\newcommand\mathcall[1]{\text{\usefont{U}{BOONDOX-cal}{m}{n}#1}}
\definecolor{colorred}{HTML}{B00000}
\definecolor{colorgreen}{HTML}{258300}
\definecolor{colorblue}{HTML}{2e32fa}
\newcommand\MyAutoefPhrasecolorGroup[1]{%
  \color@begingroup\color{MyCurrentcolor}#1\endgroup
}%
\def\HyRef@testreftype#1.#2\\{%
 \colorlet{MyCurrentcolor}{.}%
 \ltx@IfUndefined{#1autorefname}{%
   \ltx@IfUndefined{#1name}{%
     \HyRef@StripStar#1\\*\\\@nil{#1}%
     \ltx@IfUndefined{\HyRef@name autorefname}{%
       \ltx@IfUndefined{\HyRef@name name}{%
         \def\HyRef@currentHtag{}%
         \Hy@Warning{No autoref name for `#1'}%
       }{%
         \edef\HyRef@currentHtag{%
           \noexpand\MyAutoefPhrasecolorGroup{%
             \expandafter\noexpand\csname\HyRef@name name\endcsname
           }%
           \noexpand~%
         }%
       }%
     }{%
       \edef\HyRef@currentHtag{%
         \noexpand\MyAutoefPhrasecolorGroup{%
           \expandafter\noexpand
           \csname\HyRef@name autorefname\endcsname
         }%
         \noexpand~%
       }%
     }%
   }{%
     \edef\HyRef@currentHtag{%
       \noexpand\MyAutoefPhrasecolorGroup{%
         \expandafter\noexpand\csname#1name\endcsname
       }%
       \noexpand~%
     }%
   }%
 }{%
   \edef\HyRef@currentHtag{%
     \noexpand\MyAutoefPhrasecolorGroup{%
       \expandafter\noexpand\csname#1autorefname\endcsname
     }%
     \noexpand~%
   }%
 }%
}%
\numberwithin{equation}{section}
\newcommand{\nlb}{{\ensuremath{\textnormal{b}}}}
\newcommand{\nlc}{{\ensuremath{\textnormal{c}}}}
\newcommand{\nld}{{\ensuremath{\textnormal{d}}}}
\newcommand{\nlC}{{\ensuremath{\textnormal{C}}}}
\newcommand{\nlS}{{\ensuremath{\textnormal{S}}}}
\newcommand{\rmc}{{\ensuremath{\mathrm{c}}}}
\newcommand{\rmd}{{\ensuremath{\mathrm{d}}}}
\newcommand{\rme}{{\ensuremath{\mathrm{e}}}}
\newcommand{\rmn}{{\ensuremath{\mathrm{n}}}}
\newcommand{\rmp}{{\ensuremath{\mathrm{p}}}}
\newcommand{\rmq}{{\ensuremath{\mathrm{q}}}}
\newcommand{\rmt}{{\ensuremath{\mathrm{t}}}}
\newcommand{\rmA}{{\ensuremath{\mathrm{A}}}}
\newcommand{\rmB}{{\ensuremath{\mathrm{B}}}}
\newcommand{\rmC}{{\ensuremath{\mathrm{C}}}}
\newcommand{\rmD}{{\ensuremath{\mathrm{D}}}}
\newcommand{\rmE}{{\ensuremath{\mathrm{E}}}}
\newcommand{\rmH}{{\ensuremath{\mathrm{H}}}}
\newcommand{\rmR}{{\ensuremath{\mathrm{R}}}}
\newcommand{\rmT}{{\ensuremath{\mathrm{T}}}}
\newcommand{\sfa}{{\ensuremath{\mathsf{a}}}}
\newcommand{\sfb}{{\ensuremath{\mathsf{b}}}}
\newcommand{\sfd}{{\ensuremath{\mathsf{d}}}}
\newcommand{\sfh}{{\ensuremath{\mathsf{h}}}}
\newcommand{\sfn}{{\ensuremath{\mathsf{n}}}}
\newcommand{\sfp}{{\ensuremath{\mathsf{p}}}}
\newcommand{\sft}{{\ensuremath{\mathsf{t}}}}
\newcommand{\sfx}{{\ensuremath{\mathsf{x}}}}
\newcommand{\sfL}{{\ensuremath{\mathsf{L}}}}
\newcommand{\calA}{{\ensuremath{\mathcall{A}}}}
\newcommand{\calD}{{\ensuremath{\mathcall{D}}}}
\newcommand{\calE}{{\ensuremath{\mathcall{E}}}}
\newcommand{\calF}{{\ensuremath{\mathcall{F}}}}
\newcommand{\calG}{{\ensuremath{\mathcall{G}}}}
\newcommand{\calH}{{\ensuremath{\mathcall{H}}}}
\newcommand{\calJ}{{\ensuremath{\mathcall{J}}}}
\newcommand{\calL}{{\ensuremath{\mathcall{L}}}}
\newcommand{\calM}{{\ensuremath{\mathcall{M}}}}
\newcommand{\calN}{{\ensuremath{\mathcall{N}}}}
\newcommand{\calQ}{{\ensuremath{\mathcall{Q}}}}
\newcommand{\calS}{{\ensuremath{\mathcall{S}}}}
\newcommand{\calV}{{\ensuremath{\mathcall{V}}}}
\newcommand{\calW}{{\ensuremath{\mathcall{W}}}}
\newcommand{\bbI}{{\ensuremath{\mathbb{I}}}}
\newcommand{\bdmu}{{\ensuremath{\boldsymbol{\mu}}}}
\newcommand{\bdnu}{{\ensuremath{\boldsymbol{\nu}}}}
\newcommand{\bdsigma}{{\ensuremath{\boldsymbol{\sigma}}}}
\newcommand{\bdGamma}{{\ensuremath{\boldsymbol{\Gamma}}}}
\newcommand{\bdDelta}{{\ensuremath{\boldsymbol{\Delta}}}}
\newcommand{\B}{\sfb}
\newcommand{\N}{\boldsymbol{\mathrm{N}}}						
\newcommand{\Q}{\boldsymbol{\mathrm{Q}}}						
\newcommand{\R}{\boldsymbol{\mathrm{R}}}						
\renewcommand{\S}{\boldsymbol{\mathrm{S}}}						
\renewcommand{\d}{\,\mathrm{d}}				
\newcommand{\M}{\boldsymbol{\mathrm{M}}}
\let\limsup\undefined
\let\liminf\undefined
\let\div\undefined
\DeclareMathOperator*{\limsup}{limsup}		
\DeclareMathOperator*{\liminf}{liminf}		
\DeclareMathOperator*{\esssup}{esssup}		
\DeclareMathOperator*{\essinf}{essinf}		
\DeclareMathOperator{\supp}{spt}			
\DeclareMathOperator{\div}{div}				
\DeclareMathOperator{\Span}{span}			
\DeclareMathOperator{\tr}{tr}				
\DeclareMathOperator{\DIV}{\boldsymbol{\mathrm{div}}}	
\DeclareMathOperator{\norm}{\boldsymbol{\mathrm{n}}}	
\let\originalleft\left			
\let\originalright\right
\renewcommand{\left}{\mathopen{}\mathclose\bgroup\originalleft}
\renewcommand{\right}{\aftergroup\egroup\originalright}
\newcommand{\mapdef}[3][]{\ifthenelse{\isempty{#1}}{#2\quad\longmapsto\quad #3}{#1\colon\quad #2\quad\longmapsto\quad #3}}		
\newcommand{\der}[2][]{\ifthenelse{\isempty{#1}}{\frac{\nld}{\nld #2}}{\left.\frac{\nld}{\nld #2}\right\vert_{#1}}}				
\newcommand{\checknarg}{\@ifnextchar\bgroup{\gobblenarg}{}}
\newcommand{\gobblenarg}[1]{\@ifnextchar\bgroup{,\ \! #1\gobblenarg}{,\ \! #1}}
\theoremstyle{definition}
\newtheorem{bump}{Bump}[section]
\theoremstyle{plain}
\newtheorem{theorem}[bump]{Theorem}
\newtheorem{proposition}[bump]{Proposition}
\newtheorem{definition}[bump]{Definition}
\newtheorem{lemma}[bump]{Lemma}
\newtheorem{corollary}[bump]{Corollary}
\newtheorem{assumption}[bump]{Assumption}
\theoremstyle{remark}
\newtheorem{remark}[bump]{Remark}
\newtheorem{example}[bump]{Example}
\newtheoremstyle{cited}
{\topsep}		
{\topsep}		
{\itshape}		
{}				
{\bfseries}		
{\textbf{.}}	
{.5em}			
{\thmname{#1} \thmnumber{#2} \thmnote{\normalfont#3}}		
\theoremstyle{cited}			
\let\@fnsymbol\@arabic	 		
\def\nonumberfootnote{\xdef\@thefnmark{}\@footnotetext}			
\newcommand{\mms}{\mathit{M}}				
\newcommand{\met}{\sfd}						
\newcommand{\W}{\mathit{W}}					
\newcommand{\meas}{\mathfrak{m}}			
\newcommand{\Leb}{\calL}					
\newcommand{\Haus}{\calH}					
\newcommand{\Borel}{\mathcall{B}}			
\newcommand{\vol}{\mathfrak{v}}				
\newcommand{\Prob}{\boldsymbol{\mathrm{P}}}					
\newcommand{\Exp}{\boldsymbol{\mathrm{E}}}	
\newcommand{\Id}{\mathrm{Id}}				
\newcommand{\F}{\calF}						
\newcommand{\Kato}{\boldsymbol{\mathscr{K}}}
\newcommand{\cem}{\dagger}
\newcommand{\RCD}{\mathrm{RCD}}				
\newcommand{\BE}{\mathrm{BE}}				
\newcommand{\eb}{{\mathrm{eb}}}
\newcommand{\ec}{{\mathrm{ec}}}
\newcommand{\ebc}{{\mathrm{ebc}}}
\newcommand{\bounded}{\nlb}					
\newcommand{\comp}{\nlc}					
\newcommand{\bs}{\mathrm{bs}}				
\newcommand{\loc}{\mathrm{loc}}				
\newcommand{\qloc}{\mathrm{qloc}}			
\newcommand{\TV}{\mathrm{TV}}				
\newcommand{\HS}{{\mathrm{HS}}}				
\newcommand{\Ric}{\mathrm{Ric}}				
\newcommand{\Cont}{\nlC}					
\newcommand{\Ell}{\mathit{L}}				
\newcommand{\Lip}{\mathrm{Lip}}				
\newcommand{\Sobo}{\nlS}					
\newcommand{\BV}{\mathrm{BV}}				
\newcommand{\Test}{\mathrm{Test}}			
\newcommand{\PCM}{\mathrm{Pcm}}				
\newcommand{\SF}{\mathrm{SF}}				
\newcommand{\Ch}{\calE}						
\newcommand{\Dom}{\mathcall{D}}					
\newcommand{\reg}{{\mathrm{reg}}}
\DeclareMathOperator{\Hess}{Hess}			
\newcommand{\ChHeat}{\sfp}					
\newcommand{\HHeat}{\sfh}					
\newcommand{\CHeat}{\sft}					
\newcommand{\Schr}[1]{\sfp^{#1}}			
\newcommand{\push}{\sharp}					
\newcommand{\One}{1}				
\newcommand{\Harm}{\mathcall{H}}			
\newcommand{\RIC}{\boldsymbol{\mathrm{Ric}}}
\newcommand{\Hodge}{\smash{\vec{\Delta}}}		
\newcommand{\Bochner}{\smash{\square}} 
\newcommand{\DELTA}{\bdDelta}		
\newcommand{\Meas}{\mathfrak{M}}				
\newcommand{\Hom}{\mathrm{Hom}}				
\newcommand{\cl}{\mathrm{cl}}				
\newcommand{\CAP}{\mathrm{cap}}				
\newcommand{\fin}{{\mathrm{f}}}
\newcommand{\sigmafin}{\sigma}
\newcommand{\finR}{{\mathrm{fR}}}
\newcommand{\sigmafinR}{{\sigma\mathrm{R}}}
\newcommand{\bR}{{\mathrm{fR}}}
\newcommand{\sigmaR}{{\sigma\mathrm{R}}}
\newcommand{\bc}{{\mathrm{bc}}}
\newcommand{\II}{\boldsymbol{\mathrm{I\!I}}}
\newcommand{\E}{\calE}
\newcommand{\ric}{\mathrm{ric}}
\newcommand{\RMA}{\boldsymbol{\rmA}}
\newcommand{\cov}{{\mathrm{cov}}}
\newcommand{\con}{{\mathrm{con}}}
\newcommand{\dR}{{\mathrm{dR}}}
\newcommand{\surf}{\mathfrak{s}}
\newcommand{\sym}{{\mathrm{sym}}}
\newcommand{\asym}{{\mathrm{asym}}}
\DeclareMathOperator{\sgn}{sgn}
\newcommand{\Reg}{{\mathrm{Reg}}}
\providecommand{\bysame}{\leavevmode\hbox to3em{\hrulefill}\thinspace}
\let\oldtocsection=\tocsection
\let\oldtocsubsection=\tocsubsection
\let\oldtocsubsubsection=\tocsubsubsection
\renewcommand{\tocsection}[2]{\hspace{0em}\oldtocsection{#1}{#2}}
\renewcommand{\tocsubsection}[2]{\hspace{1em}\oldtocsubsection{#1}{#2}}
\renewcommand{\tocsubsubsection}[2]{\hspace{2em}\oldtocsubsubsection{#1}{#2}}
\newcommand{\nocontentsline}[3]{}
\newcommand{\tocless}[2]{\bgroup\let\addcontentsline=\nocontentsline#1{#2}\egroup}
\begin{document}

\title[Vector calculus for tamed Dirichlet spaces]{Vector calculus for tamed Dirichlet spaces}
\author{Mathias Braun}
\address{Department of Mathematics, University of Toronto Bahen Centre, 40 St. George Street Room 6290, Toronto, Ontario M5S 2E4, Canada}
\email{braun@math.toronto.edu}
\date{\today}
\subjclass[2010]{Primary: 53C21, 58J35; Secondary: 31C25, 35J10, 35K20, 58J32}
\keywords{Heat flow; Kato class; Ricci curvature; Schrödinger semigroup}

\begin{abstract} In the language of $\Ell^\infty$-modules proposed by Gigli, we introduce a first order calculus on a topological Lusin measure space $(\mms,\meas)$ carrying a quasi-regular, strongly local Dirichlet form $\Ch$. Furthermore, we develop a second order calculus if $(\mms,\Ch,\meas)$ is tamed by a signed measure in the extended Kato class in the sense of Erbar, Rigoni, Sturm and Tamanini. This allows us to define e.g.~Hessians, covariant and exterior derivatives, Ricci curvature, and second fundamental form.
\end{abstract}

\maketitle
\thispagestyle{empty}

\tableofcontents

\setcounter{section}{-1}
\section{Introduction}

\subsubsection*{Background} The so-called $\RCD(K,\infty)$ condition, $K\in\R$, introduced in \cite{ambrosio2015a,ambrosio2014b, lott2009, sturm2006a, sturm2006b}, gives a meaning to the Ricci curvature of an infinitesimally Hilbertian  metric measure space $(\mms,\met,\meas)$ being bounded from below by $K$. It can be defined in at least two equivalent ways \cite{ambrosio2014b, ambrosio2015}: the \emph{Lagrangian} one by geodesic $K$-convexity of the relative entropy on the $2$-Wasserstein space, as well as the \emph{Eulerian} one phrased as a weak version of the \emph{Bakry--\smash{Émery} condition} $\BE_2(K,\infty)$ \cite{bakry1985a,bakry1985b} coupled with the Sobolev-to-Lipschitz property. In the latter picture, Gigli \cite{gigli2015,gigli2018} developed a powerful first and second order calculus. It leads to natural nonsmooth analogues to the notions of Hessian, covariant and exterior derivative, Ricci curvature,  Hodge's theorem, etc. This machinery has already provided deep structural and geometric results \cite{brue2019, brue2020}. In the abstract diffusion operator setting, a pointwise definition of a Ricci tensor is due to \cite{sturm2018}.

Recently, spaces with \emph{nonconstant}, even \emph{nonuniform} lower Ricci bounds have attracted  high attention. Using Schrödinger operator theory, the condition $\BE_2(K,\infty)$ can be given a meaning --- which also works  perfectly in the framework of \emph{Dirichlet spaces} $(\mms,\Ch,\meas)$ --- even if $K$ is replaced by a function, a measure, or a distribution  \cite{erbar2020,sturm2020}. (For functions, an equivalent Lagrangian counterpart still exists  \cite{braun2021,sturm2015,sturm2020}.) In fact, there is important evidence in the measure-valued case once \emph{boundaries} come into play. First, albeit, say, compact  Riemannian manifolds with \emph{convex} boundary are still covered by the $\RCD$ theory \cite{han2020}, already the appearance of a small boundary concavity makes it impossible for the relative entropy to be $K$-convex for any $K\in\R$ \cite{sturm2020, wang2014}. Second, on a compact Riemannian manifold $\mms$  with not necessarily convex boundary $\partial\mms$, the signed Borel measure
\begin{align}\label{Eq:Kappa meas intro}
\kappa  := \mathcall{k}\,\vol + \mathcall{l}\,\surf
\end{align}
plays the natural role of a lower ``Ricci'' bound \cite{erbar2020,hsu2002,sturm2020}. Here $\vol$ and $\surf$ are the volume measure and surface measure of $\mms$ and $\partial\mms$, and $\mathcall{k}$ and $\mathcall{l}$ are the pointwise lowest eigenvalues of the Ricci tensor $\Ric$ and the second fundamental form $\mathbb{I}$.

Of particular interest in the outlined business of singular  Ricci bounds is the \emph{extended Kato class} $\Kato_{1-}(\mms)$ of signed measures on $\mms$, already for Riemannian manifolds without boundary \cite{braun2020a, carron2019, gueneysu2015, gueneysu2017, gueneysu2020, magniez2020,rose2019, rose2020} or their Ricci limits \cite{carron2021}. This is just the right class of measure-valued potentials for which the associated Feynman--Kac semigroup has good properties \cite{stollmann1996}. In a recent work, Erbar, Rigoni, Sturm and Tamanini \cite{erbar2020} introduced the notion of \emph{tamed spaces}, i.e.~Dirichlet spaces supporting distribution-valued synthetic lower Ricci bounds in terms of \emph{$\Ch$-quasi-local distributions} $\smash{\kappa\in\F_\qloc^{-1}}$. These include the extended Kato class of $\mms$. 

\subsubsection*{Objective} Inspired by and following \cite{gigli2018}, our goal is to construct a functional first and second order calculus over Dirichlet spaces that are tamed by signed extended Kato class measures. (There are various reasons for working with $\smash{\kappa\in \Kato_{1-}(\mms)}$ rather than with general $\smash{\kappa\in\F_\qloc^{-1}}$, which are summarized in an own paragraph below. Still, already in the former case, many arguments become technically more challenging compared to \cite{gigli2018}.) In turn, this will induce a first order calculus on vector-valued objects. A functional first order structure for Dirichlet spaces is, of course, well-known to exist \cite{baudoin2019, cipriani2003, eberle1999, hinz2013, hinz2015, ionescu2012}. Here, we introduce it by the approach through $\Ell^\infty$-modules  \cite{gigli2018} and show its compatibility with the previous works. On the other hand, besides \cite{gigli2018} higher order objects are only studied in one-dimensional cases \cite{baudoin2019,hinz2015} or under restrictive structural assumptions \cite{honda2014,liu2002}. In our general approach, the two most important  quantities will be
\begin{itemize}
\item the \emph{Hessian} operator on appropriate functions,  along with proving that sufficiently many of these do exist,  and
\item a measure-valued \emph{Ricci curvature}.
\end{itemize}
In addition, we concisely incorporate the tamed analogue of the finite-dimensional $\BE_2(K,N)$ condition \cite{erbar2020}, $K\in\R$ and $N\in [1,\infty)$,  following the $\RCD^*(K,N)$-treatise \cite{han2018} which is not essentially different from \cite{gigli2018}.  More details  are outlined below. Before, we summarize our motivations in extending \cite{gigli2018} to tamed spaces.

The first, evident, reason is the larger setting. Dirichlet spaces are more general than metric measure spaces: they cover e.g.~certain noncomplete spaces, extended metric measure spaces such as configuration spaces \cite{albeverio1998,erbar2015b}, etc. From many perspectives, they seem to be the correct framework in which elements of a vector calculus should be studied \cite{baudoin2019, hinz2013}. Also, the considered lower Ricci bounds, examples of which are due to \cite{braunrigoni2021,erbar2020, gueneysu2020}, may be highly irregular. In fact \cite{honda2018}, already for uniform lower bounds, the Bakry--\smash{Émery} setting is strictly larger than the $\RCD$ one if the Sobolev-to-Lipschitz property is dropped. See \autoref{Sub:Tam} below.

Second, we want to pursue a thorough discussion of how the appearance of a ``boundary'' --- or more precisely, an $\meas$-negligible, non-$\Ch$-polar set --- in $\mms$ affects the calculus objects that are introduced similarly as in \cite{gigli2018}. Besides the need of measure-valued Ricci bounds to describe curvature of $\meas$-singular sets as by \eqref{Eq:Kappa meas intro}, boundaries play an increasing role in recent research  \cite{brue2020, sturm2020}. This motivated us to make sense, in all generality and apart from extrinsic structures, of measure-valued boundary objects such as \emph{normal components} of vector fields or, reminiscent of \cite{han2019}, a \emph{second fundamental form}. In fact, our guiding example is the case of compact Riemannian manifolds with boundary which, unlike only partly in the $\RCD$ setting, is fully covered by tamed spaces. Returning to this setting from time to time also provides us with a negative insight on an open question in \cite{gigli2018}, namely whether ``$H = W$'', see e.g.~\autoref{Sub:Calc rules hess}.  (This does not conflict with the smooth ``$H=W$'' results \cite{schwarz1995} as our ``$H$-spaces'' are different from the smooth ones.)

Lastly, we hope that our toolbox becomes helpful in further investigations of tamed spaces. Possible directions  could include
\begin{itemize}
\item the study of covariant Schrödinger operators \cite{braun2020,gueneysu2017},
\item rigidity results for and properties of finite-dimensional tamed spaces \cite{brue2020,bruesemola2020},
\item the study of bounded variation functions under Kato conditions \cite{brue2019,  buffa2019,gueneysu2015},
\item super-Ricci flows \cite{kopfer2018,sturm2018b}, noting that the Kato condition, in contrast to $\Ell^p$-conditions, on the Ricci curvature along Kähler--Ricci flows is stable   \cite{tian2016},
\item a structure theory for Kato Ricci limit or tamed spaces \cite{carron2021,mondino2019}, or 
\item the proof of a Bismut--Elworthy--Li formula \cite{bismut1984, elworthy1994}.
\end{itemize}

\subsubsection*{First order calculus} Let $(\mms,\Ch,\meas)$ be a quasi-regular, strongly local Dirichlet space,  see \autoref{Sub:Dirichlet forms} for basics on these. To simplify the presentation in this introduction, we assume that $\Ch$ admits a carré du champ $\smash{\Gamma\colon\F_\rme\to\Ell^1(\mms)}$, i.e.
\begin{align*}
\Ch(f) = \big\Vert \Gamma(f)\big\Vert_{\Ell^1(\mms)}
\end{align*}
for every $f\in\F$, where $\F$ (or $\F_\rme$) is the (extended) domain of $\Ch$. This will be our setting  from \autoref{Ch:Tangent module} on, see \autoref{As:Gamma-operator}. However, the space of $1$-forms in \autoref{Sec:Cotangent module} can  even be constructed in a  ``universal'' sense, see \autoref{Th:Universal}, by relying on the more general concepts of \emph{energy measures} for $\Ch$ and \emph{$\Ch$-dominant measures} \cite{delloschiavo2020,hino2010}, cf.~\autoref{Sub:Energy} and \autoref{Sub:Carré}.

To speak about vector-valued objects, we employ the theory of \emph{$\Ell^p$-normed $\Ell^\infty$-modules}, $p\in[1,\infty]$, w.r.t.~a given measure --- here $\meas$ --- introduced in \cite{gigli2018}, see \autoref{Sub:Linfty modles}. This is a Banach space $\calM$ endowed with a group action by $\Ell^\infty(\mms)$ and a map $\vert\cdot\vert \colon \calM\to\Ell^p(\mms)$, the \emph{pointwise norm}, such that
\begin{align*}
\Vert\cdot\Vert_\calM = \big\Vert\vert\cdot\vert\big\Vert_{\Ell^p(\mms)}.
\end{align*}
In terms of $\vert\cdot\vert$, all relevant $\meas$-a.e.~properties of elements of $\calM$, e.g.~their $\meas$-a.e. vanishing outside some given Borel set $A\subset\mms$, can be rigorously made sense of. $\Ell^\infty(\mms)$ is chosen as acting group given that multiplying vector-valued objects by functions should preserve the initial object's $\meas$-integrability. Thus, to some extent $\Ell^\infty$-modules allow us to speak of generalized \emph{sections} without any vector bundle (which we will also not define). We believe that  this interpretation is more straight\-forward and better suited for analytic purposes than the \emph{fiber} one by measurable Hilbert fields from \cite{baudoin2019, cipriani2003, eberle1999, hinz2013, hinz2015, ionescu2012}  --- albeit the approaches are equivalent, see \autoref{Re:Link MHB} --- where such a bundle is actually constructed.

The space $\Ell^2(T^*\mms)$ of $\Ell^2$-$1$-forms w.r.t.~$\meas$, termed \emph{cotangent module}  \cite{gigli2018}, is explicitly constructed in \autoref{Sub:Construction}. By duality, the \emph{tangent module} $\Ell^2(T\mms)$ of $\Ell^2$-vector fields w.r.t.~$\meas$ is then defined in  \autoref{Sec:TMod}. All in all, the discussion from \autoref{Sec:Cotangent module} and \autoref{Ch:Tangent module} leads to the following.

\begin{theorem}\label{Th:Module structure intro} $\Ell^2(T^*\mms)$ and $\Ell^2(T\mms)$ are  $\Ell^2$-normed $\Ell^\infty$-modules with pointwise norms  both denoted by $\vert\cdot\vert$. They  come with  a linear  \emph{differential} $\rmd\colon \F_\rme \to\smash{\Ell^2(T^*\mms)}$ and a linear \emph{gradient} $\nabla \colon\F_\rme\to\smash{\Ell^2(T\mms)}$ such that for every $f\in\F_\rme$,
\begin{align*}
\vert\rmd f\vert = \vert\nabla f\vert = \Gamma(f)^{1/2}\quad\meas\text{-a.e.}
\end{align*}
\end{theorem}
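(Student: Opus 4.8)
The plan is to carry Gigli's construction of the cotangent module, \cite{gigli2018}, over to the present Dirichlet setting, the carré du champ $\Gamma$ from \autoref{As:Gamma-operator} playing the role that the squared minimal weak upper gradient plays in the metric measure case, and then to pass to the tangent module by duality. First I would introduce the \emph{pre-cotangent module}: finite formal sums $\sum_i \chi_{A_i}\,\rmd f_i$, where $\{A_i\}$ is a finite Borel partition of $\mms$ and $f_i\in\F_\rme$, modulo the relation identifying $\sum_i\chi_{A_i}\rmd f_i$ with $\sum_j\chi_{B_j}\rmd g_j$ whenever $\Gamma(f_i-g_j)=0$ $\meas$-a.e.\ on $A_i\cap B_j$ for all $i,j$. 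On the quotient, set
\begin{align*}
\Big\vert\sum_i \chi_{A_i}\,\rmd f_i\Big\vert := \sum_i \chi_{A_i}\,\Gamma(f_i)^{1/2}.
\end{align*}
This takes values in $\Ell^2(\mms)$ since $\Gamma(f_i)\in\Ell^1(\mms)$, and it descends to the quotient thanks to the \emph{locality} of $\Gamma$: by strong locality of $\Ch$ and polarization, $\Gamma$ is a symmetric $\Ell^1(\mms)$-valued bilinear form on $\F_\rme$ obeying the pointwise Cauchy--Schwarz inequality and $\chi_{\{f=g\}}\,\Gamma(f-h)=\chi_{\{f=g\}}\,\Gamma(g-h)$ $\meas$-a.e.; the same locality makes the defining relation transitive, hence an equivalence relation.

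\textbf{Step 2 (module structure, completion, differential).} Put $\Vert\omega\Vert := \Vert\vert\omega\vert\Vert_{\Ell^2(\mms)}$, which is a norm because the pointwise Cauchy--Schwarz inequality for $\Gamma$ yields the pointwise triangle inequality for $\vert\cdot\vert$. Define the $\Ell^\infty(\mms)$-action first on simple functions via $\big(\sum_k c_k\chi_{C_k}\big)\cdot\big(\sum_i\chi_{A_i}\rmd f_i\big):=\sum_{i,k}\chi_{A_i\cap C_k}\,\rmd(c_k f_i)$, using that $\F_\rme$ is stable under multiplication by constants with $\Gamma(cf)=c^2\Gamma(f)$; check $\vert h\cdot\omega\vert=\vert h\vert\,\vert\omega\vert$ $\meas$-a.e.\ for simple $h$ and extend by density to all of $\Ell^\infty(\mms)$. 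Let $\Ell^2(T^*\mms)$ be the completion; by the general theory of $\Ell^p$-normed $\Ell^\infty$-modules recalled in \autoref{Sub:Linfty modles}, the pointwise norm and the action pass to the completion and turn $\Ell^2(T^*\mms)$ into an $\Ell^2$-normed $\Ell^\infty$-module. Set $\rmd f := [\chi_\mms\,\rmd f]$: linearity of $\rmd$ follows from bilinearity of $\Gamma$ via the defining relation, $\vert\rmd f\vert=\Gamma(f)^{1/2}$ $\meas$-a.e.\ holds by definition, and $\{\rmd f : f\in\F_\rme\}$ generates $\Ell^2(T^*\mms)$ by construction. (Alternatively, all of this is a special case of the universal construction of \autoref{Th:Universal}, in which the energy measure of $f$ equals $\Gamma(f)\,\meas$.)

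\textbf{Step 3 (tangent module and gradient).} Define $\Ell^2(T\mms):=\Ell^2(T^*\mms)^*$, the module dual; by Gigli's duality theory this is again an $\Ell^2$-normed $\Ell^\infty$-module with its own pointwise norm $\vert\cdot\vert$, which yields the claimed structure. Since polarization of $\Gamma$ equips $\Ell^2(T^*\mms)$ with a pointwise scalar product, it is a Hilbert module, so the associated Riesz map $\Ell^2(T^*\mms)\to\Ell^2(T^*\mms)^*=\Ell^2(T\mms)$ is a module isomorphism preserving pointwise norms. Define $\nabla f$ as the image of $\rmd f$ under this map; then $\nabla$ is linear and $\vert\nabla f\vert=\vert\rmd f\vert=\Gamma(f)^{1/2}$ $\meas$-a.e., which is the remaining assertion.

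\textbf{Main obstacle.} The genuinely delicate point is Step 1: verifying that $\Gamma$, living on the \emph{extended} domain $\F_\rme$ — whose elements need not belong to $\Ell^2(\mms)$ — is bilinear, satisfies the pointwise Cauchy--Schwarz inequality, and above all enjoys the locality identity $\chi_{\{f=g\}}\Gamma(f)=\chi_{\{f=g\}}\Gamma(g)$ $\meas$-a.e. In the quasi-regular strongly local setting this rests on truncation and approximation arguments together with $\Ch$-quasi-continuous representatives, and it is exactly what makes the pointwise norm well defined on the quotient. Once this is secured, the rest is a routine transcription of \cite{gigli2018}.
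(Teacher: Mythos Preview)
Your proposal is correct and follows essentially the same route as the paper: construct the cotangent module \`a la Gigli using the carr\'e du champ as pointwise norm, complete, define $\rmd$ on generators, then pass to the tangent module by module duality and define $\nabla f$ via the Riesz isomorphism. The only noteworthy difference is that the paper carries out the cotangent construction in slightly greater generality---using the energy measures $\bdmu_f$ and an arbitrary $\Ch$-dominant measure $\mu$ rather than assuming a carr\'e du champ from the outset (\autoref{Th:Module structure}, \autoref{Th:Universal})---and only specializes to $\mu=\meas$ under \autoref{As:Gamma-operator} when introducing the tangent module; your direct approach with $\Gamma$ is exactly this specialization and suffices for the statement as written.
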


Both $\rmd$ and $\nabla$ obey all expected locality and calculus rules, cf.~\autoref{Cor:Calculus rules d}. Moreover, polarization of $\vert\cdot\vert$ induces a \emph{pointwise scalar product} $\langle\cdot,\cdot\rangle$ on $\smash{\Ell^2(T^*\mms)^2}$ and $\smash{\Ell^2(T\mms)^2}$ which, by integration w.r.t.~$\meas$, turns the latter into Hilbert spaces, respectively, see \autoref{Th:Module structure}.

\subsubsection*{Measure-valued divergence} Recall the  \emph{Gauß--Green  formula}
\begin{align}\label{Eq:Green intro}
-\int_\mms \rmd h(X)\d\vol = \int_\mms h\div_\vol X\d\vol - \int_{\partial\mms} h\,\langle X,\sfn\rangle\d\surf,
\end{align}
valid for every compact Riemannian manifold $\mms$ with boundary $\partial\mms$, every $X\in\Gamma_\comp(T\mms)$ and every $h\in\Cont_\comp^\infty(\mms)$. Here, $\sfn$ is the outward-pointing unit normal vector field at $\partial\mms$. This motivates our first key differential object, the \emph{measure-valued divergence} of appropriate vector fields, which in turn  is suitable to define the \emph{normal component} of the latter. Indeed, the point of introducing these, and the essence of our ``boundary discussion'', is that the second fundamental form of $\partial \mms$ at gradients is --- loosely speaking, see \autoref{Sub:Riem mflds} and \autoref{Ex:Sec fund form smooth} --- given  by
\begin{align}\label{Eq:2nd fund form intro}
\mathbb{I}(\nabla f,\nabla f) = -\frac{1}{2}\big\langle\nabla \vert \nabla f\vert^2,\sfn\big\rangle
\end{align}
for every $f\in \Cont^\infty(\mms)$ for which $\nabla f\in\Gamma(T\mms)$ is \emph{tangential} to $\partial\mms$, i.e.
\begin{align}\label{Eq:Vanishing normal cpnts intro}
\langle \nabla f,\sfn\rangle = 0\quad\text{at }\partial\mms.
\end{align}

We thus propose the following definition leaned on  \cite{buffa2019}, see \autoref{Def:Measure-valued divergence}.

\begin{definition} We say that $X\in\Ell^2(T\mms)$ has a \emph{measure-valued divergence}, briefly $X\in\Dom(\DIV)$, if there exists a $\sigma$-finite signed Borel measure $\DIV X$ charging no $\Ch$-polar sets such that for sufficiently many  $h\in\F$,
\begin{align*}
-\int_M \rmd h(X) \d\meas = \int_\mms \widetilde{h}\d\!\DIV X.
\end{align*}
\end{definition}

In turn, keeping in mind \eqref{Eq:Green intro} and using Lebesgue's decomposition
\begin{align*}
\DIV X = \DIV_\ll X + \DIV_\perp X
\end{align*}
of $\DIV X$ w.r.t.~$\meas$, we define the normal component of $X\in\Dom(\DIV)$ by
\begin{align*}
\norm X := -\DIV_\perp X,
\end{align*}
see \autoref{Def:Normal component}. Calculus rules for $\DIV X$ and $\norm X$, $X\in\Dom(\DIV)$, are listed in  \autoref{Sec:Divergences}. In our generality, we do not know more about the support of $\norm X$ than its $\meas$-singularity.  Nevertheless, these notions are satisfactorily compatible with other recent \emph{extrinsic} approaches to Gauß--Green's formula and boundary components on (subsets of) $\RCD$ spaces  \cite{brue2019,buffa2019, sturm2020} as outlined in \autoref{Ch:Appendix}. 

The advantage of this measure point of view compared  to the $\Ell^2$-one from \cite{gigli2018}, see \autoref{Def:L2 div}, is its ability to ``see'' the normal component of  $X\in\Dom(\DIV)$ rather than the latter being  left out in the relevant integration by parts formulas and interpreted as zero. This distinction does mostly not matter: matching with the interpretation of the generator $\Delta$ of  $\Ch$ as \emph{Neumann Laplacian}, on tamed spaces, for many $g \in \F\cap\Ell^\infty(\mms)$ and $f \in\Dom(\Delta)$ --- e.g.~for $g,f\in\Test(\mms)$, cf.~\autoref{Le:Div g nabla f} and \eqref{Eq:Test fcts intro} below ---  the vector field $X := g\,\nabla f\in\Ell^2(T\mms)$ belongs to $\Dom(\DIV)$ with 
\begin{align}\label{Eq:Div ids intro}
\begin{split}
\DIV_\ll X  &=  \big[\rmd g(\nabla f) + g\,\Delta f\big]\,\meas,\\
\norm X &= 0.
\end{split}
\end{align}
In particular, reminiscent of \eqref{Eq:2nd fund form intro} and \eqref{Eq:Vanishing normal cpnts intro} one would  desire  to have a large class of vector fields with vanishing normal component to define a second fundamental form. (In fact, many relevant spaces will be defined in terms of such vector fields, hence all Laplace-type operators considered in this work, see \autoref{Def:Bochner Laplacian} and \autoref{Def:Hodge Lapl}, implicitly obey Neumann boundary conditions in certain senses.) By now, it is however not even clear if there exist (m)any $f\in\F$ with 
\begin{enumerate}[label=\alph*.]
\item\label{La:RPF1} $\vert\nabla f\vert^2\in\F$, not to say with
\item\label{La:RPF2} $\smash{\nabla \vert \nabla f\vert^2\in\Dom(\DIV)}$. 
\end{enumerate}
These issues  appear similarly when initially trying to define higher order differential operators, as briefly illustrated now along with addressing \ref{La:RPF1} and \ref{La:RPF2}

\subsubsection*{Second order calculus} The subsequent pointwise formulas hold on the interior $\mms^\circ$ of any Riemannian manifold $\mms$ with boundary, for every $f,g_1,g_2\in\Cont^\infty(\mms)$, every $X,X_1,X_2\in\Gamma(T\mms)$ and every $\omega\in\Gamma(T^*\mms)$ \cite{lee2018,petersen2006}:
\begin{align}\label{Eq:Formulas intro}
\begin{split}
2\Hess f(\nabla g_1,\nabla g_2) &= \big\langle\nabla\langle\nabla f,\nabla g_1\rangle,\nabla g_2\big\rangle + \big\langle\nabla\langle \nabla f,\nabla g_2\rangle,\nabla g_1\big\rangle\\
&\qquad\qquad - \big\langle \nabla\langle \nabla g_1,\nabla g_2\rangle,\nabla f\big\rangle,\\
\big\langle \nabla_{\nabla g_1}X,\nabla g_2\big\rangle &= \big\langle\nabla\langle X,\nabla g_1\rangle,\nabla g_2\big\rangle - \Hess g_2(X,\nabla g_1),\\
\rmd\omega(X_1,X_2) &= \rmd\big[\omega(X_2)\big](X_1) -\rmd\big[\omega(X_1)\big](X_2)\\
&\qquad\qquad - \omega(\nabla_{X_1}X_2-\nabla_{X_2}X_1).\textcolor{white}{\big\vert}
\end{split}
\end{align}
The first identity characterizes the Hessian $\Hess f$ of $f$, the second is a definition of the covariant derivative $\nabla X$ of $X$ in terms of that Hessian, and in turn, the exterior derivative $\rmd\omega$ of $\omega$ can be defined with the help of $\nabla$. (A similar formula is true for the exterior  differential acting on forms of any degree, see \autoref{Ex:Ext der smooth}.) Hence, we may and will axiomatize these three differential operators in the previous order. In the sequel, we only outline how we paraphrase the first identity in \eqref{Eq:Formulas intro} nonsmoothly. The operators $\nabla$ and $\rmd$ can then be defined by similar (integration by parts) procedures and, as for the Hessian, satisfy a great diversity of expected calculus rules, see \autoref{Sub:Calc rules hess},  \autoref{Ch:Covariant der} and \autoref{Ch:Ext derivative} for details.

Up to the small point of defining the two-fold tensor product $\Ell^2((T^*)^{\otimes 2}\mms)$ of $\Ell^2(T^*\mms)$, see \autoref{Sub:Tensor products}, and keeping in mind \eqref{Eq:Div ids intro}, the following, stated in \autoref{Def:Hess}, is naturally motivated by  \eqref{Eq:Formulas intro}.

\begin{definition}\label{Def:Intodef Hess} The space $\Dom(\Hess)$ consists of all $f\in\F$ such that there exists some tensor $\Hess f\in\Ell^2((T^*)^{\otimes 2}\mms)$ such that for every $g_1,g_2\in\Test(\mms)$,
\begin{align*}
&2\int_\mms h\Hess f(\nabla g_1,\nabla g_2)\d\meas\\
&\qquad\qquad = -\int_\mms \langle\nabla f,\nabla g_1\rangle\d\!\DIV_\ll(h\,\nabla g_2) - \int_\mms \langle\nabla f,\nabla g_2\rangle\d\!\DIV_\ll(h\,\nabla g_1)\\
&\qquad\qquad\qquad\qquad - \int_\mms h\,\big\langle \nabla f,\nabla\langle\nabla g_1,\nabla g_2\rangle\big\rangle\d\meas.
\end{align*}
\end{definition}

The advantage of  \autoref{Def:Intodef Hess} is that the r.h.s.~of the defining property only contains one derivative of $f$.  All terms make sense if, as stated, $g_1$ and $g_2$ are in 
\begin{align}\label{Eq:Test fcts intro}
\Test(\mms) := \big\lbrace f \in \Dom(\Delta)\cap\Ell^\infty(\mms) : \vert\nabla f\vert\in\Ell^\infty(\mms),\ \Delta f\in\F\big\rbrace,
\end{align}
cf.~\autoref{Sub:Test fcts} and \autoref{Sub:Dom Hess}.  $\Test(\mms)$ is dense in $\F$,  and is a cornerstone of our discussion, playing the role of \emph{smooth} functions. For instance,  $\vert\nabla f\vert^2\in\F$ for $f\in\Test(\mms)$  by \autoref{Pr:Bakry Emery measures}, which also  addresses \ref{La:RPF1}~above. (In fact, $\vert \nabla f\vert^2$ is in the domain of the \emph{measure-valued Schrödinger operator} $\smash{\DELTA^{2\kappa}}$,  \autoref{Def:Measure valued Schr}. For possible later extensions, $\kappa$ will mostly not be separated from the considered operators. Hence, \ref{La:RPF2}~will be answered quite late, but positively, in \autoref{Le:Fin tot var}.) The latter technical grounds have been laid in \cite{erbar2020} following \cite{savare2014}, are summarized in \autoref{Sub:Self-imp}, and are one key place where taming  by $\smash{\kappa\in\Kato_{1-}(\mms)}$ is needed.

In \autoref{Th:Hess}, we show that $\Dom(\Hess)$ is nonempty, in fact, dense in $\Ell^2(\mms)$.

\begin{theorem}\label{Th:Hess intro} Every $f\in\Test(\mms)$ belongs to $\Dom(\Hess)$ with
\begin{align*}
\int_\mms \big\vert\!\Hess f\big\vert_\HS^2\d\meas \leq \int_\mms (\Delta f)^2\d\meas  - \big\langle \kappa\,\big\vert\,\vert\nabla f\vert^2\big\rangle.
\end{align*}
\end{theorem}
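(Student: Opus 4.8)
The plan is to follow the classical Bakry--Émery self-improvement argument, transplanted to the tamed Dirichlet setting via the measure-valued $\Gamma_2$ constructed in \cite{erbar2020}. Fix $f\in\Test(\mms)$. The key input is the tamed Bochner inequality in integrated form: for suitable test functions $g$ one has a bound of the shape
\begin{align*}
\int_\mms g\,\DELTA^{2\kappa}\vert\nabla f\vert^2\,(\text{in the measure sense}) \geq 2\int_\mms g\,\big[(\Delta f)^2 - \ldots\big]\d\meas,
\end{align*}
or, taking $g\equiv 1$ (morally, after a suitable truncation/exhaustion argument to handle integrability), $-\langle\kappa\mid\vert\nabla f\vert^2\rangle \geq \int_\mms (\Delta f)^2\d\meas - \tfrac12\int_\mms \Delta\vert\nabla f\vert^2\d\meas$, with the last term vanishing or being controlled because $\vert\nabla f\vert^2$ lies in the domain of the measure-valued Schrödinger operator (\autoref{Pr:Bakry Emery measures}). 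The point is that this furnishes an \emph{a priori} $\Ell^2$-bound on a quadratic expression in second derivatives of $f$ \emph{before} the Hessian has been shown to exist.

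The core step is the polarization/optimization that upgrades Bochner's scalar inequality to the pointwise (matrix) Bochner inequality, thereby producing the Hessian. Concretely, for finite families $g_i\in\Test(\mms)$ and bounded measurable coefficients $\phi^{ij}$, one considers the quantity $\sum_{i,j}\phi^{ij}\langle\nabla\langle\nabla f,\nabla g_i\rangle,\nabla g_j\rangle$ (symmetrized as in the right-hand side of \autoref{Def:Intodef Hess}), and shows via the tamed $\Gamma_2$-calculus that, integrated against a nonnegative bounded $h$, it is controlled by $\big(\int h\,\vert A\vert_{\mathrm{HS}}^2\big)^{1/2}$ times the natural norm of the matrix field $\phi$, where $\vert A\vert_{\mathrm{HS}}^2$ is exactly the right-hand side of \autoref{Th:Hess intro} (localized via $h$). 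This is a Cauchy--Schwarz/self-improvement maneuver: one tests Bochner against linear combinations $\sum_i \lambda_i g_i$, expands, and chooses the $\lambda_i$ (now allowed to be functions) optimally. The outcome is a bounded bilinear form on the algebraic tensor product generated by $\{\nabla g_i\}$, with the stated norm bound; since such simple tensors are dense in $\Ell^2((T^*)^{\otimes 2}\mms)$ (from \autoref{Sub:Tensor products}), this bilinear form is represented by a genuine element $\Hess f\in\Ell^2((T^*)^{\otimes 2}\mms)$ satisfying the identity in \autoref{Def:Intodef Hess}. That identity is precisely the nonsmooth paraphrase of the first line of \eqref{Eq:Formulas intro}, with the divergence terms $\DIV_\ll(h\,\nabla g_j)$ supplying $\langle\nabla\langle\nabla f,\nabla g_i\rangle,\nabla g_j\rangle$ after integration by parts through \eqref{Eq:Div ids intro}.

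Assembling the proof: (i) recall from \autoref{Pr:Bakry Emery measures} that $\vert\nabla f\vert^2\in\F$ and lies in the domain of $\DELTA^{2\kappa}$, so all terms in the putative Hessian identity and in the $\Gamma_2$-inequality are well-defined and finite; (ii) write out the tamed integrated Bochner inequality tested against $h=\big(\sum_i\lambda_i g_i\big)$-type combinations, using bilinearity of the energy measure and the calculus rules for $\rmd$ and $\DIV$ from \autoref{Cor:Calculus rules d} and \autoref{Sec:Divergences}; (iii) perform the self-improvement to extract the pointwise (in the HS-norm) bound; (iv) invoke density of simple tensors to define $\Hess f$ and verify membership in $\Dom(\Hess)$; (v) specialize the self-improvement bound with the identity matrix field to read off $\int_\mms\vert\Hess f\vert_{\mathrm{HS}}^2\d\meas\le\int_\mms(\Delta f)^2\d\meas-\langle\kappa\mid\vert\nabla f\vert^2\rangle$; (vi) conclude density of $\Dom(\Hess)$ in $\Ell^2(\mms)$ from density of $\Test(\mms)$ in $\F$ (hence in $\Ell^2(\mms)$).

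I expect the main obstacle to be step (ii)--(iii): making the self-improvement rigorous when $\kappa$ is only an extended Kato class measure rather than a constant. One must ensure that the measure-valued term $\langle\kappa\mid\cdot\rangle$ interacts correctly with the localization by $h$ and with the quadratic substitution $f\mapsto$ (combinations), i.e.\ that the tamed $\Gamma_2$ satisfies a genuine Leibniz/chain behavior under these manipulations with no spurious boundary contributions — which is exactly where the quasi-regularity, strong locality, and the $\kappa\in\Kato_{1-}(\mms)$ hypothesis (via \autoref{Sub:Self-imp}) are essential. A secondary technical nuisance is justifying the truncation/exhaustion needed to pass to $h\equiv 1$ (or to absorb the $\Delta\vert\nabla f\vert^2$ term) in $\sigma$-finite generality without compactness; this is handled by a standard monotone approximation using cutoff functions in $\Test(\mms)$, but must be stated carefully.
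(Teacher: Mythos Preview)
Your proposal follows essentially the same route as the paper: the Bakry--\'Emery self-improvement applied to the measure-valued $\bdGamma_2^{2\kappa}$ from \autoref{Pr:Bakry Emery measures}, optimized over test combinations to produce a pointwise Cauchy--Schwarz bound that is then realized as a tensor via density. The paper carries out your step (ii)--(iii) by applying the chain rule for $\bdGamma_2^{2\kappa}$ (\autoref{Le:Calculus rules}) to a specific polynomial $\varphi(f,g,h)$, then using a partition argument to ``freeze'' the free parameters at the values of $\widetilde{f},\widetilde{g},\widetilde{h}$; this yields a quadratic-in-$\lambda$ inequality of \emph{measures}, and the Cauchy--Schwarz extraction is done via the measure-theoretic \autoref{Le:Measure lemma} rather than at the level of integrals. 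The outcome is the pointwise bound $\big\vert\!\Hess f\big\vert_\HS^2 \le \gamma_2^{2\kappa}(f)$ $\meas$-a.e., which is then integrated.

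There is, however, one point you underestimate. You describe passing to $h\equiv 1$ (equivalently, obtaining the global mass identity $\bdGamma_2^{2\kappa}(f)[\mms]=\int(\Delta f)^2\d\meas-\langle\kappa\mid\vert\nabla f\vert^2\rangle$) as ``handled by a standard monotone approximation using cutoff functions in $\Test(\mms)$''. In the tamed setting this does \emph{not} work: as the paper stresses in \autoref{Re:No global control} and \autoref{Re:RCD cons}, the available exhaustion from \autoref{Le:Approx to id} gives no control on $\Ch(g_n)$, and conservativeness is neither assumed nor a consequence of $\kappa\in\Kato_{1-}(\mms)$. The paper instead proves the finiteness of $\Vert\DELTA^{2\kappa}\vert\nabla f\vert^2\Vert_\TV$ and the mass identity in \autoref{Pr:Fin tot var} by a separate argument using the $\BE_1(\kappa,\infty)$ self-improvement (via \autoref{Le:BE1 lemma} applied to $u=\vert\nabla f\vert$). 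Without this, your step (v) has a genuine gap: you cannot conclude the stated integral inequality from the pointwise one by cutoffs alone.
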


Here $\smash{\vert\cdot\vert_\HS}$ is the pointwise Hilbert--Schmidt-type norm on $\Ell^2((T^*)^{\otimes 2}\mms)$ --- as well as the two-fold tensor product $\Ell^2(T^{\otimes 2}\mms)$ of $\Ell^2(T\mms)$ --- see \autoref{Sub:Tensor products}.

The key ingredient for the proof of \autoref{Th:Hess intro} is \autoref{Le:Extremely key lemma}. It results from a variant of  the famous \emph{self-improvement} technique \cite{bakry1985a}. Here we follow \cite{gigli2018}, see also \cite{erbar2020,  savare2014, sturm2018}. The idea is to replace $f\in\Test(\mms)$ in the taming condition
\begin{align*}
\DELTA^{2\kappa}\frac{\vert \nabla f\vert^2}{2}  - \big\langle\nabla f,\nabla\Delta f\big\rangle\,\meas\geq 0
\end{align*}
from \autoref{Pr:Bakry Emery measures} by a  polynomial in appropriate test functions. By  optimizing over the coefficients, \autoref{Th:Hess intro} follows by integrating the resulting inequality
\begin{align*}
\DELTA^{2\kappa}\frac{\vert\nabla f\vert^2}{2} - \big\langle\nabla f,\nabla\Delta f\big\rangle\,\meas \geq \big\vert\!\Hess f\big\vert_\HS^2\,\meas.
\end{align*}

\subsubsection*{Ricci curvature and second fundamental form} The second main result of our work is the  existence of the named measure-valued \emph{curvature tensors}. Both are defined by \emph{Bochner's identity}.  The latter requires some work to be made sense of at least for the large class $\Reg(T\mms)$ of \emph{regular vector fields}, i.e.~all linear combinations of elements of the form $X := g\,\nabla f\in\Ell^2(T\mms)$,  $g\in\Test(\mms)\cup\R\,\One_\mms$ and $f\in\Test(\mms)$, see \autoref{Sub:Test objects}. (It is generally larger than the one of \emph{test vector fields} $\Test(T\mms)$ considered in \cite{gigli2018}.) Such $X$, first, obey $\smash{\vert X\vert^2\in\Dom(\DELTA^{2\kappa})}$ by \autoref{Le:Pre.Bochner}, second,  have a covariant derivative $\nabla X\in\Ell^2(T^{\otimes 2}\mms)$ by \autoref{Th:Properties W12 TM}, and third, have a $1$-form counterpart $\smash{X^\flat\in\Ell^2(T^*\mms)}$ in the domain of the Hodge Laplacian $\smash{\Hodge}$ by \autoref{Le:Hodge test}. Therefore, for $X\in\Reg(T\mms)$ the  definition
\begin{align*}
\RIC^\kappa(X,X) := \DELTA^{2\kappa}\frac{\vert X\vert^2}{2} + \Hodge X^\flat(X)\,\meas - \big\vert\nabla X\big\vert_\HS^2\,\meas
\end{align*}
makes sense. In fact, a variant of which is  \autoref{Th:Ricci measure}, we have the following. 

\begin{theorem}\label{Th:RIC intro} The previous map $\RIC^\kappa$ extends continuously to the closure $\smash{H_\sharp^{1,2}(T\mms)}$ of $\Reg(T\mms)$ w.r.t.~an appropriate $H^{1,2}$-norm, see \autoref{Def:Hsharp}, with values in the space of Borel measures on $\mms$ with finite total variation charging no $\Ch$-polar sets.
\end{theorem}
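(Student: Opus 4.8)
\emph{Plan.} The statement is an instance of the standard pattern ``define a tensorial object on a dense class of regular vector fields, show it is a finite signed measure with a quantitative bound, then extend it by continuity'', following the blueprint of \cite{gigli2018} adapted to the presence of the taming measure $\kappa$. I would proceed in three steps.

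\textbf{Step 1: $\RIC^\kappa(X,X)$ is a finite signed Borel measure charging no $\Ch$-polar set, for every $X\in\Reg(T\mms)$.} By \autoref{Le:Pre.Bochner}, $\vert X\vert^2\in\Dom(\DELTA^{2\kappa})$, so $\DELTA^{2\kappa}\tfrac{\vert X\vert^2}{2}$ is a signed Borel measure charging no $\Ch$-polar set; its total variation is finite (cf.\ \autoref{Le:Fin tot var}), the singular part being controlled by the surface-like part of $\kappa$ paired with the bounded, $\vert\kappa\vert$-integrable function $\widetilde{\vert X\vert^2}$, using $\vert X\vert^2\in\F\cap\Ell^\infty(\mms)$. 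By \autoref{Le:Hodge test}, $X^\flat\in\Dom(\Hodge)$, hence $\Hodge X^\flat\in\Ell^2(T^*\mms)$ and $\Hodge X^\flat(X)\in\Ell^1(\mms)$; by \autoref{Th:Properties W12 TM}, $\nabla X\in\Ell^2(T^{\otimes 2}\mms)$, hence $\vert\nabla X\vert_\HS^2\in\Ell^1(\mms)$. Thus all three terms in the definition of $\RIC^\kappa(X,X)$ are finite signed Borel measures charging no $\Ch$-polar set, and so is their sum.

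\textbf{Step 2: nonnegativity and the quantitative bound on $\Reg(T\mms)$.} This is the analytic heart. I would run the self-improvement argument that produced \autoref{Th:Hess intro}, i.e.\ the vector-valued refinement of \autoref{Le:Extremely key lemma}: substitute into the improved taming inequality
\[
\DELTA^{2\kappa}\frac{\vert\nabla f\vert^2}{2}-\big\langle\nabla f,\nabla\Delta f\big\rangle\,\meas\ \geq\ \big\vert\!\Hess f\big\vert_\HS^2\,\meas
\]
polynomials in finitely many functions of $\Test(\mms)$, optimize over the coefficients, and pass to linear combinations; this should yield, for every $X\in\Reg(T\mms)$, the measure-valued Bochner inequality $\RIC^\kappa(X,X)\geq 0$. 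Testing this against $\One_\mms$ --- which requires an exhaustion/cut-off argument on the possibly non-compact, infinite base space, using that constants lie in the extended domain and that $\widetilde{\vert X\vert^2}$ is $\vert\kappa\vert$-integrable --- would give
\[
\RIC^\kappa(X,X)(\mms)\ \leq\ -\!\int_\mms\widetilde{\vert X\vert^2}\,\d\kappa+\int_\mms\Hodge X^\flat(X)\,\d\meas-\int_\mms\big\vert\nabla X\big\vert_\HS^2\,\d\meas,
\]
whence, by Cauchy--Schwarz and the Kato estimate for $\kappa$, $\vert\RIC^\kappa(X,X)\vert(\mms)=\RIC^\kappa(X,X)(\mms)\leq C\,\Vert X\Vert_{H^{1,2}_\sharp}^2$, where $\Vert\cdot\Vert_{H^{1,2}_\sharp}$ is the norm of \autoref{Def:Hsharp}, which is precisely tailored so that --- together with the $\Ell^2$-control of $X$ and $\nabla X$ --- this bound holds.

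\textbf{Step 3: continuous extension.} By construction $\Reg(T\mms)$ is dense in $H^{1,2}_\sharp(T\mms)$, and the finite signed Borel measures on $\mms$ form a Banach space under the total variation norm. The map $X\mapsto\RIC^\kappa(X,X)$ is $\R$-quadratic on the linear space $\Reg(T\mms)$; polarization gives $\RIC^\kappa(X,X)-\RIC^\kappa(Y,Y)=\RIC^\kappa(X-Y,X+Y)$, and since $\RIC^\kappa(Z,Z)\geq 0$ for all $Z\in\Reg(T\mms)$ (Step 2), the discriminant inequality applied on each Borel set, summed over partitions, yields the measure-valued Cauchy--Schwarz bound $\vert\RIC^\kappa(X-Y,X+Y)\vert(\mms)\leq\vert\RIC^\kappa(X-Y,X-Y)\vert(\mms)^{1/2}\,\vert\RIC^\kappa(X+Y,X+Y)\vert(\mms)^{1/2}\leq C\,\Vert X-Y\Vert_{H^{1,2}_\sharp}\Vert X+Y\Vert_{H^{1,2}_\sharp}$. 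Hence $X\mapsto\RIC^\kappa(X,X)$ is uniformly continuous on bounded subsets of $\Reg(T\mms)$ and extends uniquely to a continuous map on $H^{1,2}_\sharp(T\mms)$. Finally, finiteness of the total variation, the property of charging no $\Ch$-polar set, and nonnegativity are each preserved under total variation convergence, so the extension takes values in the asserted class (and is in fact still nonnegative, which is needed for \autoref{Th:Ricci measure}). I expect Step 2 to be the main obstacle: executing the self-improvement for genuinely \emph{regular} (not merely test) vector fields, and controlling the singular part of $\DELTA^{2\kappa}\tfrac{\vert X\vert^2}{2}$ while justifying the $\One_\mms$-testing on an infinite, non-compact space.
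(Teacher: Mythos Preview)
Your overall architecture matches the paper's proof of \autoref{Th:Ricci measure}: define $\RIC^\kappa$ on $\Reg(T\mms)$, establish nonnegativity via \autoref{Le:Pre.Bochner}, bound the total variation, and extend via the measure-valued Cauchy--Schwarz inequality (\autoref{Le:Measure lemma}). Step~3 is essentially the paper's argument.

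The genuine gap is in Step~2, and you correctly flag it as the main obstacle --- but your proposed resolution fails. An exhaustion/cut-off argument to evaluate $\DELTA^{2\kappa}\tfrac{\vert X\vert^2}{2}[\mms]$ does \emph{not} work in this generality: the paper emphasizes (\autoref{Re:No global control}, \autoref{Re:RCD cons}) that no global control on $\Ch(g_n)$ for cut-offs $g_n$ is available, and conservativeness is neither assumed nor a consequence of $\kappa\in\Kato_{1-}(\mms)$. Testing against approximations to $\One_\mms$ leaves an uncontrolled cross term $\Ch(g_n,\vert X\vert^2)$. The paper instead obtains both the finiteness of $\Vert\DELTA^{2\kappa}\vert X\vert^2\Vert_\TV$ and the mass identity $\DELTA^{2\kappa}\vert X\vert^2[\mms]=-2\,\big\langle\kappa\,\big\vert\,\vert X\vert^2\big\rangle$ from the $\BE_1(\kappa,\infty)$ self-improvement: see \autoref{Pr:Fin tot var} for gradients and \autoref{Cor:Kappa X^2 identity} (via \autoref{Le:Epsilon lemma} with $q=1$) for general $X\in\Reg(T\mms)$. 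The introduction flags this as a ``seemingly new'' argument precisely because the $\RCD$ cut-off route is unavailable.

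A second point you gloss over: the assertion that $\Vert\cdot\Vert_{H_\sharp^{1,2}(T\mms)}$ gives ``$\Ell^2$-control of $\nabla X$'' is exactly \autoref{Le:Inclusion}, which is itself nontrivial --- it is derived by integrating \autoref{Le:Pre.Bochner}, applying \autoref{Cor:Kappa X^2 identity}, and then absorbing the Kato term via the form bound $\rho'<1$ together with \autoref{Le:Kato inequality}. Without this inclusion, the form bound only controls $\big\langle\kappa^-\,\big\vert\,\vert X\vert^2\big\rangle$ by the \emph{covariant} quantity $\Ch_\cov(X)$, not by the contravariant $H_\sharp^{1,2}$-norm. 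The paper's actual bound reads $\Vert\RIC^\kappa(X,X)\Vert_\TV\leq\Ch_\con(X^\flat)-\big\langle\kappa\,\big\vert\,\vert X\vert^2\big\rangle$, and its $H_\sharp^{1,2}$-continuity then follows from \autoref{Le:Inclusion} and \autoref{Cor:Kappa cont}.
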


The nonnegativity implicitly asserted therein comes precisely from the taming condition. Abusing terminology, the map $\smash{\RIC^\kappa}$ will be called \emph{$\kappa$-Ricci measure}. 

Finally, in \autoref{Sub:Curv tensors from RIC} we separate the measure $\kappa$ from $\RIC^\kappa$. To this aim, in \autoref{Le:Pre.Bochner} and \autoref{Le:Fin tot var} we  discover that $\nabla\vert X\vert^2\in\Dom(\DIV)$ together with the relation $\DIV \nabla \vert X\vert^2 = \DELTA^{2\kappa}\vert X\vert + 2\,\vert X\vert_\sim^2\,\kappa$ ---  for an $\Ch$-quasi-continuous $\meas$-version $\vert X\vert_\sim^2$ of $\vert X\vert^2$ --- for every $X\in\Reg(T\mms)$, linking the operator $\DIV$ to the $\kappa$-Ricci measure $\RIC^\kappa$ (recall  \ref{La:RPF2} above). Based on this observation we then set
\begin{align}\label{Eq:RIC formula intro}
\begin{split}
\RIC(X,X) &:= \RIC^\kappa(X,X) + \vert X\vert^2_\sim\,\kappa\\
&\textcolor{white}{:}= \DIV\nabla \frac{\vert X\vert^2}{2}  + \Hodge X^\flat(X)\,\meas - \big\vert \nabla X\big\vert_\HS^2\,\meas
\end{split}
\end{align}
for $X\in\Reg(T\mms)$, which in turn allows us to define, even for $\smash{X\in H_\sharp^{1,2}(T\mms)}$,
\begin{itemize}
\item the \emph{Ricci curvature} of $\mms$, see \autoref{Def:Ric}, by $\RIC_\ll(X,X)$, and
\item the \emph{second fundamental form} of $\mms$, see \autoref{Def:Second fund form}, by 
\begin{align*}
\II(X,X) := \RIC_\perp(X,X).
\end{align*}
\end{itemize}

These definitions have serious smooth evidence thanks to --- and in fact have been partly inspired by --- the work \cite{han2019}. Therein, it is shown that on any  Riemannian manifold $\mms$ with boundary, the map $\RIC$  similarly defined according to \cite{gigli2018}, hence to our work as well, satisfies
\begin{align*}
\RIC(\nabla f,\nabla f) = \Ric(\nabla f,\nabla f)\,\vol + \mathbb{I}(\nabla f,\nabla f)\,\surf
\end{align*}
for every $\smash{f\in\Cont_\comp^\infty(\mms)}$ subject to \eqref{Eq:Vanishing normal cpnts intro}. Moreover, on $\RCD(K,\infty)$ spaces, $K\in\R$, according to \cite{gigli2018} we particularly have $\II(X,X)\geq 0$ for every $\smash{X\in H_\sharp^{1,2}(T\mms)}$, which is a way of analytically stating \emph{convexity} of $\mms$, see \autoref{Re:Conv RCD c}.  In general, it should be noted that $\II$ may be also concentrated on interior singularities though, cf.~\autoref{Re:Also conc on interior sing}. A further novel, but natural suggestion of our treatise is to interpret the  Ricci curvature of tamed spaces as something $\meas$-absolutely continuous.

\subsubsection*{Other interesting results} Our treatise comes with further beautiful results that are worth mentioning here and hold in great generality. Examples are
\begin{itemize}
\item metric compatibility of the covariant derivative $\nabla$ w.r.t.~the ``Riemannian metric'' $\langle\cdot,\cdot\rangle$,  \autoref{Pr:Compatibility}, and
\item a nonsmooth analogue of the Hodge theorem,  \autoref{Th:Hodge thm}.
\end{itemize}
Moreover, we address various points that have not been treated in \cite{gigli2018}, but rather initiated in \cite{braun2020, han2018}, among others
\begin{itemize}
\item semigroup domination of the heat flow on vector fields  w.r.t.~the functional one, \autoref{Th:HSU Bochner}, as well as of the heat flow on $1$-forms w.r.t.~the Schrödinger semigroup with potential $\kappa$, \autoref{Th:HSU forms},
\item spectral bottom estimates for the Bochner Laplacian,  \autoref{Cor:Spectra cov}, and the Hodge Laplacian,  \autoref{Cor:Spec bds},
\item a vector version of the measure-valued $q$-Bochner inequality \cite{braun2021}, $q\in [1,2]$,  \autoref{Th:Vector Bochner}, and
\item boundedness of the ``local dimension'' of $\Ell^2(T\mms)$ by $\lfloor N\rfloor$, \autoref{Pr:Upper bound local dimension}, and the existence of an extension of $\RIC_\ll$ to all of $\Ell^2(T\mms)$ on tamed spaces with upper dimension bound $N\in [1,\infty)$,  \autoref{Pr:Ric extension}.
\end{itemize}

\subsubsection*{Comments on the extended Kato condition} Finally, we comment on the assumption $\kappa\in\smash{\Kato_{1-}(\mms)}$ and technical issues, compared to \cite{gigli2018}, which arise later. 

In \cite[Cor.~3.3.9, Cor.~3.6.4]{gigli2018}, the following ``integrated Bochner inequality'' for $\RCD(K,\infty)$ spaces, $K\in\R$, is derived for suitable $X\in \Ell^2(T\mms)$:
\begin{align}\label{Eq:RCD control intro}
\int_\mms \big\vert\nabla X\big\vert_\HS^2\d\meas \leq \int_\mms \vert\rmd X^\flat\vert^2 \d\meas + \int_\mms \vert\delta X^\flat\vert^2\d\meas - K\int_\mms \vert X\vert^2\d\meas.
\end{align}
Here $\delta$ is the codifferential operator. The interpretation of \eqref{Eq:RCD control intro} is that \emph{an appropriate first order norm on $1$-forms controls the first order topology on vector fields qualitatively and quantitatively}. Indeed, first, for gradient vector fields, by heat flow regularization \eqref{Eq:RCD control intro} implies that $\Dom(\Delta)\subset\Dom(\Hess)$, and \eqref{Eq:RCD control intro} is stable under this procedure. Second, \eqref{Eq:RCD control intro} is crucial in the $\RCD$ version of \autoref{Th:RIC intro} \cite[Thm.~3.6.7]{gigli2018}, for  extending \eqref{Eq:RIC formula intro} beyond $\Test(T\mms)$  requires continuous dependency of the covariant term w.r.t.~a contravariant norm. In both cases, the curvature term  is clearly continuous, even in $\F$ or $\smash{\Ell^2(T\mms)}$, respectively.

The latter is  wrong in our situation: already on a compact Riemannian manifold $\mms$ with boundary and $\mathcall{l} \neq 0$, the pairing $\smash{\big\langle \kappa\,\big\vert\,\vert X\vert^2\big\rangle}$ according to \eqref{Eq:Kappa meas intro} does not even make sense for general $X\in\Ell^2(T\mms)$. Hence, we will have to deal with two correlated problems: controlling our calculus by stronger continuity properties of $X\mapsto \big\langle \kappa\,\big\vert\,\vert X\vert^2\big\rangle$, but also vice versa. (The fact that certain first order norms on $1$-forms bound covariant ones on compact Riemannian manifolds with boundary, a classical result by Gaffney \cite{schwarz1995}, see \autoref{Re:Gaffney}, is already nontrivial.) 

The key property of $\kappa\in\Kato_{1-}(\mms)$ in this direction is that $\smash{f \mapsto \big\langle \kappa\,\big\vert\,f^2\big\rangle}$ is (well-defined and) $\Ch$-form bounded on $\F$ with form bound  smaller than $1$, see \autoref{Le:Form boundedness}. That is, there exist $\rho'\in [0,1)$ and $\alpha'\in\R$ such that for every $f\in\F$,
\begin{align}\label{Eq:Form boundedness intro}
\big\vert\big\langle\kappa\,\big\vert\,f^2\big\rangle\big\vert \leq \rho'\int_\mms \vert\nabla f\vert^2\d\meas + \alpha'\int_\mms f^2\d\meas.
\end{align}

Now, from \eqref{Eq:Form boundedness intro}, we first note that the pairing $\smash{\big\langle\kappa\,\big\vert\,\vert X\vert^2\big\rangle}$ is well-defined for all $X$ in a covariant first order space termed $\smash{H^{1,2}(T\mms)}$, see \autoref{Def:H12 vfs}, since for every $X\in H^{1,2}(T\mms)$ we have $\vert X\vert\in \F$ by \emph{Kato's inequality}
\begin{align}\label{Eq:Kato inequ intro}
\big\vert\nabla \vert X\vert\big\vert\leq \big\vert \nabla X\big\vert_\HS\quad\meas\text{-a.e.},
\end{align}
as proven in \autoref{Le:Kato inequality}. The latter is essentially a consequence of metric compatibility of $\nabla$, cf.~\autoref{Pr:Compatibility}, and Cauchy--Schwarz's inequality. In particular, combining \eqref{Eq:Form boundedness intro} with \eqref{Eq:Kato inequ intro} will imply that $\smash{X\mapsto \big\langle \kappa\,\big\vert\,\vert X\vert^2\big\rangle}$ is even continuous in $H^{1,2}(T\mms)$, see \autoref{Cor:Kappa cont}. For completeness, we also mention here that Kato's inequality is  useful at other places as well, e.g.~in proving the above mentioned semigroup domination results. On $\RCD(K,\infty)$ spaces, $K\in\R$ --- on which \eqref{Eq:Kato inequ intro} has been proven in \cite{debin2021} in order to find ``quasi-continuous representatives'' of vector fields --- this has been observed in \cite{braun2020}.

However, extending the inequality 
\begin{align}\label{Eq:Cov control intro}
\int_\mms \big\vert\nabla X\big\vert_\HS^2\d\meas \leq \int_\mms\vert\rmd X^\flat\vert^2\d\meas + \int_\mms \vert\delta X^\flat\vert^2\d\meas - \big\langle\kappa\,\big\vert \,\vert X\vert^2\big\rangle
\end{align}
similar to \eqref{Eq:RCD control intro}, see \autoref{Le:Inclusion}, from $X\in \Reg(T\mms)$ --- for which it is valid by many careful computations, see \autoref{Le:Extremely key lemma} and \autoref{Le:Pre.Bochner},  and the $\BE_1(\kappa,\infty)$ condition, see \autoref{Pr:Fin tot var} and \autoref{Cor:Kappa X^2 identity} --- continuously to more general $\smash{X\in H_\sharp^{1,2}(T\mms)}$ requires better control on the curvature term. Here is where the form bound $\rho' \in [0, 1)$ comes into play. Indeed, using \eqref{Eq:Form boundedness intro} and \eqref{Eq:Kato inequ intro}, 
\begin{align*}
-\big\langle\kappa\,\big\vert\,\vert X\vert^2\big\rangle  \leq \rho'\int_\mms \big\vert\nabla X\big\vert_\HS^2\d\meas + \alpha'\int_\mms \vert X\vert^2\d\meas,
\end{align*}
and this can be merged with \eqref{Eq:Cov control intro} to obtain the desired continuous control of the covariant by a contravariant first order norm. In fact, this kind of argumentation, without already having Kato's inequality at our disposal, will also be pursued in our proof that $\Dom(\Delta)\subset \Dom(\Hess)$, see \autoref{Cor:Dom(Delta) subset W22}.

In view of this key argument, we believe that the extended Kato framework is somewhat maximal possible for which a second order calculus, at least with the presented diversity of higher order differential operators, as below can be developed.

Lastly, it is worth to spend few words on a different technical issue. Namely, to continuously extend $\RIC^\kappa$ in \autoref{Th:RIC intro} w.r.t.~a meaningful target topology, we need to know in advance that $\smash{\DELTA^{2\kappa}\vert X\vert^2}$ has finite total variation for $X\in\Reg(T\mms)$. Even for gradient vector fields, this is not discussed in \cite{erbar2020}. On the other hand, the corresponding $\RCD$ space result \cite[Lem.~2.6]{savare2014} uses their stochastic completeness  \cite{ambrosio2014a}. In our work, the latter is neither assumed nor generally known to be a consequence of the condition $\kappa\in\Kato_{1-}(\mms)$. Compare with \autoref{Subsub:Intr compl}. In \autoref{Pr:Fin tot var}, we give an alternative, seemingly new proof of the above finiteness which relies instead on the $\BE_1(\kappa,\infty)$ condition.

\subsubsection*{Organization} \autoref{Pt:I} deals with the first order differential calculus over $\mms$. We first recapitulate basic notions about Dirichlet forms and $\Ell^\infty$-modules in \autoref{Sec:Prel Dir...}. Then appropriate analogs of differential $1$-forms and vector fields are introduced in \autoref{Sec:Cotangent module} and  \autoref{Ch:Tangent module}, respectively. The latter contains a thorough discussion on functional and measure-valued divergences in \autoref{Sec:Divergences}.

In \autoref{Pt:II}, we study elements of a second order calculus on tamed spaces, whose properties are recorded in \autoref{Sub:Tamed spaces}. We go on with giving a meaning to the Hessian in \autoref{Sec:Hessian}, to the covariant derivative in \autoref{Ch:Covariant der}, and exterior differential and Hodge theory in \autoref{Ch:Ext derivative}. \autoref{Sec:Curvature} contains the appropriate existence of a Ricci curvature and a second fundamental form.

\subsubsection*{Acknowledgments} The author warmly thanks Lorenzo Dello Schiavo, Chiara Rigoni, Christian Rose, Karl-Theodor Sturm and Luca Tamanini for many fruitful and enlightening discussions as well as helpful comments. 

Funding by the European Research Council through the ERC-AdG ``RicciBounds'' is gratefully acknowledged.

\section*{List of main vector spaces}\label{List of notations}

The page numbers refer to the places in the text where the respective objects are introduced first. 

\begingroup
\centering
\begin{longtable}{rl}
$\F$, $\Dom(\Ch)$ & domain of $\Ch$, p.~\pageref{Not:Form domain} \\
$\F_\rme$ & extended domain, p.~\pageref{Def:Extended domain} \\
$\Cont_0(\mms)$ & continuous functions vanishing at $\infty$, p.~\pageref{Not:C_0} \\
$\Dom(\Delta)$ & domain of the Laplacian, p.~\pageref{Sub:Neumann Laplacian} \\
$\Dom(\Delta^{q\kappa})$ & domain of the Schrödinger operator with\\
& potential $q\kappa$, p.~\pageref{Not:Schrödinger operator} \\
$\Dom(\DELTA)$ & domain of the measure-valued Laplacian,   p.~\pageref{Not:Meas val Lapl} \\
$\Dom(\DELTA^{q\kappa})$ & domain of the measure-valued Schrödinger\\
& operator with potential $q\kappa$, p.~\pageref{Not:Measure valued Schr I}, p.~\pageref{Not:Meas valued Schr Ii} \\
$\Dom(\div)$ & domain of the $\Ell^2$-divergence, p.~\pageref{Def:L2 div} \\
$\Dom(\DIV)$ & domain of the measure-valued divergence, p.~\pageref{Def:Measure-valued divergence} \\
$\Dom_\TV(\DIV)$ & domain of the measure-valued divergence with\\
& finite total variation, p.~\pageref{Eq:Dom TV} \\
$\Test(\mms)$ & test functions, p.~\pageref{Not:Test fcts} \\
$\Test_{\Ell^\infty}(\mms)$ & test functions with bounded Laplacian, p.~\pageref{Not:Test fcts Linfty} \\
$\calG$ & $\Ell^1$-functions with $\Ell^1$-differential, p.~\pageref{Def:W11} \\
$\calG_\reg$ & closure of $\calG \cap \Test(\mms)$ in $\calG$, p.~\pageref{Def:H11 functions} \\
$\Dom(\Hess)$ & domain of the Hessian, p.~\pageref{Def:Hess} \\
$\Dom_\reg(\Hess)$ & closure of $\Test(\mms)$ in $\Dom(\Hess)$, p.~\pageref{Def:H22} \\
$\Dom_{2,2,1}(\Hess)$ & $\F$-functions with $\Ell^1$-Hessian, p.~\pageref{Def:W221} \\
$\Ell^2(T^*\mms)$ & cotangent module, p.~\pageref{Def:Cotangent module} \\
$\Ell^2(T\mms)$ & tangent module, p.~\pageref{Def:Tangent module} \\
$\Ell^0(T^*\mms)$ & $\Ell^0$-module induced by $\Ell^2(T^*\mms)$, p.~\pageref{Not:L0's} \\
$\Ell^0(T\mms)$ & $\Ell^0$-module induced by $\Ell^2(T\mms)$, p.~\pageref{Not:L0's} \\
$\Ell^p(T^*\mms)$ & $\Ell^p$-space induced by $\Ell^0(T^*\mms)$, p.~\pageref{Not:Lp sp.} \\
$\Ell^p(T\mms)$ & $\Ell^p$-space induced by $\Ell^0(T\mms)$, p.~\pageref{Not:Lp sp.} \\
$\Ell^2((T^*)^{\otimes 2}\mms)$ & two-fold tensor product of $\Ell^2(T^*\mms)$, p.~\pageref{Not:L2 tensor pr} \\
$\Ell^2(T^{\otimes 2}\mms)$ & two-fold tensor product of $\Ell^2(T\mms)$, p.~\pageref{Not:L2 tensor pr} \\
$\Ell^2(\Lambda^kT^*\mms)$ & $k$-fold exterior product of $\Ell^2(T^*\mms)$, p.~\pageref{Not:Lp sp ext p} \\
$\Ell^2(T_r^s\mms)$ &  $\Ell^2$-tensor fields of type $(r,s)$, p.~\pageref{Not:L2 tfs rs} \\
$\Reg(T^*\mms)$ & regular $1$-forms, p.~\pageref{Not:reg 1 forms} \\ 
$\Reg(T\mms)$ & regular vector fields, p.~\pageref{Not:reg 1 vfs} \\
$\Test(T^*\mms)$  & test $1$-forms, p.~\pageref{Not:reg 1 forms} \\
$\Test(T\mms)$ & test vector fields, p.~\pageref{Not:reg 1 vfs} \\
$\Reg(\Lambda^kT^*\mms)$ & $k$-fold exterior product of $\Reg(T^*\mms)$, p.~\pageref{Not:reg lambda k} \\
$\Reg(T^{\otimes 2}\mms)$ & two-fold tensor product of $\Reg(T\mms)$, p.~\pageref{Not:Two fold tps} \\
$\Reg(T_s^r\mms)$ & regular $(r,s)$-tensor fields, p.~\pageref{Not:Reg tfields} \\
$\Test(\Lambda^kT^*\mms)$ & $k$-fold exterior product of $\Test(T^*\mms)$, p.~\pageref{Not:test lambda k} \\
$\Test(T^{\otimes 2}\mms)$ & two-fold tensor product of $\Test(T\mms)$, p.~\pageref{Not:Two fold tps} \\
$W^{1,2}(T\mms)$ & $\Ell^2$-vector fields with $\Ell^2$-covariant derivative,  p.~\pageref{Def:Cov der} \\
$H^{1,2}(T\mms)$ & closure of $\Reg(T\mms)$ in $W^{1,2}(T\mms)$, p.~\pageref{Def:H12 vfs} \\
$W^{(2,1)}(T\mms)$ & $\Ell^2$-vector fields with $\Ell^1$-covariant derivative, p.~\pageref{Def:W21} \\
$\Dom(\Bochner)$ & domain of the Bochner Laplacian, p.~\pageref{Def:Bochner Laplacian} \\
$W^{1,2}(T_r^s\mms)$ & $\Ell^2$-tensor fields of type $(r,s)$ with $\Ell^2$-covariant\\
&  derivative, p.~\pageref{Def:Tensor cov der} \\
$\Dom(\rmd^k)$ & domain of the differential in $\Ell^2(\Lambda^kT^*\mms)$, p.~\pageref{Def:Exterior derivative} \\
$\Dom(\delta^k)$ & domain of the codifferential in $\Ell^2(\Lambda^kT^*\mms)$, p.~\pageref{Def:Codifferential} \\
$W^{1,2}(\Lambda^kT^*\mms)$ & intersection of $\Dom(\rmd^k)$ and $\Dom(\delta^k)$, p.~\pageref{Def:W12 forms space} \\
$H^{1,2}(\Lambda^kT^*\mms)$ & closure of $\Reg(\Lambda^kT^*\mms)$ in $W^{1,2}(\Lambda^kT^*\mms)$,  p.~\pageref{Def:H Hodge space} \\
$\Dom(\Hodge)$ & domain of the Hodge Laplacian, p.~\pageref{Def:Hodge Lapl} \\
$\Harm(\Lambda^kT^*\mms)$ & harmonic $k$-forms, p.~\pageref{Def:Hodge Lapl} \\
$\smash{H_\sharp^{1,2}(T\mms)}$ & image of $H^{1,2}(T^*\mms)$ under $\sharp$, p.~\pageref{Def:Hsharp}
\end{longtable}
\endgroup

\part{First order differential structure}\label{Pt:I}

\section{Preliminaries. Dirichlet spaces and module theory}\label{Sec:Prel Dir...}

\subsection{Notations}\label{Sub:Notation} We start with basic  terminologies used all over the paper.

For the remainder of this article, we make the following topological assumption. Compare with \autoref{Re:Lusin}.

\begin{assumption} $(\mms,\tau)$ is a topological Lusin space, i.e.~a continuous injective image of a Polish space, endowed with a $\sigma$-finite Borel measure $\meas$ on $\mms$,   according to \autoref{Subsub:Measure spaces} below, with full topological support.
\end{assumption}

The given topology $\tau$ is considered as understood and does, up to few exceptions, usually not appear in our subsequent notation.

\subsubsection{Measures}\label{Subsub:Measure spaces} All very elementary measure-theoretic  terminologies are agreed upon \cite{bogachev2007a, bogachev2007b, halmos1950}. More specific points are shortly addressed now.

The Borel $\sigma$-algebra induced by $\tau$ is denoted by $\Borel(\mms)$, while its Carathéodory completion w.r.t.~a Borel measure $\mu$ on $\mms$ is denoted by $\Borel^\mu(\mms)$. (If not explicitly stated otherwise, we identify certain subsets of $\mms$ with their equivalence classes in $\Borel^\mu(\mms)$.) The \emph{support} of every Borel measure $\mu$ on $\mms$ is defined \cite[Sec.~V.1]{ma1992} and denoted by $\supp\mu$. By $\smash{\Meas_\fin^+(\mms)}$, $\smash{\Meas^+_\sigmafin(\mms)}$, $\smash{\Meas^{\pm}_\fin(\mms)}$ and $\smash{\Meas_\sigmafin^{\pm}(\mms)}$, we intend the spaces of Borel measures on $\mms$ which are finite, $\sigma$-finite, signed and finite, as well as signed and $\sigma$-finite, respectively. Here, $\sigma$-finiteness of $\smash{\mu\in\Meas_\sigma^\pm(\mms)}$ refers to the existence of an increasing sequence of \emph{open} subsets of $\mms$ on whose elements $\mu$ is finite. The subscripts $\rmR$, such as in $\smash{\Meas_\finR^+(\mms)}$, or $\Ch$, such as in $\smash{\Meas_\fin^+(\mms)_\Ch}$, indicate the respective subclass of (signed) measures which are Radon or do not charge $\Ch$-polar sets according to \autoref{Sub:Quasi-notions}. 

Given any $\mu\in \Meas_\sigmafin^\pm(\mms)$, there exist unique $\mu^+,\mu^-\in \Meas_\sigmafin^+(\mms)$ such that
\begin{align*}
\mu = \mu^+ - \mu^-.
\end{align*}
This is the \emph{Jordan decomposition} of $\mu$ into its positive part $\smash{\mu^+}$ and its negative part $\smash{\mu^-}$. At least one of the measures $\mu^+$ or $\mu^-$ is finite --- hence $\mu^+-\mu^-$ is well-defined ---  while they are both finite if $\smash{\mu\in \Meas_\fin^\pm(\mms)}$ \cite[Thm.~29.B]{halmos1950}. 

The \emph{total variation} $\vert\mu\vert\in \smash{\Meas_\sigmafin^+(\mms)}$ of $\mu\in \smash{\Meas_\sigmafin^\pm(\mms)}$ is defined by
\begin{align*}
	\vert \mu\vert := \mu^+ + \mu^-.
\end{align*}
This gives rise to the \emph{total variation norm}
\begin{align*}
\Vert\mu\Vert_\TV := \vert\mu\vert[\mms]
\end{align*}
of $\mu$. Note that $(\Meas_\fin^\pm(\mms),\Vert \cdot\Vert_\TV)$ is indeed a normed vector space. 

In this notation, given a not necessarily nonnegative $\nu \in\Meas_\sigmafin^\pm(\mms)$, we  write $\nu\ll\mu$ if $\nu \ll \vert \mu\vert$, or equivalently $\vert\nu \vert \ll \vert\mu\vert$ \cite[Thm.~30.A]{halmos1950}, for \emph{absolute continuity}. Note that if $\mu,\nu\in \Meas_\fin^+(\mms)$ are \emph{singular} to each other, written $\mu\perp\nu$, then
\begin{align}\label{Eq:TV under singularity}
\vert \mu + \nu\vert = \vert\mu \vert + \vert\nu\vert.
\end{align}

Lastly, recall that given any $\smash{\mu,\nu\in \Meas_\sigma^{\pm}(\mms)}$, $\mu$ admits a \emph{Lebesgue decomposition} w.r.t.~$\nu$ \cite[Thm.~32.C]{halmos1950} --- that is, there exist unique $\smash{\mu_\ll,\mu_\perp\in\Meas_\sigma^\pm(\mms)}$ whose sum is well-defined and with the property that 
\begin{align*}
\mu_\ll &\ll \nu,\\
\mu_\perp &\perp\nu,\\
\mu &= \mu_\ll + \mu_\perp.
\end{align*}

\subsubsection{Functions}\label{Subsub:Fcts} Let $\Ell_0(\mms)$ and $\Ell_\infty(\mms)$ be the spaces of real-valued and bounded real-valued $\Borel(\mms)$-measurable functions defined everywhere on $\mms$. Write $\SF(\mms)$ for the space of \emph{simple functions}, i.e.~of those $f\in\Ell_\infty(\mms)$ with finite range.

Let $\mu$ be a Borel measure on $\mms$. Let $\smash{\Ell^0(\mms,\mu)}$ be the real vector space of equivalence classes of elements in $\smash{\Ell_0(\mms)}$ w.r.t.~$\mu$-a.e.~equality. We mostly make neither notational nor descriptional distinction between ($\mu$-a.e.~properties of) functions $f\in\Ell_0(\mms,\mu)$ and (properties of) its equivalence class $[f]_\mu \in\Ell^0(\mms,\mu)$. In the only case where this difference matters, see \autoref{Sub:Quasi-notions} below, we use the distinguished notations $\smash{\widetilde{f}}$ or $f_\sim$.

The \emph{support} of $f\in\Ell^0(\mms,\mu)$ is defined as
\begin{align*}
\supp_\mu f := \supp(\vert f\vert\,\mu)
\end{align*}
and we briefly write $\supp$ for $\supp_\meas$. Let $\smash{\Ell^0_\comp(\mms,\mu)}$ be the class of $f\in \Ell^0(\mms,\mu)$ such that $\supp_\mu f$ is compact. We write $\Ell^0(\mms)$ for $\Ell^0(\mms,\meas)$.

Given $p\in [1,\infty]$ we denote the (local) $p$-th order Lebesgue spaces w.r.t.~$\mu$ by $\Ell^p(\mms,\mu)$, with the usual norm $\smash{\Vert\cdot \Vert_{\Ell^p(\mms,\mu)}}$, and $\smash{\Ell^p_\loc(\mms,\mu)}$. We always abbreviate $\smash{\Ell^p(\mms,\meas)}$ and $\smash{\Ell^p_\loc(\mms,\meas)}$ by $\smash{\Ell^p(\mms)}$ and $\smash{\Ell^p_\loc(\mms)}$, respectively. 

If $(\mms,\met)$ is a metric space --- in which case we always assume that $\tau$ coincides with the topology induced by $\met$ --- we write $\Lip(\mms)$ for the space of real-valued Lipschitz functions w.r.t.~$\met$. $\Lip_\bs(\mms)$ is the class of boundedly supported elements in $\Lip(\mms)$,  i.e.~which vanish identically outside a ball in $\mms$.

Lastly, the $\mu$\emph{-essential supremum} of a family $(f_i)_{i\in I}$ in $\Ell^0(\mms,\mu)$ with arbitrary, not necessarily countable index set $I$ is the minimal function $f\in\Ell^0(\mms,\mu)$ such that $f\geq f_i$ $\mu$-a.e.~for every $i\in I$. It exists and is unique. We write 
\begin{align*}
\mu\text{-}\!\esssup\{ f_i : i\in I\} &:= f,\\
\mu\text{-}\!\essinf\{f_i : i \in I\} &:= - \mu\text{-}\!\esssup\{- f_i : i\in I\}.
\end{align*}
while the prefixes ``$\mu$-'' are dropped from these notations  if $\mu = \meas$.

\subsection{Riemannian manifolds with boundary}\label{Sub:Riem mflds} One family of guiding examples for our constructions pursued from \autoref{Sec:Cotangent module} on are Riemannian manifolds, possibly noncompact and possibly with boundary. Here we collect basic terminologies on these. See \cite{lee1997, lee2018, petersen2006, schwarz1995} for further reading.

\subsubsection{Setting}\label{Sub:Setting} Unless explicitly stated otherwise, any Riemannian manifold $\mms$ is understood to have topological dimension $d\geq 2$ and to be smooth, i.e.~to be locally homeomorphic to $\smash{\R^d}$ or $\smash{\R^{d-1}\times [0,\infty)}$ depending on whether $\partial\mms = \emptyset$ or $\partial\mms \neq \emptyset$ and with smooth transition functions. Recall that a function $f\colon \mms\to\R$ is smooth \cite[Ch.~1]{lee2018} if $\smash{f\circ\sfx^{-1}\colon \sfx(U) \to \R}$ is smooth in the ordinary Euclidean sense for every  chart $(U,\sfx)$ on $\mms$ --- if $(U,\sfx)$ is a boundary chart, i.e.~$U\cap\partial\mms \neq \emptyset$, this means that $\smash{f\circ\sfx^{-1}}$ has a smooth extension to an open subset of $\smash{\R^d}$. For simplicity, any Riemannian manifold is assumed to be connected and (metrically) complete. Set
\begin{align*}
\mms^\circ := \mms \setminus \partial\mms.
\end{align*}
Denote by $\vol \in \smash{\Meas_{\sigmaR}^+(\mms)}$ the Riemannian volume measure induced by the metric tensor $\langle\cdot,\cdot\rangle$. Let $\smash{\surf\in\Meas_\sigmaR^+(\partial\mms)}$ be the surface measure on $\partial\mms$. 

If $\partial\mms \neq \emptyset$, then $\partial\mms$ is a smooth codimension $1$ submanifold of $\mms$. It  naturally becomes Riemannian when endowed with the pullback metric
\begin{align*}
\langle\cdot,\cdot\rangle_\jmath := \jmath^*\langle\cdot,\cdot\rangle
\end{align*}
under the natural inclusion $\jmath\colon \partial\mms\to\mms$. The map $\jmath$ induces a natural inclusion $\smash{\rmd\jmath\colon T\partial\mms\to T\mms\big\vert_{\partial\mms}}$ which is \emph{not} surjective. In particular, the vector bundles $T\partial\mms$ and $\smash{T\mms\big\vert_{\partial \mms}}$ do \emph{not} coincide. Rather, $T\partial\mms$ is identifiable with the codimension $1$ subbundle $\rmd\jmath(T\partial\mms)$ of $\smash{T\mms\big\vert_{\partial\mms}}$.

\subsubsection{Sobolev spaces on vector bundles} Denote the space of smooth sections of a real vector bundle $\boldsymbol{\mathrm{F}}$ over $\mms$ (or $\partial\mms$) by $\Gamma(\boldsymbol{\mathrm{F}})$. (This is a slight abuse of notation since we also denote carré du champs associated to Dirichlet energies by $\Gamma$, see \autoref{Sub:Carré}. However, it will always be clear from the context which meaning is intended.) With a connection $\nabla$ on $\boldsymbol{\mathrm{F}}$ --- always chosen to be the Levi-Civita one if $\boldsymbol{\mathrm{F}} := T\mms$ --- one can define Sobolev spaces $W^{k,p}(\boldsymbol{\mathrm{F}})$, $k\in\N_0$ and $p\in [1,\infty)$, in various ways, e.g.~by completing $\Gamma_\comp(\boldsymbol{\mathrm{F}})$ w.r.t.~an appropriate norm w.r.t.~$\vol$ (or $\surf$), or in a weak sense \cite[Sec.~1.3]{schwarz1995}. The ``natural'' approaches all coincide if $\mms$ is compact \cite[Thm.~1.3.6]{schwarz1995}, but for noncompact $\mms$, without further geometrical restrictions ambiguities may occur \cite{eichhorn1988}. In our work, it is always either  clear from the context which definition is intended, or precise meanings are simply irrelevant when only the compact setting is considered.

Throughout, it is useful to keep in mind the following \emph{trace theorem} for compact $\mms$ \cite[Thm.~1.3.7]{schwarz1995}. If $\mms$ is noncompact, it only holds true locally \cite[p.~39]{schwarz1995}. 

\begin{proposition}\label{Pr:Trace thm} For every $k\in\N_0$ and every $p\in [1,\infty)$, the natural restriction map $\smash{\cdot\big\vert_{\partial\mms}}\colon \Gamma(\boldsymbol{\mathrm{F}})\to\smash{\Gamma(\boldsymbol{\mathrm{F}}\big\vert_{\partial\mms})}$ in the spirit of \autoref{Sub:Nml cpnts} below extends to a continuous --- in fact, compact --- map from $W^{k+1,p}(\boldsymbol{\mathrm{F}})$ to $\smash{W^{k,p}(\boldsymbol{\mathrm{F}}\big\vert_{\partial\mms})}$.
\end{proposition}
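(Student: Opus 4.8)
The plan is to reduce the statement to the classical Euclidean half-space trace inequality by a finite-atlas argument, exploiting compactness of $\mms$ both to make all coefficient bounds uniform and, at the end, to deduce compactness of the extended map from the Rellich--Kondrachov theorem on $\partial\mms$. First I would fix a finite atlas of $\mms$ made of interior charts covering $\mms^\circ$ together with boundary charts $(U_i,\sfx_i)$, $i=1,\dots,N$, satisfying $\sfx_i(U_i)\subset\R^{d-1}\times[0,\infty)$ and $\sfx_i(U_i\cap\partial\mms)\subset\R^{d-1}\times\{0\}$ and chosen fine enough that $\boldsymbol{\mathrm{F}}$ is trivial over each $U_i$; let $\{\chi_i\}$ be a subordinate partition of unity. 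Since $s\mapsto s\big\vert_{\partial\mms}$ is local and $(\chi_i s)\big\vert_{\partial\mms}=0$ for the interior charts, it suffices to bound $\Vert(\chi_i s)\big\vert_{\partial\mms}\Vert_{W^{k,p}}$ by $\Vert s\Vert_{W^{k+1,p}(\boldsymbol{\mathrm{F}})}$ for each boundary chart. Over $U_i$, via a trivialization $\boldsymbol{\mathrm{F}}\big\vert_{U_i}\cong U_i\times\R^r$ with $r:=\operatorname{rank}\boldsymbol{\mathrm{F}}$, a section $s$ becomes an $\R^r$-valued function $u_i$ on $\sfx_i(U_i)$ and $\nabla$ reads $\rmd+A_i$ for a smooth matrix $A_i$ of $1$-forms; since $\overline{U_i}$ is compact, $A_i$ and its derivatives up to order $k$ are bounded, so iterated covariant derivatives of $s$ up to order $k+1$ are comparable in this trivialization to ordinary derivatives of $u_i$ up to order $k+1$, with constants depending only on the atlas. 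This reduces everything, componentwise, to the scalar half-space case.

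There I would invoke the classical trace estimate: for $v\in\Cont_\comp^\infty(\overline{\R^{d-1}\times[0,\infty)})$ one has $v(x',0)=-\int_0^\infty\partial_{x_d}v(x',t)\d t$, whence, after multiplying by $\vert v\vert^{p-1}$ and using Hölder's inequality, $\Vert v(\cdot,0)\Vert_{\Ell^p(\R^{d-1})}\leq C\,\Vert v\Vert_{W^{1,p}}$; applying this to the tangential derivatives $\partial_{x'}^\alpha v$ with $\vert\alpha\vert\leq k$ --- which commute with restriction to $\{x_d=0\}$, the single integration along the normal direction consuming exactly one more derivative --- yields $\Vert v(\cdot,0)\Vert_{W^{k,p}(\R^{d-1})}\leq C\,\Vert v\Vert_{W^{k+1,p}}$. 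Summing the localized estimates over the finite atlas gives $\Vert s\big\vert_{\partial\mms}\Vert_{W^{k,p}(\boldsymbol{\mathrm{F}}\big\vert_{\partial\mms})}\leq C\,\Vert s\Vert_{W^{k+1,p}(\boldsymbol{\mathrm{F}})}$ for every $s\in\Gamma(\boldsymbol{\mathrm{F}})$, and since $\Gamma(\boldsymbol{\mathrm{F}})$ is dense in $W^{k+1,p}(\boldsymbol{\mathrm{F}})$ when $\mms$ is compact (the ``natural'' Sobolev norms coinciding there), the restriction map extends to a continuous linear map $W^{k+1,p}(\boldsymbol{\mathrm{F}})\to W^{k,p}(\boldsymbol{\mathrm{F}}\big\vert_{\partial\mms})$.

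For the compactness assertion I would sharpen the local estimate: the same construction combined with the Gagliardo seminorm bound on the half-space shows the trace maps $W^{k+1,p}(\boldsymbol{\mathrm{F}})$ continuously into the fractional Sobolev space $W^{k+1-1/p,p}(\boldsymbol{\mathrm{F}}\big\vert_{\partial\mms})$ defined via the same atlas; since $\partial\mms$ is a \emph{compact} manifold of dimension $d-1$ and $k+1-1/p>k$, the embedding $W^{k+1-1/p,p}(\boldsymbol{\mathrm{F}}\big\vert_{\partial\mms})\hookrightarrow W^{k,p}(\boldsymbol{\mathrm{F}}\big\vert_{\partial\mms})$ is compact by Rellich--Kondrachov, and the extended trace, being the composition of the two, is compact. (Alternatively one could verify the Fréchet--Kolmogorov criterion directly on the localized traces via a mollification argument, bypassing fractional spaces.) I expect the main difficulty to be purely organizational rather than conceptual: making the passage from covariant to ordinary derivatives genuinely uniform over the finite atlas, and tracking faithfully that the trace loses exactly one order of the \emph{normal} derivative, which is what forces the shift $k+1\to k$. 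The same bookkeeping also explains why only a local statement survives for noncompact $\mms$: the atlas can no longer be taken finite, so the uniform bounds on the $A_i$ and the finite summation over charts both break down.
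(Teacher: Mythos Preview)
The paper does not actually prove this proposition: it is stated as a citation of \cite[Thm.~1.3.7]{schwarz1995} and used as a black box throughout. Your sketch is the standard finite-atlas reduction to the Euclidean half-space trace inequality followed by Rellich--Kondrachov on the compact boundary, which is indeed how the result is established in references such as \cite{schwarz1995}; the argument is correct in outline and there is nothing to compare it against in the paper itself.
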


\subsubsection{Normal and tangential components}\label{Sub:Nml cpnts} Denote by $\smash{\sfn\in\Gamma(T\mms\big\vert_{\partial\mms})}$ the outward pointing unit normal vector field at $\partial\mms$. It can be smoothly extended to a (non-relabeled) vector field $\sfn$ on an open neighborhood of $\partial\mms$ \cite[Thm.~1.1.7]{schwarz1995}.

The restriction $\smash{X\big\vert_{\partial\mms}}$ of a given $X\in\Gamma(T\mms)$ to $\partial\mms$ decomposes into a \emph{normal part} $\smash{X^\perp\in\Gamma(T\mms\big\vert_{\partial\mms})}$ and a \emph{tangential part} $\smash{X^{\Vert}\in\Gamma(T\mms\big\vert_{\partial\mms})}$ defined by
\begin{align}\label{Eq:Normal parts vfs smooth world}
\begin{split}
X^\perp &:= \langle X,\sfn\rangle\,\sfn,\\
X^\Vert &:= X\big\vert_{\partial\mms} - X^\perp.
\end{split}
\end{align}
Unlike the end of \autoref{Sub:Setting}, under a slight abuse of  notation, in a unique way every $\smash{X^\Vert \in \Gamma(T\partial\mms)}$ can be identified with a tangential element $\smash{X\in \Gamma(T\mms\big\vert_{\partial\mms})}$, i.e.~$\smash{X^\perp=0}$ \cite[p.~16]{schwarz1995}. In turn, such an $X$ can be smoothly extended to an open neighborhood of $\partial\mms$ \cite[Lem.~8.6]{lee2018}. That is, knowing the restrictions to $\partial\mms$ of all $X\in\Gamma(T\mms)$ with purely tangential boundary components suffices to recover the entire intrinsic covariant structure of $\partial\mms$.

Similarly, the tangential part $\smash{\rmt\,\omega\in \Gamma(\Lambda^kT^*\mms\big\vert_{\partial\mms})}$ and the normal part $\rmn\,\omega\in\smash{\Gamma(\Lambda^kT^*\mms\big\vert_{\partial\mms})}$, $k\in\N$, of a $k$-form $\omega\in\Gamma(\Lambda^kT^*\mms)$ at $\partial\mms$ are defined by
\begin{align}\label{Eq:Normal parts forms smooth world}
\begin{split}
\rmt\,\omega(X_1,\dots,X_k) &:= \omega(X_1^\Vert,\dots,X_k^\Vert),\\
\rmn\,\omega &:= \omega\big\vert_{\partial\mms} - \rmt\,\omega
\end{split}
\end{align}
for every $\smash{X_1,\dots,X_k\in\Gamma(T\mms\big\vert_{\partial\mms})}$. Taking tangential and normal parts of differential forms is dual to each other through the Hodge $\star$-operator \cite[Prop.~1.2.6]{schwarz1995}.

\subsection{Dirichlet forms}\label{Sub:Dirichlet forms} In this section, we summarize various important notions of Dirichlet spaces. This survey is enclosed by two examples in \autoref{Sub:Guiding} we frequently use for illustrative reasons in the sequel. 

For further, more detailed accounts, we refer to the books \cite{bouleau1991, chen2012, fukushima2011, ma1992}. 

\subsubsection{Basic definitions}\label{Sub:Forms Sobolev} We always fix  a symmetric, quasi-regular \cite[Def.~III.3.1]{ma1992} and strongly local Dirichlet form $(\Ch,\F)$ with linear domain $\F := \Dom(\Ch)$\label{Not:Form domain} which is dense in $\Ell^2(\mms)$. (In our work, symmetry of $\Ch$ may and will be  assumed without restriction, see \autoref{Re:Non-symm}.)  Here, strong locality \cite[Def.~1.3.17]{chen2012} means that for every $f,g\in\F\cap \Ell^0_\comp(\mms)$ such that $f$ is constant on a neighborhood of $\supp g$,
\begin{align*}
\Ch(f,g)=0.
\end{align*}

\begin{remark}\label{Re:Lusin} As every topological space which carries a quasi-regular Dirichlet form $(\Ch,\F)$ is, up to removing an $\Ch$-polar set, a Lusin space \cite[Rem.~IV.3.2]{ma1992}, our assumption on $\mms$ stated at the beginning of  \autoref{Sub:Notation} is in fact not restrictive.
\end{remark}

\begin{remark}\label{Re:Jump} Strong locality is not strictly necessary to run the construction of the cotangent module in \autoref{Sub:Construction}. One could more generally assume $\Ch$ to have trivial killing part in its Beurling--Deny decomposition, see \cite[Thm.~4.3.4]{chen2012}, \cite[Thm.~3.2.1]{fukushima2011} and \cite[Thm.~5.1]{kuwae1998}. However, if $\Ch$ has nontrivial jump part --- which precisely distinguishes it from being strongly local \cite[Prop.~4.3.1]{chen2012} --- the important calculus rules from \autoref{Th:Properties energy measure} below typically fail.
\end{remark}

The triple $(\mms,\Ch,\meas)$ is called \emph{Dirichlet space}. If we say that a property holds for $\Ch$, we usually mean that it is satisfied by the pair $(\Ch,\F)$. We abbreviate
\begin{align*}
\F_\bounded &:= \F\cap \Ell^\infty(\mms),\\
\F_\comp &:= \F\cap \Ell^0_\rmc(\mms),\\
\F_\bc &:= \F_\bounded \cap \F_\comp.\textcolor{white}{L^0_\rmc}
\end{align*}
By definition, 
$\Ch$ is \emph{closed} \cite[Def.~I.2.3]{ma1992}, i.e.~$(\F,\Vert\cdot\Vert_\F)$ is complete, where 
\begin{align*}
\big\Vert f\big\Vert_{\F}^2 := \big\Vert f\big\Vert_{\Ell^2(\mms)}^2 + \Ch(f),
\end{align*}
and \emph{Markovian} \cite[Def.~I.4.5]{ma1992}, i.e.~$\smash{\min\{f^+,1\}\in \F}$ as well as
\begin{align*}
\Ch\big(\!\min\{f^+,1\}\big) \leq \Ch(f)
\end{align*}
for every $f\in\F$. Here, we abbreviate
\begin{align*}
	\Ch(f) := \Ch(f,f)
\end{align*}
and we do so analogously for the diagonal values of any other bilinear form  encountered in the sequel without further notice. 

A densely defined, quadratic form on $\Ell^2(\mms)$ --- for notational convenience, we concentrate on $\Ch$ --- is called \emph{closable} if for every $\Ch$-Cauchy sequence $(f_n)_{n\in\N}$ in $\Dom(\Ch)$ with $\Vert f_n\Vert_{\Ell^2(\mms)} \to 0$ as $n\to\infty$, we have $\Ch(f_n)\to 0$ as $n\to\infty$. $\Ch$ is closable if and only if it has a closed extension \cite[p.~4]{fukushima2011}. Here, we term a sequence $(f_n)_{n\in\N}$ in $\F$ \emph{$\Ch$-Cauchy} if $\Ch(f_n-f_m) \to 0$ as $n,m\to\infty$, and \emph{$\Ch$-bounded} if
\begin{align*}
\sup_{n\in\N}\Ch(f_n)< \infty.
\end{align*}

\subsubsection{Basic properties}\label{Sub:Quasi-notions} For all relevant quasi-notions evolving around the definition of quasi-regularity, we refer to \cite[Ch.~1]{chen2012} or  \cite[Ch.~III]{ma1992}. Here, we solely state the following useful properties \cite[Prop.~IV.3.3]{ma1992} frequently used in our work. (Here and in the sequel, ``$\Ch$-q.e.'' and ``$\Ch$-q.c.''~abbreviate \emph{$\Ch$-quasi-everywhere} \cite[Def.~III.2.1]{ma1992} and \emph{$\Ch$-quasi-continuous}  \cite[Def.~III.3.2]{ma1992}, respectively.)

\begin{proposition}\label{Le:Props q.r. s.l. Dirichlet form} For every quasi-regular Dirichlet form $\Ch$, the following  hold.
\begin{enumerate}[label=\textnormal{\textcolor{black}{(}\roman*\textcolor{black}{)}}]
\item $\F$ is a separable Hilbert space w.r.t.~$\smash{\Vert\cdot\Vert_{\F}}$.
\item Every $f\in \F$ has an $\Ch$-q.c.~$\meas$-version  $\smash{\widetilde{f}}$.
\item If a function $f$ is $\Ch$-q.c.~and is nonnegative $\meas$-a.e.~on an open set $U\subset\mms$, then $f$ is nonnegative $\Ch$-q.e.~on $U$. In particular, an $\Ch$-q.c.~$\meas$-representative $\smash{\widetilde{f}}$ of any $f\in \F$ is $\Ch$-q.e.~unique.
\item If $f\in \F \cap\Ell^\infty(\mms)$, then any $\Ch$-q.c.~$\meas$-version $\smash{\widetilde{f}}$ of $f$ obeys 
\begin{align*}
	\vert \widetilde{f}\vert\leq \Vert f\Vert_{\Ell^\infty(\mms)}\quad\Ch\text{-q.e.}
\end{align*}
\end{enumerate}
\end{proposition}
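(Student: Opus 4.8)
The statement is exactly \cite[Prop.~IV.3.3]{ma1992}, and only quasi-regularity of $\Ch$ (not strong locality) enters; the plan is to unwind it from the definition of quasi-regularity --- the existence of an $\Ch$-nest of compact sets, a $\Vert\cdot\Vert_\F$-dense subset $D\subseteq\F$ all of whose elements admit $\Ch$-quasi-continuous $\meas$-versions, and a countable family in $\F$ with quasi-continuous versions separating the points of $\mms$ off an $\Ch$-exceptional set --- together with the closedness of $\Ch$ and the separability of $\Ell^2(\mms)$ (which holds since $\mms$ is Lusin). For (i), closedness says $(\F,\Vert\cdot\Vert_\F)$ is complete, hence a Hilbert space for $\langle f,g\rangle_\F:=\Ch(f,g)+\langle f,g\rangle_{\Ell^2(\mms)}$; I would get separability by applying the resolvent $G_1$ of $\Ch$ to a countable $\Ell^2(\mms)$-dense set, noting $G_1$ maps it into $\F$ with $\Vert\cdot\Vert_\F$-dense image, or simply quote \cite[Prop.~IV.3.3(i)]{ma1992}.

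For (ii), given $f\in\F$ pick $f_n\in D$ with $f_n\to f$ in $\Vert\cdot\Vert_\F$ and pass to a subsequence with $\Vert f_{n+1}-f_n\Vert_\F\le 2^{-n}$. The Chebyshev-type capacity estimate $\CAP(\{|\widetilde{u}|>\lambda\})\lesssim\lambda^{-2}\Vert u\Vert_\F^2$ for $u\in\F$ (with a fixed quasi-continuous version $\widetilde{u}$) then gives $\CAP(G_n)\lesssim 2^{-n}$ for $G_n:=\{|\widetilde{f_{n+1}}-\widetilde{f_n}|>2^{-n/2}\}$, so by Borel--Cantelli $N:=\bigcap_m\bigcup_{n\ge m}G_n$ is $\Ch$-exceptional, and off $N$ the sequence $(\widetilde{f_n})$ converges uniformly on each member of a suitable $\Ch$-nest; the limit is $\Ch$-quasi-continuous and is a $\meas$-version of $f$.

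For (iii), set $g:=\widetilde{f}^{\,-}$, which is $\Ch$-quasi-continuous and vanishes $\meas$-a.e.\ on the open set $U$; I claim $g=0$ $\Ch$-q.e.\ on $U$. Given $\varepsilon>0$, quasi-continuity yields an open $V\subseteq\mms$ with $\CAP(V)<\varepsilon$ and $g$ continuous on $\mms\setminus V$; then $\{g>0\}\cap(U\setminus V)$ is relatively open in $U\setminus V$ and $\meas$-negligible, hence empty because $\meas$ has full topological support. Thus $\{g>0\}\cap U\subseteq V$, so $\CAP(\{g>0\}\cap U)\le\varepsilon$, and letting $\varepsilon\downarrow 0$ proves the claim; the asserted $\Ch$-q.e.\ uniqueness of quasi-continuous $\meas$-versions of $f\in\F$ is the case $U=\mms$ applied to the difference of two such versions.

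Finally (iv) is immediate from (iii): for $f\in\F\cap\Ell^\infty(\mms)$ with $M:=\Vert f\Vert_{\Ell^\infty(\mms)}$, the functions $M-\widetilde{f}$ and $M+\widetilde{f}$ are $\Ch$-quasi-continuous and $\ge 0$ $\meas$-a.e.\ on the open set $\mms$, hence $\ge 0$ $\Ch$-q.e.\ by (iii), i.e.\ $|\widetilde{f}|\le M$ $\Ch$-q.e. The step I expect to be most delicate is the full-support argument in (iii): it is precisely here that the standing hypothesis $\supp\meas=\mms$ is indispensable --- without it a quasi-continuous function could vanish $\meas$-a.e.\ yet not $\Ch$-q.e., collapsing (iii)--(iv) and with them the later identification of pointwise norms on $\Ch$-q.e.\ representatives --- alongside the capacitary Borel--Cantelli passage to a subsequence in (ii).
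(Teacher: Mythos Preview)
The paper does not give a proof of this proposition at all; it is stated with a direct citation to \cite[Prop.~IV.3.3]{ma1992} and nothing more. Your proposal correctly identifies this reference and then goes further, supplying a correct sketch of the standard arguments behind it: resolvent density for separability in (i), a capacitary Borel--Cantelli for (ii), the full-support argument for (iii), and the reduction of (iv) to (iii). Your emphasis on $\supp\meas=\mms$ being indispensable for (iii) is exactly right and worth keeping in mind, since the paper assumes this throughout but does not always flag where it is used.
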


\subsubsection{Extended domain}\label{Sub:Extended domain} We now define the main space  around which  \autoref{Sec:Cotangent module} is built. In view of \autoref{Def:Differential}, the  point is our goal to speak about $\Ell^2$-differentials $\rmd f$ of appropriate $f\in\Ell^0(\mms)$ without imposing any  integrability assumption on $f$.

The following definition can be found in \cite[p.~690]{kuwae1998}.

\begin{definition}\label{Def:Extended domain}
The \emph{extended domain} $\F_\rme$ of $\Ch$ is defined to consist of all $f\in \Ell^0(\mms)$ for which there exists an $\Ch$-Cauchy sequence $(f_n)_{n\in\N}$ of elements of $\F$ such that $f_n \to f$ pointwise $\meas$-a.e.~as $n\to\infty$.
\end{definition}

\begin{example} In the metric measure terminology of \cite{ambrosio2014a,gigli2018}, up to a possible ``non-Riemannian structure'' of $\mms$, see \autoref{Re:Non-symm} below, $\F_\rme$ is contained in the \emph{Sobolev class} $\smash{\Sobo^2(\mms)}$ \cite[Def.~2.1.4]{gigli2018}, which in turn is contained in the class of functions in $\dot{\F}_\loc$ (cf.~\autoref{Re:FLOC DEF:}) with integrable carré du champ, see \cite[Thm.~6.2]{ambrosio2014a}. These inclusions, which may be strict in general, cause no ambiguity of our approach compared to \cite{gigli2018}, cf.~\autoref{Re:Gigli comp}.
\end{example}

We say that a sequence $(f_n)_{n\in\N}$ converges to $f$ in $\F_\rme$ if $f_n\in \F$ for every $n\in\N$, $(f_n)_{n\in\N}$ is $\Ch$-Cauchy, and $f_n \to f$ $\meas$-a.e.~as $n\to\infty$. The following \autoref{Pr:Extended domain props} is provided thanks to  \cite[Prop.~3.1, Prop.~3.2]{kuwae1998}. Let us also set
\begin{align*}
\F_\eb &:= \F_\rme\cap\Ell^\infty(\mms),\\
\F_\ec &:= \F \cap \Ell^0_\comp(\mms),\\
\F_\ebc &:= \F_\eb\cap\F_\ec.\textcolor{white}{L_\rmc^0}
\end{align*}

\begin{proposition}\label{Pr:Extended domain props} The extended domain $\F_\rme$ has the following properties.
\begin{enumerate}[label=\textnormal{\textcolor{black}{(}\roman*\textcolor{black}{)}}]
\item  $\Ch$ uniquely extends to a \textnormal{(}non-relabeled\textnormal{)} real-valued  bilinear form on $\smash{\F_\rme^2}$ in such a way that for every $f\in \smash{\F_\rme}$,
\begin{align*}
\Ch(f) = \lim_{n\to\infty}\Ch(f_n)
\end{align*}
for every sequence $(f_n)_{n\in\N}$ that converges to $f$ in $\smash{\F_\rme}$.
\item $\smash{\Ch^{1/2}}$ is a seminorm on $\F_\rme$.
\item\label{La:E bounded} A function $f\in \Ell^0(\mms)$ belongs to $\F_\rme$ if and only if there exists an $\Ch$-bounded sequence $(f_n)_{n\in\N}$ in $\F_\rme$ such that $f_n \to f$ $\meas$-a.e.~as $n\to\infty$. In other words, the functional $\Ch_1\colon \Ell^0(\mms)\to [0,\infty]$ defined by
\begin{align*}
\Ch_1(f) := \begin{cases}
\Ch(f) & \text{if }f\in \F_\rme,\\
\infty & \text{otherwise}
\end{cases}
\end{align*}
is lower semicontinuous w.r.t.~pointwise $\meas$-a.e.~convergence. In particular, $\Ch_1$ viewed as a functional from $\smash{\Ell^2(\mms)}$ with values in $[0,\infty]$ is convex and $\Ell^2$-lower semicontinuous.
\item If $f,g\in\F_\rme$, then $\min\{f,g\},\min\{f^+,1\}\in\F_\rme$ with
\begin{align*}
\Ch\big(\!\min\{f,g\}\big) &\leq \Ch(f) + \Ch(g),\\
\Ch\big(\!\min\{f^+,1\}\big) &\leq \Ch\big(f, \min\{f^+,1\}\big).
\end{align*}
\item\label{La:W12 = S2 cap L2} We have
\begin{align*}
\F = \F_\rme\cap\Ell^2(\mms).
\end{align*}
\item Every $f\in\F_\rme$ has an $\Ch$-q.c.~$\meas$-version $\smash{\widetilde{f}}$ which is $\Ch$-q.e.~unique.
\end{enumerate}
\end{proposition}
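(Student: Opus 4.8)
\emph{Overall strategy.} My plan is to isolate a single master lemma and derive all six items from it by soft Hilbert-space arguments together with the standard Markovian inequalities on $\F$. The master lemma is: \emph{if $(w_n)_{n\in\N}\subset\F$ is $\Ch$-Cauchy with $w_n\to 0$ $\meas$-a.e., then $\Ch(w_n)\to 0$}; equivalently, in the Hilbert space $\mathcal H$ obtained by completing $\F/\{\Ch=0\}$ for the seminorm $\Ch^{1/2}$, the image of such a sequence converges to $0$. Granting this, item (i) is immediate: for $f\in\F_\rme$ and a defining $\Ch$-Cauchy sequence $(f_n)\subset\F$, the images $[f_n]$ form a Cauchy sequence in $\mathcal H$, so $\Ch(f):=\lim_n\Ch(f_n)$ exists, and it does not depend on the sequence, since the difference of two defining sequences is $\Ch$-Cauchy with $\meas$-a.e.\ limit $0$ and so has $\mathcal H$-limit $0$. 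Polarisation yields the symmetric bilinear extension, continuous along $\Ch$-Cauchy sequences by Cauchy--Schwarz, and item (ii) follows by passing $\Ch^{1/2}(f_n+g_n)\le\Ch^{1/2}(f_n)+\Ch^{1/2}(g_n)$ to the limit.

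\emph{The remaining items.} For item (iii) the content is that an $\Ch$-bounded sequence $(f_n)\subset\F_\rme$ with $f_n\to f$ $\meas$-a.e.\ has $f\in\F_\rme$ with $\Ch(f)\le\liminf_n\Ch(f_n)$. A diagonal selection — using that $\meas$-a.e.\ convergence on the $\sigma$-finite $\mms$ amounts to convergence in measure on an exhausting family of finite-measure sets, plus Borel--Cantelli — replaces $(f_n)$ by an $\Ch$-bounded sequence $(\hat f_n)\subset\F$ with $\hat f_n\to f$ $\meas$-a.e.\ and $\limsup_n\Ch(\hat f_n)\le\limsup_n\Ch(f_n)$. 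As $([\hat f_n])$ is bounded in $\mathcal H$, it has a weakly convergent subsequence, and Mazur's lemma applied to the tails of that subsequence yields finite convex combinations $g_m$ of the $\hat f_n$, with indices tending to $\infty$, such that $(g_m)$ is $\Ch$-Cauchy while still $g_m\to f$ $\meas$-a.e.\ (a convex combination of a tail of a pointwise convergent sequence converges pointwise); hence $f\in\F_\rme$, and running this on a subsequence realising the lower limit gives $\Ch(f)\le\liminf_n\Ch(f_n)$. Convexity of $\Ch_1$ is then clear (a nonnegative quadratic form on the subspace $\F_\rme$), and $\Ell^2$-lower semicontinuity follows by passing to a $\meas$-a.e.\ convergent subsequence. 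Item (iv) is obtained by approximating $f,g\in\F_\rme$ by $\Ch$-Cauchy sequences $(f_n),(g_n)\subset\F$, observing that $\min\{f_n,g_n\}$ and $\min\{f_n^+,1\}$ lie in $\F$, converge $\meas$-a.e.\ to $\min\{f,g\}$ and $\min\{f^+,1\}$, and stay $\Ch$-bounded by the Markovian inequalities $\Ch(u\wedge v)+\Ch(u\vee v)\le\Ch(u)+\Ch(v)$ and $\Ch(u^+\wedge 1)\le\Ch(u,u^+\wedge 1)$ on $\F$; item (iii) then promotes membership, and the inequalities pass to the limit using weak convergence in $\mathcal H$ for the bilinear term. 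For item (v), the inclusion $\F\subseteq\F_\rme\cap\Ell^2$ is immediate from the definitions, while the converse is more delicate: one truncates $f\in\F_\rme\cap\Ell^2$ to reduce to bounded $f$ (item (iv) keeps the truncations in $\F_\rme$ with energy $\le\Ch(f)$) and then, by clipping the approximating sequence against suitable reference functions and using weak compactness in $(\F,\Vert\cdot\Vert_\F)$, produces a sequence in $\F$ converging to $f$ in $\Ell^2$ with bounded energy, so that closedness of $\Ch$ gives $f\in\F$. Item (vi): each $f_n\in\F$ has an $\Ch$-quasi-continuous version by \autoref{Le:Props q.r. s.l. Dirichlet form}; localising the $\Ch$-Cauchy sequence against cutoffs drawn from an $\Ch$-nest makes it Cauchy for $\Vert\cdot\Vert_\F$, and the capacity estimate $\CAP(\{|\widetilde u|>\lambda\})\le\lambda^{-2}\Vert u\Vert_\F^2$ with Borel--Cantelli produces a subsequence converging $\Ch$-q.e.\ to an $\Ch$-quasi-continuous version of $f$; $\Ch$-q.e.\ uniqueness is part of \autoref{Le:Props q.r. s.l. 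Dirichlet form}.

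\emph{Main obstacle.} The only genuinely hard point is the master lemma, precisely because $\meas$-a.e.\ convergence is far too weak to be fed directly into the closedness of $\Ch$, which is a statement about $\Ell^2$-convergence. The plan is a self-improvement-by-truncation argument. Writing $w_n^{(k)}$ for the truncation of $w_n$ at level $k$, strong locality and the chain rule for energy measures (\autoref{Th:Properties energy measure}) give the exact orthogonal splitting $\Ch(w_n)=\Ch(w_n^{(k)})+\Ch(w_n-w_n^{(k)})$, reducing the claim to bounded functions modulo the tail term. One localises $w_n^{(k)}$ against a cutoff $\chi\in\F\cap\Ell^\infty\cap\Ell^1$ (available by quasi-regularity), controlling $\Ch(\chi\,w_n^{(k)})$ via the Leibniz rule by $\Ch(w_n^{(k)})$ plus an error integrated against the finite measure $\Gamma(\chi)$. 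The delicate feature is that both the tail term and this error are measured by energy measures rather than by $\meas$; the remedy is to pass to $\Ch$-quasi-continuous versions and upgrade $w_n\to 0$ to hold $\Ch$-q.e.\ along a subsequence, so that these terms vanish by dominated convergence against $\Gamma(\chi)$ and $\Gamma(w_n)$. What remains is a sequence $\chi\,w_n^{(k)}$ that is bounded, $\Ell^2$-null and $\Ch$-controlled, to which closedness of $\Ch$ — with weak compactness in $(\F,\Vert\cdot\Vert_\F)$ and Mazur — applies; a diagonal argument over $k$ and over an exhausting family of cutoffs $\chi$ recovers $\Ch(w_n)\to 0$. It is this interplay of Markovianity, strong locality and quasi-regularity — none of which can be dispensed with — that makes the statement delicate, and it is worked out in \cite{kuwae1998}.
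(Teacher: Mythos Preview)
The paper does not prove this proposition at all: it simply records, in the sentence preceding the statement, that the result ``is provided thanks to \cite[Prop.~3.1, Prop.~3.2]{kuwae1998}''. Your proposal is therefore consistent with the paper --- you too defer to \cite{kuwae1998} at the end --- but you go considerably further by sketching the architecture of Kuwae's argument: isolating the master lemma (an $\Ch$-Cauchy sequence in $\F$ with $\meas$-a.e.\ limit $0$ has $\Ch$-limit $0$), deriving (i)--(ii) from it by completion, (iii) via Mazur, (iv) from the Markovian inequalities, and (v)--(vi) by truncation and capacity estimates. This outline is faithful to how the result is actually established in the cited reference, and the identification of the master lemma as the crux --- together with your remark that its proof genuinely needs the combination of Markovianity, strong locality and quasi-regularity --- is exactly right. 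There is nothing to correct; you have supplied what the paper omits.
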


In general, unfortunately, the constant function $\One_\mms$ does not belong to $\F_\rme$. Since we nevertheless need approximations of $\One_\mms$ in $\F$ at various instances, we record the following result due to \cite[Thm.~4.1]{kuwae1998}.

\begin{lemma}\label{Le:Approx to id} There exists a sequence $(G_n)_{n\in\N}$ of $\Ch$-quasi-open Borel subsets of $\mms$ such that $G_n\subset G_{n+1}$ $\Ch$-q.e.~for every $n\in \N$, $\smash{\bigcup_{n\in\N} G_n}$ covers $\mms$ up to an $\Ch$-polar set, and for every $n\in\N$ there exists $\smash{g_n \in \F_\bounded}$ such that
\begin{align*}
g_n = 1\quad\meas\text{-a.e.}\quad\text{on }G_n.
\end{align*}
\end{lemma}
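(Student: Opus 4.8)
The plan is to reduce the entire statement to the single fact that $\Ch$ carries a bounded, $\meas$-a.e.\ strictly positive function whose $\Ch$-quasi-continuous version is strictly positive $\Ch$-quasi-everywhere, and then to read off the $G_n$ as its super-level sets and the $g_n$ as its truncations. Concretely: using the $\sigma$-finiteness of $\meas$, I would first fix some $h\in\Ell^1(\mms)\cap\Ell^\infty(\mms)$ with $h>0$ $\meas$-a.e., and let $u\in\F$ be its $1$-resolvent, i.e.\ the unique element of $\F$ with $\Ch(u,v)+\int_\mms u\,v\d\meas=\int_\mms h\,v\d\meas$ for every $v\in\F$ (Lax--Milgram). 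The Markov property of $\Ch$ gives $0\le u\le\Vert h\Vert_{\Ell^\infty(\mms)}$ $\meas$-a.e., so $u\in\F_\bounded$. Fixing a Borel $\Ch$-quasi-continuous $\meas$-version $\widetilde u$ of $u$, \autoref{Le:Props q.r. s.l. Dirichlet form}\,(iii) applied on the open set $\mms$ already yields $\widetilde u\ge 0$ $\Ch$-q.e. I would then set $G_n:=\{\widetilde u>1/n\}$, which is Borel and $\Ch$-quasi-open since super-level sets of $\Ch$-quasi-continuous functions are $\Ch$-quasi-open; the $G_n$ genuinely increase, and $\bigcup_n G_n=\{\widetilde u>0\}$. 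Finally $g_n:=\min\{(n\,u)^+,1\}$ lies in $\F_\bounded$ by the Markov property, a quasi-continuous version of it is $\min\{(n\,\widetilde u)^+,1\}$, and this equals $1$ $\Ch$-q.e.---hence $\meas$-a.e.---on $G_n$. So everything is in place \emph{provided} one knows that $\{\widetilde u\le 0\}$ is $\Ch$-polar.

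The only genuinely nontrivial point, and the main obstacle I expect, is therefore to upgrade $\widetilde u\ge 0$ $\Ch$-q.e.\ to $\widetilde u>0$ $\Ch$-q.e., equivalently that $N:=\{\widetilde u=0\}$ has zero capacity. I would argue by contradiction: if $\CAP(N)>0$, then (replacing $N$ by a quasi-closed subset of finite positive capacity if needed) the potential theory of quasi-regular Dirichlet forms, e.g.\ \cite[Ch.~III]{ma1992}, furnishes its $1$-equilibrium potential $e_N\in\F$, with $0\le e_N\le 1$ and $\widetilde{e_N}=1$ $\Ch$-q.e.\ on $N$, together with the associated equilibrium measure $\mu_N\ge 0$, concentrated on $N$, charging no $\Ch$-polar set, and satisfying $\Ch(e_N,v)+\int_\mms e_N\,v\d\meas=\int_\mms\widetilde v\d\mu_N$ for every $v\in\F$. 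Testing this identity with $v:=u$ gives $\Ch(e_N,u)+\int_\mms e_N\,u\d\meas=\int_\mms\widetilde u\d\mu_N=0$, because $\widetilde u=0$ $\mu_N$-a.e.; testing the defining equation of $u$ with $v:=e_N$ gives the same left-hand side equal to $\int_\mms h\,e_N\d\meas$. Hence $\int_\mms h\,e_N\d\meas=0$, and since $h>0$ $\meas$-a.e.\ and $e_N\ge 0$ this forces $e_N=0$ $\meas$-a.e., so $\widetilde{e_N}=0$ $\Ch$-q.e.\ by \autoref{Le:Props q.r. s.l. Dirichlet form}\,(iii) once more---contradicting $\widetilde{e_N}=1$ $\Ch$-q.e.\ on $N$. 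Therefore $\CAP(N)=0$; since also $\{\widetilde u<0\}$ is $\Ch$-polar, $\{\widetilde u\le 0\}$ is $\Ch$-polar and the construction of the first paragraph goes through.

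If a probabilistic formulation is preferred, the same positivity follows at once from the Hunt process $(X_t)_{t\ge 0}$ properly associated with $\Ch$: one has $u(x)=\Exp_x\bigl[\int_0^\infty\rme^{-t}\,h(X_t)\d t\bigr]$ for $\Ch$-q.e.\ $x$, and since the occupation measure of $(X_t)$ is absolutely continuous with respect to $\meas$ while $\{h=0\}$ is $\meas$-null, the integrand is $\Prob_x$-a.s.\ positive for $\Ch$-q.e.\ $x$. In any case, the statement is exactly \cite[Thm.~4.1]{kuwae1998}, so it may alternatively simply be cited.
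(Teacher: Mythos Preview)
The paper does not give a proof of this lemma at all; it simply records the statement and attributes it to \cite[Thm.~4.1]{kuwae1998}. Your proposal supplies a complete and correct argument --- the resolvent construction $u = G_1 h$ with $h>0$ $\meas$-a.e., the super-level sets $G_n=\{\widetilde u>1/n\}$, and the equilibrium-potential contradiction showing $\{\widetilde u=0\}$ is $\Ch$-polar are all standard and sound. You have in effect filled in what the paper leaves as a citation, and you also correctly identify the reference at the end.
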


\begin{remark}\label{Re:FLOC DEF:} In the terminology of \cite{kuwae1998}, \autoref{Le:Approx to id} asserts that $\One_\mms$ belongs to the \emph{local space} $\smash{\dot{\F}_\loc}$. It contains $\F_\rme$ \cite[Thm.~4.1]{kuwae1998}, but this inclusion may be strict. For instance, the energy measure $\bdmu$ from \autoref{Sub:Energy} below does in general not extend to a bilinear form on $\smash{\dot{\F}_\loc^2}$ in any reasonable sense \cite[Rem.~2.13]{delloschiavo2020}.
\end{remark}

\begin{remark}\label{Re:No global control} Unlike the setting of \autoref{Ex:mms} which is mostly worked upon in \cite{gigli2018}, \autoref{Le:Approx to id} does \emph{not} provide a global control on $(\Ch(g_n))_{n\in\N}$. In particular, we do not know in general whether $\Ch(g_n)\to 0$ as $n\to\infty$, a property which is closely related to recurrence of $\Ch$ \cite[Thm.~1.6.3]{fukushima2011}. 
\end{remark}

\subsubsection{Energy measures}\label{Sub:Energy}  $\Ch$ can be represented in terms of so-called \emph{energy measures}  --- a fact which also holds if the form admits no killing but a nontrivial jump part as  described in \autoref{Re:Jump}, see \cite[Thm.~3.2.1]{fukushima2011} and its proof --- a concept which is recorded now. This leads to the notion of carré du champ discussed in \autoref{Sub:Carré}. In \autoref{Sub:Differential}, it is furthermore used to give a precise meaning to the ``differential of a function $f \in \F_\rme$ on a Borel set $A\subset\mms$''.

The class $\smash{\F_\bounded}$ is an algebra w.r.t.~pointwise multiplication \cite[Prop.~I.2.3.2]{bouleau1991} which is dense in $\F$ \cite[Cor. 2.1]{kuwae1998}. In fact,
\begin{align*}
\Ch(f\,g) \leq 2\,\big\Vert f\big\Vert_{\Ell^\infty(\mms)}^2\,\Ch(g) + 2\,\big\Vert g\big\Vert_{\Ell^\infty(\mms)}^2\,\Ch(f)
\end{align*} 
for every $\smash{f,g\in\F_\bounded}$. Together with \cite[Thm.~5.2]{kuwae1998}, there exists a symmetric bilinear map $\smash{\bdmu\colon \F_\bounded^2} \to \smash{\Meas_{\bR}^{\pm}(\mms)_\Ch}$ such that for every $\smash{f,g,h\in \F_\bounded}$,
\begin{align*}
2\int_\mms \widetilde{h} \d\bdmu_{f,g} = \Ch(f\,h, g) + \Ch(g\,h, f) - \Ch(f\,g, h).
\end{align*}
The map $\bdmu$ is uniquely determined by this identity. It uniquely extends to a (non-relabeled) symmetric bilinear form $\smash{\bdmu \colon \F_\rme^2\to \Meas_\bR^{\pm}(\mms)_\Ch}$ by approximation in $\F_\rme$  \cite[Ch.~5]{kuwae1998}. The diagonal of $\bdmu$  takes values in $\smash{\Meas_\finR^+(\mms)_\Ch}$. Depending on the context, the map $\bdmu$ or, given any $f\in \smash{\F_\rme}$, the measure $\smash{\bdmu_f}$ are called the \emph{energy measure} (associated to $f$).

The following facts about $\bdmu$ with $\F_\rme$ and $\smash{\F_\eb}$  replaced by $\F$ and $\smash{\F_\bounded}$, respectively, have been proven in \cite[Thm.~2.8]{delloschiavo2020} and \cite[Lem.~5.2]{kuwae1998}. By approximation in $\F_\rme$, any of these properties  extend to the former classes. See also \autoref{Re:CDC} below for slightly more refined statements.

\begin{proposition}\label{Th:Properties energy measure}  The map $\bdmu$ satisfies the following obstructions.
	\begin{enumerate}[label=\textnormal{\textcolor{black}{(}\roman*\textcolor{black}{)}}]
	\item \emph{Representation formula.} For every $f,g\in\smash{\F_\rme}$,
	\begin{align*}
	\Ch(f,g) = \bdmu_{f,g}[\mms].
	\end{align*}
	\item \emph{Cauchy--Schwarz inequality.} For every $f,g\in \smash{\F_\rme}$ and every $u,v\in\Ell_\infty(\mms)$,
	\begin{align*}
	\Big\vert\!\int_\mms u\,v\d\bdmu_{f,g}\Big\vert \leq \Big[\!\int_\mms u^2\d\bdmu_f\Big]^{1/2}\,\Big[\!\int_\mms v^2\d\bdmu_g\Big]^{1/2}.
	\end{align*}
	\item\label{La:Minimum mu} \emph{Truncation.} For every $f,g,h\in\smash{\F_\rme}$, $\min\{f,g\}\in \F_\rme$ and
\begin{align*}
	\bdmu_{\min\{f,g\}, h} &= \One_{\{\widetilde{f}\leq \widetilde{g}\}}\,\bdmu_{f,h} + \One_{\{\widetilde{f} > \widetilde{g}\}}\,\bdmu_{g,h},\\
	\One_{\{\widetilde{f}=0\}}\,\bdmu_f&=0.
\end{align*}	
	\item\label{La:Chain rule} \emph{Chain rule.} For every $k,l\in\N$, every $f\in \smash{\F_\eb^k}$ and $g\in \smash{\F_\eb^l}$ as well as every $\smash{\varphi \in\Cont^1(\R^k)}$ and $\smash{\psi\in\Cont^1(\R^l)}$ --- with $\varphi(0) = \psi(0) = 0$ if $\meas[\mms] = \infty$ --- we have $\varphi\circ f,\psi\circ g\in \smash{\F_\rme}$ with
	\begin{align*}
	\bdmu_{\varphi\circ f,\psi\circ g} = \sum_{i=1}^k\sum_{j=1}^l \big[\partial_i \varphi\circ \widetilde{f}\big]\, \big[\partial_j\psi\circ \widetilde{g}\big]\,\bdmu_{f_i,g_j}.
\end{align*}
\item \emph{Leibniz rule.} For every $f,g,h\in\smash{\F_\eb}$, we have $f\,g\in \F_\eb$ and
\begin{align*}
\bdmu_{f\,g,h} = \widetilde{f}\,\bdmu_{g,h} + \widetilde{g}\,\bdmu_{f,h}.
\end{align*}
\item\label{La:Strong locality} \emph{Strong locality.} For every $\Ch$-quasi-open $G\subset \mms$, every $f\in \smash{\F_\rme}$ such that $f$ is constant $\meas$-a.e.~on $G$ and every $g\in \smash{\F_\rme}$,
\begin{align*}
\One_G\,\bdmu_{f,g} = 0.
\end{align*}
	\end{enumerate}
\end{proposition}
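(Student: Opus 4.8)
The plan is to derive all six assertions from their cited counterparts over $\F$ and $\F_\bounded$ by one common approximation argument. The cornerstone is that the extension of $\bdmu$ to $\F_\rme$ is continuous in total variation: if $f\in\F_\rme$ and $(f_n)_{n\in\N}$ is an $\Ch$-Cauchy sequence in $\F$ with $f_n\to f$ $\meas$-a.e., then $\bdmu_{f_n}\to\bdmu_f$ in $\Vert\cdot\Vert_\TV$. Indeed, by bilinearity and the cited Cauchy--Schwarz inequality over $\F$,
\[
\big\Vert\bdmu_{f_n}-\bdmu_{f_m}\big\Vert_\TV = \big\Vert\bdmu_{f_n-f_m,\,f_n+f_m}\big\Vert_\TV \leq \Ch(f_n-f_m)^{1/2}\,\Ch(f_n+f_m)^{1/2},
\]
which tends to $0$ as $n,m\to\infty$; hence $(\bdmu_{f_n})_{n\in\N}$ is Cauchy in the Banach space $(\Meas_\fin^\pm(\mms),\Vert\cdot\Vert_\TV)$, and since both the cone of nonnegative measures and the subspace of measures charging no $\Ch$-polar set are $\Vert\cdot\Vert_\TV$-closed, its limit lies in $\Meas_\finR^+(\mms)_\Ch$ and — this being exactly what ``extension by approximation in $\F_\rme$'' means — coincides with $\bdmu_f$. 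Polarising, $\bdmu_{f_n,g_n}\to\bdmu_{f,g}$ in $\Vert\cdot\Vert_\TV$ whenever $f_n\to f$ and $g_n\to g$ in $\F_\rme$. Passing to a subsequence we may moreover arrange that $\widetilde{f_n}\to\widetilde f$ $\Ch$-q.e.\ (a standard feature of $\Ch$-Cauchy sequences in quasi-regular Dirichlet spaces, see \cite{ma1992}); then $\widetilde{f_n}\to\widetilde f$ holds $\bdmu_g$-a.e.\ for every $g\in\F_\rme$, because the Cauchy--Schwarz bound $\vert\bdmu_{f,g}\vert(A)\leq\bdmu_f(A)^{1/2}\,\bdmu_g(A)^{1/2}$ turns $\Ch$-polar sets into $\bdmu$-null sets.

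Given this, assertions (i), (ii) and (vi) follow at once. For (i), $\bdmu_{f,g}[\mms]=\lim_n\bdmu_{f_n,g_n}[\mms]=\lim_n\Ch(f_n,g_n)=\Ch(f,g)$ by $\Vert\cdot\Vert_\TV$-convergence and \autoref{Pr:Extended domain props}. For (ii), fix $u,v\in\Ell_\infty(\mms)$, write the cited inequality for $(f_n,g_n)$, and let $n\to\infty$, noting that an integral of a bounded function against a $\Vert\cdot\Vert_\TV$-convergent sequence of measures converges to the corresponding limit integral. For (vi), if $f$ is $\meas$-a.e.\ constant on the $\Ch$-quasi-open set $G$, one picks the approximants $f_n$ to be $\meas$-a.e.\ equal to that constant on $G$ (modifying a given sequence by means of the cutoffs from \autoref{Le:Approx to id}, or simply restricting to $G$), applies the cited identity $\One_G\,\bdmu_{f_n,g_n}=0$, and passes to the $\Vert\cdot\Vert_\TV$-limit.

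For the chain rule (iv) and Leibniz rule (v) the same scheme applies, with two preliminary reductions. First, any $f\in\F_\eb$ is the $\F_\rme$-limit of a sequence in $\F_\bounded$: combine the cited density of $\F_\bounded$ in $\F$ with the defining density of $\F$ in $\F_\rme$ by a diagonal argument. Hence one may take all approximants in $\F_\bounded$, to which the cited forms of (iv), (v) apply. Second, since the $\Ch$-q.c.\ versions of the finitely many functions involved are bounded $\Ch$-q.e., one may replace $\varphi$, $\psi$ by compactly supported $\Cont^1$ functions — agreeing with $\varphi$, $\psi$ together with their partial derivatives on a large box containing the relevant $\Ch$-q.e.\ ranges and vanishing at $0$ if $\meas[\mms]=\infty$ — without altering either side of the claimed identity; these modified $\varphi$, $\psi$ have bounded partials. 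Now pass the cited identities to the limit exactly as before: the energy measures converge in $\Vert\cdot\Vert_\TV$, the scalar prefactors $\partial_i\varphi\circ\widetilde{f_n}$, $\widetilde{f_n}$, $\widetilde{g_n}$ converge $\bdmu$-a.e.\ along the chosen subsequence and are uniformly bounded, and membership of $\varphi\circ f$, $\psi\circ g$, $f\,g$ in $\F_\rme$ drops out from the pointwise-a.e.\ lower semicontinuity recorded in \autoref{Pr:Extended domain props} together with the resulting uniform $\Ch$-bounds. The truncation formula (iii) is treated likewise, first rewriting $\min\{a,b\}=\tfrac12\big(a+b-\vert a-b\vert\big)$ to reduce to the cited $\F$-statement, and noting that the set $\{\widetilde f=\widetilde g\}$ carries no mass for $\bdmu_f$, $\bdmu_g$, $\bdmu_{f,h}$ or $\bdmu_{g,h}$ — the $\F_\rme$-version of the second half of (iii), obtained from its $\F$-version and Cauchy--Schwarz — so that the jump of $\One_{(-\infty,0]}$ at the origin is harmless when passing $\One_{\{\widetilde{f_n}\leq\widetilde{g_n}\}}\to\One_{\{\widetilde f\leq\widetilde g\}}$ to the limit. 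The point I expect to require the most care is precisely this bookkeeping with $\Ch$-quasi-continuous representatives: selecting one subsequence that simultaneously realises $\Ch$-q.e.\ convergence of all functions entering a given identity, and using Cauchy--Schwarz to make the attendant exceptional sets negligible for every energy measure that appears.
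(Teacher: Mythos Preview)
Your overall strategy is exactly what the paper means by ``by approximation in $\F_\rme$'', and it handles (i), (ii), (iv) and (v) cleanly. There is, however, a genuine gap in your treatment of (vi), with a related issue in the second half of (iii).

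For (vi) you assert that, given $f\in\F_\rme$ constant on the quasi-open set $G$, one can ``pick the approximants $f_n$ to be $\meas$-a.e.\ equal to that constant on $G$''. Neither mechanism you propose accomplishes this: the cutoffs from \autoref{Le:Approx to id} equal $1$ on sets $G_n$ forming an $\Ch$-nest of all of $\mms$, unrelated to your fixed $G$, and ``restricting to $G$'' does not produce elements of $\F$. There is no general reason such constant-on-$G$ approximants exist. The same obstruction affects your derivation of the second identity in (iii): the level sets $\{\widetilde{f_n}=0\}$ move with $n$, so $\One_{\{\widetilde{f_n}=0\}}\bdmu_{f_n}=0$ tells you nothing about $\bdmu_f$ on $\{\widetilde f=0\}$. (Incidentally, your claim that $\{\widetilde f=\widetilde g\}$ is $\bdmu_f$- and $\bdmu_g$-null is false --- take $f=g$ --- though what you actually need there, namely $\bdmu_{f,h}=\bdmu_{g,h}$ on that set, \emph{is} true once the second half of (iii) is available for $f-g$.) The second half of (iii) is really the instance $C=\{0\}$ of the absolute continuity $f_\push\bdmu_f\ll\Leb^1$ recorded in \autoref{Re:CDC}, which requires a separate argument and does not follow from naive approximation. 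Once you have it, (vi) is cleaner to obtain as a \emph{consequence} of (iii) and (iv) rather than by direct approximation: reduce to $f\in\F_\eb$ via Lipschitz truncation (again \autoref{Re:CDC}), and if $f=c\neq 0$ on $G$ apply the chain rule with $\varphi(t)=t(t-c)$ to convert $\bdmu_{\varphi\circ f}[G]=0$ into $\bdmu_f[G]=0$.
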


\subsubsection{Carré du champ and $\Ch$-dominance}\label{Sub:Carré} Following \cite{delloschiavo2020,hino2010},  we now record the concept of carré du champs and (minimal) $\Ch$-dominant measures. The second notion will especially be relevant in studying the ``universality'' of the cotangent module $\Ell^2(T^*\mms)$ and its compatibility to \cite{baudoin2019, cipriani2003, eberle1999, hinz2013, hinz2015, ionescu2012}, see \autoref{Th:Universal} and \autoref{Re:Comp Dir sp}, but also in \autoref{Re:Hino} below.

Towards the aim of representing $\Ch$ in terms of a ``scalar product of gradients'', the following definition from \cite[Def.~2.16]{delloschiavo2020} and \cite[Def.~2.1]{hino2010} is useful.

\begin{definition}\label{Def:Dominance} A Borel measure $\mu$ on $\mms$ is called 
\begin{enumerate}[label=\textnormal{\alph*.}]
\item \emph{$\Ch$-dominant} if it is $\sigma$-finite and $\bdmu_f \ll \mu$ for every $f\in\smash{\F}$, and 
\item \emph{minimal $\Ch$-dominant} if it is $\Ch$-dominant and $\mu\ll \nu$ for every $\Ch$-dominant Borel measure $\nu$ on $\mms$.
\end{enumerate}
\end{definition}

Any two minimal $\Ch$-dominant measures are mutually equivalent. The class of $f\in\F$ for which $\smash{\bdmu_f}$ is minimal $\Ch$-dominant is dense in $\F$, and every minimal $\Ch$-dominant measure does not charge $\Ch$-polar sets \cite[Prop.~2.18]{delloschiavo2020}. 

Given any $\Ch$-dominant $\mu$, there exists a unique symmetric bilinear $\smash{\Gamma_\mu\colon \F_\rme^2}\to\Ell^1(\mms,\mu)$ such that for every $\smash{f,g\in \F_\rme}$,
\begin{align*}
\Ch(f,g) = \int_\mms\Gamma_\mu(f,g)\d\mu.
\end{align*}
In particular, $\bdmu_f\ll \mu$ for every $f\in\F_\rme$ \cite[Lem.~2.2]{hino2010}, and the calculus rules from \autoref{Th:Properties energy measure} transfer accordingly to $\smash{\Gamma_\mu}$ at the $\mu$-a.e.~level. 

We say that $(\mms,\Ch,\meas)$ or simply $\Ch$ admits a carré du champ if $\meas$ is $\Ch$-dominant, in which case we abbreviate $\Gamma_\meas$ by $\Gamma$ and term it the \emph{carré du champ \textnormal{(}operator\textnormal{)}}  associated with $\Ch$. In fact, if $\Ch$ admits a carré du champ, then $\meas$ is already \emph{minimal} $\Ch$-dominant according to the remark after \cite[Def.~2.16]{delloschiavo2020}.

\begin{remark} In general, $\Ch$ might not always admit a carré du champ \cite[Ex.~2.17]{delloschiavo2020}, which actually motivated \autoref{Def:Dominance} in \cite{delloschiavo2020,hino2010}. This lack of $\Gamma$, however, is excluded later in \autoref{As:Gamma-operator}.
\end{remark}

\begin{remark}\label{Re:CDC} Following \cite[Thm.~I.7.1.1]{bouleau1991} or \cite[Thm.~4.3.8]{chen2012}, we have
\begin{align}\label{La:ll}
	f_\push\bdmu_f \ll \Leb^1
	\end{align}
	for every $f\in \F_\rme$. 
	In other words, given an $\Ch$-dominant $\mu$, $\smash{\Gamma_\mu(f) = 0}$ holds $\mu$-a.e.~on $f^{-1}(C)$ for every $\Leb^1$-negligible Borel set $C\subset\R$. In particular, by approximation of Lipschitz by $\smash{\Cont^1}$-functions, see e.g.~the proof of \cite[Thm.~2.2.6]{gigli2018}, the same conclusion as in \ref{La:Chain rule} in \autoref{Th:Properties energy measure} holds for $f\in\smash{\F_\rme^k}$ and $g\in \smash{\F_\rme^l}$ under the hypotheses that $\varphi\in \smash{\Lip(\R^k)}$ and $\smash{\psi\in \smash{\Lip(\R^l)}}$, where the partial derivatives $\smash{\partial_i\varphi}$ and $\smash{\partial_j\psi}$ are defined arbitrarily on their respective sets of non-differentiability points.

If $\Ch$ admits a carré du champ, \eqref{La:ll} translates into
	\begin{align*}
	f_\push\big[\Gamma(f)\,\meas\big]\ll\Leb^1.
	\end{align*}
	for every $f\in \F_\rme$. In this framework all $\Ch$-q.c.~representatives in \autoref{Th:Properties energy measure} can equivalently be replaced by their genuine $\meas$-versions. This fact will be used in the sequel when referring to \autoref{Th:Properties energy measure} without further mention.
	\end{remark}

\subsubsection{Neumann Laplacian}\label{Sub:Neumann Laplacian} By \cite[Thm.~1.3.1]{fukushima2011}, $\Ch$ is uniquely associated to a nonpositive, self-adjoint --- hence closed and densely defined --- operator $\Delta$ on $\Ell^2(\mms)$ with domain $\Dom(\Delta)$ as follows.\label{Not:Laplacian} A function $f\in \F$ belongs to $\Dom(\Delta)$ if and only if there exists $h \in \Ell^2(\mms)$ such that for every $g\in\F$,
\begin{align}\label{Eq:IBP Laplacian}
-\int_\mms g\,h \d\meas = \Ch(g,f).
\end{align}
If such a function $h$ exists, the element
\begin{align*}
\Delta f := h
\end{align*}
is unique and termed \emph{\textnormal{(}Neumann\textnormal{)} Laplacian} of $f$. See also \autoref{Ex:Mflds} below.

If $\Ch$ admits a carré du champ, from \autoref{Th:Properties energy measure} the following is straightforward to derive, see e.g.~\cite[Sec.~I.6]{bouleau1991}.

\begin{lemma}\label{Le:Delta Leibniz rule} If $\Ch$ admits a carré du champ, the following hold.
\begin{enumerate}[label=\textnormal{\textcolor{black}{(}\roman*\textcolor{black}{)}}]
\item \emph{Leibniz rule.} For every $f,g\in\Dom(\Delta)\cap \Ell^\infty(\mms)$ with $\Gamma(f),\Gamma(g)\in\Ell^\infty(\mms)$,
\begin{align*}
\Delta(f\,g) = f\,\Delta g + 2\,\Gamma(f,g) + g\,\Delta f\quad\meas\text{-a.e.}
\end{align*}
\item \emph{Chain rule.} For every $f\in\Dom(\Delta)\cap\Ell^\infty(\mms)$ with $\Gamma(f)\in\Ell^\infty(\mms)$ and every $\varphi\in\Cont^\infty(I)$ for some interval $I\subset\R$ which contains $0$ and the image of $f$ --- with $\varphi(0) = 0$ if $\meas[\mms] = \infty$ --- we have
\begin{align*}
\Delta(\varphi\circ f) = \big[\varphi'\circ f\big]\,\Delta f + \big[\varphi''\circ f\big]\,\Gamma(f)\quad\meas\text{-a.e.}
\end{align*}
\end{enumerate}
\end{lemma}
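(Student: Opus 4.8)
The plan is to verify, in each case, the integration-by-parts characterization~\eqref{Eq:IBP Laplacian} of the Neumann Laplacian directly, using the carré du champ calculus rules of \autoref{Th:Properties energy measure} --- which, since $\Ch$ admits a carré du champ, hold $\meas$-a.e.\ for $\Gamma$, see \autoref{Re:CDC}. In both parts the candidate for the new Laplacian, say $u$, is a bounded function times an $\Ell^2(\mms)$-function plus a term of the shape (bounded)\,$\cdot\,\Gamma(\cdot)$; since $\Gamma$ takes values in $\Ell^1(\mms)$ while the stated boundedness hypotheses on $\Gamma(f)$ and $\Gamma(g)$, together with Cauchy--Schwarz for $\Gamma$ (\autoref{Th:Properties energy measure}), force the $\Gamma$-term into $\Ell^\infty(\mms)$ as well, we get $u\in\Ell^1(\mms)\cap\Ell^\infty(\mms)\subset\Ell^2(\mms)$. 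It therefore suffices to prove $-\int_\mms h\,u\d\meas = \Ch(h,\cdot\,)$ for $h$ ranging over the dense algebra $\F_\bounded$; as both sides are continuous in $h$ with respect to $\Vert\cdot\Vert_\F$ (the left-hand side since $u\in\Ell^2(\mms)$, the right-hand side by Cauchy--Schwarz for $\Ch$), the identity extends to all $h\in\F$, and \eqref{Eq:IBP Laplacian} then yields both membership in $\Dom(\Delta)$ and the formula.

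For the Leibniz rule, first note $f\,g\in\F_\bounded$ because $\F_\bounded$ is an algebra. Fixing $h\in\F_\bounded$ and expanding $\Ch$ through $\Gamma$, the Leibniz rule of \autoref{Th:Properties energy measure} gives
\begin{align*}
\Ch(h,f\,g) = \int_\mms \Gamma(h,f\,g)\d\meas = \int_\mms f\,\Gamma(h,g)\d\meas + \int_\mms g\,\Gamma(h,f)\d\meas.
\end{align*}
Applying the Leibniz rule once more, $f\,\Gamma(h,g) = \Gamma(f\,h,g) - h\,\Gamma(f,g)$ $\meas$-a.e.; since $f\,h\in\F_\bounded$ and $g\in\Dom(\Delta)$, integration yields $\int_\mms f\,\Gamma(h,g)\d\meas = \Ch(f\,h,g) - \int_\mms h\,\Gamma(f,g)\d\meas = -\int_\mms f\,h\,\Delta g\d\meas - \int_\mms h\,\Gamma(f,g)\d\meas$. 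Treating the second summand symmetrically and adding, one arrives at $\Ch(h,f\,g) = -\int_\mms h\,\big[f\,\Delta g + g\,\Delta f + 2\,\Gamma(f,g)\big]\d\meas$, the asserted identity.

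For the chain rule, one first checks $\varphi\circ f\in\F$: after replacing $\varphi$ by a Lipschitz function on $\R$ agreeing with it on a compact interval containing the essential range of $f$ (and vanishing at $0$ when $\meas[\mms]=\infty$), the chain rule for energy measures in the Lipschitz form of \autoref{Re:CDC} gives $\varphi\circ f\in\F_\rme$, whence $\varphi\circ f\in\F$ by boundedness and \autoref{Pr:Extended domain props}. Now fix $h\in\F_\bounded$; the chain rule for $\Gamma$ gives $\Ch(h,\varphi\circ f) = \int_\mms [\varphi'\circ f]\,\Gamma(h,f)\d\meas$. Put $u := (\varphi'-\varphi'(0))\circ f$: since $\varphi'-\varphi'(0)$ is Lipschitz and vanishes at $0$ while $\vert u\vert$ is controlled by $\vert f\vert\in\Ell^2(\mms)$, one has $u\in\F_\rme\cap\Ell^2(\mms)=\F$, hence $u\in\F_\bounded$. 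Splitting $[\varphi'\circ f]\,\Gamma(h,f) = u\,\Gamma(h,f) + \varphi'(0)\,\Gamma(h,f)$, the Leibniz rule turns the first term into $\Gamma(h\,u,f) - h\,\Gamma(u,f)$ with $\Gamma(u,f) = [\varphi''\circ f]\,\Gamma(f)$ by the chain rule; integrating and using $h\,u\in\F_\bounded$, $f\in\Dom(\Delta)$ and $\varphi'(0)\,\Ch(h,f) = -\varphi'(0)\int_\mms h\,\Delta f\d\meas$ gives
\begin{align*}
\Ch(h,\varphi\circ f) &= -\int_\mms h\,(u+\varphi'(0))\,\Delta f\d\meas - \int_\mms h\,[\varphi''\circ f]\,\Gamma(f)\d\meas\\
&= -\int_\mms h\,\big([\varphi'\circ f]\,\Delta f + [\varphi''\circ f]\,\Gamma(f)\big)\d\meas,
\end{align*}
as claimed.

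The whole argument is routine once the energy-measure calculus is in place; the two steps calling for a little attention are the verification that the proposed Laplacians are genuinely square-integrable --- which is exactly where the hypotheses $\Gamma(f),\Gamma(g)\in\Ell^\infty(\mms)$ enter --- and, in the chain rule, the case $\meas[\mms]=\infty$, handled by subtracting the constant $\varphi'(0)$ so that all auxiliary functions stay inside $\F$.
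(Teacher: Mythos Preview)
Your proof is correct and follows exactly the approach the paper has in mind: the paper's own ``proof'' is just the sentence that the lemma is straightforward from \autoref{Th:Properties energy measure} together with a reference to \cite[Sec.~I.6]{bouleau1991}, and you have supplied precisely those details --- verify the integration-by-parts identity \eqref{Eq:IBP Laplacian} on the dense algebra $\F_\bounded$ by expanding $\Ch$ through $\Gamma$ and applying the Leibniz and chain rules for the carré du champ.

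One small slip in the opening paragraph: you write ``we get $u\in\Ell^1(\mms)\cap\Ell^\infty(\mms)\subset\Ell^2(\mms)$'', but the summands of the form (bounded)\,$\cdot\,\Delta f$ are only in $\Ell^2(\mms)$, not in $\Ell^1(\mms)\cap\Ell^\infty(\mms)$ when $\meas[\mms]=\infty$. The conclusion $u\in\Ell^2(\mms)$ is still correct, just for the simpler reason that each summand is already in $\Ell^2(\mms)$: $f\,\Delta g\in\Ell^2(\mms)$ since $f$ is bounded and $\Delta g\in\Ell^2(\mms)$, and $\Gamma(f,g)\in\Ell^2(\mms)$ since $\vert\Gamma(f,g)\vert\leq\Gamma(f)^{1/2}\,\Gamma(g)^{1/2}$ with $\Gamma(f)^{1/2}\in\Ell^2(\mms)$ and $\Gamma(g)^{1/2}\in\Ell^\infty(\mms)$ (and similarly for $[\varphi''\circ f]\,\Gamma(f)$). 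This does not affect the rest of the argument.
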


\subsubsection{Neumann heat flow}\label{Subsub:Neumann heat flow} By the spectral theorem \cite[Lem.~1.3.2]{fukushima2011}, $\Delta$ induces a strongly continuous semigroup $(\ChHeat_t)_{t\geq 0}$ ---  the so-called \emph{\textnormal{(}Neumann\textnormal{)} heat semigroup} or \emph{\textnormal{(}Neumann\textnormal{)} heat flow}  --- of linear operators on $\Ell^2(\mms)$ by
\begin{align*}
\ChHeat_t := \rme^{\Delta t}.
\end{align*}
The curve $t\mapsto \ChHeat_t f$ belongs to $\Cont^1((0,\infty);\Ell^2(\mms))$ with
\begin{align*}
\frac{\rmd}{\rmd t}\ChHeat_t f = \Delta\ChHeat_t f
\end{align*}
for every $f\in \Ell^2(\mms)$ and every $t>0$, and for such $f$, the function $t\mapsto \Ch(\ChHeat_tf)$ is nonincreasing on $(0,\infty)$. The heat flow commutes with $\Delta$, i.e.~
\begin{align*}
\ChHeat_t\,\Delta = \Delta\,\ChHeat_t\quad\text{on }\Dom(\Delta)
\end{align*}
for every $t\geq 0$. Moreover, if $f\in\F$ then $\ChHeat_t f\to f$ in $\F$ as $t\to 0$.

For every $t\geq 0$, $\ChHeat_t$ is a bounded, self-adjoint operator on $\Ell^2(\mms)$ with norm no larger than $1$. By the first Beurling--Deny criterion \cite[Thm.~1.3.2]{davies1989}, $(\ChHeat_t)_{t\geq 0}$ is \emph{positivity preserving}, i.e.~for every nonnegative $f\in \Ell^2(\mms)$, we have $\ChHeat_tf \geq 0$ $\meas$-a.e. Moreover, by \cite[Thm.~1.3.3]{davies1989}, $(\ChHeat_t)_{t\geq 0}$ is \emph{sub-Markovian}, i.e.~if $f\leq 1_\mms$ $\meas$-a.e.~then $\ChHeat_tf\leq \One_\mms$ $\meas$-a.e.~as well for every $t\geq 0$. Hence, $(\ChHeat_t)_{t\geq 0}$ extends to a semigroup of bounded contraction operators from $\Ell^p(\mms)$ to $\Ell^p(\mms)$ for every $p\in [1,\infty]$ which is strongly continuous if $p<\infty$ and weakly$^*$ continuous if $p=\infty$. Finally, at various occasions we will need the subsequent standard a priori estimates, see e.g.~\cite{brezis1973} or the arguments in \cite[Subsec.~3.4.4]{gigli2018}.

\begin{lemma}\label{Th:Heat flow properties} For every $f\in\Ell^2(\mms)$ and every $t>0$,
\begin{align*}
\Ch(\ChHeat_t f) &\leq \frac{1}{2t}\,\big\Vert f\big\Vert_{\Ell^2(\mms)}^2,\\
\big\Vert\Delta \ChHeat_tf\big\Vert_{\Ell^2(\mms)}^2 &\leq \frac{1}{2t^2}\big\Vert f\big\Vert_{\Ell^2(\mms)}^2.
\end{align*}
\end{lemma}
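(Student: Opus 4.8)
The plan is to reduce both inequalities to elementary one–variable estimates via the spectral theorem. Since $\Delta$ is nonpositive and self-adjoint on $\Ell^2(\mms)$, it admits a spectral resolution, which I write as $-\Delta = \int_{[0,\infty)}\lambda\d E_\lambda$; correspondingly $\ChHeat_t = \rme^{t\Delta} = \int_{[0,\infty)}\rme^{-t\lambda}\d E_\lambda$ for every $t\geq 0$. Fixing $f\in\Ell^2(\mms)$ and setting $\mu_f := \big\Vert E_{(\cdot)}f\big\Vert_{\Ell^2(\mms)}^2$, this is a finite Borel measure on $[0,\infty)$ with total mass $\mu_f\big([0,\infty)\big) = \Vert f\Vert_{\Ell^2(\mms)}^2$. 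For $t>0$ one has $\ChHeat_t f\in\Dom(\Delta)\subset\F$, so applying \eqref{Eq:IBP Laplacian} with $g := \ChHeat_t f$ and then the functional calculus gives
\begin{align*}
\Ch(\ChHeat_t f) = -\int_\mms \ChHeat_t f\,\Delta\ChHeat_t f\d\meas = \int_{[0,\infty)}\lambda\,\rme^{-2t\lambda}\d\mu_f(\lambda), \qquad \big\Vert\Delta\ChHeat_t f\big\Vert_{\Ell^2(\mms)}^2 = \int_{[0,\infty)}\lambda^2\,\rme^{-2t\lambda}\d\mu_f(\lambda).
\end{align*}
It then remains only to bound the scalar kernels: a one-line calculus exercise yields $\sup_{\lambda\geq 0}\lambda\,\rme^{-2t\lambda} = (2\rme t)^{-1}\leq (2t)^{-1}$ and $\sup_{\lambda\geq 0}\lambda^2\,\rme^{-2t\lambda} = (\rme t)^{-2}\leq (2t^2)^{-1}$, whence both integrals above are dominated by the claimed multiples of $\mu_f\big([0,\infty)\big) = \Vert f\Vert_{\Ell^2(\mms)}^2$. (The constants are not sharp, which leaves some slack.)

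Alternatively --- and this is the route one takes to avoid invoking the spectral calculus explicitly --- one can argue purely from the properties of $(\ChHeat_t)_{t\geq 0}$ recorded above. Writing $f_s := \ChHeat_s f$, one has $\tfrac{\rmd}{\rmd s}\Vert f_s\Vert_{\Ell^2(\mms)}^2 = -2\,\Ch(f_s)$; since $s\mapsto\Ch(f_s)$ is nonincreasing on $(0,\infty)$, integrating over $(0,t)$ and discarding $\Vert f_t\Vert_{\Ell^2(\mms)}^2\geq 0$ gives $2t\,\Ch(f_t)\leq\Vert f\Vert_{\Ell^2(\mms)}^2$, the first estimate. For the second, one differentiates once more, $\tfrac{\rmd}{\rmd s}\Ch(f_s) = -2\,\Vert\Delta f_s\Vert_{\Ell^2(\mms)}^2$, and since $\Vert\Delta f_s\Vert_{\Ell^2(\mms)} = \Vert\ChHeat_{s-r}\Delta f_r\Vert_{\Ell^2(\mms)}$ is nonincreasing in $s$, the map $s\mapsto\Ch(f_s)$ is convex. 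Jensen's inequality over $[0,t]$ gives $\Ch(f_{t/2})\leq\tfrac1t\int_0^t\Ch(f_s)\d s\leq\tfrac{1}{2t}\Vert f\Vert_{\Ell^2(\mms)}^2$, while the tangent-line bound at $s = t$ gives $\Ch(f_{t/2})\geq\Ch(f_t) + t\,\Vert\Delta f_t\Vert_{\Ell^2(\mms)}^2$; combining the two yields $\Vert\Delta f_t\Vert_{\Ell^2(\mms)}^2\leq\tfrac{1}{2t^2}\Vert f\Vert_{\Ell^2(\mms)}^2$.

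Neither route presents a genuine obstacle: this is a standard a priori estimate. The only points needing a modicum of care are the routine bookkeeping in the functional calculus (measurability and finiteness of the spectral integrals, immediate since $\mu_f$ is finite and the integrands bounded) in the first approach, and, in the second, the justification of differentiating $s\mapsto\Vert f_s\Vert_{\Ell^2(\mms)}^2$ and $s\mapsto\Ch(f_s)$ together with the monotonicity of $s\mapsto\Vert\Delta f_s\Vert_{\Ell^2(\mms)}$ --- all legitimate because $f_s\in\Dom(\Delta^k)$ for every $k$ when $s>0$ (iterate the semigroup) and $s\mapsto\ChHeat_s f$ is $\Cont^1$ into $\Ell^2(\mms)$, hence into $\F$ as well by the recorded properties of the heat flow.
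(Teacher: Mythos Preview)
Your proposal is correct. The paper does not actually supply a proof of this lemma; it merely cites \cite{brezis1973} and \cite[Subsec.~3.4.4]{gigli2018} as standard references. Your second route --- differentiating $s\mapsto\Vert f_s\Vert_{\Ell^2(\mms)}^2$ and $s\mapsto\Ch(f_s)$, exploiting monotonicity and convexity --- is precisely the gradient-flow argument found in those sources, so in that sense you have reproduced the intended proof. Your spectral-calculus route is a legitimate and arguably cleaner alternative that the cited references do not use explicitly; it trades the differential inequalities for the elementary pointwise bounds on $\lambda\mapsto\lambda^k\rme^{-2t\lambda}$, at the mild cost of invoking the spectral theorem rather than only the semigroup properties recorded in the text.
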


\subsubsection{Associated Markov process}\label{Sub:Markov} By quasi-regularity \cite[Thm.~IV.3.5]{ma1992}, $\Ch$ is properly associated to an $\meas$-reversible Markov process $\smash{\M := (\Omega,\calA,(\B_t)_{t\geq 0}, (\Prob^x)_{x\in\mms_{\cem}})}$ consisting of a process $(\B_t)_{t\geq 0}$ with values in a Hausdorff topological space $\mms_{\cem} := \mms \sqcup \{\cem\}$ with \emph{cemetery} $\cem$ and associated lifetime $\zeta \colon \Omega\to [0,\infty]$ defined on a measurable space $(\Omega,\calA)$, a family $(\Prob^x)_{x\in\mms_{\cem}}$ of probability measures on $(\Omega,\calA)$, and an (implicitly given) filtration $(\calA_t)_{t \geq 0}$ w.r.t.~which $\smash{((\B_t)_{t\geq 0},(\Prob^x)_{x\in \mms_\cem})}$ satisfies the Markov property. See \cite[Sec.~IV.1]{ma1992} for precise definitions. The indicated association is that for every $f\in \Ell^2(\mms)$ and every $t>0$,
\begin{align*}
\ChHeat_t f = \Exp^{\,\cdot}\big[f(\B_{2t})\,\One_{\{t < \zeta/2\}}\big]\quad\meas\text{-a.e.},
\end{align*}
and the map $\smash{x \mapsto \Exp^x[f(\B_{2t})\,\One_{\{t < \zeta/2\}}]}$ defined on $\mms$ is $\Ch$-q.c.~\cite[Def.~IV.2.5]{ma1992}. These expressions make sense regardless of the chosen $\meas$-version of $f$ \cite[Rem.~IV.2.6]{ma1992} and respect the convention $f(\cem) := 0$ for $f$ initially defined on $\mms$.  $\M$ can be constructed to be   \emph{right} \cite[Def.~IV.1.8]{ma1992}  and \emph{$\meas$-tight special standard} \cite[Def.~IV.1.13]{ma1992}. Among other things, this means that~$(\B_0)_\push\Prob^x = \delta_x$ for every $x\in\mms_{\cem}$ and that $(\B_t)_{t\geq 0}$ obeys the strong Markov property w.r.t.~$(\calA_t)_{t\geq 0}$.

By strong locality of $\Ch$, see e.g.~\cite[Thm.~1.5]{ma1992} or  \cite[Thm.~4.3.4]{chen2012}, $\M$ can be even chosen to  satisfy the a.s.~sample path continuity property
\begin{align*}
\Prob^{\,\cdot}\big[t\mapsto \sfb_t \text{ is continuous on }[0,\zeta/2)\big] = 1\quad\Ch\text{-q.e.}
\end{align*}

\subsubsection{Two guiding examples}\label{Sub:Guiding}

The following frameworks frequently serve as guiding examples  throughout our treatise and are mainly listed to fix notation.

\begin{example}[Riemannian manifolds with boundary]\label{Ex:Mflds} Let $\mms$ be a Riemannian manifold with boundary as in \autoref{Sub:Riem mflds}, and let $\meas$ be a Borel measure on $\mms$ which is locally equivalent to $\vol$. Let $W^{1,2}(\mms^\circ)$ be the Sobolev space w.r.t.~$\meas$  defined in the usual sense on $\mms^\circ$. Define  $\Ch\colon W^{1,2}(\mms^\circ)\to [0,\infty)$ through
\begin{align*}
\Ch(f) := \int_{\mms^\circ} \vert\nabla f\vert^2\d\meas
\end{align*}
and the quantity $\Ch(f,g)$, $f,g\in W^{1,2}(\mms^\circ)$, by polarization. 
Then $(\Ch, W^{1,2}(\mms))$ is a Dirichlet form which is strongly local and regular, since $\smash{\Cont_\comp^\infty(\mms)}$ is a dense set of $W^{1,2}(\mms)$ which is also uniformly dense in $\Cont_0(\mms)$.\label{Not:C_0} (The latter is the space of continuous functions on $\mms$ vanishing at $\infty$.) $\Ch$ admits a carré du champ which is precisely given by $\vert\nabla \cdot\vert^2$.  See  \cite{chen2012, davies1989, fukushima2011, sturm2021} for details.

Furthermore, suppose that $\smash{\meas := \rme^{-2w}\,\vol}$ for some $\smash{w\in \Cont^2(\mms)}$. By Green's formula, see e.g.~\cite[p.~44]{lee1997}, $\Delta$ is the self-adjoint realization of the drift Laplacian 
\begin{align*}
\Delta_0 - 2\,\langle\nabla w,\nabla\cdot\rangle
\end{align*}
w.r.t.~\emph{Neumann boundary conditions}, where $\Delta_0$ is the Laplace--Beltrami operator on $\mms$, initially defined on functions $f\in \Cont_\comp^\infty(\mms)$ with
\begin{align*}
\rmd f(\sfn)=0\quad\text{on }\partial\mms.
\end{align*}
This equation makes sense $\surf$-a.e.~for every $f\in\Dom(\Delta)$ by the local trace theorem.
\end{example}

\begin{example}[Infinitesimally Hilbertian metric measure spaces]\label{Ex:mms} Let $(\mms,\met,\meas)$ be an infinitesimally Hilbertian metric measure space, according to (4.19) in \cite{ambrosio2014b}, for which $\meas$ satisfies the growth condition (4.2) in \cite{ambrosio2014a}. In this case $\Ch$  is the \emph{Cheeger energy} introduced in \cite[Thm.~4.5]{ambrosio2014a}, with domain denoted by $W^{1,2}(\mms)$. By \cite[Thm.~4.1]{savare2014}, $\Ch$ is  quasi-regular and strongly local, and it admits a carré du champ which $\meas$-a.e.~coincides with the \emph{minimal relaxed gradient} from \cite[Def.~4.2]{ambrosio2014a}.
\end{example}

\begin{remark}[Subsets]\label{Re:Subsets} Let $(\mms,\met,\meas)$ be as in \autoref{Ex:mms} and $E\subset \mms$ be a closed subset. Assume that $E= \smash{\overline{E}^\circ}$, $\meas[E] >0$, $\meas[\partial E] = 0$, and that the length distance $\met_E$ on $E^2$ induced by $\met$ is nondegenerate. Then $(E,\met_E,\meas_E)$, where $\meas_E := \meas[\,\cdot\cap E]$, induces a quasi-regular, strongly local Dirichlet space $(E,\Ch_E,\meas_E)$ \cite{sturm2020, sturm2021} whose carré du champs coincide $\meas_E$-a.e.~on $E^\circ$. Moreover, in the sense of restrictions of functions, see e.g.~(47) in  \cite{sturm2020}, we have
\begin{align}\label{Eq:W12 inclusions subsets}
W^{1,2}(\mms)\big\vert_E \subset W^{1,2}(E).
\end{align}
\end{remark}

Whenever $\mms$ fits into \autoref{Ex:Mflds} or \autoref{Ex:mms}, we intend the canonically induced Dirichlet space $(\mms,\Ch,\meas)$ without further notice.

\subsection{$L^\infty$-modules}\label{Sub:Linfty modles} Throughout this section, we assume that $\mu$ is a $\sigma$-finite Borel measure on $\mms$. The theory of \emph{$\Ell^p(\mms,\mu)$-normed $\Ell^\infty(\mms,\mu)$-modules}, $p\in [1,\infty]$, over general measure spaces has been introduced in \cite{gigli2018} and is recorded now.

As in \autoref{Subsub:Fcts}, when $\mu = \meas$ --- which will be the relevent case in most of our work unless in \autoref{Sec:Cotangent module} --- we drop the measure from the notation. 

\subsubsection{Definition and basic properties} 

\begin{definition}\label{Def:Modules} Given $p\in [1,\infty]$, a real Banach space $(\calM,\Vert \cdot\Vert_\calM)$ or simply $\calM$ is termed an \emph{$\Ell^p$-normed $\Ell^\infty$-module \textnormal{(}over $\mms$\textnormal{)}} if it comes with
\begin{enumerate}[label=\textnormal{\alph*.}]
\item\label{La:Hut} a bilinear map $\cdot\colon \Ell^\infty(\mms,\mu)\times\calM\to\calM$ satisfying
\begin{align*}
(f\,g)\cdot v &= f\cdot(g\cdot v),\\
\One_\mms\cdot v &= v,
\end{align*}
\item a nonnegatively valued map $\vert\cdot\vert_\mu \colon \calM\to \Ell^p(\mms,\mu)$ such that
\begin{align*}
\vert f\cdot v\vert_\mu &= \vert f\vert\,\vert v\vert_\mu\quad\mu\text{-a.e.},\\
\Vert v\Vert_\calM &= \big\Vert \vert v\vert_\mu\big\Vert_{\Ell^p(\mms,\mu)},
\end{align*}
\end{enumerate}
for every $f,g\in\Ell^\infty(\mms,\mu)$ and every $v\in\calM$. If only \autoref{La:Hut} is satisfied, we call $(\calM,\Vert\cdot\Vert_{\calM})$ or simply $\calM$ an \emph{$\Ell^\infty$-premodule}.
\end{definition}

We neither express the space $\mms$ nor the reference measure $\mu$ in the terminology of an $\Ell^p$-normed $\Ell^\infty$-module for brevity. In this section, every module is considered over the same $\mms$ and w.r.t.~the same $\mu$. Later, it will always either be clear from the context or explicitly indicated which $\mms$ and $\mu$ are intended.

\begin{remark} In \cite[Def.~1.2.10]{gigli2018}, spaces obeying \autoref{Def:Modules} are called \emph{$\Ell^p$-normed premodules} and are as such a priori more general than $\Ell^p$-normed $\Ell^\infty$-modules. However, these notions coincide for $p\in [1,\infty)$ \cite[Prop.~1.2.12]{gigli2018}. What only might be missing in the case $p=\infty$ is the gluing property \cite[Def.~1.2.1, Ex.~1.2.5]{gigli2018}, whose lack will never occur in our work,  hence the minor change of terminology.
\end{remark}

\begin{example} $\Ell^p(\mms,\mu)$ is an $\Ell^p$-normed $\Ell^\infty$-module w.r.t.~$\mu$, $p\in[1,\infty]$.
\end{example}

We call $\calM$ an \emph{$\Ell^\infty$-module} if it is $\Ell^p$-normed for some $p\in [1,\infty]$ --- which is assumed throughout the rest of this section --- and \emph{separable} if it is a separable Banach space. We term $v\in\calM$ \emph{\textnormal{(}$\mu$-essentially\textnormal{)} bounded} if $\vert v\vert_\mu\in\Ell^\infty(\mms,\mu)$. If $\calM$ is separable, it admits a countable dense subset of bounded elements. We drop the $\cdot$ sign if the multiplication on $\calM$ is understood. By \cite[Prop.~1.2.12]{gigli2018}, $\vert \cdot\vert_\mu$ is local in the sense that $\vert v\vert_\mu = 0$ $\mu$-a.e.~on $E$ if and only if $1_E\,v = 0$ for every $v\in\calM$ and every $E\in\Borel^\mu(\mms)$. We write 
\begin{align*}
\{v = 0\} &:= \{\vert v\vert_\mu = 0\},\\
\{v\neq 0\} &:= \{v=0\}^\rmc
\end{align*}
Lastly, for every $v,w\in\calM$ we have the $\mu$-a.e.~triangle inequality
\begin{align*}
\vert v+ w\vert_\mu \leq \vert v\vert_\mu + \vert w\vert_\mu\quad\mu\text{-a.e.},
\end{align*}
which shows that the map $\vert\cdot\vert_\mu\colon \calM\to \Ell^p(\mms,\mu)$ is continuous.

$\calM$ is called \emph{Hilbert module} if it is an $\Ell^2$-normed $\Ell^\infty$-module and a Hilbert space \cite[Def.~1.2.20, Prop.~1.2.21]{gigli2018}. Its pointwise norm $\vert\cdot\vert_\mu$ satisfies a pointwise $\mu$-a.e.~parallelogram identity. In particular, it induces a \emph{pointwise scalar product} $\langle\cdot,\cdot\rangle_\mu \colon \calM^2\to\Ell^1(\mms,\mu)$ which is $\Ell^\infty(\mms,\mu)$-bilinear, $\mu$-a.e.~nonnegative definite, local in both components, satisfies the pointwise $\mu$-a.e.~Cauchy--Schwarz inequality, and reproduces the Hilbertian scalar product on $\calM$ by integration w.r.t.~$\mu$.

Let $\calM$ and $\calN$ be $\Ell^p$-normed $\Ell^\infty$-modules, $p\in [1,\infty]$, such  that $\calN$ is a closed subspace of $M$. Then the quotient $\calM/\calN$ is an $\Ell^p$-normed $\Ell^\infty$-module as well \cite[Prop.~1.2.14]{gigli2018}  with pointwise norm  given by
\begin{align*}
\vert[v]\vert_{\mu} := \mu\text{-}\!\essinf\!\big\lbrace \vert v+w\vert_\mu : w\in\calN\big\rbrace.
\end{align*}
For instance, given $E\in\Borel^\mu(\mms)$, the $\Ell^\infty$-module $\smash{\calM\big\vert_E}$ consisting of all $v\in\calM$ such that $\{v\neq 0\}\subset E$ can be canonically identified with $\smash{\calM/\calM\big\vert_{E^\rmc}}$.

\subsubsection{Duality}\label{Sub:Duality} Let $\calM$ and $\calN$ be $\Ell^\infty$-normed modules. Slightly abusing notation, denote both pointwise norms by $\vert \cdot\vert_\mu$. A map $T\colon \calM\to\calN$ is called \emph{module morphism} if it is a bounded linear map in the sense of functional analysis and
\begin{align}\label{Eq:BRABBL}
T(f\,v) = f\,T(v)
\end{align}
for every $v\in\calM$ and every $f\in\Ell^\infty(\mms,\mu)$.
The set of all such module morphisms is written $\Hom(\calM;\calN)$ and is equipped with the usual operator norm $\Vert \cdot\Vert_{\calM;\calN}$. We term $\calM$ and $\calN$ \emph{isomorphic \textnormal{(}as $\Ell^\infty$-modules\textnormal{)}} if there exist $T\in\Hom(\calM;\calN)$ and $S\in\Hom(\calN;\calM)$ such that $T\circ S = \Id_\calN$ and $S\circ T=\Id_\calM$. Any such $T$ is called \emph{module isomorphism}. If in addition, such a $T$ is a norm isometry, it is called \emph{module isometric isomorphism}. In fact, by \eqref{Eq:BRABBL} every module isometric isomorphism $T$ preserves pointwise norms $\mu$-a.e., i.e.~for every $v\in\calM$,
\begin{align*}
\vert T(v)\vert_\mu = \vert v\vert_\mu\quad\mu\text{-a.e.}
\end{align*}

The \emph{dual module} to $\calM$ is defined by
\begin{align*}
\calM^* := \Hom(\calM;\Ell^1(\mms,\mu))
\end{align*}
and will be endowed with the usual operator norm. The pointwise pairing between $v\in\calM$ and $L\in\calM^*$ is denoted by $L(v)\in\Ell^1(\mms,\mu)$. If $\calM$ is $\Ell^p$-normed, then $\calM^*$ is an $\Ell^q$-normed $\Ell^\infty$-normed module, where $p,q\in [1,\infty]$ with $1/p+1/q=1$ \cite[Prop.~1.2.14]{gigli2018} with naturally defined multiplication and, by a slight abuse of notation, pointwise norm given by
\begin{align}\label{Eq:dual norm}
\vert L\vert_\mu := \mu\text{-}\!\esssup\!\big\lbrace \vert L(v)\vert : v\in\calM,\ \vert v\vert_\mu \leq 1\ \mu\text{-a.e.}\big\rbrace.
\end{align}
By  \cite[Cor.~1.2.16]{gigli2018}, if $p<\infty$,
\begin{align*}
\vert v\vert_\mu = \mu\text{-}\!\esssup\!\big\lbrace \vert L(v)\vert : L\in \calM^*,\ \vert L\vert_\mu \leq 1\ \mu\text{-a.e.}\big\rbrace
\end{align*}
for every $v\in\calM$. Moreover, if $p<\infty$, in the sense of functional analysis $\calM^*$ and the Banach space dual $\calM'$ of $\calM$ are isometrically isomorphic \cite[Prop.~1.2.13]{gigli2018}. In this case, the natural  pointwise pairing map $\calJ\colon \calM \to\calM^{**}$, where  $\calM^{**} := \Hom(\calM^*;\Ell^1(\mms,\mu))$, belongs to $\Hom(\calM;\calM^{**})$ and constitutes a norm isometry \cite[Prop.~1.2.15]{gigli2018}. We term $\calM$ \emph{reflexive \textnormal{(}as $\Ell^\infty$-module\textnormal{)}} if $\calJ$ is surjective. If $\calM$ is $\Ell^p$-normed for $p\in (1,\infty)$, this is equivalent to $\calM$ being reflexive as Banach space \cite[Cor.~1.2.18]{gigli2018}, while for $p=1$,  the implication from ``reflexive as Banach space'' to ``reflexive as $\Ell^\infty$-module'' still holds \cite[Prop.~1.2.13, Prop.~1.2.17]{gigli2018}. In particular, all Hilbert modules are reflexive in both senses.

If $\calM$ is a Hilbert module, we have the following analogue of the Riesz representation theorem \cite[Thm.~1.2.24]{gigli2018}. For $v\in\calM$, let $L_v\in\calM^*$ be given by
\begin{align*}
L_v(w) := \langle v,w\rangle.
\end{align*}

\begin{proposition}\label{Th:Riesz theorem modules} Let $\calM$ be a Hilbert module. Then the map which sends $v\in\calM$ to $L_v\in\calM^*$ is a module isometric  isomorphism, and in particular a norm isometry. Moreover, for every $l\in\calM'$ there exists a unique $v\in\calM$ with
\begin{align*}
l = \int_\mms \langle v,\cdot\rangle_\mu\d\mu.
\end{align*}
\end{proposition}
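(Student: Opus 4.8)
The plan is to establish the three assertions in order: that $v\mapsto L_v$ is a module morphism preserving pointwise norms, that it is surjective onto $\calM^*$, and finally the Banach-dual statement, which will come almost for free. Throughout I use that $\calM$ is $\Ell^2$-normed, so $\calM^*$ is $\Ell^2$-normed as well, and that the pointwise scalar product $\langle\cdot,\cdot\rangle_\mu$ is $\Ell^\infty(\mms,\mu)$-bilinear, obeys the pointwise $\mu$-a.e.\ Cauchy--Schwarz inequality, satisfies $\langle v,v\rangle_\mu=\vert v\vert_\mu^2$ $\mu$-a.e., and reproduces $\langle\cdot,\cdot\rangle_\calM$ by integration, as recalled above.

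First I would check that $v\mapsto L_v$ is linear (from bilinearity of $\langle\cdot,\cdot\rangle_\mu$) and $\Ell^\infty$-linear in the sense $L_{f\,v}=f\,L_v$ (from $\Ell^\infty(\mms,\mu)$-bilinearity of $\langle\cdot,\cdot\rangle_\mu$). The pointwise Cauchy--Schwarz inequality gives $\vert L_v(w)\vert=\vert\langle v,w\rangle_\mu\vert\le\vert v\vert_\mu\,\vert w\vert_\mu$ $\mu$-a.e., so $L_v\in\calM^*$ with $\vert L_v\vert_\mu\le\vert v\vert_\mu$ $\mu$-a.e.\ by \eqref{Eq:dual norm}. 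For the reverse pointwise bound I would use the standard estimate $\vert L_v(w)\vert\le\vert L_v\vert_\mu\,\vert w\vert_\mu$ $\mu$-a.e.\ (a direct consequence of \eqref{Eq:dual norm} and locality of $\vert\cdot\vert_\mu$) with the choice $w=v$: since $L_v(v)=\vert v\vert_\mu^2$ $\mu$-a.e., we obtain $\vert v\vert_\mu^2\le\vert L_v\vert_\mu\,\vert v\vert_\mu$ $\mu$-a.e., hence $\vert v\vert_\mu\le\vert L_v\vert_\mu$ on $\{v\neq 0\}$ and trivially on $\{v=0\}$. Thus $\vert L_v\vert_\mu=\vert v\vert_\mu$ $\mu$-a.e.; integrating the squares over $\mms$ yields $\Vert L_v\Vert_{\calM^*}=\Vert v\Vert_\calM$, so $v\mapsto L_v$ is in particular injective.

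The heart of the matter is surjectivity. Given $L\in\calM^*$, the functional $w\mapsto\int_\mms L(w)\d\mu$ is linear on $\calM$ and bounded, since $\big\vert\int_\mms L(w)\d\mu\big\vert\le\int_\mms\vert L\vert_\mu\,\vert w\vert_\mu\d\mu\le\Vert L\Vert_{\calM^*}\,\Vert w\Vert_\calM$ by the estimate $\vert L(w)\vert\le\vert L\vert_\mu\,\vert w\vert_\mu$ and the Cauchy--Schwarz inequality in $\Ell^2(\mms,\mu)$. As $\calM$ is a Hilbert space, the classical Riesz representation theorem furnishes a unique $v\in\calM$ with $\int_\mms L(w)\d\mu=\langle v,w\rangle_\calM=\int_\mms\langle v,w\rangle_\mu\d\mu=\int_\mms L_v(w)\d\mu$ for every $w\in\calM$. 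To promote this integrated identity to $L=L_v$ I would exploit module-linearity: replacing $w$ by $f\,w$ for arbitrary $f\in\Ell^\infty(\mms,\mu)$ gives $\int_\mms f\,\big(L(w)-L_v(w)\big)\d\mu=0$; since $L(w)-L_v(w)\in\Ell^1(\mms,\mu)$ and $\mu$ is $\sigma$-finite, testing against $f=\sgn\big(L(w)-L_v(w)\big)\,\One_{B_k}$ along an exhaustion $(B_k)_{k\in\N}$ of $\mms$ by Borel sets of finite $\mu$-measure forces $L(w)=L_v(w)$ $\mu$-a.e., for every $w\in\calM$. Hence $L=L_v$, so $v\mapsto L_v$ is a bijective module morphism preserving pointwise norms, i.e.\ a pointwise module isometric isomorphism, and a fortiori a norm isometry.

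For the ``Moreover'' part: given $l\in\calM'$, since $\calM$ is a Hilbert space the classical Riesz theorem produces a unique $v\in\calM$ with $l(w)=\langle v,w\rangle_\calM=\int_\mms\langle v,w\rangle_\mu\d\mu$ for all $w\in\calM$, that is $l=\int_\mms\langle v,\cdot\rangle_\mu\d\mu$; uniqueness follows from nondegeneracy of $\langle\cdot,\cdot\rangle_\calM$. (Alternatively, this is the composition of the first part with the identification $\calM^*\cong\calM'$ recorded above.) The one genuinely delicate step is the passage in the surjectivity argument from the weak identity $\int_\mms L(w)\d\mu=\int_\mms L_v(w)\d\mu$ to the strong one $L(w)=L_v(w)$ $\mu$-a.e.; the remainder is a routine transcription of the scalar Riesz theorem into the pointwise $\Ell^\infty$-module calculus, the only other minor care being that the test elements used ($f\,w$ and $v$ itself) manifestly lie in $\calM$.
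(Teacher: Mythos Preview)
Your proof is correct. The paper does not supply its own proof of this proposition; it is quoted directly from \cite[Thm.~1.2.24]{gigli2018}, so there is no in-paper argument to compare against. Your approach---using pointwise Cauchy--Schwarz for the isometry, classical Riesz in the Hilbert space $\calM$ to produce the candidate $v$, and then testing against $f\,w$ to upgrade the integrated identity to a pointwise one---is the natural route and matches the standard argument for this result.
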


\subsubsection{$L^0$-modules}\label{Sub:L0 modules} Let $\calM$ be an  $\Ell^\infty$-module. Following \cite[Sec.~1.3]{gigli2018} we now recall a natural concept of building a topological vector space $\calM^0$ of ``measurable elements of $\mms$ without integrability restrictions'' containing $\calM$ with continuous inclusion as well as a (non-relabeled) extension of the pointwise norm $\vert \cdot\vert_\mu\colon\calM^0\to\Ell^0(\mms,\mu)$ such that for every $v\in\calM^0$, $v\in \calM$ if and only if $\vert v\vert_\mu\in\Ell^p(\mms,\mu)$.

Let $(B_i)_{i\in\N}$ a Borel partition of $\mms$ such that $\meas[B_i] \in (0,\infty)$ for every $i\in\N$. Denote by $\calM^0$ the completion of $\calM$ w.r.t.~the distance $\met_{\calM^0}\colon \calM^2\to[0,\infty)$ with
\begin{align*}
\met_{\calM^0}(v,w) := \sum_{i\in \N} \frac{2^{-i}}{\meas[B_i]}\int_{B_i}\min\{\vert v-w\vert,1\}\d\mu.
\end{align*}
We refer to $\calM^0$ as the \emph{$\Ell^0$-module} associated to $\calM$. The induced topology on $\calM^0$ does not depend on the choice of $(B_i)_{i\in\N}$ \cite[p.~31]{gigli2018}. Additionally, scalar and functional multiplication, and the pointwise norm $\vert\cdot\vert_\mu$ extend continuously to $\calM^0$, so that all $\mu$-a.e.~properties mentioned in \autoref{Sub:Linfty modles} hold for general elements in $\calM^0$ and $\Ell^0(\mms,\mu)$ in place of $\calM$ and $\Ell^\infty(\mms,\mu)$. The pointwise pairing of $\calM$ and $\calM^*$ extends uniquely and continuously to a bilinear map on $\calM^0\times(\calM^*)^0$ with values in $\Ell^0(\mms,\mu)$ such that for every $v\in\calM^0$ and every $L\in (\calM^*)^0$,
\begin{align*}
\vert L(v)\vert \leq \vert L\vert_\mu\,\vert v\vert_\mu\quad\mu\text{-a.e.},
\end{align*}
and we have the following characterization of elements in $(\calM^*)^0$ \cite[Prop.~1.3.2]{gigli2018}.

\begin{proposition}\label{Pr:Dual of L0} Let $T\colon \calM^0\to \Ell^0(\mms,\mu)$ be a linear map for which there exists $f\in\Ell^0(\mms,\mu)$ such that for every $v\in\calM$,
\begin{align*}
\vert T(v)\vert\leq f\,\vert v\vert_\mu\quad\mu\text{-a.e.}
\end{align*}
Then there exists a unique $L\in(\calM^*)^0$ such that 
for every $v\in\calM$, 
\begin{align*}
L(v) = T(v)\quad\mu\text{-a.e.},
\end{align*}
and we furthermore have
\begin{align*}
\vert L\vert_\mu \leq f\quad\mu\text{-a.e.}
\end{align*}
\end{proposition}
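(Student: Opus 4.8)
The plan is to first promote the pointwise estimate to genuine $\Ell^\infty(\mms,\mu)$-linearity of $T$ on $\calM$, and then to build $L$ by localization and gluing. One may assume $f\geq 0$ $\mu$-a.e.\ (otherwise replace $f$ by $f\vee 0$, which leaves the hypothesis intact since $\vert T(v)\vert\geq 0$ forces $\vert v\vert_\mu=0$ $\mu$-a.e.\ on $\{f<0\}$). For the first step, fix $E\in\Borel^\mu(\mms)$ and $v\in\calM$: the hypothesis bounds $\vert T(\One_E v)\vert$ by $f\,\One_E\,\vert v\vert_\mu$, so $T(\One_E v)$ vanishes $\mu$-a.e.\ outside $E$; applying the same to $E^\rmc$ and invoking linearity of $T$ gives $T(\One_E v)=\One_E\,T(v)$ $\mu$-a.e., hence $T(g\,v)=g\,T(v)$ for every simple $g$. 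Moreover the estimate makes $T\colon\calM\to\Ell^0(\mms,\mu)$ continuous: if $\vert v_n-v\vert_\mu\to 0$ in $\Ell^p(\mms,\mu)$, then $f\,\vert v_n-v\vert_\mu\to 0$ in $\Ell^0(\mms,\mu)$, whence $T(v_n)\to T(v)$ in $\Ell^0(\mms,\mu)$. Approximating an arbitrary $g\in\Ell^\infty(\mms,\mu)$ uniformly by simple functions therefore yields $T(g\,v)=g\,T(v)$ in general, so $T$ restricts on $\calM$ to an $\Ell^\infty(\mms,\mu)$-linear, $\Ell^0$-continuous map.

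Next I would localize. Since $\mu$ is $\sigma$-finite and $f$ is $\mu$-a.e.\ finite, choose a countable Borel partition $\{C_k\}_{k\in\N}$ of $\mms$ with $\mu[C_k]<\infty$ and $f\leq M_k$ $\mu$-a.e.\ on $C_k$ for suitable $M_k<\infty$. Put $L_k(v):=\One_{C_k}\,T(v)$ for $v\in\calM$. Then $\vert L_k(v)\vert\leq M_k\,\One_{C_k}\,\vert v\vert_\mu$, and finiteness of $\mu[C_k]$ (together with Hölder's inequality if $p>1$) forces $L_k(v)\in\Ell^1(\mms,\mu)$ and makes $L_k$ a bounded operator; combined with $L_k(g\,v)=\One_{C_k}\,g\,T(v)=g\,L_k(v)$ from the first step, this shows $L_k\in\calM^*=\Hom(\calM;\Ell^1(\mms,\mu))$, with $\{L_k\neq 0\}\subset C_k$ and $\vert L_k\vert_\mu\leq\One_{C_k}\,f$ $\mu$-a.e.\ by \eqref{Eq:dual norm}.

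Now glue. The partial sums $S_N:=\sum_{k\leq N}L_k\in\calM^*$ have pairwise disjointly supported summands, so $\vert S_N-S_{N'}\vert_\mu\leq\One_{\bigcup_{k>N}C_k}\,f$ $\mu$-a.e.\ for $N'>N$; since this majorant decreases $\mu$-a.e.\ to $0$ and $\mu[B_i]<\infty$ for each $i$, dominated convergence shows that $(S_N)_{N\in\N}$ is Cauchy in $(\calM^*)^0$, hence converges to some $L\in(\calM^*)^0$. For $v\in\calM$ one has $S_N(v)=\One_{\bigcup_{k\leq N}C_k}\,T(v)\to T(v)$ in $\Ell^0(\mms,\mu)$, while continuity of the extended pairing $\calM^0\times(\calM^*)^0\to\Ell^0(\mms,\mu)$ gives $S_N(v)\to L(v)$; thus $L(v)=T(v)$ $\mu$-a.e. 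Similarly $\vert L\vert_\mu=\lim_N\vert S_N\vert_\mu=\sum_k\vert L_k\vert_\mu\leq\sum_k\One_{C_k}\,f=f$ $\mu$-a.e., using continuity and locality of the pointwise norm on $(\calM^*)^0$. This gives existence together with the bound $\vert L\vert_\mu\leq f$. For uniqueness, let $L'\in(\calM^*)^0$ also satisfy $L'(v)=T(v)$ $\mu$-a.e.\ for all $v\in\calM$ and set $M:=L-L'$, so $M$ annihilates $\calM$. For $n,k\in\N$, the truncation $\One_{\{\vert M\vert_\mu\leq n\}\cap C_k}\,M$ has pointwise norm at most $n\,\One_{C_k}$ with $\mu[C_k]<\infty$, hence lies in $\calM^*$; as it still annihilates every $v\in\calM$, it is the zero element of $\calM^*$, so $\One_{\{\vert M\vert_\mu\leq n\}\cap C_k}\,\vert M\vert_\mu=0$ $\mu$-a.e. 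Letting $n\to\infty$ and summing over $k$ (recall $\vert M\vert_\mu<\infty$ $\mu$-a.e.) gives $\vert M\vert_\mu=0$, i.e.\ $L=L'$.

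The crux is the first step: a map that is only assumed linear but satisfies the stated pointwise estimate is automatically $\Ell^\infty(\mms,\mu)$-linear and $\Ell^0$-continuous on $\calM$. Once that is in place, the remainder is a careful but routine assembly from the definition of the $\Ell^0$-module $(\calM^*)^0$, locality of pointwise norms, and dominated convergence — the only point requiring mild attention being to keep all sets in play of finite $\mu$-measure, so that the truncations genuinely land in $\calM^*$.
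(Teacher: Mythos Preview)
Your argument is correct. The paper does not actually prove this proposition; it merely records it with a citation to \cite[Prop.~1.3.2]{gigli2018}, so there is no ``paper's own proof'' to compare against. Your proof follows the natural strategy one would expect for such a result: (i) upgrade the pointwise bound to $\Ell^\infty$-linearity of $T$ on $\calM$ via the locality argument $T(\One_E v)=\One_E T(v)$; (ii) localize to sets where $f$ is bounded and $\mu$ is finite, so that each piece $L_k$ lands in $\calM^*$; (iii) glue via a Cauchy argument in $(\calM^*)^0$; (iv) handle uniqueness by truncating $M=L-L'$ back into $\calM^*$ using the characterization that $w\in(\calM^*)^0$ lies in $\calM^*$ precisely when $\vert w\vert_\mu\in\Ell^q(\mms,\mu)$. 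All steps are sound. The only places worth a second glance are the identity $(gM)(v)=g\,M(v)$ for $M\in(\calM^*)^0$ and $g\in\Ell^0(\mms,\mu)$, which you implicitly use in the uniqueness step --- this is indeed the continuous extension of the defining module action on $\calM^*$ --- and the equality $\vert S_N\vert_\mu=\sum_{k\leq N}\vert L_k\vert_\mu$, which you justify correctly via locality and disjointness of supports.
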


\begin{remark} To some extent, one can make sense of $\Ell^0$-normed modules w.r.t.~a \emph{submodular outer measure} $\mu^*$ on $\mms$ \cite[Def.~2.4]{debin2021}. A prominent example of such a $\mu^*$ is the \emph{$\Ch$-capacity} $\CAP_\Ch$ \cite[Def.~2.6, Prop.~2.8]{debin2021}, which --- towards the aim of defining \emph{quasi-continuity} of vector fields over metric measure spaces --- motivated the authors of \cite{debin2021} to study this kind of modules.  However, what lacks for such $\mu^*$ is a working definition of dual modules, and in particular \autoref{Pr:Dual of L0} seems unavailable \cite[Rem.~3.3]{debin2021}.
\end{remark}

\begin{remark} The concept of $\Ell^0$-modules is tightly linked to the one of \emph{measurable fields of Hilbert spaces} \cite{eberle1999}. See \cite[Rem.~1.4.12]{gigli2018} and \cite[Ch.~2]{hinz2013} for details.
\end{remark}

\subsubsection{Local dimension and dimensional decomposition}\label{Sub:Local dimension} Given an $\Ell^\infty$-module $\calM$ and $E\in \Borel^\mu(\mms)$, we say that $v_1,\dots,v_n\in\calM$, $n\in\N$, are \emph{independent \textnormal{(}on $E$\textnormal{)}} if all  functions $f_1,\dots,f_n\in\Ell^\infty(\mms,\mu)$ obeying
\begin{align*}
f_1\,v_1 + \dots + f_n\,v_n = 0\quad \mu\text{-a.e.}\quad\text{on }E
\end{align*}
vanish $\mu$-a.e.~on $E$. This notion of local independence is well-behaved under passage to subsets and under module isomorphisms, see \cite[p.~34]{gigli2018} for details. The \emph{span} $\Span_E\,\calV$ of a subset $\calV\subset\calM$ on $E\subset \mms$ --- briefly $\Span\,\calV$ if $E=\mms$ --- is the space consisting of all $\smash{v\in \calM\big\vert_E}$ which possess the following property: there exists a disjoint  partition $(E_k)_{k\in\N}$ of $E$ in $\Borel^\mu(\mms)$ such that for every $k\in\N$, we can find $m_k\in\N$, $\smash{v_1^k,\dots,v_{m_k}^k\in \calV}$ and $\smash{f_1^k,\dots,f_{m_k}^k}\in\Ell^\infty(\mms,\mu)$ such that
\begin{align*}
\One_{E_k}\,v = f_1^k\,v_1^k + \dots + f_{m_k}^k\,v_{m_k}^k.
\end{align*}
Its closure $\smash{\cl_{\Vert\cdot\Vert_\calM}\Span_E\,\calV}$ is usually referred to as the space \emph{generated} by $\calV$ on $E$, or simply by $\calV$ if $E=\mms$ \cite[Def.~1.4.2]{gigli2018}. 

If $\calV$ is a finite set, $\Span_E\,\calV$ is closed \cite[Prop.~1.4.6]{gigli2018}, a fact which gives additional strength to the following notions \cite[Def.~1.4.3]{gigli2018}.

\begin{definition} A family $\{v_1,\dots,v_n\}\subset \calM$, $n\in\N$, is said to be a \emph{\textnormal{(}local\textnormal{)} basis \textnormal{(}on $E$\textnormal{)}} if $v_1,\dots,v_n$ are independent on $E$, and
\begin{align*}
\Span_E\,\{v_1,\dots,v_n\} = \calM\big\vert_E.
\end{align*}
We say that $\calM$ has \emph{\textnormal{(}local\textnormal{)} dimension $n$ \textnormal{(}on $E$\textnormal{)}} if there exists a local basis of $\calM$ on $E$. We say that $\calM$ has \emph{infinite dimension} on $E$ if it does not have finite dimension on any subset of $E$ with positive $\mu$-measure.
\end{definition}

For the well-posedness of this definition and its link to local independence, we refer to \cite[Prop.~1.4.4]{gigli2018}. If $\calM$ is $\Ell^p$-normed w.r.t.~$\mu$, $p<\infty$, and has dimension $n\in\N$ on $E$, then $\calM^*$ has dimension $n$ on $E$ as well \cite[Thm.~1.4.7.]{gigli2018}.

The following important structural result is due to \cite[Prop.~1.4.5]{gigli2018}.

\begin{proposition}\label{Th:Dimensional decomposition} For every $\Ell^\infty$-module $\calM$ w.r.t.~$\mu$, there exists a unique Borel partition $(E_n)_{n\in\N\cup\{\infty\}}$ of $\mms$ such that
\begin{enumerate}[label=\textnormal{\alph*.}]
\item for every $n\in\N$ with $\mu[E_n] >0$, $\calM$ has dimension $n$ on $E_n$, and
\item for every $E\in \Borel^\mu(\mms)$ with $E\subset E_\infty$, $\calM$ has infinite dimension on $E$.
\end{enumerate}
\end{proposition}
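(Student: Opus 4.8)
The plan is to build the sets $E_n$, $n\in\N$, one after another as \emph{maximal} sets on which $\calM$ has dimension $n$, and then to put $E_\infty := \mms\setminus\bigcup_{n\in\N}E_n$. First I would fix, once and for all, a finite Borel measure $\mu'$ on $\mms$ having the same null sets as $\mu$ (available since $\mu$ is $\sigma$-finite); it serves only to measure the sizes of sets, and every assertion in the statement is insensitive to replacing $\mu$ by $\mu'$, because the two induce the same $\sigma$-algebra $\Borel^\mu(\mms)$, the same $\Ell^\infty$, and the same notion of local dimension. Then, for $n\in\N$, I would work with
\[
\calA_n := \big\lbrace E\in\Borel^\mu(\mms) : \calM\text{ has dimension }n\text{ on }E\big\rbrace\cup\{\emptyset\}.
\]

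Next I would record two stability properties of $\calA_n$. Stability under Borel subsets is immediate, since restricting a local basis $\{v_1,\dots,v_n\}$ on $E$ to $E'\subset E$ again yields independent generators of $\calM\big\vert_{E'}$. For stability under countable \emph{disjoint} unions $E=\bigsqcup_{k\in\N}E_k$ with $E_k\in\calA_n$, I would pick a local basis $\{v_1^k,\dots,v_n^k\}$ on each $E_k$ and, after multiplying each $v_i^k$ by a suitable positive constant, arrange that $\Vert\One_{E_k}v_i^k\Vert_{\Ell^p(\mms,\mu)}\le 2^{-k}$ (respectively $\vert\One_{E_k}v_i^k\vert\le 1$ $\mu$-a.e.\ if $p=\infty$); then the series $w_i := \sum_{k\in\N}\One_{E_k}v_i^k$ converges in the $\Ell^0$-module $\calM^0$ with $\vert w_i\vert\in\Ell^p(\mms,\mu)$, so $w_i\in\calM$, and using disjointness of the $E_k$ together with the clause of the definition of $\Span_E$ that permits countable partitions one checks that $\{w_1,\dots,w_n\}$ is a local basis on $E$. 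Combined with subset stability this yields stability of $\calA_n$ under arbitrary countable unions. I would also invoke the well-posedness of local dimension \cite[Prop.~1.4.4]{gigli2018}: if $\calM$ has dimension $n$ on $E$ and dimension $m$ on $F$ with $\mu[E\cap F]>0$, then $n=m$.

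With these tools, for each $n\in\N$ I would set $m_n := \sup\{\mu'[E] : E\in\calA_n\}\in[0,\mu'[\mms]]$, choose $F^{(k)}\in\calA_n$ with $\mu'[F^{(k)}]\to m_n$, and define $\hat E_n := \bigcup_{k\in\N}F^{(k)}$; by countable-union stability $\hat E_n\in\calA_n$ and $\mu'[\hat E_n]=m_n$, so $\hat E_n$ is a largest set carrying dimension $n$. The well-posedness fact forces $\mu[\hat E_n\cap\hat E_m]=0$ for $n\ne m$, so after passing to genuinely disjoint Borel representatives $E_n$ and putting $E_\infty := \mms\setminus\bigcup_{n\in\N}E_n$ one gets a Borel partition of $\mms$ for which (a) holds by construction. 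For (b), given Borel $E\subset E_\infty$ with $\mu[E]>0$ (the null case being trivial), if $\calM$ had some finite dimension $n$ on $E$ then $E\in\calA_n$ and $E$ is $\mu$-a.e.\ disjoint from $E_n$, whence $E\cup E_n\in\calA_n$ with $\mu'[E\cup E_n]=\mu'[E]+m_n>m_n$, contradicting the maximality of $m_n$. For uniqueness, given another such partition $(E_n')_{n\in\N\cup\{\infty\}}$ and $n\in\N$, the well-posedness fact gives $\mu[E_n\cap E_m']=0$ for $m\ne n$, and $\mu[E_n\cap E_\infty']=0$ as well (else $\calM$ would have finite dimension $n$ on a positive-measure subset of $E_\infty'$, contradicting (b) for the primed partition); hence $E_n\subset E_n'$ $\mu$-a.e., and a symmetric comparison — together with the observation that $\mu[E_n]>0\iff\mu[E_n']>0$ — yields $E_n=E_n'$ $\mu$-a.e.\ for all $n\in\N$, hence $E_\infty=E_\infty'$ $\mu$-a.e.

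The step I expect to be the main obstacle is the countable-union stability of $\calA_n$: arranging the rescaled partial bases so that the glued sections $w_i$ genuinely lie in $\calM$ (not merely in $\calM^0$) for \emph{every} exponent $p\in[1,\infty]$, and verifying that $\{w_1,\dots,w_n\}$ spans $\calM\big\vert_E$ through the countable-partition clause in the definition of $\Span$. Everything else is a soft maximality argument built on the cited uniqueness of local dimension and on monotone continuity of the auxiliary finite measure $\mu'$.
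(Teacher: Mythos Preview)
The paper does not give its own proof of this statement; it is stated as ``due to \cite[Prop.~1.4.5]{gigli2018}'' and quoted without argument. Your proposal is essentially the standard maximality-plus-gluing argument that Gigli uses, and it is correct: the passage to a finite equivalent measure $\mu'$, the subset- and countable-union stability of $\calA_n$, the exhaustion to produce $\hat E_n$, and the uniqueness via well-posedness of local dimension are all sound. Your handling of the gluing step through $\calM^0$ --- defining $w_i$ as an $\Ell^0$-limit and then recognising $\vert w_i\vert\in\Ell^p(\mms,\mu)$ to land back in $\calM$ --- is exactly the right device, and the paper's framework (which builds the gluing property into its definition of $\Ell^\infty$-module, cf.\ the remark after \autoref{Def:Modules}) supports this even when $p=\infty$.
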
 

\begin{remark}\label{Re:Link MHB} Using \autoref{Th:Dimensional decomposition}, it is possible to establish a one-to-one-cor\-respondence between separable Hilbert modules and \emph{direct integrals} of separable Hilbert spaces \cite[Thm.~1.4.11, Rem.~1.4.12]{gigli2018}. Albeit at a structural level, this provides the link of \autoref{Sec:Cotangent module} and \autoref{Ch:Tangent module} to earlier axiomatizations of spaces of $1$-forms and vector fields on Dirichlet spaces \cite{baudoin2019, cipriani2003, eberle1999, hinz2013, hinz2015, ionescu2012}, and in view of the universal property of \autoref{Th:Universal} below, we do not enter into details here and leave these to the interested reader. We only point out the remark at \cite[p.~42]{gigli2018} that the interpretation of this link should be treated with some care.
\end{remark}

\begin{remark}[Hino index]\label{Re:Hino} Unlike \autoref{Th:Dimensional decomposition},  in Dirichlet form theory there already exists a natural notion of ``pointwise tangent space dimension'' in terms of the \emph{\textnormal{(}pointwise\textnormal{)} Hino index} introduced in \cite{hino2010}, which is quickly recorded now. For the modules under our consideration, these two notions of local dimension turn out to coincide, see \autoref{Cor:Hino} and also \autoref{Sec:Structural consequences}.

Let $\mu$ be minimal $\Ch$-dominant. Let $(f_i)_{i\in\N}$ be a sequence in $\F$ whose linear span is dense in $\F$. The \emph{pointwise index} of $(\Ch, \F)$ or simply $\Ch$ \cite[Def.~2.9, Prop.~2.10]{hino2010} is the function $\rmp\colon \mms \to \N_0\cup\{\infty\}$ given by
\begin{align*}
\rmp := \sup_{n\in\N} \mathrm{rank}\, \big[\Gamma_\mu(f_i,f_j)\big]_{i,j\in \{1,\dots,n\}}.
\end{align*}
See \cite[Ch.~2]{hino2010} for a thorough discussion on the well-definedness of $\rmp$. In particular, by  \cite[Prop.~2.11]{hino2010} we know that $\rmp > 0$ $\mu$-a.e.~unless $\Ch$ is trivial. The \emph{index} $\rmp^*\in \N\cup\{\infty\}$ of $(\Ch, \F)$ or simply $\Ch$ is then
\begin{align*}
\rmp^* := \Vert \rmp\Vert_{\Ell^\infty(\mms,\mu)}.
\end{align*}
These definitions are  independent of the choice of minimal $\Ch$-dominant $\mu$. By \cite[Prop.~2.10]{hino2010}, we have the following result. (Another  probabilistic aspect of it not treated in our work is that $\rmp^*$ coincides with the so-called \emph{martingale dimension} \cite{hino2008, kusuoka1989} w.r.t.~the Markov process on $\mms$ from \autoref{Sub:Markov} \cite[Thm.~3.4]{hino2010}.)
\end{remark}

\begin{lemma} For every $n\in\N$ and every $f_1,\dots,f_n \in \F$,
\begin{align*}
\mathrm{rank}\,\big[\Gamma_\mu(f_i,f_j)\big]_{i,j\in \{1,\dots,n\}} \leq \rmp \leq \rmp^*\quad\mu\text{-a.e.}
\end{align*}
Moreover, $\rmp$ is the $\mu$-a.e.~smallest function satisfying the first $\mu$-a.e.~inequality for every $n\in\N$ and every $f_1,\dots,f_n \in \F$.
\end{lemma}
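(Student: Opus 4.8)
The plan is to deduce everything from bilinearity and $\Ell^1(\mms,\mu)$-continuity of the carré $\Gamma_\mu$, the elementary matrix inequality $\mathrm{rank}(C\,G\,C^\top)\le\mathrm{rank}(G)$, and lower semicontinuity of the rank. Fix the sequence $(f_i)_{i\in\N}$ in $\F$ with dense linear span used in \autoref{Re:Hino} to define $\rmp$, and set $G^{(m)}:=[\Gamma_\mu(f_i,f_j)]_{i,j\in\{1,\dots,m\}}$, so that by construction $\rmp=\sup_{m\in\N}\mathrm{rank}\,G^{(m)}$ $\mu$-a.e.; in particular $\mathrm{rank}\,G^{(m)}\le\rmp$ $\mu$-a.e.\ for every $m\in\N$. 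The inequality $\rmp\le\rmp^*$ is then immediate, since $\rmp^*=\Vert\rmp\Vert_{\Ell^\infty(\mms,\mu)}$.

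For the first inequality, given arbitrary $f_1,\dots,f_n\in\F$, I would use density of the span of $(f_i)_{i\in\N}$ in $\F$ to choose, for each $k\in\N$, an integer $m_k\in\N$ and constant coefficients $c^{(k)}_{ji}$ such that $g^{(k)}_j:=\sum_{i=1}^{m_k}c^{(k)}_{ji}\,f_i$ satisfies $\Vert g^{(k)}_j-f_j\Vert_\F\to0$ as $k\to\infty$ for every $j\in\{1,\dots,n\}$. Writing $C^{(k)}$ for the $n\times m_k$ matrix of these coefficients, bilinearity of $\Gamma_\mu$ on $\F_\rme^2$ gives, $\mu$-a.e., $[\Gamma_\mu(g^{(k)}_j,g^{(k)}_l)]_{j,l}=C^{(k)}\,G^{(m_k)}\,(C^{(k)})^\top$, whence $\mathrm{rank}[\Gamma_\mu(g^{(k)}_j,g^{(k)}_l)]_{j,l}\le\mathrm{rank}\,G^{(m_k)}\le\rmp$ $\mu$-a.e.\ for every $k$. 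Next, from $\Gamma_\mu(g^{(k)}_j,g^{(k)}_l)-\Gamma_\mu(f_j,f_l)=\Gamma_\mu(g^{(k)}_j,g^{(k)}_l-f_l)+\Gamma_\mu(g^{(k)}_j-f_j,f_l)$, the pointwise $\mu$-a.e.\ Cauchy--Schwarz inequality for $\Gamma_\mu$, the $\Ell^2$-Cauchy--Schwarz inequality, and $\int_\mms\Gamma_\mu(h)\d\mu=\Ch(h)$, one obtains $\Vert\Gamma_\mu(g^{(k)}_j,g^{(k)}_l)-\Gamma_\mu(f_j,f_l)\Vert_{\Ell^1(\mms,\mu)}\to0$; after passing to a subsequence this convergence holds $\mu$-a.e., simultaneously for all pairs $(j,l)$ with $j,l\in\{1,\dots,n\}$.

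The last step is to invoke lower semicontinuity of the rank: for fixed $r$, the set of matrices of rank $\ge r$ is open, being described by the non-vanishing of some $r\times r$ minor, which depends polynomially on the entries. Hence, at $\mu$-a.e.\ point $x$ with $\rmp(x)<\infty$, the uniform bound $\mathrm{rank}[\Gamma_\mu(g^{(k)}_j,g^{(k)}_l)(x)]_{j,l}\le\rmp(x)$ for all $k$, together with the entrywise convergence to $[\Gamma_\mu(f_j,f_l)(x)]_{j,l}$, forces $\mathrm{rank}[\Gamma_\mu(f_j,f_l)(x)]_{j,l}\le\rmp(x)$, while on $\{\rmp=\infty\}$ there is nothing to prove; this is the first displayed inequality. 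For the minimality claim: if $q$ is $\Borel^\mu(\mms)$-measurable with values in $\N_0\cup\{\infty\}$ and satisfies $\mathrm{rank}[\Gamma_\mu(f_i,f_j)]_{i,j\in\{1,\dots,n\}}\le q$ $\mu$-a.e.\ for every $n\in\N$ and every $f_1,\dots,f_n\in\F$, then applying this to the generating sequence gives $\mathrm{rank}\,G^{(n)}\le q$ $\mu$-a.e.\ for all $n\in\N$, and taking the countable supremum over $n$ yields $\rmp\le q$ $\mu$-a.e. The only genuinely delicate point is the limiting step of the second paragraph --- upgrading $\Ell^1$-convergence to $\mu$-a.e.\ convergence of the whole matrix along an extracted subsequence and then applying lower semicontinuity of rank pointwise while separating off $\{\rmp=\infty\}$; the rest is routine bilinear algebra and the calculus rules for $\Gamma_\mu$ transferred from \autoref{Th:Properties energy measure}.
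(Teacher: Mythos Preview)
Your argument is correct. The paper does not supply its own proof of this lemma; it simply cites \cite[Prop.~2.10]{hino2010} and states the result. Your proposal therefore furnishes a self-contained proof where the paper defers to the reference. The approach you take --- approximating arbitrary $f_1,\dots,f_n$ by finite linear combinations of the generating sequence, using the matrix identity $C\,G^{(m)}\,C^\top$ and the rank bound $\mathrm{rank}(CGC^\top)\le\mathrm{rank}(G)$, then passing to the limit via $\Ell^1(\mms,\mu)$-continuity of $\Gamma_\mu$ and lower semicontinuity of the rank --- is essentially the natural one, and it is in the spirit of Hino's original argument. One minor cosmetic point: you use the symbol $f_i$ both for the fixed generating sequence from \autoref{Re:Hino} and for the arbitrary functions in the lemma, which makes the line $g^{(k)}_j=\sum_{i=1}^{m_k}c^{(k)}_{ji}\,f_i$ momentarily ambiguous; renaming the generating sequence (to, say, $(e_i)_{i\in\N}$) would remove the clash.
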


\subsubsection{Tensor products}\label{Sub:Tensor products} Let $\calM_1$ and $\calM_2$ be two \emph{Hilbert} modules. Again, by a slight abuse of notation, we denote both pointwise scalar products by $\langle\cdot,\cdot\rangle_\mu$. 

Let $\calM_1^0\odot\calM_2^0$ be the ``tensor product''  consisting of all finite linear combinations of formal elements $v\otimes w$, $\smash{v\in\calM_1^0}$ and $\smash{w\in\calM_2^0}$, obtained by factorizing appropriate vector spaces  \cite[Sec.~1.5]{gigli2018}. It naturally comes with a multiplication $\cdot\colon \Ell^0(\mms,\mu)\times (\calM_1^0\odot\calM_2^0)\to\Ell^0(\mms,\mu)$ defined through
\begin{align*}
f\,(v\otimes w) := (f\,v)\otimes w = v\otimes(f\,w)
\end{align*}
 and a pointwise scalar product $:_\mu\colon (\calM_1^0\odot\calM_2^0)^2\to \Ell^0(\mms,\mu)$ given by
\begin{align}\label{Eq:Tensor product pointwise sc prod}
(v_1\otimes w_1) :_\mu (v_2\otimes w_2) &:= \langle v_1, v_2\rangle_\mu\,\langle w_1,w_2\rangle_\mu,
\end{align}
both extended to $\calM_1^0\odot \calM_2^0$ by (bi-)linearity. Then $:_\mu$ is bilinear, $\mu$-a.e.~nonnegative definite, symmetric, and local in both components \cite[Lem.~3.2.19]{gigli2020}. 

The pointwise \emph{Hilbert--Schmidt norm} $\vert\cdot\vert_{\HS,\mu}\colon \calM_1^0\odot \calM_2^0\to \Ell^0(\mms,\mu)$ is given by
\begin{align}\label{Eq:Pointwise norm HS}
\vert A\vert_{\HS,\mu} := \sqrt{A:_\mu A}.
\end{align}
This map satisfies the $\mu$-a.e.~triangle inequality and is $1$-homogeneous w.r.t.~multiplication with $\Ell^0(\mms,\mu)$-functions \cite[p.~44]{gigli2018}. 

Consequently, the map $\Vert \cdot\Vert_{\calM_1\otimes\calM_2}\colon\calM_1^0\odot \calM_2^0\to [0,\infty]$ defined through
\begin{align*}
\Vert A\Vert_{\calM_1\otimes\calM_2} := \big\Vert \vert A\vert_{\HS,\mu}\big\Vert_{\Ell^2(\mms,\mu)}
\end{align*}
has all properties of a norm except that it might take the value $\infty$.

\begin{definition} The \emph{tensor product}  $\calM_1\otimes\calM_2$ is the completion w.r.t.~$\Vert\cdot\Vert_{\calM_1\otimes\calM_2}$ of the subspace that consists of all $A\in\calM_1^0\odot\calM_2^0$ such that $\smash{\Vert A \Vert_{\calM_1\otimes\calM_2} < \infty}$.
\end{definition}

Inductively, for $\calM := \calM_1$ and $k\in\N$, up to unique identification we set
\begin{align*}
\calM^{\otimes k} := \calM^{\otimes (k-1)}\otimes \calM = \calM\otimes\calM^{\otimes (k-1)},
\end{align*}
where we conventionally set $\calM^{\otimes 0} := \Ell^2(\mms)$ as well. 

Through \eqref{Eq:Tensor product pointwise sc prod}, $\calM_1\otimes\calM_2$ naturally becomes a Hilbert module \cite[p.~45]{gigli2018}. If $\calM_1$ and $\calM_2$ are separable, then so is $\calM_1\otimes\calM_2$. Indeed, if $D_i\subset \calM_i$ are countable dense subsets consisting of \emph{bounded} elements, $i\in \{1,2\}$, then the linear span of elements of the form $v\otimes w$, $v\in D_1$ and $w\in D_2$, is dense in $\calM_1\otimes\calM_2$. Here, boundedness is essential as underlined by the next remark.

\begin{remark} The space $\calM_1\otimes \calM_2$ should not be confused with the tensor product $\calM_1\otimes_\rmH\calM_2$ in the Hilbert space sense \cite{kadison1983}. Indeed, in general these do not coincide \cite[Rem.~1.5.2]{gigli2018}. For instance, for $v\in\calM_1$ and $w\in\calM_2$ we always have $v\otimes_\rmH w \in \calM_1\otimes_\rmH\calM_2$ since the corresponding norm is
\begin{align*}
\Vert v\otimes_\rmH w\Vert_{\calM_1\otimes_\rmH\calM_2} = \Vert v\Vert_{\calM_1}\,\Vert w\Vert_{\calM_2},
\end{align*}
but according to \eqref{Eq:Tensor product pointwise sc prod}, the norm
\begin{align*}
\Vert v\otimes w \Vert_{\calM_1\otimes \calM_2} = \Big[\!\int_\mms \vert v\vert_\mu^2\,\vert w\vert_\mu^2\d\mu\Big]^{1/2}
\end{align*}
in $\smash{\calM_1^0\odot \calM_2^0}$ might well be infinite unless, for instance, $v$ or $w$ is bounded.

More intuitively,  we should think about $v\otimes w \in\smash{\calM_1^0\odot\calM_2^0}$ as section $x\mapsto v(x)\otimes w(x)$ over $\mms$, while $v\otimes_\rmH w\in\calM_1\otimes_\rmH\calM_2$ is interpreted as section $(x,y) \mapsto v(x)\otimes w(y)$ over $\mms^2$. The latter point of view is not relevant in this work, but is crucial e.g.~in obtaining a spectral representation of the heat kernel on $1$-forms on compact $\RCD^*(K,N)$ spaces \cite[Thm.~6.11]{braun2020}, $K\in\R$ and $N\in [1,\infty)$.
\end{remark}

Lastly, we introduce the concept of \emph{symmetric} and \emph{antisymmetric parts} in the case $\calM := \calM_1 = \calM_2$. Denote by $\smash{A^\top\in \calM^{\otimes 2}}$ the \emph{transpose} of $A\in\calM^{\otimes 2}$ as defined in \cite[Sec.~1.5]{gigli2018}. For instance, for bounded $v,w\in\calM$ we have
\begin{align}\label{Eq:Transpose}
(v\otimes w)^\top = w\otimes v.
\end{align}
It is an involutive module isometric isomorphism. We shall call $A\in \calM^{\otimes 2}$ \emph{symmetric} if $\smash{A=A^\top}$ and \emph{antisymmetric} if $\smash{A = -A^\top}$. We write $\smash{\calM_\sym^{\otimes 2}}$ and $\smash{\calM_\asym^{\otimes 2}}$ for the subspaces of symmetric and antisymmetric elements in $\calM^{\otimes 2}$, respectively. These are closed and pointwise $\mu$-a.e.~orthogonal w.r.t.~$:_\mu$. As usual, for every $A\in\calM^{\otimes 2}$ there exist a unique $\smash{A_\sym\in\calM^{\otimes 2}_\sym}$, the \emph{symmetric part} of $A$, and a unique $\smash{A_\asym\in\calM_\asym^{\otimes 2}}$, the \emph{antisymmetric part} of $A$, such that
\begin{align*}
A = A_\sym + A_\asym.
\end{align*}
In particular, we have
\begin{align}\label{Eq:sym plus asym}
\big\vert A\big\vert_{\HS,\mu}^2 = \big\vert A_\sym\big\vert_{\HS,\mu}^2 + \big\vert A_\asym\big\vert_{\HS,\mu}^2\quad\mu\text{-a.e.}
\end{align}
Next, we present a duality formula for symmetric parts crucially exploited later in \autoref{Le:Pre.Bochner}. If $D\subset\calM$ is a set of bounded elements generating $\calM$ in the sense of \autoref{Sub:Local dimension}, then $\{v\otimes v : v\in D\}$ generates $\smash{\calM_\sym^{\otimes 2}}$, and after \cite[Prop.~1.4.9]{gigli2018} for every $A\in\calM^{\otimes 2}$ we have the duality formula
\begin{align}\label{Eq:Duality formula symm part I}
\begin{split}
\big\vert A_\sym\big\vert_{\HS,\mu}^2 &= \mu\text{-}\!\esssup\!\Big\lbrace 2A: \sum_{j=1}^m v_j\otimes v_j - \Big\vert\!\sum_{j=1}^m v_j\otimes v_j\Big\vert_{\HS,\mu}^2:\\
&\qquad\qquad m\in\N,\ v_1,\dots,v_m\in D\Big\rbrace.
\end{split}
\end{align}

\subsubsection{Traces} Let $\calM$ be a Hilbert module over $\mms$. In terms of local bases outlined in \autoref{Sub:Local dimension}, it is possible to define the \emph{trace} of an element $\smash{A\in\calM_\sym^{\otimes 2}}$. 

As usual, Gram--Schmidt orthonormalization combined with \cite[Thm.~1.4.11]{gigli2018} entails the following. Denoting by $(E_n)_{n\in\N\cup\{\infty\}}$ the dimensional decomposition of $\calM$ according to \autoref{Th:Dimensional decomposition},  for every $n\in\N$ and every Borelian $E\subset E_n$ with $\mu[E] \in (0,\infty)$, there exists a basis $\smash{\{e_1^n,\dots,e_n^n\}\subset\calM\big\vert_E}$ of $\calM$ on $E$ with
\begin{align}\label{Eq:Obstr}
\big\langle e_i^n,e_j^n\big\rangle_\mu = \delta_{ij}\quad\mu\text{-a.e.}\quad\text{on }E
\end{align}
for every $i,j\in\{1,\dots,n\}$. Moreover, for every Borelian $E\subset E_\infty$ with $\mu[E] \in (0,\infty)$ there exists a sequence $\smash{(e_i^\infty)_{i\in\N}}$ in $\smash{\calM\big\vert_E}$ which generates $\calM$ on $E$ and satisfies \eqref{Eq:Obstr} for every $i,j\in\N$ and $n:=\infty$. Any such $\smash{\{e_1^n,\dots,e_n^n\}}$, $n\in\N$, or $\smash{(e_i^\infty)_{i\in\N}}$ is called a \emph{pointwise orthonormal basis} of $\calM$ on $E$. In particular, for every $E\subset E_n$ as above, $n\in\N\cup\{\infty\}$ and every $v\in\calM$ we can write
\begin{align*}
\One_E\,v = \sum_{i=1}^n \big\langle v, e_i^n\big\rangle_\mu\,e_i^n.
\end{align*}

Lastly, if $E\subset E_n$ is such a Borel set, $n\in\N\cup\{\infty\}$, we define
\begin{align}\label{Eq:Pointwise trace}
\One_E\,\tr A := \sum_{i=1}^n A : (e_i^n\otimes e_i^n).
\end{align}
This does not depend on the choice of the pointwise orthonormal basis.

\subsubsection{Exterior products}\label{Sub:Exterior products} Let $\calM$ be a Hilbert module and $k\in\N_0$. Set $\Lambda^0\calM^0 := \Ell^0(\mms,\mu)$ and, for $k\geq 1$, let $\Lambda^k\calM^0$ be the ``exterior product'' constructed by suitably factorizing $\smash{(\calM^0)^{\odot k}}$ \cite[Sec.~1.5]{gigli2018}. The representative of $v_1\odot\dots\odot v_k$, $v_1,\dots,v_k\in\calM^0$, in $\Lambda^k\calM^0$ is written $v_1\wedge\dots\wedge v_k$. $\smash{\Lambda^k\calM^0}$ naturally comes with a multiplication $\cdot\colon \Ell^0(\mms,\mu)\times \Lambda^k\calM^0\to\Lambda^k\calM^0$ via
\begin{align*}
f\,(v_1\wedge\dots v_k) := (f\,v_1) \wedge\dots \wedge v_k = \dots = v_1\wedge\dots\wedge (f\,v_k)
\end{align*}
and a pointwise scalar product $\langle\cdot,\cdot\rangle_\mu\colon (\Lambda^k\calM^0)^2\to\Ell^0(\mms,\mu)$ defined by
\begin{align}\label{Eq:Ptw scalar prod ext}
\langle v_1\wedge\dots\wedge v_k, w_1\wedge\dots \wedge w_k\rangle_\mu := \det\big[\langle v_i,w_j\rangle_\mu\big]_{i,j\in\{1,\dots,k\}}
\end{align}
 up to a factor $k!$, both extended to $\Lambda^k\calM^0$ by (bi-)linearity. Then $\smash{\langle\cdot,\cdot\rangle_\mu}$ is bilinear, $\mu$-a.e.~nonnegative definite, symmetric, and local in both components.

\begin{remark} Given any $k,k'\in\N_0$, the map assigning to $v_1\wedge\dots\wedge v_k\in\Lambda^k\calM^0$ and $\smash{w_1\wedge\dots\wedge w_{k'}\in\Lambda^{k'}\calM^0}$ the element $\smash{v_1\wedge\dots\wedge v_k \wedge w_1\wedge\dots\wedge w_{k'}\in\Lambda^{k+k'}\calM^0}$ can and will be uniquely extended by bilinearity and continuity to a bilinear map $\wedge \colon \smash{\Lambda^k\calM^0}\times\smash{\Lambda^{k'}\calM^0}\to \smash{\Lambda^{k+k'}\calM^0}$ termed \emph{wedge product} \cite[p.~47]{gigli2018}. If $k=0$ or $k'=0$, it simply corresponds to multiplication of elements of $\smash{\Lambda^{k'}\calM^0}$ or $\smash{\Lambda^k\calM^0}$, respectively, with functions in $\Ell^0(\mms,\mu)$ according to \eqref{Eq:Ptw scalar prod ext}.
\end{remark}

By a slight abuse of notation, define the map $\smash{\vert\cdot\vert_\mu\colon\Lambda^k\calM^0}\to\Ell^0(\mms,\mu)$ by
\begin{align*}
\vert\omega\vert_\mu := \sqrt{\langle \omega,\omega\rangle_\mu}.
\end{align*}
It obeys the $\mu$-a.e.~triangle inequality and is homogeneous w.r.t.~multiplication with $\Ell^0(\mms,\mu)$-functions \cite[p.~47]{gigli2018}.

It follows that the map $\Vert \cdot\Vert_{\Lambda^k\calM}\colon \Lambda^k\calM^0\to [0,\infty]$ defined by
\begin{align*}
\Vert \omega\Vert_{\Lambda^k\calM} := \big\Vert \vert \omega\vert_\mu\big\Vert_{\Ell^2(\mms,\mu)}
\end{align*}
has all properties of a norm except that $\Vert \omega\Vert_{\Lambda^k\calM}$ might be infinite.

\begin{definition} The \emph{\textnormal{(}$k$-fold\textnormal{)} exterior product} $\smash{\Lambda^k\calM}$ is defined as the completion w.r.t.~$\Vert\cdot\Vert_{\Lambda^k\calM}$ of the subspace consisting of all $\omega\in\Lambda^k\calM^0$ such that $\Vert \omega\Vert_{\Lambda^k\calM} < \infty$.
\end{definition}

The space $\Lambda^k\calM$ naturally becomes a Hilbert module and, if $\calM$ is separable, is separable as well \cite[p.~47]{gigli2018}.

\section{Cotangent module}\label{Sec:Cotangent module}

In this chapter, following \cite[Sec.~2.2]{gigli2018} we discuss a key object of our treatise, namely the \emph{cotangent module} $\Ell^2(T^*\mms)$,  i.e.~the space of differential $1$-forms that are square-integrable in a certain ``universal'' sense made precise in \autoref{Th:Module structure}.

\subsection{The construction}\label{Sub:Construction}

Define the \emph{pre-cotangent module} $\PCM$ by
\begin{align*}
\begin{split}
\PCM &:= \Big\lbrace (f_i,A_i)_{i\in\N} : (A_i)_{i\in\N}\text{ partition of }\mms \text{ in }\Borel(\mms),\\
&\qquad\qquad (f_i)_{i\in\N} \text{ in } \F_\rme,\  \sum_{i\in\N} \bdmu_{f_i}[A_i] < \infty\Big\rbrace.
\end{split}
\end{align*}
Moreover, define a relation $\rmR$ on $\PCM$ by declaring that $(f_i,A_i)_{i\in\N} \,\rmR\, (g_j, B_j)_{j\in\N}$ if and only if $\smash{\bdmu_{f_i-g_j}[A_i\cap B_j] = 0}$ for every $i,j\in\N$. $\rmR$ is in fact an equivalence relation by  \autoref{Th:Properties energy measure}. The equivalence class of an element $(f_i,A_i)_{i\in\N} \in \PCM$ w.r.t.~$\rmR$ is shortly denoted by $[f_i,A_i]$. As made precise in \autoref{Th:Module structure} and  \autoref{Def:Differential}, we think of $[f_i,A_i]\in \PCM/\rmR$ as the $1$-form which equals $\rmd f_i$ on $A_i$ for every $i\in\N$ in a certain ``universal'' a.e.~sense. 

$\PCM/\rmR$ becomes a vector space via the well-defined operations
\begin{align*}
	[f_i,A_i] + [g_j,B_j] &:= [f_i + g_j, A_i\cap B_j],\\
	\lambda\,[f_i,A_i] &:= [\lambda\, f_i, A_i]
\end{align*}
for every $[f_i,A_i], [g_j,B_j]\in\PCM/\rmR$ and every $\lambda\in\R$. 

In an analogous way, we define an action of $\SF(\mms)$ on $\PCM/\rmR$ as follows. Let $h\in\SF(\mms)$ and $[f_i,A_i]\in\PCM/\rmR$. Write
\begin{align}\label{Eq:simple function h}
h = \One_{B_1}\,h_1 + \dots + \One_{B_k}\,h_k
\end{align}
where $k\in\N$, $B_1,\dots,B_k\in\Borel(\mms)$ are disjoint, and $h_1,\dots,h_k\in\R$. Define
\begin{align}\label{Eq:SF mult}
h\,[f_i,A_i] := [h_j\,f_i, A_i\cap B_j],
\end{align}
where we set $B_j := \emptyset$ and $h_j := 0$ for every integer $j > k$. It is straightforward to verify that this definition is well-defined, independent of the particular way of writing $h$, and gives rise to a bilinear map $\SF(\mms)\times\PCM/\rmR \to \PCM/\rmR$ such that for every $[f_i,A_i]\in \PCM/\rmR$ and every $h,k\in \SF(\mms)$, 
\begin{align*}
(h\,k)\,[f_i,A_i] &= h\,\big(k\,[f_i,A_i]\big),\\
\One_\mms\,[f_i,A_i] &= [f_i,A_i].
\end{align*}

Lastly, by \autoref{Th:Properties energy measure}, the map $\Vert\cdot\Vert_{\Ell^2(T^*\mms)} \colon \PCM/\rmR\ \to [0,\infty)$ given by
\begin{align*}
\big\Vert [f_i,A_i]\big\Vert_{\Ell^2(T^*\mms)}^2 := \sum_{i\in\N} \bdmu_{f_i}[A_i]
\end{align*}
constitutes a norm on $\PCM/\rmR$.

\begin{definition}\label{Def:Cotangent module} We define the Banach space $\smash{(\Ell^2(T^*\mms), \Vert\cdot\Vert_{\Ell^2(T^*\mms)})}$ as the completion of $\smash{(\PCM/\rmR, \Vert \cdot \Vert_{\Ell^2(T^*\mms)})}$. The pair $\smash{(\Ell^2(T^*\mms), \Vert\cdot\Vert_{\Ell^2(T^*\mms)})}$ or simply $\Ell^2(T^*\mms)$ is henceforth called \emph{cotangent module}, and the elements of $L^2(T^*\mms)$ are called \emph{cotangent vector fields} or \emph{\textnormal{(}differential\textnormal{)} $1$-forms}.
\end{definition}

\begin{remark} The name ``cotangent \emph{module}'' for $\Ell^2(T^*\mms)$ is justified by \autoref{Th:Module structure} below. We point out for now that the notation $\Ell^2(T^*\mms)$ is purely formal since we did and do not define any kind of cotangent bundle $T^*\mms$. It rather originates in the analogy of $\Ell^2(T^*\mms)$ with the space of $\Ell^2$-sections of the cotangent bundle $T^*\mms$ in the smooth setting described in \autoref{Re:Comp smooth} below. On the other hand, by the structural characterization of Hilbert modules as direct integral of measurable fields of certain Hilbert spaces $(\calH_x)_{x\in\mms}$, see e.g.~\cite[p.~4381]{hinz2013} and \autoref{Re:Comp Dir sp} below, one could think of a fictive cotangent bundle as something ``a.e.~defined''. This point of view has been taken in the approaches \cite{baudoin2019, cipriani2003, eberle1999, hinz2013, hinz2015, ionescu2012}.
\end{remark}

A main ingredient to establish \autoref{Th:Module structure} is the following lemma, which is an immediate consequence of the above construction 
and the density of $\SF(\mms)$ in $\Ell_\infty(\mms)$ w.r.t.~the uniform norm.

\begin{lemma}\label{Pr:Group action} The map from $\SF(\mms)\times \PCM/\rmR$ into $\PCM/\rmR$ defined in \eqref{Eq:SF mult} extends continuously and uniquely to a bilinear map from  $\smash{\Ell_\infty(\mms)\times\Ell^2(T^*\mms)}$ into $\smash{\Ell^2(T^*\mms)}$ satisfying, for every $f,g\in\Ell_\infty(\mms)$ and every $\omega\in \Ell^2(T^*\mms)$, 
	\begin{align*}
		(f\,g)\,\omega &= f\,(g\,\omega),\\
		\One_\mms\,\omega &= \omega,\\
		\Vert f\,\omega\Vert_{\Ell^2(T^*\mms)} &\leq \sup\vert f\vert(\mms)\,\Vert \omega\Vert_{\Ell^2(T^*\mms)}.
	\end{align*}
\end{lemma}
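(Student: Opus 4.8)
The plan is to establish \autoref{Pr:Group action} by the standard density/extension argument, exploiting the fact that the map $(h,\omega)\mapsto h\,\omega$ on $\SF(\mms)\times\PCM/\rmR$ is already bounded in the appropriate sense. First I would verify the crucial norm estimate on the dense domain: for $h\in\SF(\mms)$ written as in \eqref{Eq:simple function h} and $[f_i,A_i]\in\PCM/\rmR$, the definition \eqref{Eq:SF mult} gives
\begin{align*}
\big\Vert h\,[f_i,A_i]\big\Vert_{\Ell^2(T^*\mms)}^2 = \sum_{i\in\N}\sum_{j=1}^k h_j^2\,\bdmu_{f_i}[A_i\cap B_j] \leq \big(\sup\vert h\vert(\mms)\big)^2\sum_{i\in\N}\bdmu_{f_i}[A_i],
\end{align*}
using the chain rule $\bdmu_{h_j f_i} = h_j^2\,\bdmu_{f_i}$ from \autoref{Th:Properties energy measure}\ref{La:Chain rule} (with the linear map $t\mapsto h_j t$) and the fact that the $B_j$ are disjoint so $\sum_j \One_{B_j}\le \One_\mms$. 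Hence $\Vert h\,[f_i,A_i]\Vert_{\Ell^2(T^*\mms)}\le \sup\vert h\vert(\mms)\,\Vert[f_i,A_i]\Vert_{\Ell^2(T^*\mms)}$.

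Next I would invoke density twice. Since $\SF(\mms)$ is dense in $\Ell_\infty(\mms)$ in the uniform norm, and $\PCM/\rmR$ is dense in $\Ell^2(T^*\mms)$ by \autoref{Def:Cotangent module}, the bilinear map $\SF(\mms)\times\PCM/\rmR\to\Ell^2(T^*\mms)$ is defined on a product of dense subsets of the Banach spaces $(\Ell_\infty(\mms),\sup\vert\cdot\vert(\mms))$ and $\Ell^2(T^*\mms)$, with target the complete space $\Ell^2(T^*\mms)$, and it satisfies the joint bound $\Vert h\,\eta\Vert_{\Ell^2(T^*\mms)}\le \sup\vert h\vert(\mms)\,\Vert\eta\Vert_{\Ell^2(T^*\mms)}$. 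A routine Cauchy-sequence argument then produces a unique bilinear extension to all of $\Ell_\infty(\mms)\times\Ell^2(T^*\mms)$ satisfying the same bound: given $f\in\Ell_\infty(\mms)$ and $\omega\in\Ell^2(T^*\mms)$, pick $h_n\to f$ uniformly in $\SF(\mms)$ and $\eta_n\to\omega$ in $\Ell^2(T^*\mms)$, note that $(h_n\,\eta_n)_{n\in\N}$ is $\Ell^2(T^*\mms)$-Cauchy because
\begin{align*}
\Vert h_n\eta_n - h_m\eta_m\Vert_{\Ell^2(T^*\mms)} \leq \sup\vert h_n - h_m\vert(\mms)\,\Vert\eta_n\Vert_{\Ell^2(T^*\mms)} + \sup\vert h_m\vert(\mms)\,\Vert\eta_n-\eta_m\Vert_{\Ell^2(T^*\mms)},
\end{align*}
and define $f\,\omega$ as its limit; independence of the chosen sequences and bilinearity follow by the usual interleaving estimate.

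Finally I would check that the two algebraic identities persist under the extension. Both $(f\,g)\,\omega = f\,(g\,\omega)$ and $\One_\mms\,\omega=\omega$ hold on $\SF(\mms)\times\PCM/\rmR$ by the properties recorded just before the lemma statement, and since all the maps involved ($\eta\mapsto (fg)\eta$, $\eta\mapsto f(g\eta)$, etc.) are continuous in each variable separately by the norm bound just established, the identities extend to $\Ell_\infty(\mms)\times\Ell^2(T^*\mms)$ by density. I do not expect any genuine obstacle here; the only point requiring a little care is bookkeeping the joint (rather than merely separate) continuity so that the extension is well-defined on the product space and the limit $f\,\omega$ is independent of the approximating sequences — but this is handled by the standard bilinear-extension lemma and the estimate displayed above.
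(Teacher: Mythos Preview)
Your proposal is correct and follows exactly the approach the paper intends: the paper merely states that the lemma ``is an immediate consequence of the above construction and the density of $\SF(\mms)$ in $\Ell_\infty(\mms)$ w.r.t.~the uniform norm,'' and you have spelled out precisely those details. The norm estimate on $\SF(\mms)\times\PCM/\rmR$ via the scaling identity $\bdmu_{h_j f_i}=h_j^2\,\bdmu_{f_i}$, followed by the standard bilinear extension to the product of completions, is exactly the intended argument.
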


\begin{theorem}[Module property]\label{Th:Module structure} For every $\Ch$-dominant Borel measure $\mu$ on $\mms$, the cotangent module $L^2(T^*\mms)$ is an $L^2$-normed $L^\infty$-module over $\mms$ w.r.t.~$\mu$ whose pointwise norm $\vert\cdot\vert_\mu$ satisfies, for every $[f_i,A_i]\in \PCM/\rmR$,
\begin{align}\label{Eq:ptw norm mu}
\big\vert [f_i,A_i]\big\vert_\mu = \sum_{i\in\N} \One_{A_i}\,\Gamma_\mu(f_i)^{1/2}\quad\mu\text{-a.e.}
\end{align}	
In particular, $\Ell^2(T^*\mms)$ is a Hilbert module w.r.t.~$\mu$.
\end{theorem}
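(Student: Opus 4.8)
The plan is to install the candidate pointwise norm \eqref{Eq:ptw norm mu} on the dense subspace $\PCM/\rmR$, verify the required identities there, and then extend everything to the completion $\Ell^2(T^*\mms)$ by continuity, invoking \autoref{Pr:Group action} for the module axiom on the multiplication. Fix an $\Ch$-dominant Borel measure $\mu$ and recall from the discussion following \autoref{Def:Dominance} that $\bdmu_f\ll\mu$ with $\mu$-density $\Gamma_\mu(f)$ for every $f\in\F_\rme$, and that the calculus rules of \autoref{Th:Properties energy measure} hold $\mu$-a.e.\ for $\Gamma_\mu$. First I would set $\big|[f_i,A_i]\big|_\mu:=\sum_{i\in\N}\One_{A_i}\,\Gamma_\mu(f_i)^{1/2}$, a well-defined element of $\Ell^0(\mms,\mu)$ since $(A_i)_{i\in\N}$ partitions $\mms$. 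It lies in $\Ell^2(\mms,\mu)$, with
\[
\int_\mms \big|[f_i,A_i]\big|_\mu^2\,\d\mu=\sum_{i\in\N}\int_{A_i}\Gamma_\mu(f_i)\,\d\mu=\sum_{i\in\N}\bdmu_{f_i}[A_i]=\big\Vert[f_i,A_i]\big\Vert_{\Ell^2(T^*\mms)}^2,
\]
so the norm identity $\Vert\cdot\Vert_{\Ell^2(T^*\mms)}=\big\Vert\,|\cdot|_\mu\big\Vert_{\Ell^2(\mms,\mu)}$ already holds on $\PCM/\rmR$. Independence of the representative is the first point to check: if $(f_i,A_i)_{i\in\N}\,\rmR\,(g_j,B_j)_{j\in\N}$, then $\bdmu_{f_i-g_j}[A_i\cap B_j]=0$ forces $\Gamma_\mu(f_i-g_j)=0$ $\mu$-a.e.\ on $A_i\cap B_j$, and the $\mu$-a.e.\ Cauchy--Schwarz inequality for $\Gamma_\mu$ gives $\Gamma_\mu(f_i)^{1/2}=\Gamma_\mu(g_j)^{1/2}$ $\mu$-a.e.\ there; passing to the common refinement of the two partitions shows the two expressions for $|\cdot|_\mu$ agree $\mu$-a.e.

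Next I would record the algebraic identities on $\PCM/\rmR$. By bilinearity of $\Gamma_\mu$, $\Gamma_\mu(f+g)+\Gamma_\mu(f-g)=2\Gamma_\mu(f)+2\Gamma_\mu(g)$, and by Cauchy--Schwarz $\Gamma_\mu(f+g)^{1/2}\le\Gamma_\mu(f)^{1/2}+\Gamma_\mu(g)^{1/2}$ $\mu$-a.e. Since $[f_i,A_i]+[g_j,B_j]=[f_i+g_j,A_i\cap B_j]$, evaluating on the common refinement yields both the pointwise triangle inequality $|\omega+\eta|_\mu\le|\omega|_\mu+|\eta|_\mu$ and the pointwise parallelogram identity $|\omega+\eta|_\mu^2+|\omega-\eta|_\mu^2=2|\omega|_\mu^2+2|\eta|_\mu^2$ $\mu$-a.e.\ for $\omega,\eta\in\PCM/\rmR$; integrating the latter gives the Banach-space parallelogram law on $\PCM/\rmR$. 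For $h\in\SF(\mms)$ as in \eqref{Eq:simple function h}, formula \eqref{Eq:SF mult} together with $\Gamma_\mu(h_j f_i)=h_j^2\,\Gamma_\mu(f_i)$ (bilinearity, $h_j$ constant) gives $\big|h\,[f_i,A_i]\big|_\mu=\sum_{i,j}\One_{A_i\cap B_j}|h_j|\,\Gamma_\mu(f_i)^{1/2}=|h|\,\big|[f_i,A_i]\big|_\mu$ $\mu$-a.e.

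Then I would pass to the completion $\Ell^2(T^*\mms)$. From the pointwise triangle inequality one gets $\big|\,|\omega|_\mu-|\eta|_\mu\,\big|\le|\omega-\eta|_\mu$ $\mu$-a.e., hence $\big\Vert\,|\omega|_\mu-|\eta|_\mu\,\big\Vert_{\Ell^2(\mms,\mu)}\le\Vert\omega-\eta\Vert_{\Ell^2(T^*\mms)}$, so $|\cdot|_\mu$ is $1$-Lipschitz on $\PCM/\rmR$ and extends uniquely to a ($1$-Lipschitz) map $|\cdot|_\mu\colon\Ell^2(T^*\mms)\to\Ell^2(\mms,\mu)$. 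The norm identity extends by continuity of the two norms, and the Banach-space parallelogram law extends likewise, so $\Ell^2(T^*\mms)$ is a Hilbert space. The identity $|f\,\omega|_\mu=|f|\,|\omega|_\mu$ for $f\in\Ell_\infty(\mms)$ and $\omega\in\Ell^2(T^*\mms)$ follows in two steps: for fixed $h\in\SF(\mms)$ and $\omega_n\to\omega$ in $\Ell^2(T^*\mms)$ with $\omega_n\in\PCM/\rmR$, continuity of the $\Ell_\infty$-action (\autoref{Pr:Group action}) gives $h\,\omega_n\to h\,\omega$, while multiplication by the fixed bounded function $|h|$ is continuous on $\Ell^2(\mms,\mu)$, so the claim holds for all $h\in\SF(\mms)$; and for general $f\in\Ell_\infty(\mms)$ one approximates $f$ uniformly by simple functions $h$ and uses $\Vert(h-f)\omega\Vert_{\Ell^2(T^*\mms)}\le\big(\sup_\mms|h-f|\big)\Vert\omega\Vert_{\Ell^2(T^*\mms)}$ together with $\big\Vert(|h|-|f|)\,|\omega|_\mu\big\Vert_{\Ell^2(\mms,\mu)}\le\big(\sup_\mms|h-f|\big)\big\Vert\,|\omega|_\mu\big\Vert_{\Ell^2(\mms,\mu)}$. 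Finally, locality of $|\cdot|_\mu$ (the case $h=\One_E$ of this identity) gives $\Vert f\,\omega\Vert_{\Ell^2(T^*\mms)}=0$ whenever $f=0$ $\mu$-a.e., so the $\Ell_\infty(\mms)$-action descends to an $\Ell^\infty(\mms,\mu)$-action; together with \autoref{Pr:Group action} all conditions of \autoref{Def:Modules} are met with $p=2$, and being a Hilbert space $\Ell^2(T^*\mms)$ is then a Hilbert module.

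The computations with $\Gamma_\mu$ are routine; the step that requires care is the extension to the completion, where the \emph{non-linear} map $|\cdot|_\mu$ must be extended via the reverse triangle inequality and all identities passed to the limit using only the \emph{separate} continuity of the module multiplication. A secondary but genuine point is the passage from an everywhere-defined $\Ell_\infty(\mms)$-action to a bona fide $\Ell^\infty(\mms,\mu)$-action through locality of $|\cdot|_\mu$, which is exactly what legitimises the phrase ``w.r.t.\ $\mu$'' for each $\Ch$-dominant $\mu$.
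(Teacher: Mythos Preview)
Your proposal is correct and follows essentially the same route as the paper's proof: define $|\cdot|_\mu$ on $\PCM/\rmR$ by \eqref{Eq:ptw norm mu}, verify the norm identity and the homogeneity $|h\,\omega|_\mu=|h|\,|\omega|_\mu$ for simple $h$ there, extend by continuity to the completion via \autoref{Pr:Group action}, and deduce the Hilbert property from the parallelogram law for $\bdmu$. Your treatment is in fact more explicit than the paper's on two points it leaves implicit --- the well-definedness of $|\cdot|_\mu$ on equivalence classes, and the extension of the nonlinear map $|\cdot|_\mu$ via the pointwise reverse triangle inequality (the paper just calls it ``an isometry'') --- and you also spell out the descent from the $\Ell_\infty(\mms)$-action to a genuine $\Ell^\infty(\mms,\mu)$-action, which the paper handles in its opening sentence.
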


\begin{proof} By $\Ch$-dominance, passing to $\mu$-versions in $\Ell_\infty(\mms)$ of any given $f\in\Ell^\infty(\mms,\mu)$ in the action of $\Ell_\infty(\mms)$ on $\Ell^2(T^*\mms)$ induces a well-defined bilinear map from $\Ell^\infty(\mms,\mu)\times \Ell^2(T^*\mms)$ to $\Ell^2(T^*\mms)$ which, thanks to \autoref{Pr:Group action}, turns $\Ell^2(T^*\mms)$ into an $\Ell^\infty$-premodule w.r.t.~$\mu$.
	
Now define the pointwise norm $\smash{\vert\cdot\vert_\mu\colon \PCM/\rmR\to\Ell^2(\mms,\mu)}$ by \eqref{Eq:ptw norm mu}. This map is clearly an isometry from $\PCM/\rmR$ to $\Ell^2(\mms,\mu)$, whence by continuous and unique extension to  $\Ell^2(T^*\mms)$, it will only be necessary to prove the required properties for $\smash{\vert\cdot\vert_\mu}$ from \autoref{Def:Modules} for elements of $\PCM/\rmR$ and $\SF(\mms)$, respectively. Indeed, for $[f_i,A_i]\in\PCM/\rmR$,  by Fubini's theorem,
\begin{align*}
\int_\mms \big\vert [f_i,A_i]\big\vert_\mu^2\d\mu = \sum_{i\in\N} \int_{A_i}\Gamma_\mu(f_i)\d\mu = \sum_{i\in\N} \bdmu_{f_i}[A_i]= \big\Vert [f_i,A_i]\big\Vert_{\Ell^2(T^*\mms)}^2.
\end{align*}
On the other hand, writing a given $h\in\SF(\mms)$ according to \eqref{Eq:simple function h},
\begin{align*}
\big\vert h\,[f_i,A_i]\big\vert_\mu  = \sum_{i,j\in\N} \One_{A_i}\,\One_{B_j}\,\vert h_j\vert\,\Gamma_\mu(f_i)^{1/2}= \vert h\vert\,\big\vert [f_i,A_i]\big\vert_\mu\quad\mu\text{-a.e.},
\end{align*}
which establishes the desired properties for $\vert\cdot\vert_\mu$.

The last statement follows since  $\Vert\cdot\Vert_{\Ell^2(T^*\mms)}$ satisfies the parallelogram identity, which is a  consequence of the bilinearity and symmetry of $\bdmu$.
\end{proof}

\begin{remark} Conceptually, one could alternatively construct $\Ell^2(T^*\mms)$ as follows. Given an $\Ch$-dominant $\mu$, define the ``$\Ell^0$-module'' --- put in quotes since it is not a priori induced by an $\Ell^\infty$-module --- $\smash{\Ell^0(T^*\mms)_\mu}$ w.r.t.~$\mu$ as completion w.r.t.~an appropriate distance constructed from $\smash{\Gamma_\mu^{1/2}}$, compare with \autoref{Sub:L0 modules}. Then restrict to the subspace of elements whose induced pointwise norm belongs to $\Ell^2(\mms,\mu)$. See \cite{eberle1999,gigli2020,hinz2013} for details. The advantage of our above approach is that it is clearer in advance that the resulting space does not depend on $\mu$.
\end{remark}

\subsection{Differential of a function in the extended domain}\label{Sub:Differential} $\Ell^2(T^*\mms)$ directly provides a  notion of a \emph{differential} acting on functions in $\F_\rme$. The behavior of a given element of $\Ell^2(T^*\mms)$ is completely determined by its interaction with differentials of functions in $\smash{\F_\rme}$, see \autoref{Th:Universal}. In turn, this will be used to phrase the calculus rules from \autoref{Th:Properties energy measure} at the $\Ell^\infty$-module level, see \autoref{Cor:Calculus rules d}.

We start with the following definition.

\begin{definition}\label{Def:Differential} The \emph{differential} of any function $f\in \F_\rme$ is defined by 
\begin{align*}
\rmd f:= [f,\mms],
\end{align*}
 where $[f,\mms]\in\PCM/\rmR$  is the representative of the sequence $(f_i,A_i)_{i\in\N}$ given by $f_1 := f$, $A_1 := \mms$, $f_i := 0$ and $A_i := \emptyset$ for every $i \geq 2$.
\end{definition}

As usual, we call a $1$-form $\omega\in\Ell^2(T^*\mms)$ \emph{exact} if, for some $f\in\F_\rme$,
\begin{align*}
\omega = \rmd f.
\end{align*}

The differential $\rmd$ is a linear operator on $\F_\rme$. By \eqref{Eq:ptw norm mu}, w.r.t.~the $\Ell^\infty$-module structure induced by \emph{any} $\Ch$-dominant $\mu$ according to \autoref{Th:Module structure},
\begin{align}\label{Eq:TRE}
\vert \rmd f\vert_\mu = \Gamma_\mu(f)^{1/2}\quad\mu\text{-a.e.}
\end{align}
holds for every $f\in \F_\rme$.

\subsubsection{Universality} To derive the  calculus rules from \autoref{Cor:Calculus rules d} relying on  \autoref{Th:Properties energy measure}, we need to prove the following density property. It is independent of the particular choice of the $\Ch$-dominant Borel measure $\mu$ on $\mms$ that induces the background $\Ell^\infty$-module structure on $\Ell^2(T^*\mms)$ according to \autoref{Th:Module structure}. A related  version involving the local density of ``regular vector fields'' which relies on the second order calculus developed in \autoref{Pt:II} is stated in \autoref{Le:Regular vfs generation} below. 

\begin{lemma}\label{Pr:Generators cotangent module} The cotangent module $\Ell^2(T^*\mms)$ is generated, in the sense of $\Ell^\infty$-modules, by $\smash{\rmd\,\F_\rme}$ and by $\smash{\rmd\, \F}$. In particular, $\Ell^2(T^*\mms)$ is separable.
\end{lemma}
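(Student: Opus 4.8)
The plan is to show that $\smash{\rmd\,\F}$ generates $\Ell^2(T^*\mms)$, since $\smash{\rmd\,\F\subset\rmd\,\F_\rme}$ and generation by the smaller set implies generation by the larger one. Recall from \autoref{Sub:Local dimension} that generation means the closure of the $\SF(\mms)$- (equivalently $\Ell^\infty(\mms)$-) span equals the whole module. The natural strategy is a density argument: first show that $\PCM/\rmR$ is dense in $\Ell^2(T^*\mms)$ — which is true by construction, since $\Ell^2(T^*\mms)$ is by definition the completion of $\PCM/\rmR$ — and then show that every element of $\PCM/\rmR$ lies in $\Span\,\rmd\,\F_\rme$ (or better, can be approximated by elements of $\Span\,\rmd\,\F$).

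First I would take an arbitrary $[f_i,A_i]\in\PCM/\rmR$. By definition of $\Span$ in \autoref{Sub:Local dimension}, I need a Borel partition $(E_k)_{k\in\N}$ of $\mms$ with $\One_{E_k}[f_i,A_i]$ a finite $\Ell^\infty$-combination of elements $\rmd g$, $g\in\F_\rme$. But the partition $(A_i)_{i\in\N}$ already does the job in a formal sense: observe that $\One_{A_i}\,[f_i,A_i] = \One_{A_i}\,\rmd f_i = \One_{A_i}\,[f_i,\mms]$, where the equality holds because the two representatives agree on $A_i$ and both vanish (in pointwise norm) off $A_i$ — this follows from \eqref{Eq:ptw norm mu} together with \autoref{Th:Properties energy measure}, specifically strong locality. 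So on each piece $A_i$ of the partition, $[f_i,A_i]$ restricts to $\One_{A_i}\,\rmd f_i$, which is $\One_{A_i}$ (an $\Ell^\infty$-function) times $\rmd f_i$ (an element of $\rmd\,\F_\rme$). Hence $[f_i,A_i]\in\Span\,\rmd\,\F_\rme$, and therefore $\PCM/\rmR\subset\Span\,\rmd\,\F_\rme$, giving $\Ell^2(T^*\mms) = \cl_{\Vert\cdot\Vert}\PCM/\rmR\subseteq\cl_{\Vert\cdot\Vert}\Span\,\rmd\,\F_\rme$, i.e.~$\rmd\,\F_\rme$ generates.

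To upgrade this to generation by $\rmd\,\F$ (the claim with the smaller set), I would approximate: given $f\in\F_\rme$ and a $\Ch$-Cauchy sequence $(f_n)_{n\in\N}$ in $\F$ converging to $f$ in $\F_\rme$, \autoref{Pr:Extended domain props} gives $\Ch(f-f_n)^{1/2}=\Ch(f)^{1/2}-\Ch(f_n)^{1/2}$ — more precisely $\Ch(f_n-f_m)\to 0$ and $\Ch(f_n)\to\Ch(f)$ — so $\Vert\rmd f - \rmd f_n\Vert_{\Ell^2(T^*\mms)}^2 = \Ch(f-f_n)\to 0$ by the representation formula $\Vert\rmd g\Vert^2 = \bdmu_g[\mms] = \Ch(g)$ extended to $\F_\rme$. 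Thus $\rmd f\in\cl_{\Vert\cdot\Vert}\rmd\,\F$, and consequently $\cl_{\Vert\cdot\Vert}\Span\,\rmd\,\F_\rme = \cl_{\Vert\cdot\Vert}\Span\,\rmd\,\F = \Ell^2(T^*\mms)$. For separability, I would pick a countable set $\scrD\subset\F$ dense in $(\F,\Vert\cdot\Vert_\F)$ — this exists by \autoref{Le:Props q.r. s.l. Dirichlet form}(i), since $\F$ is a separable Hilbert space — and a countable dense subalgebra $\scrC$ of $\SF(\mms)$ in sup-norm (e.g.~rational-valued simple functions over a countable generating algebra of $\Borel(\mms)$, using that $\mms$ is Lusin hence second-countable on a suitable Borel structure; alternatively just invoke that $\Ell^2(T^*\mms)$ is generated by a separable family and apply the general fact that a module generated by a countable set of bounded elements over a separable base is separable). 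Then rational-coefficient $\scrC$-combinations of $\{\rmd g : g\in\scrD\}$ are dense, using \autoref{Pr:Group action} for the continuity of the $\Ell^\infty$-action.

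The main obstacle I anticipate is the separability bookkeeping rather than the generation statement: one must produce a genuinely \emph{countable} dense family, and the subtlety is that elements of $\PCM/\rmR$ involve arbitrary Borel partitions, so one needs to argue that finitely-many-piece rational simple functions suffice to approximate, in $\Ell^2(T^*\mms)$-norm, the general infinite-partition elements. This is handled by truncating the partition (the tail $\sum_{i>N}\bdmu_{f_i}[A_i]\to 0$) and approximating each $\One_{A_i}$ by simple functions in a way compatible with the energy-measure weights — a routine but slightly fiddly measure-theoretic step. The generation part itself, as sketched above, is essentially immediate from the construction and strong locality.
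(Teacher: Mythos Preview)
Your proposal is correct and follows essentially the same route as the paper: both show that every $[f_i,A_i]\in\PCM/\rmR$ lies in (the closure of) $\Span\,\rmd\,\F_\rme$ via the built-in partition, then approximate $\rmd f$ for $f\in\F_\rme$ by $\rmd f_n$ with $f_n\in\F$ using $\Ch(f_n-f)\to 0$. For separability, the paper takes the shortcut you mention as an alternative --- observing that $\Vert\rmd f\Vert_{\Ell^2(T^*\mms)}\leq\Vert f\Vert_\F$ makes $\rmd\,\F$ separable and then citing \cite[Prop.~1.4.10]{gigli2018} --- rather than the explicit countable-dense-set construction you sketch first; your hands-on version is fine but unnecessary given that reference.
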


\begin{proof} By the definition of the norm $\smash{\Vert \cdot \Vert_{\Ell^2(T^*\mms)}}$ on $\PCM/\rmR$ and after passing to the limit, we see that the family $\calS$ of finite linear combinations of objects $\One_A\d f$, $A\in \Borel(\mms)$ and $f\in \F_\rme$, is dense in $\PCM/\rmR$. Given that $\PCM/\rmR$ is dense in $\Ell^2(T^*\mms)$ by construction of the latter space, the first claim follows.

Next, given any $A\in\Borel(\mms)$ and $\smash{f\in \F_\rme}$, by definition of $\smash{\F_\rme}$ and \autoref{Th:Properties energy measure} there exists a sequence $(f_n)_{n\in\N}$ in $\F$ such that
\begin{align*}
\lim_{n\to\infty}\big\Vert \One_A\,\rmd(f_n - f)\big\Vert_{\Ell^2(T^*\mms)}^2 &= \lim_{n\to\infty} \bdmu_{f_n-f}[A] \leq \lim_{n\to\infty}\Ch(f_n-f) =0.
\end{align*}
Thus, $\rmd\,\F$ generates $\calS$, and by the argument from the first part of the proof and a diagonal procedure, $\smash{\rmd\,\F}$ generates $\smash{\Ell^2(T^*\mms)}$.

To see the separability of $\Ell^2(T^*\mms)$, note that $\rmd\,\F$ is a separable subset since $\smash{\Vert\rmd f\Vert_{\Ell^2(T^*\mms)}\leq \Vert f\Vert_{\F}}$ for every $f\in \F$ and by  the separability of $\F$ granted by \autoref{Pr:Extended domain props}. In particular, $\Ell^2(T^*\mms)$ is separable by \cite[Prop.~1.4.10]{gigli2018}.
\end{proof}

\begin{corollary}\label{Cor:Hino} Let $(E_n)_{n\in{\N\cup\{\infty\}}}$ be the dimensional decomposition of $\Ell^2(T^*\mms)$, seen as an $\Ell^2$-normed $\Ell^\infty$-module w.r.t.~a given $\Ch$-dominant $\mu$. Denote by $\rmp$ the pointwise index from \autoref{Re:Hino}. Then for every $n\in \N\cup\{\infty\}$,
\begin{align*}
\rmp = n\quad\mu\text{-a.e.}\quad\text{on }E_n.
\end{align*}
\end{corollary}

\begin{theorem}[Universal property]\label{Th:Universal} Let $\mu$ be an $\Ch$-dominant Borel measure on $\mms$. Let $(\calM,\mathcall{d})$ be a tuple consisting of an $\Ell^2$-normed $\Ell^\infty$-module over $\mms$ w.r.t.~$\mu$ with pointwise norm denoted by $_\mu\vert\cdot\vert$ and a linear map $\smash{\mathcall{d}\colon \F_\rme \to \calM}$ such that
\begin{enumerate}[label=\textnormal{\alph*.}]
\item\label{La:gen} $\calM$ is generated by $\smash{\rmd\,\F_\rme}$, and
\item\label{La:Carre} for every $f\in\smash{\F_\rme}$,
\begin{align*}
_\mu\vert \mathcall{d}f\vert = \Gamma_\mu(f)^{1/2}\quad\mu\text{-a.e.}
\end{align*}
\end{enumerate}
Then there exists a unique module isomorphism $\Phi\colon \Ell^2(T^*\mms) \to \calM$ such that
\begin{align}\label{Eq:Commutation d}
\Phi\circ \rmd = \mathcall{d}.
\end{align}
\end{theorem}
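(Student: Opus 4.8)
The plan is the standard ``universal property by generators and relations'' argument adapted to the $\Ell^\infty$-module setting. First I would \emph{define} $\Phi$ on the dense subspace $\PCM/\rmR\subset\Ell^2(T^*\mms)$, then check it extends by continuity, and finally check bijectivity together with the module-morphism property. Concretely, for $[f_i,A_i]\in\PCM/\rmR$ I set
\begin{align*}
\Phi\big([f_i,A_i]\big) := \sum_{i\in\N} \One_{A_i}\,\mathcall{d}f_i,
\end{align*}
where the series converges in $\calM$ because, by assumption \ref{La:Carre} and the local-isometry property of the pointwise norm together with $(A_i)_{i\in\N}$ being a Borel partition,
\begin{align*}
\Big\Vert \sum_{i\in\N}\One_{A_i}\,\mathcall{d}f_i\Big\Vert_\calM^2 = \int_\mms \sum_{i\in\N}\One_{A_i}\,\big({}_\mu\vert\mathcall{d}f_i\vert\big)^2\d\mu = \sum_{i\in\N}\int_{A_i}\Gamma_\mu(f_i)\d\mu = \sum_{i\in\N}\bdmu_{f_i}[A_i] < \infty.
\end{align*}
The first task is well-definedness on $\PCM/\rmR$: if $(f_i,A_i)_{i\in\N}\,\rmR\,(g_j,B_j)_{j\in\N}$, then $\bdmu_{f_i-g_j}[A_i\cap B_j]=0$, hence by \ref{La:Carre} and strong locality of $\Gamma_\mu$ (\autoref{Th:Properties energy measure}\ref{La:Strong locality}) one gets ${}_\mu\vert\mathcall{d}f_i-\mathcall{d}g_j\vert=0$ $\mu$-a.e.~on $A_i\cap B_j$, so $\One_{A_i\cap B_j}\mathcall{d}f_i = \One_{A_i\cap B_j}\mathcall{d}g_j$; summing over $i,j$ gives $\Phi([f_i,A_i])=\Phi([g_j,B_j])$. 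The displayed computation simultaneously shows that $\Phi$ is a linear \emph{isometry} from $(\PCM/\rmR,\Vert\cdot\Vert_{\Ell^2(T^*\mms)})$ into $\calM$, where I also use linearity of $\mathcall{d}$ and the defining vector-space operations on $\PCM/\rmR$. Since $\PCM/\rmR$ is dense in $\Ell^2(T^*\mms)$ by construction, $\Phi$ extends uniquely to a linear isometry $\Phi\colon\Ell^2(T^*\mms)\to\calM$, and \eqref{Eq:Commutation d} holds because $\rmd f=[f,\mms]$ maps to $\One_\mms\,\mathcall{d}f=\mathcall{d}f$.

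Next I would verify that $\Phi$ is a module morphism. It suffices to check $\Phi(h\,\omega)=h\,\Phi(\omega)$ for $h\in\SF(\mms)$ and $\omega\in\PCM/\rmR$, since $\SF(\mms)$ is uniformly dense in $\Ell_\infty(\mms)$, the $\Ell^\infty$-actions on both sides are continuous (\autoref{Pr:Group action} on the source, \autoref{Def:Modules}\ref{La:Hut} on the target), and both $\PCM/\rmR$ is dense and $\mu$-a.e.~classes in $\Ell^\infty(\mms,\mu)$ are represented by $\Ell_\infty(\mms)$-functions (using $\Ch$-dominance of $\mu$ exactly as in the proof of \autoref{Th:Module structure}). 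For $h=\sum_{l=1}^k\One_{B_l}h_l$ as in \eqref{Eq:simple function h} and $\omega=[f_i,A_i]$, formula \eqref{Eq:SF mult} gives $h\,\omega=[h_l\,f_i,A_i\cap B_l]$, so $\Phi(h\,\omega)=\sum_{i,l}\One_{A_i\cap B_l}\,\mathcall{d}(h_l f_i)=\sum_{i,l}\One_{A_i\cap B_l}\,h_l\,\mathcall{d}f_i = h\sum_i \One_{A_i}\mathcall{d}f_i = h\,\Phi(\omega)$, using linearity of $\mathcall{d}$. Thus $\Phi\in\Hom(\Ell^2(T^*\mms);\calM)$ and it is a norm isometry, hence injective with closed image.

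It remains to see $\Phi$ is surjective, which is where assumption \ref{La:gen} enters and is the only genuinely non-formal point. The image $\Phi(\Ell^2(T^*\mms))$ is a closed $\Ell^\infty$-submodule of $\calM$ containing $\mathcall{d}\F_\rme=\Phi(\rmd\,\F_\rme)$; since $\calM$ is generated by $\mathcall{d}\,\F_\rme$ in the sense of $\Ell^\infty$-modules (i.e.~$\calM=\cl_{\Vert\cdot\Vert_\calM}\Span\,\mathcall{d}\,\F_\rme$, cf.~\autoref{Sub:Local dimension}), and $\Span\,\mathcall{d}\,\F_\rme$ consists of $\mu$-a.e.~gluings of $\Ell^\infty$-combinations of elements of $\mathcall{d}\,\F_\rme$, each such gluing lies in the closed submodule $\Phi(\Ell^2(T^*\mms))$ (a closed submodule is stable under countable Borel gluings by the local gluing argument, exactly as in \autoref{Pr:Generators cotangent module} where $\Ell^2(T^*\mms)$ itself is shown to be generated by $\rmd\,\F_\rme$); passing to the closure gives $\calM=\Phi(\Ell^2(T^*\mms))$. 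Hence $\Phi$ is a surjective module morphism and a norm isometry, so its inverse is linear and, being the inverse of a module morphism, is itself $\Ell^\infty$-linear; thus $\Phi^{-1}\in\Hom(\calM;\Ell^2(T^*\mms))$ and $\Phi$ is a module isomorphism. Uniqueness of $\Phi$ subject to \eqref{Eq:Commutation d} is immediate: any such map agrees with the above on $\rmd\,\F_\rme$, hence on $\Span\,\rmd\,\F_\rme$ by $\Ell^\infty$-linearity, hence on all of $\Ell^2(T^*\mms)$ by continuity and \autoref{Pr:Generators cotangent module}. \qed

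\medskip

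The step I expect to be the main obstacle — or at least the one requiring the most care — is the passage from ``$\calM$ generated by $\mathcall{d}\,\F_\rme$'' to surjectivity of $\Phi$: one must argue that a \emph{closed} $\Ell^\infty$-submodule is automatically stable under the countable Borel gluing operations implicit in the definition of $\Span$, which is precisely the mechanism used (in the converse direction) in the proof of \autoref{Pr:Generators cotangent module}. Everything else is bookkeeping: well-definedness modulo $\rmR$ via strong locality, the isometry identity via Fubini, and reduction of the module-morphism property from $\Ell_\infty(\mms)$ to $\SF(\mms)$.
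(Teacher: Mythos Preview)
Your proof is correct and follows essentially the same route as the paper's: define $\Phi$ on $\PCM/\rmR$ by $\Phi([f_i,A_i])=\sum_i\One_{A_i}\,\mathcall{d}f_i$, verify it is a well-defined norm isometry via assumption \ref{La:Carre}, extend by density, check $\Ell^\infty$-linearity on simple functions, and deduce surjectivity from assumption \ref{La:gen}. Two minor remarks: (i) your citation of ``strong locality'' for well-definedness is slightly off --- what you actually use is the direct implication $\bdmu_{f_i-g_j}[A_i\cap B_j]=0\Rightarrow\Gamma_\mu(f_i-g_j)=0$ $\mu$-a.e.\ on $A_i\cap B_j$, which is just nonnegativity of $\Gamma_\mu$ together with $\bdmu\ll\mu$; (ii) the paper handles surjectivity marginally more directly by observing $\Phi(\PCM/\rmR)$ already equals the dense set $\calQ$ of finite-norm gluings of $\mathcall{d}f_i$, so the ``closed submodule stable under gluing'' discussion you flag as the main obstacle is not needed --- the image of an isometry is closed and contains a dense set.
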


\begin{proof} First, we observe that for every $f,g\in \F_\rme$ and every Borel subset $A\subset\mms$, we have $\One_A\d f = \One_A\d g$ w.r.t.~the $\Ell^\infty$-module structure of $\Ell^2(T^*\mms)$ from \autoref{Th:Module structure} if and only if $\One_A\,\mathcall{d}f = \One_A\,\mathcall{d}g$ w.r.t.~the given $\Ell^\infty$-module structure of $\calM$ (both w.r.t.~$\mu$). Indeed, this follows from  combining \autoref{Th:Module structure} with \ref{La:Carre}:
\begin{align}\label{Eq:Gluck}
\One_A\,\big\vert\rmd (f-g)\big\vert_\mu = \One_A\,\Gamma_\mu(f-g)^{1/2} = \One_A\textcolor{white}{\big\vert_{\textcolor{black}{\mu}}}\big\vert\mathcall{d}(f-g)\big\vert\quad\mu\text{-a.e.}
\end{align}

Define the real vector space $\calQ\subset\calM$ by
\begin{align*}
\calQ &:= \Big\lbrace \sum_{i\in\N} \One_{A_i}\,\mathcall{d} f_i : (A_i)_{i\in\N}\text{ partition of }\mms\text{ in } \Borel(\mms),\\
&\qquad\qquad (f_i)_{i\in\N} \text{ in } \F_\rme,\ \sum_{i\in\N} \big\Vert\One_{A_i}\,_\mu\vert \mathcall{d} f_i\vert\big\Vert_{\Ell^2(\mms,\mu)}^2 < \infty \Big\rbrace,
\end{align*}
as well as the map $\Phi\colon \PCM\to \calQ$ by
\begin{align}\label{Eq:Initial def}
\Phi \sum_{i\in\N} \One_{A_i}\d f_i := \sum_{i\in\N}\One_{A_i}\,\mathcall{d}f_i.
\end{align}
By \eqref{Eq:Gluck}, $\Phi$ is well-defined, it is linear, and by definition \eqref{Eq:Commutation d} holds. Moreover, the relation \eqref{Eq:Initial def} will entail the claimed uniqueness of the continuous extension of $\Phi$ to $\Ell^2(T^*\mms)$ as a byproduct. To prove the actual existence of such an extension, we first observe that by \eqref{Eq:Gluck},
\begin{align*}
\textcolor{white}{\Big\vert_{\textcolor{Black}{\mu}}}\Big\vert\!\sum_{i\in\N}\One_{A_i}\,\mathcall{d}f_i\Big\vert = \Big\vert\!\sum_{i\in\N}\One_{A_i}\d f_i\Big\vert_\mu\quad\mu\text{-a.e.}  
\end{align*}
In particular, $\Phi$ is a norm isometry, i.e.
\begin{align*}
\big\Vert\Phi\,\omega\big\Vert_\calM = \Vert \omega\Vert_{\Ell^2(T^*\mms)}
\end{align*}
for every $\omega\in \PCM$. By the density of $\PCM/\rmR$ in $\Ell^2(T^*\mms)$, $\Phi$ extends uniquely and continuously to a linear isometry $\smash{\Phi\colon \Ell^2(T^*\mms)\to\calM}$ which also preserves the respective pointwise norms. The latter map is injective, has closed image and is actually surjective by the trivial identity $\Phi(\PCM)=\calQ$ and item \ref{La:gen} above. Thus, $\Phi$ is a Banach space isomorphism. The $\Ell^\infty$-linearity of $\Phi$ finally follows by continuity and uniqueness of the extension of $\Phi$ and the density of $\SF(\mms)$ in $\Ell^\infty(\mms,\mu)$ w.r.t.~the uniform norm after deriving the  elementary identity
\begin{align*}
\Phi\, \One_B\,\omega = \One_B\,\Phi\,\omega
\end{align*} 
for every $\omega\in\PCM$ and every $B\in\Borel(\mms)$ from \eqref{Eq:Initial def}.
\end{proof}

\subsubsection{Properties of the differential} Now we establish elementary calculus rules associated with the differential $\rmd$ from \autoref{Def:Differential}. 

We start with the following ``closedness'' property of it w.r.t.~$\meas$ [sic].

\begin{lemma}\label{Le:d closed} For every sequence $(f_n)_{n\in\N}$ in $\F_\rme$ which converges to $f\in\Ell^0(\mms)$ pointwise $\meas$-a.e.~in such a way that the sequence $(\rmd f_n)_{n\in\N}$ converges to $\omega\in \Ell^2(T^*\mms)$ in $\Ell^2(T^*\mms)$, we have $\smash{f\in \F_\rme}$ as well as
\begin{align}\label{Eq:df=omega}
\rmd f =\omega.
\end{align}
In particular, if $f_n \in \F$ for every $n\in\N$, $f_n \rightharpoonup f$ in $\Ell^2(\mms)$ and $\rmd f_n \rightharpoonup \omega$ in $\Ell^2(T^*\mms)$ as $n\to\infty$ for some $f\in \Ell^2(\mms)$ and $\omega\in \Ell^2(T^*\mms)$, then $f\in \F$, and the identity \eqref{Eq:df=omega} holds accordingly.
\end{lemma}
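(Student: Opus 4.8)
The statement is a closedness/closability property of the differential $\rmd$ acting on the extended domain $\F_\rme$, with values in $\Ell^2(T^*\mms)$. The natural strategy is to reduce the claim to the lower semicontinuity of $\Ch_1$ provided by \autoref{Pr:Extended domain props}\ref{La:E bounded} together with the fact that bounded sequences in the Hilbert module $\Ell^2(T^*\mms)$ have weakly convergent subsequences and that the norm $\Vert\cdot\Vert_{\Ell^2(T^*\mms)}$ equals $\Ch^{1/2}$ on exact forms, via the identity $\Vert\rmd g\Vert_{\Ell^2(T^*\mms)}^2 = \bdmu_g[\mms] = \Ch(g)$ for $g\in\F_\rme$ coming from \autoref{Th:Module structure} and \autoref{Th:Properties energy measure}.

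\textbf{First step: membership $f\in\F_\rme$.} Since $(\rmd f_n)_{n\in\N}$ converges in $\Ell^2(T^*\mms)$, it is bounded there, so $\sup_{n\in\N}\Ch(f_n) = \sup_{n\in\N}\Vert\rmd f_n\Vert_{\Ell^2(T^*\mms)}^2 < \infty$; that is, $(f_n)_{n\in\N}$ is an $\Ch$-bounded sequence in $\F_\rme$. Combined with the hypothesis $f_n\to f$ pointwise $\meas$-a.e., \autoref{Pr:Extended domain props}\ref{La:E bounded} gives $f\in\F_\rme$ directly, together with the lower semicontinuity estimate $\Ch(f) \le \liminf_{n\to\infty}\Ch(f_n)$.

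\textbf{Second step: the identity $\rmd f = \omega$.} Here I would use a Mazur-type argument. By convexity of $\Ell^2$-balls, for each $n$ pick a finite convex combination $g_n := \sum_k \lambda_k^{(n)} f_{m_k^{(n)}}$ with $m_k^{(n)}\ge n$ such that $\rmd g_n = \sum_k\lambda_k^{(n)}\rmd f_{m_k^{(n)}} \to \omega$ \emph{strongly} in $\Ell^2(T^*\mms)$ — actually one can arrange along a subsequence that $\Vert\rmd g_n - \omega\Vert_{\Ell^2(T^*\mms)} < 2^{-n}$. Then $\rmd(g_{n+1}-g_n)\to 0$ in $\Ell^2(T^*\mms)$, i.e. $\Ch(g_{n+1}-g_n) = \Vert\rmd(g_{n+1}-g_n)\Vert_{\Ell^2(T^*\mms)}^2 \to 0$ fast enough that $(g_n)_{n\in\N}$ is $\Ch$-Cauchy. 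Passing to a further subsequence, $g_n\to \hat f$ pointwise $\meas$-a.e. for some $\hat f\in\F_\rme$ with $\rmd g_n\to\rmd\hat f$ in $\Ell^2(T^*\mms)$ (this last convergence because $\rmd$ is an isometry from $(\F_\rme, \Ch^{1/2})$-Cauchy sequences onto Cauchy sequences in $\Ell^2(T^*\mms)$, using \autoref{Pr:Extended domain props}(i) for passage to the limit of $\Ch$); hence $\rmd\hat f = \omega$. It remains to identify $\hat f$ with $f$. Since each $g_n$ is a convex combination of the $f_m$ with $m\ge n$ and $f_m\to f$ pointwise $\meas$-a.e., a standard Banach–Saks/convex-combination argument (or simply the observation that convex combinations of a pointwise convergent sequence converge to the same limit $\meas$-a.e., after passing to the relevant subsequence and using that the combinations involve tail indices) shows $g_n\to f$ pointwise $\meas$-a.e. up to a subsequence. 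Uniqueness of $\meas$-a.e.~limits forces $\hat f = f$ $\meas$-a.e., whence $\rmd f = \rmd\hat f = \omega$, proving \eqref{Eq:df=omega}.

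\textbf{Third step: the weak version.} If $f_n\in\F$ for all $n$ with $f_n\rightharpoonup f$ in $\Ell^2(\mms)$ and $\rmd f_n\rightharpoonup\omega$ in $\Ell^2(T^*\mms)$, apply Mazur's lemma in the product space $\Ell^2(\mms)\times\Ell^2(T^*\mms)$: there are convex combinations $\tilde f_n$ of the $f_m$ with $m\ge n$ such that $\tilde f_n\to f$ strongly in $\Ell^2(\mms)$ and $\rmd\tilde f_n\to\omega$ strongly in $\Ell^2(T^*\mms)$. Extracting a subsequence with $\tilde f_n\to f$ pointwise $\meas$-a.e., the first part applies to give $f\in\F_\rme$ and $\rmd f=\omega$; and $f\in\F_\rme\cap\Ell^2(\mms) = \F$ by \autoref{Pr:Extended domain props}\ref{La:W12 = S2 cap L2}.

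\textbf{Main obstacle.} The genuinely delicate point is the passage from weak/convex-combination convergence of the $\rmd f_n$ to an \emph{$\meas$-a.e.~pointwise} statement about the scalar potentials, i.e.~matching the ``horizontal'' limit (the function $f$, defined only $\meas$-a.e.~and obtained pointwise) with the ``vertical'' limit $\hat f$ produced from the Cauchy sequence in $\Ell^2(T^*\mms)$. One must be careful that $\F_\rme$ carries only the seminorm $\Ch^{1/2}$ (constants are invisible to $\rmd$), so $\rmd g_n\to\omega$ alone pins down $\hat f$ only up to an additive constant on each ``connected component''; the pointwise $\meas$-a.e.~convergence $g_n\to f$ is exactly what removes this ambiguity. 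Organizing the two subsequence extractions (one to make $(g_n)$ $\Ch$-Cauchy, one to recover pointwise convergence) so that they are compatible is the part requiring genuine care, though it is standard.
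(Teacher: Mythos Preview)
Your argument is correct, and the third step matches the paper's treatment exactly. However, your second step is unnecessarily circuitous: the hypothesis in the first part already gives \emph{strong} convergence $\rmd f_n\to\omega$ in $\Ell^2(T^*\mms)$, so $(f_n)$ itself is $\Ch$-Cauchy and there is no need for Mazur or convex combinations --- you can take $g_n:=f_n$ throughout. The paper exploits this directly: once $f\in\F_\rme$ is established as in your first step, it applies the lower semicontinuity of $\Ch_1$ from \autoref{Pr:Extended domain props}\ref{La:E bounded} to the difference $f-f_m$, obtaining
\[
\Vert\rmd f-\rmd f_m\Vert_{\Ell^2(T^*\mms)}^2 = \Ch(f-f_m) \le \liminf_{n\to\infty}\Ch(f_n-f_m) = \liminf_{n\to\infty}\Vert\rmd f_n-\rmd f_m\Vert_{\Ell^2(T^*\mms)}^2,
\]
and the right-hand side tends to $0$ as $m\to\infty$ since $(\rmd f_n)$ is Cauchy; hence $\rmd f_m\to\rmd f$ and $\rmd f=\omega$. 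This sidesteps the auxiliary $\hat f$ and the identification step you flag as the ``main obstacle'' --- which, incidentally, is not delicate at all: convex combinations of tail terms $f_m$, $m\ge n$, of a pointwise $\meas$-a.e.\ convergent sequence converge pointwise $\meas$-a.e.\ to the same limit, so $\hat f=f$ immediately and no subsequence juggling is required.
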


\begin{proof} Since $(f_n)_{n\in\N}$ is $\Ch$-bounded by assumption, by \autoref{Pr:Extended domain props} we directly obtain that $\smash{f\in \F_\rme}$. Therefore, $\smash{f-f_m\in \F_\rme}$ for every $m\in\N$, and again by \autoref{Pr:Extended domain props} and using that $(f_n)_{n\in\N}$ is $\Ch$-Cauchy by \eqref{Eq:TRE},
\begin{align*}
\limsup_{m\to\infty} \Vert \rmd(f - f_m)\Vert_{\Ell^2(T^*\mms)} \leq \limsup_{m\to\infty}\liminf_{n\to\infty} \Vert \rmd(f_n-f_m)\Vert_{\Ell^2(T^*\mms)}=0.
\end{align*}
It follows that $\rmd f_m \to \rmd f$ in $\Ell^2(T^*\mms)$ as $m\to\infty$, whence $\rmd f =\omega$.

The second claim is now due to Mazur's lemma, up to possibly passing to suitable pointwise $\meas$-a.e.~converging subsequences.
\end{proof}

``Expected'' calculus rules for $\rmd$ hold if the measure $\mu$ under consideration is minimal $\Ch$-dominant (recall from \autoref{Sub:Carré} that such $\mu$ does not charge $\Ch$-polar sets). In particular, all identities in \autoref{Cor:Calculus rules d} below make sense by the definition of $\rmd$ in terms of $\bdmu$ and   \autoref{Th:Properties energy measure}.

\begin{proposition}\label{Cor:Calculus rules d} Let $\mu$ be a minimal $\Ch$-dominant Borel measure on $\mms$. Then  w.r.t.~the $\Ell^\infty$-module structure of $\Ell^2(T^*\mms)$ from \autoref{Th:Module structure} induced by $\mu$, the  following properties hold.
\begin{enumerate}[label=\textnormal{\textcolor{black}{(}\roman*\textcolor{black}{)}}]
\item\label{La:Uno} \emph{Locality.} For every $f,g\in \F_\rme$,
\begin{align*}
\One_{\{\widetilde{f} = \widetilde{g}\}}\d f = \One_{\{\widetilde{f}=\widetilde{g}\}}\d g.
\end{align*}
\item\label{La:Due} \emph{Chain rule.} For every $f\in \F_\rme$ and every $\Leb^1$-negligible Borel set $C\subset\R$, 
\begin{align*}
\One_{\widetilde{f}^{-1}(C)}\d f=0.
\end{align*}
In particular, for every $\varphi\in\Lip(\R)$,
\begin{align*}
\rmd(\varphi\circ f) = \big[\varphi'\circ \widetilde{f}\big]\d f,
\end{align*}
where the derivative $\smash{\varphi'}$ is defined arbitrarily on the intersection of the set of non-differentiability points of $\varphi$ with the image of $\smash{\widetilde{f}}$.
\item\label{La:Quattro} \emph{Leibniz rule.} For every $\smash{f,g\in \F_{\eb}(\mms)}$,
\begin{align*}
\rmd(f\,g) = \widetilde{f}\d g + \widetilde{g}\d f.
\end{align*}
\end{enumerate}
\end{proposition}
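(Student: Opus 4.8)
The plan is to transfer each identity from the corresponding statement in \autoref{Th:Properties energy measure} to the level of the cotangent module, using the defining isometry $\big\vert[f_i,A_i]\big\vert_\mu = \sum_i \One_{A_i}\,\Gamma_\mu(f_i)^{1/2}$ from \autoref{Th:Module structure} together with the polarized version of the pointwise norm, namely that for $f,g\in\F_\rme$ and a Borel set $A$ one has $\One_A\d f = \One_A\d g$ in $\Ell^2(T^*\mms)$ if and only if $\One_A\,\bdmu_f = \One_A\,\bdmu_g$ and $\One_A\,\bdmu_{f,g}=\One_A\,\bdmu_g$, i.e.\ if and only if $\One_A\,\bdmu_{f-g}=0$. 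This last equivalence is exactly the mechanism already used in the proof of \autoref{Th:Universal} (see \eqref{Eq:Gluck}), and since $\mu$ is minimal $\Ch$-dominant it charges no $\Ch$-polar set, so the $\Ch$-q.c.\ representatives in \autoref{Th:Properties energy measure} may be used freely (cf.\ \autoref{Re:CDC}).

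For \ref{La:Uno}, I would argue as follows. Strong locality of $\bdmu$, \autoref{Th:Properties energy measure}\ref{La:Strong locality}, applied to $f-g\in\F_\rme$, which is $\meas$-a.e.\ (hence $\Ch$-q.e., by \autoref{Le:Props q.r. s.l. Dirichlet form}) constantly zero on the $\Ch$-quasi-open set $\{\widetilde f=\widetilde g\}$, gives $\One_{\{\widetilde f=\widetilde g\}}\,\bdmu_{f-g,h}=0$ for every $h\in\F_\rme$; in particular $\One_{\{\widetilde f=\widetilde g\}}\,\bdmu_{f-g}=0$. Taking square roots of the pointwise norm, $\One_{\{\widetilde f=\widetilde g\}}\big\vert\rmd(f-g)\big\vert_\mu = \One_{\{\widetilde f=\widetilde g\}}\,\Gamma_\mu(f-g)^{1/2}=0$ $\mu$-a.e., which by locality of the pointwise norm (\autoref{Sub:Linfty modles}) means $\One_{\{\widetilde f=\widetilde g\}}\d f=\One_{\{\widetilde f=\widetilde g\}}\d g$. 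For the first part of \ref{La:Due} the argument is identical, now using the second identity in \autoref{Th:Properties energy measure}\ref{La:Minimum mu} together with \eqref{La:ll}: since $f_\push\bdmu_f\ll\Leb^1$, an $\Leb^1$-negligible $C$ forces $\bdmu_f$ to vanish on $\widetilde f^{-1}(C)$, hence $\One_{\widetilde f^{-1}(C)}\big\vert\rmd f\big\vert_\mu=0$ and $\One_{\widetilde f^{-1}(C)}\d f=0$.

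For the chain rule formula in \ref{La:Due} and the Leibniz rule \ref{La:Quattro}, I would verify the desired identity ``locally'' on an arbitrary Borel set $A$ by comparing pointwise norms of the difference. For \ref{La:Quattro}: set $\omega := \rmd(fg) - \widetilde f\d g - \widetilde g\d f \in \Ell^2(T^*\mms)$; this makes sense since $fg\in\F_\eb$ by \autoref{Th:Properties energy measure} and $\widetilde f,\widetilde g$ are bounded $\mu$-a.e.\ by \autoref{Le:Props q.r. s.l. Dirichlet form}, so the products $\widetilde f\d g$, $\widetilde g\d f$ are legitimate elements of the module. Then compute $\vert\omega\vert_\mu^2$ by expanding the pointwise scalar product into nine terms, each of which is an $\Ell^\infty$-multiple of $\Gamma_\mu(\cdot,\cdot)$; the Leibniz rule for $\bdmu$ in \autoref{Th:Properties energy measure} (transferred to $\Gamma_\mu$ as noted after \autoref{Def:Dominance}) shows the combination equals zero $\mu$-a.e. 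Hence $\Vert\omega\Vert_{\Ell^2(T^*\mms)}=0$, so $\omega=0$. The $\Cont^1$ chain rule for $\bdmu$ handles the case $\varphi\in\Cont^1$ verbatim, and the general Lipschitz case follows by the approximation argument recalled in \autoref{Re:CDC}, combined with the already established part of \ref{La:Due} to control the ambiguity of $\varphi'$ on the non-differentiability set (an $\Leb^1$-null set).

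I do not expect a serious obstacle here — the content is genuinely a translation exercise. The one point requiring a little care is the passage from ``equality of energy measures restricted to $A$'' to ``equality of $1$-forms restricted to $A$''; this is precisely formula \eqref{Eq:Gluck}, and the only subtlety is making sure one works with a \emph{minimal} $\Ch$-dominant $\mu$ so that $\Gamma_\mu$ is well-defined on all of $\F_\rme$ and the $\Ch$-q.e.\ statements in \autoref{Th:Properties energy measure} may be read $\mu$-a.e. A secondary point is the Lipschitz case of the chain rule, where one must observe that $\varphi'\circ\widetilde f$ differs on a $\mu$-null set regardless of how $\varphi'$ is chosen on non-differentiability points, using \ref{La:Due} applied to $f$.
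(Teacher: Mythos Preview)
Your strategy is the same as the paper's --- transfer the energy-measure calculus rules from \autoref{Th:Properties energy measure} to the module via the identification $\vert\rmd f\vert_\mu=\Gamma_\mu(f)^{1/2}$ --- and is essentially correct, with one genuine slip and one minor stylistic difference.

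The slip is in \ref{La:Uno}: the set $\{\widetilde f=\widetilde g\}$ is the zero set of the $\Ch$-q.c.\ function $\widetilde f-\widetilde g$, hence $\Ch$-quasi-\emph{closed}, not quasi-open. So you cannot invoke the strong-locality item \ref{La:Strong locality} of \autoref{Th:Properties energy measure}, whose hypothesis demands a quasi-open $G$. The fix is immediate: use instead the second identity in the truncation item \ref{La:Minimum mu}, namely $\One_{\{\widetilde h=0\}}\,\bdmu_h=0$, applied to $h:=f-g$. This is exactly what the paper does (after reducing to $g=0$ by linearity), and the rest of your argument --- pointwise norm of the difference vanishes, hence the difference is zero --- goes through unchanged.

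For \ref{La:Due} and \ref{La:Quattro} your direct approach (expand $\vert\omega\vert_\mu^2$ into $\Gamma_\mu$-terms and watch them cancel via the energy-measure Leibniz/chain rule) is a legitimate alternative to the paper's route, which instead verifies $\langle\rmd(\varphi\circ f),\rmd g\rangle_\mu=\big\langle[\varphi'\circ\widetilde f]\d f,\rmd g\big\rangle_\mu$ for all $g\in\F_\rme$ and then appeals to \autoref{Pr:Generators cotangent module} and the Hilbert structure to conclude. Your expansion avoids the density step but requires tracking more terms; the paper's testing argument is slightly cleaner. Both are correct.
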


\begin{proof} By the linearity of $\rmd$, it  is sufficient to consider the case where $g$ vanishes identically in point \ref{La:Uno}. Up to passing to a Borelian $\Ch$-quasi-closed representative of the $\Ch$-quasi-closed set $\smash{\{\widetilde{f}=0\}}$ \cite[Lem.~2.5]{delloschiavo2020}, by \autoref{Th:Module structure}  we have $\smash{\One_{\{\widetilde{f}=0\}}\d f = 0}$ if and only if $\Gamma_\mu(f) = 0$ $\mu$-a.e.~on $\smash{\{\widetilde{f}=0\}}$. By  \autoref{Th:Properties energy measure}, the assertion thus follows from the identities
\begin{align*}
\big\Vert \One_{\{\widetilde{f}=0\}}\d f\big\Vert_{\Ell^2(T^*\mms)} =\bdmu_f\big[\{\widetilde{f}=0\}\big]^{1/2} = 0.
\end{align*}

The first point of \ref{La:Due} follows similar lines, taking \autoref{Re:CDC} into account.

To prove the second claim in \ref{La:Due}, using  \autoref{Th:Properties energy measure} we note that
\begin{align*}
\big\langle\rmd(\varphi\circ f),\rmd g\big\rangle_\mu = \big\langle\big[\varphi'\circ \widetilde{f}\big]\d f,\rmd g\big\rangle_\mu\quad\mu\text{-a.e.}
\end{align*}
for every $\smash{g\in \F_\rme}$. \autoref{Pr:Generators cotangent module} allows us to extend this property to arbitrary $\omega\in\Ell^2(T^*\mms)$ in place of $\rmd g$. The Hilbert space structure of $\Ell^2(T^*\mms)$ from \autoref{Th:Module structure}, the fact that $\langle\cdot,\cdot\rangle_\mu$ induces the scalar product on $\Ell^2(T^*\mms)$ by integration w.r.t.~$\mu$ as well as the arbitrariness of $\omega$ terminate the proof of \ref{La:Due}.

Item \ref{La:Quattro} follows analogously to the previous argument.
\end{proof}

\begin{remark}\label{Re:Min E dom} If $\meas$ is minimal $\Ch$-dominant, all identities in \autoref{Cor:Calculus rules d} can equivalently be phrased with $f$ and $g$ in place of their $\Ch$-q.c.~$\meas$-versions $\smash{\widetilde{f}}$ and $\widetilde{g}$, respectively. In particular, we shall later refer to the corresponding modified version of  \autoref{Cor:Calculus rules d}  without further comment.
\end{remark}

\subsection{Some remarks on the axiomatization} 

\begin{remark}[Compatibility with the smooth case]\label{Re:Comp smooth} If $\mms$ is a Riemannian manifold with boundary, in the setting of \autoref{Ex:Mflds}, $\Ell^2(T^*\mms)$ coincides with the space of $\Ell^2$-sections of the cotangent bundle $T^*\mms$ w.r.t.~$\meas$, and $\rmd$ is the usual $\meas$-a.e.~defined differential for, say, boundedly supported Lipschitz functions on $\mms$.
\end{remark}

\begin{remark}[Compatibility with {\cite{gigli2018}}]\label{Re:Gigli comp} By the $\meas$-a.e.~equality between carré du champ and minimal relaxed gradient outlined in \autoref{Ex:mms}, our approach is fully compatible with the one from \cite[Sec.~2.2]{gigli2018} for \emph{infinitesimally Hilbertian} metric measure spaces $(\mms,\met,\meas)$. 

Indeed, let $\Ell^2(T^*\mms)_\calD$ be the $\Ell^2$-cotangent module constructed analogously to \autoref{Sub:Construction} w.r.t.~a given reference domain $\calD \supset \F$ whose elements all  belong to $\smash{\dot{\F}_\loc}$ and have finite energy measure (note that $\calD := \F_\rme$ in \autoref{Sub:Construction}). Then by \autoref{Th:Universal} and the locality properties from \autoref{Cor:Calculus rules d}, $\Ell^2(T^*\mms)_\calD$ and $\Ell^2(T^*\mms)$ coincide; compare with \cite[Prop.~4.1.6, Prop.~4.1.8]{gigli2020}.
\end{remark}

\begin{remark}[Compatibility with {\cite{baudoin2019, cipriani2003, eberle1999, hinz2013, hinz2015, ionescu2012}}]\label{Re:Comp Dir sp} The space $\calH$ of $1$-forms constructed in \cite[Ch.~2]{hinz2013} agrees with $\Ell^2(T^*\mms)$. For $f\in\F\cap\Cont_0(\mms)$, the differential $\rmd f \in\Ell^2(T^*\mms)$ corresponds to the element $f\otimes \One_\mms \in \calH$ in \cite{hinz2013}. For instance, this follows by first proving that $\calH$ is an $\Ell^2$-normed $\Ell^\infty$-module w.r.t.~an $\Ch$-dominant $\mu$ \cite[Ass.~2.1]{hinz2013} which is generated by $\mathcall{d}(\F\cap \Cont_0(\mms))$, $\mathcall{d} := \cdot\otimes\One_\mms$, and applying \autoref{Th:Universal}. The former is already done somewhat implicitly in \cite[Ch.~2]{hinz2013}.
\end{remark}

\begin{remark}[Non-symmetric Dirichlet forms]\label{Re:Non-symm} The concepts presented above could be generalized to the case when $\Ch$ is non-symmetric in the sense of \cite[Def.~I.2.4]{ma1992} and satisfies the so-called \emph{strong sector condition} from (I.2.4) in \cite{ma1992}. (The latter always holds if $\Ch$ is symmetric.) See \cite{kuwae1998,ma1992} for further reading. In particular, by \cite[Thm.~3.5]{mataloni1999} and owing to the transfer method \cite{chen1994}, there exists a bilinear map $\smash{\bdnu\colon \F^2\to \Meas_{\bR}^\pm(\mms)_\Ch}$ such that
\begin{align*}
\Ch(f,g) = \bdnu_{f,g}[\mms]
\end{align*}
for every $f,g\in \F$. Under the strong sector condition, $\bdnu$ can be extended to a non-relabeled bilinear map on $\smash{\F_\rme^2}$ by approximation. However,  
\begin{align*}
\bdnu = \bdmu
\end{align*}
holds on the diagonal of $\smash{\F_\rme^2}$, where solely in this remark, $\bdmu$ designates the energy measure of the symmetric part $\smash{\widetilde{\Ch}}$ of $\Ch$, see (I.2.1) in \cite{ma1992}. In particular, the construction of $\Ell^2(T^*\mms)$ --- and hence also the one from \autoref{Ch:Tangent module} below by \autoref{Th:Riesz theorem modules} --- only depends on $\smash{\widetilde{\Ch}}$. In other words, even when starting with a non-symmetric form $\Ch$, the cotangent module $\Ell^2(T^*\mms)$ will be a Hilbert space. 

This does not conflict with the not necessary infinitesimally Hilbertian setting for metric measure spaces in \cite[Ch.~2]{gigli2018}, although our approach is quite similar to \cite{gigli2018}, see \autoref{Re:Gigli comp}. The reason is that the underlying energy form in \cite{gigli2018} is neither a priori induced by, nor a posteriori can be turned into a bilinear form in a reasonable way. It is rather given in terms of the ``diagonal energy measure'' $\vert\rmD \cdot\vert^2\,\meas$, where $\vert\rmD\cdot\vert^2$ is the so-called \emph{minimal relaxed gradient} introduced in \cite[Def.~4.2]{ambrosio2014a}. Said differently, the Hilbertianity of $\Ell^2(T^*\mms)$ in our setting simply comes since we have started with a bilinear rather than merely a $2$-homogeneous  form.
\end{remark}

\section{Tangent module}\label{Ch:Tangent module}

\subsection{Tangent vector fields and gradients}\label{Sec:TMod} Now we dualize the concept of cotangent module introduced in \autoref{Sec:Cotangent module} to define the so-called \emph{tangent module} $\Ell^2(T\mms)$, i.e.~an appropriate space of vector fields on $\mms$. 

At the $\Ell^\infty$-module level, different structures may arise depending on the choice of $\Ch$-dominant Borel measure $\mu$ in \autoref{Th:Module structure}. In fact, although the tangent module $\Ell^2(T\mms)_\mu$ with induced gradient $\nabla_\mu$ as in \autoref{Def:Tangent module} and \autoref{Def:Gradient} make perfect sense for every such $\mu$, from now on we assume the following.

\begin{assumption}\label{As:Gamma-operator} $\Ch$ admits a carré du champ w.r.t.~$\meas$.
\end{assumption}

Unless explicitly stated otherwise, all $\Ell^\infty$-modules as well as their respective properties in the sequel are understood w.r.t.~$\meas$.

\begin{remark} 
Besides \autoref{As:Gamma-operator} being worked under throughout \autoref{Pt:II}, at the current stage it has two further advantages. First, by \autoref{Re:Min E dom} the expected calculus rules from \autoref{Cor:Calculus rules d} hold and transfer accordingly to its induced gradient $\nabla$. Second, \autoref{As:Gamma-operator} avoids  incompatibilities between the divergence operator induced by different choices of (minimal) $\Ch$-dominant reference measures and the Laplacian $\Delta$ associated to $\Ch$, see \autoref{Re:Incomp}.
\end{remark}

\begin{definition}\label{Def:Tangent module} The \emph{tangent module} $(\Ell^2(T\mms),\Vert\cdot\Vert_{\Ell^2(T\mms)})$ or simply $\Ell^2(T\mms)$ is
\begin{align*}
\Ell^2(T\mms) :=\Ell^2(T^*\mms)^*
\end{align*}
and it is endowed with the norm $\smash{\Vert\cdot\Vert_{\Ell^2(T\mms)}}$ induced by \eqref{Eq:dual norm}. The elements of $\Ell^2(T\mms)$ will be called \emph{vector fields}.
\end{definition}

As in \autoref{Sub:Linfty modles}, the pointwise pairing between $\omega\in \Ell^2(T^*\mms)$ and $X\in \Ell^2(T\mms)$ is denoted by $\omega(X)\in\Ell^1(\mms)$, and, by a slight abuse of notation, $\vert X\vert\in\Ell^2(\mms)$ denotes the pointwise norm of such an $X$. By \autoref{Pr:Generators cotangent module} and \autoref{Th:Riesz theorem modules}, $\Ell^2(T\mms)$ is a separable Hilbert module. 

Furthermore, in terms of the pointwise scalar product $\langle\cdot,\cdot\rangle$ on $\Ell^2(T^*\mms)$ and $\Ell^2(T\mms)$, respectively, \autoref{Th:Riesz theorem modules} allows us to introduce the  \emph{musical isomorphisms} $\sharp\colon \Ell^2(T^*\mms)\to\Ell^2(T\mms)$ and $\flat := \sharp^{-1}$ defined by
\begin{align}\label{Eq:Musical isos}
\big\langle \omega^\sharp, X\big\rangle := \omega(X) =: \big\langle X^\flat,\omega\big\rangle\quad\meas\text{-a.e.}
\end{align}

\begin{definition}\label{Def:Gradient} The \emph{gradient} of a  function $f\in \F_\rme$ is defined by
\begin{align*}
\nabla f := (\rmd f)^\sharp.
\end{align*}
\end{definition}

Observe from \eqref{Eq:Musical isos} that $\nabla f$, where $f\in\F_\rme$, is characterized as the unique element  $X\in \Ell^2(T\mms)$ which satisfies
\begin{align*}
\rmd f(X) = \vert \rmd f\vert^2 = \vert X\vert^2\quad\meas\text{-a.e.}
\end{align*}
This uniqueness may fail on ``non-Riemannian'' spaces --- that we do not consider here --- such as Finsler manifolds. Compare with \cite[Subsec.~2.3.1]{gigli2018}.

The gradient operator is clearly linear on $\F_\rme$ and closed in the sense of \autoref{Le:d closed}.  By \eqref{Eq:Musical isos} and \autoref{Cor:Calculus rules d}, all calculus rules from \autoref{Cor:Calculus rules d} transfer accordingly to the gradient. Moreover, \autoref{Pr:Generators cotangent module} ensures that $\Ell^2(T\mms)$ is generated, in the sense of $\Ell^\infty$-modules, by $\smash{\nabla\,\F_\rme}$  and by $\smash{\nabla\,\F}$.

\subsection{Divergences}\label{Sec:Divergences} Now we introduce and study two notions of \emph{divergence} of suitable elements of $\Ell^2(T\mms)$. The first in \autoref{Def:L2 div} is an $\Ell^2$-approach similar to \cite[Subsec.~2.3.3]{gigli2018}. The second in \autoref{Def:Measure-valued divergence} axiomatizes a measure-valued divergence $\DIV$. Both approaches are compatible in the sense of \autoref{Pr:Div comp}.

\subsubsection{$\Ell^2$-divergence} The following is similar to \cite[Def.~2.3.11]{gigli2018}. See also \cite[Ch.~3]{hinz2013} for a similar construction for regular Dirichlet spaces.

\begin{definition}\label{Def:L2 div} We define the space $\Dom(\div)$ to consist of all $X\in \Ell^2(T\mms)$ for which there exists a function $f\in \Ell^2(\mms)$ such that for every $h\in \F$,
\begin{align*}
-\int_\mms h\,f\d \meas = \int_\mms \d h(X)\d\meas.
\end{align*}
In case of existence, $f$ is unique, called the \emph{divergence} of $X$ and denoted by $\div X$.
\end{definition}

The uniqueness comes from the density of $\F$ in $\Ell^2(\mms)$. Note that $\div$ is a linear operator on $\Dom(\div)$, which thus turns $\Dom(\div)$ into a vector space.

By \eqref{Eq:IBP Laplacian}, we have $\nabla\,\Dom(\Delta)\subset\Dom(\div)$ and
\begin{align}\label{Eq:div nabla = Delta}
\div \nabla f=\Delta f\quad\meas\text{-a.e.}
\end{align}
for every $f\in\Dom(\Delta)$. Moreover, employing the Leibniz rule in \autoref{Th:Properties energy measure}, one easily can verify that for every $X\in\Dom(\div)$ and every $\smash{f\in \F_\eb}$ with $\vert\rmd f\vert\in\Ell^\infty(\mms)$, we have $f\,X\in\Dom(\div)$ and
\begin{align}\label{Eq:Div lbnz rle}
\div(f\,X) = f\div X + \rmd f(X)\quad\meas\text{-a.e.}
\end{align}

\begin{remark}\label{Re:Incomp} In fact, \eqref{Eq:div nabla = Delta} and \eqref{Eq:Div lbnz rle} are the key reasons for  \autoref{As:Gamma-operator} at this first order level. Indeed, given a different $\Ch$-dominant $\mu$, the pairing $\langle \nabla_\mu f, X\rangle_\mu$ in \eqref{Eq:Div lbnz rle} would require a possible divergence $\div_\mu X$ for $X\in\Ell^2(T\mms)_\mu$ to belong to $\Ell^2(\mms,\mu)$, while  \eqref{Eq:div nabla = Delta} somewhat forces the identity $\mu =\meas$. 

One way to bypass this if \autoref{As:Gamma-operator} does not hold is to regard the $\Ell^2$-divergence as simply acting on $1$-forms instead of vector fields  \cite[Ch.~3]{hinz2013}.
\end{remark}

\subsubsection{Measure-valued divergence} The next definition has partly been inspired by the work  \cite[Def.~4.1]{buffa2019}. 

\begin{definition}\label{Def:Measure-valued divergence} We define the space $\Dom(\DIV)$ to consist of all $X \in \Ell^2(T\mms)$ for which there exists  $\smash{\nu\in \Meas_\sigmafinR^{\pm}(\mms)_\Ch}$ such that for every $h\in\smash{\F_\bc}$, 
	\begin{align}\label{Eq:Def prop Div}
		-\int_\mms \widetilde{h}\d\nu = \int_\mms \rmd h(X)\d\meas.
	\end{align}
	In case of existence, $\nu$ is unique, termed the \emph{measure-valued divergence} of $X$ and denoted by $\DIV X$.
\end{definition}

The uniqueness statement follows by density of $\F_\bc$ in $\F$ by quasi-regularity of $\Ch$. The divergence $\DIV$ is clearly a linear operator on\label{Eq:Dom TV}
\begin{align*}
\Dom_\TV(\DIV) := \big\lbrace X \in \Ell^2(T\mms) : \Vert\!\DIV X\Vert_\TV < \infty\big\rbrace,
\end{align*}
and the latter is a vector space. 

\subsubsection{Normal components and Gauß--Green's formula}\label{Sub:normal cpts} Before we proceed with various properties of the notions from \autoref{Def:L2 div} and \autoref{Def:Measure-valued divergence}, it is convenient to introduce some further notation.

\begin{definition}\label{Def:Normal component} Given $X\in\Dom(\DIV)$, its \emph{divergence} $\smash{\div_1 X\in\Ell^1_\loc(\mms)}$ and its \emph{normal component} $\smash{\norm X\in\Meas_\sigmafinR^\pm(\mms)_\Ch}$ are  defined by
\begin{align*}
\div_1 X &:=\frac{\rmd\!\DIV_\ll X}{\rmd \meas},\\
\norm X &:= -\DIV_\perp X.
\end{align*}
We define $\Dom_{L^2}(\DIV)$ as the space of all $X\in\Dom(\DIV)$ such that $\div_1 X\in\Ell^2(\mms)$.
\end{definition}

\autoref{Def:Normal component} is justified in the smooth setting according to the subsequent \autoref{Ex:Mflds with boundary} and \autoref{Re:Green's formula}.

\begin{example}\label{Ex:Mflds with boundary} Let $\mms$ be a Riemannian manifold with boundary $\partial\mms$. Recall that we denote by $\sfn$ the outward pointing unit normal vector field at $\partial\mms$. Then for every smooth vector field $X$ on $\mms$ with compact support and every $h\in\Cont_\comp^\infty(\mms)$, by Green's formula, see e.g.~\cite[p.~44]{lee1997}, we have
\begin{align*}
-\int_\mms \rmd h(X)\d\vol= \int_\mms h\div_\vol X\d\vol - \int_{\partial\mms} h\,\langle X,\sfn\rangle\d\surf.
\end{align*}
Here, $\div_\vol X$ is the usual divergence w.r.t.~$\vol$. Thus, since $\smash{\Cont_\comp^\infty(\mms)}$ is a core for $\F$ (recall \autoref{Ex:Mflds}), $X\in\Dom_\TV(\DIV)$ with
\begin{align*}
\div X &= \div_\vol X\quad\vol\text{-a.e.},\\
\norm X &= \langle X,\sfn\rangle\,\surf\textcolor{white}{X^\perp}\\
&= \langle X^\perp,\sfn\rangle\,\surf.
\end{align*}
Hence $\norm X$ is a measure which is supported on $\partial\mms$.

If $\vol$ is instead replaced by $\meas := \rme^{-2w}\,\vol$, $w\in\Cont^2(\mms)$, then for every $X$ as above, in terms of the metric tensor $\langle\cdot,\cdot\rangle$ we  have $X\in\Dom_\TV(\DIV)$ with
\begin{align*}
\div X &= \div_\vol X - 2\,\langle\nabla w,X\rangle\quad \meas\text{-a.e.},\\
\norm X &= \rme^{-2w}\,\langle X,\sfn\rangle\,\surf\textcolor{white}{X^\perp}\\
&= \rme^{-2w}\,\langle X^\perp,\sfn\rangle\,\surf.
\end{align*}
\end{example}

In fact, this smooth framework is the main motivation for the terminology of \autoref{Def:Normal component}, since in general we a priori do not know anything about the support of $\norm X$, $X\in\Dom(\DIV)$. As suggested by \autoref{Ex:Mflds with boundary} and the following \autoref{Re:Green's formula}, we may and will interpret $\norm X$ as the normal component of $X$ w.r.t.~a fictive \emph{outward pointing} unit normal vector field --- or more generally, as the ``normal'' part of $X$ w.r.t.~a certain $\meas$-singular set. Examples which link our \emph{intrinsic} approach to normal components with existing \emph{extrinsic} ones from \cite{brue2019,buffa2019,sturm2020} are discussed in \autoref{Ch:Appendix} below.

As it turns out in \autoref{Le:Div g nabla f} and \autoref{Sub:Test objects}, most vector fields of interest have vanishing normal component. Hence, normal components mostly do not appear in our subsequent treatise. However, our notion of second fundamental form in \autoref{Sub:Second fund form} really needs to make sense of a  \emph{nonvanishing} normal component of certain vector fields, see \eqref{Eq:Sec fund form normal cpt} and \autoref{Ex:Sec fund form smooth}. This is precisely the motivation behind \autoref{Def:Measure-valued divergence} and \autoref{Def:Normal component}, which have not been introduced in \cite{gigli2018}. In any case, in possible further applications it might be useful to see the normal component of vector fields in $\Dom(\DIV)$.

\begin{remark}[Gauß--Green formula]\label{Re:Green's formula} In terms of \autoref{Def:Measure-valued divergence}, given any $X\in\Dom(\DIV)$ and $h\in \smash{\F_\bc}$, in analogy to \autoref{Ex:Mflds with boundary} we have
\begin{align*}
-\int_\mms \rmd h(X)\d\meas = \int_\mms h\div X\d\meas - \int_\mms \widetilde{h}\,\d\!\norm X.
\end{align*}
\end{remark}

\subsubsection{Calculus rules} The proof of the following simple result directly follows from the respective definitions and is thus omitted. 

\begin{lemma}\label{Pr:Div comp} If $X\in\Dom(\div)$ satisfies $(\div X)^+\in\Ell^1(\mms)$ or $(\div X)^-\in\Ell^1(\mms)$, then $X\in\Dom_{\Ell^2}(\DIV)$ and
\begin{align*}
\div_1 X &= \div X\quad\meas\text{-a.e.},\\
\norm X &= 0.
\end{align*}
Conversely, if $X\in \Dom_{\Ell^2}(\DIV)$ with $\norm X = 0$, then $X\in \Dom(\div)$ with
\begin{align*}
\div X = \div_1 X\quad\meas\text{-a.e.}
\end{align*}
\end{lemma}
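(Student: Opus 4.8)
The statement has two directions, and both are essentially a comparison of the defining integration-by-parts identities for $\div$ (\autoref{Def:L2 div}) and for $\DIV$ (\autoref{Def:Measure-valued divergence}), combined with the uniqueness clauses for each operator. The plan is to produce, from the given $\Ell^2$-divergence, a candidate signed measure, check it satisfies \eqref{Eq:Def prop Div}, and then identify its Lebesgue decomposition; and conversely, to read off an $\Ell^2$-divergence from a measure-valued one with vanishing normal part.

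\emph{First direction.} Suppose $X\in\Dom(\div)$ with, say, $(\div X)^+\in\Ell^1(\mms)$ (the case $(\div X)^-\in\Ell^1(\mms)$ is symmetric, replacing $X$ by $-X$). Since $\div X\in\Ell^2(\mms)$ and its positive part is also in $\Ell^1(\mms)$, the function $\div X$ is, on each set of a fixed exhausting sequence of open sets of finite measure, integrable; hence $\nu := (\div X)\,\meas$ is a well-defined element of $\Meas_{\sigmafin}^\pm(\mms)$. Because $\div X$ is an honest $\meas$-measurable function, $\nu\ll\meas$, so $\nu$ charges no $\Ch$-polar set, and (being absolutely continuous) it is Radon in the relevant sense; thus $\smash{\nu\in\Meas_{\sigmafinR}^\pm(\mms)_\Ch}$. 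For $h\in\F_\bc\subset\F$ the defining identity of $\div$ gives $-\int_\mms h\,\div X\d\meas = \int_\mms \rmd h(X)\d\meas$, and since $h$ has an $\Ch$-q.c.\ $\meas$-version $\widetilde h$ with $\widetilde h = h$ $\meas$-a.e., the left-hand side equals $-\int_\mms \widetilde h\d\nu$. Hence $\nu$ witnesses $X\in\Dom(\DIV)$ with $\DIV X=\nu=(\div X)\,\meas$. Its Lebesgue decomposition w.r.t.\ $\meas$ is therefore $\DIV_\ll X = (\div X)\,\meas$ and $\DIV_\perp X = 0$, so by \autoref{Def:Normal component} we get $\div_1 X = \rmd\!\DIV_\ll X/\rmd\meas = \div X$ $\meas$-a.e.\ (so in particular $\div_1 X\in\Ell^2(\mms)$, i.e.\ $X\in\Dom_{\Ell^2}(\DIV)$) and $\norm X = -\DIV_\perp X = 0$.

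\emph{Converse direction.} Suppose $X\in\Dom_{\Ell^2}(\DIV)$ with $\norm X = 0$. Then $\DIV_\perp X = 0$, so $\DIV X = \DIV_\ll X = (\div_1 X)\,\meas$ with $\div_1 X\in\Ell^2(\mms)$ by the definition of $\Dom_{\Ell^2}(\DIV)$. For $h\in\F_\bc$, \eqref{Eq:Def prop Div} together with $\widetilde h = h$ $\meas$-a.e.\ gives $-\int_\mms h\,\div_1 X\d\meas = \int_\mms \rmd h(X)\d\meas$; to pass to all $h\in\F$, use that $\F_\bc$ is dense in $\F$ (by quasi-regularity of $\Ch$), that $\rmd$ is continuous $\F\to\Ell^2(T^*\mms)$ hence $h\mapsto\int_\mms\rmd h(X)\d\meas$ is $\F$-continuous, and that $h\mapsto\int_\mms h\,\div_1 X\d\meas$ is $\Ell^2(\mms)$-continuous hence $\F$-continuous. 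Thus the identity of \autoref{Def:L2 div} holds with $f := \div_1 X\in\Ell^2(\mms)$, giving $X\in\Dom(\div)$ with $\div X = \div_1 X$ $\meas$-a.e.

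\emph{Main obstacle.} There is no serious obstacle; the only points needing a little care are (i) checking that the candidate measure $(\div X)\,\meas$ really is $\sigma$-finite as a \emph{signed} measure — which is why the hypothesis $(\div X)^\pm\in\Ell^1(\mms)$ (one sign suffices) is imposed, since without it $\div X\in\Ell^2(\mms)$ alone need not yield a well-defined signed measure on a set of infinite measure — and (ii) the density/continuity argument used to upgrade the test-function class from $\F_\bc$ to $\F$ in the converse. Both are routine, which is exactly why the paper states the proof is omitted; I would include the two or three lines above for completeness.
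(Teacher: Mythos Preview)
Your proposal is correct and follows precisely the route the paper has in mind: the paper omits the proof entirely, stating that it ``directly follows from the respective definitions,'' and your argument does exactly that---constructing the candidate measure $(\div X)\,\meas$, verifying \eqref{Eq:Def prop Div}, reading off the Lebesgue decomposition, and in the converse direction extending from $\F_\bc$ to $\F$ by density. Your remarks on why the integrability hypothesis on $(\div X)^\pm$ is needed also match the paper's later \autoref{Re:vbl}.
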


\begin{remark}\label{Re:vbl} The additional integrability condition in the first part of \autoref{Pr:Div comp} ensures  that $\div X\,\meas$ is a well-defined signed Borel measure for $X\in\Dom(\div)$.  In fact, unlike \eqref{Eq:div nabla = Delta} this small technical issue prevents us from saying that in general, if $f\in\Dom(\Delta)$ then $\nabla f\in\Dom_{\Ell^2}(\DIV)$ with $\div_1 \nabla f = \Delta f$ $\meas$-a.e.~and $\norm \nabla f=0$. This would correspond to the role of $\Delta$ as Neumann Laplacian from \autoref{Sub:Neumann Laplacian}. 

However, by the integration by parts \autoref{Def:L2 div} as well as \autoref{Le:Div identities} and \autoref{Le:Div g nabla f} below  there is still formal evidence in keeping this link in mind. In particular, we may and will interpret every $X\in\Dom(\div)$ as having vanishing normal component although the natural assignment $\DIV X := \div X\,\meas$ --- which entails $\norm X=0$ --- might not be well-defined in $\smash{\Meas_{\sigma}^\pm(\mms)_\Ch}$. 
\end{remark}

Based upon \autoref{Pr:Div comp}, as long as confusion is excluded we make no further notational distinction and identify, for suitable $X\in\Ell^2(T\mms)$,
\begin{align*}
\div X = \div_1 X.
\end{align*}

\begin{lemma}\label{Le:Div identities} For every $X\in\Dom(\DIV)$ and every $f\in \smash{\F_\eb}$ such that $\vert X\vert\in \Ell^\infty(\mms)$ or $\vert\rmd f\vert \in\Ell^\infty(\mms)$, we have $f\,X\in\Dom(\DIV)$ with
\begin{align*}
\DIV(f\,X) &= \widetilde{f}\DIV X + \rmd f(X)\,\meas,\\
\!\textcolor{white}{\widetilde{f}}\!\div(f\,X) &= f\div X + \rmd f(X)\quad\meas\text{-a.e.},\\
\norm(f\,X) &= \widetilde{f}\norm X.
\end{align*}
In particular, if $X\in\Dom_{L^2}(\DIV)$ then also $f\,X\in\Dom_{L^2}(\DIV)$.
\end{lemma}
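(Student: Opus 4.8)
The plan is to verify the defining property \eqref{Eq:Def prop Div} of $\DIV$ for the vector field $f\,X$ directly, using the corresponding property for $X$ together with the Leibniz-type rules already at our disposal. First I would fix an arbitrary $h\in\F_\bc$ and rewrite the pairing $\rmd h(f\,X)$. By the $\Ell^\infty$-module structure of $\Ell^2(T\mms)$ and the definition of the pointwise pairing, $\rmd h(f\,X)=f\,\rmd h(X)$ $\meas$-a.e.; moreover $f\,h\in\F_\bc$ because $\F_\bounded$ is an algebra (as recalled in \autoref{Sub:Energy}) and $\supp_\meas(f\,h)\subset\supp_\meas h$ is compact. The key algebraic identity is the Leibniz rule for the differential, \autoref{Cor:Calculus rules d}\ref{La:Quattro} (valid under \autoref{As:Gamma-operator} by \autoref{Re:Min E dom}): $\rmd(f\,h)=\widetilde{f}\d h+\widetilde{h}\d f$, hence $\rmd(f\,h)(X)=\widetilde f\,\rmd h(X)+\widetilde h\,\rmd f(X)$ $\meas$-a.e. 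Rearranging gives
\begin{align*}
\int_\mms f\,\rmd h(X)\d\meas = \int_\mms \rmd(f\,h)(X)\d\meas - \int_\mms \widetilde h\,\rmd f(X)\d\meas.
\end{align*}

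\textbf{Main computation.} Applying \eqref{Eq:Def prop Div} for $X$ with test function $f\,h\in\F_\bc$ turns the first integral on the right into $-\int_\mms \widetilde{f\,h}\d\!\DIV X=-\int_\mms \widetilde f\,\widetilde h\d\!\DIV X$, the product of $\Ch$-q.c.~versions being an $\Ch$-q.c.~version of $f\,h$. Therefore
\begin{align*}
-\int_\mms \rmd h(f\,X)\d\meas = \int_\mms \widetilde h\,\big[\widetilde f\d\!\DIV X + \rmd f(X)\,\d\meas\big],
\end{align*}
which is exactly \eqref{Eq:Def prop Div} for $f\,X$ with candidate measure $\nu:=\widetilde f\,\DIV X+\rmd f(X)\,\meas$. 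It remains to check that $\nu$ lies in $\Meas_{\sigmafinR}^\pm(\mms)_\Ch$: the term $\rmd f(X)\,\meas$ is $\meas$-absolutely continuous with density in $\Ell^1(\mms)$ (by Cauchy--Schwarz, $|\rmd f(X)|\le|\rmd f|\,|X|$, and the integrability hypothesis on $f$ and $X$ makes this product integrable, at least $\sigma$-finitely), hence charges no $\Ch$-polar set and is Radon; and $\widetilde f\,\DIV X$ is $\DIV X$ multiplied by a bounded $\Ch$-q.c.~function, which preserves the classes $\Meas_{\sigmafinR}^\pm(\mms)_\Ch$. By the uniqueness assertion in \autoref{Def:Measure-valued divergence}, $\DIV(f\,X)=\nu$. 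Taking the Lebesgue decomposition of $\nu$ w.r.t.~$\meas$: since $\rmd f(X)\,\meas\ll\meas$ and, writing $\DIV X=\DIV_\ll X+\DIV_\perp X$, the measure $\widetilde f\,\DIV_\perp X$ is still $\meas$-singular (multiplication by a function does not destroy singularity on the set where $\DIV_\perp X$ is concentrated), we read off $\DIV_\ll(f\,X)=\widetilde f\,\DIV_\ll X+\rmd f(X)\,\meas$ and $\DIV_\perp(f\,X)=\widetilde f\,\DIV_\perp X$, i.e.~$\div(f\,X)=f\div X+\rmd f(X)$ $\meas$-a.e.\ (after identifying densities, using that $\widetilde f=f$ $\meas$-a.e.) and $\norm(f\,X)=-\DIV_\perp(f\,X)=\widetilde f\,\norm X$. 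The final sentence, that $f\,X\in\Dom_{L^2}(\DIV)$ when $X\in\Dom_{L^2}(\DIV)$, follows because $\div_1(f\,X)=f\div_1 X+\rmd f(X)$ is a sum of an $\Ell^2$ function times a bounded function and an $\Ell^1\cap\Ell^\infty$-controlled term; under either boundedness hypothesis ($|X|\in\Ell^\infty$ or $|\rmd f|\in\Ell^\infty$) together with $f\in\F_\bounded$, one checks $\rmd f(X)\in\Ell^2(\mms)$, so $\div_1(f\,X)\in\Ell^2(\mms)$.

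\textbf{Expected obstacle.} None of the steps is deep; the only points requiring care are bookkeeping ones. The first is making sure the candidate $\nu$ genuinely has the right $\sigma$-finiteness and Radon properties and charges no $\Ch$-polar set --- this is where the two alternative boundedness hypotheses on $|X|$ and $|\rmd f|$ enter, guaranteeing $\rmd f(X)\in\Ell^1_\loc(\mms)$ (indeed in $\Ell^1(\mms)$ under the stated assumptions) so that $\rmd f(X)\,\meas$ is a well-defined $\sigma$-finite signed measure. The second, slightly subtler, point is the compatibility of the Lebesgue decomposition with multiplication by $\widetilde f$: one must argue that $\widetilde f\,\DIV_\perp X\perp\meas$ and $\widetilde f\,\DIV_\ll X\ll\meas$, which is immediate since absolute continuity and singularity are both stable under multiplication by any Borel function, and that the new $\meas$-absolutely continuous part $\rmd f(X)\,\meas$ must be added to the $\DIV_\ll$-part by uniqueness of the decomposition. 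I would also remark that $\widetilde f$ appears (rather than $f$) in the formula for $\DIV$ and $\norm$ precisely because these measures may charge $\meas$-negligible sets, whereas in the $\meas$-a.e.\ identity for $\div$ one may freely replace $\widetilde f$ by $f$.
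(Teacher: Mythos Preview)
Your proof is correct and follows essentially the same approach as the paper, which likewise derives the first identity from the Leibniz rule (via \autoref{Th:Properties energy measure}) and then extracts the $\div$ and $\norm$ formulas from the Lebesgue decomposition. One minor bookkeeping point: since $f\in\F_\eb$ rather than $\F_\bounded$, the claim $fh\in\F_\bc$ is justified not by the algebra property of $\F_\bounded$ but by the Leibniz rule for $\F_\eb$ combined with \autoref{Pr:Extended domain props}\ref{La:W12 = S2 cap L2}, the compact support of $h$ forcing $fh\in\Ell^2(\mms)$ and hence $fh\in\F$.
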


\begin{proof} The first identity, from which the second follows, is straightforward to deduce from  \autoref{Th:Properties energy measure}. The last claim on $\Dom_{L^2}(\DIV)$ is a direct consequence of these two identities. To prove the remaining claim $\smash{\norm(f\,X) = \widetilde{f}\norm X}$, we compute
\begin{align*}
\norm(f\,X) &= -\DIV_\perp(f\,X)\\
&=  f\div X\,\meas - \widetilde{f}\,\DIV X\\
&= -\widetilde{f}\,\DIV_\perp X\\
&= \widetilde{f}\,\norm X.\qedhere
\end{align*}
\end{proof}

\begin{example} Every identity stated in \autoref{Le:Div identities} is fully justified in the smooth context of \autoref{Ex:Mflds with boundary}. In particular, for every smooth vector field $X$ on $\mms$ with compact support and every $f\in\Cont_\comp^\infty(\mms)$,
\begin{align*}
\norm(f\,X) = \langle f\,X, \sfn\rangle\,\surf = f\,\langle X,\sfn\rangle\,\surf = f\norm X.
\end{align*}
\end{example}

Finally, we show that \autoref{Pr:Div comp} is not void, i.e.~that there will exist an $\Ell^2$-dense set of vector fields --- see \autoref{Sub:Test objects} below --- which satisfy both hypotheses of \autoref{Pr:Div comp} even in a slightly stronger version.

\begin{lemma}\label{Le:Div g nabla f} Suppose that $f\in \Dom(\Delta)$ and $\smash{g\in \F_\bounded}$. Moreover, suppose that $\vert\nabla f \vert\in\Ell^\infty(\mms)$ or $\vert\rmd g\vert\in\Ell^\infty(\mms)$. Then $g\,\nabla f\in \Dom_\TV(\DIV)\cap\Dom(\div)$ with
\begin{align*}
\div(g\,\nabla f) &= \rmd g(\nabla f) + g\,\Delta f\quad\meas\text{-a.e.},\\
\norm(g\,\nabla f) &=0.
\end{align*}
\end{lemma}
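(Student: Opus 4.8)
The plan is to verify the defining property \eqref{Eq:Def prop Div} of $\DIV$ for the vector field $X := g\,\nabla f$ by a direct integration by parts computation, and to separately check that the resulting measure is absolutely continuous w.r.t.~$\meas$ (so that its $\perp$-part, hence $\norm X$, vanishes). First I would observe that $X\in\Ell^2(T\mms)$: indeed $\vert X\vert = \vert g\vert\,\vert\nabla f\vert = \vert g\vert\,\Gamma(f)^{1/2}$ $\meas$-a.e.~by \autoref{Th:Module structure}, and this lies in $\Ell^2(\mms)$ since either factor is bounded and the other is in $\Ell^2(\mms)$ (note $\Gamma(f)^{1/2} = \vert\nabla f\vert \in \Ell^2(\mms)$ because $f\in\Dom(\Delta)\subset\F$, and $g\in\Ell^\infty(\mms)$). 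Next, for $h\in\F_\bc$ I compute, using the Leibniz rule for $\rmd$ (\autoref{Cor:Calculus rules d}\ref{La:Quattro}, valid here since $g,h\in\F_\eb$, in the form of \autoref{Re:Min E dom}) and then \eqref{Eq:Musical isos} together with the definition $\Delta$ via \eqref{Eq:IBP Laplacian}:
\begin{align*}
\int_\mms \rmd h(g\,\nabla f)\d\meas &= \int_\mms g\,\rmd h(\nabla f)\d\meas = \int_\mms g\,\langle\nabla h,\nabla f\rangle\d\meas\\
&= \int_\mms \langle \nabla(g\,h),\nabla f\rangle\d\meas - \int_\mms h\,\langle\nabla g,\nabla f\rangle\d\meas\\
&= \Ch(g\,h,f) - \int_\mms h\,\rmd g(\nabla f)\d\meas\\
&= -\int_\mms g\,h\,\Delta f\d\meas - \int_\mms h\,\rmd g(\nabla f)\d\meas\\
&= -\int_\mms h\,\big[g\,\Delta f + \rmd g(\nabla f)\big]\d\meas.
\end{align*}
Here $g\,h\in\F$ since $\F_\bounded$ is an algebra and $\F_\bc\subset\F_\bounded$; the passage $\int g\,h\,\Delta f = -\Ch(g\,h,f)$ uses $f\in\Dom(\Delta)$. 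This already shows that the measure $\nu := \big[g\,\Delta f + \rmd g(\nabla f)\big]\,\meas$ satisfies the defining relation, provided I verify it is a well-defined element of $\Meas_\sigmafinR^\pm(\mms)_\Ch$.

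For that I would check that $\varphi := g\,\Delta f + \rmd g(\nabla f) \in \Ell^1(\mms)$, so that $\varphi\,\meas \in \Meas_\finR^\pm(\mms)$ and, being absolutely continuous w.r.t.~$\meas$, charges no $\Ch$-polar sets (recall $\meas$ itself charges no $\Ch$-polar sets under \autoref{As:Gamma-operator}, or more simply this holds because $\Ch$-polar sets are $\meas$-null). Integrability: $g\,\Delta f\in\Ell^1(\mms)$ since $g\in\Ell^\infty(\mms)$ and $\Delta f\in\Ell^2(\mms)\subset\Ell^1_\loc$; but for a genuinely $\Ell^1$ bound I should note $\Delta f\in\Ell^2(\mms)$ and $g\in\Ell^2(\mms)$ (as $g\in\F$), hence $g\,\Delta f\in\Ell^1(\mms)$ by Cauchy--Schwarz. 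Similarly $\rmd g(\nabla f) = \langle\nabla g,\nabla f\rangle$ satisfies $\vert\langle\nabla g,\nabla f\rangle\vert \le \vert\nabla g\vert\,\vert\nabla f\vert = \Gamma(g)^{1/2}\,\Gamma(f)^{1/2}$ $\meas$-a.e.~by the pointwise Cauchy--Schwarz inequality for the scalar product on $\Ell^2(T\mms)$, and both $\Gamma(g)^{1/2},\Gamma(f)^{1/2}\in\Ell^2(\mms)$ since $f,g\in\F$; hence $\rmd g(\nabla f)\in\Ell^1(\mms)$. So $\varphi\in\Ell^1(\mms)$ and $\Vert\varphi\,\meas\Vert_\TV = \Vert\varphi\Vert_{\Ell^1(\mms)} < \infty$, giving $X\in\Dom_\TV(\DIV)$ with $\DIV(g\,\nabla f) = \varphi\,\meas$.

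Finally, from $\DIV X = \varphi\,\meas$ with $\varphi\in\Ell^1(\mms)$ I read off the Lebesgue decomposition: $\DIV_\ll X = \varphi\,\meas$ and $\DIV_\perp X = 0$, whence $\div_1 X = \rmd\!\DIV_\ll X/\rmd\meas = \varphi = \rmd g(\nabla f) + g\,\Delta f$ $\meas$-a.e.~and $\norm X = -\DIV_\perp X = 0$. Since $\varphi\in\Ell^1(\mms)$ we in particular have $(\div_1 X)^\pm\in\Ell^1(\mms)$, and by \autoref{Def:L2 div} (comparing the integration by parts identity just derived, which holds for all $h\in\F_\bc$ and extends to all $h\in\F$ by density and the $\Ell^1$-$\Ell^\infty$ and $\Ch$-continuity of both sides in $h$) we also get $g\,\nabla f\in\Dom(\div)$ with $\div(g\,\nabla f) = \varphi$ $\meas$-a.e.; alternatively this last point follows directly from \autoref{Pr:Div comp} since $\norm X = 0$ and $\div_1 X\in\Ell^1(\mms)$ — here one must first argue $\div_1 X\in\Ell^2(\mms)$, which holds because $g\,\Delta f\in\Ell^2(\mms)$ ($g$ bounded, $\Delta f\in\Ell^2$) and, under the extra hypothesis $\vert\nabla f\vert\in\Ell^\infty(\mms)$ or $\vert\rmd g\vert\in\Ell^\infty(\mms)$, also $\langle\nabla g,\nabla f\rangle\in\Ell^2(\mms)$. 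The only mild subtlety — and the step I'd treat most carefully — is bookkeeping the integrability hypotheses: the bare assumptions $f\in\Dom(\Delta)$, $g\in\F_\bounded$ already suffice for $X\in\Dom_\TV(\DIV)$, while the additional boundedness of $\vert\nabla f\vert$ or $\vert\rmd g\vert$ is exactly what upgrades $\varphi$ to $\Ell^2(\mms)$ and hence places $X$ in $\Dom(\div)$ (equivalently $\Dom_{\Ell^2}(\DIV)$).
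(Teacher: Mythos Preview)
Your proof is correct and follows essentially the same approach as the paper: both verify the defining integration-by-parts identity for $\DIV$ and then check that the resulting density $\varphi = g\,\Delta f + \rmd g(\nabla f)$ lies in $\Ell^1(\mms)$ (and in $\Ell^2(\mms)$ under the extra boundedness hypothesis). The only cosmetic difference is that the paper first cites the already-established Leibniz rule \eqref{Eq:Div lbnz rle} for $\div$ together with \eqref{Eq:div nabla = Delta} to obtain $g\,\nabla f\in\Dom(\div)$, whereas you re-derive this inline via the Leibniz rule for $\rmd$; your direct computation has the minor advantage of transparently handling both alternative hypotheses $\vert\nabla f\vert\in\Ell^\infty(\mms)$ and $\vert\rmd g\vert\in\Ell^\infty(\mms)$ symmetrically.
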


\begin{proof} By \eqref{Eq:Div lbnz rle} and \eqref{Eq:div nabla = Delta}, we already know that $g\,\nabla f\in\Dom(\div)$.

To show that $g\,\nabla f\in\Dom_\TV(\DIV)$, note that under the given assumptions, the r.h.s.~of the identity for $\div(g\,\nabla f)$ belongs to $\Ell^1(\mms)$. In particular, by what we have already proved before, $\nu := \div X\,\meas \in \Meas_{\fin}^\pm(\mms)_\Ch$ satisfies the defining property \eqref{Eq:Def prop Div}  for $\DIV X$, yielding the claim.
\end{proof}

We conclude this survey with a duality formula. Recall \autoref{Re:vbl} to see why it might be hard to verify in general that the second inequality is an equality.

\begin{proposition} For every $f\in \F$,
\begin{align*}
\Ch(f) &= \sup\!\Big\lbrace\!-\! 2\int_\mms f\div X\d\meas - \int_\mms\vert X\vert^2\d\meas : X\in\Dom(\div)\Big\rbrace\\
&\geq \sup\!\Big\lbrace\!-\! 2\int_\mms f\div X\d\meas - \int_\mms\vert X\vert^2\d\meas : X\in\Dom_{L^2}(\DIV),\ \norm X =0\Big\rbrace.
\end{align*}
Moreover, if $\mms$ has finite $\meas$-measure, the second inequality is an equality.
\end{proposition}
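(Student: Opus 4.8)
The first equality is a standard duality formula that I would establish directly: for $f \in \F$ and any $X \in \Dom(\div)$, the integration by parts $-\int_\mms f \div X \d\meas = \int_\mms \rmd f(X) \d\meas$ combined with the pointwise Cauchy--Schwarz inequality $\rmd f(X) \leq \vert \rmd f\vert \, \vert X\vert \leq \tfrac12\vert\rmd f\vert^2 + \tfrac12 \vert X\vert^2$ $\meas$-a.e.\ gives
\begin{align*}
-2\int_\mms f\div X\d\meas - \int_\mms\vert X\vert^2\d\meas \leq \int_\mms \vert\rmd f\vert^2\d\meas = \Ch(f),
\end{align*}
so the supremum on the right is $\leq \Ch(f)$. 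For the reverse inequality I would test with $X := \nabla f$ when $f \in \Dom(\Delta)$, using $\nabla\,\Dom(\Delta)\subset\Dom(\div)$ and \eqref{Eq:div nabla = Delta}: then $-2\int_\mms f\,\Delta f\d\meas - \int_\mms\vert\nabla f\vert^2\d\meas = 2\Ch(f) - \Ch(f) = \Ch(f)$. For general $f \in \F$, approximate by $f_t := \ChHeat_t f \in \Dom(\Delta)$ with $f_t \to f$ in $\F$ as $t \to 0$ (see \autoref{Subsub:Neumann heat flow}), so $\Ch(f_t)\to\Ch(f)$; applying the already-proved case to each $f_t$ and passing to the limit (the functional $X \mapsto -2\int f\div X\d\meas - \int\vert X\vert^2\d\meas$ depends continuously on $f$ in $\Ell^2$, and the corresponding witnesses $\nabla f_t$ realize values converging to $\Ch(f)$) yields that the supremum is $\geq \Ch(f)$. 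Hence the first equality.

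\textbf{The inequality.} This is immediate once one checks that $\Dom_{L^2}(\DIV)$ with $\norm X = 0$ embeds into $\Dom(\div)$ compatibly: by the second part of \autoref{Pr:Div comp}, any $X \in \Dom_{L^2}(\DIV)$ with $\norm X = 0$ lies in $\Dom(\div)$ with $\div X = \div_1 X$ $\meas$-a.e. So every competitor in the smaller supremum is also a competitor in the larger one, giving ``$\geq$''.

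\textbf{Equality when $\meas[\mms] < \infty$.} Here the point is that the witnesses $\nabla f_t = \nabla \ChHeat_t f$ used above actually belong to the smaller class. Indeed, for $f \in \F$ and $t > 0$ we have $f_t := \ChHeat_t f \in \Dom(\Delta)$ with $\Delta f_t \in \Ell^2(\mms)$; since $\meas[\mms] < \infty$ we get $\Delta f_t \in \Ell^1(\mms)$, so $(\div \nabla f_t)^\pm = (\Delta f_t)^\pm \in \Ell^1(\mms)$, and the first part of \autoref{Pr:Div comp} shows $\nabla f_t \in \Dom_{L^2}(\DIV)$ with $\div_1 \nabla f_t = \Delta f_t \in \Ell^2(\mms)$ and $\norm \nabla f_t = 0$. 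Thus each $\nabla f_t$ is an admissible competitor in the second supremum, realizing the value $-2\int_\mms f\,\div\nabla f_t\d\meas - \int_\mms\vert\nabla f_t\vert^2\d\meas$. Letting $t \to 0$: $f_t \to f$ in $\F$ forces $\nabla f_t \to \nabla f$ in $\Ell^2(T\mms)$, hence $\int_\mms\vert\nabla f_t\vert^2\d\meas \to \Ch(f)$, while $-2\int_\mms f\,\div\nabla f_t\d\meas = 2\int_\mms\rmd f(\nabla f_t)\d\meas \to 2\int_\mms \rmd f(\nabla f)\d\meas = 2\Ch(f)$; the difference converges to $\Ch(f)$, so the second supremum is $\geq \Ch(f)$, and combined with the reverse inequality it equals $\Ch(f)$.

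\textbf{Expected main obstacle.} The only delicate point is the finiteness $\Delta f_t \in \Ell^1(\mms)$ in the last step, which genuinely uses $\meas[\mms] < \infty$ (via $\Ell^2 \subset \Ell^1$) — this is exactly where the hypothesis enters, and in the infinite-measure case $\div X\,\meas$ need not be a well-defined signed measure, as flagged in \autoref{Re:vbl}. Everything else is routine: the Cauchy--Schwarz/Young estimate, the heat-flow approximation, and the continuity of $\nabla$ and of the pairing $\rmd f(\cdot)$ under $\Ell^2$-convergence of vector fields.
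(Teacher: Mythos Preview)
Your proof is correct and follows essentially the same approach as the paper: the upper bound via Young's inequality and the lower bound via the heat-flow witnesses $X_t := \nabla\ChHeat_t f$ are identical in spirit (the paper computes the limit via $-\int f\,\Delta\ChHeat_t f = \Ch(\ChHeat_{t/2}f)$ and monotonicity of $t\mapsto\Ch(\ChHeat_t f)$, you via $-\int f\,\Delta\ChHeat_t f = \Ch(f,\ChHeat_t f)\to\Ch(f)$, which is equally valid). For the finite-measure case the paper is slightly slicker: rather than re-running the witness argument, it observes that when $\meas[\mms]<\infty$ one has $\Ell^2(\mms)\subset\Ell^1(\mms)$, so \emph{every} $X\in\Dom(\div)$ satisfies the hypothesis of the first part of \autoref{Pr:Div comp}, hence the two competitor classes coincide outright and the two suprema are automatically equal.
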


\begin{proof} The second inequality is a trivial consequence of \autoref{Pr:Div comp}. 

Moreover, if $\meas[\mms] < \infty$, by \autoref{Pr:Div comp} every $X\in\Dom(\div)$ belongs to $\Dom_{\Ell^2}(\DIV)$ with $\DIV X = \div X\,\meas$, whence $\norm  X=0$. This yields the last statement.

Let us finally turn to the first equality. 

To prove ``$\geq$'', given  any $X\in\Dom(\div)$, it follows from Young's inequality and \autoref{Def:L2 div} that
\begin{align*}
\Ch(f) + \int_\mms\vert X\vert^2\d\meas\geq 2\int_\mms \rmd f(X)\d\meas = -2\int_\mms f\div X\d\meas.
\end{align*}
Rearranging terms and taking the supremum over all $X$ as above directly implies the desired inequality.

To prove the converse inequality, given any $f\in \F$ and any $t>0$, we set $X_t := \nabla\ChHeat_tf\in\Ell^2(T\mms)$. Since $\ChHeat_tf\in\Dom(\Delta)$, it follows from \eqref{Eq:div nabla = Delta} that $X_t\in \Dom(\div)$ with $\div X_t = \Delta \ChHeat_t f$ $\meas$-a.e. Using the semigroup property of $(\ChHeat_t)_{t\geq 0}$, \eqref{Eq:IBP Laplacian} and the nonincreasingness of the function $t\mapsto \Ch(\ChHeat_tf)$ on $[0,\infty)$, we arrive at
\begin{align*}
&\limsup_{t\to 0} \Big[\!- 2\int_\mms f\div X_t\d\meas - \int_\mms \vert X_t\vert^2\d\meas\Big]\\
&\qquad\qquad = \limsup_{t\to 0}\Big[\Ch(\ChHeat_tf) -\int_\mms f\Delta\ChHeat_tf\d\meas - \int_\mms\vert\nabla\ChHeat_tf\vert^2\d\meas\Big]\\
&\qquad\qquad = \limsup_{t\to 0}\Big[\Ch(\ChHeat_tf) + \int_\mms \vert\nabla\ChHeat_{t/2}f\vert^2\d\meas - \int_\mms \vert\nabla \ChHeat_tf\vert^2\d\meas\Big]\\
&\qquad\qquad \geq \limsup_{t\to 0}\Ch(\ChHeat_tf) = \Ch(f).\textcolor{white}{\int_\mms}
\end{align*}
The desired inequality readily follows.
\end{proof}

\part{Second order differential structure}\label{Pt:II}

Throughout this part, let \autoref{As:Gamma-operator} hold. We employ the following convention: given $\smash{f\in\Ell^0(\mms)}$, integrals of possibly degenerate terms such as $1/f$ are (consistently) understood as restricted on $\{f\neq 0\}$ without further notice.

\section{Preliminaries. The taming condition}

\subsection{Basic notions of tamed spaces}\label{Sub:Tamed spaces} In this section, we outline the theory of \emph{tamed spaces} introduced in \cite{erbar2020} that will be needed below throughout.
 
\subsubsection{Quasi-local distributions} Given an $\Ch$-quasi-open set $G\subset\mms$, let $\smash{\F_G^{-1}}$ denote the dual space of the closed \cite[p.~84]{chen2012} subspace
\begin{align*}
\F_G := \big\lbrace f\in \F: \widetilde{f} = 0\ \Ch\text{-q.e.~on }G^\rmc\big\rbrace
\end{align*}
of $\F$. Note that if $G'\subset\mms$ is $\Ch$-quasi-open as well with $G\subset G'$, then $\smash{\F_G^{-1}\supset\F_{G'}^{-1}}$. Let $\smash{\F_\qloc^{-1}}$ denote the space of \emph{$\Ch$-quasi-local distributions} on $\F$, i.e.~the space of all objects $\kappa$ for which there exists an $\Ch$-quasi-open $\Ch$-nest $(G_n)_{n\in\N}$ such that $\smash{\kappa\in \bigcap_{n\in\N} \F_{G_n}^{-1}}$. Every distribution $\smash{\kappa\in\F_\qloc^{-1}}$ is uniquely associated with an \emph{additive functional}, or briefly AF, $(\sfa^\kappa_t)_{t\geq 0}$ in the sense of \cite[Lem.~2.7, Lem.~2.9]{erbar2020}. Here, uniqueness means up to $\meas$-equivalence of AF's \cite[p.~423]{fukushima2011}.  (We refer to \cite[Ch.~II]{fukushima2011} or \cite[Ch.~4, App.~A]{chen2012} for a concise overview over basic notions about AF's.) The AF $\smash{(\sfa_t^\kappa)_{t\geq 0}}$ is independent of the chosen $\Ch$-nest \cite[Lem.~2.11]{erbar2020}. All AF's will be understood as being zero beyond the explosion time $\zeta$.

\begin{remark} (The defining properties of) AF's   are linked to the Markov process $\M$ from \autoref{Sub:Markov} \cite[Def.~A.3.1]{chen2012}. 
\end{remark}

\begin{example}\label{Ex:Nearly Borel}
If $\smash{\kappa\in\F_\qloc^{-1}}$ is induced in the evident way through a nearly Borel \cite[Def.~A.1.28]{chen2012} function $f\in\Ell^2(\mms)$, by \cite[Rem.~2.8]{erbar2020}, for every $t\in [0,\zeta)$,
\begin{align*}
\sfa_t^\kappa = \frac{1}{2}\int_0^{2t}f(\sfb_s)\d s.
\end{align*}
\end{example}

\subsubsection{Extended Kato class} The relevant distributions $\smash{\kappa\in\F_\qloc^{-1}}$ in our work, see \autoref{Re:Why Kato}, will be induced by signed measures in the \emph{extended Kato class} $\Kato_{1-}(\mms)$, which is introduced now. In fact, Feynman--Kac semigroups and energy forms induced by more general distributions $\smash{\kappa\in\F_\qloc^{-1}}$ are studied in \cite{erbar2020}. 

Given any Borelian $f\colon\mms\to\R$, set
\begin{align*}
\rmq\text{-}\!\sup f := \inf\!\Big\lbrace\!\sup_{x\in (M')^\rmc} f(x) : M'\subset\mms \text{ is } \Ch\text{-polar}\Big\rbrace.
\end{align*}
Recall from \cite[p.~84]{chen2012} that a measure $\nu\in\Meas^+(\mms)$ is \emph{$\Ch$-smooth} if it does not charge $\Ch$-polar sets and $\nu[F_n] < \infty$ for every $n\in\N$, for some $\Ch$-nest $\smash{(F_n)_{n\in\N}}$ of closed sets. Every Radon measure charging no $\Ch$-polar set is $\Ch$-smooth, but the converse does not hold  \cite[Ex.~2.33]{delloschiavo2020}. By the Revuz correspondence \cite[Thm.~A.3.5]{chen2012}, any $\Ch$-smooth $\smash{\nu\in\Meas^+(\mms)}$ is uniquely associated to a positive continuous AF, or briefly PCAF, $\smash{(\sfa_t^\nu)_{t\geq 0}}$ by the subsequent identity, valid for every nonnegative $f\in\Ell_0(\mms)$:
\begin{align*}
\int_\mms f\d\nu = \lim_{t\to 0}\frac{1}{t}\int_\mms \Exp^{\,\cdot}\Big[\!\int_0^t f(\sfb_{2s})\d\sfa^\nu_s\Big]\d\meas.
\end{align*}

Of course, the existence of a \emph{compact} $\Ch$-nest, by quasi-regularity of $\Ch$, ensures that every $\sigma$-finite $\nu\in\Meas^+(\mms)$ charging  no $\Ch$-polar set is  $\Ch$-smooth. Hence the following definition \cite[Def.~2.21]{erbar2020} is meaningful.

\begin{definition}\label{Def:Kato class} Given $\rho\geq 0$, the \emph{$\rho$-Kato class} $\Kato_\rho(\mms)$ of $\mms$ is defined to consist of all $\smash{\mu\in\Meas_{\sigma}^\pm(\mms)}$ which do not charge $\Ch$-polar sets with
\begin{align*}
\lim_{t\to 0}\rmq\text{-}\!\sup \Exp^{\,\cdot}\big[\sfa_t^{2\vert\mu\vert}\big]\leq \rho.
\end{align*}
$\Kato_0(\mms)$ is called \emph{Kato class} of $\mms$, while the \emph{extended Kato class} of $\mms$ is
\begin{align*}
\Kato_{1-}(\mms) := \bigcup_{\rho\in [0,1)}\Kato_\rho(\mms).
\end{align*}
\end{definition}

\begin{remark}\label{Re:iffs} By definition, we have $\mu \in \Kato_\rho(\mms)$ if and only if $\vert\mu\vert\in\Kato_\rho(\mms)$ if and only if $\smash{\mu^+,\mu^-\in\Kato_\rho(\mms)}$, $\rho \geq 0$. 
\end{remark}

The following important lemma is due to \cite[Cor.~2.25]{erbar2020}. The immediate corollary that one can always choose $\rho'< 1$ for $\mu\in\Kato_{1-}(\mms)$ therein is used  crucially in our work (and also in \cite{erbar2020}), see e.g.~\autoref{Cor:Dom(Delta) subset W22} and \autoref{Le:Inclusion}.

\begin{lemma}\label{Le:Form boundedness} For every $\rho,\rho'\geq 0$ with $\rho < \rho'$ and every $\mu\in\Kato_\rho(\mms)$ there exists $\alpha'\in\R$ such that for every $f\in \F$,
\begin{align*}
\int_\mms \widetilde{f}^2\d\mu \leq \rho'\,\Ch(f) + \alpha'\int_\mms f^2\d\meas.
\end{align*}
\end{lemma}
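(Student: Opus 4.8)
The plan is to deduce the stated form bound from the very definition of the Kato class together with the basic theory of positive continuous additive functionals and the associated Feynman--Kac semigroups. I work under the hypotheses $\rho<\rho'$ and $\mu\in\Kato_\rho(\mms)$, and recall from \autoref{Re:iffs} that it suffices to treat nonnegative $\mu$, so I replace $\mu$ by $|\mu|\in\Kato_\rho(\mms)$; to lighten notation write $\nu:=2|\mu|$ and let $(\sfa_t^\nu)_{t\geq 0}$ be its PCAF via the Revuz correspondence. The Kato condition says $\lim_{t\to 0}\rmq\text{-}\!\sup\Exp^{\,\cdot}[\sfa_t^\nu]\leq\rho$, so fix $t_0>0$ small enough that $\Exp^x[\sfa_{t_0}^\nu]\leq\rho'$ for $\Ch$-q.e.~$x$ (here I use $\rho<\rho'$ to absorb the ``$\leq\rho$'' into a strict-looking but safe bound valid for all sufficiently small $t$). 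The quantity controlling everything is the operator norm on $\Ell^\infty(\mms)$ of the Feynman--Kac-type kernel $f\mapsto\Exp^{\,\cdot}[\int_0^{t_0}f(\sfb_s)\d\sfa_s^\nu]$, which by the choice of $t_0$ is at most $\rho'$.

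First I would set up the quadratic-form computation. For nonnegative $f\in\F$, Revuz duality and reversibility of $\M$ give the representation
\begin{align*}
\int_\mms \widetilde f^2\d\nu=\lim_{t\to 0}\frac1t\int_\mms\Exp^{\,\cdot}\Big[\!\int_0^t \widetilde f^2(\sfb_{2s})\d\sfa_s^\nu\Big]\d\meas,
\end{align*}
and more usefully the standard identity (see \cite[Ch.~6]{chen2012} or \cite[\S 3]{erbar2020}) expressing $\int_\mms\widetilde f^2\d\nu$ via the bilinear form of the additive functional applied to the heat flow: one writes, using $f-\ChHeat_{t_0}f$ and the fundamental theorem of calculus along the semigroup,
\begin{align*}
\int_\mms \widetilde f^2\d\nu = \int_\mms \widetilde f^2\, \Exp^{\,\cdot}\big[\sfa_{t_0}^\nu\big]^{-1}\cdots
\end{align*}
— more cleanly, I would follow the route in \cite[proof of Cor.~2.25]{erbar2020}: the Feynman--Kac semigroup $(\Schr{\nu}_t)_{t\geq0}$ with potential $-\nu$ satisfies a Dyson--Phillips/Duhamel expansion whose first-order term is exactly the kernel above, and the Kato smallness of that kernel at time $t_0$ yields that the perturbed form $\Ch^\nu(f):=\Ch(f)-\int\widetilde f^2\d\nu$ is closed, lower semibounded, and comparable to $\Ch$. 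Concretely, from $\|\Exp^{\,\cdot}[\sfa_{t_0}^\nu]\|_{\Ell^\infty}\leq\rho'$ and Khasminskii's lemma one gets, for some constant $C$ depending on $t_0,\rho'$, the bound
\begin{align*}
\int_\mms \widetilde f^2\d\nu \leq \rho'\int_\mms \Gamma(f)\d\meas + C\int_\mms f^2\d\meas,
\end{align*}
and recalling $\nu=2|\mu|$ and $\Ch(f)=\int_\mms\Gamma(f)\d\meas$ this is precisely the claim with $\alpha':=C/2$ after rescaling, the factor of $2$ in $\nu$ being already built into \autoref{Def:Kato class}.

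The mechanism of the inequality is the elementary estimate: for $f\in\F$ and $t>0$,
\begin{align*}
\int_\mms \widetilde f^2\d\nu \;=\; \frac1t\int_\mms \Exp^{\,\cdot}\Big[\!\int_0^t \widetilde f^2(\sfb_{2s})\,\d\sfa_s^\nu\Big]\d\meas \;+\; o(1),
\end{align*}
and then splitting $\widetilde f^2(\sfb_{2s})\leq 2(\widetilde f(\sfb_{2s})-\widetilde f(\sfb_0))^2+2\widetilde f^2(\sfb_0)$: the second term contributes $\tfrac2t\,\Exp^{\,\cdot}[\sfa_t^\nu]\,f^2$, bounded in $\Ell^1(\meas)$ by $\tfrac{2}{t}\|\Exp^{\,\cdot}[\sfa_t^\nu]\|_{\Ell^\infty}\int f^2\d\meas$, which after optimizing/fixing $t=t_0$ is the $\alpha'\int f^2$ term; the first term is controlled by the energy of $f$ via the martingale/Dirichlet-form identity $\Exp^{\meas}[(\widetilde f(\sfb_{2s})-\widetilde f(\sfb_0))^2]\leq 2s\,\Ch(f)$ together with Fubini and $\int_0^{t_0}\d\sfa_s^\nu$, and the Kato bound on $\Exp^{\,\cdot}[\sfa_{t_0}^\nu]$ forces the coefficient down to $\rho'$ — this is where $\rho<\rho'$ is essential, giving room in the limit $t\to0$. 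The main obstacle, and the step I would be most careful about, is making the passage to the limit $t\to 0$ and the interchange of expectation, time integration against $\d\sfa_s^\nu$, and $\meas$-integration fully rigorous while keeping the energy coefficient exactly $\rho'$ rather than some worse constant; this is exactly the content of \cite[Cor.~2.25]{erbar2020}, so in the write-up I would either cite it directly or reproduce its Khasminskii-lemma argument, and I would not re-derive the Revuz correspondence or the additive-functional machinery, taking those as given from \cite{chen2012,fukushima2011}.
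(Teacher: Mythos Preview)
The paper does not give its own proof of this lemma: it simply records the statement and attributes it to \cite[Cor.~2.25]{erbar2020}. Your proposal ultimately lands on exactly the same citation, so at the level of what is actually claimed and justified, you and the paper agree.

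The heuristic sketch you give before that citation is in the right spirit --- Kato smallness of $\Exp^{\,\cdot}[\sfa^\nu_{t_0}]$ plus a Khasminskii-type argument is indeed how \cite{erbar2020} proceeds --- but the intermediate displays are not a self-contained proof: the splitting $\widetilde f^2(\sfb_{2s})\leq 2(\widetilde f(\sfb_{2s})-\widetilde f(\sfb_0))^2+2\widetilde f^2(\sfb_0)$ combined with the martingale-energy bound does not by itself force the energy coefficient down to exactly $\rho'$, and one of your displayed identities trails off with ``$\cdots$''. Since you explicitly flag this and defer to \cite[Cor.~2.25]{erbar2020} for the rigorous version, there is no actual gap; just be aware that the sketch on its own would not stand as a proof, and in a final write-up you should do what the paper does and cite the result directly.
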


In particular, w.r.t.~a sequence $(G_n)_{n\in\N}$ of open subsets of $\mms$ witnessing the $\sigma$-finiteness of $\nu := \vert\mu\vert$, by Cauchy--Schwarz's inequality every $\mu\in\Kato_\rho(\mms)$, $\rho\geq 0$, induces a (non-relabeled)  element $\smash{\mu\in\F_\qloc^{-1}(\mms)}$ by setting, for $\smash{f\in\bigcup_{n\in\N} \F_{G_n}}$,
\begin{align*}
\big\langle \mu\,\big\vert\, f \big\rangle := \int_\mms \widetilde{f}\d\mu.
\end{align*}
If $\mu \in \Kato_\rho(\mms)$ is nonnegative, then the AF's coming from the Revuz correspondence w.r.t.~$\mu$ and from its property as inducing an element in $\smash{\F_\qloc^{-1}}$ are $\meas$-equivalent. The key feature about general $\mu\in \Kato_{1-}(\mms)$ is that by Khasminskii's lemma \cite[Lem.~2.24]{erbar2020}, the induced distribution is \emph{moderate} \cite[Def.~2.13]{erbar2020}, i.e.~
\begin{align*}
\sup_{t\in [0,1]}\rmq\text{-}\!\sup\Exp^{\,\cdot}\big[\rme^{-\sfa_t^{2\mu}}\big] < \infty.
\end{align*}

\subsubsection{Feynman--Kac semigroup} Let $q\in [1,2]$, and note that $q\kappa/2 \in\Kato_{1-}(\mms)$ for every $\kappa\in \Kato_{1-}(\mms)$. Given such $\kappa$, we define a family $(\Schr{q\kappa}_t)_{t\geq 0}$ of operators acting on nonnegative nearly Borel functions $f\in \Ell_0(\mms)$ by
\begin{align*}
\Schr{q\kappa}_tf := \Exp^{\,\cdot}\big[\rme^{-\sfa_t^{q\kappa}}f(\sfb_{2t})\,\One_{\{t < \zeta/2\}}\big].
\end{align*}
It naturally extends to nearly Borelian $f\in\Ell_0(\mms)$ for which the latter expectation for $\vert f\vert$ in place of $f$ is finite, see \cite[Def.~2.10]{erbar2020}. (For later  convenience, we have to change the notation from \cite{erbar2020} a bit, also at later times, see \autoref{Re:Notation change} below.) It is $\meas$-symmetric and maps $\meas$-equivalence classes to $\meas$-equivalence classes. Since $q\kappa/2$ is moderate, it extends to an exponentially bounded semigroup of linear operators on $\Ell^p(\mms)$ for every $p\in[1,\infty]$ \cite[Lem.~2.11, Rem.~2.14]{erbar2020}. That is, there exists a finite constant $C>0$ such that for every $p\in [1,\infty]$ and every $t\geq 0$,
\begin{align*}
\big\Vert\Schr{q\kappa}_t\big\Vert_{\Ell^p(\mms);\Ell^p(\mms)}\leq C\,\rme^{Ct}.
\end{align*}

\subsubsection{The perturbed energy form} One of the main results from \cite{erbar2020} is that for $\kappa\in\Kato_{1-}(\mms)$ --- in fact, for more general $\smash{\kappa\in\F_\qloc^{-1}}$, cf.~\cite[Thm.~2.49]{erbar2020} --- $(\Schr{q\kappa}_t)_{t\geq 0}$ is properly associated to an  energy form $\Ch^{q\kappa}$, $q\in [1,2]$. Indeed, by \cite[Thm.~2.47, Thm.~2.49, Cor.~2.51]{erbar2020}, the quadratic form
\begin{align}\label{Eq::E^k identity}
\Ch^{q\kappa}(f) := \Ch(f) + q\,\big\langle \kappa\,\big\vert\, f^2\big\rangle
\end{align}
with finiteness domain $\Dom(\Ch^{q\kappa}) = \F$ is closed, lower semibounded and associated with $(\Schr{q\kappa}_t)_{t\geq 0}$ \cite[Thm.~1.3.1, Lem.~1.3.2]{fukushima2011}. 

The corresponding generator, henceforth termed $\Delta^{q\kappa}$ with domain $\Dom(\Delta^{q\kappa})$,\label{Not:Schrödinger operator} is called \emph{Schrödinger operator} with potential $q\kappa$.

\begin{remark}\label{Re:Why Kato} One reason for considering perturbations of $\Ch$ by $\kappa\in\Kato_{1-}(\mms)$ is the following. By \autoref{Le:Form boundedness}, the map $f\mapsto\big\langle\kappa^-\,\big\vert \,f^2\big\rangle$ on $\F$ is form bounded w.r.t.~$\Ch$, hence w.r.t.~$\smash{\Ch^{q\kappa^+}}$, with some form bound $\rho' < 1$. Hence, by \cite[Thm.~2.49]{erbar2020}, $\Ch^{q\kappa}$ is closed with domain $\smash{\Dom(\Ch^{q\kappa}) = \big\lbrace f\in\F : \big\langle\kappa^+\,\big\vert\, f^2\big\rangle< \infty\big\rbrace}$. Again by \autoref{Le:Form boundedness}, the latter is all of $\F$, which is technically required in the setting of \autoref{Sub:Self-imp}, compare with \cite[Ch.~6]{erbar2020}. See also \autoref{Re:No restriction} below.
\end{remark}

\begin{remark}\label{Re:Notation change} For our analytic and geometric purposes, we use differently scaled forms, operators and semigroups than \cite{erbar2020}. Let us list the relations of the main objects in \cite{erbar2020}, on the  l.h.s.'s, with our notation, on the respective  r.h.s.'s:
\begin{align*}
\mathscr{E} &= \Ch/2,\textcolor{white}{\big\vert}\\
\sfL &= \Delta/2,\textcolor{white}{\big\vert}\\
P_t &= \ChHeat_{t/2},\textcolor{white}{\big\vert}\\
B_t &= \sfb_t\ \text{[sic]},\textcolor{white}{\big\vert}\\
\mathscr{E}^{q\kappa/2} &= \Ch^{q\kappa}/2\textcolor{white}{\big\vert}\\
\sfL^{q\kappa/2} &= \Delta^{q\kappa}/2,\\
P_t^{q\kappa/2} &= \Schr{q\kappa}_{t/2}.
\end{align*}
\end{remark}

\subsubsection{Tamed spaces}\label{Sub:Tam}

\begin{definition}\label{Def:BE cond} Suppose that $q\in \{1,2\}$, $\kappa\in \Kato_{1-}(\mms)$ and $N\in [1,\infty]$. We say that $(\mms,\Ch,\meas)$ or simply $\mms$ satisfies the \emph{$q$-Bakry--Émery condition}, briefly $\BE_q(\kappa,N)$, if for every $f\in \Dom(\Delta)$ with $\Delta f\in\F$ and every nonnegative $\phi\in \Dom(\Delta^{q\kappa})$ with $\Delta^{q\kappa}\phi\in\Ell^\infty(\mms)$, we have
\begin{align*}
&\frac{1}{q}\int_\mms \Delta^{q\kappa}\phi\,\vert\nabla f\vert^q\d\meas - \int_\mms\phi\,\vert\nabla f\vert^{q-1}\,\big\langle\nabla f,\nabla \Delta f \big\rangle\d\meas\\
&\qquad\qquad \geq \frac{1}{N}\int_\mms\phi\,\vert\nabla f\vert^{q-1}\,(\Delta f)^2\d\meas.
\end{align*}
The latter term is understood as $0$ if $N:= \infty$.
\end{definition}

\begin{remark} Through a mollification argument using $(\Schr{q\kappa}_t)_{t\geq 0}$ \cite[Lem.~6.2]{erbar2020}, the class of elements $\phi\in \Dom(\Delta^{q\kappa})$ with $\Delta^{q\kappa}\phi\in\Ell^\infty(\mms)$ is dense in $\Ell^2(\mms)$.
\end{remark}

\begin{assumption}\label{As:Bakry Emery} We henceforth assume that $\mms$ satisfies the $\BE_2(\kappa,N)$ condition for given $\kappa\in\Kato_{1-}(\mms)$ and $N\in [1,\infty]$.
\end{assumption}

For certain $k\colon\mms\to\R$ and $N\in [1,\infty]$, write $\BE_2(k,N)$ instead of $\BE_2(k\,\meas, N)$. Of course, \autoref{Def:BE cond} \cite[Def.~3.1, Def.~3.5]{erbar2020} generalizes the well-known Bakry--\smash{Émery} condition for uniform lower Ricci bounds \cite{ambrosio2015, bakry1985a, bakry1985b, erbar2015, gigli2015}. Variable lower Ricci bounds have been first studied by \cite{braun2021, sturm2015} in a synthetic context.

In the framework of \autoref{As:Bakry Emery}, we say that $(\mms,\Ch,\meas)$ or simply $\mms$ is \emph{tamed}. Although this is not the original definition of taming from \cite[Def.~3.2]{erbar2020}, for $\smash{\kappa\in\Kato_{1-}(\mms)}$ they are in fact equivalent, see \autoref{Sub:Self-imp} below.

\begin{remark}\label{Re:No restriction} From the taming point of view, one might regard the implicit assumption that $\kappa^+\in\Kato_{1-}(\mms)$ (recall \autoref{Re:iffs}) as unnatural. However, for the \emph{qualitative} message of this article --- the existence of a rich second order calculus --- one can simply ignore $\kappa^+$ by setting it to zero. To obtain \emph{quantitative} results in applications, to bypass the assumption $\kappa^+\in\Kato_{1-}(\mms)$, a useful tool could be appropriate ``cutoffs'' and monotone approximations by elements in $\smash{\Kato_{1-}(\mms)}$, see e.g.~\autoref{Sub:Funct inequ hf 1-forms} below and \cite[Lem.~2.1]{braun2021}.
\end{remark}

\begin{example}[Manifolds] Any compact Riemannian manifold $\mms$ is tamed by 
\begin{align*}
\kappa := \mathcall{k}\,\vol + \mathcall{l}\,\surf
\end{align*}
which in fact belongs to $\Kato_0(\mms)$ \cite[Thm.~4.4]{erbar2020} (recall \autoref{Ex:Mflds}). More generally \cite{braun2020a,braunrigoni2021}, let $\mms$ be a ``regular'' Lipschitz Riemannian manifold, in the sense of \cite{braunrigoni2021}, that is quasi-isometric to a Riemannian manifold with uniformly lower bounded Ricci curvature. Suppose that the Ricci curvature of $\mms$, where defined, is bounded from below by a function $\mathcall{k}\in\Ell^p(\mms,\Xi\,\vol)$, $p > d/2$, where $\Xi\colon\mms\to\R$ is given by $\Xi(x) := \vol[B_1(x)]^{-1}$. Then $\mms$ is tamed by $\kappa := \mathcall{k}\,\vol \in\Kato_0(\mms)$.
\end{example}

\begin{example}[$\RCD$ spaces]\label{Ex:The RCD ex} Every $\RCD(K,\infty)$ space $(\mms,\met,\meas)$, $K\in\R$, according to \autoref{Ex:mms} is tamed by $\kappa := K\,\meas\in\Kato_0(\mms)$ \cite{ambrosio2014b} (recall \autoref{Ex:Nearly Borel}). 
\end{example}

\begin{example}[Almost smooth spaces] Let $(\mms,\met,\meas)$ be a $d$-dimensional almost smooth metric measure space, $d\in\N$, in the sense of \cite[Def.~3.1, Def.~3.16]{honda2018}, examples of which include the gluing of two pointed, compact Riemannian manifolds (\emph{not} necessarily  of the same dimension) at their base points. Then, under few further assumptions, Honda proved that if the ``generalized Ricci curvature'' of $\mms$ is bounded from below by $K(d-1)$, $K\in\R$, then the $\BE_2(K(d-1),d)$ condition holds for $\mms$ \cite[Thm.~3.7, Thm.~3.17]{honda2018}. The $\RCD^*(K(d-1),N)$ condition, however, does not hold in general \cite[Rem.~3.9]{honda2018}.
\end{example}

\begin{example}[Configuration spaces] Further important nonsmooth examples are configuration spaces $\smash{\mathcal{Y}}$ over Riemannian manifolds $\mms$ \cite{albeverio1998, erbar2015b}. The  Dirichlet form $\smash{\Ch^\mathcal{Y}}$ on $\smash{\mathcal{Y}}$ constructed in \cite{albeverio1998} is quasi-regular and strongly local, cf.~the proof of  \cite[Thm.~6.1]{albeverio1998}. If $\Ric \geq K$ on $\mms$, $K\in\R$, then $\smash{(\mathcal{Y},\Ch^\mathcal{Y},\pi)}$ is tamed by $\kappa := K\,\pi\in\Kato_0(\mms)$ as well by \cite[Thm.~4.7]{erbar2015} and \cite[Thm.~3.6]{erbar2020}. Here $\pi$ is the Poisson (probability) measure on $\smash{\mathcal{Y}}$, up to intensity. 

A similar result even over more general spaces is announced in \cite{delloschiavo2020}.
\end{example}

Spaces that are tamed by some measures in $\Kato_{1-}(\mms)$ or even $\Kato_0(\mms)$ may also have cusp-like singularities \cite[Thm.~4.6]{erbar2020}, have singular \cite[Thm.~2.36]{erbar2020} or not semiconvex boundary \cite[Thm.~4.7]{erbar2020} or be of Harnack-type \cite{braunrigoni2021,erbar2020}.

\subsubsection{Intrinsically complete Dirichlet spaces}\label{Subsub:Intr compl}

An interesting, but not exhaustive, class of tamed spaces we sometimes consider is the one of intrinsically complete $\mms$ as introduced in \cite[Def.~3.8]{erbar2020}.

\begin{definition}\label{Def:Intr compl} We call $(\mms,\Ch,\meas)$ or simply $\mms$ \emph{intrinsically complete} if there exists a sequence $(\phi_n)_{n\in\N}$ in $\F$ such that $\smash{\meas\big[\{\phi_n > 0\}\big] < \infty}$, $0\leq \phi_n\leq 1$ $\meas$-a.e.~and $\vert\nabla \phi_n\vert \leq 1$ $\meas$-a.e.~for every $n\in\N$ as well as $\phi_n \to \One_\mms$ and $\vert\nabla \phi_n\vert\to 0$ pointwise $\meas$-a.e.~as $n\to\infty$.
\end{definition}

Intrinsically complete tamed spaces are \emph{stochastically complete}, i.e.
\begin{align*}
\ChHeat_t\One_\mms = \One_\mms\quad\meas\text{-a.e.}
\end{align*}
for every $t\geq 0$ \cite[Thm.~3.11]{erbar2020}. In our work, intrinsically complete spaces provide a somewhat better version of \autoref{Le:Approx to id}, sometimes used to get rid of differentials in certain expressions. See e.g.~\autoref{Re:Test closure} and \autoref{Re:Wd120 in W12}. However, none of our results will severely rely on intrinsic completeness.

\subsection{Self-improvement and singular $\Gamma_2$-calculus}\label{Sub:Self-imp} In fact, under the above \autoref{As:Bakry Emery}, the condition $\BE_2(\kappa,N)$ is \emph{equivalent} to $\BE_1(\kappa,N)$ \cite[Thm.~6.9]{erbar2020}, $N\in [1,\infty]$, albeit the latter is a priori stronger \cite[Thm.~3.4, Prop.~3.7]{erbar2020}. In particular, the heat flow $(\ChHeat_t)_{t\geq 0}$ satisfies the important contraction estimate
\begin{align}\label{Eq:1BE grad est}
\vert\nabla\ChHeat_tf\vert\leq \Schr{\kappa}_t\vert\nabla f\vert\quad\meas\text{-a.e.}
\end{align}
for every $f\in\F$ and every $t\geq 0$. See \cite{bakry1985a, bakry1985b, savare2014} for corresponding results for constant $\kappa$ and \cite[Thm.~3.6]{braun2021} for the first nonconstant result in this direction. 

We briefly recapitulate the singular $\Gamma_2$-calculus developed in \cite{erbar2020, savare2014}, since the involved calculus objects, in particular those from \autoref{Sub:Test fcts} below, are crucial in our treatise as well, see e.g.~\autoref{Th:Hess} and \autoref{Th:Ricci measure}.

\subsubsection{Test functions}\label{Sub:Test fcts} Define the set of \emph{test functions} by\label{Not:Test fcts}
\begin{align*}
\Test(\mms) := \big\lbrace f\in \Dom(\Delta)\cap\Ell^\infty(\mms) : \vert\nabla f\vert\in \Ell^\infty(\mms),\ \Delta f\in \F\big\rbrace
\end{align*}
It is an algebra w.r.t.~pointwise multiplication, and if $f\in \Test(\mms)^n$ and $\varphi\in\Cont^\infty(\R^n)$ with $\varphi(0)=0$, $n\in\N$, then $\varphi\circ f\in \Test(\mms)$  as well \cite[Lem.~3.2]{savare2014}. 

Since $\BE_2(\kappa,N)$ implies $\BE_2(-\kappa^-,N)$ \cite[Prop.~6.7]{erbar2020}, a variant of the reverse Poincaré inequality states that for every $f\in \Ell^2(\mms)\cap\Ell^\infty(\mms)$ and every $t>0$,
\begin{align*}
\vert\nabla\ChHeat_tf\vert^2 \leq \frac{1}{2t}\,\big\Vert \Schr{-2\kappa^-}_t\big\Vert_{\Ell^\infty(\mms);\Ell^\infty(\mms)}\,\Vert f\Vert_{\Ell^\infty(\mms)}\quad\meas\text{-a.e.},
\end{align*}
and hence $\ChHeat_t f\in \Test(\mms)$ \cite[Cor.~6.8]{erbar2020}. In particular, $\Test(\mms)$ is dense in $\F$.

We use the following two approximation results henceforth exploited at various instances. For convenience, we outline the proof of \autoref{Le:Product convergence}. \autoref{Le:Mollified heat flow}, which yields a useful density result for the slightly smaller set\label{Not:Test fcts Linfty}
\begin{align*}
\Test_{\Ell^\infty}(\mms) := \big\lbrace f \in\Test(\mms) : \Delta f\in \Ell^\infty(\mms)\big\rbrace,
\end{align*}
results from \eqref{Eq:1BE grad est} and an approximation by a mollified heat flow \cite{erbar2020, savare2014}.

\begin{lemma}\label{Le:Product convergence} For every $f\in\Test(\mms)$, there exist sequences $(g_n)_{n\in\N}$ and $(h_n)_{n\in\N}$ in $\Test(\mms)$ which are bounded in $\Ell^\infty(\mms)$ with $g_n\,h_n\to f$ in $\F$ as $n\to\infty$.
\end{lemma}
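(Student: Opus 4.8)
The claim is that every $f\in\Test(\mms)$ can be written as a limit in $\F$ of products $g_n\,h_n$ with $g_n,h_n\in\Test(\mms)$ uniformly bounded in $\Ell^\infty(\mms)$. The natural strategy is to produce the two factors by composing $f$ with a smooth cutoff of its square root, after a shift to guarantee strict positivity. Concretely, first I would normalize: replacing $f$ by $f/\Vert f\Vert_{\Ell^\infty(\mms)}$ (or simply arguing for arbitrary $f$ and tracking constants) we may assume $\Vert f\Vert_{\Ell^\infty(\mms)}\le 1$. Then for a parameter $\varepsilon\in(0,1)$ consider the shifted function $f+2\varepsilon$, whose essential range lies in $[\varepsilon,1+2\varepsilon]\subset(0,\infty)$. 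Pick $\varphi_\varepsilon\in\Cont^\infty(\R)$ with $\varphi_\varepsilon(0)=0$ and $\varphi_\varepsilon(t)=\sqrt{t}$ for $t$ in a neighbourhood of $[\varepsilon,1+2\varepsilon]$, and set
\begin{align*}
g_\varepsilon := \varphi_\varepsilon\circ(f+2\varepsilon)-\sqrt{2\varepsilon},\qquad h_\varepsilon := \varphi_\varepsilon\circ(f+2\varepsilon)+\sqrt{2\varepsilon}.
\end{align*}
By the chain-rule stability of $\Test(\mms)$ under postcomposition with $\Cont^\infty$-functions vanishing at $0$ (\cite[Lem.~3.2]{savare2014}, recalled in \autoref{Sub:Test fcts}), both $\varphi_\varepsilon\circ(f+2\varepsilon)$ and hence $g_\varepsilon,h_\varepsilon$ lie in $\Test(\mms)$; and $g_\varepsilon\,h_\varepsilon=(\varphi_\varepsilon\circ(f+2\varepsilon))^2-2\varepsilon=(f+2\varepsilon)-2\varepsilon=f$ $\meas$-a.e., so in fact the product is \emph{exactly} $f$ for every $\varepsilon$. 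The only remaining point is the uniform $\Ell^\infty$-bound: since $\varphi_\varepsilon\circ(f+2\varepsilon)=\sqrt{f+2\varepsilon}$ $\meas$-a.e.\ takes values in $[\sqrt\varepsilon,\sqrt{3}]$, we get $\Vert g_\varepsilon\Vert_{\Ell^\infty(\mms)}\le\sqrt3+\sqrt2$ and likewise for $h_\varepsilon$, uniformly in $\varepsilon$. Choosing $\varepsilon=1/n$ then gives the desired sequences (with the exact product $f$, so convergence in $\F$ is trivial).

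There is, however, a subtlety that makes the ``exact product'' shortcut slightly suspicious and suggests the intended proof is genuinely approximative: to even form $\varphi_\varepsilon\circ(f+2\varepsilon)$ via \cite[Lem.~3.2]{savare2014} one needs a $\Cont^\infty$-function on $\R$ (or on an interval containing $0$ and the range of $f+2\varepsilon$) that vanishes at $0$; the function $t\mapsto\sqrt{t}$ is not smooth at $0$, so the smooth modification $\varphi_\varepsilon$ must differ from $\sqrt{\cdot}$ near $0$, and then $g_\varepsilon\,h_\varepsilon$ need not equal $f$ exactly on the part of $\mms$ where the shifted argument is small — but since the shifted argument is bounded below by $\varepsilon>0$ $\meas$-a.e., there is in fact no such part, and the identity does hold. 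So the main thing to check carefully is precisely that $f+2\varepsilon\ge\varepsilon$ $\meas$-a.e., which holds once $\Vert f\Vert_{\Ell^\infty(\mms)}\le 1$; after undoing the normalization one gets the statement for general $f\in\Test(\mms)$ with an $\varepsilon$ scaled by $\Vert f\Vert_{\Ell^\infty(\mms)}$ and uniform bounds depending only on $\Vert f\Vert_{\Ell^\infty(\mms)}$. I would present it this way, flagging that one may alternatively take a genuinely approximating family (e.g.\ $\varphi$ smooth everywhere with $\varphi(t)^2$ close to $t$ on the range) and invoke $\F$-continuity of $f\mapsto\varphi\circ f$ along bounded sequences in $\Test(\mms)$, which is the form in which the lemma is actually used downstream.

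The \emph{anticipated obstacle} is thus not the algebra but the regularity of the square-root: one has to commit to a concrete smooth $\varphi_\varepsilon$ and verify $g_\varepsilon,h_\varepsilon\in\Test(\mms)$ with controlled $\Ell^\infty$-norms, which rests entirely on the closure properties of $\Test(\mms)$ under $\Cont^\infty_0$-composition and products (both recalled in \autoref{Sub:Test fcts}). If one wants the convergence to be non-trivial — i.e.\ if the smooth cutoff genuinely perturbs $f$ on a set of positive measure, which happens only if one forgoes the shift — then one additionally needs that $\varphi_n\circ f\to\sqrt{|f|}$-type convergence improves to $\F$-convergence of the products; this follows from dominated convergence for the energy measures together with the chain rule in \autoref{Th:Properties energy measure}\ref{La:Chain rule} (or \autoref{Re:CDC}), using $\vert\nabla f\vert\in\Ell^\infty(\mms)$ and $\Delta f\in\F$ to control $\Ch(g_n h_n - f)$. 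In the write-up I would favour the clean version with the $2\varepsilon$-shift, where the product is identically $f$ and no convergence argument is needed, and simply remark that this already yields more than claimed.
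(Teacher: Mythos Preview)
There is a genuine error: the inequality $f+2\varepsilon\ge\varepsilon$ $\meas$-a.e.\ does \emph{not} follow from $\Vert f\Vert_{\Ell^\infty(\mms)}\le 1$. A normalized $f$ takes values in $[-1,1]$, so $f+2\varepsilon$ ranges over $[-1+2\varepsilon,1+2\varepsilon]$, which is negative for small $\varepsilon$. Your construction tacitly assumes $f\ge 0$, which is not given.

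Even after shifting correctly (say by $1+\varepsilon$ so that $f+1+\varepsilon\ge\varepsilon$), the factor $h_\varepsilon=\varphi_\varepsilon\circ(f+1+\varepsilon)+\sqrt{1+\varepsilon}$ is problematic when $\meas[\mms]=\infty$: writing $h_\varepsilon=\chi_\varepsilon\circ f$ with $\chi_\varepsilon(t)=\varphi_\varepsilon(t+1+\varepsilon)+\sqrt{1+\varepsilon}$, one has $\chi_\varepsilon(0)=2\sqrt{1+\varepsilon}\ne 0$, so the closure property of $\Test(\mms)$ under $\Cont^\infty$-composition does not apply, and indeed $h_\varepsilon\notin\Ell^2(\mms)$. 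More structurally, an \emph{exact} factorization $f=(\psi\circ f)(\chi\circ f)$ with $\psi,\chi\in\Cont^\infty(\R)$ and $\psi(0)=\chi(0)=0$ is impossible for $f$ with zeros, since then $\psi\chi$ vanishes to second order at $0$ while $t\mapsto t$ has a simple zero. This is precisely why the paper's proof is genuinely approximative: it takes $g_k:=2\arctan(kf)/\pi$ (a smooth approximation of $\sgn f$) and $h_m:=(f^2+2^{-m})^{1/2}-2^{-m/2}$ (a smooth approximation of $\vert f\vert$), both of which vanish where $f$ does, and then argues $g_k h_m\to f$ in $\F$ via the chain rule and dominated convergence, followed by a diagonal argument.
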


\begin{proof} Given any $k,m\in\N$, we define $g_k := 2\arctan(k\,f)/\pi\in\Test(\mms)$ and $h_m := (f^2 +  2^{-m})^{1/2} - 2^{-m/2}\in\Test(\mms)$. Then $g_k\,h_m \to g_k\,\vert f\vert$ in $\Ell^2(\mms)$ as $m\to\infty$ for every $k\in\N$, and $g_k\,\vert f\vert \to f$ in $\Ell^2(\mms)$ as $k \to\infty$. Using \autoref{Cor:Calculus rules d} and Lebesgue's theorem, it follows that $g_k\,h_m\to g_k\,\vert f\vert$ in $\F$ as $m\to\infty$ for every $k\in\N$, and $g_k\,\vert f\vert \to f$ in $\F$ as $k\to\infty$. We conclude by a diagonal argument.
\end{proof}

\begin{lemma}\label{Le:Mollified heat flow} For every $f\in \F$ such that $a\leq f\leq b$ $\meas$-a.e., $a,b\in [-\infty,\infty]$, there exists a sequence $(f_n)_{n\in\N}$ in $\Test_{\Ell^\infty}(\mms)$ which converges to $f$ in $\F$ such that $a\leq f_n\leq b$ $\meas$-a.e.~for every $n\in\N$. If moreover $\vert\nabla f\vert\in\Ell^\infty(\mms)$, then $(f_n)_{n\in\N}$ can be constructed such that $(\vert \nabla f_n\vert)_{n\in\N}$ is bounded in $\Ell^\infty(\mms)$.
\end{lemma}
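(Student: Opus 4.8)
\textbf{Proof plan for \autoref{Le:Mollified heat flow}.}
The strategy is the standard mollification-of-the-heat-flow argument from \cite{savare2014}, adapted to the taming setting as in \cite{erbar2020}, so I will only sketch the steps. Fix a nonnegative $\varphi\in\Cont_\comp^\infty((0,\infty))$ with $\smash{\int_0^\infty\varphi(t)\d t = 1}$, and for $\varepsilon > 0$ set $\varphi_\varepsilon(t) := \varepsilon^{-1}\varphi(t/\varepsilon)$. Given $f\in\F$ with $a\leq f\leq b$ $\meas$-a.e., define the mollified heat flow
\begin{align*}
\mathsf{h}_\varepsilon f := \int_0^\infty \varphi_\varepsilon(t)\,\ChHeat_t f\d t,
\end{align*}
where the Bochner integral is taken in $\F$ (this makes sense since $t\mapsto\ChHeat_t f$ is continuous into $\F$ on $[0,\infty)$ and $\Ch(\ChHeat_t f)$ is nonincreasing, hence locally bounded on the support of $\varphi_\varepsilon$; away from $t=0$ one also has the $\Ell^2$-bounds of \autoref{Th:Heat flow properties}). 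First I would check the three elementary facts: (i) $\mathsf{h}_\varepsilon f\to f$ in $\F$ as $\varepsilon\to 0$, which follows from $\ChHeat_t f\to f$ in $\F$ as $t\to0$ together with dominated convergence; (ii) the order constraint $a\leq\mathsf{h}_\varepsilon f\leq b$ $\meas$-a.e.\ is preserved, because $(\ChHeat_t)_{t\geq0}$ is sub-Markovian (so it preserves one-sided bounds, after adding/subtracting constants using that $\One_\mms$ is approximable and the semigroup is order-preserving on the relevant truncations — more directly, $a\leq f\leq b$ gives $a\leq\ChHeat_t f\leq b$ by positivity preservation applied to $f-a$ and $b-f$ together with $\ChHeat_t\One_\mms\leq\One_\mms$; if $a=-\infty$ or $b=+\infty$ the corresponding inequality is vacuous), and then integrating against the probability density $\varphi_\varepsilon$; (iii) $\mathsf{h}_\varepsilon f\in\Test_{\Ell^\infty}(\mms)$.

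For (iii), the point is that differentiation under the integral sign and the identity $\der{}{t}\ChHeat_t f = \Delta\ChHeat_t f$ give $\Delta\,\mathsf{h}_\varepsilon f = -\int_0^\infty\varphi_\varepsilon'(t)\,\ChHeat_t f\d t$, which lies in $\Ell^\infty(\mms)$ by the sub-Markovian $\Ell^\infty$-bound on $\ChHeat_t$ and the integrability of $\varphi_\varepsilon'$ (here one uses that $\varphi_\varepsilon'$ is supported away from $0$, or simply that $\Vert\ChHeat_t\Vert_{\Ell^\infty;\Ell^\infty}\leq1$ for all $t$, so no blow-up occurs). Iterating, $\Delta\,\mathsf{h}_\varepsilon f\in\Dom(\Delta)$ with $\Ell^\infty$-image, hence $\Delta\,\mathsf{h}_\varepsilon f\in\F$; thus $\mathsf{h}_\varepsilon f\in\Dom(\Delta)$, $\Delta\,\mathsf{h}_\varepsilon f\in\Ell^\infty(\mms)\cap\F$. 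Boundedness of $\mathsf{h}_\varepsilon f$ itself is immediate from (ii). Finally $\vert\nabla\,\mathsf{h}_\varepsilon f\vert\in\Ell^\infty(\mms)$: by the $\BE_2(\kappa,N)\Rightarrow\BE_2(-\kappa^-,N)$ implication \cite[Prop.~6.7]{erbar2020} and the reverse Poincaré inequality recalled in \autoref{Sub:Test fcts}, $\vert\nabla\ChHeat_t f\vert^2\leq (2t)^{-1}\big\Vert\Schr{-2\kappa^-}_t\big\Vert_{\Ell^\infty;\Ell^\infty}\,\Vert f\Vert_{\Ell^\infty(\mms)}^2$ $\meas$-a.e.; since $\smash{t\mapsto\Schr{-2\kappa^-}_t}$ is exponentially bounded on $\Ell^\infty$, the bound $t^{-1/2}$ is locally integrable against $\varphi_\varepsilon$ away from $t=0$, so Minkowski's integral inequality yields $\vert\nabla\,\mathsf{h}_\varepsilon f\vert\in\Ell^\infty(\mms)$. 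Altogether $\mathsf{h}_\varepsilon f\in\Test_{\Ell^\infty}(\mms)$, and choosing $\varepsilon = 1/n$ gives the desired sequence.

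For the last assertion, suppose in addition $\vert\nabla f\vert\in\Ell^\infty(\mms)$. Then the contraction estimate \eqref{Eq:1BE grad est}, $\vert\nabla\ChHeat_t f\vert\leq\Schr{\kappa}_t\vert\nabla f\vert$ $\meas$-a.e., combined with the exponential $\Ell^\infty$-boundedness of $(\Schr{\kappa}_t)_{t\geq0}$, gives $\vert\nabla\ChHeat_t f\vert\leq C\rme^{Ct}\,\Vert\,\vert\nabla f\vert\,\Vert_{\Ell^\infty(\mms)}$ $\meas$-a.e.; integrating against $\varphi_\varepsilon$ (a probability density supported in a fixed bounded interval for $\varepsilon\leq1$, say) yields a bound on $\Vert\,\vert\nabla\,\mathsf{h}_\varepsilon f\vert\,\Vert_{\Ell^\infty(\mms)}$ uniform in $\varepsilon\in(0,1]$. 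Hence $(\vert\nabla f_n\vert)_{n\in\N}$ with $f_n := \mathsf{h}_{1/n}f$ is bounded in $\Ell^\infty(\mms)$.

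\textbf{Main obstacle.} The only genuinely delicate points are the justification of differentiation under the Bochner integral (to identify $\Delta\,\mathsf{h}_\varepsilon f$) and the integrability near $t=0$ of the various a priori estimates — $\Ch(\ChHeat_t f)$, $\Vert\Delta\ChHeat_t f\Vert_{\Ell^2(\mms)}$, and the reverse-Poincaré $t^{-1/2}$ factor — against the mollifier $\varphi_\varepsilon$. Choosing $\varphi$ supported in a compact subset of $(0,\infty)$ sidesteps the $t\to0$ issue entirely and makes all the estimates trivial; this is the standard trick and I would adopt it, noting that it does not harm the convergence $\mathsf{h}_\varepsilon f\to f$ in $\F$ as $\varepsilon\to0$. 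Everything else is a routine assembly of the already-stated properties of $(\ChHeat_t)_{t\geq0}$, the sub-Markovianity, and the $\BE_1(\kappa,\infty)$ gradient estimate \eqref{Eq:1BE grad est}.
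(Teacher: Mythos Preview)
Your approach via the mollified heat flow is exactly what the paper invokes (it gives no proof of its own but refers to \cite{erbar2020,savare2014} and the gradient estimate \eqref{Eq:1BE grad est}), and for $f\in\F\cap\Ell^\infty(\mms)$ your argument is correct in all details.

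There is, however, a genuine gap in the case where $a$ or $b$ is infinite. You write ``Boundedness of $\mathsf{h}_\varepsilon f$ itself is immediate from (ii)'', but (ii) only yields $a\leq\mathsf{h}_\varepsilon f\leq b$, which is vacuous on the unbounded side. If $f\notin\Ell^\infty(\mms)$ then $\ChHeat_t f$ need not be bounded, so neither is $\mathsf{h}_\varepsilon f$; consequently $\mathsf{h}_\varepsilon f\notin\Test(\mms)$, and your identification of $\Delta\,\mathsf{h}_\varepsilon f$ as an $\Ell^\infty$-function (via the sub-Markovian $\Ell^\infty$-bound) and your invocation of the reverse Poincar\'e inequality both break down, since each requires $f\in\Ell^\infty(\mms)$. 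The fix is the obvious one: first replace $f$ by its truncation $f_k := \max\{\min\{f,k\},-k\}$ (adjusted so as to respect any finite endpoint among $a,b$), note that $f_k\to f$ in $\F$ and that $a\leq f_k\leq b$ $\meas$-a.e.\ still holds, apply your mollified heat flow construction to each $f_k$, and diagonalize. This is routine and is the step the paper implicitly absorbs into the reference to \cite{savare2014}, but it should be stated.
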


\subsubsection{Measure-valued Schrödinger operator} A further regularity property of functions $f\in\Test(\mms)$ that will be crucial in defining the $\kappa$-Ricci measure in \autoref{Sub:Ricci measure} is that their carré du champs $\vert\nabla f\vert^2$ have $\F$-regularity under $\BE_2(\kappa,N)$. In fact, $\vert\nabla f\vert^2$ admits a measure-valued Schrödinger operator in the sense of \autoref{Def:Measure valued Schr}. This is recorded in \autoref{Pr:Bakry Emery measures} and is due to \cite[Lem.~6.4]{erbar2020}.

\begin{definition}\label{Def:Measure valued Schr} We define $\Dom(\DELTA^{2\kappa})$ to consist of all $u\in \F$ for which there exists  $\smash{\iota\in\Meas_{\sigmafinR}^\pm(\mms)_\Ch}$ such that for every $h\in \F$, we have $\smash{\widetilde{h}\in \Ell^1(\mms,\iota)}$ and
\begin{align*}
\int_\mms \widetilde{h}\d\iota = -\Ch^{2\kappa}(h,u).
\end{align*}
In case of existence, $\iota$ is unique, denoted by $\DELTA^{2\kappa} u$ and shall be called the \emph{measure-valued Schrödinger operator} with potential $2\kappa$.\label{Not:Measure valued Schr I}
\end{definition}

\begin{proposition}\label{Pr:Bakry Emery measures} For every $f\in\Test(\mms)$ we have $\vert\nabla f\vert^2\in \F$ and even $\vert\nabla f\vert^2\in \Dom(\DELTA^{2\kappa})$. Moreover,
\begin{align*}
\frac{1}{2}\DELTA^{2\kappa}\vert\nabla f\vert^2 -  \big\langle\nabla f,\nabla\Delta f\big\rangle\,\meas \geq  \frac{1}{N}\,(\Delta f)^2\,\meas.
\end{align*}
\end{proposition}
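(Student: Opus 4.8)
The statement is essentially \cite[Lem.~6.4]{erbar2020}, so the plan is to recall and reproduce that argument, adapted to the current notation. The overall strategy is the classical $\Gamma_2$-computation: we want to show that the signed measure-like object $u \mapsto -\Ch^{2\kappa}(h,u)$ applied to $u := \vert\nabla f\vert^2$ is represented by a $\sigma$-finite signed measure $\smash{\iota\in\Meas_{\sigmafinR}^\pm(\mms)_\Ch}$, and that this measure dominates $\frac{2}{N}(\Delta f)^2\,\meas + 2\langle\nabla f,\nabla\Delta f\rangle\,\meas$. First I would record that for $f\in\Test(\mms)$ one has $\vert\nabla f\vert^2 = \Gamma(f)\in\Ell^1(\mms)\cap\Ell^\infty(\mms)$, and moreover $\vert\nabla f\vert^2\in\F$: this follows from the self-improved gradient estimate and the Bakry--Émery machinery, e.g.~by differentiating $t\mapsto \Schr{-2\kappa^-}_s\vert\nabla\ChHeat_{t-s}f\vert^2$ or by the standard argument in \cite[Sec.~3]{savare2014} combined with \autoref{Le:Form boundedness} to control the Kato perturbation; the key point is that $\BE_2(\kappa,N)$ (equivalently $\BE_2(-\kappa^-,N)$, \cite[Prop.~6.7]{erbar2020}) gives an a priori $\F$-bound on $\vert\nabla\ChHeat_tf\vert^2$ which passes to the limit $t\to 0$.

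\textbf{The main computation.} With $\vert\nabla f\vert^2\in\F$ in hand, I would test the candidate identity against nonnegative $\phi\in\Dom(\Delta^{2\kappa})$ with $\Delta^{2\kappa}\phi\in\Ell^\infty(\mms)$ (a class dense in $\Ell^2(\mms)$, as noted after \autoref{Def:BE cond}). For such $\phi$,
\begin{align*}
-\Ch^{2\kappa}\big(\phi, \vert\nabla f\vert^2\big) = \int_\mms \Delta^{2\kappa}\phi\,\vert\nabla f\vert^2\d\meas,
\end{align*}
since $\phi$ is in the domain of the generator $\Delta^{2\kappa}$ of the closed form $\Ch^{2\kappa}$. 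The $\BE_2(\kappa,N)$ inequality of \autoref{Def:BE cond}, applied with $q=2$, reads precisely
\begin{align*}
\frac{1}{2}\int_\mms \Delta^{2\kappa}\phi\,\vert\nabla f\vert^2\d\meas - \int_\mms \phi\,\big\langle\nabla f,\nabla\Delta f\big\rangle\d\meas \geq \frac{1}{N}\int_\mms\phi\,(\Delta f)^2\d\meas,
\end{align*}
using $f\in\Test(\mms)$ (so $\Delta f\in\F$ and the $\nabla f,\nabla\Delta f$ pairing is legitimate in $\Ell^1(\mms)$ since $\vert\nabla f\vert\in\Ell^\infty(\mms)$). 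Thus the linear functional $\phi\mapsto -\frac{1}{2}\Ch^{2\kappa}(\phi,\vert\nabla f\vert^2) - \int_\mms\phi\,\langle\nabla f,\nabla\Delta f\rangle\d\meas$ is nonnegative on a dense cone of test functions, hence extends to a nonnegative (Radon, $\Ch$-smooth) measure by a Riesz-type representation; adding back the $\Ell^1$-density $\langle\nabla f,\nabla\Delta f\rangle + \text{(lower order)}$ shows $\vert\nabla f\vert^2\in\Dom(\DELTA^{2\kappa})$ with
\begin{align*}
\DELTA^{2\kappa}\vert\nabla f\vert^2 = 2\,\langle\nabla f,\nabla\Delta f\rangle\,\meas + \mu_{\Gamma_2}
\end{align*}
for some nonnegative $\smash{\mu_{\Gamma_2}\in\Meas_{\sigmafinR}^+(\mms)_\Ch}$ satisfying $\mu_{\Gamma_2}\geq \frac{2}{N}(\Delta f)^2\,\meas$. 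Rearranging gives the displayed inequality.

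\textbf{Technical points and the main obstacle.} Several measure-theoretic checks need care: that the functional is well-defined independently of the chosen $\Ch$-q.c.~representative $\widetilde{h}$ in the definition of $\Dom(\DELTA^{2\kappa})$; that the representing measure is genuinely $\sigma$-finite and charges no $\Ch$-polar set (this uses quasi-regularity and the existence of a compact $\Ch$-nest, plus the fact that $\langle\nabla f,\nabla\Delta f\rangle\,\meas$ is finite and absolutely continuous); and that the negative part of $\DELTA^{2\kappa}\vert\nabla f\vert^2$ is finite so that the Jordan decomposition makes sense, which again reduces to $\langle\nabla f,\nabla\Delta f\rangle\in\Ell^1(\mms)$. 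The main obstacle, as in \cite{erbar2020,savare2014}, is establishing the a priori regularity $\vert\nabla f\vert^2\in\F$ itself: one must control the Kato perturbation term $\langle\kappa\,\vert\,(\vert\nabla\ChHeat_tf\vert^2)^2\rangle$-type quantities uniformly in $t$, which is exactly where the extended Kato hypothesis $\kappa\in\Kato_{1-}(\mms)$ and the form bound $\rho'<1$ of \autoref{Le:Form boundedness} enter decisively; without it the limiting procedure $t\to 0$ in the reverse Poincaré / Bochner estimate would not close. Once that regularity is secured, the remaining argument is the routine testing-against-$\phi$ computation sketched above, and the conclusion follows by density of the test cone in $\Ell^2(\mms)$ together with lower semicontinuity.
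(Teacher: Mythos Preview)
Your proposal is correct and follows exactly the approach of \cite[Lem.~6.4]{erbar2020}, which is precisely what the paper does: the paper does not give its own proof of this proposition but simply records it as due to \cite[Lem.~6.4]{erbar2020}. Your sketch accurately captures the two-step structure of that argument --- first the a priori $\F$-regularity of $\vert\nabla f\vert^2$ via the form bound $\rho'<1$, then the Riesz-type representation obtained by testing the $\BE_2(\kappa,N)$ inequality against nonnegative $\phi\in\Dom(\Delta^{2\kappa})$ with $\Delta^{2\kappa}\phi\in\Ell^\infty(\mms)$ (this representation step is exactly the content of \cite[Lem.~6.2]{erbar2020}, whose variant appears in the paper as \autoref{Le:BE1 lemma}).
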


An advantage of interpreting the Schrödinger operator associated to $\Ch^{2\kappa}$ as a measure is that the potential $2\kappa$ can be separated from $\DELTA^{2\kappa}$ to give the \emph{measure-valued Laplacian} $\smash{\DELTA := \DELTA^{2\kappa} + 2\kappa}$, which fits well with the divergence objects from \autoref{Sec:Divergences}, see \autoref{Sub:Measure valued Lapl} below. This is technically convenient in defining the drift-free Ricci measure $\RIC$ in \autoref{Def:Ric} and the second fundamental form $\II$ in \autoref{Def:Second fund form} without $\kappa$-dependency. However, for possible later extensions, e.g.~when $\kappa$ is not a (signed) measure, we decided not to separate the distribution $\kappa$ from the other calculus objects under consideration until \autoref{Sub:Measure valued Lapl}.

\subsubsection{Singular $\Gamma_2$-operator} Given \autoref{Pr:Bakry Emery measures}, following \cite{erbar2020,savare2014} we introduce the map $\bdGamma_2^{2\kappa}\colon \Test(\mms)\to \Meas_\sigmafinR^+(\mms)_\Ch$ by
\begin{align}\label{Eq:Gamma_2 opertor}
\bdGamma_2^{2\kappa}(f) := \frac{1}{2}\DELTA^{2\kappa}\vert\nabla f\vert^2 - \big\langle\nabla f,\nabla \Delta f\big\rangle\,\meas.
\end{align}
According to the Lebesgue decomposition in \autoref{Subsub:Measure spaces}, we decompose
\begin{align*}
\bdGamma_2^{2\kappa}(f) = \bdGamma_2^{2\kappa}(f)_\ll + \bdGamma_2^{2\kappa}(f)_\perp
\end{align*}
w.r.t.~$\meas$, $f\in\Test(\mms)$. A consequence of \autoref{Pr:Bakry Emery measures} is that
\begin{align*}
\bdGamma_2^{2\kappa}(f)_\perp \geq 0
\end{align*}
and, defining $\smash{\gamma_2^{2\kappa}(f) := \rmd \bdGamma_2^{2\kappa}(f)_\ll/\rmd\meas\in \Ell^1(\mms)}$,
\begin{align*}
\gamma_2^{2\kappa}(f) \geq \frac{1}{N}\,(\Delta f)^2 \quad\meas\text{-a.e.}
\end{align*}
Further calculus rules of $\bdGamma_2^{2\kappa}$ are summarized in the next \autoref{Le:Calculus rules}. To this aim, note that $\langle\nabla u,\nabla v\rangle\in\F$ for every $u,v\in\Test(\mms)$ by \autoref{Pr:Bakry Emery measures}, whence it makes sense define the ``pre-Hessian'' $\rmH[\cdot]\colon \Test(\mms)^3 \to \Ell^2(\mms)$ by
\begin{align}\label{Eq:Pre-Hessian}
\begin{split}
2\,\rmH[f](g_1,g_2) &:= \big\langle\nabla g_1,\nabla\langle\nabla f,\nabla g_2\rangle\big\rangle +\big\langle\nabla g_2,\nabla\langle\nabla f,\nabla g_1\rangle\big\rangle\\
 &\qquad\qquad - \big\langle\nabla f,\nabla\langle\nabla g_1,\nabla g_2\rangle\big\rangle.
\end{split}
\end{align}

\begin{lemma}\label{Le:Calculus rules} Let $\alpha\in\N$, $q\in\Test(\mms)^\alpha$ and $\varphi\in\Cont^\infty(\R^\alpha)$ with $\varphi(0) = 0$. Moreover, given any $i,j\in \{1,\dots,\alpha\}$, set $\varphi_i := \partial_i\varphi$ and $\varphi_{ij} := \partial_i\partial_j\varphi$. Define $\RMA^{2\kappa}[\varphi\circ q]\in\Meas_\fin^\pm(\mms)_\Ch$ and $\rmB[\varphi\circ q], \rmC[\varphi\circ q], \rmD[\varphi\circ q]\in\Ell^1(\mms)$ by
	\begin{align*}
	\RMA^{2\kappa}[\varphi\circ q] &:= \sum_{i,j=1}^\alpha \big[\varphi_i\circ\widetilde{q}\big]\,\big[\varphi_j\circ \widetilde{q}\big]\,\bdGamma_2^{2\kappa}(q_i,q_j),\\
	\rmB[\varphi\circ q] &:= 2\sum_{i,j,k=1}^\alpha \big[\varphi_i\circ q\big]\,\big[\varphi_{jk}\circ q]\,\rmH[q_i](q_j, q_k),\\
	\rmC[\varphi\circ q] &:= \sum_{i,j,k,l =1}^\alpha \big[\varphi_{ik}\circ q\big]\,\big[\varphi_{jl}\circ q\big]\,\langle\nabla q_i,\nabla q_j\rangle\,\langle\nabla q_k,\nabla q_l\rangle,\\
	\rmD[\varphi\circ q] &:= \Big[\!\sum_{i=1}^\alpha \big[\varphi_i\circ q\big]\,\Delta q_i + \sum_{i,j=1}^\alpha\big[\varphi_{ij}\circ q]\,\langle\nabla q_i,\nabla q_j\rangle\Big]^2.
	\end{align*}
	Then we have the identities 
	\begin{align*}
		\bdGamma_2^{2\kappa}(\varphi\circ q) &= \RMA^{2\kappa}[\varphi\circ q] + \big[\rmB[\varphi\circ q] + \rmC[\varphi\circ q]\big]\,\meas,\\
	\big[\Delta(\varphi\circ q)\big]^2\,\meas &= \rmD[\varphi\circ q]\,\meas.
	\end{align*}
\end{lemma}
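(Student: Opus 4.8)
The statement is a pair of ``chain rule'' identities for the singular $\Gamma_2$-operator $\bdGamma_2^{2\kappa}$ and for the squared measure-valued Laplacian of $\varphi\circ q$, so the natural approach is to reduce everything to the two basic calculus tools already available: the chain rule for the measure-valued Schrödinger operator $\DELTA^{2\kappa}$ (via the definition in \autoref{Def:Measure valued Schr} together with the chain rule for $\bdmu$ and $\Gamma$ in \autoref{Th:Properties energy measure} and \autoref{Re:CDC}), and the chain rule for the Laplacian $\Delta$ in \autoref{Le:Delta Leibniz rule}. First I would verify that $\varphi\circ q\in\Test(\mms)$, which is exactly the stability statement cited from \cite[Lem.~3.2]{savare2014}, so that the left-hand sides make sense and $\bdGamma_2^{2\kappa}(\varphi\circ q)$ is a well-defined element of $\Meas_{\sigmafinR}^+(\mms)_\Ch$ by \autoref{Pr:Bakry Emery measures}.

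For the second identity I would simply apply the chain rule of \autoref{Le:Delta Leibniz rule}(ii): since $q_i\in\Test(\mms)$ with $\vert\nabla q_i\vert,\Delta q_i$ bounded (indeed $\Delta q_i\in\F$), iterating the single-variable chain rule over the finitely many components $q_1,\dots,q_\alpha$ — or directly invoking the multivariable version, which follows by composing with the coordinate projections and using the Leibniz rule of \autoref{Le:Delta Leibniz rule}(i) — gives
\begin{align*}
\Delta(\varphi\circ q) = \sum_{i=1}^\alpha \big[\varphi_i\circ q\big]\,\Delta q_i + \sum_{i,j=1}^\alpha \big[\varphi_{ij}\circ q\big]\,\langle\nabla q_i,\nabla q_j\rangle\quad\meas\text{-a.e.},
\end{align*}
and squaring both sides yields $[\Delta(\varphi\circ q)]^2 = \rmD[\varphi\circ q]$ $\meas$-a.e., hence the measure identity.

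For the first identity I would expand $\bdGamma_2^{2\kappa}(\varphi\circ q)=\tfrac12\DELTA^{2\kappa}\vert\nabla(\varphi\circ q)\vert^2 - \langle\nabla(\varphi\circ q),\nabla\Delta(\varphi\circ q)\rangle\,\meas$ according to \eqref{Eq:Gamma_2 opertor}. Using the chain rule $\nabla(\varphi\circ q)=\sum_i[\varphi_i\circ q]\,\nabla q_i$ and the already-established formula for $\Delta(\varphi\circ q)$, the term $\langle\nabla(\varphi\circ q),\nabla\Delta(\varphi\circ q)\rangle$ unfolds into a sum of scalar products involving $\nabla q_i$, $\nabla\Delta q_j$, $\nabla\langle\nabla q_i,\nabla q_j\rangle$, and products of $\varphi_i\circ q$, $\varphi_{ij}\circ q$, $\varphi_{ijk}\circ q$; symmetrizing and collecting the pieces that do not involve $\DELTA^{2\kappa}$ is precisely how the $\rmH[\cdot]$-terms (giving $\rmB[\varphi\circ q]$) and the pure $\langle\nabla q_i,\nabla q_j\rangle\langle\nabla q_k,\nabla q_l\rangle$-terms (giving $\rmC[\varphi\circ q]$) arise, after one recognizes the combination $\langle\nabla g_1,\nabla\langle\nabla f,\nabla g_2\rangle\rangle+\langle\nabla g_2,\nabla\langle\nabla f,\nabla g_1\rangle\rangle-\langle\nabla f,\nabla\langle\nabla g_1,\nabla g_2\rangle\rangle=2\rmH[f](g_1,g_2)$ from \eqref{Eq:Pre-Hessian}. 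Meanwhile the term $\tfrac12\DELTA^{2\kappa}\vert\nabla(\varphi\circ q)\vert^2$ needs the chain/Leibniz rule for the \emph{measure-valued} operator $\DELTA^{2\kappa}$ applied to $\vert\nabla(\varphi\circ q)\vert^2=\sum_{i,j}[\varphi_i\circ q][\varphi_j\circ q]\langle\nabla q_i,\nabla q_j\rangle$; since each $\langle\nabla q_i,\nabla q_j\rangle\in\F\cap\Ell^\infty(\mms)$ lies in $\Dom(\DELTA^{2\kappa})$ (this is the substance of \autoref{Pr:Bakry Emery measures} and its proof) and the prefactors $[\varphi_i\circ q][\varphi_j\circ q]$ are bounded $\F$-functions, one can test against $h\in\F$, integrate by parts using $\Ch^{2\kappa}$, and move derivatives around to split off the singular part $\RMA^{2\kappa}[\varphi\circ q]=\sum_{i,j}[\varphi_i\circ\widetilde q][\varphi_j\circ\widetilde q]\bdGamma_2^{2\kappa}(q_i,q_j)$ plus absolutely continuous remainders that merge with the $\rmB,\rmC$ terms; the bookkeeping is essentially the polarized version of the single-function computation in \cite[Lem.~6.4]{erbar2020} / \cite{savare2014}.

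\textbf{Main obstacle.} The genuine difficulty is not conceptual but the careful handling of the measure-valued term $\DELTA^{2\kappa}(\,[\varphi_i\circ q][\varphi_j\circ q]\langle\nabla q_i,\nabla q_j\rangle)$: one must justify a Leibniz-type rule for $\DELTA^{2\kappa}$ against $\Ell^\infty$-valued weights while only knowing that the \emph{factors} individually have measure-valued Schrödinger operators, and one must keep track of which pieces land in the $\meas$-absolutely-continuous part versus the singular part, so that the final regrouping produces exactly $\RMA^{2\kappa}+(\rmB+\rmC)\,\meas$ with no leftover terms. This is where invoking the singular $\Gamma_2$-calculus machinery of \cite{erbar2020,savare2014} — in particular their treatment of $\bdGamma_2^{2\kappa}$ on polynomials in test functions — is essential, and I would lean on those references for the delicate integration-by-parts steps rather than re-deriving them from scratch.
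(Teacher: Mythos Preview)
Your approach is correct and is essentially the standard singular $\Gamma_2$-calculus computation carried out in \cite[Lem.~6.5, Thm.~6.6]{erbar2020} (following \cite{savare2014}). The paper does not give its own proof of this lemma; it simply records the statement and, in \autoref{Re:No a priori fin tot var}, points to those references, noting only the minor technical caveat that the finiteness of the measures involved is established a posteriori via \autoref{Pr:Fin tot var}. The Leibniz rule for $\DELTA^{2\kappa}$ that you flag as the main obstacle is precisely \cite[Cor.~6.3]{erbar2020}, which the paper itself invokes later (e.g.\ in the proof of \autoref{Le:Pre.Bochner}), so your plan to lean on that reference rather than re-derive it is exactly what the paper does.
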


\begin{remark}\label{Re:No a priori fin tot var} Note that all measures in \autoref{Le:Calculus rules} are identified as \emph{finite}. This technical point is shown in \autoref{Pr:Fin tot var} below. Albeit the latter is an \emph{a posteriori} consequence of $\BE_1(\kappa,\infty)$, see also \autoref{Re:RCD cons}, the results of \cite[Lem.~6.5, Thm.~6.6, Thm.~6.9]{erbar2020} --- in particular \autoref{Le:Calculus rules} --- are still deducible  \emph{a priori} from \autoref{As:Bakry Emery} by restriction of the identities from \cite[Lem.~6.5, Thm.~6.6]{erbar2020} to subsets of finite measure, or interpreting the asserted identities in a suitable weak sense.
\end{remark}

\subsubsection{Finiteness of total variations}\label{Sub:Schr props} The final goal of this subsection is to prove in \autoref{Pr:Fin tot var} that $\Vert\DELTA^{2\kappa}\vert\nabla f\vert^2\Vert_\TV < \infty$, $f\in\Test(\mms)$. Besides the technical \autoref{Re:No a priori fin tot var}, this fact will be of decisive help in \emph{continuously} extending the $\kappa$-Ricci measure beyond regular vector fields, see \autoref{Th:Ricci measure}.

The following is a minor variant of \cite[Lem.~6.2]{erbar2020} with potential $\kappa$ instead of $2\kappa$, proven in a completely analogous way.

\begin{lemma}\label{Le:BE1 lemma} Let $u\in \Ell^2(\mms)\cap\Ell^\infty(\mms)$ be nonnegative, and let $g\in \Ell^2(\mms)$. Suppose that for every nonnegative $\phi\in\Dom(\Delta^\kappa)\cap\Ell^\infty(\mms)$ with $\Delta^\kappa\phi\in\Ell^\infty(\mms)$,
\begin{align*}
\int_\mms u\,\Delta^\kappa\phi\d\meas \geq -\int_\mms g\,\phi\d\meas.
\end{align*}
Then $u\in \F$ as well as
\begin{align*}
\Ch^\kappa(u) \leq \int_\mms u\,g\d\meas.
\end{align*}
Moreover, there exists a unique measure $\bdsigma\in \smash{\Meas_\sigma^+(\mms)_\Ch}$ such that for every $h\in \F$, we have $\smash{\widetilde{h}\in \Ell^1(\mms,\bdsigma)}$ and
\begin{align*}
\int_\mms \widetilde{h}\d\bdsigma = -\Ch^\kappa(h,u) +\int_\mms h\,g\d\meas.
\end{align*}
\end{lemma}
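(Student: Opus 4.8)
The statement is a variant of \cite[Lem.~6.2]{erbar2020}, obtained by running the same argument with the Schrödinger form $\Ch^\kappa$ in place of $\Ch^{2\kappa}$; accordingly the plan is to mimic that proof almost verbatim. First I would fix a nonnegative $u\in\Ell^2(\mms)\cap\Ell^\infty(\mms)$ and $g\in\Ell^2(\mms)$ satisfying the hypothesis and upgrade it from the dense test class of $\phi$'s to all nonnegative $\phi\in\Dom(\Delta^\kappa)$. The key point is that $\Ch^\kappa$ is a closed, lower semibounded form associated to a strongly continuous semigroup $(\Schr{\kappa}_t)_{t\ge0}$ on $\Ell^2(\mms)$ by \eqref{Eq::E^k identity} and the discussion in \autoref{Sub:Self-imp}; in particular its generator $\Delta^\kappa$ has a resolvent $G_\lambda^\kappa := (\lambda-\Delta^\kappa)^{-1}$ which, for $\lambda$ large enough (larger than the lower bound of $-\Ch^\kappa$), is bounded, positivity preserving up to exponential factors, and maps $\Ell^2(\mms)\cap\Ell^\infty(\mms)$ into $\Dom(\Delta^\kappa)\cap\Ell^\infty(\mms)$ with bounded image under $\Delta^\kappa$ (using $\Delta^\kappa G_\lambda^\kappa = \lambda G_\lambda^\kappa - \Id$). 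So given an arbitrary nonnegative $\phi\in\Dom(\Delta^\kappa)\cap\Ell^\infty(\mms)$, I would approximate $\phi$ by $\phi_\varepsilon := \lambda G_\lambda^\kappa\psi_\varepsilon$ with suitable nonnegative $\psi_\varepsilon\in\Ell^2(\mms)\cap\Ell^\infty(\mms)$ converging to $(\lambda-\Delta^\kappa)\phi/\lambda$, so that $\phi_\varepsilon\to\phi$ in $\Dom(\Delta^\kappa)$ (graph norm) and $\Delta^\kappa\phi_\varepsilon\in\Ell^\infty(\mms)$; passing to the limit in the hypothesis extends the inequality $\int_\mms u\,\Delta^\kappa\phi\d\meas\ge -\int_\mms g\,\phi\d\meas$ to all such $\phi$.

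Next I would prove $u\in\F$ together with the energy bound. Apply the extended inequality with $\phi := G_\lambda^\kappa u$ for $\lambda$ large (note $u\ge0$ gives $\phi\ge0$, using positivity preservation of the resolvent which follows from that of $(\Schr{\kappa}_t)_{t\ge0}$, itself a consequence of the domination $\Schr{\kappa}_t\le\Schr{-\kappa^-}_t$-type estimates and the first Beurling--Deny criterion after the form-boundedness normalization). Then $\Delta^\kappa\phi = \lambda\phi - u$, so the hypothesis reads
\begin{align*}
\lambda\int_\mms u\,\phi\d\meas - \int_\mms u^2\d\meas = \int_\mms u\,\Delta^\kappa\phi\d\meas \geq -\int_\mms g\,\phi\d\meas,
\end{align*}
while on the other hand $\Ch^\kappa(\phi) = -\Ch^\kappa(\phi,\phi) + \lambda\int\phi^2 - \lambda\int\phi^2 = \int (\lambda\phi - u)\phi\d\meas$ after using $-\Ch^\kappa(h,\phi) = \int h\,(\lambda\phi-u)\d\meas$ with $h:=\phi$; standard manipulations (the Yosida-type inequality $\Ch^\kappa(\lambda G_\lambda^\kappa u)\le$ something controlled, combined with letting $\lambda\to\infty$ along the spectral calculus) give that $\lambda G_\lambda^\kappa u\to u$ in $\Ell^2(\mms)$ with $\sup_\lambda \Ch^\kappa(\lambda G_\lambda^\kappa u)<\infty$, hence $u\in\F = \Dom(\Ch^\kappa)$ by lower semicontinuity of $\Ch^\kappa$, and the limiting inequality yields $\Ch^\kappa(u)\le\int_\mms u\,g\d\meas$. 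This is the routine Yosida-approximation part and I would not belabor the computations.

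For the last assertion — existence and uniqueness of $\bdsigma\in\Meas_\sigma^+(\mms)_\Ch$ with $\int_\mms\widetilde h\d\bdsigma = -\Ch^\kappa(h,u) + \int_\mms h\,g\d\meas$ for all $h\in\F$ — I would argue as follows. The linear functional $L(h) := -\Ch^\kappa(h,u) + \int_\mms h\,g\d\meas$ on $\F$ is, by the displayed inequality extended to nonnegative $h$ (density of the test $\phi$'s in the nonnegative cone, as established above), nonnegative on $\F_+$: indeed for $0\le h\in\F$ one approximates and uses $\Ch^\kappa(h,u)\le\int_\mms h\,g\d\meas$, which is exactly the statement obtained by running the first step with $\phi$ replaced by approximations of $h$. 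Hence $L$ is a positive continuous linear functional on $\F$ with the contraction property of a form, and by the standard representation theorem for such functionals (the $1$--$1$ correspondence between nonnegative forms dominated by $\Ch^\kappa$ and $\Ch$-smooth measures, as used throughout \cite{erbar2020} via the Revuz correspondence, or equivalently \cite[Thm.~6.6]{erbar2020} applied to this setting) there is a unique $\Ch$-smooth — in particular $\sigma$-finite and not charging $\Ch$-polar sets — measure $\bdsigma$ with $L(h) = \int_\mms\widetilde h\d\bdsigma$ and $\widetilde h\in\Ell^1(\mms,\bdsigma)$ for every $h\in\F$; uniqueness follows from the density of $\F$ (more precisely $\F_\bc$, by quasi-regularity) and the fact that two $\Ch$-smooth measures agreeing against all such $\widetilde h$ coincide. \textbf{Main obstacle.} The only genuinely delicate point is the positivity of the functional $L$ on $\F_+$, i.e.\ transferring the hypothesis from the special test class of $\phi$'s to a large enough class to pin down a \emph{positive} measure rather than merely a signed distribution; this requires the resolvent approximation to preserve nonnegativity and to converge in graph norm, which in turn leans on the positivity preservation and the $\Ell^\infty$-boundedness properties of $(\Schr{\kappa}_t)_{t\ge0}$ recorded in \autoref{Sub:Tamed spaces}. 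Everything else is a routine adaptation of \cite[Lem.~6.2]{erbar2020}.
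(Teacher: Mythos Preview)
Your approach is exactly what the paper does: it states the lemma as ``a minor variant of \cite[Lem.~6.2]{erbar2020} with potential $\kappa$ instead of $2\kappa$, proven in a completely analogous way'' and gives no further details, so your plan to rerun that argument with $\Ch^\kappa$ in place of $\Ch^{2\kappa}$ is precisely on target. One minor comment: in your Step~1 the approximation $\phi_\varepsilon = \lambda G_\lambda^\kappa\psi_\varepsilon$ with nonnegative $\psi_\varepsilon\to(\lambda-\Delta^\kappa)\phi/\lambda$ need not preserve positivity, since $(\lambda-\Delta^\kappa)\phi$ can change sign; the cleaner route (and the one you implicitly use later anyway) is to take $\phi_\lambda := \lambda G_\lambda^\kappa h$ directly for $h\in\F_+\cap\Ell^\infty(\mms)$, which is nonnegative, belongs to the test class since $\Delta^\kappa\phi_\lambda = \lambda^2 G_\lambda^\kappa h - \lambda h\in\Ell^\infty(\mms)$, and converges to $h$ in $\F$ --- this suffices both for Step~2 (with $h:=u$) and for the positivity of $L$ in Step~3.
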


\begin{proposition}\label{Pr:Fin tot var} For every $f\in\Test(\mms)$, $\vert \nabla f\vert$ belongs to $\smash{\F_\bounded}$, and the signed Borel measure $\DELTA^{2\kappa} \vert\nabla f\vert^2$ has finite total variation. Moreover,
\begin{align}\label{Eq:Identities Gamma2 DELTA}
\begin{split}
\bdGamma_2^{2\kappa}(f)[\mms] &=\int_\mms (\Delta f)^2\d\meas - \big\langle \kappa\,\big\vert\,\vert\nabla f\vert^2\big\rangle,\\
\DELTA^{2\kappa} \vert\nabla f\vert^2[\mms] &= -2\,\big\langle\kappa\,\big\vert\,\vert\nabla f\vert^2\big\rangle.
\end{split}
\end{align}
\end{proposition}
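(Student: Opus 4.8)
The plan is to use the $\BE_1(\kappa,\infty)$ condition, which holds under \autoref{As:Bakry Emery} by \cite[Thm.~6.9]{erbar2020}, to squeeze $\vert\nabla f\vert$ between a subsolution and supersolution estimate and then apply \autoref{Le:BE1 lemma} with $u := \vert\nabla f\vert$. First I would recall from \autoref{Sub:Test fcts} that $\vert\nabla f\vert\in\Ell^2(\mms)\cap\Ell^\infty(\mms)$ is nonnegative for $f\in\Test(\mms)$, and that $\vert\nabla f\vert^2\in\Dom(\DELTA^{2\kappa})$ by \autoref{Pr:Bakry Emery measures}. The key computational input is the $\BE_1(\kappa,\infty)$ inequality tested against nonnegative $\phi\in\Dom(\Delta^\kappa)\cap\Ell^\infty(\mms)$ with $\Delta^\kappa\phi\in\Ell^\infty(\mms)$: it reads
\begin{align*}
\int_\mms \Delta^\kappa\phi\,\vert\nabla f\vert\d\meas \geq \int_\mms \phi\,\frac{\langle\nabla f,\nabla\Delta f\rangle}{\vert\nabla f\vert}\d\meas
\end{align*}
(integrals over $\{\vert\nabla f\vert\neq 0\}$ as per the convention at the start of \autoref{Pt:II}), so that $g := \langle\nabla f,\nabla\Delta f\rangle/\vert\nabla f\vert$ is the natural candidate; since $\Delta f\in\F$ and $\vert\nabla f\vert$ is essentially bounded below away from zero on its support in the relevant sense, $g\in\Ell^2(\mms)$ after a truncation/approximation argument (approximate $\vert\nabla f\vert$ by $(\vert\nabla f\vert^2+\varepsilon)^{1/2}$, which lies in $\Test(\mms)$, and pass to the limit). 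Applying \autoref{Le:BE1 lemma} then yields $\vert\nabla f\vert\in\F$ — hence $\vert\nabla f\vert\in\F_\bounded$ — together with a measure $\bdsigma\in\Meas_\sigma^+(\mms)_\Ch$ satisfying $\int_\mms\widetilde h\d\bdsigma = -\Ch^\kappa(h,\vert\nabla f\vert)+\int_\mms h\,g\d\meas$ for every $h\in\F$, and crucially $\Ch^\kappa(\vert\nabla f\vert)\leq \int_\mms\vert\nabla f\vert\,g\d\meas<\infty$.

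Next I would convert the finiteness of $\bdsigma$ on all of $\mms$ into finiteness of $\Vert\DELTA^{2\kappa}\vert\nabla f\vert^2\Vert_\TV$. The point is that $\DELTA^{2\kappa}\vert\nabla f\vert^2$ should equal $2\,\vert\nabla f\vert_\sim\,\bdsigma + (\text{an }\Ell^1\text{-density})\,\meas$ by a chain-rule-type identity relating $\DELTA^{2\kappa}$ acting on $\vert\nabla f\vert^2 = (\vert\nabla f\vert)^2$ to $\bdsigma = \DELTA^\kappa\vert\nabla f\vert$ and the carré du champ $\Gamma(\vert\nabla f\vert)$; more precisely, testing against $h\in\F_\bc$ and using the Leibniz rule from \autoref{Th:Properties energy measure} one gets
\begin{align*}
\int_\mms\widetilde h\d\DELTA^{2\kappa}\vert\nabla f\vert^2 = -\Ch^{2\kappa}(h,\vert\nabla f\vert^2) = -2\int_\mms\widetilde h\,\vert\nabla f\vert_\sim\d\bdsigma - \big(\text{correction terms}\big),
\end{align*}
where the correction terms are $\meas$-absolutely continuous with $\Ell^1$-density built from $\Gamma(\vert\nabla f\vert)$, $\Delta f$, and $\langle\nabla f,\nabla\Delta f\rangle$ — all genuinely $\Ell^1$ since $\vert\nabla f\vert\in\F_\bounded$ and $f\in\Test(\mms)$. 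Since $\vert\nabla f\vert_\sim$ is bounded and $\bdsigma$ is finite, the singular part has finite total variation, and the absolutely continuous part does too; hence $\Vert\DELTA^{2\kappa}\vert\nabla f\vert^2\Vert_\TV<\infty$.

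Finally, the two identities in \eqref{Eq:Identities Gamma2 DELTA} follow by taking $h\to\One_\mms$ in the defining relations. For the second: by \autoref{Le:Approx to id} choose $g_n\in\F_\bounded$ with $g_n = 1$ $\meas$-a.e.\ on an exhausting sequence $G_n$; testing $\DELTA^{2\kappa}\vert\nabla f\vert^2$ against $g_n$ and using strong locality of $\bdmu$ (so $\Ch(g_n,\vert\nabla f\vert^2)$ localizes and vanishes in the limit via dominated convergence, using that $\vert\nabla f\vert^2\in\F$ and the total-variation finiteness just proved to control the $\kappa$-pairing term) gives $\DELTA^{2\kappa}\vert\nabla f\vert^2[\mms] = -\Ch^{2\kappa}(\One_\mms,\vert\nabla f\vert^2)\,\text{``}=\text{''}\,-2\,\langle\kappa\,\vert\,\vert\nabla f\vert^2\rangle$, since $\Ch(\One_\mms,\cdot) = 0$ formally but rigorously the $\Ch$-part drops out in the limit. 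The first identity then follows by integrating the definition \eqref{Eq:Gamma_2 opertor} of $\bdGamma_2^{2\kappa}(f)$ over $\mms$: $\bdGamma_2^{2\kappa}(f)[\mms] = \tfrac12\DELTA^{2\kappa}\vert\nabla f\vert^2[\mms] - \int_\mms\langle\nabla f,\nabla\Delta f\rangle\d\meas$, and by \eqref{Eq:IBP Laplacian} with $g = \Delta f\in\F$ we have $\int_\mms\langle\nabla f,\nabla\Delta f\rangle\d\meas = \Ch(f,\Delta f) = -\int_\mms(\Delta f)^2\d\meas$, so combining with the second identity yields $\bdGamma_2^{2\kappa}(f)[\mms] = -\langle\kappa\,\vert\,\vert\nabla f\vert^2\rangle + \int_\mms(\Delta f)^2\d\meas$.

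\textbf{Main obstacle.} The hard part will be the limiting argument replacing $h$ by $\One_\mms$ while justifying that the pure-energy contribution $\Ch(g_n,\vert\nabla f\vert^2)$ vanishes: unlike in the $\RCD$ setting one has no global bound on $\Ch(g_n)$ (see \autoref{Re:No global control}), so one cannot naively estimate $\Ch(g_n,\vert\nabla f\vert^2)\leq \Ch(g_n)^{1/2}\Ch(\vert\nabla f\vert^2)^{1/2}$; instead one must use strong locality of the energy measure to write $\Ch(g_n,\vert\nabla f\vert^2) = \int_\mms\widetilde g_n\,\d\bdmu_{\One,\vert\nabla f\vert^2}$-type localizations or, more robustly, exploit that $\One_{G_n}\,\bdmu_{g_n,\vert\nabla f\vert^2}$ and the tail $\bdmu_{\vert\nabla f\vert^2}[\mms\setminus G_n]\to 0$ since $\bdmu_{\vert\nabla f\vert^2}$ is a finite measure, giving $\Ch(g_n,\vert\nabla f\vert^2)\to\bdmu_{\vert\nabla f\vert^2}[\mms]\cdot(\text{const from }g_n)$, which one checks vanishes. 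Making this precise — together with the measure-valued chain rule for $\DELTA^{2\kappa}$ applied to a square — is where most of the careful work lies; the $\BE_1(\kappa,\infty)$ input and \autoref{Le:BE1 lemma} handle the rest more or less mechanically.
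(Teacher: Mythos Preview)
Your overall strategy matches the paper's: invoke $\BE_1(\kappa,\infty)$ via \cite[Thm.~6.9]{erbar2020}, apply \autoref{Le:BE1 lemma} with $u=\vert\nabla f\vert$ and $g=-\One_{\{\vert\nabla f\vert>0\}}\langle\nabla f,\nabla\Delta f\rangle\,\vert\nabla f\vert^{-1}$ to get $\vert\nabla f\vert\in\F_\bounded$ and the measure $\bdsigma$, then relate $\DELTA^{2\kappa}\vert\nabla f\vert^2$ to $\bdsigma$ via a Leibniz computation. That much is correct, and the explicit measure identity you are aiming for is exactly what the paper derives:
\begin{align*}
\DELTA^{2\kappa}\vert\nabla f\vert^2 = 2\,\vert\nabla f\vert_\sim\,\bdsigma + 2\,\big\langle\nabla f,\nabla\Delta f\big\rangle\,\meas + 2\,\big\vert\nabla\vert\nabla f\vert\big\vert^2\,\meas.
\end{align*}

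The gap is in how you extract finiteness and the global identities. You assert $\bdsigma$ is finite, but \autoref{Le:BE1 lemma} only gives $\bdsigma\in\Meas_\sigma^+(\mms)_\Ch$ --- $\sigma$-finite, not finite --- so ``$\vert\nabla f\vert_\sim$ bounded and $\bdsigma$ finite'' is not available. And for the identities you propose testing against $g_n\to\One_\mms$ from \autoref{Le:Approx to id}; you yourself correctly flag that $\Ch(g_n,\vert\nabla f\vert^2)\to 0$ cannot be justified without a bound on $\Ch(g_n)$, and your locality workaround still requires controlling $\bdmu_{g_n}[\mms\setminus G_n]$, which is equally unavailable (cf.\ \autoref{Re:No global control}). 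This route does not close.

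The paper's fix is a single move that resolves both issues simultaneously: test the $\bdsigma$-identity against $h:=\vert\nabla f\vert\in\F_\bounded$ itself. This gives
\begin{align*}
\int_\mms \vert\nabla f\vert_\sim\d\bdsigma = -\Ch^\kappa\big(\vert\nabla f\vert\big) - \int_\mms\langle\nabla f,\nabla\Delta f\rangle\d\meas,
\end{align*}
a finite quantity, so $\vert\nabla f\vert_\sim\,\bdsigma$ is finite (hence $\Vert\DELTA^{2\kappa}\vert\nabla f\vert^2\Vert_\TV<\infty$), and substituting this back into the displayed measure identity evaluated on $\mms$ yields $\DELTA^{2\kappa}\vert\nabla f\vert^2[\mms]=-2\,\langle\kappa\,\vert\,\vert\nabla f\vert^2\rangle$ directly --- no approximation of $\One_\mms$ needed. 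The first identity then follows exactly as you say, via \eqref{Eq:Gamma_2 opertor} and $\int_\mms\langle\nabla f,\nabla\Delta f\rangle\d\meas=-\int_\mms(\Delta f)^2\d\meas$.
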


\begin{proof} By \autoref{Pr:Bakry Emery measures}, we already know that $\vert\nabla f\vert^2\in\Dom(\DELTA^{2\kappa})$. Now recall that by the self-improvement property of $\BE_2(\kappa,N)$, $(\mms,\Ch,\meas)$ obeys $\BE_1(\kappa,\infty)$ according to \autoref{Def:BE cond} \cite[Thm.~6.9]{erbar2020}. By \autoref{Le:BE1 lemma} applied to $u := \vert\nabla f\vert \in \Ell^2(\mms)\cap\Ell^\infty(\mms)$ and $g := \smash{-\One_{\{\vert \nabla f\vert > 0\}}\,\langle\nabla f,\nabla \Delta f\rangle\,\vert\nabla f\vert^{-1}}\in\Ell^2(\mms)$ as well as by \eqref{Eq::E^k identity}, we obtain $\smash{\vert \nabla f\vert\in \F_\bounded}$ and the unique existence of an element $\smash{\bdsigma\in \Meas_\sigmafinR^+(\mms)_\Ch}$ such that for every $h\in \F$, we have $\smash{\widetilde{h}\in \Ell^1(\mms,\bdsigma)}$ and
\begin{align}\label{Eq:Tuuut}
\int_\mms \widetilde{h}\d\bdsigma &= -\Ch^\kappa\big(h, \vert\nabla f\vert\big) -\int_{\{\vert\nabla f\vert > 0\}} h\,\big\langle\nabla f,\nabla \Delta f\big\rangle\,\vert\nabla f\vert^{-1}\d\meas.
\end{align}
Inserting $h := \phi\,\vert\nabla f\vert$ for $\phi\in\F_\bounded$  in \eqref{Eq:Tuuut} yields
\begin{align*}
\int_\mms \widetilde{\phi}\,\vert\nabla f\vert_\sim\d\bdsigma &= -\Ch^\kappa\big(\phi\,\vert\nabla f\vert, \vert\nabla f\vert\big) -\int_\mms \phi\,\big\langle\nabla f,\nabla \Delta f\big\rangle\d\meas.
\end{align*}
Hence for such $\phi$ and using the \autoref{Def:Measure valued Schr} of $\DELTA^{2\kappa}$,
\begin{align*}
\int_\mms \widetilde{\phi}\d\DELTA^{2\kappa}\vert\nabla f\vert^2 &= - \Ch^{2\kappa}\big(\phi,\vert\nabla f\vert^2\big)\\
&= - 2\int_\mms \vert\nabla f\vert\,\big\langle\nabla\phi,\nabla\vert\nabla f\vert\big\rangle\d\meas - 2\,\big\langle\kappa\,\big\vert\,\phi\, \vert\nabla f\vert^2\big\rangle\\
&= -2\,\Ch^{\kappa}\big(\phi\,\vert\nabla f\vert,\vert\nabla f\vert\big) +2\int_\mms \phi\,\big\vert\nabla\vert\nabla f\vert\big\vert^2\d\meas\\
&= 2\int_\mms \widetilde{\phi}\,\vert\nabla f\vert_\sim\d\bdsigma + 2\int_\mms\phi\,\big\langle\nabla f,\nabla\Delta f\big\rangle\d\meas\\
&\qquad\qquad + 2\int_\mms\phi\,\big\vert\nabla\vert\nabla f\vert\big\vert^2\d\meas.
\end{align*}
Since $\smash{\phi\in\F_\bounded}$ is arbitrary, we get
\begin{align}\label{Eq:Meas id}
\DELTA^{2\kappa}\vert\nabla f\vert^2 = 2\,\vert\nabla f\vert_\sim\,\bdsigma + 2\,\big\langle\nabla f,\nabla \Delta f \big\rangle\,\meas + 2\,\big\vert\nabla\vert\nabla f\vert\big\vert^2\,\meas.
\end{align}
Indeed, the r.h.s.~is well-defined since $\langle\nabla f,\nabla\Delta f\rangle\,\meas$ and $\smash{\big\vert\nabla \vert\nabla f\vert\big\vert^2\,\meas}$ define (signed) Borel measures of finite total variation. Setting $h := \vert\nabla f\vert$ in \eqref{Eq:Tuuut} implies that $\vert\nabla f\vert_\sim\,\bdsigma$ is finite as well, whence $\DELTA^{2\kappa} \vert\nabla f\vert^2$ is of finite total variation.

Finally, the second identity from \eqref{Eq:Identities Gamma2 DELTA} follows from combining \eqref{Eq:Meas id} with \eqref{Eq:Tuuut} for $h := \vert \nabla f\vert$, which in turn gives the first identity by the definition \eqref{Eq:Gamma_2 opertor}.
\end{proof}

\begin{remark}\label{Re:RCD cons} On $\RCD(K,\infty)$  spaces $(\mms,\met,\meas)$, $K\in\R$, according to \autoref{Ex:mms} the argument for the finiteness of $\Vert\DELTA^{2K}\vert\nabla f\vert^2\Vert_\TV$ is more straightforward: it follows by conservativeness of the heat flow $(\ChHeat_t)_{t\geq 0}$ \cite{ambrosio2014b,ambrosio2015} and does not require the detour over the $\BE_1(K,\infty)$ condition \cite[Lem.~2.6]{savare2014}. In our work, conservativeness is \emph{neither} assumed nor generally a \emph{consequence} of \autoref{Def:BE cond} (recall \autoref{Re:No global control}). 
\end{remark}

\begin{remark}[Caveat] The relation \eqref{Eq:Tuuut} suggests to derive that $\vert\nabla f\vert\in\Dom(\DELTA^\kappa)$, with $\DELTA^\kappa$ defined appropriately as in \autoref{Def:Measure valued Schr},  with
\begin{align*}
\DELTA^\kappa \vert\nabla f\vert = \bdsigma  + \big\langle \nabla f,\nabla\Delta f\big\rangle\,\vert \nabla f\vert^{-1}\,\meas.
\end{align*}
However, it is not clear if the r.h.s.~defines an element of $\smash{\Meas_\sigmafinR^\pm(\mms)}$ since neither the summands on the r.h.s.~typically define  finite measures, nor we really know whether the last summand is a signed measure (it might take both the value $\infty$ and $-\infty$).

The situation changes when treating the $\Ell^1$-Bochner inequality for test vector fields, see \autoref{Th:Vector Bochner} below. 
\end{remark}

\subsection{Lebesgue spaces and test objects}\label{Sub:Test objects} This section is a survey over the definition of the spaces $\Ell^p(T^*\mms)$ and $\Ell^p(T\mms)$, $p\in [1,\infty]$, of $p$-integrable (co-)tangent vector fields w.r.t.~$\meas$.

\subsubsection{The $\Ell^0$-modules $\Ell^0(T^*\mms)$ and $\Ell^0(T\mms)$}\label{Sub:L0 modules assoc to cotangent} Let $\Ell^0(T^*\mms)$ and $\Ell^0(T\mms)$ be the $\Ell^0$-modules as in \autoref{Sub:L0 modules}  associated to $\Ell^2(T^*\mms)$ and $\Ell^2(T\mms)$, i.e.\label{Not:L0's}
\begin{align*}
\Ell^0(T^*\mms) &:= \Ell^2(T^*\mms)^0,\\
\Ell^0(T\mms) &:= \Ell^2(T\mms)^0.
\end{align*}
The characterization of Cauchy sequences in these spaces \cite[p.~31]{gigli2018}  grants that the pointwise norms $\vert\cdot\vert\colon \Ell^2(T^*\mms)\to\Ell^2(\mms)$ and $\vert\cdot\vert\colon\Ell^2(T\mms)\to\Ell^2(\mms)$ as well as the musical isomorphisms $\flat\colon \Ell^2(T\mms)\to\Ell^2(T^*\mms)$ and $\sharp\colon \Ell^2(T^*\mms)\to\Ell^2(T\mms)$ uniquely extend to (non-relabeled) continuous maps $\vert\cdot \vert\colon \Ell^0(T^*\mms)\to\Ell^0(\mms)$, $\vert\cdot \vert\colon\Ell^0(T\mms)\to\Ell^0(\mms)$, $\flat\colon \Ell^0(T\mms)\to\Ell^0(T^*\mms)$ and $\sharp\colon \Ell^0(T^*\mms)\to\Ell^0(T\mms)$. (And the latter two will restrict to pointwise isometric module isomorphisms between the respective $\Ell^p$-spaces, $p\in[1,\infty]$, from \autoref{Sub:Lebesgue sp}.)

\subsubsection{The Lebesgue spaces $\Ell^p(T^*\mms)$ and $\Ell^p(T\mms)$}\label{Sub:Lebesgue sp} For $p\in [1,\infty]$, let $\Ell^p(T^*\mms)$ and $\Ell^p(T\mms)$ be the Banach spaces consisting of all $\omega\in\Ell^0(T^*\mms)$  and $X\in\Ell^0(T\mms)$ such that $\vert\omega\vert\in \Ell^p(\mms)$ and $\vert X\vert\in\Ell^p(\mms)$, respectively, endowed with the norms\label{Not:Lp sp.}
\begin{align*}
\Vert \omega\Vert_{\Ell^p(T^*\mms)} &:= \big\Vert \vert\omega\vert\big\Vert_{\Ell^p(\mms)},\\
\Vert X\Vert_{\Ell^p(T\mms)} &:= \big\Vert \vert X\vert\big\Vert_{\Ell^p(\mms)}.
\end{align*}
Since by \autoref{Pr:Generators cotangent module}, $\Ell^2(T^*\mms)$ is separable --- and so is $\Ell^2(T\mms)$ by \autoref{Th:Riesz theorem modules} --- one easily derives that if $p<\infty$, the spaces $\Ell^p(T^*\mms)$ and $\Ell^p(T\mms)$ are separable as well. Since $\Ell^2(T^*\mms)$ and $\Ell^2(T\mms)$ are reflexive as Hilbert spaces, by the discussion from \autoref{Sub:Duality} it follows that $\Ell^p(T^*\mms)$ and $\Ell^p(T\mms)$ are reflexive for every $p\in [1,\infty]$, and that for $q\in [1,\infty]$ such that $1/p+1/q=1$, in the sense of $\Ell^\infty$-modules we have the duality
\begin{align*}
 \Ell^p(T^*\mms)^* = \Ell^q(T\mms).
\end{align*}

\subsubsection{Test and regular objects}\label{Sub:Test reg} As in \cite[p.~102]{gigli2018}, using \autoref{Le:Mollified heat flow} we see that the linear span of all elements of the form $h\,\nabla g$, $g\in\Test_{\Ell^\infty}(\mms)$ and $h\in\Test(\mms)$, is weakly$^*$ dense in $\Ell^\infty(T\mms)$. This motivates to consider the subsequent subclasses of $\Ell^2(T\mms)$  consisting of \emph{test vector fields} or \emph{regular vector fields}, respectively:\label{Not:reg 1 vfs}
\begin{align*}
\Test_{\Ell^\infty}(T\mms) &:= \Big\lbrace\! \sum_{i=1}^n g_i\,\nabla f_i : n\in\N,\ f_i,g_i\in\Test_{\Ell^\infty}(\mms)\Big\rbrace,\\
\Test(T\mms) &:= \Big\lbrace\! \sum_{i=1}^n g_i\,\nabla f_i : n\in\N,\ f_i,g_i\in\Test(\mms)\Big\rbrace,\\
\Reg(T\mms) &:= \Big\lbrace\!\sum_{i=1}^n g_i\,\nabla f_i : n\in\N,\ f_i\in\Test(\mms),\ g_i\in \Test(\mms)\cup \R\,\One_\mms\Big\rbrace.
\end{align*}

\begin{remark} These three classes play different roles in the sequel. $\Test_{\Ell^\infty}(T\mms)$ is just needed for technical reasons when some second order $\Ell^\infty$-control is required. $\Test(T\mms)$ is usually the class of vector fields w.r.t.~which certain objects are defined by testing against, while $\Reg(T\mms)$ is the typical class of vector fields for which such objects \emph{are} defined. We make this distinction between $\Test(T\mms)$ and $\Reg(T\mms)$, which has not been done in \cite{gigli2018}, for the  reason that we want to include both vector fields with ``regular'' zeroth order part as well as pure gradient vector fields for differential objects such as the covariant derivative, see \autoref{Th:Properties W12 TM}, or the exterior differential, see \autoref{Th:Wd12 properties}. However, under the usual closures that we take below, it is not clear if gradient vector fields belong to those w.r.t.~test rather than regular objects. Compare with \autoref{Re:Test closure}.
\end{remark}

Of course $\Test_{\Ell^\infty}(T\mms)\subset \Test(T\mms),\Test_{\Ell^\infty}(T\mms)\subset \Ell^1(T\mms)\cap\Ell^\infty(T\mms)$, while merely $\Reg(T\mms)\subset \Ell^2(T\mms)\cap\Ell^\infty(T\mms)$. By \autoref{Le:Div g nabla f}, we have $\Test(T\mms)\subset\Dom_\TV(\DIV)\cap\Dom(\div)$ --- as well as $\norm X = 0$ for every $X\in\Test(T\mms)$ --- while only $\Reg(T\mms)\subset\Dom(\div)$. By \autoref{Pr:Generators cotangent module} and \autoref{Th:Riesz theorem modules}, all classes are dense in $\Ell^p(T\mms)$, $p\in [1,\infty)$, and weakly$^*$ dense in $\Ell^\infty(T\mms)$.  Furthermore, we set\label{Not:reg 1 forms}
\begin{align*}
\Test_{\Ell^\infty}(T^*\mms) &:= \Test_{\Ell^\infty}(T\mms)^\flat,\\
\Test(T^*\mms) &:= \Test(T\mms)^\flat,\\
\Reg(T^*\mms) &:= \Reg(T\mms)^\flat.
\end{align*}

\subsubsection{Lebesgue spaces on tensor products}\label{Sub:Leb sp tp} Denote the two-fold tensor products of $\Ell^2(T^*\mms)$ and $\Ell^2(T\mms)$, respectively, in the sense of \autoref{Sub:Tensor products} by\label{Not:L2 tensor pr}
\begin{align*}
\Ell^2((T^*)^{\otimes 2}\mms) &:= \Ell^2(T^*\mms)^{\otimes 2},\\
\Ell^2(T^{\otimes 2}\mms) &:= \Ell^2(T\mms)^{\otimes 2}.
\end{align*}
By the discussion from \autoref{Sub:Tensor products}, \autoref{Th:Module structure} and \autoref{Th:Riesz theorem modules}, both are separable Hilbert modules. They are pointwise isometrically module isomorphic: the respective  pairing is initially defined by
\begin{align*}
(\omega_1\otimes\omega_2)(X_1\otimes X_2) := \omega_1(X_1)\,\omega_2(X_2)\quad\meas\text{-a.e.}
\end{align*} 
for $\omega_1,\omega_2\in\Ell^2(T^*\mms)\cap\Ell^\infty(T^*\mms)$ and $X_1,X_2\in\Ell^2(T^*\mms)\cap\Ell^\infty(T^*\mms)$, and is ex\-ten\-ded by linearity and continuity to $\Ell^2((T^*)^{\otimes 2}\mms)$ and $\Ell^2(T^{\otimes 2}\mms)$, respectively. By a slight abuse of notation, this pairing,  with \autoref{Th:Riesz theorem modules}, induces the musical isomorphisms $\smash{\flat\colon \Ell^2(T^{\otimes 2}\mms) \to \Ell^2((T^*)^{\otimes 2}\mms)}$ and $\smash{\sharp := \flat^{-1}}$ given by
\begin{align*}
A^\sharp : T := A(T) =: A : T^\flat\quad\meas\text{-a.e.}
\end{align*}

We let $\Ell^p((T^*)^{\otimes 2}\mms)$ and $\Ell^p(T^{\otimes 2}\mms)$, $p\in\{0\}\cup [1,\infty]$, be defined similarly to \autoref{Sub:L0 modules assoc to cotangent} and \autoref{Sub:Lebesgue sp}. For $p\in[1,\infty]$, these spaces  naturally become Banach which, if $p<\infty$, are separable.

Lastly, we define the subsequent $\Ell^p$-dense sets, $p\in [1,\infty]$, intended strongly if $p<\infty$ and weakly$^*$ if $p = \infty$, reminiscent of \autoref{Sub:Tensor products}:\label{Not:Two fold tps}
\begin{align*}
\Test_{\Ell^\infty}((T^*)^{\otimes 2}\mms) &:= \Test_{\Ell^\infty}(T^*\mms)^{\odot 2},\\
\Test_{\Ell^\infty}(T^{\otimes 2}\mms) &:= \Test_{\Ell^\infty}(T\mms)^{\odot 2},\\
\Test((T^*)^{\otimes 2}\mms) &:= \Test(T^*\mms)^{\odot 2},\\
\Test(T^{\otimes 2}\mms) &:= \Test(T\mms)^{\odot 2},\\
\Reg((T^*)^{\otimes 2}\mms) &:= \Reg(T^*\mms)^{\odot 2},\\
\Reg(T^{\otimes 2}\mms) &:= \Reg(T\mms)^{\odot 2}.
\end{align*}

\subsubsection{Lebesgue spaces on exterior products} Given any $k\in\N_0$, we set\label{Not:Lp sp ext p}
\begin{align*}
\Ell^2(\Lambda^kT^*\mms) &:= \Lambda^k\Ell^2(T^*\mms),\\
\Ell^2(\Lambda^kT\mms) &:= \Lambda^k\Ell^2(T\mms),
\end{align*}
where the exterior products are intended as in \autoref{Sub:Exterior products}. For $k\in\{0,1\}$, we employ the consistent interpretations
\begin{align*}
\Ell^2(\Lambda^1T^*\mms) &:= \Ell^2(T^*\mms),\\
\Ell^2(\Lambda^1T\mms) &:= \Ell^2(T\mms),\\
\Ell^2(\Lambda^0T^*\mms) &:= \Ell^2(\Lambda^0T\mms) := \Ell^2(\mms).
\end{align*}
By \autoref{Sub:Exterior products}, these are naturally Hilbert modules. As in \autoref{Sub:Leb sp tp}, $\smash{\Ell^2(\Lambda^kT^*\mms)}$ and $\smash{\Ell^2(\Lambda^kT\mms)}$ are pointwise isometrically module isomorphic. For brevity, the induced pointwise pairing between $\omega\in\Ell^2(\Lambda^kT^*\mms)$ and $X_1\wedge\dots X_k\in\Ell^2(\Lambda^kT\mms)$, $X_1,\dots,X_k\in\Ell^2(T\mms)\cap\Ell^\infty(T\mms)$, is written
\begin{align*}
\omega(X_1,\dots,X_k) := \omega(X_1\wedge\dots\wedge X_k).
\end{align*}

We let $\Ell^p(\Lambda^kT^*\mms)$ and $\Ell^p(\Lambda^kT\mms)$, $p\in\{0\}\cup [1,\infty]$, be  as in \autoref{Sub:L0 modules assoc to cotangent} and \autoref{Sub:Lebesgue sp}. For $p\in[1,\infty]$, these spaces  are Banach and, if $p<\infty$, additionally separable.

Let the formal $k$-th exterior products, $k\in\N_0$, of the classes from \autoref{Sub:Test reg} be defined through\label{Not:test lambda k}
\begin{align*}
\Test_{\Ell^\infty}(\Lambda^kT^*\mms) &:= \Big\lbrace\!\sum_{i=1}^n f_i^0\d f_i^1\wedge\dots\wedge \rmd f_i^k : n\in\N,\ f_i^j \in\Test_{\Ell^\infty}(\mms)\Big\rbrace,\\
\Test_{\Ell^\infty}(\Lambda^kT\mms) &:= \Big\lbrace\!\sum_{i=1}^n f_i^0\,\nabla f_i^1\wedge\dots\wedge \nabla f_i^k : n\in\N,\ f_i^j \in\Test_{\Ell^\infty}(\mms)\Big\rbrace,\\
\Test(\Lambda^kT^*\mms) &:= \Big\lbrace\!\sum_{i=1}^n f_i^0\d f_i^1\wedge\dots\wedge \rmd f_i^k : n\in\N,\ f_i^j \in\Test(\mms)\Big\rbrace,\\
\Test(\Lambda^kT\mms) &:= \Big\lbrace\!\sum_{i=1}^n f_i^0\,\nabla  f_i^1\wedge\dots\wedge \nabla f_i^k : n\in\N,\ f_i^j \in\Test(\mms)\Big\rbrace\\
\Reg(\Lambda^kT^*\mms) &:= \Big\lbrace\!\sum_{i=1}^n f_i^0\d f_i^1\wedge\dots\wedge \rmd f_i^k : n\in\N,\ f_i^j \in\Test(\mms),\\
&\qquad\qquad f_i^0\in\Test(\mms)\cup\R\,\One_\mms\Big\rbrace,\textcolor{white}{\sum_i^n}\\
\Reg(\Lambda^kT\mms) &:= \Big\lbrace\!\sum_{i=1}^n f_i^0\,\nabla f_i^1\wedge\dots\wedge \nabla f_i^k : n\in\N,\ f_i^j \in\Test(\mms),\\
&\qquad\qquad f_i^0\in\Test(\mms)\cup\R\,\One_\mms\Big\rbrace.\textcolor{white}{\sum_i^n}
\end{align*}
We\label{Not:reg lambda k} employ the evident interpretations for $k=1$, while the respective spaces for $k=0$ are identified with those spaces  to which their generic elements's zeroth order terms belong to. These classes are dense in their respective $\Ell^p$-spaces, $p\in[1,\infty]$ --- strongly if $p<\infty$, and weakly$^*$ if $p=\infty$.

\section{Hessian}\label{Sec:Hessian}

\subsection{The Sobolev space $\Dom(\Hess)$}\label{Sub:Dom Hess} Now we define the key object of our second order differential structure, namely the \emph{Hessian} of suitable functions $f\in \F$. We choose an integration by parts procedure as in \cite[Subsec.~3.3.1]{gigli2018}, motivated by the subsequent Riemannian example.

\begin{example}\label{Ex:Hess} Let $\mms$ be a  Riemannian manifold with boundary. The metric compatibility of $\nabla$ allows us to rephrase the definition of the Hessian $\Hess f \in \Gamma((T^*)^{\otimes 2}\mms)$ of a function $f\in \Cont^\infty(\mms)$ pointwise as
\begin{align}\label{La:Smth hess}
\begin{split}
	2\Hess f(\nabla g_1,\nabla g_2) &= 2\,\big\langle\nabla_{\nabla g_1}\nabla f,\nabla g_2\big\rangle\\
	&= \big\langle \nabla g_1, \nabla \langle\nabla f,\nabla g_2\rangle\big\rangle + \big\langle \nabla g_2, \nabla \langle\nabla f,\nabla g_1\rangle\big\rangle\\
	&\qquad\qquad - \big\langle \nabla f, \nabla \langle\nabla g_1,\nabla g_2\rangle\big\rangle
	\end{split}
\end{align}
for every $g_1,g_2\in \Cont_\comp^\infty(\mms)$, see e.g.~\cite[p.~28]{petersen2006}. The first equality ensures that $\Hess f$ is $\Cont^\infty$-linear in both components. Thus, since smooth gradient vector fields  locally generate $TM$, the second equality characterizes the Hessian of $f$. 

We now restrict our attention to those $g_1$ and $g_2$ whose derivatives constitute \emph{Neumann vector fields}, i.e.~
\begin{align}\label{Eq:Vanishing normal}
\langle\nabla g_1,\sfn\rangle = \langle\nabla g_2,\sfn\rangle = 0\quad\text{on }\partial\mms,
\end{align}
e.g.~to $\smash{g_1,g_2\in\Cont_\comp^\infty(\mms^\circ)}$. Multiply \eqref{La:Smth hess} by a function $h\in\Cont_\comp^\infty(\mms)$ and integrate (by parts). In this case, recall that $h\,\nabla g_1,h\,\nabla g_2\in \Dom_{\TV}(\DIV)\cap\Dom(\div)$ with
\begin{align}\label{Eq:Smooth div identities}
\begin{split}
\DIV(h\, \nabla g_1) = \div_\vol(h\,\nabla g_1)\,\vol,\\
\DIV(h\, \nabla g_2)  = \div_\vol(h\,\nabla g_2)\,\vol
\end{split}
\end{align}
by \autoref{Ex:Mflds with boundary}. The resulting integral identity reads
\begin{align*}
&2\int_\mms h\Hess f(\nabla g_1,\nabla g_2)\d \vol\\
&\qquad\qquad = -\int_\mms \langle \nabla f,\nabla g_2\rangle\div_\vol(h\,\nabla g_1) \d\vol  - \int_\mms \langle\nabla f,\nabla g_1\rangle\div_\vol(h\,\nabla g_2)\d\vol\\
&\qquad\qquad\qquad\qquad  -\int_\mms h\,\big\langle\nabla f,\nabla\langle\nabla g_1,\nabla g_2\rangle\big\rangle\d\vol.
\end{align*}
In turn, this integral identity characterizes $\Hess f$ on $\mms^\circ$ by the arbitrariness of $g_1$, $g_2$ and $h$, and hence on $\mms$ by the existence of a smooth extension to all of $\mms$.
\end{example}

Observe that on the r.h.s.~of the previous integral identity, no second order expression in $f$ is present. Moreover, as we have already noted in \autoref{Le:Div g nabla f}, $\Test(T\mms)$ is a large class of vector fields obeying a nonsmooth version of \eqref{Eq:Vanishing normal}. Next, note that all volume integrals on the r.h.s.~--- with $\vol$ replaced by $\meas$ --- are well-defined for every $f\in \F$ and $g_1,g_2,h\in\Test(\mms)$. Indeed, the first one exists since $\nabla g_2\in\Ell^\infty(\mms)$, whence $\langle\nabla f,\nabla g_2\rangle\in \Ell^2(\mms)$, and since $h\,\nabla g_1\in \Dom_{\TV}(\DIV)\cap\Dom(\div)$ with $\div(h\,\nabla g_1)  \in \Ell^2(\mms)$ by \autoref{Le:Div g nabla f}. An analogous argument applies for the second volume integral. The last one is well-defined since $\langle \nabla g_1,\nabla g_2\rangle \in \F$ thanks to \autoref{Pr:Bakry Emery measures} and polarization, and since $h\in\Ell^\infty(\mms)$.

These observations lead to the following definition. 

\begin{definition}\label{Def:Hess}
	We define the space $\Dom(\Hess)$ to consist of all $f\in \F$ for which there exists $A\in\Ell^2((T^*)^{\otimes 2}\mms)$ such that for every $g_1,g_2, h\in \Test(\mms)$,
	\begin{align*}
		&2\int_{\mms} h\,A(\nabla g_1,\nabla g_2)\d\meas\\
		&\qquad\qquad = -\int_{\mms} \langle \nabla f,\nabla g_1\rangle\div(h\,\nabla g_2) \d\meas - \int_{\mms} \langle\nabla f,\nabla g_2\rangle\div(h\,\nabla g_1)\d\meas\\
		&\qquad\qquad\qquad\qquad -\int_{\mms}h\,\big\langle \nabla f,\nabla\langle \nabla g_1,\nabla g_2\rangle\big\rangle\d\meas.
	\end{align*}
	If such an $A$ exists, it is unique, denoted by $\Hess f$ and termed the \emph{Hessian} of $f$.
\end{definition}

Indeed, given any $f\in \Dom(\Hess)$ there is at most one $A$ as in \autoref{Def:Hess} by density of $\Test(T^{\otimes 2}\mms)$ in $\Ell^2(T^{\otimes 2}\mms)$, since $\Test(\mms)$ is an algebra. In particular, $\Dom(\Hess)$ is a vector space and $\Hess$ is a linear operator on it. Further elementary properties are collected in \autoref{Th:Hess properties}.

The space $\Dom(\Hess)$ is endowed with the norm $\Vert \cdot\Vert_{\Dom(\Hess)}$ given by
\begin{align*}
\big\Vert f\big\Vert_{\Dom(\Hess)}^2 := \big\Vert f\big\Vert_{\Ell^2(\mms)}^2 + \big\Vert \rmd f\big\Vert_{\Ell^2(T^*\mms)}^2 + \big\Vert \!\Hess f\big\Vert_{\Ell^2((T^*)^{\otimes 2}\mms)}^2.
\end{align*}
Furthermore we define the energy functional $\E_2\colon \F\to [0,\infty]$ by
\begin{align*}
		\E_2(f) := \begin{cases}\displaystyle \int_{\mms} \big\vert\!\Hess f\big\vert_\HS^2\d\meas & \text{if } f\in \Dom(\Hess),\\
		\infty & \text{otherwise}.
	\end{cases}
\end{align*}

\begin{theorem}\label{Th:Hess properties} The space $\Dom(\Hess)$, the Hessian $\Hess$ and the functional $\Ch_2$ have the following properties.
	\begin{enumerate}[label=\textnormal{\textcolor{black}{(}\roman*\textcolor{black}{)}}]
		\item\label{La:H1} $\Dom(\Hess)$ is a separable Hilbert space w.r.t.~$\smash{\Vert\cdot\Vert_{\Dom(\Hess)}}$.
		\item\label{La:H2} The Hessian is a closed operator on $\Dom(\Hess)$, i.e.~the image of the map $\Id\times \Hess\colon \Dom(\Hess)\to \F\times\Ell^2((T^*)^{\otimes 2}\mms)$ is a closed subspace of $\F\times\Ell^2((T^*)^{\otimes 2}\mms)$.
		\item\label{La:H3} For every $f\in \Dom(\Hess)$, the tensor $\Hess f$ is symmetric, i.e.
\begin{align*}
\Hess f = (\Hess f)^\top
\end{align*}		
		according to the definition of the transpose from  \eqref{Eq:Transpose}.
		\item\label{La:H4} $\E_2$ is $\F$-lower semicontinuous, and for every $f\in \F$, 
		\begin{align*}
			\E_2(f) &= \sup\!\Big\lbrace\! -\! \sum_{l=1}^r \int_{\mms} \langle \nabla f,\nabla g_l\rangle\div(h_l\,h'_l\,\nabla g'_l) \d\meas\\
			&\qquad\qquad - \sum_{l=1}^r \int_{\mms} \langle \nabla f,\nabla g_l'\rangle\div(h_l\,h'_l\,\nabla g_l) \d\meas\\
			&\qquad\qquad - \sum_{l=1}^r \int_{\mms} h_l\,h_l'\,\big\langle \nabla f,\nabla\langle\nabla g_l,\nabla g_l'\rangle\big\rangle\d\meas\\
			&\qquad\qquad - \int_{\mms} \Big\vert\! \sum_{l=1}^r h_l\,h'_l\,\nabla g_l\otimes \nabla g'_l\Big\vert^2\d\meas :\\
			&\qquad\qquad\qquad\qquad r\in \N,\ g_l, g'_l, h_l, h_l'\in\Test(\mms)\Big\rbrace.
		\end{align*}
	\end{enumerate}
\end{theorem}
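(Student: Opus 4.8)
\textbf{Proof strategy for \autoref{Th:Hess properties}.}

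The plan is to treat the four items essentially independently, but to front-load the variational formula in \ref{La:H4} since items \ref{La:H1}, \ref{La:H2} and \ref{La:H4} are all closely intertwined. First I would prove the duality formula for $\E_2$. The inequality ``$\leq$'' is the easy direction: for $f\in\Dom(\Hess)$, take a pointwise orthonormal basis argument combined with the duality formula \eqref{Eq:Duality formula symm part I} for symmetric parts of tensors. Applying it to $A := \Hess f$ and using that, after \ref{La:H3}, $\Hess f$ is symmetric, one writes $\vert\Hess f\vert_\HS^2$ as an $\meas$-essential supremum of expressions $2\,\Hess f : \sum_l h_l h_l'\,\nabla g_l\otimes\nabla g_l' - \big\vert\sum_l h_l h_l'\,\nabla g_l\otimes\nabla g_l'\big\vert^2$; then the defining property in \autoref{Def:Hess}, applied termwise with test function $h_l h_l'$, rewrites $\int_\mms 2\,\Hess f : (h_l h_l'\,\nabla g_l\otimes\nabla g_l')\d\meas$ precisely as the first three sums on the right-hand side. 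One must verify that the $\meas$-essential supremum inside the integral can be exchanged with the integral, which is a standard monotone-class / lattice argument (the family of competitors is stable under gluing via \eqref{Eq:ptw norm mu}-type locality, so one reduces to a countable sup and uses monotone convergence). For the reverse inequality ``$\geq$'', one shows that if the right-hand supremum is finite, then the functional $B\mapsto (\text{linear part in }f)$ extends to a bounded linear functional on the closed span of $\{\sum_l h_l h_l'\,\nabla g_l\otimes\nabla g_l'\}$ inside $\Ell^2(T^{\otimes 2}\mms)$ — this is where the algebra property of $\Test(\mms)$ and density of $\Test(T^{\otimes 2}\mms)$ enter — and Riesz representation produces the required $A = \Hess f$; symmetry of $A$ is automatic since only symmetric competitors appear, or is imposed a posteriori.

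For \ref{La:H3}, symmetry of $\Hess f$, I would argue directly from \autoref{Def:Hess}: the right-hand side of the defining identity is manifestly symmetric under exchange of $g_1$ and $g_2$, hence $\int_\mms h\,[\Hess f - (\Hess f)^\top](\nabla g_1,\nabla g_2)\d\meas = 0$ for all $g_1, g_2, h\in\Test(\mms)$, and density of $\Test(T^{\otimes 2}\mms)$ in $\Ell^2(T^{\otimes 2}\mms)$ together with $\Test(\mms)$ being an algebra forces $\Hess f = (\Hess f)^\top$. This step is short and should come before \ref{La:H4} in the logical order, as I used it above.

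For \ref{La:H2} (closedness) and then \ref{La:H1}: closedness is a consequence of the $\F$-lower semicontinuity asserted in \ref{La:H4}, or can be shown directly. Given $(f_n)\subset\Dom(\Hess)$ with $f_n\to f$ in $\F$ and $\Hess f_n\to A$ in $\Ell^2((T^*)^{\otimes 2}\mms)$, one simply passes to the limit in the defining identity of \autoref{Def:Hess}: the left-hand side converges because $\Hess f_n\to A$ and $h\,\nabla g_1\otimes\nabla g_2\in\Ell^2(T^{\otimes 2}\mms)$ is bounded; each of the three terms on the right converges because $\langle\nabla f_n,\nabla g_i\rangle\to\langle\nabla f,\nabla g_i\rangle$ in $\Ell^2(\mms)$ (as $\nabla g_i\in\Ell^\infty(T\mms)$), because $\div(h\,\nabla g_j)\in\Ell^2(\mms)$ by \autoref{Le:Div g nabla f}, and because $\nabla\langle\nabla g_1,\nabla g_2\rangle\in\Ell^2(T\mms)$ by \autoref{Pr:Bakry Emery measures}. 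Hence $f\in\Dom(\Hess)$ with $\Hess f = A$, i.e.~$\Id\times\Hess$ has closed image. Then \ref{La:H1} follows formally: $\Vert\cdot\Vert_{\Dom(\Hess)}$ is by construction the graph norm of a closed operator between Hilbert spaces (using that $\F$ is Hilbert by \autoref{Le:Props q.r. s.l. Dirichlet form} and $\Ell^2((T^*)^{\otimes 2}\mms)$ is a Hilbert module hence a Hilbert space), so $\Dom(\Hess)$ is complete; it inherits the parallelogram law from $\F\times\Ell^2((T^*)^{\otimes 2}\mms)$, so it is Hilbert; separability follows since it embeds isometrically via $\Id\times\rmd\times\Hess$ into the separable space $\F\times\Ell^2(T^*\mms)\times\Ell^2((T^*)^{\otimes 2}\mms)$ (separability of the latter two from \autoref{Pr:Generators cotangent module} and \autoref{Sub:Tensor products}). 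Finally, $\F$-lower semicontinuity of $\E_2$ in \ref{La:H4} is immediate from the proven duality formula, because $\E_2$ is there exhibited as a supremum of functionals each of which is $\F$-continuous (again using $\langle\nabla f,\nabla g_l\rangle$ depends continuously on $f\in\F$, and the quartic term does not involve $f$), and a supremum of continuous functionals is lower semicontinuous.

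\textbf{Main obstacle.} The delicate point is the exchange of $\meas$-essential supremum and integral in the ``$\leq$'' direction of \ref{La:H4}, i.e.~upgrading the pointwise duality formula \eqref{Eq:Duality formula symm part I} to an integrated one while keeping the competitors in the restricted class of finite sums $\sum_l h_l h_l'\,\nabla g_l\otimes\nabla g_l'$ with \emph{test} functions rather than arbitrary bounded module elements. One must check that this class is rich enough: it generates $\Ell^2(T^{\otimes 2}\mms)_{\sym}$ in the sense of \autoref{Sub:Local dimension} (which follows since $\nabla\Test(\mms)$ generates $\Ell^2(T\mms)$ by \autoref{Th:Module structure}/\autoref{Pr:Generators cotangent module} and multiplication by $\Test(\mms)$-functions suffices to localize, using \autoref{Le:Mollified heat flow}), and it is stable under the gluing operations needed to realize the countable-sup reduction underlying \cite[Prop.~1.4.9]{gigli2018}. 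Everything else is bookkeeping with the calculus rules from \autoref{Th:Properties energy measure}, \autoref{Le:Div g nabla f}, and \autoref{Pr:Bakry Emery measures}.
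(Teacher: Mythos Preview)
Your treatment of \ref{La:H1}, \ref{La:H2}, \ref{La:H3} is correct and essentially identical to the paper's. For \ref{La:H4}, the Riesz--representation half you describe (assume the supremum is finite, build a bounded linear functional on $\Test(T^{\otimes 2}\mms)$, extend, represent) is exactly the paper's argument, including the use of \autoref{Le:Product convergence} to pass from products $h_l h_l'$ back to single test functions. Note, however, that this argument yields $\Ch_2(f)\le C$, not ``$\ge$'' as you label it.

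The substantive divergence is in the remaining direction. You propose to invoke the pointwise formula \eqref{Eq:Duality formula symm part I} and then interchange $\meas$-essential supremum with the integral, and you correctly flag this interchange as the main obstacle: the class $\{\sum_l h_l h_l'\,\nabla g_l\otimes\nabla g_l'\}$ with $h_l,h_l',g_l,g_l'\in\Test(\mms)$ is not stable under multiplication by indicators, so the lattice/gluing machinery behind \cite[Prop.~1.4.9]{gigli2018} does not apply directly; moreover \eqref{Eq:Duality formula symm part I} uses diagonal competitors $\sum v_j\otimes v_j$, not the off-diagonal ones in the theorem, so a further reduction is needed. The paper bypasses all of this: it uses the \emph{global} Hilbert-space identity $\|\Hess f\|_{\Ell^2}^2=\sup_{T}\{2\int\Hess f:T\,\rmd\meas-\|T\|_{\Ell^2}^2\}$ over $T\in\Ell^2(T^{\otimes 2}\mms)$, and then simply restricts the supremum to the dense subset $\Test(T^{\otimes 2}\mms)$. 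No pointwise formula and no interchange are needed. Your ``main obstacle'' is thus an artifact of the route chosen, not of the theorem. For lower semicontinuity, your deduction from the duality formula (sup of $\F$-continuous functionals) is a valid alternative to the paper's argument via weak compactness, Mazur's lemma and \ref{La:H2}.
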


\begin{proof} Item \ref{La:H2} follows since the r.h.s.~of the defining property of the Hessian in \autoref{Def:Hess} is continuous in $f$ and $A$ w.r.t.~weak convergence in $\F$ and $\Ell^2((T^*)^{\otimes 2}\mms)$, respectively, for fixed $g_1,g_2,h\in\Test(\mms)$.

The Hilbert space property of $\smash{\Dom(\Hess)}$ in \ref{La:H1} is a direct consequence of the completeness of $\F$, \ref{La:H2} and since $\smash{\Vert\cdot\Vert_{\Dom(\Hess)}}$ trivially satisfies the parallelogram identity. Hence, we are left with the separability of $\Dom(\Hess)$. Since $\F$ and $\Ell^2((T^*)^{\otimes 2}\mms)$ are separable by \autoref{Le:Props q.r. s.l. Dirichlet form},  \autoref{Pr:Generators cotangent module} and the discussion from \autoref{Sub:Tensor products}, their Cartesian product is a separable Hilbert space w.r.t.~the norm $\Vert\cdot\Vert$, where
\begin{align*}
\Vert (f,A) \Vert^2 := \big\Vert f\big\Vert_{\F}^2 + \big\Vert A\big\Vert_{\Ell^2((T^*)^{\otimes 2}\mms)}^2.
\end{align*}
In particular, $\smash{\Id\times\Hess\colon \Dom(\Hess)\to \F\times\Ell^2((T^*)^{\otimes 2}\mms)}$ is a bijective isometry onto its image, whence the claim follows from \ref{La:H2}.

Concerning \ref{La:H3}, setting $h := h_1\,h_2$ with $h_1,h_2\in\Test(\mms)$, we easily see that for every $g_1,g_2\in\Test(\mms)$ the r.h.s.~of the defining property of $\Hess f$, $f\in \Dom(\Hess)$, in \autoref{Def:Hess} is symmetric in $h_1$ and $h_2$ as well as $g_1$ and $g_2$, respectively, and it is furthermore bilinear in $h_1\,\nabla g_1$ and $h_2\,\nabla g_2$. Hence, using \eqref{Eq:Transpose}  we deduce the symmetry of $\Hess f$, $f\in \Dom(\Hess)$, on $\Test(T^{\otimes 2}\mms)$ and hence on all of $\Ell^2(T^{\otimes 2}\mms)$ by a density argument.

The $\F$-lower semicontinuity of $\Ch_2$ in \ref{La:H4}  directly follows since bounded subsets of the Hilbert space $\smash{\Ell^2((T^*)^{\otimes 2}\mms)}$ are weakly relatively compact, combined with Mazur's lemma and \ref{La:H2}.

We finally turn to the duality formula in \ref{La:H4}. 

Let us first prove the inequality ``$\geq$'', for which we assume without restriction that $f\in \Dom(\Hess)$. By duality of $\Dom(\Hess)$ and its Hilbert space dual $\Dom(\Hess)'$ as well as the density of $\Test(T^{\otimes 2}\mms)$ in $\Ell^2(T^{\otimes 2}\mms)$,
\begin{align*}
\Ch_2(f) &= \sup\!\Big\lbrace 2\sum_{i=1}^n\int_\mms \Hess f(X_i,X_i')\d\meas\\
&\qquad\qquad - \int_\mms \Big\vert\! \sum_{i=1}^n X_i\otimes X_i'\Big\vert^2\d\meas : n\in\N,\ X_i,X_i'\in\Test(T\mms)\Big\rbrace.
\end{align*}
Let us write $\smash{X_i \otimes X_i' := h_{i1}\,h_{i1}'\,\nabla g_{i1}\otimes \nabla g_{i1}' + \dots + h_{im}\,h_{im}'\,\nabla g_{im}\otimes g_{im}'}$ for certain elements $g_{ij}, g_{ij}', h_{ij}, h_{ij}'\in\Test(\mms)$, $i\in \{1,\dots,n\}$ and $j\in \{1,\dots,m\}$ with $n,m\in\N$. Then by \autoref{Def:Hess} and since $\Test(\mms)$ is an algebra,
\begin{align*}
&2\sum_{i=1}^n\int_\mms \Hess f(X_i,X_i')\d\meas\\
&\qquad\qquad = 2\sum_{i=1}^n\sum_{j=1}^m \int_\mms h_{ij}\,h_{ij}' \Hess f(\nabla g_{ij},\nabla g_{ij}')\d\meas\\
&\qquad\qquad = - \sum_{i=1}^n\sum_{j=1}^m\int_\mms \langle \nabla f,\nabla g_{ij}\rangle\div(h_{ij}\,h_{ij}'\,\nabla g_{ij}')\d\meas\\
&\qquad\qquad\qquad\qquad - \sum_{i=1}^n\sum_{j=1}^m\int_\mms \langle\nabla f,\nabla g_{ij}'\rangle\div(h_{ij}\,h_{ij}'\,\nabla g_{ij})\d\meas\\
&\qquad\qquad\qquad\qquad - \sum_{i=1}^n\sum_{j=1}^m \int_\mms h_{ij}\,h_{ij}'\,\big\langle \nabla f,\nabla\langle\nabla g_{ij},\nabla g_{ij}'\rangle\big\rangle\d\meas,
\end{align*}
which terminates the proof of ``$\geq$''. 

Turning to ``$\leq$'' in \ref{La:H4}, we may and will assume without loss of generality that the supremum, henceforth denoted by $C$, on the r.h.s.~of the claimed formula is finite. Consider the operator $\Phi\colon\Test(T^{\otimes 2}\mms)\to \R$ given by
\begin{align}\label{Eq:Blurr}
\begin{split}
&2\,\Phi\sum_{i=1}^n\sum_{j=1}^m h_{ij}\,h_{ij}'\,\nabla g_{ij}\otimes \nabla g_{ij}'\\
&\qquad\qquad := -\sum_{i=1}^n\sum_{j=1}^m \int_\mms \langle\nabla f,\nabla g_{ij}\rangle\div(h_{ij}\,h_{ij}'\,\nabla g_{ij}')\d\meas\\
&\qquad\qquad\qquad\qquad - \sum_{i=1}^n \sum_{j=1}^m \int_\mms \langle\nabla f,\nabla g_{ij}'\rangle\div(h_{ij}\,h_{ij}'\,\nabla g_{ij})\d\meas\\
&\qquad\qquad\qquad\qquad - \sum_{i=1}^n\sum_{j=1}^m\int_\mms h_{ij}\,h_{ij}'\,\big\langle\nabla f,\nabla\langle\nabla g_{ij},\nabla g_{ij}'\rangle\big\rangle\d\meas.
\end{split}
\end{align}
The value of $\Phi(T)$ is independent of the particular way of writing $T\in\Test(T^{\otimes 2}\mms)$. Indeed, if $T= 0$ but $\Phi(T)\neq 0$, letting $\lambda \to \infty$ in the identity $\Phi(\lambda\,T)\,\sgn\Phi(T) = \Phi(T)\,\lambda\,\sgn\Phi(T)$ implied by \eqref{Eq:Blurr} would contradict the assumption that $C < \infty$. The map $\Phi$ is thus well-defined, it is  linear, and for every $T\in\Test(T^{\otimes 2}\mms)$,
\begin{align*}
2\,\Phi(T) \leq C + \big\Vert T\big\Vert_{\Ell^2(T^{\otimes 2}\mms)}^2.
\end{align*}
Replacing $T$ by $\lambda\, T$ and optimizing over $\lambda\in\R$ gives
\begin{align}\label{Eq:Phi T norm}
\vert \Phi(T)\vert \leq \sqrt{C}\,\Vert T\Vert_{\Ell^2(T^{\otimes 2}\mms)}
\end{align}
for every $T\in\Test(T^{\otimes 2}\mms)$. Hence, $\Phi$ uniquely induces a (non-relabeled) element of the Hilbert space dual $\Ell^2(T^{\otimes 2}\mms)'$ of $\Ell^2(T^{\otimes 2}\mms)$. By  \autoref{Th:Riesz theorem modules}, we find a unique element $\smash{A'\in \Ell^2((T^*)^{\otimes 2}\mms)}$ such that
\begin{align*}
\Phi(T) = \int_\mms A'(T)\d\meas
\end{align*}
for every $T\in \Ell^2(T^{\otimes 2}\mms)$. Now \autoref{Le:Product convergence}, \autoref{Le:Div g nabla f} as well as the continuity of $\Phi$ allow us to replace the terms $\smash{h_{ij}\,h_{ij}'}$ by arbitrary elements $\smash{k_{ij}\in\Test(\mms)}$, still retaining the identity \eqref{Eq:Blurr} with $\smash{k_{ij}}$ in place of $\smash{h_{ij}\,h_{ij}'}$, $i\in\{1,\dots,n\}$ and $j\in \{1,\dots,m\}$. In particular, by  \autoref{Def:Hess}, we deduce that $f\in \Dom(\Hess)$ and $A' = \Hess f$. By \autoref{Th:Riesz theorem modules} again and \eqref{Eq:Phi T norm}, we obtain
\begin{align*}
\Vert \!\Hess f\Vert_{\Ell^2((T^*)^{\otimes 2}\mms)} = \Vert \Phi\Vert_{\Ell^2(T^{\otimes 2}\mms)'} \leq \sqrt{C},
\end{align*}
which is precisely what was left to prove.
\end{proof}

\begin{remark} If $\Ch_2$ is extended to $\Ell^2(\mms)$ by $\Ch_2(f) := \infty$ for $f\in \Ell^2(\mms)\setminus \F$, it is unclear if the resulting functional is $\Ell^2$-lower semicontinuous. To bypass this issue in applications, one might instead use that by \autoref{Th:Hess properties}, the functional $\smash{\Ch_2^\varepsilon\colon \Ell^2(\mms)\to [0,\infty]}$ given by
\begin{align*}
\Ch_2^\varepsilon(f) := \begin{cases}
\displaystyle\varepsilon\int_\mms\vert\nabla f\vert^2\d\meas + \int_\mms\big\vert\!\Hess f\big\vert_\HS^2\d\meas & \text{if }f\in \Dom(\Hess),\\
\infty & \text{otherwise}
\end{cases}
\end{align*} 
is $\Ell^2$-lower semicontinuous for every $\varepsilon > 0$. 

If $\mms$ is, say, a compact Riemannian manifold without boundary, one can easily prove using the Bochner identity  that the (nonpositive)  generator associated with $\Ch_2^\varepsilon$ \cite[Thm.~1.3.1]{fukushima2011} is the Paneitz-type operator
\begin{align*}
-\Delta ^2 +\varepsilon\,\Delta f + \div(\Ric^\flat\,\nabla\cdot).
\end{align*}
\end{remark}

\begin{remark}\label{Re:Hess tr Delta} In general, the Hessian is not the trace of the Laplacian in the sense of \eqref{Eq:Pointwise trace}. This already happens on weighted Riemannian manifolds without boundary: of course, the associated Laplacian $\Delta$ is defined by partial integration w.r.t.~the reference measure \cite[Sec.~3.6]{grigoryan2009}, while the definition of Hessian only depends on the metric tensor. See also the second part of \autoref{Ex:Mflds with boundary}. Examples of abstract spaces for which this \emph{is} the case --- and which currently enjoy high research interest \cite{brue2020, dephilippis2018, honda2020} --- are \emph{noncollapsed} $\RCD(K,N)$ spaces, $K\in\R$ and $N\in [1,\infty)$ \cite[Thm.~1.12]{dephilippis2018}. See also \autoref{Re:Dim RCD}  below.
\end{remark}

\begin{remark}\label{Re:Conf trafos}
In line with \autoref{Re:Hess tr Delta}, although a priori $\meas$ plays a role in Definition \ref{Def:Hess}, we expect the Hessian to only depend on conformal transformations of $\langle\cdot,\cdot\rangle$, but not on drift transformations of $\meas$. For instance, this is known on $\RCD^*(K,N)$ spaces, $K\in\R$ and $N\in [1,\infty)$, see e.g.~\cite[Prop.~3.11]{han2019} or \cite[Lem.~2.16]{hansturm2019}, and it does not seem hard to adapt the arguments from \cite{han2019} to more general settings.
\end{remark}

\begin{remark} As an alternative to \autoref{Def:Hess}, one can define $\Dom(\Hess)$ as the finiteness domain of the r.h.s.~of the duality formula in \ref{La:H4} in \autoref{Th:Hess}. The Hessian of $f\in \Dom(\Hess)$ is then well-defined by the same duality arguments as in the proof of \autoref{Th:Hess}.
\end{remark}

\subsection{Existence of many functions in $\Dom(\Hess)$} Up to now, we still do not know whether $\Dom(\Hess)$ is nontrivial. The ultimate goal of this section is to prove that $\Test(\mms)\subset \Dom(\Hess)$ in \autoref{Th:Hess}, whence $\Dom(\Hess)$ is even dense in $\Ell^2(\mms)$.

The strategy is reliant on \cite[Subsec.~3.3.2]{gigli2018}, which has itself been inspired by the ``self-improvement'' works \cite{bakry1985a,bakry1985b}, see \cite[Rem.~3.3.10]{gigli2018} and also \cite{erbar2020, savare2014, sturm2018}. The key technical part (not only for \autoref{Th:Hess}, but in fact  for \autoref{Th:Ricci measure} below as well) is contained in \autoref{Le:Extremely key lemma}, where --- loosely speaking and up to introducing the relevant objects later --- we show that
\begin{align*}
\vert\nabla X : T\vert^2 \leq \Big[\Delta^{2\kappa} \frac{\vert X\vert^2}{2} + \big\langle X,(\Hodge X^\flat)^\sharp\big\rangle - \big\vert(\nabla X)_\asym\big\vert_\HS^2 \Big]\,\big\vert T\big\vert_\HS^2\quad\meas\text{-a.e.}
\end{align*}
for  $X,T\in\Test(T\mms)$. Of course, neither we introduced the covariant derivative $\nabla$, \autoref{Def:Cov der} or the Hodge Laplacian $\Hodge$, \autoref{Def:Hodge Lapl}, yet,  nor in general we have $\vert X\vert^2\in\Dom(\Delta^{2\kappa})$ for  $X\in\Test(T\mms)$. Reminiscent of \autoref{Pr:Bakry Emery measures} and \cite[Cor.~6.3]{erbar2020}, we instead rephrase the above inequality in terms of measures, and the involved objects  $\nabla X$ and $\smash{\Hodge X^\flat}$ therein as the ``r.h.s.'s of the identities one would expect for $\nabla X$ and $\smash{\Hodge X^\flat}$ for $X\in\Test(\mms)$'', rigorously proven in \autoref{Th:Properties W12 TM} and \autoref{Le:Hodge test} below. In particular, by optimization over $T\in\Test(T\mms)$,
\begin{align*}
\big\vert (\nabla X)_\sym\big\vert_\HS^2 \leq \Delta^{2\kappa}\frac{\vert X\vert^2}{2} + \big\langle X, (\Hodge X^\flat)^\sharp\big\rangle - \big\vert(\nabla X)_\asym\big\vert_\HS^2\quad\meas\text{-a.e.},
\end{align*}
which is the Bochner inequality for vector fields according to \eqref{Eq:sym plus asym}. For $X := \nabla f$, $f\in\Test(\mms)$, this essentially provides \autoref{Th:Hess}. Details about this inequality for general $X\in\Reg(T\mms)$, leading to \autoref{Th:Ricci measure}, are due to  \autoref{Le:Pre.Bochner}.

We start with a technical preparation. Given $\smash{\mu,\nu\in \Meas_\fin^+(\mms)}$, we define the Borel measure  $\smash{\sqrt{\mu\,\nu}\in \Meas_\fin^+(\mms)}$ as follows. Let $\smash{\iota\in \Meas_\fin^+(\mms)}$ with $\mu\ll\iota$ and $\nu\ll\iota$ be arbitrary, denote the respective densities w.r.t.~$\iota$ by $\smash{f,g\in \Ell^1(\mms,\iota)}$, and set
\begin{align*}
	\sqrt{\mu\,\nu} := \sqrt{f\,g}\,\iota.
\end{align*}
For instance, one can choose $\iota:=\vert \mu\vert + \vert\nu\vert$ \cite[Thm.~30.A]{halmos1950} --- in fact, the previous definition is independent of the choice of $\iota$, whence $\sqrt{\mu\,\nu}$ is well-defined.

The following important measure theoretic lemma is due to \cite[Lem.~3.3.6]{gigli2018}.

\begin{lemma}\label{Le:Measure lemma} Let $\mu_1,\mu_2,\mu_3\in\Meas_\fin^\pm(\mms)$ satisfy the inequality
\begin{align*}
\lambda^2\,\mu_1 + 2\lambda\,\mu_2 + \mu_3 \geq 0
\end{align*}	
for every $\lambda\in\R$. Then the following properties hold. 
\begin{enumerate}[label=\textnormal{\textcolor{black}{(}\roman*\textcolor{black}{)}}]
	\item\label{La:Le1} The elements $\mu_1$ and $\mu_3$ are nonnegative, and
	\begin{align*}
		\vert\mu_2\vert\leq \sqrt{\mu_1\,\mu_3}.
	\end{align*}
	\item\label{La:Le2} We have $\mu_2\ll\mu_1$, $\mu_2\ll\mu_3$ and
	\begin{align*}
		\Vert\mu_2\Vert_\TV \leq \sqrt{\Vert \mu_1\Vert_\TV\,\Vert \mu_3\Vert_\TV}.
	\end{align*}
	\item\label{La:Le3} The $\meas$-singular parts $(\mu_1)_\perp$ and $(\mu_3)_\perp$ of $\mu_1$ and $\mu_3$ are nonnegative. Moreover, expressing the densities of the $\meas$-absolutely continuous parts of $\mu_i$ by $\rho_i := \rmd(\mu_i)_\ll/\rmd\meas \in \Ell^1(\mms)$, $i\in\{1,2,3\}$, we have
	\begin{align*}
		\vert\rho_2\vert^2\leq \rho_1\,\rho_3\quad\meas\text{-a.e.}
	\end{align*}
\end{enumerate}
\end{lemma}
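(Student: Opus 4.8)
\textbf{Proof plan for \autoref{Le:Measure lemma}.}

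The plan is to reduce everything to a single scalar-valued pointwise inequality by choosing a suitable common dominating measure. First I would pick $\iota := \vert\mu_1\vert + \vert\mu_2\vert + \vert\mu_3\vert \in \Meas_\fin^+(\mms)$, so that $\mu_i \ll \iota$ for each $i\in\{1,2,3\}$, and write $\mu_i = f_i\,\iota$ with densities $f_i \in \Ell^1(\mms,\iota)$ (not necessarily nonnegative). The hypothesis $\lambda^2\,\mu_1 + 2\lambda\,\mu_2 + \mu_3 \geq 0$ for every $\lambda\in\R$ then says that for every fixed $\lambda\in\bbQ$ (countably many constraints suffice by continuity in $\lambda$) the density $\lambda^2 f_1 + 2\lambda f_2 + f_3$ is nonnegative $\iota$-a.e.; intersecting the countably many full-measure sets, we get a single Borel set of full $\iota$-measure on which $\lambda^2 f_1(x) + 2\lambda f_2(x) + f_3(x) \geq 0$ for all $\lambda\in\bbQ$, hence for all $\lambda\in\R$. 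By the elementary discriminant analysis of a nonnegative real quadratic, this forces $f_1(x)\geq 0$, $f_3(x)\geq 0$, and $f_2(x)^2 \leq f_1(x)\,f_3(x)$ at $\iota$-a.e.~$x$.

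From this pointwise trichotomy the three assertions follow by routine measure theory. For \ref{La:Le1}: $f_1,f_3\geq 0$ $\iota$-a.e.~gives $\mu_1,\mu_3\geq 0$, and $\vert f_2\vert \leq \sqrt{f_1 f_3}$ $\iota$-a.e.~is, by the very definition of $\sqrt{\mu_1\,\mu_3}$ (using the dominating measure $\iota$, with well-definedness guaranteed by the remark preceding the lemma), exactly the statement $\vert\mu_2\vert \leq \sqrt{\mu_1\,\mu_3}$. For \ref{La:Le2}: absolute continuity $\mu_2 \ll \mu_1$ follows since $\{f_1 = 0\} \subset \{f_2 = 0\}$ up to $\iota$-null sets (and symmetrically $\mu_2 \ll \mu_3$); the total variation bound comes from $\Vert\mu_2\Vert_\TV = \int_\mms \vert f_2\vert\d\iota \leq \int_\mms \sqrt{f_1 f_3}\d\iota \leq \big(\int_\mms f_1\d\iota\big)^{1/2}\big(\int_\mms f_3\d\iota\big)^{1/2} = \sqrt{\Vert\mu_1\Vert_\TV\,\Vert\mu_3\Vert_\TV}$ by Cauchy--Schwarz, using $f_1,f_3\geq 0$ $\iota$-a.e. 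For \ref{La:Le3}: decompose $\iota = \iota_\ll + \iota_\perp$ w.r.t.~$\meas$; then $(\mu_i)_\ll = f_i\,\iota_\ll$ and $(\mu_i)_\perp = f_i\,\iota_\perp$, so $(\mu_1)_\perp, (\mu_3)_\perp \geq 0$ is immediate, and writing $\iota_\ll = \varrho\,\meas$ with $\varrho \geq 0$ gives $\rho_i = f_i\,\varrho$ $\meas$-a.e., whence $\vert\rho_2\vert^2 = f_2^2\,\varrho^2 \leq f_1 f_3\,\varrho^2 = \rho_1\,\rho_3$ $\meas$-a.e.

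The only genuinely delicate point is the passage from the measure inequality to the pointwise inequality on densities for \emph{all} real $\lambda$ simultaneously: one must be careful to first extract the countably many $\iota$-a.e.~inequalities indexed by rational $\lambda$, intersect the corresponding conull sets, and only then let $\lambda$ range over $\bbR$ pointwise. Everything else is bookkeeping with Lebesgue decompositions and Cauchy--Schwarz; since this is precisely \cite[Lem.~3.3.6]{gigli2018}, I would present it compactly and refer there for the routine verifications.
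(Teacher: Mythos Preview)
Your proof is correct and complete; the reduction to a single dominating measure $\iota$, the passage to a pointwise quadratic inequality via countably many rational $\lambda$, and the subsequent discriminant argument are exactly the standard route. The paper itself does not prove this lemma but simply cites \cite[Lem.~3.3.6]{gigli2018}, so your write-up is in fact more detailed than what appears here, and matches the argument one finds in that reference.
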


In the subsequent lemma, all terms where $N'$ is infinite are interpreted as being zero. Similar proofs can be found in \cite{braun2020, gigli2018, han2018}.

\begin{lemma}\label{Le:Extremely key lemma} Let $N'\in [N,\infty]$, $n,m\in\N$, $f,g \in\Test(\mms)^n$ and $h\in\Test(\mms)^m$. Define $\mu_1[f,g] \in\Meas_\fin^\pm(\mms)$ as
	\begin{align*}
	\mu_1[f,g] &:= \sum_{i,i'=1}^n \widetilde{g}_i\,\widetilde{g}_{i'} \,\bdGamma_2^{2\kappa}(f_i, f_{i'}) +2 \sum_{i,i'=1}^n g_i\,\rmH[f_i](f_{i'},g_{i'})\,\meas\\
	&\qquad\qquad +\frac{1}{2} \sum_{i,i'=1}^n \big[\langle\nabla f_i,\nabla f_{i'}\rangle\,\langle\nabla g_i,\nabla g_{i'}\rangle + \langle\nabla f_i,\nabla g_{i'}\rangle\,\langle \nabla g_i,\nabla f_{i'}\rangle\big]\,\meas\\
	&\qquad \qquad - \frac{1}{N'}\,\Big[\! \sum_{i=1}^n \big[g_i\,\Delta f_i + \langle \nabla f_i,\nabla g_i\rangle\big]\Big]^2\,\meas
	\end{align*}
	As in \autoref{Le:Measure lemma}, we denote the density of the $\meas$-absolutely continuous part of $\mu_1[f,g]$ by $\smash{\rho_1[f,g] := \rmd\mu_1[f,g]_\ll/\rmd \meas\in \Ell^1(\mms)}$. Then the $\meas$-singular part $\smash{\mu_1[f,g]_\perp}$ of $\mu_1[f,g]$ as well as $\rho_1[f,g]$ are nonnegative, and
	\begin{align*}
	&\Big[\!\sum_{i=1}^n\sum_{j=1}^m \big[\langle\nabla f_i,\nabla h_j\rangle\,\langle\nabla g_i,\nabla h_j\rangle + g_i\,\rmH[f_i](h_j,h_j)\big]\\
	&\qquad\qquad\qquad\qquad - \frac{1}{N'} \sum_{i=1}^n\sum_{j=1}^m \big[g_i\,\Delta f_i + \langle \nabla f_i,\nabla g_i\rangle\big]\,\vert\nabla h_j\vert^2 \Big]^2\\
	&\qquad\qquad \leq \rho_1[f,g]\,\Big[\!\sum_{j,j'=1}^m \langle\nabla h_j,\nabla h_{j'}\rangle^2 - \frac{1}{N'}\Big[\! \sum_{j=1}^m \vert\nabla h_j\vert^2\Big]^2\Big]\quad\meas\text{-a.e.}
	\end{align*}
\end{lemma}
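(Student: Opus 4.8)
The statement is a pointwise (i.e. $\meas$-a.e.) Cauchy--Schwarz-type inequality that must be extracted from the self-improvement machinery behind $\BE_2(\kappa,N)$. The plan is to apply \autoref{Le:Measure lemma} to a suitable one-parameter family of measures built out of $\bdGamma_2^{2\kappa}$ evaluated on a \emph{polynomial} in the test functions. Concretely, for $\lambda\in\R$ consider the substitution of the single test function $q_\lambda := \sum_{i=1}^n g_i\,f_i + \lambda\sum_{j=1}^m h_j^2/2$ (or more precisely, work with the vector $q=(g_i,f_i,h_j)$ and the function $\varphi_\lambda$ on $\R^{2n+m}$ whose value is $\sum_i g_i f_i + (\lambda/2)\sum_j h_j^2$) into \autoref{Pr:Bakry Emery measures}. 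Since $\BE_2(\kappa,N)$ implies $\bdGamma_2^{2\kappa}(\varphi_\lambda\circ q)\geq \frac1N(\Delta(\varphi_\lambda\circ q))^2\,\meas$, and since \autoref{Le:Calculus rules} expands both sides as explicit polynomials in $\lambda$ with measure-valued coefficients, one obtains an inequality of the shape $\lambda^2\,\mu_1 + 2\lambda\,\mu_2 + \mu_3 \geq 0$ for all $\lambda$ (more accurately, a degree-two polynomial in $\lambda$ of finite-measure coefficients, after truncating to sets of finite measure as in \autoref{Re:No a priori fin tot var}, or by \autoref{Pr:Fin tot var} directly since all these are finite measures). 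The coefficient $\mu_1$ of $\lambda^2$ is exactly $\mu_1[f,g]$ as written (the $\frac1{N'}$ term comes from writing $\frac1{N'}$ in place of $\frac1N$, legitimate since $N'\geq N$ only weakens the inequality), and $\mu_2$ is exactly the cross term $\sum_{i,j}[\langle\nabla f_i,\nabla h_j\rangle\langle\nabla g_i,\nabla h_j\rangle + g_i\rmH[f_i](h_j,h_j)]\,\meas - \frac1{N'}\sum_{i,j}[g_i\Delta f_i + \langle\nabla f_i,\nabla g_i\rangle]\vert\nabla h_j\vert^2\,\meas$, while $\mu_3$ is the $\lambda^0$-coefficient built from the $h_j$'s alone, namely $\sum_{j,j'}\langle\nabla h_j,\nabla h_{j'}\rangle^2\,\meas - \frac1{N'}[\sum_j\vert\nabla h_j\vert^2]^2\,\meas$ (plus a $\bdGamma_2^{2\kappa}(h_j^2)$-type piece which, crucially, contributes only to higher-order-in-$\lambda$ bookkeeping — one must check it lands in the right slot; if it is genuinely part of $\mu_3$ one keeps it, but in the stated inequality $\mu_3$ has been taken to be only the displayed $\meas$-absolutely continuous expression, so the argument must be that the extra pieces are nonnegative or get absorbed, which I would verify by direct expansion via \autoref{Le:Calculus rules}).

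First I would carry out the algebra: expand $\RMA^{2\kappa}[\varphi_\lambda\circ q]$, $\rmB$, $\rmC$, $\rmD$ from \autoref{Le:Calculus rules} for $\varphi_\lambda$, collect powers of $\lambda$, and identify the three coefficient measures. The $\lambda^2$-coefficient is where the quadratic terms in $h$ appear: $\bdGamma_2^{2\kappa}$ applied to $\sum_j h_j^2/2$ gives the $\bdGamma_2^{2\kappa}(h_j,h_{j'})$-type and $\rmH[h_j]$-type contributions, but in the \emph{stated} $\mu_1[f,g]$ there is no $h$-dependence at all — so the correct reading is that we substitute $\varphi_\lambda = \sum_i g_i f_i + \lambda\,\psi$ where $\psi$ does \emph{not} depend on $\lambda$, and the $\lambda^2$ term picks up only $\RMA$-type pieces from $\sum_i g_i f_i$; this forces $\psi$ to be a \emph{linear} functional so that $\bdGamma_2^{2\kappa}$ is quadratic in it only through the $\langle\nabla h,\nabla h\rangle^2$ and $\rmH$ terms which sit in $\mu_3$. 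I would reorganize: take $\varphi = \sum_i g_i f_i$ and separately perturb, using the bilinearity of $\bdGamma_2^{2\kappa}$, by $t\sum_j h_j^2$, so that $\bdGamma_2^{2\kappa}(\varphi + t\sum_j h_j^2/2)$ is a quadratic polynomial in $t$: the $t^2$-coefficient involves only $h$, the $t^1$-coefficient is the cross term $\mu_2$, and the $t^0$-coefficient is $\mu_1[f,g]$ (the $\frac12[\langle\nabla f_i,\nabla f_{i'}\rangle\langle\nabla g_i,\nabla g_{i'}\rangle + \langle\nabla f_i,\nabla g_{i'}\rangle\langle\nabla g_i,\nabla f_{i'}\rangle]$ piece arises from $\rmC$, the $\rmH$ piece from $\rmB$, the $\bdGamma_2^{2\kappa}(f_i,f_{i'})$ piece from $\RMA$, and the $(\cdots)^2/N'$ from $\rmD$). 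With the roles thus correctly assigned, \autoref{Le:Measure lemma}\ref{La:Le3} applied to the triple $(t^2\text{-coeff},\ t^1\text{-coeff},\ t^0\text{-coeff}) = (\mu_3,\mu_2,\mu_1[f,g])$ (note the order is the reverse of the lemma's, so I invoke it with $\mu_1\leftrightarrow$ the $h$-only measure and $\mu_3\leftrightarrow\mu_1[f,g]$) gives simultaneously that $\mu_1[f,g]_\perp\geq 0$, that $\rho_1[f,g]\geq 0$, and the density inequality $\vert\rho_2\vert^2\leq \rho_3\cdot\rho_1[f,g]$ $\meas$-a.e. — which is precisely the claimed inequality once one reads off that $\rho_2$ equals $\sum_{i,j}[\langle\nabla f_i,\nabla h_j\rangle\langle\nabla g_i,\nabla h_j\rangle + g_i\rmH[f_i](h_j,h_j)] - \frac1{N'}\sum_{i,j}[g_i\Delta f_i + \langle\nabla f_i,\nabla g_i\rangle]\vert\nabla h_j\vert^2$ and $\rho_3 = \sum_{j,j'}\langle\nabla h_j,\nabla h_{j'}\rangle^2 - \frac1{N'}[\sum_j\vert\nabla h_j\vert^2]^2$.

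The technical preliminaries I would dispatch first: every measure in sight is finite by \autoref{Pr:Fin tot var} (applied to the relevant test functions, noting $\Test(\mms)$ is an algebra so $\varphi\circ q\in\Test(\mms)$), so \autoref{Le:Measure lemma} applies without the truncation caveat of \autoref{Re:No a priori fin tot var}; the $\meas$-absolutely continuous density of a product like $\langle\nabla f_i,\nabla h_j\rangle\langle\nabla g_i,\nabla h_j\rangle\,\meas$ is just the pointwise product of the functions since these are $\meas$-a.c. by construction; and the $\bdGamma_2^{2\kappa}$-pieces contribute to densities via $\bdGamma_2^{2\kappa}(\cdot)_\ll = \gamma_2^{2\kappa}(\cdot)\,\meas$. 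The \textbf{main obstacle} I anticipate is purely combinatorial-bookkeeping: correctly matching the $t^2$, $t^1$, $t^0$ coefficients of $\bdGamma_2^{2\kappa}(\varphi + t\,\psi)$ — as expanded through the four terms $\RMA^{2\kappa},\rmB,\rmC,\rmD$ of \autoref{Le:Calculus rules} with $\varphi_{jk}$ picking up the Hessian-type $\rmH$ contributions from the quadratic-in-$t$ dependence $\psi = \frac12\sum_j h_j^2$ whose second derivatives are nonzero — against the four explicit groups of terms in the definition of $\mu_1[f,g]$ and of $\rho_2,\rho_3$. In particular one must be careful that the chain-rule second-derivative terms $\varphi_{ij}\circ q$ generate both the $\rmH[f_i](h_j,h_j)$ cross terms (in $\mu_2$) and the $\langle\nabla h_j,\nabla h_{j'}\rangle^2$ terms (in $\mu_3$), and that the $\rmD$-term $[\sum \varphi_i\Delta q_i + \sum\varphi_{ij}\langle\nabla q_i,\nabla q_j\rangle]^2$ expands into exactly the three squared blocks appearing with coefficient $-1/N'$ in $\mu_1[f,g]$, $\mu_2$, $\mu_3$ respectively. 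Once this identification is verified term by term, the conclusion is immediate from \autoref{Le:Measure lemma}\ref{La:Le3}.
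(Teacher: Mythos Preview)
Your overall strategy --- apply $\BE_2(\kappa,N)$ to a polynomial in test functions via \autoref{Le:Calculus rules}, collect a quadratic in $\lambda$, and invoke \autoref{Le:Measure lemma} --- is the right one, and it is the paper's strategy too. But your direct substitution $\varphi_\lambda(x,y,z) = \sum_i x_i y_i + \tfrac{\lambda}{2}\sum_j z_j^2$ with $q=(f,g,h)$ does \emph{not} produce the stated coefficient measures. Expand the $\RMA^{2\kappa}$-term: since $\varphi_i\circ q = g_i$, $\varphi_{n+i}\circ q = f_i$, and $\varphi_{2n+j}\circ q = \lambda h_j$, you get
\[
\RMA^{2\kappa}[\varphi_\lambda\circ q] = \sum_{i,i'}\widetilde g_i\widetilde g_{i'}\,\bdGamma_2^{2\kappa}(f_i,f_{i'}) + 2\sum_{i,i'}\widetilde g_i\widetilde f_{i'}\,\bdGamma_2^{2\kappa}(f_i,g_{i'}) + \sum_{i,i'}\widetilde f_i\widetilde f_{i'}\,\bdGamma_2^{2\kappa}(g_i,g_{i'}) + \lambda^2\sum_{j,j'}\widetilde h_j\widetilde h_{j'}\,\bdGamma_2^{2\kappa}(h_j,h_{j'}) + \ldots
\]
The $\lambda^0$-part carries $\widetilde f_i\widetilde f_{i'}\,\bdGamma_2^{2\kappa}(g_i,g_{i'})$ and mixed $\bdGamma_2^{2\kappa}(f_i,g_{i'})$ terms, which are \emph{absent} from the target $\mu_1[f,g]$; likewise the $\lambda^2$-part carries $\widetilde h_j\widetilde h_{j'}\,\bdGamma_2^{2\kappa}(h_j,h_{j'})$, absent from the target $\mu_3[h]$. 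No amount of bookkeeping fixes this: the asymmetry of $\mu_1[f,g]$ --- only $\bdGamma_2^{2\kappa}(f_\cdot,f_\cdot)$, never $\bdGamma_2^{2\kappa}(g_\cdot,\cdot)$ --- cannot arise from a single $\Cont^\infty$-function $\varphi$ composed with $(f,g,h)$.

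The paper's missing idea is a \emph{localization trick}. One takes instead
\[
\varphi(x,y,z) = \sum_i\big[\lambda\,x_iy_i + a_ix_i - b_iy_i\big] + \sum_j\big[(z_j-c_j)^2 - c_j^2\big]
\]
with auxiliary \emph{constants} $a,b\in\R^n$, $c\in\R^m$. Now $\varphi_{n+i}\circ q = \lambda f_i - b_i$ and $\varphi_{2n+j}\circ q = 2(h_j - c_j)$. The $\BE_2(\kappa,N')$ inequality $\RMA^{2\kappa} + (\rmB+\rmC - \tfrac{1}{N'}\rmD)\,\meas \geq 0$ holds for \emph{every} choice of constants, hence one may restrict to a Borel partition $(E_k)$ of an $\Ch$-nest and choose $a_k,b_k,c_k$ on each piece so that $\sum_k \One_{E_k}a_k \to \lambda\widetilde g$, $\sum_k \One_{E_k}b_k \to \lambda\widetilde f$, $\sum_k \One_{E_k}c_k \to \widetilde h$ pointwise. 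In the $\Vert\cdot\Vert_\TV$-limit the factors $\lambda f_i - b_i$ and $h_j - c_j$ vanish, annihilating every unwanted $\bdGamma_2^{2\kappa}(g_\cdot,\cdot)$ and $\bdGamma_2^{2\kappa}(h_\cdot,\cdot)$ contribution, while $\lambda g_i + a_i \to 2\lambda g_i$ doubles the surviving prefactors. What remains, after dividing by $4$, is precisely $\lambda^2\mu_1[f,g] + 2\lambda\,\mu_2[f,g,h] + \mu_3[h] \geq 0$, and \autoref{Le:Measure lemma} finishes the proof as you indicated.
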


\begin{proof} We define $\mu_2[f,g,h],\mu_3[h] \in \Meas_\fin^\pm(\mms)$ by
	\begin{align*}
		\mu_2[f,g,h] &:= \sum_{i=1}^n\sum_{j=1}^m\big[\langle\nabla f_i,\nabla h_j\rangle\,\langle\nabla g_i,\nabla h_j\rangle + g_i\,\rmH[f_i](h_j,h_j)\big]\,\meas\\
		&\qquad\qquad - \frac{1}{N'}\sum_{i=1}^n\sum_{j=1}^m \big[g_i\,\Delta f_i + \langle \nabla f_i,\nabla g_i\rangle\big]\,\vert\nabla h_j\vert^2\,\meas,\\
		\mu_3[h] &:= \Big[\!\sum_{j,j'=1}^m\langle\nabla h_j,\nabla h_{j'}\rangle^2 - \frac{1}{N'}\Big[\!\sum_{j=1}^m \vert\nabla h_j\vert^2\Big]^2\Big]\,\meas.
	\end{align*}
	Both claims readily follow from \autoref{Le:Measure lemma} as soon as $\lambda^2\,\mu_1[f,g] + 2\lambda\,\mu_2[f,g,h] + \mu_3[h] \geq 0$ for every $\lambda \in\R$, which is what we concentrate on in the sequel.
	
	Let $\lambda\in\R$ and pick $a,b\in\R^n$ as well as $c\in\R^m$. Define the function $\varphi\in\Cont^\infty(\R^{2n+m})$ through
	\begin{align*}
	\varphi(x,y,z) := \sum_{i=1}^n \big[\lambda\,x_i\,y_i + a_i\,x_i - b_i\,y_i\big] + \sum_{j=1}^m \big[(z_j-c_j)^2-c_j^2\big].
	\end{align*}
	For every $i\in\{1,\dots,n\}$ and every $j\in\{1,\dots,m\}$, those first and second partial derivatives of $\varphi$ which,  do not always vanish identically read
	\begin{align*}
	\varphi_i(x,y,z) &= \lambda\,y_i +a_i,\\
	\varphi_{n+i}(x,y,z) &= \lambda\,x_i - b_i,\\
	\varphi_{2n+j}(x,y,z) &= 2(z_j-c_j),\\
	\varphi_{i,n+i}(x,y,z) &= \lambda,\\
	\varphi_{n+i,i}(x,y,z) &=\lambda,\\
	\varphi_{2n+j,2n+j}(x,y,z) &=2.
	\end{align*}
	For convenience, we write 
\begin{align*}
\RMA^{2\kappa}(\lambda,a,b,c) &:= \RMA^{2\kappa}[\varphi\circ q],\\
\rmB(\lambda,a,b,c) &:= \rmB[\varphi\circ q],\\
\rmC(\lambda,a,b,c) &:= \rmC[\varphi\circ q],\\
\rmD(\lambda,a,b,c) &:= \rmD[\varphi\circ q],
\end{align*}	 
where the respective r.h.s.'s are defined as in \autoref{Le:Calculus rules} for $\alpha := 2n+m$ and $q:= (f,g,h)$. Using the same \autoref{Le:Calculus rules}, we compute
	\begin{align*}
	\RMA^{2\kappa}(\lambda,a,b,c) &= \sum_{i,i'=1}^n (\lambda\,\widetilde{g}_i + a_i)\,(\lambda\,\widetilde{g}_{i'} + a_{i'})\,\bdGamma_2^{2\kappa}(f_i,f_{i'}) + \text{other terms},\\
	\rmB(\lambda,a,b,c) &= 4\sum_{i,i'=1}^n (\lambda\,g_i +a_i)\,\lambda\,\rmH[f_i](f_{i'},g_{i'})\\
	&\qquad\qquad +4\sum_{i=1}^n\sum_{j=1}^m (\lambda\,g_i+a_i)\,\rmH[f_i](h_j,h_j) + \text{other terms},\\
	\rmC(\lambda,a,b,c) &= 2\sum_{i,i'=1}^n\lambda^2\,\big[\langle\nabla f_i,\nabla f_{i'}\rangle\,\langle\nabla g_i,\nabla g_{i'}\rangle + \langle\nabla f_i,\nabla g_{i'}\rangle\,\langle \nabla g_i,\nabla f_{i'}\rangle\big]\\
	&\qquad\qquad + 8\sum_{i=1}^n\sum_{j=1}^m \lambda\,\langle\nabla f_i,\nabla h_j\rangle\,\langle \nabla g_i,\nabla h_j\rangle\\
	&\qquad\qquad + 4\sum_{j,j'=1}^m\langle\nabla h_j,\nabla h_{j'}\rangle^2 + \text{other terms},\\
	\rmD(\lambda,a,b,c) &= \sum_{i,i'=1}^n (\lambda\,g_i + a_i)\,(\lambda\,g_{i'}+a_i)\,\Delta f_i\,\Delta f_{i'}\\
	&\qquad\qquad + 4\sum_{i,i'=1}^n\lambda\,(\lambda\,g_i+a_i)\,\Delta f_i\,\langle\nabla f_{i'},\nabla g_{i'}\rangle\\
	&\qquad\qquad + 4\sum_{i,i'=1}^n\lambda^2\,\langle\nabla f_i,\nabla g_i\rangle\,\langle\nabla f_{i'},\nabla g_{i'}\rangle\\
	&\qquad\qquad + 4\sum_{i=1}^n\sum_{j=1}^m (\lambda\,g_i+a_i)\,\Delta f_i\,\vert\nabla h_j\vert^2\\
	&\qquad\qquad + 8\sum_{i=1}^n\sum_{j=1}^m\lambda\,\langle\nabla f_i,\nabla g_i\rangle\,\vert\nabla h_j\vert^2\\
	&\qquad\qquad + 4\,\Big[\sum_{j=1}^m \vert\nabla h_j\vert^2\Big]^2 + \text{other terms}.
	\end{align*}
	Here, every ``other term'' contains at least one factor of the form $\smash{\lambda\,\widetilde{f}_i - b_i}$ or $\smash{\widetilde{h}_j - c_j}$ for some $i\in \{1,\dots,n\}$ and $j\in \{1,\dots,m\}$.
	
	By \autoref{Le:Calculus rules} and  \autoref{Pr:Bakry Emery measures} with the nonnegativity of $\rmD(\lambda,a,b,c)$ as well as the trivial inequality $1/N \geq 1/N'$,
	\begin{align*}
		\RMA^{2\kappa}(\lambda,a,b,c) + \Big[\rmB(\lambda,a,b,c) + \rmC(\lambda,a,b,c) -\frac{1}{N'}\,\rmD(\lambda,a,b,c)\Big]\,\meas \geq 0.
	\end{align*}
	By the arbitrariness of $a,b\in\R^n$ and $c\in\R^m$, for every Borel partition $(E_p)_{p\in\N}$ of $\mms$, every Borel set $F\subset\mms$ and all sequences $(a_k)_{k\in\N}$ and $(b_k)_{k\in\N}$ in $\R^n$ as well as $(c_k)_{k\in\N}$ in $\R^m$,
	\begin{align}\label{Eq:Intermediate}
	\begin{split}
		&\One_{F}\sum_{k\in\N} \One_{E_k}\,\Big[\RMA^{2\kappa}(\lambda,a_k,b_k,c_k) + \Big[\rmB(\lambda,a_k,b_k,c_k) + \rmC(\lambda,a_k,b_k,c_k)\\
		&\qquad \qquad -\frac{1}{N'}\,\rmD(\lambda,a_k,b_k,c_k)\Big]\,\meas\Big]\geq 0.
		\end{split}
	\end{align}
	
We now choose the involved quantities appropriately. Let $(F_k)_{k\in\N}$ be an $\Ch$-nest with the property that the restrictions of $\smash{\widetilde{f}}$, $\smash{\widetilde{g}}$ and $\smash{\widetilde{h}}$ to $F_k$ are continuous for every $k\in\N$, and set $\smash{F := \bigcup_{k\in\N} F_k}$. Since $F^\rmc$ is an $\Ch$-polar set and thus not seen by $\meas$ and $\smash{\bdGamma_2^{2\kappa}(f_i,f_{i'})}$, $i,i'\in\{1,\dots,n\}$, its contribution to the subsequent manipulations is ignored. For $l\in\N$ we now take a Borel partition $\smash{(E_k^l)_{k\in\N}}$ of $\mms$ and sequences $\smash{(a_k^l)_{k\in\N}}$ and $\smash{(b_k^l)_{k\in\N}}$ in $\R^n$ as well as $\smash{(c_k^l)_{k\in\N}}$ in $\R^m$ with
\begin{align*}
\sup_{k,l\in\N} \big[\vert a_k^l\vert + \vert b_k^l\vert +\vert c_k^l\vert\big] < \infty
\end{align*}
in such a way that
\begin{align*}
\lim_{l\to\infty} \sum_{k\in\N}\One_{E_k^l}\,a_k^l &= \lambda\,\widetilde{g},\\
\lim_{l\to\infty} \sum_{k\in\N} \One_{E_k^l}\,b_k^l &= \lambda\,\widetilde{f},\\
\lim_{l\to\infty} \sum_{k\in\N} \One_{E_k^l}\,c_k^l &= \widetilde{h}
\end{align*}
pointwise on $F$. Thus, the l.h.s.~of \eqref{Eq:Intermediate} with $\smash{(E_k)_{k\in\N}}$, $\smash{(a_k)_{k\in\N}}$, $\smash{(b_k)_{k\in\N}}$ and $\smash{(c_k)_{k\in\N}}$ replaced by $\smash{(E_k^l)_{k\in\N}}$, $\smash{(a_k^l)_{k\in\N}}$, $\smash{(b_k^l)_{k\in\N}}$ and $\smash{(c_k^l)_{k\in\N}}$, $l\in\N$, respectively, converges w.r.t.~$\Vert\cdot\Vert_\TV$ as $l\to\infty$. In fact, in the limit as $l\to\infty$ every ``other term'' above becomes zero, and the prefactors $\smash{\lambda\,\widetilde{g}_i+(a_k^l)_i}$ become $2\lambda\,\widetilde{g}_i$, $i\in \{1,\dots,n\}$. We finally obtain
	\begin{align*}
		&4\lambda^2\sum_{i,i'=1}^n \widetilde{g}_i\,\widetilde{g}_{i'}\,\bdGamma_2^{2\kappa}(f_i,f_{i'})\\
		&\qquad\qquad + 8\lambda^2\sum_{i,i'=1}^n g_i\,\rmH[f_i](f_{i'},g_{i'}) \,\meas+ 8\lambda\sum_{i=1}^n\sum_{j=1}^m g_i\,\rmH[f_i](h_j,h_j)\,\meas\\
		&\qquad\qquad + 2\lambda^2\sum_{i,i'=1}^n\big[\langle\nabla f_i,\nabla f_{i'}\rangle\,\langle\nabla g_i,\nabla g_{i'}\rangle + \langle\nabla f_i,\nabla g_{i'}\rangle\,\langle\nabla g_i,\nabla f_{i'}\rangle\big]\,\meas\\
		&\qquad\qquad\qquad\qquad + 8\lambda\sum_{i=1}^n\sum_{j=1}^m\langle\nabla f_i,\nabla h_j\rangle\,\langle\nabla g_i,\nabla h_j\rangle\,\meas\\
		&\qquad \qquad \qquad\qquad + 4\sum_{j,j'=1}^m\langle\nabla h_j,\nabla h_{j'}\rangle^2\,\meas\\
		&\qquad\qquad - \frac{4\lambda^2}{N'}\,\Big[\sum_{i=1}^n \big[g_i\,\Delta f_i + \langle \nabla f_i,\nabla g_i\rangle\big]\Big]^2\,\meas\\
		&\qquad\qquad\qquad\qquad - \frac{8\lambda}{N'}\sum_{i=1}^n\sum_{j=1}^m \big[g_i\,\Delta f_i + \langle\nabla f_i,\nabla g_i\rangle\big]\,\vert \nabla h_j\vert^2\,\meas\\
		&\qquad\qquad\qquad \qquad - \frac{4}{N'}\,\Big[\sum_{j=1}^m\vert\nabla h_j\vert^2\Big]^2 \geq 0.
	\end{align*}
	Dividing by $4$ and sorting terms by the order of $\lambda$ yields the claim.
\end{proof}

We note the following consequence of \autoref{Le:Extremely key lemma} that is used in \autoref{Th:Hess} below as well, but becomes especially important in \autoref{Sec:Structural consequences}.

\begin{remark}\label{Re:Trace} The nonnegativity of $\mu_3[h]$ from \autoref{Le:Extremely key lemma} can be translated into the following trace inequality, compare with \cite[Rem.~2.19]{braun2020} and the proof of \cite[Prop.~3.2]{han2018}. With the pointwise trace defined as in \eqref{Eq:Pointwise trace}, we have
\begin{align*}
\Big\vert\! \sum_{j=1}^m \nabla h_j\otimes\nabla h_j\Big\vert_\HS^2 \geq \frac{1}{N}\tr\!\Big[\sum_{j=1}^m \nabla h_j\otimes\nabla h_j\Big]^2\quad\meas\text{-a.e.}
\end{align*}
for every $m\in\N$ and every $h\in\Test(\mms)^m$.
\end{remark}

\begin{theorem}\label{Th:Hess} Every $f\in\Test(\mms)$ belongs to $\Dom(\Hess)$ and satisfies
\begin{align}\label{Eq:H = H}
\Hess f(\nabla g_1,\nabla g_2) = \rmH[f](g_1,g_2)\quad\meas\text{-a.e.}
\end{align}
for every $g_1,g_2\in\Test(\mms)$. Moreover, denoting by $\smash{\gamma_2^{2\kappa}(f)\in \Ell^1(\mms)}$ the density of the $\meas$-absolutely continuous part of $\bdGamma_2^{2\kappa}(f)$, we have
\begin{align}\label{Eq:Pointwise Bochner}
\big\vert\!\Hess f\big\vert_\HS^2 \leq \gamma_2^{2\kappa}(f)\quad\meas\text{-a.e.}
\end{align}
\end{theorem}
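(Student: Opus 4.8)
The strategy is to apply the key \autoref{Le:Extremely key lemma} in the special case $g = \One_\mms$ (constant), which kills the $\rmH$-terms and the $\langle\nabla f,\nabla g\rangle$-terms, so that $\mu_1[f,\One_\mms]$ reduces to the $\Gamma_2$-measure $\bdGamma_2^{2\kappa}(f)$ plus the lower order contravariant terms, and then to read off both the existence of $\Hess f$ (via the duality formula \ref{La:H4} of \autoref{Th:Hess properties}) and the pointwise bound \eqref{Eq:Pointwise Bochner} at once. Concretely, first I would fix $f \in \Test(\mms)$ and, for $m\in\N$ and $h\in\Test(\mms)^m$, specialize \autoref{Le:Extremely key lemma} with $n := 1$, $f_1 := f$, $g_1 := \One_\mms$ and $N' := \infty$ (so all $1/N'$-terms vanish). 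Since $\nabla \One_\mms = 0$, this yields $\rho_1[f,\One_\mms] = \gamma_2^{2\kappa}(f)$ $\meas$-a.e., the nonnegativity of $\bdGamma_2^{2\kappa}(f)_\perp$ (already known from \autoref{Pr:Bakry Emery measures}), and the pointwise inequality
\begin{align*}
\Big[\sum_{j=1}^m \rmH[f](h_j,h_j)\Big]^2 \leq \gamma_2^{2\kappa}(f)\,\sum_{j,j'=1}^m \langle\nabla h_j,\nabla h_{j'}\rangle^2\quad\meas\text{-a.e.}
\end{align*}

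\textbf{From the key inequality to the Hessian.} Next I would polarize and use the pre-Hessian $\rmH[f]$ to build the candidate tensor. The inequality above, after integration and an application of Cauchy--Schwarz, shows that the linear functional $\Phi$ on $\Test(T^{\otimes 2}\mms)$ defined (on simple tensors) by $\Phi\big(\sum_i k_i\,\nabla g_{i1}\otimes\nabla g_{i2}\big) := \sum_i \int_\mms k_i\,\rmH[f](g_{i1},g_{i2})\d\meas$ is well-defined (independent of the representation, by the same $\lambda\to\infty$ scaling argument as in the proof of \autoref{Th:Hess properties}\ref{La:H4}) and bounded by $\big(\int_\mms \gamma_2^{2\kappa}(f)\d\meas\big)^{1/2}\,\Vert\cdot\Vert_{\Ell^2(T^{\otimes 2}\mms)}$; here I would use the identity $\rmH[f](g_1,g_2) = \tfrac12\big(\langle\nabla g_1,\nabla\langle\nabla f,\nabla g_2\rangle\rangle + \langle\nabla g_2,\nabla\langle\nabla f,\nabla g_1\rangle\rangle - \langle\nabla f,\nabla\langle\nabla g_1,\nabla g_2\rangle\rangle\big)$ from \eqref{Eq:Pre-Hessian}, together with the divergence identities of \autoref{Le:Div g nabla f} (namely $\div(h\,\nabla g) = \rmd h(\nabla g) + h\,\Delta g$) to rewrite $\int_\mms h\,\rmH[f](g_1,g_2)\d\meas$ precisely as the right-hand side appearing in \autoref{Def:Hess}. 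By \autoref{Th:Riesz theorem modules} (Riesz representation for Hilbert modules), $\Phi$ is represented by integration against a unique $A \in \Ell^2((T^*)^{\otimes 2}\mms)$, and unwinding the definitions gives exactly the defining property of \autoref{Def:Hess}, so $f\in\Dom(\Hess)$ with $\Hess f = A$ and hence \eqref{Eq:H = H}; the norm bound from Riesz gives $\int_\mms |\Hess f|_\HS^2\d\meas \leq \int_\mms \gamma_2^{2\kappa}(f)\d\meas$, i.e.\ the integrated Bochner inequality.

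\textbf{Upgrading to the pointwise inequality.} To get \eqref{Eq:Pointwise Bochner} rather than just its integrated form, I would use the pointwise duality formula \eqref{Eq:Duality formula symm part I} for the Hilbert--Schmidt norm of the symmetric part, applied with the generating set $D := \{\nabla g : g\in\Test(\mms)\}$ (which generates $\Ell^2(T\mms)$ by \autoref{Pr:Generators cotangent module}, since $\nabla\,\F$ generates and $\Test(\mms)$ is dense in $\F$, localized via \autoref{Le:Product convergence}). Since $\Hess f$ is symmetric by \autoref{Th:Hess properties}\ref{La:H3}, $|\Hess f|_\HS^2 = |(\Hess f)_\sym|_\HS^2$ equals the $\meas$-essential supremum over $m\in\N$, $g_1,\dots,g_m\in\Test(\mms)$ of $2\,\Hess f : \sum_j \nabla g_j\otimes\nabla g_j - \big|\sum_j\nabla g_j\otimes\nabla g_j\big|_\HS^2 = 2\sum_j \rmH[f](g_j,g_j) - \sum_{j,j'}\langle\nabla g_j,\nabla g_{j'}\rangle^2$; by the displayed pointwise inequality from \autoref{Le:Extremely key lemma} (with $h := (g_1,\dots,g_m)$), each such expression is $\meas$-a.e.\ bounded by $2\sqrt{\gamma_2^{2\kappa}(f)}\,\big(\sum_{j,j'}\langle\nabla g_j,\nabla g_{j'}\rangle^2\big)^{1/2} - \sum_{j,j'}\langle\nabla g_j,\nabla g_{j'}\rangle^2 \leq \gamma_2^{2\kappa}(f)$ by the elementary bound $2\sqrt{a}\,\sqrt{b} - b \leq a$. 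Taking the $\meas$-essential supremum yields \eqref{Eq:Pointwise Bochner}, and integrating recovers the statement of \autoref{Th:Hess intro}. The main obstacle I anticipate is the careful bookkeeping in the specialization of \autoref{Le:Extremely key lemma} — checking that the constant-$g$ substitution is legitimate (it requires $\One_\mms$ to be approximable appropriately, but since $g$ enters only through $\nabla g$ and through bounded multiplications, one can either verify \autoref{Le:Extremely key lemma} allows $g_i\in\R\,\One_\mms$ directly by inspecting its proof, or approximate using \autoref{Le:Approx to id}) and matching the $\rmH[f]$-expression to the integration-by-parts form of \autoref{Def:Hess} via \autoref{Le:Div g nabla f} without sign or boundary-term errors.
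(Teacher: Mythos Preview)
Your proposal is correct and follows essentially the same route as the paper: both specialize \autoref{Le:Extremely key lemma} at $n=1$, $N'=\infty$ and reduce to $g=\One_\mms$ via the approximation from \autoref{Le:Approx to id} (the paper does this explicitly, first taking $g\in\Test(\mms)$, then extending to $g\in\F\cap\Ell^\infty(\mms)$ via \autoref{Le:Mollified heat flow}, then setting $g:=g_n$ and using locality of $\nabla$), build the bounded operator $\Phi$ from $\rmH[f]$, and invoke Riesz to identify $\Hess f$. The only cosmetic difference is that the paper defines $\Phi$ pointwise (valued in $\Ell^0(\mms)$) and reads off \eqref{Eq:Pointwise Bochner} directly from the pointwise estimate $\vert\Phi(T)\vert\leq\gamma_2^{2\kappa}(f)^{1/2}\,\vert T\vert_\HS$, whereas you integrate first and then recover the pointwise bound via the symmetric-part duality formula \eqref{Eq:Duality formula symm part I}.
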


\begin{proof} Recall that indeed $\smash{\gamma_2^{2\kappa}(f)\in\Ell^1(\mms)}$ by \autoref{Pr:Fin tot var}. Let $g,h_1,\dots,h_m\in\Test(\mms)$, $m\in\N$. Applying \autoref{Le:Extremely key lemma} for $N' := \infty$ and $n := 1$ then entails
\begin{align*}
&\Big[\!\sum_{j=1}^m \big[\langle \nabla f,\nabla h_j\rangle\,\langle\nabla g,\nabla h_j\rangle + g\,\rmH[f](h_j,h_j)\big]\Big]^2\\
&\qquad\qquad \leq  \Big[g^2\,\gamma_2^{2\kappa}(f) + 2g\,\rmH[f](f,g) + \frac{1}{2}\,\vert\nabla f\vert^2\,\vert\nabla g\vert^2 + \frac{1}{2}\,\langle\nabla f,\nabla g\rangle^2\Big]\\
&\qquad\qquad\qquad\qquad\times \sum_{j,j'=1}^m \langle \nabla h_j,\nabla h_{j'}\rangle^2\\
&\qquad\qquad = \Big[g^2\,\gamma_2^{2\kappa}(f) + g\,\big\langle \nabla\vert\nabla f\vert^2,\nabla g \big\rangle + \frac{1}{2}\,\vert\nabla f\vert^2\,\vert\nabla g\vert^2 + \frac{1}{2}\,\langle\nabla f,\nabla g\rangle^2\Big]\\
&\qquad\qquad\qquad\qquad\times \sum_{j,j'=1}^m \langle \nabla h_j,\nabla h_{j'}\rangle^2\quad\meas\text{-a.e.}
\end{align*}
In the last identity, we used the definition \eqref{Eq:Pre-Hessian} of $\rmH[f](f,g)$. Using the first part of  \autoref{Le:Mollified heat flow} and possibly passing to subsequences, this $\meas$-a.e.~inequality extends to all $g\in \F\cap\Ell^\infty(\mms)$. Thus, successively setting $g := g_n$, $n\in\N$, where $(g_n)_{n\in\N}$ is the sequence provided by \autoref{Le:Approx to id}, together with the locality of $\nabla$ from \autoref{Cor:Calculus rules d}, and by the definition \eqref{Eq:Pointwise norm HS} of the pointwise Hilbert--Schmidt norm of $\Ell^2(T^{\otimes 2}\mms)$, we obtain
\begin{align}\label{Eq:Tttuuuuuuus}
\Big\vert\!\sum_{j=1}^m \rmH[f](h_j,h_j)\Big\vert &\leq \gamma_2^{2\kappa}(f)^{1/2}\, \Big\vert\!\sum_{j=1}^m \nabla h_j\otimes\nabla h_j\Big\vert_\HS\quad\meas\text{-a.e.}
\end{align}
This implies pointwise $\meas$-a.e.~off-diagonal estimates as follows. Given any $m'\in \N$ and $\smash{h_j, h_j'\in\Test(\mms)}$, $j\in \{1,\dots,m'\}$, since
\begin{align*}
\sum_{j=1}^{m'} \rmH[f](h_j,h_j') = \frac{1}{2}\sum_{j=1}^{m'} \big[\rmH[f](h_j+h_j', h_j+h_j') - \rmH[f](h_j,h_j) - \rmH[f](h_j',h_j')\big]
\end{align*}
holds $\meas$-a.e., applying \eqref{Eq:Tttuuuuuuus}, using that
\begin{align*}
&\frac{1}{2}\sum_{j=1}^{m'} \big[\nabla(h_j+h_j')\otimes \nabla (h_j+h_j') - \nabla h_j\otimes \nabla h_j - \nabla h_j' \otimes \nabla h_j'\big]\\
&\qquad\qquad = \frac{1}{2}\sum_{j=1}^{m'} \big[\nabla h_j\otimes \nabla h_j' + \nabla h_j'\otimes\nabla h_j\big]\\
&\qquad\qquad = \Big[\!\sum_{j=1}^{m'} \nabla h_j\otimes \nabla h_j'\Big]_\sym
\end{align*}
and finally employing that $\vert T_\sym\vert_\HS \leq \vert T\vert_\HS$ for every $T\in\Ell^2(T^{\otimes 2}\mms)$, we get
\begin{align*}
\Big\vert\!\sum_{j=1}^{m'} \rmH[f](h_j,h_j') \Big\vert
&\leq \gamma_2^{2\kappa}(f)^{1/2}\,\Big\vert \frac{1}{2}\sum_{j=1}^{m'} \big[\nabla h_j\otimes \nabla h_j' + \nabla h_j'\otimes \nabla h_j\big]\Big\vert_\HS\\
&\leq \gamma_2^{2\kappa}(f)^{1/2}\,\Big\vert\!\sum_{j=1}^{m'} \nabla h_j\otimes \nabla h_j'\Big\vert_\HS\quad\meas\text{-a.e.}
\end{align*}
We replace $h_j$ by $a_j\,h_j$, $j\in \{1,\dots,m'\}$, for arbitrary $a_1,\dots,a_{m'}\in \Q$. This gives
\begin{align}\label{Eq:abc}
\Big\vert\!\sum_{j=1}^{m'} a_j\,\rmH[f](h_j,h_j') \Big\vert
\leq \gamma_2^{2\kappa}(f)^{1/2}\,\Big\vert\!\sum_{j=1}^{m'}  a_j\,\nabla h_j\otimes \nabla h_j'\Big\vert_\HS\quad\meas\text{-a.e.}
\end{align}
In fact, since $\Q$ is countable, we find an $\meas$-negligible Borel set $B\subset \mms$ on whose complement \eqref{Eq:abc} holds pointwise for every $a_1,\dots,a_{m'}\in \Q$. Since both sides of \eqref{Eq:abc} are continuous in $a_1,\dots,a_{m'}$, by density of $\Q$ in $\R$ we deduce that \eqref{Eq:abc} holds pointwise on $\smash{B^\rmc}$ for every $a_1,\dots,a_{m'}\in \R$. Therefore, given any $g_1,\dots,g_{m'}\in\Test(\mms)$, up to possibly removing a further $\meas$-negligible Borel set $C\subset\mms$, for every $x\in (B\cup C)^\rmc$ we may replace $a_j$ by $g_j(x)$, $j\in\{1,\dots,m'\}$, in \eqref{Eq:abc}. This leads to
\begin{align}\label{Eq:Zu}
\Big\vert\!\sum_{j=1}^{m'} g_j\,\rmH[f](h_j,h_j')\Big\vert^2 \leq \gamma_2^{2\kappa}(f)^{1/2}\,\Big\vert\!\sum_{j=1}^{m'} g_j\,\nabla h_j\otimes\nabla h_j'\Big\vert_\HS\quad\meas\text{-a.e.}
\end{align}

We now define the operator $\Phi\colon \Test(T^{\otimes 2}\mms) \to \Ell^0(\mms)$ by
\begin{align}\label{Eq:The Phi def}
\Phi\,\sum_{j=1}^{m'} g_j\,g_j'\,\nabla h_j\otimes \nabla h_j' := \sum_{j=1}^{m'} g_j\,g_j'\,\rmH[f](h_j,h_j').
\end{align}
From \eqref{Eq:Zu} and the algebra property of $\Test(\mms)$, it follows that $\Phi$ is well-defined, i.e.~the value of $\Phi(T)$ does not depend on the specific way of representing a given element $T\in \Test(T^{\otimes 2}\mms)$. Moreover, the map $\Phi$ is clearly linear, and for every $g\in\Test(\mms)$ and every $T\in\Test(T^{\otimes 2}\mms)$, 
\begin{align}\label{Eq:tes linear}
\Phi(g\,T) = g\,\Phi(T).
\end{align}
Since the $\meas$-singular part $\bdGamma_2^{2\kappa}(f)_\perp$ of $\bdGamma_2^{2\kappa}(f)$ is nonnegative, by \eqref{Eq:Identities Gamma2 DELTA} we get
\begin{align}\label{Eq:Kappa comp}
\int_\mms \gamma_2^{2\kappa}(f)\d\meas \leq \bdGamma_2^{2\kappa}(f)[\mms] = \int_\mms (\Delta f)^2\d\meas - \big\langle \kappa \,\big\vert\,\vert\nabla f\vert^2\big\rangle.
\end{align}
After integrating \eqref{Eq:Zu} and employing Cauchy--Schwarz's inequality, 
\begin{align*}
\Vert \Phi(T)\Vert_{\Ell^1(\mms)} \leq \Big[\!\int_\mms (\Delta f)^2\d\meas - \big\langle\kappa\,\big\vert\,\vert\nabla f\vert^2\big\rangle\Big]^{1/2}\,\Vert T\Vert_{\Ell^2(T^{\otimes 2}\mms)}
\end{align*}
holds for every $T\in \Test(T^{\otimes 2}\mms)$. Thus, by density of $\Test(T^{\otimes 2}\mms)$ in $\Ell^2(T^{\otimes 2}\mms)$  and \eqref{Eq:tes linear}, $\Phi$ uniquely extends to a (non-relabeled) continuous, $\Ell^\infty$-linear map from $\Ell^2(T^{\otimes 2}\mms)$ into $\Ell^1(\mms)$, whence $\Phi\in \Ell^2((T^*)^{\otimes 2}\mms)$ by definition of the latter space.

To check that $f\in \Dom(\Hess)$ and $\Phi = \Hess f$, first note that by the continuity of $\Phi$ and \autoref{Le:Product convergence}, we can replace $\smash{g_j\,g_j'}$ by arbitrary $k_j\in\Test(\mms)$, $j\in \{1,\dots,m'\}$, still retaining the identity \eqref{Eq:The Phi def}. Therefore, slightly changing the notation in \eqref{Eq:The Phi def}, let $g_1,g_2,h \in \Test(\mms)$ and use \eqref{Eq:tes linear}, the definition \eqref{Eq:Pre-Hessian} of $\rmH[f]$ and  \autoref{Le:Div g nabla f} to derive that
\begin{align*}
&2\int_\mms h\,\Phi(\nabla g_1, \nabla g_2)\d\meas\\
&\qquad\qquad = \int_\mms h\,\big\langle\nabla g_1,\nabla \langle\nabla f,\nabla g_2\rangle\big\rangle\d\meas + \int_\mms h\,\big\langle\nabla g_2,\nabla \langle\nabla f,\nabla g_1\rangle\big\rangle\d\meas\\
&\qquad\qquad\qquad\qquad -\int_\mms h\,\big\langle\nabla f,\nabla \langle\nabla g_1,\nabla g_2\rangle\big\rangle\d\meas\\
&\qquad\qquad = -\int_\mms \langle\nabla f,\nabla g_2\rangle\div(h\,\nabla g_1)\d\meas - \int_\mms \langle\nabla f,\nabla g_1\rangle\div(h\,\nabla g_2)\d\meas\\
&\qquad\qquad\qquad\qquad -\int_\mms h\,\big\langle\nabla f,\nabla \langle\nabla g_1,\nabla g_2\rangle\big\rangle\d\meas,
\end{align*}
which is the desired assertion $f\in \Dom(\Hess)$ and $\Phi=\Hess f$.

The same argument gives \eqref{Eq:H = H}, while the inequality  \eqref{Eq:Pointwise Bochner} is due to \eqref{Eq:Zu}, the density of $\Test(T^{\otimes 2}\mms)$ in $\Ell^2(T^{\otimes 2}\mms)$ as well as the definition \eqref{Eq:Pointwise norm HS} of the pointwise Hilbert--Schmidt norm.
\end{proof}

\autoref{Th:Hess} implies the following qualitative result. A quantitative version of it, as directly deduced in \cite[Cor.~3.3.9]{gigli2018} from \cite[Thm.~3.3.8]{gigli2018}, is however not yet available only with the information collected so far. See \autoref{Re:rererer} below.

\begin{corollary}\label{Cor:Dom(Delta) subset W22} Every $f\in \Dom(\Delta)$ belongs to the closure of $\Test(\mms)$ in $\Dom(\Hess)$, and in particular to $\Dom(\Hess)$. More precisely, let $\rho'\in (0,1)$ and $\alpha'\in\R$ be as in \autoref{Le:Form boundedness} for $\mu := \kappa^-$. Then for every $f\in \Dom(\Delta)$, we have $f\in \Dom(\Hess)$ with
\begin{align*}
\int_\mms \big\vert \!\Hess f\big\vert_\HS^2\d\meas \leq \frac{1}{1-\rho'}\int_\mms (\Delta f)^2\d\meas + \frac{\alpha'}{1-\rho'}\int_\mms\vert\nabla f\vert^2\d\meas.
\end{align*}
\end{corollary}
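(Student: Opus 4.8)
The statement to be proved is \autoref{Cor:Dom(Delta) subset W22}: every $f\in\Dom(\Delta)$ lies in $\Dom(\Hess)$, it lies in the $\Dom(\Hess)$-closure of $\Test(\mms)$, and the quantitative bound
\begin{align*}
\int_\mms \big\vert\!\Hess f\big\vert_\HS^2\d\meas \leq \frac{1}{1-\rho'}\int_\mms(\Delta f)^2\d\meas + \frac{\alpha'}{1-\rho'}\int_\mms\vert\nabla f\vert^2\d\meas
\end{align*}
holds with $\rho',\alpha'$ as in \autoref{Le:Form boundedness} for $\mu:=\kappa^-$. The strategy is the standard one from \cite[Cor.~3.3.9]{gigli2018}: approximate $f$ by heat flow and pass to the limit, but with the twist that the curvature term $\big\langle\kappa\,\big\vert\,\vert\nabla f\vert^2\big\rangle$ from \eqref{Eq:Pointwise Bochner} (and the form bound \eqref{Eq:Kappa comp}) is not controlled in $\F$ alone, so one has to absorb the $\kappa^-$-contribution into the Hessian term using $\rho'<1$.

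First I would fix $f\in\Dom(\Delta)$ and set $f_t:=\ChHeat_tf$ for $t>0$. By the mollified-heat-flow regularization recalled in \autoref{Sub:Test fcts} (the reverse Poincaré inequality and \cite[Cor.~6.8]{erbar2020}), $f_t\in\Test(\mms)$, so \autoref{Th:Hess} applies: $f_t\in\Dom(\Hess)$ and, integrating \eqref{Eq:Pointwise Bochner} and using \eqref{Eq:Kappa comp},
\begin{align*}
\int_\mms \big\vert\!\Hess f_t\big\vert_\HS^2\d\meas \leq \int_\mms \gamma_2^{2\kappa}(f_t)\d\meas \leq \int_\mms(\Delta f_t)^2\d\meas - \big\langle\kappa\,\big\vert\,\vert\nabla f_t\vert^2\big\rangle.
\end{align*}
Now split $\kappa=\kappa^+-\kappa^-$; dropping the nonnegative contribution $\big\langle\kappa^+\,\big\vert\,\vert\nabla f_t\vert^2\big\rangle$ and applying \autoref{Le:Form boundedness} to $\mu:=\kappa^-$ with form bound $\rho'<1$ gives
\begin{align*}
\big\langle\kappa^-\,\big\vert\,\vert\nabla f_t\vert^2\big\rangle \leq \rho'\int_\mms \big\vert\nabla\vert\nabla f_t\vert\big\vert^2\d\meas + \alpha'\int_\mms\vert\nabla f_t\vert^2\d\meas.
\end{align*}
Here the subtlety is that the natural object appearing is $\big\vert\nabla\vert\nabla f_t\vert\big\vert^2$, and one needs the pointwise Kato-type inequality $\big\vert\nabla\vert\nabla f_t\vert\big\vert\leq\big\vert\!\Hess f_t\big\vert_\HS$ $\meas$-a.e.\ — for gradient vector fields $X=\nabla f_t$ this is the classical $\big\vert\nabla\vert\nabla f_t\vert\big\vert^2\le\big\vert\!\Hess f_t\big\vert_\HS^2$, which follows from the Cauchy--Schwarz-type estimate already implicit in \autoref{Th:Hess} (it is exactly the off-diagonal estimate \eqref{Eq:Zu} specialized appropriately, or can be obtained directly from \autoref{Le:Extremely key lemma} with $n=1$); this is the one genuinely new ingredient compared to the constant-curvature case and I would state and prove it as a short separate observation or cite the relevant place. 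Combining, $\int\vert\!\Hess f_t\vert_\HS^2 \le \int(\Delta f_t)^2 + \rho'\int\vert\!\Hess f_t\vert_\HS^2 + \alpha'\int\vert\nabla f_t\vert^2$, and since $\rho'<1$ we may absorb to get
\begin{align*}
\int_\mms\big\vert\!\Hess f_t\big\vert_\HS^2\d\meas \leq \frac{1}{1-\rho'}\int_\mms(\Delta f_t)^2\d\meas + \frac{\alpha'}{1-\rho'}\int_\mms\vert\nabla f_t\vert^2\d\meas.
\end{align*}

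Next I would let $t\to 0$. Since $f\in\Dom(\Delta)$, we have $f_t\to f$ and $\Delta f_t=\ChHeat_t\Delta f\to\Delta f$ in $\Ell^2(\mms)$ (the heat flow commutes with $\Delta$), hence $f_t\to f$ in $\F$ as well and $\int(\Delta f_t)^2\to\int(\Delta f)^2$, $\int\vert\nabla f_t\vert^2\to\int\vert\nabla f\vert^2$. The right-hand side above is therefore bounded uniformly in $t$, so $(\Hess f_t)_{t>0}$ is bounded in the Hilbert space $\Ell^2((T^*)^{\otimes 2}\mms)$; extract a weakly convergent subsequence $\Hess f_{t_n}\rightharpoonup A$. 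By weak lower semicontinuity of the norm, $\int\vert A\vert_\HS^2 \le \liminf\int\vert\!\Hess f_{t_n}\vert_\HS^2$, which is bounded by the desired right-hand side. By the closedness of the Hessian operator, \autoref{Th:Hess properties}\ref{La:H2} — since $f_{t_n}\to f$ in $\F$ and $\Hess f_{t_n}\rightharpoonup A$, and the defining relation of \autoref{Def:Hess} is weakly continuous in the pair — we conclude $f\in\Dom(\Hess)$ with $\Hess f=A$, giving the quantitative inequality. Finally, for the membership of $f$ in the $\Dom(\Hess)$-closure of $\Test(\mms)$: using Mazur's lemma on the weakly convergent $(f_{t_n},\Hess f_{t_n})$ in $\F\times\Ell^2((T^*)^{\otimes2}\mms)$ (equivalently in $\Dom(\Hess)$ via the isometry $\Id\times\Hess$), suitable convex combinations of the $f_{t_n}\in\Test(\mms)$ converge to $f$ in the $\Dom(\Hess)$-norm. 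The main obstacle is not in any single step but in getting the constant right: one must be careful to drop $\big\langle\kappa^+\,\big\vert\,\vert\nabla f_t\vert^2\big\rangle\ge 0$ only, apply the form bound exclusively to $\kappa^-$, and invoke the pointwise Kato inequality $\big\vert\nabla\vert\nabla f_t\vert\big\vert\le\big\vert\!\Hess f_t\big\vert_\HS$ at the right place so that the absorption $\rho'<1$ actually closes — this is precisely where the extended Kato hypothesis $\kappa\in\Kato_{1-}(\mms)$ is used, as emphasized in the introduction.
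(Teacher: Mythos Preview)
There is a gap in your approximation step: for a general $f\in\Dom(\Delta)$ you cannot conclude that $f_t=\ChHeat_tf\in\Test(\mms)$, since $\Test(\mms)\subset\Ell^\infty(\mms)$ and the heat flow does not map $\Ell^2(\mms)$ into $\Ell^\infty(\mms)$ in general. The reverse Poincar\'e inequality you cite requires the input to be bounded. The paper fixes this by first truncating, setting $f_n:=\max\{\min\{f,n\},-n\}\in\Ell^2(\mms)\cap\Ell^\infty(\mms)$, and then applying the heat flow so that $\ChHeat_tf_n\in\Test(\mms)$; one then sends $n\to\infty$ (using the a priori bound of \autoref{Th:Heat flow properties} for $\Delta\ChHeat_t$) and $t\to 0$. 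With this amendment your weak-compactness-plus-Mazur closure argument goes through; alternatively, and this is what the paper implicitly does, the quantitative inequality applied to differences $\ChHeat_tf_n-\ChHeat_sf_m\in\Test(\mms)$ shows the approximants are Cauchy in $\Dom(\Hess)$, giving strong convergence directly.

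Your route to the absorption estimate is also genuinely different from the paper's. You bound $\smash{\big\langle\kappa^-\,\big\vert\,\vert\nabla f\vert^2\big\rangle}$ via the form bound and then close the loop with the pointwise Kato inequality $\smash{\big\vert\nabla\vert\nabla f\vert\big\vert\le\big\vert\!\Hess f\big\vert_\HS}$. This is correct for test functions and can be extracted at this stage --- from \eqref{Eq:H = H} one has $\Hess f(\nabla f,\nabla h)=\tfrac12\big\langle\nabla h,\nabla\vert\nabla f\vert^2\big\rangle$ for $f,h\in\Test(\mms)$, so $\big\vert\nabla\vert\nabla f\vert^2\big\vert\le 2\,\big\vert\!\Hess f\big\vert_\HS\,\vert\nabla f\vert$, and the chain rule together with $\vert\nabla f\vert\in\F$ from \autoref{Pr:Fin tot var} gives the claim --- but in the paper's logical order the general Kato inequality is \autoref{Le:Kato inequality}, proved only after the covariant calculus is set up. The paper instead avoids Kato here: it invokes $\BE_1(-\kappa^-,\infty)$ (via \cite[Thm.~6.9]{erbar2020}) to get $\smash{\Ch^{-\kappa^-}\!\big(\vert\nabla f\vert\big)\le\int_\mms(\Delta f)^2\d\meas}$, writes $\smash{\Ch\big(\vert\nabla f\vert\big)=\Ch^{-\kappa^-}\!\big(\vert\nabla f\vert\big)+\big\langle\kappa^-\,\big\vert\,\vert\nabla f\vert^2\big\rangle}$ inside the form bound, and absorbs. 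Both routes give the same constants; yours is arguably more elementary once the Kato step for gradients is isolated, while the paper's stays within the taming machinery already in place and does not anticipate later results.
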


\begin{proof} Since $(\mms,\Ch,\meas)$ satisfies $\BE_1(\kappa,\infty)$ by \cite[Thm.~6.9]{erbar2020}, it also trivially obeys $\BE_1(-\kappa^-,\infty)$, see also \cite[Prop.~6.7]{erbar2020}. As in the proof of \autoref{Pr:Fin tot var}, 
\begin{align*}
\Ch^{-\kappa^-}\!\big(\vert\nabla f\vert\big) \leq \int_\mms (\Delta f)^2\d\meas
\end{align*}
holds for every $f\in\Test(\mms)$. Hence, using \eqref{Eq::E^k identity} we estimate
\begin{align*}
\big\langle\kappa^-\,\big\vert\,\vert\nabla f\vert^2\big\rangle  &\leq \rho'\,\Ch\big(\vert\nabla f\vert\big) +\alpha'\int_\mms \vert\nabla f\vert^2\d\meas\\
&= \rho'\,\Ch^{-\kappa^-}\!\big(\vert\nabla f\vert\big) + \rho'\,\big\langle\kappa^-\,\big\vert\,\vert\nabla f\vert^2\big\rangle + \alpha'\int_\mms \vert\nabla f\vert^2\d\meas\\
&\leq \rho'\int_\mms (\Delta f)^2\d\meas + \rho'\,\big\langle\kappa^-\,\big\vert\,\vert\nabla f\vert^2\big\rangle + \alpha'\int_\mms \vert\nabla f\vert^2\d\meas.
\end{align*}
The claim for $f\in\Test(\mms)$ now follows easily. We already know from \autoref{Th:Hess} that $f\in \Dom(\Hess)$. Integrating \eqref{Eq:Pointwise Bochner} and using \eqref{Eq:Kappa comp} thus yields
\begin{align}\label{Eq:Write down}
\int_\mms \big\vert\!\Hess f\big\vert_\HS^2\d\meas &\leq \int_\mms (\Delta f)^2\d\meas - \big\langle \kappa\,\big\vert\,\vert\nabla f\vert^2\big\rangle\\
&\leq \int_\mms (\Delta f)^2\d\meas + \big\langle\kappa^-\,\big\vert\,\vert\nabla f\vert^2\big\rangle\nonumber\\
&\leq \frac{1}{1-\rho'}\int_\mms (\Delta f)^2\d\meas + \frac{\alpha'}{1-\rho'}\int_\mms \vert\nabla f\vert^2\d\meas.\nonumber
\end{align}

Finally, given $f\in\Dom(\Delta)$, let $f_n := \max\{\min\{f,n\},-n\} \in\Ell^2(\mms)\cap\Ell^\infty(\mms)$, $n\in\N$. Note that $\ChHeat_t f_n \in \Test(\mms)$ for every $t>0$ and every  $n\in\N$, and that $\ChHeat_t f_n \to \ChHeat_t f$ in $\F$ as well as, thanks to \autoref{Th:Heat flow properties}, $\Delta\ChHeat_t f_n \to \Delta\ChHeat_t f$ in $\Ell^2(\mms)$ as $n\to\infty$. Moreover $\ChHeat_t f\to f$ in $\F$ as well as $\Delta\ChHeat_t f = \ChHeat_t \Delta f \to \Delta f$ in $\Ell^2(\mms)$ as $t\to 0$. These observations imply that $f$ belongs to the closure of $\Test(\mms)$ in $\Dom(\Hess)$, whence $f\in \Dom(\Hess)$ by \autoref{Th:Hess properties}, and the claimed inequality, with unchanged constants, is clearly stable under this approximation procedure.
\end{proof}

\begin{remark}\label{Re:rererer} The subtle reason why we still cannot deduce \eqref{Eq:Write down} for general $f\in\Dom(\Delta)$ is that we neither know whether the r.h.s.~of \eqref{Eq:Write down} makes sense --- which essentially requires $\vert\nabla f\vert\in \F$ --- nor, in the notation of the previous proof, whether $\smash{\big\langle\kappa\,\big\vert\,\vert\nabla\ChHeat_tf_n\vert^2\big\rangle \to \big\langle\kappa\,\big\vert\,\vert\nabla f\vert^2\big\rangle}$ as $n\to\infty$ and $t\to 0$. (Neither we know if $\smash{\Ch\big(\vert\nabla \ChHeat_tf_n\vert\big) \to \Ch\big(\vert\nabla f\vert\big)}$ as $n\to\infty$ and $t\to 0$.) Both points are trivial in the more restrictive $\RCD(K,\infty)$ case from \cite[Cor.~3.3.9]{gigli2018}, $K\in\R$. In our setting, solely \autoref{Le:Form boundedness} does not seem sufficient to argue similarly. Instead, both points will follow from \autoref{Le:Kato inequality} and \autoref{Le:Pre.Bochner}, see \autoref{Cor:Kappa cont} and \autoref{Cor:Integr Bochner II}. 
\end{remark}

\subsection{Structural consequences of \autoref{Le:Extremely key lemma}}\label{Sec:Structural consequences} Solely in this section, we assume that $\BE_2(\kappa,N)$ holds for $N< \infty$.  In this case, we derive a nontrivial upper bound on the local dimension of $\Ell^2(T\mms)$ --- and hence of $\Ell^2(T^*\mms)$ by \autoref{Th:Riesz theorem modules} --- in \autoref{Pr:Upper bound local dimension}. Our proof follows \cite[Prop.~3.2]{han2018}.  Reminiscent of \autoref{Re:Hino} and \autoref{Cor:Hino}, as a byproduct we obtain an upper bound on the Hino index of $\mms$. The key point is the trace inequality derived in \autoref{Re:Trace}.

Let $\smash{(E_n)_{n\in\N\cup\{\infty\}}}$ be the dimensional decomposition of $\Ell^2(T\mms)$ as provided by \autoref{Th:Dimensional decomposition}. The \emph{maximal essential local dimension} of $\Ell^2(T\mms)$ is
\begin{align*}
\dim_{\Ell^\infty,\max}\Ell^2(T\mms) := \sup\!\big\lbrace n \in  \N\cup\{\infty\} : \meas[E_n]>0\big\rbrace.
\end{align*}

\begin{proposition}\label{Pr:Upper bound local dimension} We have
\begin{align*}
\dim_{\Ell^\infty,\max} \Ell^2(T\mms) \leq \lfloor N\rfloor.
\end{align*}
Moreover, if $N$ is an integer, then for every $f\in\Test(\mms)$,
\begin{align*}
\tr\Hess f = \Delta f\quad\meas\text{-a.e.}\quad\text{on }E_N.
\end{align*}
\end{proposition}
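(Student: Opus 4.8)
\textbf{Proof strategy for \autoref{Pr:Upper bound local dimension}.} The plan is to exploit the trace inequality from \autoref{Re:Trace} on suitable finite collections of test functions whose gradients realize a local basis of $\Ell^2(T\mms)$. First I would fix $n\in\N$ with $\meas[E_n] > 0$ and a Borelian $E\subset E_n$ with $\meas[E]\in(0,\infty)$. By the construction in \autoref{Sub:Leb sp tp} combined with \autoref{Th:Dimensional decomposition} and the fact that $\nabla\,\Test(\mms)$ generates $\Ell^2(T\mms)$ (from \autoref{Pr:Generators cotangent module} and density of $\Test(\mms)$ in $\F$), after a Gram--Schmidt procedure one can find $h_1,\dots,h_n\in\Test(\mms)$ whose gradients are, up to an arbitrarily small $\Ell^2$-error and on an arbitrarily large Borel subset $E'\subset E$, pointwise $\meas$-a.e.\ orthonormal on $E'$. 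Then $A := \sum_{j=1}^n \nabla h_j\otimes\nabla h_j$ satisfies, on $E'$, $\tr A = \sum_{j=1}^n \vert\nabla h_j\vert^2 \approx n$ and $\vert A\vert_\HS^2 = \sum_{j,j'=1}^n \langle\nabla h_j,\nabla h_{j'}\rangle^2 \approx n$ $\meas$-a.e. The trace inequality $\vert A\vert_\HS^2 \geq \tfrac{1}{N}(\tr A)^2$ $\meas$-a.e.\ from \autoref{Re:Trace} then forces $n \approx \tfrac{1}{N} n^2$, i.e.\ $n \leq N$ up to the approximation error; since $n$ is an integer, $n\leq\lfloor N\rfloor$. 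Taking the supremum over all such $n$ gives the dimension bound.

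\textbf{Key steps in order.} (1) Reduce to producing, for each $n$ with $\meas[E_n]>0$, an almost-orthonormal family of gradients of test functions on a large piece of $E_n$; here the only subtlety is that the pointwise orthonormal basis supplied by \eqref{Eq:Obstr} consists of general elements of $\Ell^2(T\mms)$, so one approximates each basis vector in $\Ell^2(T\mms)$-norm by elements of $\Reg(T\mms) = $ linear combinations of $g\,\nabla f$ and then, absorbing the bounded coefficients $g$ via \autoref{Le:Product convergence}, further by pure gradients $\nabla h_j$ with $h_j\in\Test(\mms)$, using Egorov's theorem to pass from $\Ell^2$-convergence to uniform control on a large Borel subset $E'$. (2) On $E'$, read off $\tr A$ and $\vert A\vert_\HS^2$ from \eqref{Eq:Pointwise trace} and \eqref{Eq:Pointwise norm HS} and invoke \autoref{Re:Trace}. (3) Integrate or evaluate $\meas$-a.e.\ on $E'$ to deduce $n\leq N$, hence $n\leq\lfloor N\rfloor$. (4) For the trace identity when $N\in\N$: given $f\in\Test(\mms)$, work on $E_N$, pick a pointwise orthonormal basis $\{e_1^N,\dots,e_N^N\}$ of $\Ell^2(T\mms)$ on a Borel subset of $E_N$ of finite measure, and apply the trace inequality of \autoref{Re:Trace} to the collection realizing this basis together with the Bochner-type inequality \eqref{Eq:Pointwise Bochner}; on $E_N$ the two inequalities must be simultaneously tight, which by the equality case of Cauchy--Schwarz forces $\Hess f$ restricted to this basis to be a multiple of the identity, and computing the trace via \eqref{Eq:div nabla = Delta}, \autoref{Th:Hess} and $\gamma_2^{2\kappa}(f)\geq\tfrac{1}{N}(\Delta f)^2$ pins the multiple down to $(\Delta f)/N$ per diagonal entry, giving $\tr\Hess f = \Delta f$ $\meas$-a.e.\ on $E_N$.

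\textbf{Main obstacle.} The principal difficulty is step (1): passing from an abstract pointwise orthonormal basis of the Hilbert module $\Ell^2(T\mms)$ on $E_n$ to gradients of \emph{test} functions that are \emph{uniformly} almost-orthonormal on a large subset, while keeping all the relevant quadratic quantities ($\tr$ and $\vert\cdot\vert_\HS$) under simultaneous $\meas$-a.e.\ control. This requires chaining the density of $\Reg(T\mms)$ in $\Ell^2(T\mms)$, the reduction of coefficients via \autoref{Le:Product convergence}, the locality and calculus rules from \autoref{Cor:Calculus rules d}, and an Egorov-type argument, all with careful bookkeeping of error terms so that the strict inequality $n\leq N+\varepsilon$ can be sharpened to $n\leq N$ by integrality. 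The trace-identity part (step 4) is then comparatively soft, being an equality-case analysis once \autoref{Re:Trace} and \eqref{Eq:Pointwise Bochner} are in hand.
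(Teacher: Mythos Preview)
Your step (1) has a genuine gap. You propose to approximate each orthonormal basis vector $V_j$ on $E_n$ by a \emph{pure gradient} $\nabla h_j$ of a test function, invoking \autoref{Le:Product convergence} to ``absorb the bounded coefficients $g$'' from a $\Reg(T\mms)$-approximation. But \autoref{Le:Product convergence} only says that a test \emph{function} can be approximated by products of two test functions; it does not allow you to write $g\,\nabla f$ --- let alone a finite $\Ell^\infty$-linear combination of such --- as an approximate pure gradient. Pure gradients are not dense in $\Ell^2(T\mms)$ in general (already on $\R^2$ the field $(-y,x)$ is nowhere a gradient), so an orthonormal basis cannot be realized this way. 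The density statement you actually have from \autoref{Pr:Generators cotangent module} is that $\nabla\,\Test(\mms)$ \emph{generates} $\Ell^2(T\mms)$ as an $\Ell^\infty$-module, which irreducibly requires $\Ell^\infty$-coefficients.

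The paper circumvents this by going the other direction: rather than approximating the basis by gradients, it extends the trace inequality of \autoref{Re:Trace} from $\sum_j\nabla h_j\otimes\nabla h_j$ to $\sum_j f_j\,\nabla h_j\otimes f_j\,\nabla h_j$ with arbitrary $f_j\in\Ell^\infty(\mms)$, by exactly the coefficient-replacement argument used in the proof of \autoref{Th:Hess} (rational scalars, countability, continuity, then pointwise substitution). Once the inequality holds for all such products, density lets you plug in the orthonormal basis $V_1,\dots,V_m$ directly, yielding $m\geq m^2/N$ on $E_m$ and hence $m\leq N$. Your step (4) is also off: equality in the trace inequality for the identity operator is tautological and says nothing about $\Hess f$, and equality in \eqref{Eq:Pointwise Bochner} is not available. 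The paper instead uses the full two-sided estimate of \autoref{Le:Extremely key lemma} with $N'=N$ (after the same extension to $V_j$), in which the right-hand factor $\sum_{j,j'}\langle V_j,V_{j'}\rangle^2-\tfrac{1}{N}\big(\sum_j\vert V_j\vert^2\big)^2$ vanishes on $E_N$, forcing $\sum_j\Hess f(V_j,V_j)-\tfrac{1}{N}\Delta f\sum_j\vert V_j\vert^2=0$ and hence $\tr\Hess f=\Delta f$.
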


\begin{proof} Suppose to the contrapositive that $\meas[E_n] > 0$ for some $n\in\N\cup\{\infty\}$ with $n > N$. Let $m\in \N$ be a finite number satisfying $N < m \leq n$. Let $B\subset E_m$ be a given Borel set of finite, but positive $\meas$-measure. By \autoref{Th:Module structure} for $\mu := \meas$ and \cite[Thm.~1.4.11]{gigli2018}, there exist vectors $V_1,\dots,V_m\in \Ell^2(T\mms)$ such that $\One_{B^\rmc}\,V_i = 0$ and $\langle V_i,V_j\rangle = \delta_{ij}$ $\meas$-a.e.~in $B$ for every $i,j\in\{1,\dots,m\}$ which generate $\smash{\Ell^2(T\mms)}$ on $B$. Recall that by \autoref{Re:Trace}, for every $h\in\Test(\mms)^m$,
\begin{align}\label{Eq:xcv}
\Big\vert\!\sum_{j=1}^m \nabla h_j\otimes\nabla h_j\Big\vert_\HS^2 \geq \frac{1}{N}\tr\!\Big[\!\sum_{j=1}^m\nabla h_j\otimes\nabla h_j\Big]^2\quad\meas\text{-a.e.}
\end{align}
By a similar argument as for \autoref{Th:Hess}, we can replace $\nabla h_j$ by $f_j\,\nabla h_j$, $j\in \{1,\dots,m\}$, for arbitrary $f_1,\dots,f_m\in\Ell^\infty(\mms)$,  still retaining \eqref{Eq:xcv}. By \autoref{Pr:Generators cotangent module} and \autoref{Sub:Test objects}, the linear span of such vector fields generates $\Ell^2(T\mms)$ on $B$. We can thus further replace $f_j\,\nabla h_j$ by $1_B\,V_j$, $j\in \{1,\dots,m\}$, and \eqref{Eq:xcv} translates into
\begin{align}\label{Eq:Equality occurs}
m = \Big\vert\!\sum_{j=1}^m V_j\otimes V_j\Big\vert_\HS^2 \geq \frac{1}{N}\tr\!\Big[\!\sum_{j=1}^m V_j\otimes V_j\Big]^2 = \frac{m^2}{N}\quad\meas\text{-a.e.}\quad\text{on }B.
\end{align}
This is in contradiction with the assumption $N < m$.

The second claim is only nontrivial if $\meas[E_N]> 0$. In this case, retain the notation of the previous part and observe that under our given assumptions, equality occurs in \eqref{Eq:Equality occurs} for $m$ replaced by $N$. Using \autoref{Le:Extremely key lemma}, \eqref{Eq:H = H} and similar arguments as for \autoref{Th:Hess} to get rid of the term containing $g\in\Test(\mms)$ and from above to pass from $\nabla h_j$ to $V_j$, $j\in \{1,\dots,N\}$, we get
\begin{align*}
&\Big\vert\!\sum_{j=1}^N \Hess f(V_j,V_j) - (\Delta f)^2\Big\vert^2\\
&\qquad\qquad = \Big\vert\!\sum_{j=1}^N \Hess f(V_j,V_j) - \frac{1}{N}\,(\Delta f)^2\sum_{j=1}^N \vert V_j\vert^2\Big\vert^2  =0\quad\meas\text{-a.e.}\quad\text{on }B.
\end{align*}
This provides the assertion by the arbitrariness of $B$.
\end{proof}

\begin{corollary} For every $k\in \N$ with $k>\lfloor N\rfloor$,
\begin{align*}
\Ell^2(\Lambda^kT^*\mms) = \{0\}.
\end{align*}
\end{corollary}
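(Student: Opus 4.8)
The statement to prove is: for every integer $k > \lfloor N\rfloor$, the module $\Ell^2(\Lambda^kT^*\mms)$ is trivial. By the pointwise isometric module isomorphism between $\Ell^2(\Lambda^kT^*\mms)$ and $\Ell^2(\Lambda^kT\mms)$ recorded in \autoref{Sub:Test objects}, it suffices to show $\Ell^2(\Lambda^kT\mms) = \{0\}$, and for that it is enough to show that $\Lambda^k\Ell^2(T\mms)^0 = \{0\}$ pointwise, i.e.~that $\vert\omega\vert = 0$ $\meas$-a.e.~for every $\omega \in \Lambda^k\Ell^2(T\mms)$. The first step is to invoke \autoref{Pr:Upper bound local dimension}, which gives $\dim_{\Ell^\infty,\max}\Ell^2(T\mms) \leq \lfloor N\rfloor$; writing $(E_n)_{n\in\N\cup\{\infty\}}$ for the dimensional decomposition of $\Ell^2(T\mms)$ from \autoref{Th:Dimensional decomposition}, this means $\meas[E_n] = 0$ for every $n > \lfloor N\rfloor$ (including $n = \infty$), so that $\meas$-a.e.~point lies in some $E_n$ with $n \leq \lfloor N\rfloor < k$.

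Next I would localize: fix $n \leq \lfloor N\rfloor$ with $\meas[E_n] > 0$ and a Borel set $E \subset E_n$ with $\meas[E] \in (0,\infty)$, and choose a pointwise orthonormal basis $\{e_1^n,\dots,e_n^n\} \subset \Ell^2(T\mms)\big\vert_E$ of $\Ell^2(T\mms)$ on $E$ as in the discussion of traces preceding \eqref{Eq:Pointwise trace}. For a simple wedge $X_1\wedge\dots\wedge X_k$ with bounded $X_i \in \Ell^2(T\mms)$, restricting to $E$ and expanding each $\One_E\,X_i = \sum_{j=1}^n \langle X_i, e_j^n\rangle\,e_j^n$, multilinearity and antisymmetry of $\wedge$ force $\One_E\,(X_1\wedge\dots\wedge X_k) = 0$ because any wedge $e_{j_1}^n \wedge \dots \wedge e_{j_k}^n$ with $k > n$ indices drawn from $\{1,\dots,n\}$ has a repeated factor and hence vanishes. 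Equivalently, one can compute the pointwise norm directly via \eqref{Eq:Ptw scalar prod ext}: $\vert X_1\wedge\dots\wedge X_k\vert_\mu^2$ is a sum of determinants of Gram-type matrices $[\langle X_{i},X_{j}\rangle]$ which, on $E$, have rank at most $n < k$ and therefore vanish $\meas$-a.e.~on $E$. By linearity this extends to all finite linear combinations of simple wedges of bounded vector fields, and such combinations are dense in $\Lambda^k\Ell^2(T\mms)$ (as $\Ell^2(T\mms)$ is separable with a countable dense set of bounded elements, recall \autoref{Sub:Exterior products}). Since $\meas$-a.e.~point of $\mms$ belongs to such an $E$ (the $E_n$ with $n \leq \lfloor N\rfloor$ cover $\mms$ up to an $\meas$-null set), we conclude $\vert\omega\vert = 0$ $\meas$-a.e.~for every $\omega$ in a dense subset, and then for every $\omega \in \Lambda^k\Ell^2(T\mms)$ by continuity of $\vert\cdot\vert$; hence $\Vert\omega\Vert_{\Lambda^k\Ell^2(T\mms)} = 0$, proving $\Ell^2(\Lambda^kT\mms) = \{0\}$ and therefore $\Ell^2(\Lambda^kT^*\mms) = \{0\}$.

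This argument is essentially routine once \autoref{Pr:Upper bound local dimension} is in hand; the only mild subtlety is the passage from simple wedges of \emph{bounded} vector fields to general elements of $\Lambda^k\Ell^2(T\mms)$, which is handled exactly as in \autoref{Sub:Exterior products} (density of wedges of bounded generators, continuity of the pointwise norm). I expect no genuine obstacle here — the substantive work was already done in establishing the local dimension bound via the trace inequality of \autoref{Re:Trace}.
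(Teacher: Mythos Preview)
Your proof is correct. The paper states this corollary immediately after \autoref{Pr:Upper bound local dimension} without any proof, treating it as an evident consequence of the local dimension bound; your argument spells out precisely the standard details (localization to the dimensional strata $E_n$ with $n\leq\lfloor N\rfloor$, vanishing of $k$-fold wedges in an $n$-dimensional piece via the Gram determinant \eqref{Eq:Ptw scalar prod ext}, and passage to the limit by density of simple wedges of bounded elements) that the paper leaves implicit.
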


\autoref{Pr:Upper bound local dimension} opens the door for considering an $N$-Ricci tensor on $\BE_2(\kappa,N)$ spaces with $N< \infty$, see \autoref{Sub:Dimensional Ricci tensor} below.

\begin{remark}\label{Re:Dim RCD} It is an interesting task to carry out a detailed study of  sufficient and necessary conditions for the constancy of the local dimension of $\Ell^2(T\mms)$ as well as its maximality. Natural questions in this respect are the following.
\begin{enumerate}[label=\alph*.]
\item\label{La:Ukyu} Under which hypotheses does there exist $d\in \{1,\dots,\lfloor N\rfloor\}$ such that
\begin{align}\label{Eq:Ed full measure}
\meas\big[E_d^\rmc\big] = 0?
\end{align}
\item\label{La:Bukyu} If \eqref{Eq:Ed full measure} holds for some $d\in \{1,\dots,\lfloor N\rfloor\}$, which conclusions can be drawn for the space $(\mms,\Ch,\meas)$? Does it satisfy $\BE_2(\kappa,d)$?
\item\label{La:Dukyu} What happens if $N$ is an integer and $d=N$ in \eqref{Eq:Ed full measure}?
\end{enumerate}

In \cite{honda2018}, a general class of examples which obey $\BE_2(K,N)$, $K\in\R$ and $N\in [1,\infty)$, but do not have constant local dimension has been pointed out. The latter already happens, for instance, for metric measure spaces obtained by gluing together two compact pointed Riemannian manifolds at their base points. The point is that such a space does not satisfy the Sobolev-to-Lipschitz property, hence cannot be $\RCD(K',\infty)$ for any $K'\in \R$ \cite[Thm.~6.2]{ambrosio2014a}.

On the other hand, some existing results in the framework of  $\RCD(K,N)$ spaces $(\mms,\met,\meas)$, $K\in\R$ and $N\in [1,\infty)$, are worth mentioning.

Originating in \cite{mondino2019},  \ref{La:Ukyu} has completely been solved in \cite[Thm.~0.1]{bruesemola2020}. (This result from \cite{bruesemola2020} uses optimal transport tools. See \cite{han2018} for the connections of the latter to \cite{gigli2018}.) However, it is still unknown what the corresponding value of $d$ really is, e.g.~whether it generally coincides with the Hausdorff dimension of $(\mms,\met)$. 

Questions \ref{La:Bukyu} and \ref{La:Dukyu} are still subject to high research interest. Results in this direction have been initiated in \cite{dephilippis2018}. Indeed, every $\RCD(K,N)$ space with integer $N$ and $\smash{\meas[E_N^\rmc]=0}$ is weakly noncollapsed \cite[Def.~1.10]{dephilippis2018} by \cite[Thm.~1.12, Rem.~1.13]{dephilippis2018}. In fact, it is conjectured in \cite[Rem.~1.11]{dephilippis2018} that every such weakly noncollapsed $\RCD(K,N)$ space is \emph{noncollapsed} \cite[Def.~1.1]{dephilippis2018}, i.e.~$\meas$ is a constant multiple of $\Haus^N$. This conjecture has first been solved if $\mms$ is compact \cite[Cor.~1.3]{honda2020}, and recently been shown in full generality in  \cite[Thm.~1.3]{brena}. 
\end{remark}

Since to our knowledge, the result from \cite{bruesemola2020} in \autoref{Re:Dim RCD}  has not yet been considered in the context of Hino indices, let us phrase it separately according to our compatibility result in \autoref{Cor:Hino} to bring it to a broader audience.

\begin{corollary} Let $(\mms,\Ch,\meas)$ be the Dirichlet space induced by an $\RCD(K,N)$  space $(\mms,\met,\meas)$, $K\in\R$ and $N\in [1,\infty)$. Then there exists $d\in \{1,\dots,\lfloor N\rfloor\}$ such that the pointwise Hino index of $\Ch$ is $\meas$-a.e.~constantly equal to $d$.
\end{corollary}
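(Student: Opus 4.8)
The statement combines two facts that are already essentially in place. First, by \cite{bruesemola2020}, see the discussion in \autoref{Re:Dim RCD}, every $\RCD(K,N)$ space $(\mms,\met,\meas)$ with $K\in\R$ and $N\in[1,\infty)$ has constant local dimension: in the dimensional decomposition $(E_n)_{n\in\N\cup\{\infty\}}$ of $\Ell^2(T\mms)$ provided by \autoref{Th:Dimensional decomposition}, there is a single $d\in\{1,\dots,\lfloor N\rfloor\}$ with $\meas[E_d^\rmc]=0$. (The bound $d\le\lfloor N\rfloor$ is exactly \autoref{Pr:Upper bound local dimension}, which applies since such a space satisfies $\BE_2(K,N)$ with $N<\infty$ by \autoref{Ex:The RCD ex}; equivalently one quotes it directly from \cite{bruesemola2020}.) Second, by \autoref{Cor:Hino}, for the cotangent module $\Ell^2(T^*\mms)$ seen as an $\Ell^2$-normed $\Ell^\infty$-module w.r.t.\ a minimal $\Ch$-dominant $\mu$, the pointwise index $\rmp$ equals $n$ $\mu$-a.e.\ on $E_n'$, where $(E_n')_{n\in\N\cup\{\infty\}}$ is the dimensional decomposition of $\Ell^2(T^*\mms)$.

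So the remaining step is to match the two decompositions. By \autoref{Th:Riesz theorem modules}, the map $\sharp\colon\Ell^2(T^*\mms)\to\Ell^2(T\mms)$ is a pointwise module isometric isomorphism; since local independence and the span construction from \autoref{Sub:Local dimension} are preserved under module isomorphisms, $\Ell^2(T^*\mms)$ and $\Ell^2(T\mms)$ have the same local dimension on every Borel set, hence $E_n'=E_n$ up to $\mu$-null (equivalently $\meas$-null, as $\mu$ and $\meas$ are mutually absolutely continuous here since $\Ch$ admits a carré du champ) sets for every $n$. Therefore $\meas[E_d^\rmc]=0$ gives $\mu[(E_d')^\rmc]=0$, and \autoref{Cor:Hino} yields $\rmp=d$ $\mu$-a.e.\ on $E_d'$, i.e.\ $\rmp=d$ $\mu$-a.e.\ on all of $\mms$. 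Since by definition $\mu$ and $\meas$ are mutually absolutely continuous, this is the same as saying $\rmp=d$ $\meas$-a.e.

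I do not expect a genuine obstacle here: everything reduces to quoting \cite{bruesemola2020} for constancy of the local dimension, \autoref{Pr:Upper bound local dimension} for the bound $d\le\lfloor N\rfloor$, \autoref{Cor:Hino} for the identification of the Hino index with the module dimension, and the isometry $\sharp$ for passing between tangent and cotangent modules. The only mild subtlety is bookkeeping about which reference measure the Hino index is computed with respect to — but the pointwise index is independent of the choice of minimal $\Ch$-dominant $\mu$ by \autoref{Re:Hino}, and under \autoref{As:Gamma-operator} one may simply take $\mu=\meas$, which is minimal $\Ch$-dominant by the remark after \cite[Def.~2.16]{delloschiavo2020} cited in \autoref{Sub:Carré}.
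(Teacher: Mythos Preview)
Your proposal is correct and takes essentially the same approach as the paper, which does not provide an explicit proof but presents the corollary as an immediate consequence of combining the constancy-of-dimension result \cite{bruesemola2020} (see \autoref{Re:Dim RCD}) with the compatibility result \autoref{Cor:Hino}. You have simply spelled out the bookkeeping --- the passage between the dimensional decompositions of $\Ell^2(T^*\mms)$ and $\Ell^2(T\mms)$ via $\sharp$, and the choice of reference measure --- which the paper leaves implicit.
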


\subsection{Calculus rules}\label{Sub:Calc rules hess} This section contains calculus rules for the Hessian as well as preparatory material, such as different function spaces, required to develop them. We shortly comment on the two major challenges in establishing these.

First, the calculus rules in \autoref{Subsub:Hess calc rules} below  --- which easily hold for test functions --- do not transfer to arbitrary elements in $\Dom(\Hess)$ in general, at least not by approximation. In fact, in general $\Test(\mms)$ is even \emph{not} dense in $\Dom(\Hess)$: on a compact smooth Riemannian manifold $\mms$ with boundary, any nonconstant, affine $f\colon \mms \to \R$ belongs to $\Dom(\Hess)$, but as a possible limit of elements of $\Test(\mms)$ in $\Dom(\Hess)$  it would necessarily have $\surf$-a.e.~vanishing normal derivative at $\partial\mms$ by \autoref{Le:Div g nabla f},  \autoref{Ex:Mflds with boundary} and the trace theorem from  \autoref{Pr:Trace thm} for $k:= p:= 2$ and $E:=\mms\times\R$, see also \cite[p.~32]{schwarz1995}.  Hence, in \autoref{Def:H22} we consider the closure $\Dom_\reg(\Hess)$ of $\Test(\mms)$ in $\Dom(\Hess)$. Many calculus rules will ``only'' hold for this class.

Second, to prove a product rule for the Hessian, \autoref{Pr:Product rule fcts}, similarly to \autoref{Def:Hess} above one would try to define the space $\smash{W^{2,1}(\mms)}$ of all $f\in\Ell^1(\mms)$ having a gradient $\nabla f\in \Ell^1(T\mms)$ and a Hessian $\Hess f \in \Ell^1((T^*)^{\otimes 2}\mms)$.  However, we refrain from doing so, since in this case the term $\smash{\big\langle \nabla f, \nabla\langle\nabla g_1,\nabla g_2\rangle\big\rangle}$, $g_1,g_2\in\Test(\mms)$, appearing in the defining property of $\Hess f$ in \autoref{Def:Hess} cannot be guaranteed to have any integrability by the lack of $\Ell^\infty$-bounds on $\nabla\langle\nabla g_1,\nabla g_2\rangle$. (In view of \autoref{Pr:Compatibility} and \autoref{Le:Kato inequality}, this would amount to the strong requirement of bounded Hessians of $g_1$ and $g_2$. Compare with \cite[Rem.~4.10, Rem.~4.11, Rem.~4.13]{giglipasqu2020}.) A similar issue arises for the Hessian chain rule in \autoref{Pr:Hess chain rule}. Since the requirement $\nabla f\in\Ell^2(\mms)$ thus cannot be dropped, we are forced to work with the space $\smash{\Dom_{2,2,1}(\Hess)}$ introduced in \autoref{Def:W221}.

\subsubsection{First order spaces} 

\begin{definition}\label{Def:W11} We define the space $\calG$ to consist of all $f\in \Ell^1(\mms)$ for which there exists $\eta\in\Ell^1(T^*\mms)$ such that
\begin{align*}
\int_\mms \eta(\nabla g)\,h\d\meas = -\int_\mms f\div(h\,\nabla g)\d\meas
\end{align*}
for every $\smash{g\in\Test_{\Ell^\infty}(\mms)}$ and every $h\in\Test(\mms)$. If such an $\eta$ exists, it is unique, denoted by $\rmd_1 f$ and termed the \emph{differential} of $f$.
\end{definition}

The defining equality makes sense by \autoref{Le:Div g nabla f}. The uniqueness statement follows from weak$^*$ density of $\Test(T\mms)$ in $\Ell^\infty(T\mms)$ as discussed in \autoref{Sub:Test objects}. Therefore, $\rmd_1$ becomes a linear operator on $\calG$, which turns the latter into a real vector space. Since both sides of the defining property for $\rmd_1$ in \autoref{Def:W11} are strongly $\Ell^1$-continuous in $f$ and $\eta$, respectively, for fixed $\smash{g\in\Test_{\Ell^\infty}(\mms)}$ and $h\in\Test(\mms)$, $\calG$ is a Banach space w.r.t.~the norm $\Vert\cdot\Vert_{\calG}$ given by
\begin{align*}
\Vert f\Vert_{\calG} := \Vert f\Vert_{\Ell^1(\mms)} + \Vert\rmd_1f\Vert_{\Ell^1(T^*\mms)}.
\end{align*}

Products of test functions belong to $\calG$ by \autoref{Le:Div g nabla f}. In fact, by \autoref{Le:Product convergence}, $\calG\cap \F$ and $\calG \cap \Test(\mms)$ are dense in $\F$. The subsequent definition --- that we introduce since we do not know if $\calG\cap \Test(\mms)$ is dense in $\calG$ --- is thus non-void.

\begin{definition}\label{Def:H11 functions} We define the space $\calG_\reg\subset \calG$ by
\begin{align*}
\calG_\reg := \cl_{\Vert \cdot\Vert_{\calG}}\big[\calG\cap\Test(\mms)\big].
\end{align*}
\end{definition}

One  checks through \autoref{Le:Product convergence} and \autoref{Le:Mollified heat flow}  that $\calG_\reg$ is dense in $\Ell^1(\mms)$.

A priori, the differential from \autoref{Def:W11} could differ from the differential $\rmd$ on $\smash{\F_\rme}$ from \autoref{Def:Differential}.  However, these objects agree on the intersection $\smash{\calG\cap \F_\rme}$. In particular, in what follows we simply write $\rmd f$ in place of $\rmd_1 f$ for $f\in \calG$ although the axiomatic difference should always be kept in mind. 

\begin{lemma}\label{Le:d = d1} If $f\in \calG\cap \F_\rme$, then
\begin{align*}
\rmd_1 f = \rmd f.
\end{align*}
\end{lemma}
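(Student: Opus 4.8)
The plan is to exploit the defining integration-by-parts identities of both differentials and the abundance of vector fields of the form $h\,\nabla g$ with $g\in\Test_{\Ell^\infty}(\mms)$ and $h\in\Test(\mms)$. Fix $f\in\calG\cap\F_\rme$. On the one hand, by \autoref{Def:W11}, for every such $g$ and $h$ we have
\begin{align*}
\int_\mms \rmd_1 f(\nabla g)\,h\d\meas = -\int_\mms f\div(h\,\nabla g)\d\meas.
\end{align*}
On the other hand, since $f\in\F_\rme$, its differential $\rmd f$ in the sense of \autoref{Def:Differential} is defined, and $h\,\nabla g\in\Dom(\div)$ by \autoref{Le:Div g nabla f} (indeed $g\in\Test_{\Ell^\infty}(\mms)\subset\Dom(\Delta)$ and $h\in\Test(\mms)\subset\F_\bounded$ with $\vert\rmd h\vert\in\Ell^\infty(\mms)$). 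The first step is therefore to verify that the integration-by-parts formula defining $\Dom(\div)$, namely $-\int_\mms u\div X\d\meas = \int_\mms \rmd u(X)\d\meas$ for $u\in\F$, extends from $u\in\F$ to $u\in\F_\rme$ when $X := h\,\nabla g$ has the stated form; this is a routine approximation using an $\Ch$-Cauchy sequence $(f_n)_{n\in\N}$ in $\F$ with $f_n\to f$ pointwise $\meas$-a.e., together with \autoref{Le:d closed} (so that $\rmd f_n\to\rmd f$ in $\Ell^2(T^*\mms)$) and the fact that $\div(h\,\nabla g)\in\Ell^2(\mms)$ is a fixed $\Ell^2$-function while $f_n\to f$ can be upgraded to $\Ell^2$-convergence on the (finite-measure, by $\supp h$ compact) support of $h$; alternatively, one localizes using the approximations $g_n$ to $\One_\mms$ from \autoref{Le:Approx to id}. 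This yields
\begin{align*}
\int_\mms \rmd f(\nabla g)\,h\d\meas = -\int_\mms f\div(h\,\nabla g)\d\meas
\end{align*}
for every $g\in\Test_{\Ell^\infty}(\mms)$ and $h\in\Test(\mms)$.

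Combining the two displays gives $\int_\mms \big(\rmd_1 f - \rmd f\big)(\nabla g)\,h\d\meas = 0$ for all such $g,h$. The second step is to conclude that $\rmd_1 f = \rmd f$ as elements of $\Ell^0(T^*\mms)$. Since $h\in\Test(\mms)$ ranges over a class that is weakly$^*$ dense in $\Ell^\infty(\mms)$ and, more to the point, whose products $\Test(\mms)\cdot\Test(\mms)$ separate points of $\Ell^1(\mms)$, the vanishing of the integral for all $h$ forces the $\Ell^1_\loc$-function $x\mapsto (\rmd_1 f - \rmd f)(\nabla g)(x)$ to be $\meas$-a.e.\ zero for each fixed $g\in\Test_{\Ell^\infty}(\mms)$. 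Then one invokes the fact, recorded after \autoref{Sub:Test reg} and used repeatedly (cf.\ the proof of \autoref{Cor:Calculus rules d}), that $\Ell^2(T^*\mms)$ — and hence $\Ell^0(T^*\mms)$ — is generated in the sense of $\Ell^\infty$-modules by $\{\nabla g : g\in\Test_{\Ell^\infty}(\mms)\}$ (via \autoref{Pr:Generators cotangent module} together with the weak$^*$ density of $\Test_{\Ell^\infty}(T\mms)$ in $\Ell^\infty(T\mms)$, using \autoref{Le:Mollified heat flow}); consequently a $1$-form that pairs to zero against every such gradient vanishes $\meas$-a.e. This gives $\rmd_1 f = \rmd f$.

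\textbf{Main obstacle.} The only genuinely delicate point is the first step: transferring the $\Dom(\div)$ integration-by-parts identity from $\F$ to $\F_\rme$. One must be careful that $f\in\F_\rme$ need not be $\Ell^2$- or even $\Ell^1$-integrable globally, so the pairing $\int_\mms f\div(h\,\nabla g)\d\meas$ must be controlled via the compactness of $\supp h$ (making the integral effectively over a set of finite $\meas$-measure) and via a truncation/localization of $f$ that stays $\Ell^2$ there, while simultaneously keeping $\rmd$ of the truncations convergent to $\rmd f$ in $\Ell^2(T^*\mms)$ — for which the closedness \autoref{Le:d closed} and the lower semicontinuity of $\Ch_1$ from \autoref{Pr:Extended domain props}\ref{La:E bounded} are the right tools. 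Everything else is a direct application of the module generation statements already in the text.
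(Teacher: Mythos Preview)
Your overall strategy --- show that both $\rmd f$ and $\rmd_1 f$ satisfy the same integration-by-parts identity against $h\,\nabla g$ and then use density --- is exactly the paper's approach, and your second step (the density/generation argument) is fine. The flaw is in your first step, the approximation that passes the $\Dom(\div)$ identity from $\F$ to $\F_\rme$.

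Concretely: elements $h\in\Test(\mms)$ do \emph{not} have compact support in this setting; $\Test(\mms)$ consists of bounded functions with bounded gradient and $\Delta h\in\F$, with no support condition. So your reduction to a finite-measure set via ``$\supp h$ compact'' is unavailable. Moreover, an arbitrary $\Ch$-Cauchy sequence $(f_n)\subset\F$ witnessing $f\in\F_\rme$ converges to $f$ only pointwise $\meas$-a.e., and there is no mechanism to upgrade this to $\Ell^2$-convergence, even locally. You also write that $f$ ``need not be $\Ell^1$-integrable globally'', but by definition $\calG\subset\Ell^1(\mms)$, so $f$ \emph{is} in $\Ell^1(\mms)$; this is precisely the integrability you should be exploiting.

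The correct fix is the paper's: use the truncations $f_n:=\max\{\min\{f,n\},-n\}$. These lie in $\F_\rme\cap\Ell^1(\mms)\cap\Ell^\infty(\mms)\subset\F_\rme\cap\Ell^2(\mms)=\F$, satisfy $f_n\to f$ in $\Ell^1(\mms)$ by dominated convergence, and have $\rmd f_n=\One_{\{|f|<n\}}\,\rmd f\to\rmd f$ in $\Ell^2(T^*\mms)$ by locality. The reason the pairing $\int_\mms f_n\,\div(h\,\nabla g)\,\rmd\meas$ then passes to the limit is that $\div(h\,\nabla g)=\langle\nabla h,\nabla g\rangle+h\,\Delta g\in\Ell^\infty(\mms)$ --- and this $\Ell^\infty$-bound is exactly why \autoref{Def:W11} requires $g\in\Test_{\Ell^\infty}(\mms)$ rather than merely $g\in\Test(\mms)$. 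Your proposal only records $\div(h\,\nabla g)\in\Ell^2(\mms)$, which is not enough against $\Ell^1$-convergence of $f_n$.
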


\begin{proof} Given any $n\in\N$, the function $f_n := \max\{\min\{f,n\},-n\}\in \F_\rme$ belongs to $\Ell^1(\mms)\cap\Ell^\infty(\mms)$ and thus to $\F$ by \autoref{Pr:Extended domain props}. Let $\smash{g\in\Test_{\Ell^\infty}(\mms)}$ and $h\in\Test(\mms)$ be arbitrary. Observe that $f_n\to f$ in $\Ell^1(\mms)$ and $\rmd f_n = \One_{\{\vert f\vert < n\}}\d f \to \rmd f$ in $\Ell^2(T^*\mms)$ as $n\to\infty$. Since $h\,\nabla g\in \Dom_\TV(\DIV)\cap \Dom(\div)$ with $\norm(h\,\nabla g) = 0$ and  $\div(h\,\nabla g) = \langle \nabla h,\nabla g\rangle + h\,\Delta g\in\Ell^\infty(\mms)$ by \autoref{Le:Div g nabla f},
\begin{align*}
\int_\mms \rmd_1f(\nabla g)\,h\d\meas &= -\int_\mms f\div(h\,\nabla g)\d\meas\\
&= -\lim_{n\to\infty} \int_\mms f_n\div(h\,\nabla g)\d\meas\\
&= \lim_{n\to\infty} \int_{\{\vert f\vert < n\}} \rmd f(\nabla g)\,h\d\meas\\
&= \int_\mms \rmd f(\nabla g)\,h\d\meas. 
\end{align*}
The statement follows from the weak$^*$ density of $\Test_{\Ell^\infty}(T\mms)$ in $\Ell^\infty(T\mms)$.
\end{proof}

Occasionally we adopt the dual perspective of $\rmd f$ for a given $f\in \calG$ similarly to \autoref{Def:Gradient} --- more precisely, under the compatibility granted by \autoref{Le:d = d1} and keeping in mind \autoref{Sub:Lebesgue sp},  define $\nabla f\in\Ell^1(T\mms)$ by
\begin{align*}
\nabla f := (\rmd f)^\sharp.
\end{align*}

\begin{lemma}\label{Le:IBP for W11} For every $f\in \calG_\reg$ and every $g\in\F$ such that $\rmd g\in\Ell^\infty(T^*\mms)$ and $\Delta g\in\Ell^\infty(\mms)$,  we have
\begin{align*}
\int_\mms f\,\Delta g \d\meas = -\int_\mms \rmd f(\nabla g)\d\meas.
\end{align*}
\end{lemma}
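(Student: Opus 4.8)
The statement is an integration-by-parts identity: for $f \in \calG_\reg$ and $g$ with $\rmd g \in \Ell^\infty(T^*\mms)$, $\Delta g \in \Ell^\infty(\mms)$, one has $\int_\mms f\,\Delta g\d\meas = -\int_\mms \rmd f(\nabla g)\d\meas$. The plan is to first establish this when $f \in \calG \cap \Test(\mms)$, and then pass to the closure $\calG_\reg$ by continuity, after checking that the map $g$ (fixed) against which we test is sufficiently regular to allow the approximation.

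First I would reduce to the case $f \in \calG \cap \Test(\mms)$. Here $f \in \Dom(\Delta)$, so by \eqref{Eq:IBP Laplacian} applied with roles reversed, $\int_\mms f\,\Delta g\d\meas = -\Ch(f,g) = \int_\mms g\,\Delta f\d\meas$ whenever $g \in \F$ (note $g \in \F$ since $g \in \Dom(\Delta)$ by hypothesis $\Delta g \in \Ell^\infty(\mms) \subset \Ell^2(\mms)$, assuming $g \in \F$ is part of the hypothesis). Simultaneously, $\nabla f \in \Ell^2(T\mms)$ and in fact $\nabla f \in \Dom(\div)$ with $\div\nabla f = \Delta f$ by \eqref{Eq:div nabla = Delta}; hence by the \autoref{Def:L2 div} of the $\Ell^2$-divergence applied to $X := \nabla f$ and test function $g$, $\int_\mms \rmd g(\nabla f)\d\meas = -\int_\mms g\,\Delta f\d\meas$. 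Since the pointwise pairing $\langle\cdot,\cdot\rangle$ on $\Ell^2(T\mms)$ is symmetric, $\rmd g(\nabla f) = \langle\nabla g,\nabla f\rangle = \rmd f(\nabla g)$ $\meas$-a.e., so $\int_\mms \rmd f(\nabla g)\d\meas = -\int_\mms g\,\Delta f\d\meas = \int_\mms f\,\Delta g\d\meas \cdot(-1)$; rearranging gives the identity for $f \in \calG \cap \Test(\mms)$. (One must be slightly careful that all integrands are genuinely $\Ell^1$: $f \in \Ell^1(\mms)$ and $\Delta g \in \Ell^\infty(\mms)$ make the left integrand integrable; $\rmd f \in \Ell^1(T^*\mms)$ — since $f \in \calG$ — and $\nabla g \in \Ell^\infty(T\mms)$ make the right integrand integrable.)

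Next I would pass to the limit. Let $f \in \calG_\reg$ and choose $(f_n)_{n\in\N}$ in $\calG \cap \Test(\mms)$ with $f_n \to f$ in $\Vert\cdot\Vert_{\calG}$, i.e.\ $f_n \to f$ in $\Ell^1(\mms)$ and $\rmd f_n \to \rmd f$ in $\Ell^1(T^*\mms)$ (using \autoref{Le:d = d1} to identify $\rmd_1$ with $\rmd$ on the relevant intersection). The left-hand sides converge: $\big|\int_\mms(f_n - f)\,\Delta g\d\meas\big| \leq \Vert f_n - f\Vert_{\Ell^1(\mms)}\,\Vert\Delta g\Vert_{\Ell^\infty(\mms)} \to 0$. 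The right-hand sides converge: since $\vert\rmd f_n(\nabla g) - \rmd f(\nabla g)\vert \leq \vert\rmd(f_n - f)\vert\,\vert\nabla g\vert$ $\meas$-a.e.\ by the pointwise Cauchy--Schwarz inequality of \autoref{Sub:Linfty modles}, we get $\big|\int_\mms \rmd(f_n - f)(\nabla g)\d\meas\big| \leq \Vert\rmd(f_n-f)\Vert_{\Ell^1(T^*\mms)}\,\Vert\nabla g\Vert_{\Ell^\infty(T\mms)} \to 0$. Combining, the identity passes to $f \in \calG_\reg$.

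\textbf{Main obstacle.} The plan hinges on verifying the base case cleanly, and the only real subtlety there is making sure the two integration-by-parts mechanisms (the one from \eqref{Eq:IBP Laplacian} defining $\Delta$, and the one from \autoref{Def:L2 div} defining $\div$) are being applied with the correct roles and that $g \in \F$ so that both are legitimate — this is where I expect to have to be most careful, though it should go through since $g \in \Dom(\Delta) \subset \F$. The approximation step is entirely routine given the definition of $\Vert\cdot\Vert_{\calG}$ and the $\Ell^\infty$-bounds on $g$; there is no genuine analytic difficulty there.
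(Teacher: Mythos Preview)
Your proposal is correct and follows essentially the same approach as the paper: establish the identity for $f\in\calG\cap\Test(\mms)$ via the defining integration-by-parts formula \eqref{Eq:IBP Laplacian} for $\Delta$ (since $f_n\in\F$ and $g\in\Dom(\Delta)$), then pass to the limit using $\calG$-convergence and the $\Ell^\infty$-bounds on $\Delta g$ and $\nabla g$. Your base case takes a slight detour through $\div\nabla f=\Delta f$ where the paper simply uses $-\int_\mms f_n\,\Delta g\d\meas=\Ch(f_n,g)=\int_\mms\rmd f_n(\nabla g)\d\meas$ directly, but this is cosmetic.
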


\begin{proof} Given a sequence $(f_n)_{n\in\N}$ in $\Test(\mms)\cap \calG$ converging to $f$ in $\calG$, since $f_n\in \F$ for every $n\in\N$, by \autoref{Le:d = d1} we have
\begin{align*}
\int_\mms f\,\Delta g \d\meas &= \lim_{n\to\infty}\int_\mms f_n\,\Delta g\d\meas\\
&= -\lim_{n\to\infty}\int_\mms \rmd f_n(\nabla g)\d\meas\\
&= -\int_\mms \rmd f(\nabla g)\d\meas.\qedhere
\end{align*} 
\end{proof}

\begin{remark}\label{Re:No W11 in general} It is unclear to us  whether the integration by parts formula from \autoref{Le:IBP for W11} holds if merely $f\in \calG$. The subtle point is that in general, we are not able to get rid of the zeroth order term $h$ in \autoref{Def:W11} --- \autoref{Le:Approx to id} does not give global first order controls. Of course, what still rescues this  important identity (see e.g.~\autoref{Pr:BE vector fields}  below) for $\calG_\reg$-functions is its validity for the approximating test functions, and \autoref{Le:d = d1}. 

By the same reason, in our setting we lack an analogue of \cite[Prop.~3.3.18]{gigli2018}, see also \cite[Lem.~6.2.26]{gigli2020}, which grants $\F$-regularity of $f\in \calG\cap \Ell^2(\mms)$ as soon as $\rmd f\in \Ell^2(T^*\mms)$.  Compare with \autoref{Re:Wd120 in W12} and also \autoref{Re:HSU forms?} below.

Both statements are clearly true if $\mms$ is intrinsically complete.
\end{remark}

In line with \autoref{Re:No W11 in general}, the following proposition readily follows from corresponding properties of test functions from \autoref{Cor:Calculus rules d}, \autoref{Le:d = d1} and an argument as for \cite[Thm.~2.2.6]{gigli2018}.

\begin{proposition}\label{Pr:H11 calculus rules} The following properties hold for the space $\calG_\reg$ and the differential $\rmd$.
\begin{enumerate}[label=\textnormal{\textcolor{black}{(}\roman*\textcolor{black}{)}}]
\item \emph{Locality.} For every $f,g\in \calG_\reg$,
\begin{align*}
\One_{\{f=g\}}\d f = \One_{\{f=g\}}\d g.
\end{align*}
\item \emph{Chain rule.} For every $f\in \calG_\reg$ and every $\Leb^1$-negligible Borel set $C\subset\R$, 
\begin{align*}
\One_{f^{-1}(C)}\d f =0.
\end{align*}
In particular, for every $\varphi\in\Lip(\R)$, we have $\varphi\circ f \in \calG_\reg$ with
\begin{align*}
\rmd(\varphi\circ f) = \big[\varphi'\circ f\big]\d f,
\end{align*}
where the derivative $\varphi'$ is defined arbitrarily on the intersection of the set of non-differentiability points of $\varphi$ with the image of $f$.
\item \emph{Leibniz rule.} For every $f,g\in \calG_\reg\cap\Ell^\infty(\mms)$, $f\,g\in \calG_\reg$ with
\begin{align*}
\rmd (f\,g) = f\d g+g\d f.
\end{align*}
\end{enumerate}
\end{proposition}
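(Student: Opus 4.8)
\textbf{Proof plan for Proposition~\ref{Pr:H11 calculus rules}.} The strategy is to reduce every assertion to the corresponding identity for test functions, for which the calculus rules are already available (via \autoref{Cor:Calculus rules d} applied to $\smash{\F_\rme}$ and the compatibility \autoref{Le:d = d1}), and then pass to the limit along an approximating sequence in $\calG\cap\Test(\mms)$, exploiting that convergence in $\Vert\cdot\Vert_{\calG}$ entails convergence of both the function in $\Ell^1(\mms)$ and its differential in $\Ell^1(T^*\mms)$. The template is the proof of \cite[Thm.~2.2.6]{gigli2018}, adapted to the weaker ($\Ell^1$) integrability.

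First I would treat locality. Given $f,g\in\calG_\reg$, pick sequences $(f_n)_{n\in\N}$ and $(g_n)_{n\in\N}$ in $\calG\cap\Test(\mms)$ converging to $f$ and $g$ in $\Vert\cdot\Vert_{\calG}$, so that $f_n\to f$, $g_n\to g$ in $\Ell^1(\mms)$ and $\rmd f_n\to\rmd f$, $\rmd g_n\to\rmd g$ in $\Ell^1(T^*\mms)$; up to subsequences we may assume $f_n\to f$ and $g_n\to g$ pointwise $\meas$-a.e. For the test functions $f_n,g_n\in\F$ the locality statement $\One_{\{\widetilde{f_n-g_n}=0\}}\d(f_n-g_n)=0$ holds by \autoref{Cor:Calculus rules d}\ref{La:Uno} (using \autoref{Re:Min E dom}). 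The point is then to upgrade this to $\One_{\{f=g\}}\d(f-g)=0$. The clean way is to argue as in \cite[Thm.~2.2.6]{gigli2018}: write $u:=f-g\in\calG_\reg$, approximate $u$ by $u_n:=f_n-g_n\in\calG\cap\Test(\mms)$, apply the chain rule (established next, so the two claims should be proven in the order locality-via-chain-rule, or jointly) with a sequence of Lipschitz functions $\varphi_k$ that are identically $0$ near $0$ and converge to the identity, and use dominated convergence; alternatively, one proves the chain rule first and deduces locality from it exactly as in \cite{gigli2018}. For the chain rule itself: for $\varphi\in\Lip(\R)$ and $u\in\calG_\reg$, the identity $\rmd(\varphi\circ u_n)=[\varphi'\circ u_n]\,\rmd u_n$ holds for each $u_n\in\calG\cap\Test(\mms)$ by \autoref{Cor:Calculus rules d}\ref{La:Due} and \autoref{Le:d = d1}; one checks $\varphi\circ u_n\to\varphi\circ u$ in $\Ell^1(\mms)$ (Lipschitzianity of $\varphi$) and $[\varphi'\circ u_n]\,\rmd u_n\to[\varphi'\circ u]\,\rmd u$ in $\Ell^1(T^*\mms)$ (here one must handle the non-differentiability set of $\varphi$: one first reduces to $\varphi\in\Cont^1$ by the usual mollification argument, then removes the hypothesis using the chain-rule part $\One_{u^{-1}(C)}\,\rmd u=0$ for $\Leb^1$-null $C$, which in turn is proven from the $\Test(\mms)$-version plus \autoref{Re:CDC} and a limiting argument on the absolutely continuous law $u_\push[\,|\rmd u|\,]$), obtaining $\varphi\circ u\in\calG_\reg$ and the formula. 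The Leibniz rule is handled in the same spirit: for $f,g\in\calG_\reg\cap\Ell^\infty(\mms)$ one truncates to stay bounded, approximates by test functions bounded uniformly in $\Ell^\infty$ (here \autoref{Le:Product convergence} is the relevant tool, to realize products of test functions as $\F$-limits of bounded test functions), applies $\rmd(f_n\,g_n)=\widetilde{f_n}\,\rmd g_n+\widetilde{g_n}\,\rmd f_n$ from \autoref{Cor:Calculus rules d}\ref{La:Quattro}, and passes to the limit using the $\Ell^\infty$-bounds to control the products $f_n\,\rmd g_n$ and $g_n\,\rmd f_n$ in $\Ell^1(T^*\mms)$.

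The main obstacle I anticipate is \emph{closure of $\calG_\reg$ under the nonlinear operations}, i.e.\ verifying that $\varphi\circ f$ and $f\,g$ genuinely lie in $\calG_\reg$ (the $\Vert\cdot\Vert_{\calG}$-closure of $\calG\cap\Test(\mms)$), not merely in $\calG$. This is where the limited $\Ell^1$-integrability bites, in contrast to the $\Ell^2$-setting of \cite{gigli2018}: one has no global first-order control from \autoref{Le:Approx to id} (cf.\ \autoref{Re:No W11 in general}), so one cannot freely cut off zeroth-order terms. The resolution is that the approximating sequences are \emph{already} built from $\calG\cap\Test(\mms)$, and $\calG\cap\Test(\mms)$ is closed under $\varphi\circ(\cdot)$ with $\varphi\in\Cont^\infty$, $\varphi(0)=0$ (by the algebra property of $\Test(\mms)$ from \autoref{Sub:Test fcts} together with \autoref{Le:Div g nabla f}), and under products; hence each $\varphi\circ u_n$, resp.\ $f_n\,g_n$, lies in $\calG\cap\Test(\mms)$, and once one shows the $\calG$-convergence $\varphi\circ u_n\to\varphi\circ u$ (resp.\ $f_n\,g_n\to f\,g$), membership in $\calG_\reg$ is automatic. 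So the crux reduces to the $\Ell^1(T^*\mms)$-convergence of the differentials, for which the uniform $\Ell^\infty$-bounds (in the Leibniz case) or the Lipschitz bound on $\varphi'$ together with the null-set lemma (in the chain-rule case) and dominated convergence suffice. Everything else is a routine density-and-limit argument, the precise details of which mirror \cite[Thm.~2.2.6]{gigli2018} and are therefore only sketched.
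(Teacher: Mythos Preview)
Your proposal is correct and takes essentially the same approach as the paper: the paper's proof is a one-line reference stating that the result ``readily follows from corresponding properties of test functions from \autoref{Cor:Calculus rules d}, \autoref{Le:d = d1} and an argument as for \cite[Thm.~2.2.6]{gigli2018}'', which is precisely the template you describe and elaborate upon. Your identification of the closure of $\calG_\reg$ under the nonlinear operations as the main technical point, and its resolution via the algebra and composition properties of $\Test(\mms)$, is the correct elaboration of what the paper leaves implicit.
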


\begin{remark}
We do not know if \autoref{Pr:H11 calculus rules} holds for functions merely in $\calG$. E.g., unlike the identification result \autoref{Le:d = d1} it is unclear whether the gradient estimate from \eqref{Eq:1BE grad est}  holds for every $f\in \calG$ or whether $\ChHeat_t$ maps $\calG$ into $\calG$, $t>0$. 
\end{remark}

\begin{lemma}\label{Le:L1 to H11} If $f\in  \F_\rme\cap\Ell^1(\mms)$ with $\rmd f\in \Ell^1(T^*\mms)$, then $f\in \calG_\reg$.
\end{lemma}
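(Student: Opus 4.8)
The statement to prove is \autoref{Le:L1 to H11}: if $f\in\F_\rme\cap\Ell^1(\mms)$ with $\rmd f\in\Ell^1(T^*\mms)$, then $f\in\calG_\reg$. The overall strategy is a two-step approximation: first reduce to the case where $f\in\F\cap\Ell^1(\mms)$ with $\rmd f\in\Ell^1(T^*\mms)$ (which then automatically lies in $\calG$ by \autoref{Le:d = d1}), and then approximate such an $f$ by elements of $\calG\cap\Test(\mms)$ in the $\calG$-norm. The natural regularizing tool is the heat flow $(\ChHeat_t)_{t\geq 0}$, since for $g\in\Ell^2(\mms)\cap\Ell^\infty(\mms)$ one has $\ChHeat_tg\in\Test(\mms)$ by the reverse Poincaré inequality recorded in \autoref{Sub:Test fcts}, so a mollified heat flow as in \autoref{Le:Mollified heat flow} produces test functions; the difficulty is to control the $\Ell^1$-norms of $\rmd$ of the approximants, where no a priori bound like the $\Ell^2$-contractivity of the heat flow is available.

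First I would handle the truncation. Given $f\in\F_\rme\cap\Ell^1(\mms)$ with $\rmd f\in\Ell^1(T^*\mms)$, set $f_n:=\max\{\min\{f,n\},-n\}\in\F_\rme$; by \autoref{Pr:Extended domain props}\ref{La:W12 = S2 cap L2} we get $f_n\in\F$ since $f_n\in\Ell^\infty(\mms)\cap\Ell^2(\mms)$ (it is in $\Ell^1\cap\Ell^\infty\subset\Ell^2$). Moreover $f_n\to f$ in $\Ell^1(\mms)$ by dominated convergence, and $\rmd f_n=\One_{\{\vert f\vert<n\}}\,\rmd f$ by the chain rule of \autoref{Cor:Calculus rules d}\ref{La:Due}, so $\rmd f_n\to\rmd f$ in $\Ell^1(T^*\mms)$ again by dominated convergence (dominated pointwise by $\vert\rmd f\vert\in\Ell^1(\mms)$). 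By \autoref{Le:d = d1} each $f_n\in\calG$ and $\Vert f_n-f\Vert_{\calG}\to0$; hence it suffices to show each $f_n\in\calG_\reg$, i.e.\ we have reduced to $f\in\F\cap\Ell^1(\mms)\cap\Ell^\infty(\mms)$ with $\rmd f\in\Ell^1(T^*\mms)$.

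Next I would mollify this bounded $f$. Apply \autoref{Le:Mollified heat flow} (with $a:=-\Vert f\Vert_{\Ell^\infty(\mms)}$, $b:=\Vert f\Vert_{\Ell^\infty(\mms)}$) to obtain $f_k\in\Test_{\Ell^\infty}(\mms)$ with $f_k\to f$ in $\F$ and $\vert f_k\vert\leq\Vert f\Vert_{\Ell^\infty(\mms)}$ $\meas$-a.e.; in fact the construction there is $f_k=\int_0^\infty \ChHeat_s f\,\kappa_k(s)\,\rmd s$ for mollifying kernels $\kappa_k$ concentrating at $0$. Since $f_k\in\Test(\mms)$ and $f_k\in\Ell^1(\mms)$ (bounded, and $\ChHeat_s$ is a contraction on $\Ell^1$), we have $f_k\in\calG\cap\Test(\mms)$, and $\rmd_1 f_k=\rmd f_k$ by \autoref{Le:d = d1}. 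It remains to check $f_k\to f$ in $\calG$, i.e.\ $f_k\to f$ in $\Ell^1(\mms)$ and $\rmd f_k\to\rmd f$ in $\Ell^1(T^*\mms)$. The $\Ell^1$ convergence of $f_k$ follows from strong $\Ell^1$-continuity of $(\ChHeat_s)_{s\geq0}$ together with the mollification. The genuinely delicate point — and the main obstacle — is the $\Ell^1(T^*\mms)$-convergence of $\rmd f_k$: because $\rmd f_k=\int_0^\infty\rmd\ChHeat_s f\,\kappa_k(s)\,\rmd s$ and $\rmd\ChHeat_s f=\ChHeat_s^{(1)}\rmd f$ is the heat flow on $1$-forms, I would use the semigroup-domination estimate for the heat flow on $1$-forms versus the Schrödinger semigroup with potential $\kappa$ (stated as \autoref{Th:HSU forms} in the introduction), giving $\vert\rmd\ChHeat_s f\vert\leq\Schr{\kappa}_s\vert\rmd f\vert$ $\meas$-a.e.; combined with the exponential $\Ell^1$-boundedness $\Vert\Schr{\kappa}_s\Vert_{\Ell^1;\Ell^1}\leq C\rme^{Cs}$ from \autoref{Sub:Self-imp}, this yields a uniform-in-$k$ $\Ell^1$-bound and, via the strong $\Ell^1$-continuity of $s\mapsto\Schr{\kappa}_s\vert\rmd f\vert$ at $s=0^+$ (which holds because $\vert\rmd f\vert\in\Ell^1(\mms)$ and $(\Schr{\kappa}_s)$ is strongly continuous on $\Ell^1$) together with the gradient contraction \eqref{Eq:1BE grad est}, the convergence $\rmd\ChHeat_s f\to\rmd f$ in $\Ell^1(T^*\mms)$ as $s\to0$; then a dominated-convergence argument against the mollifier kernels $\kappa_k$ closes the proof. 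An alternative, if one wants to avoid invoking the vector-valued results, is to first establish convergence of $\rmd\ChHeat_s f$ in $\Ell^1$ directly from \eqref{Eq:1BE grad est} and the fact that $\Schr{\kappa}_s g\to g$ in $\Ell^1$ for $g\in\Ell^1(\mms)$, since $\vert\rmd\ChHeat_s f-\rmd f\vert\leq\vert\rmd\ChHeat_s f\vert+\vert\rmd f\vert\leq\Schr{\kappa}_s\vert\rmd f\vert+\vert\rmd f\vert$ gives the required dominating family; combined with $\rmd\ChHeat_s f\to\rmd f$ weakly in $\Ell^2_{\loc}$ or pointwise $\meas$-a.e.\ along a subsequence (from $\ChHeat_s f\to f$ in $\F$ and \autoref{Le:d closed}), Vitali's convergence theorem upgrades this to strong $\Ell^1(T^*\mms)$ convergence. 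Either route, the result follows, and all approximants constructed lie in $\calG\cap\Test(\mms)$, so $f\in\calG_\reg$ by \autoref{Def:H11 functions}.
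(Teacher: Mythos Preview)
Your approach is essentially the same as the paper's: truncate to reduce to $f\in\F\cap\Ell^1(\mms)\cap\Ell^\infty(\mms)$, then regularize by the heat flow, using the $\BE_1$ gradient estimate to control the $\Ell^1$-norms of the differentials. Two points are worth flagging. First, your ``main'' route through the heat flow on $1$-forms and \autoref{Th:HSU forms} is misplaced: that result sits in \autoref{Sub:Funct inequ hf 1-forms}, far after \autoref{Le:L1 to H11}, and moreover only holds under the restrictive \autoref{As:Restr} (i.e.\ $\kappa=\mathcall{k}\,\meas$). The estimate you actually need, $\vert\rmd\ChHeat_s f\vert\leq\Schr{\kappa}_s\vert\rmd f\vert$, is precisely \eqref{Eq:1BE grad est}, already available at this stage in full generality --- so your ``alternative'' route is the correct one and is exactly what the paper does. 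Second, the paper uses the plain heat flow $\ChHeat_t f_n$ rather than the mollified version from \autoref{Le:Mollified heat flow}; this is a minor simplification, since $\ChHeat_t f_n\in\Test(\mms)\cap\Ell^1(\mms)$ directly (by the reverse Poincar\'e argument in \autoref{Sub:Test fcts} and $\Ell^1$-contractivity), and $\rmd\ChHeat_t f_n\to\rmd f_n$ in $\Ell^2(T^*\mms)$ combined with the $\Ell^1$-bound from \eqref{Eq:1BE grad est} and Lebesgue's theorem gives the $\Ell^1(T^*\mms)$-convergence without the extra layer of mollification.
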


\begin{proof} Define $f_n \in \F_\rme\cap \Ell^1(\mms)\cap\Ell^\infty(\mms)$ by $f_n := \max\{\min\{f,n\},-n\}$, $n\in\N$. By \autoref{Cor:Calculus rules d}, we have $\rmd f_n = \One_{\{\vert f\vert < n\}}\d f\in\Ell^1(T^*\mms)$, and therefore $\rmd f_n \to \rmd f$ in $\Ell^1(T^*\mms)$ as $n\to\infty$. 

We claim that $f_n \in \calG_\reg$ for every $n\in\N$, which then readily yields the conclusion of the lemma. Indeed, given $n\in\N$ and $t>0$, we have $\ChHeat_tf_n\in\Test(\mms)\cap \Ell^1(\mms)$, and in fact $\ChHeat_tf_n \in \calG$ by  \autoref{Le:Div g nabla f}. Since $\ChHeat_tf_n \to f_n$ in $\F$ as $t\to 0$, by Lebesgue's theorem and the assumption that $f\in \Ell^1(\mms)$ it follows that $\ChHeat_t f_n \to f_n$ in $\Ell^1(\mms)$ as $t\to 0$. Moreover, since $\rmd \ChHeat_t f_n \to \rmd f_n$ in $\Ell^2(T^*\mms)$ as $t\to 0$ and since $\smash{(\rmd\ChHeat_tf_n)_{t\in [0,1]}}$ is bounded in $\Ell^1(T^*\mms)$ thanks to \eqref{Eq:1BE grad est}  and exponential boundedness of $(\Schr{\kappa}_t)_{t\geq 0}$ in $\Ell^1(\mms)$ \cite[Rem.~2.14]{erbar2020}, applying Lebesgue's theorem again we obtain that $\rmd \ChHeat_tf_n \to \rmd f_n$ in $\Ell^1(T^*\mms)$ as $t\to 0$.
\end{proof}

\begin{lemma} For every $f,g\in \F$ we have $f\,g\in \calG_\reg$ with
\begin{align*}
\rmd (f\,g) = f\d g + g\d f.
\end{align*}
\end{lemma}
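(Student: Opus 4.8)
The statement claims that for arbitrary $f,g\in\F$, the product $f\,g$ lies in $\calG_\reg$ with $\rmd(f\,g)=f\d g+g\d f$. The plan is to reduce to the already-established facts via a truncation-plus-approximation argument, using \autoref{Le:L1 to H11} as the main engine and the Leibniz rule from \autoref{Cor:Calculus rules d}\ref{La:Quattro} for the formula.

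\textbf{Key steps.} First I would reduce to bounded functions: set $f_n:=\max\{\min\{f,n\},-n\}\in\F$ and $g_n:=\max\{\min\{g,n\},-n\}\in\F$ for $n\in\N$, so that $f_n,g_n\in\F_\bounded$, $f_n\to f$ and $g_n\to g$ in $\F$ as $n\to\infty$ (using \autoref{Cor:Calculus rules d}, $\rmd f_n=\One_{\{\vert f\vert<n\}}\d f\to\rmd f$ in $\Ell^2(T^*\mms)$, and likewise for $g_n$). By the Leibniz rule for bounded functions in $\F$ (the $\F_\eb$-version of \autoref{Th:Properties energy measure}, or equivalently \autoref{Cor:Calculus rules d}\ref{La:Quattro} combined with \autoref{Re:Min E dom}), we have $f_n\,g_n\in\F_\bounded$ with $\rmd(f_n\,g_n)=\widetilde{f_n}\d g_n+\widetilde{g_n}\d f_n$. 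Second, I would check the requisite integrability so that \autoref{Le:L1 to H11} applies to $f\,g$ itself: since $f,g\in\F\subset\Ell^2(\mms)$, Cauchy--Schwarz gives $f\,g\in\Ell^1(\mms)$; and by the Leibniz rule $\rmd(f_n\,g_n)=\widetilde{f_n}\d g_n+\widetilde{g_n}\d f_n$, and the pointwise bounds $\vert\widetilde{f_n}\vert\leq\vert f\vert$, $\vert\rmd g_n\vert\leq\vert\rmd g\vert$ etc.\ (\autoref{Le:Props q.r. s.l. Dirichlet form}, \autoref{Cor:Calculus rules d}), one estimates $\vert\rmd(f_n\,g_n)\vert\leq\vert f\vert\,\vert\rmd g\vert+\vert g\vert\,\vert\rmd f\vert$ $\meas$-a.e., whose right-hand side is in $\Ell^1(\mms)$ again by Cauchy--Schwarz. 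Third, I would pass to the limit: $f_n\,g_n\to f\,g$ in $\Ell^1(\mms)$ by dominated convergence, and $\rmd(f_n\,g_n)=\widetilde{f_n}\d g_n+\widetilde{g_n}\d f_n\to \widetilde f\d g+\widetilde g\d f$ in $\Ell^1(T^*\mms)$, again by dominated convergence using the $\Ell^1$-dominating function above together with a.e.\ convergence of the integrands (after passing to a subsequence if necessary). Hence $f\,g\in\F_\rme\cap\Ell^1(\mms)$ with $\rmd(f\,g)=\widetilde f\d g+\widetilde g\d f\in\Ell^1(T^*\mms)$, and \autoref{Le:L1 to H11} yields $f\,g\in\calG_\reg$; the formula then reads $\rmd(f\,g)=f\d g+g\d f$ in the notation of \autoref{Def:W11} (with $\rmd$ identified via \autoref{Le:d = d1}), invoking \autoref{Re:Min E dom} to drop the tildes.

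\textbf{Main obstacle.} The delicate point is making the two limit passages rigorous simultaneously — in particular verifying that $\widetilde{f_n}\d g_n+\widetilde{g_n}\d f_n$ converges in $\Ell^1(T^*\mms)$, not merely $\meas$-a.e.\ pointwise in norm. This requires care because $\rmd g_n=\One_{\{\vert g\vert<n\}}\d g$ and $\widetilde{f_n}=\max\{\min\{\widetilde f,n\},-n\}$ need not converge \emph{monotonically}, so one must pin down an explicit $\Ell^1$-dominating function valid for all $n$ and argue via the dominated convergence theorem in the Banach space $\Ell^1(T^*\mms)$ (equivalently, apply scalar dominated convergence to the pointwise norms $\vert\rmd(f_n\,g_n)-(\widetilde f\d g+\widetilde g\d f)\vert$). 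Everything else is a routine combination of \autoref{Le:L1 to H11}, the Leibniz rule, and the identification \autoref{Le:d = d1}; no genuinely new estimate is needed beyond those already in the excerpt.
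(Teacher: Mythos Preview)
Your overall strategy --- truncate, use the Leibniz rule, pass to the limit --- matches the paper's, but there is a genuine gap in the order of operations. You write ``Hence $f\,g\in\F_\rme\cap\Ell^1(\mms)$ with $\rmd(f\,g)=\widetilde f\d g+\widetilde g\d f\in\Ell^1(T^*\mms)$, and \autoref{Le:L1 to H11} yields $f\,g\in\calG_\reg$.'' But your limit procedure only establishes convergence of $\rmd(f_n\,g_n)$ in $\Ell^1(T^*\mms)$, not in $\Ell^2(T^*\mms)$; membership in $\F_\rme$ requires an $\Ch$-Cauchy (or $\Ch$-bounded) approximating sequence, i.e.\ $\Ell^2$-control on the differentials. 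In general $f\,g\notin\F_\rme$ for $f,g\in\F$ --- already $W^{1,2}(\R^d)$ for $d\geq 2$ is not an algebra, and the candidate differential $f\d g+g\d f$ lies only in $\Ell^1(T^*\mms)$, not $\Ell^2(T^*\mms)$. So the hypothesis of \autoref{Le:L1 to H11} is not met for $f\,g$ itself, and this step fails.

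The paper's proof avoids this by applying \autoref{Le:L1 to H11} at the \emph{truncated} level: each $f_n\,g_n\in\F_\bounded\subset\F_\rme\cap\Ell^1(\mms)$ with $\rmd(f_n\,g_n)=f_n\,\One_{\{\vert g\vert<n\}}\d g+g_n\,\One_{\{\vert f\vert<n\}}\d f\in\Ell^1(T^*\mms)$, so \autoref{Le:L1 to H11} gives $f_n\,g_n\in\calG_\reg$ for every $n$. Then one passes to the limit in the $\calG$-norm (i.e.\ $\Ell^1\times\Ell^1(T^*\mms)$), using that $\calG_\reg$ is by definition closed in $\calG$, to conclude $f\,g\in\calG_\reg$ with the stated differential. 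Your dominated-convergence argument for the $\Ell^1$ and $\Ell^1(T^*\mms)$ limits is exactly what is needed here --- just move the invocation of \autoref{Le:L1 to H11} before the limit, not after.
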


\begin{proof} Again we define $f_n,g_n \in \F\cap\Ell^\infty(\mms)$ by $f_n := \max\{\min\{f,n\},-n\}$ and $g_n := \max\{\min\{g,n\},-n\}$, $n\in\N$. By \autoref{Cor:Calculus rules d} and \autoref{Re:Min E dom}, we have $f_n\,g_n\in  \F_\rme\cap\Ell^1(\mms)$ with
\begin{align*}
\rmd(f_n\,g_n) = f_n\d g_n + g_n\d f_n = f_n\,\One_{\{\vert g\vert < n\}}\d g + g_n\,\One_{\{\vert f\vert < n\}}\d f
\end{align*}
whence $f_n\,g_n \in \calG_\reg$ for every $n\in\N$ by \autoref{Le:L1 to H11}. Since $f_n\,g_n \to f\,g$ in $\Ell^1(\mms)$ and $f_n\,\One_{\{\vert g\vert < n\}}\d g + g_n\,\One_{\{\vert f\vert < n\}}\d f \to f\d g + g\d f$ in $\Ell^1(T^*\mms)$ as $n\to\infty$, respectively, the conclusion follows.
\end{proof}

\subsubsection{Second order spaces}

\begin{definition}\label{Def:H22} We define the space $\Dom_\reg(\Hess)\subset \Dom(\Hess)$ as
\begin{align*}
\Dom_\reg(\Hess) := \cl_{\Vert\cdot\Vert_{\Dom(\Hess)}}\Test(\mms).
\end{align*}
\end{definition}

By \autoref{Cor:Dom(Delta) subset W22}, we have
\begin{align*}
\Dom_\reg(\Hess) = \cl_{\Vert\cdot\Vert_{\Dom(\Hess)}}\Dom(\Delta).
\end{align*}
This space plays an important role in \autoref{Pr:Product rule for gradients}, but also in later discussions on calculus rules for the covariant derivative, see \autoref{Sub:Calculus rules}.

As indicated in the beginning of \autoref{Sub:Calc rules hess},  we consider the space $\smash{\Dom_{2,2,1}(\Hess)}$ of $\F$-functions with an $\Ell^1$-Hessian, which seems to be the correct framework for \autoref{Pr:Product rule fcts} and \autoref{Pr:Hess chain rule} below. (The numbers in the subscript denote the degree of integrability of $f$, $\rmd f$ and $\Hess f$, respectively, $\smash{f\in \Dom_{2,2,1}(\Hess)}$.) As after \autoref{Ex:Hess}, one argues that the defining property is well-defined. 

\begin{definition}\label{Def:W221} We define the space $\Dom_{2,2,1}(\Hess)$ to consist of all $f\in \F$ for which there exists $A\in \Ell^1((T^*)^{\otimes 2}\mms)$ such that for every $g_1,g_2,h\in\Test(\mms)$,
\begin{align*}
&2\int_\mms h\,A(\nabla g_1,\nabla g_2)\d\meas\\
&\qquad\qquad =-\int_\mms \langle\nabla f,\nabla g_1\rangle\div(h\,\nabla g_2)\d\meas - \int_\mms \langle\nabla f,\nabla g_2\rangle\div(h\,\nabla g_1)\d\meas\\
&\qquad\qquad\qquad\qquad - \int_\mms h\,\big\langle\nabla f,\nabla\langle\nabla g_1,\nabla g_2\rangle\big\rangle\d\meas.
\end{align*}
In case of existence, $A$ is unique, denoted by $\Hess_1 f$ and termed the \emph{Hessian} of $f$.
\end{definition}

By the weak$^*$ density of $\Test(T^{\otimes 2}\mms)$ in $\Ell^\infty(T^{\otimes 2}\mms)$, the uniqueness statement in \autoref{Def:W221} is indeed true given that such an element $A$ exists. Furthermore, if $\smash{f\in \Dom_{2,2,1}(\Hess)\cap \Dom(\Hess)}$ then of course
\begin{align*}
\Hess_1 f = \Hess f.
\end{align*}
We shall thus simply write $\Hess$ in place of $\Hess_1$ also for functions in $\smash{\Dom_{2,2,1}(\Hess)}$ without further notice. $\smash{\Dom_{2,2,1}(\Hess)}$ is a real vector space, and $\Hess$ is a linear operator on it.

The space $\smash{\Dom_{2,2,1}(\Hess)}$ is endowed with the norm $\Vert\cdot\Vert_{\Dom_{2,2,1}(\Hess)}$ given by
\begin{align*}
\Vert f \Vert_{\Dom_{2,2,1}(\Hess)} := \Vert  f\Vert_{\Ell^2(\mms)} + \Vert \rmd f\Vert_{\Ell^2(T^*\mms)} + \Vert \!\Hess f\Vert_{\Ell^1((T^*)^{\otimes 2}\mms)}. 
\end{align*}
Since both sides of the defining property in \autoref{Def:W221} are continuous in $f$ and $A$ w.r.t.~convergence in $\F$ and $\Ell^1((T^*)^{\otimes 2}\mms)$, respectively, this norm turns $\smash{\Dom_{2,2,1}(\Hess)}$ into a Banach space. It is also separable, since the map $\Id\times \rmd \times\Hess \colon \smash{\Dom_{2,2,1}(\Hess)}\to \Ell^2(\mms)\times \Ell^2(T^*\mms)\times \Ell^1((T^*)^{\otimes 2}\mms)$ is an isometry onto its image, where the latter space is endowed with the usual product norm
\begin{align*}
\Vert (f,\omega,A) \Vert := \Vert  f\Vert_{\Ell^2(\mms)} + \Vert \omega\Vert_{\Ell^2(T^*\mms)} + \Vert A\Vert_{\Ell^1((T^*)^{\otimes 2}\mms)}
\end{align*}
which is separable by \autoref{Pr:Generators cotangent module} and the discussion from \autoref{Sub:Lebesgue sp}.

\subsubsection{Calculus rules for the Hessian}\label{Subsub:Hess calc rules} After having introduced the relevant spaces, we finally proceed with the calculus rules for functions in $\smash{\Dom_{2,2,1}(\Hess)}$.

The next lemma will be technically useful. For $h\in\Test(\mms)$, the asserted equality is precisely the defining property of $\Hess f$ stated in \autoref{Def:Hess}. The general case $h\in \F\cap\Ell^\infty(\mms)$ follows by replacing $h$ by $\ChHeat_th\in\Test(\mms)$, $t>0$, and letting $t\to 0$ with the aid of \autoref{Le:Div g nabla f}.

\begin{lemma}\label{Le:More h} For every $f\in \Dom(\Hess)$, every $g_1,g_2\in\Test(\mms)$ and every $h\in \F\cap\Ell^\infty(\mms)$,
\begin{align*}
&2\int_\mms h\Hess f(\nabla g_1,\nabla g_2)\d\meas\\
&\qquad\qquad =-\int_\mms \langle\nabla f,\nabla g_1\rangle\div(h\,\nabla g_2)\d\meas - \int_\mms \langle\nabla f,\nabla g_2\rangle\div(h\,\nabla g_1)\d\meas\\
&\qquad\qquad\qquad\qquad -\int_\mms h\,\big\langle \nabla f,\nabla\langle\nabla g_1,\nabla g_2\rangle\big\rangle\d\meas.
\end{align*}
\end{lemma}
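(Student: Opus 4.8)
The statement extends the defining identity of $\Hess f$ from test functions $h \in \Test(\mms)$ to arbitrary $h \in \F \cap \Ell^\infty(\mms)$, using that $\Test(\mms)$ is dense in $\F$. The plan is to fix $f \in \Dom(\Hess)$ and $g_1, g_2 \in \Test(\mms)$, and to approximate a given $h \in \F \cap \Ell^\infty(\mms)$ by $h_t := \ChHeat_t h$ as $t \to 0$, checking that each of the four integrals appearing in the identity converges to the corresponding integral with $h$ in place of $h_t$.

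First I would record the relevant regularity of $h_t$: by the discussion in \autoref{Sub:Test fcts}, for $h \in \Ell^2(\mms) \cap \Ell^\infty(\mms)$ one has $\ChHeat_t h \in \Test(\mms)$ for every $t > 0$, and moreover $h_t \to h$ in $\F$ as $t \to 0$ (by strong continuity of the heat flow on $\F$), $h_t \to h$ $\meas$-a.e.\ along a subsequence, and $\Vert h_t \Vert_{\Ell^\infty(\mms)} \leq \Vert h \Vert_{\Ell^\infty(\mms)}$ by the sub-Markovian property from \autoref{Subsub:Neumann heat flow}. Since the defining property of $\Hess f$ in \autoref{Def:Hess} applies to the test function $h_t$, we have the identity with $h_t$ for every $t > 0$; it remains to pass to the limit term by term.

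For the left-hand side $\int_\mms h_t \Hess f(\nabla g_1, \nabla g_2)\d\meas$: the integrand factor $\Hess f(\nabla g_1, \nabla g_2)$ lies in $\Ell^1(\mms)$ (indeed, $\Hess f \in \Ell^2((T^*)^{\otimes 2}\mms)$ and $\nabla g_1, \nabla g_2 \in \Ell^\infty(T\mms)$ since $g_1, g_2 \in \Test(\mms)$, so in fact it lies in $\Ell^2(\mms)$), and $h_t \to h$ weakly$^*$ in $\Ell^\infty(\mms)$ (or use $h_t \to h$ in $\Ell^2(\mms)$, which follows from $\F$-convergence), giving convergence of this integral. For the last term on the right, $\int_\mms h_t \langle \nabla f, \nabla\langle \nabla g_1, \nabla g_2\rangle\rangle\d\meas$: here $\langle \nabla g_1, \nabla g_2\rangle \in \F$ by \autoref{Pr:Bakry Emery measures} and polarization, so $\nabla\langle \nabla g_1, \nabla g_2\rangle \in \Ell^2(T\mms)$ and $\langle \nabla f, \nabla\langle \nabla g_1, \nabla g_2\rangle\rangle \in \Ell^1(\mms)$, while $h_t$ is uniformly bounded and converges $\meas$-a.e., so dominated convergence (with dominating function $\Vert h \Vert_{\Ell^\infty(\mms)}\,|\langle \nabla f, \nabla\langle \nabla g_1, \nabla g_2\rangle\rangle|$) applies. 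For the two divergence terms, say $\int_\mms \langle \nabla f, \nabla g_1\rangle \div(h_t\,\nabla g_2)\d\meas$: by \autoref{Le:Div g nabla f} applied to $h_t \in \Test(\mms) \subset \Dom(\Delta)$ and $g_2 \in \Test(\mms)$ (note $|\nabla g_2| \in \Ell^\infty(\mms)$), we have $h_t\,\nabla g_2 \in \Dom(\div)$ with $\div(h_t\,\nabla g_2) = \rmd g_2(\nabla h_t) + h_t\,\Delta g_2$ $\meas$-a.e. Since $h_t \to h$ in $\F$ gives $\nabla h_t \to \nabla h$ in $\Ell^2(T\mms)$, and $\nabla g_2 \in \Ell^\infty(T\mms)$, $\Delta g_2 \in \Ell^2(\mms)$, and $h_t \to h$ in $\Ell^2(\mms)$, we get $\div(h_t\,\nabla g_2) \to \rmd g_2(\nabla h) + h\,\Delta g_2$ strongly in $\Ell^1(\mms)$; pairing against $\langle \nabla f, \nabla g_1\rangle \in \Ell^\infty(\mms)$ (again $|\nabla g_1| \in \Ell^\infty(\mms)$, $|\nabla f| \in \Ell^2(\mms)$, so actually this is in $\Ell^2(\mms)$, which is more than enough) yields convergence of the integral. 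The other divergence term is symmetric.

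The main (very mild) obstacle is bookkeeping the integrability of each factor to justify the limit passages — in particular, making sure the divergence $\div(h_t\,\nabla g_2)$ is handled via its explicit Leibniz expansion from \autoref{Le:Div g nabla f} rather than abstractly, since only then does $\F$-convergence of $h_t$ translate into $\Ell^1$-convergence of the divergence. Once all four terms are shown to converge, taking $t \to 0$ in the $h_t$-identity yields the claimed identity for $h$, completing the proof.
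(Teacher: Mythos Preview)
Your approach is exactly the one taken in the paper: approximate $h$ by $\ChHeat_t h\in\Test(\mms)$ and pass to the limit using the explicit Leibniz expansion of $\div(h_t\,\nabla g_i)$ from \autoref{Le:Div g nabla f}. One small bookkeeping point: you assert $\div(h_t\,\nabla g_2)\to\div(h\,\nabla g_2)$ in $\Ell^1(\mms)$ and then pair against $\langle\nabla f,\nabla g_1\rangle\in\Ell^2(\mms)$, which on its own is not sufficient; however the convergence is actually in $\Ell^2(\mms)$ (the term $h_t\,\Delta g_2$ converges in $\Ell^2$ by dominated convergence since $\Vert h_t\Vert_{\Ell^\infty}\leq\Vert h\Vert_{\Ell^\infty}$ and $\Delta g_2\in\Ell^2$), so the pairing is justified.
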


\begin{proposition}[Product rule]\label{Pr:Product rule fcts} If $f,g\in \Dom(\Hess)\cap\Ell^\infty(\mms)$, we have $f\,g\in \Dom_{2,2,1}(\Hess)$ with
\begin{align*}
\Hess(f\,g) &= g\Hess f + f\Hess g + \rmd f\otimes \rmd g + \rmd g \otimes \rmd f.
\end{align*}
\end{proposition}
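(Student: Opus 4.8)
The plan is to verify the defining property of $\Dom_{2,2,1}(\Hess)$ from \autoref{Def:W221} directly for the candidate tensor $A := g\Hess f + f\Hess g + \rmd f\otimes\rmd g + \rmd g\otimes\rmd f$. First I would check that $A\in\Ell^1((T^*)^{\otimes 2}\mms)$: this is immediate since $\Hess f,\Hess g\in\Ell^2((T^*)^{\otimes 2}\mms)$ and $f,g\in\Ell^\infty(\mms)$ give the first two summands in $\Ell^2\subset\Ell^1_{\loc}$, but one must use that $\rmd f,\rmd g\in\Ell^2(T^*\mms)$ to put $\rmd f\otimes\rmd g$ in $\Ell^1((T^*)^{\otimes 2}\mms)$ by Cauchy--Schwarz (this is precisely why the target is $\Ell^1$, not $\Ell^2$, as flagged in the discussion before \autoref{Def:W221}). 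Also $f\,g\in\F$ since $f,g\in\F\cap\Ell^\infty(\mms)$, and its gradient is $\rmd(f\,g) = f\d g + g\d f$ by the Leibniz rule.

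The computational heart is to evaluate, for fixed $g_1,g_2,h\in\Test(\mms)$, the three integrals on the right-hand side of the defining identity with $f$ replaced by $f\,g$, and to match the result with $2\int_\mms h\,A(\nabla g_1,\nabla g_2)\d\meas$. I would expand $\langle\nabla(f\,g),\nabla g_i\rangle = f\,\langle\nabla g,\nabla g_i\rangle + g\,\langle\nabla f,\nabla g_i\rangle$ pointwise, and then in the divergence terms use the Leibniz rule for $\DIV$ from \autoref{Le:Div identities} (or \eqref{Eq:Div lbnz rle}) to write, e.g., $\int_\mms f\,\langle\nabla g,\nabla g_2\rangle\div(h\,\nabla g_1)\d\meas$ in a form where the defining property of $\Hess g$ can be invoked with test functions $g_1,g_2$ and the $\Ell^\infty$-weight $h\,f$. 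Here \autoref{Le:More h} is essential: it extends the defining identity of $\Hess$ to weights $h\in\F\cap\Ell^\infty(\mms)$, so $h\,f$ and $h\,g$ are admissible. The terms $\rmd f\otimes\rmd g + \rmd g\otimes\rmd f$ should emerge from the cross terms where a derivative falls on $f$ and another on $g$ — concretely, from expanding $\big\langle\nabla(f\,g),\nabla\langle\nabla g_1,\nabla g_2\rangle\big\rangle = f\,\langle\nabla g,\nabla\langle\nabla g_1,\nabla g_2\rangle\rangle + g\,\langle\nabla f,\nabla\langle\nabla g_1,\nabla g_2\rangle\rangle$ and reassembling, together with the extra $\langle\nabla f,\nabla g_i\rangle$ pieces produced by the Leibniz rule for the divergence.

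The main obstacle I anticipate is purely bookkeeping: one has to carefully collect six or so groups of integrals (two from each of the three right-hand-side terms, after the product expansion) and recognize that they organize into exactly $2\int h\,g\,\Hess f(\nabla g_1,\nabla g_2) + 2\int h\,f\,\Hess g(\nabla g_1,\nabla g_2) + 2\int h\,(\rmd f\otimes\rmd g + \rmd g\otimes\rmd f)(\nabla g_1,\nabla g_2)$, where for the cross-term piece one uses $(\rmd f\otimes\rmd g)(\nabla g_1,\nabla g_2) = \langle\nabla f,\nabla g_1\rangle\langle\nabla g,\nabla g_2\rangle$ pointwise $\meas$-a.e. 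One has to be a little careful that all integrands are genuinely $\Ell^1$ at each stage (using $\vert\nabla g_i\vert,\Delta g_i\in\Ell^\infty(\mms)$, $h\in\Ell^\infty(\mms)$, and $\langle\nabla g_1,\nabla g_2\rangle\in\F$ by \autoref{Pr:Bakry Emery measures}), so that the manipulations are legitimate. Once the identity is verified for all $g_1,g_2,h\in\Test(\mms)$, uniqueness of the tensor (by weak$^*$ density of $\Test(T^{\otimes 2}\mms)$ in $\Ell^\infty(T^{\otimes 2}\mms)$) gives $\Hess(f\,g)=A$ and hence $f\,g\in\Dom_{2,2,1}(\Hess)$.
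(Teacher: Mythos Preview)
Your proposal is correct and follows essentially the same route as the paper: expand $\nabla(f\,g)$ via the Leibniz rule, rewrite each divergence term using $\div(h\,\nabla g_i)$ together with the Leibniz rule for $\div$ to absorb the factors $f$ and $g$ into the weight, apply \autoref{Le:More h} with $f\,h$ and $g\,h$ in place of $h$, and collect the leftover cross terms into $\rmd f\otimes\rmd g+\rmd g\otimes\rmd f$. One small correction: to place $g\Hess f$ and $f\Hess g$ in $\Ell^1((T^*)^{\otimes 2}\mms)$ (not merely $\Ell^1_\loc$), use that $f,g\in\F\subset\Ell^2(\mms)$ together with $\Hess f,\Hess g\in\Ell^2$ and Cauchy--Schwarz, rather than only $f,g\in\Ell^\infty(\mms)$.
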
 

\begin{proof} Note that $f\,g\in\F$, and that the r.h.s.~of the claimed identity for $\Hess(f\,g)$ defines an element in $\Ell^1((T^*)^{\otimes 2}\mms)$. Now, given any $g_1,g_2,h\in\Test(\mms)$, by \autoref{Cor:Calculus rules d} and \autoref{Le:Div g nabla f} it follows that
\begin{align*}
&-\big\langle\nabla (f\,g),\nabla g_1\big\rangle\div(h\,\nabla g_2)\\
&\qquad\qquad = -f\,\langle\nabla g,\nabla g_1\rangle\div(h\,\nabla g_2) - g\,\langle\nabla f,\nabla g_1\rangle\div(h\,\nabla g_2)\textcolor{white}{\big\langle}\\
&\qquad\qquad = - \langle\nabla g,\nabla g_1\rangle\div(f\,h\,\nabla g_2) + h\,\langle\nabla g,\nabla g_1\rangle\,\langle\nabla f,\nabla g_2\rangle\textcolor{white}{\big\langle}\\
&\qquad\qquad\qquad\qquad - \langle\nabla f,\nabla g_1\rangle\div(g\,h\,\nabla g_2) + h\,\langle\nabla f,\nabla g_1\rangle\,\langle\nabla g,\nabla g_2\rangle \quad\meas\text{-a.e.},
\end{align*}
and analogously
\begin{align*}
&-\big\langle\nabla (f\,g),\nabla g_2\big\rangle\div(h\,\nabla g_1)\\
&\qquad\qquad = - \langle\nabla g,\nabla g_2\rangle\div(f\,h\,\nabla g_1) + h\,\langle\nabla g,\nabla g_2\rangle\,\langle\nabla f,\nabla g_1\rangle\textcolor{white}{\big\langle}\\
&\qquad\qquad\qquad\qquad - \langle\nabla f,\nabla g_2\rangle\div(g\,h\,\nabla g_1) + h\,\langle\nabla f,\nabla g_2\rangle\,\langle\nabla g,\nabla g_1\rangle \quad\meas\text{-a.e.},
\end{align*}
while finally
\begin{align*}
&-h\,\big\langle \nabla (f\,g),\nabla\langle\nabla g_1,\nabla g_2\rangle\big\rangle\\
&\qquad\qquad =-h\,f\,\big\langle \nabla g,\nabla\langle\nabla g_1,\nabla g_2\rangle\big\rangle - h\,g\,\big\langle\nabla f,\nabla\langle\nabla g_1,\nabla g_2\rangle\big\rangle\quad\meas\text{-a.e.}
\end{align*}
Adding up these three identities and using \autoref{Le:More h} for $f$ with $g\,h$ in place of $h$ and for $g$ with $f\,h$ in place of $h$ and by \eqref{Eq:Tensor product pointwise sc prod}, the claim readily follows.
\end{proof}

\begin{proposition}[Chain rule]\label{Pr:Hess chain rule} Let $f\in \Dom(\Hess)$, and let $\varphi\in\Cont^1(\R)$ such that $\smash{\varphi'\in\Lip_\bounded(\R)}$. If $\meas[\mms]=\infty$, we also assume $\varphi(0)=0$. Then $\varphi\circ f\in \Dom_{2,2,1}(\Hess)$ as well as
\begin{align*}
\Hess(\varphi\circ f) = \big[\varphi''\circ f\big]\d f\otimes \rmd f + \big[\varphi'\circ f\big]\Hess f,
\end{align*}
where $\varphi''$ is defined arbitrarily on the intersection of the set of non-differentiability points of $\varphi'$ with the image of $f$.
\end{proposition}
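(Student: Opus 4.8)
The strategy is to reduce everything to the defining integration-by-parts identity for $\smash{\Dom_{2,2,1}(\Hess)}$, exactly as in the proof of the product rule \autoref{Pr:Product rule fcts}. First I would observe that since $f \in \F$ and $\varphi' \in \Lip_\bounded(\R)$ (with $\varphi(0)=0$ when $\meas[\mms]=\infty$), the chain rule from \autoref{Cor:Calculus rules d}\ref{La:Due} gives $\varphi \circ f \in \F_\rme$ with $\rmd(\varphi\circ f) = [\varphi'\circ\widetilde{f}]\,\rmd f$, and since $\varphi'$ is bounded and $f\in\Ell^2(\mms)$ (here one uses $\varphi(0)=0$ in the infinite-measure case, or boundedness of $\varphi$ otherwise) we get $\varphi\circ f\in\F$ by \autoref{Pr:Extended domain props}\ref{La:W12 = S2 cap L2}. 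Next I would check that the claimed tensor $A := [\varphi''\circ f]\,\rmd f\otimes\rmd f + [\varphi'\circ f]\,\Hess f$ actually lies in $\Ell^1((T^*)^{\otimes 2}\mms)$: the second summand is in $\Ell^1$ because $\varphi'$ is bounded and $\Hess f\in\Ell^2\subset$ (locally) $\Ell^1$-dominated... more carefully, $\Hess f \in \Ell^2((T^*)^{\otimes2}\mms)$ and $\varphi'\circ f\in\Ell^\infty$, so $[\varphi'\circ f]\Hess f\in\Ell^2((T^*)^{\otimes2}\mms)\subset\Ell^1$ only if $\meas$ is finite; in general it is in $\Ell^2$, which is enough once one notes $\smash{\Dom_{2,2,1}(\Hess)}$-membership only needs $\Ell^1$-regularity and $\Ell^2\hookrightarrow\Ell^1_\loc$ — but the definition demands a genuine $\Ell^1((T^*)^{\otimes2}\mms)$ element, so I would instead bound $\big\vert[\varphi''\circ f]\,\rmd f\otimes\rmd f\big\vert_\HS \le \Vert\varphi''\Vert_{\Ell^\infty}\,\vert\nabla f\vert^2$ which is in $\Ell^1(\mms)$ since $\nabla f\in\Ell^2(T\mms)$, and $\big\vert[\varphi'\circ f]\Hess f\big\vert_\HS\le\Vert\varphi'\Vert_{\Ell^\infty}\,\vert\Hess f\vert_\HS\in\Ell^2(\mms)$; combining, $A\in\Ell^1((T^*)^{\otimes2}\mms) + \Ell^2((T^*)^{\otimes2}\mms)$, and since the test integrals below all converge, this is the natural object. (I expect the cleanest route is to first prove it for bounded $f$ via \autoref{Pr:Product rule fcts}-type manipulations, then remove the boundedness by truncation.)

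The computational core is to verify the defining identity of \autoref{Def:W221}: fix $g_1,g_2,h\in\Test(\mms)$ and expand the three terms on the right-hand side using $\nabla(\varphi\circ f) = [\varphi'\circ f]\,\nabla f$ from \autoref{Cor:Calculus rules d}\ref{La:Due} together with \autoref{Re:Min E dom}. For the first term, $-\langle\nabla(\varphi\circ f),\nabla g_1\rangle\div(h\,\nabla g_2) = -[\varphi'\circ f]\,\langle\nabla f,\nabla g_1\rangle\div(h\,\nabla g_2)$; then I would use \autoref{Le:Div g nabla f} and the Leibniz rule to rewrite $[\varphi'\circ f]\,\div(h\,\nabla g_2) = \div([\varphi'\circ f]\,h\,\nabla g_2) - \langle\nabla(\varphi'\circ f),h\,\nabla g_2\rangle$ (valid since $\varphi'\circ f\in\F\cap\Ell^\infty(\mms)$ — here $\varphi'\circ f$ is bounded and $\nabla(\varphi'\circ f)=[\varphi''\circ f]\nabla f\in\Ell^2$ so $\varphi'\circ f\in\F$), picking up the term $h\,[\varphi''\circ f]\,\langle\nabla f,\nabla g_2\rangle\,\langle\nabla f,\nabla g_1\rangle$ which, together with its symmetric counterpart from the $g_1\leftrightarrow g_2$ term, reconstitutes $2h\,([\varphi''\circ f]\,\rmd f\otimes\rmd f)(\nabla g_1,\nabla g_2)$ via \eqref{Eq:Tensor product pointwise sc prod}. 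The remaining pieces, after collecting, are precisely $2\int_\mms [\varphi'\circ f]\,h$ times the defining right-hand side of $\Hess f$ with $[\varphi'\circ f]\,h$ in place of $h$ — and this equals $2\int_\mms [\varphi'\circ f]\,h\,\Hess f(\nabla g_1,\nabla g_2)\d\meas$ by \autoref{Le:More h}, since $[\varphi'\circ f]\,h\in\F\cap\Ell^\infty(\mms)$.

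The main obstacle I anticipate is the integrability bookkeeping rather than the algebra: one must make sure that every product of functions appearing in the expansion is integrable against $\meas$ (e.g. that $[\varphi''\circ f]\,\langle\nabla f,\nabla g_1\rangle\,\langle\nabla f,\nabla g_2\rangle\,h\in\Ell^1(\mms)$, which holds because $\nabla g_1,\nabla g_2\in\Ell^\infty(T\mms)$ and $\vert\nabla f\vert^2\in\Ell^1(\mms)$), and that the application of \autoref{Le:Div g nabla f} to $h\,\nabla g_2$ and of the Leibniz rule to $\varphi'\circ f$ is legitimate — the latter requires $\varphi'\circ f\in\F_\eb$, which follows from $\varphi'$ bounded Lipschitz and $\nabla(\varphi'\circ f)=[\varphi''\circ f]\nabla f\in\Ell^2(T\mms)$. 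To avoid the case $\varphi$ unbounded with $\meas[\mms]=\infty$ causing $\varphi\circ f\notin\Ell^2$, the hypothesis $\varphi(0)=0$ together with $\vert\varphi(t)\vert\le\Vert\varphi'\Vert_{\Ell^\infty}\vert t\vert$ gives $\vert\varphi\circ f\vert\le\Vert\varphi'\Vert_{\Ell^\infty}\vert f\vert\in\Ell^2(\mms)$, so this is handled. For the truncation step, if $f$ is not bounded set $f_n:=\max\{\min\{f,n\},-n\}$; then $\varphi\circ f_n\to\varphi\circ f$ in $\F$ and the Hessians converge in the appropriate sense by the closedness statement \autoref{Th:Hess properties}\ref{La:H2} and dominated convergence, using the explicit formula to pass to the limit, which closes the argument.
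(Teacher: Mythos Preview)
Your computational core is exactly the paper's proof: expand the three integrands in \autoref{Def:W221} via the chain rule $\nabla(\varphi\circ f)=[\varphi'\circ f]\,\nabla f$, rewrite $[\varphi'\circ f]\div(h\,\nabla g_i)=\div([\varphi'\circ f]\,h\,\nabla g_i)-h\,[\varphi''\circ f]\,\langle\nabla f,\nabla g_i\rangle$ using \autoref{Le:Div g nabla f}, collect the $\varphi''$-terms into $2h\,[\varphi''\circ f]\,(\rmd f\otimes\rmd f)(\nabla g_1,\nabla g_2)$, and recognize the remainder as the defining identity for $\Hess f$ tested against $[\varphi'\circ f]\,h\in\F\cap\Ell^\infty(\mms)$ via \autoref{Le:More h}. (For the last membership, write $\varphi'\circ f=(\varphi'-\varphi'(0))\circ f+\varphi'(0)$ so that the first summand lies in $\F$ by \autoref{Cor:Calculus rules d} even when $\meas[\mms]=\infty$; then $[\varphi'\circ f]\,h\in\F\cap\Ell^\infty(\mms)$ by the Leibniz rule.)

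Two remarks. First, the truncation step you propose at the end is both unnecessary and flawed: $f_n=\max\{\min\{f,n\},-n\}$ need not belong to $\Dom(\Hess)$, so you cannot invoke the formula for $\Hess(\varphi\circ f_n)$ to pass to the limit. The direct computation above works for every $f\in\Dom(\Hess)$ without any boundedness assumption, exactly as the paper does it. Second, your integrability worry about $[\varphi'\circ f]\,\Hess f$ lying only in $\Ell^2((T^*)^{\otimes 2}\mms)$ rather than $\Ell^1((T^*)^{\otimes 2}\mms)$ is legitimate; the paper asserts the candidate tensor ``defines an object in $\Ell^1(\mms)$'' without further comment, so this is a wrinkle in the statement of \autoref{Def:W221} that neither you nor the paper fully resolves --- but since the test tensors live in $\Ell^2(T^{\otimes 2}\mms)\cap\Ell^\infty(T^{\otimes 2}\mms)$, all pairings make sense and uniqueness is unaffected.
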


\begin{proof} By \autoref{Cor:Calculus rules d} and the boundedness of $\varphi'$, we have $\varphi\circ f \in \F$. The r.h.s.~of the claimed identity defines an object in $\Ell^1(\mms)$. Now, given any $g_1,g_2,h\in\Test(\mms)$, as in the proof of \autoref{Pr:Product rule fcts} we have
\begin{align*}
&-\big\langle\nabla(\varphi\circ f),\nabla g_1\big\rangle\div(h\,\nabla g_2)\\
&\qquad\qquad = -\big[\varphi'\circ f\big]\,\langle\nabla f,\nabla g_1\rangle\div(h\,\nabla g_2)\\
&\qquad\qquad = - \langle\nabla f,\nabla g_1\rangle\div\!\big(\big[\varphi'\circ f\big]\,h\,\nabla g_2\big)\\
&\qquad\qquad\qquad\qquad + h\,\big[\varphi''\circ f\big]\,\langle\nabla f,\nabla g_1\rangle\,\langle\nabla f,\nabla g_2\rangle\quad\meas\text{-a.e.},
\end{align*} 
and analogously
\begin{align*}
&-\big\langle\nabla(\varphi\circ f),\nabla g_2\big\rangle\div(h\,\nabla g_1)\\
&\qquad\qquad = - \langle\nabla f,\nabla g_2\rangle\div\!\big(\big[\varphi'\circ f\big]\,h\,\nabla g_1\big)\\
&\qquad\qquad\qquad\qquad + h\,\big[\varphi''\circ f\big]\,\langle\nabla f,\nabla g_2\rangle\,\langle\nabla f,\nabla g_1\rangle\quad\meas\text{-a.e.},
\end{align*}
while finally
\begin{align*}
-h\,\big\langle\nabla(\varphi\circ f),\nabla\langle\nabla g_1,\nabla g_2\rangle\big\rangle = - h\,\big[\varphi'\circ f\big]\,\big\langle\nabla f,\nabla\langle\nabla g_1,\nabla g_2\rangle\big\rangle\quad\meas\text{-a.e.}
\end{align*}
Adding up these three identities and using \autoref{Le:More h} for $f$ with $\smash{h\,[\varphi'\circ f]}$ in place of $h$ and by \eqref{Eq:Tensor product pointwise sc prod}, the claim readily follows.
\end{proof}

\begin{proposition}[Product rule for gradients]\label{Pr:Product rule for gradients} The following properties hold for every $f\in \Dom(\Hess)$ and every $g\in \Dom_\reg(\Hess)$. 
\begin{enumerate}[label=\textnormal{\textcolor{black}{(}\roman*\textcolor{black}{)}}]
\item\label{La:Ras} We have $\langle\nabla f,\nabla g\rangle\in \calG$ with
\begin{align*}
\rmd\langle\nabla f,\nabla g\rangle = \Hess f(\nabla g,\cdot) + \Hess g(\nabla f,\cdot).
\end{align*}
\item\label{La:Dwa} For every $g_1,g_2\in \Dom_\reg(\Hess)$,
\begin{align*}
2\Hess f(\nabla g_1,\nabla g_2) &=  \big\langle\nabla  g_1,\nabla\langle\nabla f,\nabla g_2\rangle\big\rangle + \big\langle\nabla g_2, \nabla\langle\nabla f,\nabla g_1\rangle\big\rangle\\
&\qquad\qquad - \big\langle\nabla f,\nabla \langle\nabla g_1,\nabla g_2\rangle\big\rangle\quad\meas\text{-a.e.}
\end{align*}
\item\label{La:Tri} If $f\in \Dom_\reg(\Hess)$ as well, then $\langle\nabla f,\nabla g\rangle\in \calG_\reg$.
\end{enumerate}
\end{proposition}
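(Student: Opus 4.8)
\textbf{Proof sketch for Proposition~\ref{Pr:Product rule for gradients}.}

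The plan is to prove the three assertions in the order \ref{La:Ras}, \ref{La:Dwa}, \ref{La:Tri}, since \ref{La:Dwa} is essentially a restatement of \ref{La:Ras} dualized and \ref{La:Tri} is an approximation refinement of \ref{La:Ras}. For \ref{La:Ras}, first I would reduce to the case $g\in\Test(\mms)$. Indeed, given $g\in\Dom_\reg(\Hess)$, choose a sequence $(g_n)_{n\in\N}$ in $\Test(\mms)$ converging to $g$ in $\Dom(\Hess)$; then $\rmd g_n\to\rmd g$ in $\Ell^2(T^*\mms)$, $\Hess g_n\to\Hess g$ in $\Ell^2((T^*)^{\otimes 2}\mms)$, so the candidate right-hand side $\Hess f(\nabla g_n,\cdot)+\Hess g_n(\nabla f,\cdot)$ converges in $\Ell^1(T^*\mms)$ to $\Hess f(\nabla g,\cdot)+\Hess g(\nabla f,\cdot)$ (using $\nabla f,\nabla g\in\Ell^2(T\mms)$ and Cauchy--Schwarz pointwise), while $\langle\nabla f,\nabla g_n\rangle\to\langle\nabla f,\nabla g\rangle$ in $\Ell^1(\mms)$. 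Since $\calG$ is a Banach space and the defining property of $\rmd_1$ in Definition~\ref{Def:W11} is $\Ell^1$-continuous in both slots for fixed test functions, it suffices to establish \ref{La:Ras} for $g\in\Test(\mms)$. (Here I note that $\langle\nabla f,\nabla g\rangle\in\Ell^1(\mms)$ automatically, since $\nabla f,\nabla g\in\Ell^2(T\mms)$.)

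So let $g\in\Test(\mms)$ and $f\in\Dom(\Hess)$; set $\eta:=\Hess f(\nabla g,\cdot)+\Hess g(\nabla f,\cdot)\in\Ell^1(T^*\mms)$, using $\nabla g\in\Ell^\infty(T\mms)$ and $\Hess f\in\Ell^2((T^*)^{\otimes 2}\mms)$ for the first term, and $\Hess g\in\Ell^2((T^*)^{\otimes 2}\mms)$ (which holds by Theorem~\ref{Th:Hess} and Proposition~\ref{Pr:Fin tot var}, indeed $\Hess g$ is even $\Ell^2$) together with $\nabla f\in\Ell^2(T\mms)$ for the second. I must verify the integration-by-parts identity of Definition~\ref{Def:W11}: for every $g'\in\Test_{\Ell^\infty}(\mms)$ and $h\in\Test(\mms)$,
\begin{align*}
\int_\mms \eta(\nabla g')\,h\d\meas = -\int_\mms \langle\nabla f,\nabla g\rangle\div(h\,\nabla g')\d\meas.
\end{align*}
The mechanism is the symmetric three-term identity. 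Applying Lemma~\ref{Le:More h} to $f$ with test functions $g,g'$ and weight $h$, and to $g$ (which lies in $\Test(\mms)\subset\Dom(\Hess)$) with test functions $f$... but here is the subtlety: $f$ need not be a test function, only $f\in\Dom(\Hess)$, so I cannot directly feed $f$ into the first slot of Lemma~\ref{Le:More h}. Instead I would use Theorem~\ref{Th:Hess} to write $\Hess g(\nabla f,\cdot)$ via the ``pre-Hessian'' $\rmH[g]$ on test arguments and handle the $\nabla f$ slot through the defining property of $\Hess f$ in Definition~\ref{Def:Hess}, combining the two applications of Lemma~\ref{Le:More h} (one genuine, one expressed through $\rmH$) so that the $\div(g g' h\,\nabla\!\cdot)$ and $\langle\nabla\cdot,\nabla\langle\cdots\rangle\rangle$ cross-terms cancel exactly as in the smooth computation of Example~\ref{Ex:Hess}. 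The bookkeeping: add the identity for $\Hess f$ tested against $\nabla g,\nabla g'$ with weight $h$ to the identity for $\Hess g$ tested against $\nabla f,\nabla g'$ with weight $h$; the $\langle\nabla g,\nabla\langle\nabla f,\nabla g'\rangle\rangle$-type terms regroup, via the Leibniz rule for $\rmd$ from Proposition~\ref{Cor:Calculus rules d} and the divergence Leibniz rule from Lemma~\ref{Le:Div identities}, into $-\int \langle\nabla f,\nabla g\rangle\div(h\,\nabla g')\d\meas$ plus a term $\int h\langle\nabla g',\nabla\langle\nabla f,\nabla g\rangle\rangle\d\meas$; but the last of these, read against the definition of $\rmd_1$, is exactly $\int\eta(\nabla g')\,h\d\meas$ once we recognize (again by Definition~\ref{Def:Hess} applied with the roles permuted) that $\Hess f(\nabla g,\nabla g')+\Hess g(\nabla f,\nabla g')=\langle\nabla g',\nabla\langle\nabla f,\nabla g\rangle\rangle$ holds after integration against $h$. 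This last cyclic identity is the heart of the matter and is precisely the nonsmooth analogue of the metric-compatibility computation; I would prove it by adding the three instances of Lemma~\ref{Le:More h}/Definition~\ref{Def:Hess} and cancelling.

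Assertion \ref{La:Dwa} then follows: given $g_1,g_2\in\Dom_\reg(\Hess)$, apply \ref{La:Ras} with $(f,g)=(f,g_2)$, $(f,g)=(g_1,g_2)$... more directly, \ref{La:Ras} gives $\rmd\langle\nabla f,\nabla g_1\rangle=\Hess f(\nabla g_1,\cdot)+\Hess g_1(\nabla f,\cdot)$ and similarly with indices swapped and with $(g_1,g_2)$, all as elements of $\Ell^1(T^*\mms)$; pairing these three identities with $\nabla g_2$, $\nabla g_1$, $\nabla f$ respectively (which is legitimate since these gradients are bounded, so the pairings are $\meas$-a.e.\ defined $\Ell^1$ functions) and taking the signed combination kills the six $\Hess g_i$-cross-terms by symmetry of the Hessians (Theorem~\ref{Th:Hess properties}\ref{La:H3}), leaving $2\Hess f(\nabla g_1,\nabla g_2)$ on one side. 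For \ref{La:Tri}, if moreover $f\in\Dom_\reg(\Hess)$, approximate both $f$ and $g$ by sequences $(f_n),(g_n)$ in $\Test(\mms)$ in the $\Dom(\Hess)$-norm; then $\langle\nabla f_n,\nabla g_n\rangle\in\calG\cap\Test(\mms)$ by Theorem~\ref{Th:Hess} (products of test functions are test functions, and $\langle\nabla f_n,\nabla g_n\rangle\in\F$ with the required integrability), and by \ref{La:Ras} applied to the pair $(f_n,g_n)$ we get $\rmd\langle\nabla f_n,\nabla g_n\rangle=\Hess f_n(\nabla g_n,\cdot)+\Hess g_n(\nabla f_n,\cdot)\to\Hess f(\nabla g,\cdot)+\Hess g(\nabla f,\cdot)$ in $\Ell^1(T^*\mms)$, while $\langle\nabla f_n,\nabla g_n\rangle\to\langle\nabla f,\nabla g\rangle$ in $\Ell^1(\mms)$ (using the uniform $\Ell^\infty$-type control on gradients available along the approximation, or simply $\Ell^2\times\Ell^2\to\Ell^1$ convergence after passing to a subsequence with pointwise bounds). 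Hence $\langle\nabla f,\nabla g\rangle$ is an $\Vert\cdot\Vert_{\calG}$-limit of elements of $\calG\cap\Test(\mms)$, i.e.\ lies in $\calG_\reg$.

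The main obstacle I anticipate is exactly the cyclic cancellation in \ref{La:Ras}: unlike in \cite{gigli2018}, $f$ is only in $\Dom(\Hess)$, not a test function, so one of the three applications of the three-term identity is not directly available and must be routed through the defining property of $\Hess f$ itself rather than through $\rmH[f]$; keeping track of which terms are genuine integrals against $\div(\cdot)$ versus against $\langle\nabla\cdot,\nabla\langle\cdots\rangle\rangle$, and ensuring every product appearing has the claimed $\Ell^1$ or $\Ell^2$ integrability (this is why $g\in\Dom_\reg(\Hess)$ with bounded-gradient test approximants, not merely $\Dom(\Hess)$, is needed), is the delicate bookkeeping. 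A secondary point is justifying the reduction to $g\in\Test(\mms)$ and the final approximation in \ref{La:Tri} — here one must be a little careful that $\Hess f(\nabla g_n,\cdot)$ converges in $\Ell^1(T^*\mms)$, which uses boundedness of $\nabla g_n$ only through $g_n\in\Test(\mms)$ and is clean, but the term $\Hess g_n(\nabla f,\cdot)$ requires $\Hess g_n\to\Hess g$ in $\Ell^2$, valid precisely because $g\in\Dom_\reg(\Hess)$.
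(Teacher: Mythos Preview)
Your overall architecture matches the paper's: reduce \ref{La:Ras} to $g\in\Test(\mms)$ by $\Dom(\Hess)$-approximation, combine the defining identities for $\Hess f$ and $\Hess g$ to produce the $\calG$-identity, deduce \ref{La:Dwa} from \ref{La:Ras} and Hessian symmetry, and obtain \ref{La:Tri} by approximating both $f$ and $g$.

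The gap is in your core step for \ref{La:Ras} with $g\in\Test(\mms)$. You correctly flag that $f$ is not a test function, so you cannot feed $\nabla f$ into the test slot of the defining property of $\Hess g$; but your proposed fix --- ``write $\Hess g(\nabla f,\cdot)$ via the pre-Hessian $\rmH[g]$ on test arguments'' --- does not work as stated, because $\rmH[g]$ also needs test arguments. The paper's resolution is concrete and different from what you describe: pick a sequence $(f_n)$ in $\Test(\mms)$ converging to $f$ \emph{merely in $\F$} (not in $\Dom(\Hess)$), apply \autoref{Def:Hess} for $g$ with $f_n,g'\in\Test(\mms)$ as test arguments, and pass to the limit. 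The cancellation then hinges on the $\F$-regularity $\langle\nabla g,\nabla g'\rangle\in\F$ and $\langle\nabla f_n,\nabla g'\rangle\in\F$ supplied by \autoref{Pr:Bakry Emery measures}, which lets one undo the $\div(h\,\nabla f_n)$ and $\langle\nabla g,\nabla\langle\nabla f_n,\nabla g'\rangle\rangle$ terms by integration by parts so that they match their $f$-counterparts in the limit. This taming-based $\F$-regularity is the mechanism driving the cancellation, and you do not invoke it.

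Two smaller corrections. In \ref{La:Dwa}, for general $g_1,g_2\in\Dom_\reg(\Hess)$ the gradients $\nabla g_1,\nabla g_2$ are \emph{not} bounded, only $\Ell^2$; the identity is still valid $\meas$-a.e.\ as an equality in $\Ell^0(\mms)$, but your justification ``legitimate since these gradients are bounded'' is wrong. In \ref{La:Tri}, $\langle\nabla f_n,\nabla g_n\rangle$ need not lie in $\Test(\mms)$; what is true (and what the paper uses) is that it lies in $\F\cap\Ell^1(\mms)$ with $\Ell^1$-differential by \ref{La:Ras}, and hence in $\calG_\reg$ via \autoref{Le:L1 to H11}.
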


\begin{proof} We first treat item \ref{La:Ras} under the additional assumption that $g\in\Test(\mms)$. Let $g'\in\Test_{\Ell^\infty}(\mms)$ and $h\in\Test(\mms)$. Let $(f_n)_{n\in\N}$ be a sequence in $\Test(\mms)$ which converges to $f$ in $\F$. Then by \autoref{Def:Hess},
\begin{align*}
&\int_\mms h\,\big[\!\Hess f(\nabla g,\nabla g') + \Hess g(\nabla f,\nabla g')\big]\d\meas\\
&\qquad\qquad = \lim_{n\to\infty} \int_\mms h\,\big[\!\Hess f(\nabla g,\nabla g') + \Hess g(\nabla f_n,\nabla g')\big]\d\meas\\
&\qquad\qquad = -\frac{1}{2}\int_\mms \langle\nabla f,\nabla g\rangle\div(h\,\nabla g')\d\meas - \frac{1}{2}\int_\mms \langle\nabla f,\nabla g'\rangle\div(h\,\nabla g)\d\meas\\
&\qquad\qquad\qquad\qquad\qquad\qquad - \frac{1}{2}\int_\mms h\,\big\langle\nabla f,\nabla\langle\nabla g,\nabla g'\rangle\big\rangle\d\meas\\
&\qquad\qquad\qquad\qquad - \frac{1}{2}\lim_{n\to\infty} \int_\mms \langle\nabla g,\nabla f_n\rangle\div(h\,\nabla g')\d\meas\\
&\qquad\qquad\qquad\qquad\qquad\qquad - \frac{1}{2}\lim_{n\to\infty}\int_\mms \langle\nabla g,\nabla g'\rangle\div(h\,\nabla f_n)\d\meas\\
&\qquad\qquad\qquad\qquad\qquad\qquad - \frac{1}{2}\lim_{n\to\infty} \int_\mms h\,\big\langle\nabla g,\nabla\langle\nabla f_n,\nabla g'\rangle\big\rangle\d\meas\\
&\qquad\qquad = -\int_\mms \langle\nabla f,\nabla g\rangle\div(h\,\nabla g')\d\meas - \frac{1}{2}\int_\mms \langle\nabla f,\nabla g'\rangle\div(h\,\nabla g)\d\meas\\
&\qquad\qquad\qquad\qquad\qquad\qquad + \frac{1}{2}\lim_{n\to\infty}\int_\mms \langle\nabla f_n,\nabla g'\rangle\div(h\,\nabla g)\d\meas\\
&\qquad\qquad\qquad\qquad - \frac{1}{2}\int_\mms h\,\big\langle\nabla f,\nabla\langle\nabla g,\nabla g'\rangle\big\rangle\d\meas\\
&\qquad\qquad\qquad\qquad\qquad\qquad + \frac{1}{2}\lim_{n\to\infty}\int_\mms h\,\big\langle\nabla \langle\nabla g,\nabla g'\rangle,\nabla f_n\big\rangle\d\meas\\
&\qquad\qquad = -\int_\mms \langle\nabla f,\nabla g\rangle\div(h\,\nabla g')\d\meas.
\end{align*}
In the second last step, we used \autoref{Le:Div g nabla f} and the fact that $\langle\nabla g,\nabla g'\rangle\in \F$ as well as $\langle\nabla f_n,\nabla g'\rangle\in \F$ for every $n\in\N$ provided by \autoref{Pr:Bakry Emery measures}. Since $\Hess f(\nabla g,\cdot) + \Hess g(\nabla f,\cdot)\in \Ell^1(T^*\mms)$, the claim follows from the definition of $\calG$ in \autoref{Def:W11}.

To cover the case of general $g\in \Dom_\reg(\Hess)$, simply observe that given any $f\in \Dom(\Hess)$, $g'\in \Test_{\Ell^\infty}(\mms)$ and $h\in \Test(\mms)$, both sides of the above computation are continuous in $g$ w.r.t.~$\Vert\cdot\Vert_{\Dom(\Hess)}$.

Point \ref{La:Dwa} easily follows by expressing each summand in the r.h.s.~of the claimed identity in terms of the formula from \ref{La:Ras}, and the symmetry of the Hessian known from \autoref{Th:Hess properties}.

We finally turn to \ref{La:Tri}. If $f,g\in\Test(\mms)$, then $\langle\nabla f,\nabla g\rangle\in \F\cap\Ell^1(\mms)$ thanks to \autoref{Pr:Bakry Emery measures}, while $\rmd \langle\nabla f,\nabla g\rangle \in  \Ell^1(T^*\mms)$ by \ref{La:Ras}. The $\calG_\reg$-regularity of $\langle\nabla f,\nabla g\rangle$  thus follows from \autoref{Le:L1 to H11}. For general $f,g\in \Dom_\reg(\Hess)$, let $(f_n)_{n\in\N}$ and $(g_n)_{n\in\N}$ be two sequences in $\Test(\mms)$ such that $f_n \to f$ and $g_n\to g$ in $\Dom(\Hess)$ as $n\to\infty$, respectively. Then clearly $\langle\nabla f_n,\nabla g_n\rangle\to \langle\nabla f,\nabla g\rangle$ in $\Ell^1(\mms)$ as $n\to\infty$, while $\Hess f_n(\nabla g_n,\cdot) + \Hess g_n(\nabla f_n,\cdot) \to \Hess f(\nabla g,\cdot) + \Hess g(\nabla f,\cdot)$ in $\Ell^1(T^*\mms)$ as $n\to\infty$. By the  $\calG_\reg$-regularity of $\langle\nabla f_n,\nabla g_n\rangle$ for every $n\in\N$ already shown above, the proof is terminated.
\end{proof}

Combining the last two items of \autoref{Pr:Product rule for gradients} with \autoref{Pr:H11 calculus rules} thus entails the following locality property.

\begin{lemma}[Locality of the Hessian] For every $f,g\in \Dom_\reg(\Hess)$,
\begin{align*}
\One_{\{f=g\}}\Hess f = \One_{\{f=g\}}\Hess g.
\end{align*}
\end{lemma}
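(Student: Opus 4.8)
The statement is the locality property of the Hessian: for $f,g\in\Dom_\reg(\Hess)$ one has $\One_{\{f=g\}}\Hess f = \One_{\{f=g\}}\Hess g$. The plan is to reduce this to the locality of the differential on $\calG_\reg$ (\autoref{Pr:H11 calculus rules}) by means of the characterization of the Hessian through the ``product rule for gradients'' (\autoref{Pr:Product rule for gradients}\ref{La:Dwa}), which expresses $\Hess f(\nabla g_1,\nabla g_2)$ entirely in terms of first-order quantities $\langle\nabla f,\nabla g_i\rangle$ and $\langle\nabla g_1,\nabla g_2\rangle$ and their differentials — all of which are controlled on the set $\{f=g\}$ once we know locality of $\nabla$ and $\rmd$ on the relevant function spaces.

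\textbf{Key steps.} First, by the linearity of $\Hess$ on $\Dom_\reg(\Hess)$ it suffices to show that $f\in\Dom_\reg(\Hess)$ with $f=0$ $\meas$-a.e.\ on a Borel set $E$ implies $\One_E\Hess f = 0$; applying this to $f-g$ (noting $\Dom_\reg(\Hess)$ is a vector space and $\{f=g\}=\{f-g=0\}$) gives the claim. Second, fix $g_1,g_2\in\Test(\mms)\subset\Dom_\reg(\Hess)$. By \autoref{Pr:Product rule for gradients}\ref{La:Dwa},
\begin{align*}
2\Hess f(\nabla g_1,\nabla g_2) &= \big\langle\nabla g_1,\nabla\langle\nabla f,\nabla g_2\rangle\big\rangle + \big\langle\nabla g_2,\nabla\langle\nabla f,\nabla g_1\rangle\big\rangle\\
&\qquad\qquad - \big\langle\nabla f,\nabla\langle\nabla g_1,\nabla g_2\rangle\big\rangle\quad\meas\text{-a.e.}
\end{align*}
Third, on $E$ we control each summand: by \autoref{Pr:Product rule for gradients}\ref{La:Ras}--\ref{La:Tri}, $\langle\nabla f,\nabla g_i\rangle\in\calG_\reg$, and by the locality of $\nabla$ from \autoref{Cor:Calculus rules d} one has $\One_E\nabla f = 0$, hence $\One_E\langle\nabla f,\nabla g_i\rangle = 0$; then the locality of $\rmd$ on $\calG_\reg$ (\autoref{Pr:H11 calculus rules}) gives $\One_E\,\rmd\langle\nabla f,\nabla g_i\rangle = 0$, so the first two terms vanish $\meas$-a.e.\ on $E$. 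The third term vanishes directly since $\One_E\nabla f = 0$. Therefore $\One_E\Hess f(\nabla g_1,\nabla g_2) = 0$ $\meas$-a.e.\ for all $g_1,g_2\in\Test(\mms)$. Fourth, since $\Test(T^{\otimes 2}\mms)$ is dense in $\Ell^2(T^{\otimes 2}\mms)$ and $\Test(\mms)$ is an algebra (so that elements $h_1\,h_2\,\nabla g_1\otimes\nabla g_2$ span a dense subspace after testing against $h\in\Test(\mms)$), an $\Ell^\infty$-module density/localization argument — exactly as used in \autoref{Def:Hess} to prove uniqueness of the Hessian — upgrades this to $\One_E\Hess f = 0$ as an element of $\Ell^2((T^*)^{\otimes 2}\mms)$.

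\textbf{Main obstacle.} The one point requiring care is the applicability of \autoref{Pr:Product rule for gradients}\ref{La:Dwa} and \ref{La:Tri}: part \ref{La:Dwa} needs $g_1,g_2\in\Dom_\reg(\Hess)$, which is fine for $g_1,g_2\in\Test(\mms)$, and part \ref{La:Ras} gives $\langle\nabla f,\nabla g_i\rangle\in\calG$, while $\calG_\reg$-regularity (needed to invoke \autoref{Pr:H11 calculus rules}) requires \emph{both} $f,g_i\in\Dom_\reg(\Hess)$, which again holds here. So the delicate bookkeeping is simply to keep $g_1,g_2$ inside $\Test(\mms)$ throughout (rather than general elements of $\Dom_\reg(\Hess)$) and only at the end invoke density in the tensor module; no genuinely new estimate is needed beyond the cited results. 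I expect the density/localization step in the tensor product to be the most technical, but it is entirely parallel to the uniqueness argument already given after \autoref{Def:Hess}.
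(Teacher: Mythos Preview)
Your proof is correct and follows exactly the route the paper indicates: the paper's entire proof is the sentence ``Combining the last two items of \autoref{Pr:Product rule for gradients} with \autoref{Pr:H11 calculus rules} thus entails the following locality property,'' and you have faithfully unpacked this --- using item \ref{La:Dwa} to express $\Hess f(\nabla g_1,\nabla g_2)$ in first-order terms, item \ref{La:Tri} to place $\langle\nabla f,\nabla g_i\rangle$ in $\calG_\reg$, and the locality of $\rmd$ on $\calG_\reg$ from \autoref{Pr:H11 calculus rules} to conclude. The final density step via $\Test(T^{\otimes 2}\mms)$ is routine and exactly as you describe.
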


\section{Covariant derivative}\label{Ch:Covariant der}

\subsection{The Sobolev space $W^{1,2}(T\mms)$}\label{Sub:Cov der 1} In a similar kind as in \autoref{Ex:Hess}, the smooth context motivates how we should define a nonsmooth covariant derivative acting on vector fields, having now the notion of Hessian at our disposal.

\begin{example}\label{Ex:Smooth cov der motiv} Let $\mms$ be a Riemannian manifold with boundary. The covariant derivative $\nabla X$ of $X\in \Gamma(T\mms)$ is uniquely defined by the pointwise relation
\begin{align}\label{Eq:Covariant derivative def}
\begin{split}
\big\langle\nabla_{\nabla g_1}X, \nabla g_2\big\rangle &=\nabla X :(\nabla g_1\otimes \nabla g_2)\\
&= \big\langle\nabla\langle X,\nabla g_2\rangle,\nabla g_1\big\rangle - \Hess g_2(\nabla g_1,X)
\end{split}
\end{align}
for every $g_1,g_2\in \Cont_\comp^\infty(\mms)$, see e.g.~\cite[Thm.~6.2.1]{gigli2020}. That is, $\nabla$ is tensorial in its first and derivative in its second component, torsion-free and metrically compatible if and only if the second equality in \eqref{Eq:Covariant derivative def} holds for every $g_1$ and $g_2$ as above. This yields an alternative definition of the Levi-Civita connection $\nabla$ in place of Koszul's formula, see e.g.~\cite[p.~25]{petersen2006}, which does not use Lie brackets. (Indeed, in our setting it is not even clear how to define the Lie bracket without covariant derivatives.)

As in \autoref{Ex:Hess}, $\nabla$ is still uniquely determined on $\Gamma(T\mms)$ by the validity of \eqref{Eq:Covariant derivative def} for every $\smash{g_1,g_2\in\Cont_\comp^\infty(\mms)}$ for which
\begin{align*}
\langle\nabla g_1,\sfn\rangle = \langle\nabla g_2,\sfn\rangle = 0\quad\text{on }\partial\mms.
\end{align*}
For such $g_1$ and $g_2$, we integrate \eqref{Eq:Covariant derivative def} against $h\,\vol$ for a given $h\in\Cont_\comp^\infty(\mms)$ to obtain, using again the regularity discussion around \eqref{Eq:Smooth div identities},
\begin{align}\label{Eq:nabla X motivation}
\begin{split}
&\int_\mms h\,\nabla X : (\nabla g_1\otimes \nabla g_2)\d\vol\\
&\qquad\qquad =-\int_\mms \langle X,\nabla g_2\rangle\div(h\,\nabla g_1)\d\vol - \int_\mms h\Hess g_2(X,\nabla g_1)\d\vol.
\end{split}
\end{align} 
This identity characterizes $\nabla X$ by the existence of a smooth extension to $\partial\mms$.
\end{example}

In our setting, the r.h.s.~of \eqref{Eq:nabla X motivation} --- with $\vol$ replaced by $\meas$ --- still makes sense for $g_1,g_2,h\in\Test(\mms)$ and is compatible with the smooth case (in the sense that no boundary terms show up in the nonsmooth terminology of \autoref{Def:Normal component}), which is argued as in the paragraph after \autoref{Ex:Hess}. Indeed, for $X\in \Ell^2(T\mms)$ we have $\langle X,\nabla g_2\rangle\in\Ell^2(\mms)$ as well as $\div(h\,\nabla g_1) = \langle\nabla h,\nabla g_1\rangle + h\,\Delta g_1\in\Ell^2(\mms)$ by \autoref{Le:Div g nabla f}, while the second integral on the r.h.s.~of \eqref{Eq:nabla X motivation} is trivially well-defined.

This motivates the subsequent definition.

\begin{definition}\label{Def:Cov der} The space $W^{1,2}(T\mms)$ is defined to consist of all $X\in \Ell^2(T\mms)$ for which there exists $T\in \Ell^2(T^{\otimes 2}\mms)$ such that for every $g_1,g_2,h\in\Test(\mms)$,
\begin{align*}
\begin{split}
&\int_\mms h\,T:(\nabla g_1\otimes \nabla g_2)\d\meas\\
&\qquad\qquad= -\int_\mms \langle X,\nabla g_2\rangle\div(h\,\nabla g_1)\d\meas  -\int_\mms h\Hess g_2(X,\nabla g_1)\d\meas.
\end{split}
\end{align*}
In case of existence, the element $T$ is unique, denoted by $\nabla X$ and termed the \emph{covariant derivative} of $X$.
\end{definition}

Arguing as for the Hessian after \autoref{Def:Hess}, the uniqueness statement in \autoref{Def:Cov der} is derived. In particular, $W^{1,2}(T\mms)$ constitutes a vector space and the covariant derivative $\nabla$ is a linear operator on it. Further properties can be consulted in \autoref{Th:Properties W12 TM} below.

The space $W^{1,2}(T\mms)$ is endowed with the norm $\Vert\cdot\Vert_{W^{1,2}(T\mms)}$ given by
\begin{align*}
\big\Vert X \big\Vert_{W^{1,2}(T\mms)}^2 := \big\Vert X\big\Vert_{\Ell^2(T\mms)}^2 + \big\Vert\nabla X\big\Vert_{\Ell^2(T^{\otimes 2}\mms)}^2.
\end{align*}
We also define the \emph{covariant} functional $\Ch_\cov \colon \Ell^2(T\mms)\to[0,\infty]$ by
\begin{align}\label{Eq:E cov W}
\Ch_\cov(X) := \begin{cases} \displaystyle\int_\mms \big\vert\nabla X\big\vert_\HS^2\d\meas & \text{if }X\in W^{1,2}(T\mms),\\
\infty & \text{otherwise}.
\end{cases}
\end{align}

\begin{theorem}\label{Th:Properties W12 TM} The space $W^{1,2}(T\mms)$, the covariant derivative $\nabla$ and the functional $\Ch_\cov$ possess the following properties.
\begin{enumerate}[label=\textnormal{\textcolor{black}{(}\roman*\textcolor{black}{)}}]
\item\label{La:Cov 1} $W^{1,2}(T\mms)$ is a separable Hilbert space w.r.t.~$\Vert\cdot\Vert_{W^{1,2}(T\mms)}$.
\item\label{La:Cov 2} The covariant derivative $\nabla$ is a closed operator. That is, the image of the map $\Id\times \nabla\colon W^{1,2}(T\mms)\to \Ell^2(T\mms)\times\Ell^2(T^{\otimes 2}\mms)$ is a closed subspace of $\Ell^2(T\mms)\times\Ell^2(T^{\otimes 2}\mms)$.
\item\label{La:Cov 5} For every $f\in \Test(\mms)$ and every $g\in \Test(\mms)\cup\R\,\One_\mms$, we have $g\,\nabla f\in W^{1,2}(T\mms)$ with
\begin{align*}
\nabla(g\,\nabla f) = \nabla g\otimes\nabla f + g\,(\Hess f)^\sharp,
\end{align*}
with the interpretation $\nabla \One_\mms := 0$. In particular $\Reg(T\mms)\subset W^{1,2}(T\mms)$, and $W^{1,2}(T\mms)$ is dense in $\Ell^2(T\mms)$.
\item\label{La:Cov 3} The functional $\Ch_\cov$ is lower semicontinuous, and for every $X\in \Ell^2(T\mms)$ we have the duality formula
\begin{align*}
\Ch_\cov(X) &= \sup\!\Big\lbrace\!-\!2\sum_{i=1}^n\int_\mms \langle X, Z_i\rangle\div Y_i\d\meas -2\sum_{i=1}^n\int_\mms  \nabla Z_i : (Y_i\otimes X)\d\meas\\
&\qquad\qquad - \int_\mms \Big\vert\!\sum_{i=1}^n Y_i\otimes Z_i\Big\vert^2\d\meas :  n\in \N,\ Y_i, Z_i\in \Test(T\mms)\Big\rbrace.
\end{align*}
\end{enumerate}
\end{theorem}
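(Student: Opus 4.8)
\textbf{Proof plan for \autoref{Th:Properties W12 TM}.}

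The four items are established in the order \ref{La:Cov 2}, \ref{La:Cov 1}, \ref{La:Cov 5}, \ref{La:Cov 3}, which mirrors the structure of the proof of \autoref{Th:Hess properties} for the Hessian. For \ref{La:Cov 2}, I fix $g_1,g_2,h\in\Test(\mms)$ and observe that the right-hand side of the defining identity in \autoref{Def:Cov der} is continuous in $(X,T)$ with respect to weak convergence in $\Ell^2(T\mms)\times\Ell^2(T^{\otimes 2}\mms)$: the term $\int_\mms\langle X,\nabla g_2\rangle\div(h\,\nabla g_1)\d\meas$ is weakly continuous in $X$ since $\div(h\,\nabla g_1)\in\Ell^2(\mms)$ by \autoref{Le:Div g nabla f} and $\nabla g_2\in\Ell^\infty(T\mms)$, while $\int_\mms h\Hess g_2(X,\nabla g_1)\d\meas$ is weakly continuous in $X$ since $h\,\Hess g_2(\cdot,\nabla g_1)\in\Ell^2(T^*\mms)$ using $\Hess g_2\in\Ell^2((T^*)^{\otimes 2}\mms)$ (from \autoref{Th:Hess}) together with $h\in\Ell^\infty(\mms)$ and $\nabla g_1\in\Ell^\infty(T\mms)$; and the left-hand side is weakly continuous in $T$ since $h\,(\nabla g_1\otimes\nabla g_2)\in\Ell^2(T^{\otimes 2}\mms)$. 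Hence if $(X_n,\nabla X_n)\to(X,T)$ in the product space, $(X,T)$ satisfies the defining identity, so $X\in W^{1,2}(T\mms)$ and $T=\nabla X$; this is exactly the asserted closedness. Item \ref{La:Cov 1} then follows in the standard way: $\Ell^2(T\mms)$ and $\Ell^2(T^{\otimes 2}\mms)$ are separable Hilbert modules by \autoref{Th:Riesz theorem modules} and the discussion of \autoref{Sub:Tensor products}, so their Cartesian product is a separable Hilbert space; the map $\Id\times\nabla$ is a bijective linear isometry of $W^{1,2}(T\mms)$ onto its image (which is closed by \ref{La:Cov 2}), and $\Vert\cdot\Vert_{W^{1,2}(T\mms)}$ trivially obeys the parallelogram law, so $W^{1,2}(T\mms)$ inherits completeness and separability.

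For \ref{La:Cov 5}, let $f\in\Test(\mms)$ and $g\in\Test(\mms)\cup\R\,\One_\mms$, and set $X:=g\,\nabla f$. I claim $T:=\nabla g\otimes\nabla f+g\,(\Hess f)^\sharp\in\Ell^2(T^{\otimes 2}\mms)$ satisfies the defining identity. Fix $g_1,g_2,h\in\Test(\mms)$. Using $\langle X,\nabla g_2\rangle=g\,\langle\nabla f,\nabla g_2\rangle$ and the product rule $\div(h\,g\,\nabla g_1)=g\div(h\,\nabla g_1)+h\,\langle\nabla g,\nabla g_1\rangle$ (from \autoref{Le:Div g nabla f} and \eqref{Eq:Div lbnz rle}), rewrite $-\int_\mms\langle X,\nabla g_2\rangle\div(h\,\nabla g_1)\d\meas$; the point is that $h\,g\in\Test(\mms)$, so \autoref{Le:More h} applies to $f$ with $h\,g$ in place of $h$, turning the $\Hess f$ integrals against $h\,g$ into divergence-plus-scalar terms. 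After collecting terms and invoking the definition of $\Hess f$ via \autoref{Def:Hess} (equivalently \autoref{Le:More h}) for $f$, and in the case $g\in\Test(\mms)$ also the pointwise formula $\rmH[f]=\Hess f(\nabla\cdot,\nabla\cdot)$ of \eqref{Eq:H = H} to identify cross terms, the resulting expression matches $\int_\mms h\,T:(\nabla g_1\otimes\nabla g_2)\d\meas$; when $g=\One_\mms$ the $\nabla g\otimes\nabla f$ summand drops and the identity reduces directly to \autoref{Def:Hess}. Since $\Reg(T\mms)$ consists of finite linear combinations of such $g\,\nabla f$, linearity of $\nabla$ gives $\Reg(T\mms)\subset W^{1,2}(T\mms)$, and density of $W^{1,2}(T\mms)$ in $\Ell^2(T\mms)$ follows since $\Reg(T\mms)$ is already $\Ell^2$-dense by \autoref{Pr:Generators cotangent module} and \autoref{Th:Riesz theorem modules}.

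Item \ref{La:Cov 3} is the analogue of \autoref{Th:Hess properties}\ref{La:H4} and is the main technical obstacle. Lower semicontinuity of $\Ch_\cov$ follows from \ref{La:Cov 2}, weak relative compactness of bounded sets in the Hilbert module $\Ell^2(T^{\otimes 2}\mms)$, and Mazur's lemma, exactly as for $\E_2$. For the duality formula, the inequality ``$\geq$'' is obtained by assuming $X\in W^{1,2}(T\mms)$, writing $\Ch_\cov(X)=\sup\{2\sum_i\int_\mms\nabla X:(Y_i\otimes Z_i)\d\meas-\int_\mms|\sum_i Y_i\otimes Z_i|^2\d\meas\}$ over $Y_i,Z_i\in\Test(T\mms)$ by duality of $W^{1,2}(T\mms)$ and density of $\Test(T^{\otimes 2}\mms)$ in $\Ell^2(T^{\otimes 2}\mms)$, and then for each $Y_i=h_i\,\nabla g_1^i$, $Z_i=k_i\,\nabla g_2^i$ expanding $\int_\mms\nabla X:(Y_i\otimes Z_i)\d\meas$ via \autoref{Def:Cov der} (after absorbing the scalar factors $h_i,k_i$, using that $\Test(\mms)$ is an algebra and that \autoref{Le:Div g nabla f} and \autoref{Le:Product convergence} allow replacing products $h_i\,k_i$ by arbitrary test functions, and writing $\nabla Z_i:(Y_i\otimes X)=\Hess g_2^i(X,\cdot)$-type terms in terms of the $Z_i$ via \ref{La:Cov 5}). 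The reverse inequality ``$\leq$'' proceeds as in the proof of \autoref{Th:Hess properties}\ref{La:H4}: assume the supremum $C$ is finite, define $\Phi$ on $\Test(T^{\otimes 2}\mms)$ by the ``expected'' right-hand side of \autoref{Def:Cov der} (summed), check well-definedness by the usual $\lambda\to\infty$ scaling argument against $C<\infty$, derive $|\Phi(T)|\le\sqrt C\,\Vert T\Vert_{\Ell^2(T^{\otimes 2}\mms)}$ by optimizing in $\lambda$, extend $\Phi$ to $\Ell^2(T^{\otimes 2}\mms)'$ and represent it by some $A'\in\Ell^2((T^*)^{\otimes 2}\mms)$ via \autoref{Th:Riesz theorem modules}, use \autoref{Le:Product convergence} and \autoref{Le:Div g nabla f} to replace the products of test functions by general elements of $\Test(\mms)$, conclude $X\in W^{1,2}(T\mms)$ with $\nabla X=A'$ from \autoref{Def:Cov der}, and read off $\Ch_\cov(X)=\Vert A'\Vert_{\Ell^2((T^*)^{\otimes 2}\mms)}^2\le C$. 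The delicate bookkeeping is keeping track of which quantities ($\langle X,Z_i\rangle$, $\div Y_i$, $\nabla Z_i:(Y_i\otimes X)$) correspond to which integrals after the $g$-factors from $Z_i=g\,\nabla g_2^i\in\Reg(T\mms)$ are moved around using \ref{La:Cov 5}, and verifying that the $\Hess$-dependent term in \autoref{Def:Cov der} is precisely the contribution $\nabla Z_i:(Y_i\otimes X)$ of the divergence-form expression.
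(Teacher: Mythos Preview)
Your proposal is correct and follows essentially the same route as the paper: items \ref{La:Cov 2} and \ref{La:Cov 1} are handled identically, item \ref{La:Cov 3} is declared analogous to \autoref{Th:Hess properties}\ref{La:H4} in both, and for item \ref{La:Cov 5} the paper packages the computation you sketch via \autoref{Le:More h} and \eqref{Eq:H = H} into a single appeal to \autoref{Pr:Product rule for gradients} (which gives $\rmd\langle\nabla f,\nabla g_2\rangle=\Hess f(\nabla g_2,\cdot)+\Hess g_2(\nabla f,\cdot)$ directly, yielding the needed ``half'' of the Hessian identity rather than the symmetric version from \autoref{Le:More h}). Your plan works once you note that \eqref{Eq:H = H} must be applied to $g_2$ as well as to $f$ to isolate the $\Hess g_2$ term; the paper's use of \autoref{Pr:Product rule for gradients} is just a cleaner way to record that step.
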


\begin{proof} Item \ref{La:Cov 2} is addressed by observing that given any $g_1,g_2,h\in\Test(\mms)$, both sides of the defining property of $\nabla$ in \autoref{Def:Cov der} are continuous in $X$ and $T$ w.r.t.~weak convergence in $\Ell^2(T\mms)$ and $\Ell^2(T^{\otimes 2}\mms)$, respectively.

For \ref{La:Cov 1}, it is clear from \ref{La:Cov 2} and the trivial parallelogram identity of $\smash{\Vert\cdot\Vert_{W^{1,2}(T\mms)}}$ that $W^{1,2}(T\mms)$ is a Hilbert space. To prove its separability, we endow $\Ell^2(T\mms)\times \Ell^2(T^{\otimes 2}\mms)$ with the product norm $\Vert\cdot\Vert$ given by
\begin{align*}
\Vert (X,T) \Vert^2 := \big\Vert X\big\Vert_{\Ell^2(T\mms)}^2+\big\Vert T\big\Vert_{\Ell^2(T^{\otimes 2}\mms)}^2.
\end{align*}
Recall from \autoref{Pr:Generators cotangent module}, \autoref{Th:Riesz theorem modules} and the discussions in \autoref{Sub:Tensor products} and \autoref{Sub:Test objects} that $\Ell^2(T\mms)$ and $\Ell^2(T^{\otimes 2}\mms)$ are separable Hilbert spaces, and so is their Cartesian product. As $\Id\times\nabla \colon W^{1,2}(T\mms) \to \Ell^2(T\mms)\times\smash{\Ell^2(T^{\otimes 2}\mms)}$ is a bijective isometry onto its image, the proof of \ref{La:Cov 1} is completed.

Item \ref{La:Cov 5} for $g:= \One_\mms$ follows from \autoref{Pr:Product rule for gradients}. Indeed, given any $g_1,g_2,h\in\Test(\mms)$, by definition of $\calG$ we have
\begin{align*}
&\int_\mms h\Hess f(\nabla g_1,\nabla g_2)\d\meas\\
&\qquad\qquad = \int_\mms h\,\big\langle \nabla\langle\nabla f,\nabla g_2\rangle,\nabla g_1 \big\rangle\d\meas - \int_\mms h\Hess g_2(\nabla f,\nabla g_2)\d\meas\\
&\qquad\qquad = - \int_\mms \langle\nabla f,\nabla g_2\rangle\div(h\,\nabla g_1)\d\meas - \int_\mms h\Hess g_2(\nabla f,\nabla g_2)\d\meas.
\end{align*}
The argument for $g\in\Test(\mms)$  follows similar lines.  To prove it, let $g_1,g_2,h\in\Test(\mms)$. Recall from \autoref{Pr:Bakry Emery measures} that $\langle\nabla f,\nabla g_2\rangle\in \F$. Since additionally $\langle\nabla f,\nabla g_2\rangle\in\Ell^\infty(\mms)$, we also have $g\,\langle\nabla f,\nabla g_2\rangle\in \F$ by the Leibniz rule for the gradient --- compare with \autoref{Cor:Calculus rules d} ---  which, also taking into account \autoref{Pr:Product rule for gradients} and \autoref{Le:d = d1}, yields
\begin{align*}
\rmd\big[g\,\langle\nabla f,\nabla g_2\rangle\big] = \langle\nabla f,\nabla g_2\rangle\,\rmd g + g\Hess f(\nabla g_2,\cdot) + g\Hess g_2(\nabla f,\cdot).
\end{align*}
This entails that
\begin{align*}
&-\int_\mms g\,\langle\nabla f,\nabla g_2\rangle\div(h\,\nabla g_1)\d\meas - \int_\mms h\,g\Hess g_2(\nabla f,\nabla g_1)\d\meas\\
&\qquad\qquad = \int_\mms \rmd\big[g\,\langle\nabla f,\nabla g_2\rangle\big](h\,\nabla g_1)\d\meas - \int_\mms h\,g\Hess g_2(\nabla f,\nabla g_1)\d\meas\\
&\qquad\qquad = \int_\mms h\,\big[\langle\nabla g,\nabla g_1\rangle\,\langle\nabla f,\nabla g_2\rangle + g\Hess f(\nabla g_1,\nabla g_2)\big]\d\meas,
\end{align*}
which is the claim by the definition \eqref{Eq:Tensor product pointwise sc prod} of the pointwise tensor product.

Concerning \ref{La:Cov 3}, the lower semicontinuity of $\Ch_\cov$ follows from the fact that bounded sets in the Hilbert space $\Ell^2(T^{\otimes 2}\mms)$ are weakly relatively compact, and from the closedness of $\nabla$ that has been shown in \ref{La:Cov 2}. 

The proof of the duality formula for $\Ch_\cov$ is analogous to the one for the duality formula for $\Ch_2$ in \autoref{Th:Hess properties}. Details are left to the reader.
\end{proof}

\begin{remark}\label{Re:Conf trafos II} Similarly to \autoref{Re:Conf trafos} and motivated by the $\RCD^*(K,N)$ result \cite[Prop.~3.12]{han2019}, $K\in\R$ and $N\in [1,\infty)$,  in practice the notion of covariant derivative from \autoref{Def:Cov der} should only depend on conformal transformations of $\langle\cdot,\cdot\rangle$, but be independent of drift transformations of $\meas$.
\end{remark}

\subsection{Calculus rules}\label{Sub:Calculus rules} In this section, we proceed in showing less elementary --- still expected --- calculus rules for the covariant derivative. Among these, we especially regard \autoref{Pr:Compatibility} and \autoref{Le:Kato inequality} as keys for the functionality of our second order axiomatization which also potentially involves ``boundary contributions''.

\subsubsection{Some auxiliary spaces of vector fields} As in \autoref{Sub:Calc rules hess}, we introduce two Sobolev spaces of vector fields we use in the sequel. In fact, the space $H^{1,2}(T\mms)$ which is introduced next plays a dominant role later, see e.g.~\autoref{Sub:Heat flow vector fields}.

\begin{definition}\label{Def:H12 vfs} We define the space $H^{1,2}(T\mms)\subset W^{1,2}(T\mms)$ as
\begin{align*}
H^{1,2}(T\mms) := \cl_{\Vert \cdot \Vert_{W^{1,2}(T\mms)}} \Reg(T\mms).
\end{align*}
\end{definition}

By \autoref{Th:Properties W12 TM}, we see that $\nabla\,\Dom_\reg(\Hess)\subset H^{1,2}(T\mms)$.

$H^{1,2}(T\mms)$ is in general a strict subset of $W^{1,2}(T\mms)$. For instance, on compact Riemannian manifolds with boundary, this follows as in the beginning of \autoref{Sub:Calc rules hess} by \eqref{Eq:div nabla = Delta}, \autoref{Le:Div g nabla f}, \autoref{Ex:Mflds with boundary} and \autoref{Pr:Trace thm} for $E:= T\mms$.

\begin{remark} For noncollapsed mGH-limits of  Riemannian manifolds without boundary under uniform Ricci and diameter bounds, it is proved in \cite[Prop.~4.5]{hondaspec} that $\smash{H^{1,2}(T\mms) = W^{1,2}(T\mms)}$. This does not conflict with our above argument involving the presence of a boundary, since spaces ``with boundary'' are known not to appear as noncollapsed Ricci limits  \cite[Thm.~6.1]{cheeger1997}.
\end{remark}

\begin{remark}\label{Re:Test closure} Besides technical reasons, \autoref{Def:H12 vfs} has the advantage that it includes both gradient vector fields of test functions as well as general elements of $\Test(T\mms)$ in the calculus rules below. See also \autoref{Sec:Curvature}. 

The ``$H^{1,2}$-space'' of vector fields introduced in  \cite[Def.~3.4.3]{gigli2018} is rather given by $\smash{\cl_{\Vert \cdot \Vert_{W^{1,2}(T\mms)}}\Test(T\mms)}$, which --- in our generality --- is a priori \emph{smaller} than $H^{1,2}(T\mms)$. The converse inclusion seems more subtle: as similarly encountered in \autoref{Re:No W11 in general}, the issue is that we do not really know how to pass from zeroth-order factors belonging to $\Test(\mms)$ to constant ones in a topology which also takes into account ``derivatives'' of vector fields in our generality. However, since ``$\supset$'' holds e.g.~if $\mms$ is intrinsically complete by \autoref{Th:Properties W12 TM}, \autoref{Def:H12 vfs} and the  approach from \cite[Def.~3.4.3]{gigli2018} give rise to the same space on $\RCD(K,\infty)$ spaces, $K\in\R$.
\end{remark}

As in \autoref{Def:W221}, in view of \autoref{Le:Leibniz rule W(21)} we introduce a space intermediate between $W^{1,2}(T\mms)$ and ``$W^{1,1}(T\mms)$'', a suitable space (that we do not define) of all $X\in \Ell^1(T\mms)$ with covariant derivative $\nabla X$ in $\Ell^1(T\mms)$. The problem arising in defining the latter, compare with \autoref{Def:Cov der}, is that we do not know if there exists a large enough class of $g_2\in\Test(\mms)$ such that $\Hess g_2\in \Ell^\infty((T^*)^{\otimes 2}\mms)$.

\begin{definition}\label{Def:W21} The space $W^{(2,1)}(T\mms)$ is defined to consist of all $X\in \Ell^2(T\mms)$ for which there exists $T\in \Ell^1(T^{\otimes 2}\mms)$ such that for every $g_1,g_2,h\in \Test(\mms)$,
\begin{align*}
\begin{split}
&\int_\mms h\,T:(\nabla g_1\otimes \nabla g_2)\d\meas\\
&\qquad\qquad = -\int_\mms \langle X,\nabla g_2\rangle\div(h\,\nabla g_1)\d\meas -\int_\mms h\Hess g_2(X,\nabla g_1)\d\meas. 
\end{split}
\end{align*}
In case of existence, the element $T$ is unique, denoted by $\nabla_1 X$ and called the \emph{covariant derivative} of $X$.
\end{definition}

Indeed, the uniqueness follows from weak$^*$ density of $\Test(T^{\otimes 2}\mms)$ in $\Ell^\infty(T^{\otimes 2}\mms)$. In particular, $\smash{W^{(2,1)}(T\mms)}$ is clearly a vector space, and $\nabla_1$ is a linear operator on it. By definition, for $X\in W^{(2,1)}(T\mms)\cap W^{1,2}(T\mms)$ we furthermore have
\begin{align*}
\nabla_1 X = \nabla X,
\end{align*}
whence we subsequently write $\nabla$ in place of $\nabla_1$ as long as confusion is excluded.

We endow $\smash{W^{(2,1)}(T\mms)}$ with the norm $\Vert\cdot\Vert_{W^{(2,1)}(T\mms)}$ given by
\begin{align*}
\Vert X \Vert_{W^{(2,1)}(T\mms)} := \Vert X\Vert_{\Ell^2(T\mms)} + \Vert \nabla X\Vert_{\Ell^1(T^{\otimes 2}\mms)}.
\end{align*}
Since both sides of the defining property in \autoref{Def:W21} are continuous w.r.t.~weak convergence in $X$ and $T$, respectively, this norm turns $\smash{W^{(2,1)}(T\mms)}$ into a Banach space. Moreover, since the map $\smash{\Id\times \nabla \colon W^{(2,1)}(T\mms)\to \Ell^2(T\mms)\times \Ell^1(T^{\otimes 2}\mms)}$ is a bijective isometry onto its image --- where $\smash{\Ell^2(T\mms)\times \Ell^1(T^{\otimes 2}\mms)}$ is endowed with the usual product norm ---  \autoref{Pr:Generators cotangent module}, \autoref{Th:Riesz theorem modules} and the discussion from \autoref{Sub:Leb sp tp} show that $\smash{W^{(2,1)}(T\mms)}$ is separable.

\subsubsection{Calculus rules for the covariant derivative}

\begin{lemma}[Leibniz rule]\label{Le:Leibniz rule W(21)} Let $X\in W^{1,2}(T\mms)$ and $f\in \F\cap\Ell^\infty(\mms)$. Then $f\,X\in W^{(2,1)}(T\mms)$ and
\begin{align*}
\nabla(f\,X) = \nabla f\otimes X + f\,\nabla X.
\end{align*}
\end{lemma}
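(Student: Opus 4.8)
The statement is a Leibniz-type rule, so the natural approach is to start from the defining identity of the covariant derivative in \autoref{Def:W21} (or \autoref{Def:Cov der}) and verify that $T := \nabla f\otimes X + f\,\nabla X$ satisfies it for $f\,X$. First I would check that this $T$ lies in $\Ell^1(T^{\otimes 2}\mms)$: since $f\in\Ell^\infty(\mms)$ and $\nabla X\in\Ell^2(T^{\otimes 2}\mms)$, we get $f\,\nabla X\in\Ell^2(T^{\otimes 2}\mms)\subset\Ell^1_\loc$; more carefully, $\vert f\,\nabla X\vert_\HS\le \Vert f\Vert_{\Ell^\infty}\,\vert\nabla X\vert_\HS\in\Ell^2(\mms)$, and $\vert\nabla f\otimes X\vert_\HS = \vert\nabla f\vert\,\vert X\vert$ with $\vert\nabla f\vert = \Gamma(f)^{1/2}\in\Ell^2(\mms)$ and $\vert X\vert\in\Ell^2(\mms)$, so by Cauchy--Schwarz both summands are in $\Ell^1(T^{\otimes 2}\mms)$. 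Hence $T\in\Ell^1(T^{\otimes 2}\mms)$, which is the integrability required for membership in $W^{(2,1)}(T\mms)$.

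The core computation is then to take arbitrary $g_1,g_2,h\in\Test(\mms)$ and expand
\[
\int_\mms h\,T:(\nabla g_1\otimes\nabla g_2)\d\meas = \int_\mms h\,\langle\nabla f,\nabla g_1\rangle\,\langle X,\nabla g_2\rangle\d\meas + \int_\mms h\,f\,\nabla X:(\nabla g_1\otimes\nabla g_2)\d\meas,
\]
using \eqref{Eq:Tensor product pointwise sc prod}. For the second term, apply the defining property of $\nabla X$ from \autoref{Def:Cov der} \emph{with $h$ replaced by $h\,f$}; this is legitimate because $h\,f\in\F\cap\Ell^\infty(\mms)$ and the defining identity of \autoref{Def:Cov der} extends from $\Test(\mms)$ to test functions multiplied by $\F\cap\Ell^\infty(\mms)$-functions in the zeroth slot — this is exactly the content of \autoref{Le:More h} for the Hessian, and the same argument (approximating $h\,f$ by $\ChHeat_t(h\,f)\in\Test(\mms)$ and letting $t\to0$, invoking \autoref{Le:Div g nabla f} for the continuity of the $\div$ terms) applies to the covariant-derivative identity. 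This rewrites the second term as
\[
-\int_\mms \langle X,\nabla g_2\rangle\div(h\,f\,\nabla g_1)\d\meas - \int_\mms h\,f\Hess g_2(X,\nabla g_1)\d\meas.
\]
Now expand $\div(h\,f\,\nabla g_1)$ via the Leibniz rule \eqref{Eq:Div lbnz rle} (applicable since $\vert\rmd(h\,f)\vert\in\Ell^\infty(\mms)$ as $h,f\in\F_\eb$ with $h\in\Test(\mms)$... actually $h\,f\in\F_\bounded$ and one uses that $h\,\nabla g_1\in\Dom(\div)$ with the appropriate bound from \autoref{Le:Div identities}): $\div(h\,f\,\nabla g_1) = f\div(h\,\nabla g_1) + \rmd f(h\,\nabla g_1)$, so $-\int \langle X,\nabla g_2\rangle\div(h\,f\,\nabla g_1)\d\meas = -\int f\,\langle X,\nabla g_2\rangle\div(h\,\nabla g_1)\d\meas - \int h\,\langle\nabla f,\nabla g_1\rangle\,\langle X,\nabla g_2\rangle\d\meas$. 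The last term here cancels exactly the first term of the expansion of $\int h\,T:(\nabla g_1\otimes\nabla g_2)$. Assembling everything, we are left with
\[
\int_\mms h\,T:(\nabla g_1\otimes\nabla g_2)\d\meas = -\int_\mms \langle f\,X,\nabla g_2\rangle\div(h\,\nabla g_1)\d\meas - \int_\mms h\Hess g_2(f\,X,\nabla g_1)\d\meas,
\]
using $\langle f\,X,\nabla g_2\rangle = f\,\langle X,\nabla g_2\rangle$ and $\Hess g_2(f\,X,\nabla g_1) = f\Hess g_2(X,\nabla g_1)$ by $\Ell^\infty$-bilinearity of the pointwise pairing. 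This is precisely the defining property of $\nabla(f\,X)$ in \autoref{Def:W21}, so $f\,X\in W^{(2,1)}(T\mms)$ with $\nabla(f\,X)=T$, and uniqueness of the covariant derivative finishes the proof.

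\textbf{Main obstacle.} The only genuinely delicate point is the extension of the defining identity of \autoref{Def:Cov der} from $h\in\Test(\mms)$ to $h\,f$ with $f\in\F\cap\Ell^\infty(\mms)$ — i.e.\ the covariant-derivative analogue of \autoref{Le:More h}. This requires approximating $h\,f$ by $\ChHeat_t(h\,f)$, checking that $\ChHeat_t(h\,f)\in\Test(\mms)$ (via the reverse Poincaré bound from \autoref{Sub:Test fcts}, since $h\,f\in\Ell^2\cap\Ell^\infty$), and verifying convergence of each term: the $\langle X,\nabla g_2\rangle\div(\ChHeat_t(h\,f)\,\nabla g_1)$ term converges using \autoref{Le:Div g nabla f} and $\ChHeat_t(h\,f)\to h\,f$ in $\F$, the $\ChHeat_t(h\,f)\Hess g_2(X,\nabla g_1)$ term converges by dominated convergence since $\ChHeat_t$ is sub-Markovian so the approximants are uniformly $\Ell^\infty$-bounded, and the left-hand side $\int \ChHeat_t(h\,f)\,T:(\nabla g_1\otimes\nabla g_2)$ converges similarly. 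Once this lemma is in hand (and it is essentially routine given the tools already assembled), the rest is bookkeeping with the Leibniz rules for $\rmd$ and $\div$ already established in \autoref{Cor:Calculus rules d} and \autoref{Le:Div identities}. I would likely state and prove this extended-$h$ version as a short auxiliary lemma immediately before \autoref{Le:Leibniz rule W(21)}, or simply cite \autoref{Le:More h} and remark the proof carries over verbatim.
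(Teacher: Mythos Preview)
Your proposal is correct and follows essentially the same approach as the paper. The only organizational difference is that the paper first treats $f\in\Test(\mms)$ --- where $h\,f\in\Test(\mms)$ holds directly by the algebra property, so no extension lemma is needed --- and only afterwards approximates a general $f\in\F\cap\Ell^\infty(\mms)$ by $\ChHeat_tf\in\Test(\mms)$, passing to the limit in both sides of the already-established identity; you instead fold the approximation into an auxiliary ``extended-$h$'' lemma up front, which amounts to the same computation.
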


\begin{proof} We first assume that $f\in \Test(\mms)$. Given any $g_1,g_2,h\in\Test(\mms)$, we have $h\,f\in\Test(\mms)$ since $\Test(\mms)$ is an algebra. By \autoref{Le:Div g nabla f} and the Leibniz rule for the gradient, \autoref{Cor:Calculus rules d}, we obtain
\begin{align*}
\div(h\,f\,\nabla g_1) = h\,\langle\nabla f,\nabla g_1\rangle + f\div(h\,\nabla g_1)\quad\meas\text{-a.e.}
\end{align*}
Hence, by definition of $\nabla X$ from \autoref{Def:Cov der} and \autoref{Th:Hess},
\begin{align*}
&\int_\mms h\,(f\,\nabla X) : (\nabla g_1\otimes\nabla g_2)\d\meas\\
&\qquad\qquad = -\int_\mms \langle X,\nabla g_2\rangle\div(h\,f\,\nabla g_1)\d\meas - \int_\mms h\,f\Hess g_2(X,\nabla g_1)\d\meas\\
&\qquad\qquad = -\int_\mms h\,\langle\nabla f,\nabla g_1\rangle\,\langle X,\nabla g_2\rangle\d\meas - \int_\mms \langle f\,X,\nabla g_2\rangle\div(h\,\nabla g_1)\d\meas\\
&\qquad\qquad\qquad\qquad -\int_\mms h\Hess g_2(f\,X,\nabla g_1)\d\meas.
\end{align*}
Rearranging terms according to \autoref{Def:W21} yields the claim.

For general $f\in \F\cap\Ell^\infty(\mms)$, note that on the one hand  $\ChHeat_tf\in \Test(\mms)$ and $\Vert\ChHeat_t f\Vert_{\Ell^\infty(\mms)} \leq \Vert f\Vert_{\Ell^\infty(\mms)}$ holds for every $t>0$, and on the other hand $\ChHeat_tf\to f$ in $\F$ as $t\to 0$. Thus, we easily see that $\ChHeat_tf\,X \to f\,X$ in $\Ell^2(T\mms)$ as well as $\nabla (\ChHeat_tf\,X) = \nabla \ChHeat_t f\otimes X + \ChHeat_t f\,\nabla X \to f\otimes X + f\,\nabla X$ in $\Ell^1(T^{\otimes 2}\mms)$ as $t \to 0$, and this suffices to conclude the proof.
\end{proof}

\begin{remark}\label{Re:fX in H12} Elementary approximation arguments, also using \autoref{Le:Mollified heat flow} for \ref{La:Item Zwei}, entail the following two $H^{1,2}$-variants of \autoref{Le:Leibniz rule W(21)}.
\begin{enumerate}[label=\textcolor{black}{(}\roman*\textcolor{black}{)}]
\item\label{La:lalalala} If $f\in \Test(\mms)$ and $X\in H^{1,2}(T\mms)$, then $f\,X \in H^{1,2}(T\mms)$ and
\begin{align*}
\nabla (f\,X) = \nabla f\otimes X + f\,\nabla X.
\end{align*}
\item\label{La:Item Zwei} The same conclusion as in \ref{La:lalalala} holds if merely $f\in \F\cap\Ell^\infty(\mms)$ and $X\in H^{1,2}(T\mms)\cap\Ell^\infty(T\mms)$.
\end{enumerate}
\end{remark}

In view of the important \emph{metric compatibility} of $\nabla$ discussed in \autoref{Pr:Compatibility} as well as  its consequences below, we shall first make sense of \emph{directional derivatives} of a given $X\in W^{1,2}(T\mms)$ in the direction of $Z\in \Ell^0(T\mms)$. There exists a unique vector field $\nabla_Z X\in \Ell^0(T\mms)$ which satisfies
\begin{align}\label{Eq:Dir der def}
\langle\nabla_ZX,Y\rangle = \nabla X : (Z\otimes Y)\quad\meas\text{-a.e.}
\end{align}
for every $Y\in \Ell^0(T\mms)$. Indeed, the r.h.s.~is a priori well-defined for every $Y,Z\in\Ell^0(T\mms)$ for which $Z\otimes Y\in \Ell^2(T^{\otimes 2}\mms)$, but this definition can and will be uniquely extended by continuity to a bilinear map $\nabla X : (\cdot\otimes\cdot)\colon \Ell^0(T\mms)^2\to\Ell^0(\mms)$, and the appropriate existence of $\nabla_ZX$ is then due to \autoref{Pr:Dual of L0}. In particular,
\begin{align}\label{Eq:Bound nabla Z X}
\vert\nabla_Z X\vert \leq \vert\nabla X\vert\,\vert Z\vert\quad\meas\text{-a.e.}
\end{align} 

\begin{proposition}[Metric compatibility]\label{Pr:Compatibility} Let $X\in W^{1,2}(T\mms)$ and $Y\in H^{1,2}(T\mms)$. Then $\langle X,Y\rangle\in \calG$, and for every $Z\in \Ell^0(T\mms)$,
\begin{align*}
\rmd\langle X,Y\rangle(Z) = \big\langle\nabla_ZX,Y\big\rangle + \big\langle X, \nabla_ZY\big\rangle\quad\meas\text{-a.e.}
\end{align*}
Moreover, if $X\in H^{1,2}(T\mms)$ as well, then $\langle X,Y\rangle\in \calG_\reg$.
\end{proposition}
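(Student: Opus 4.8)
\textbf{Proof plan for \autoref{Pr:Compatibility}.}

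The plan is to reduce everything to the case of regular vector fields by approximation, where the statement can be obtained directly from the product rule for gradients (\autoref{Pr:Product rule for gradients}) and the covariant Leibniz rule (\autoref{Th:Properties W12 TM}\ref{La:Cov 5}). First I would establish the identity when $X = \nabla f$ for some $f\in\Test(\mms)$ and $Y = g\,\nabla f'$ for $f'\in\Test(\mms)$ and $g\in\Test(\mms)\cup\R\,\One_\mms$; since both $\Reg(T\mms)$ is spanned by such $Y$ and $X$, and the asserted identity is bilinear in $(X,Y)$, this covers all of $\Reg(T\mms)^2$. For such $X,Y$ the pairing $\langle X,Y\rangle = g\,\langle\nabla f,\nabla f'\rangle$ lies in $\calG$ (in fact in $\F\cap\Ell^1(\mms)$ if $g\in\Test(\mms)$, by \autoref{Pr:Bakry Emery measures} and the Leibniz rule, or directly in $\F$ if $g = \One_\mms$), and its differential can be computed: using the Leibniz rule for $\rmd$ from \autoref{Cor:Calculus rules d} and \autoref{Pr:Product rule for gradients}\ref{La:Ras},
\begin{align*}
\rmd\langle X,Y\rangle = \langle\nabla f,\nabla f'\rangle\,\rmd g + g\,\Hess f(\nabla f',\cdot) + g\,\Hess f'(\nabla f,\cdot).
\end{align*}
On the other hand, by \autoref{Th:Properties W12 TM}\ref{La:Cov 5} we have $\nabla X = (\Hess f)^\sharp$ and $\nabla Y = \nabla g\otimes\nabla f' + g\,(\Hess f')^\sharp$, so evaluating $\langle\nabla_Z X,Y\rangle + \langle X,\nabla_Z Y\rangle$ via \eqref{Eq:Dir der def} and \eqref{Eq:Tensor product pointwise sc prod} gives exactly the pairing of the displayed $1$-form with $Z$; here one uses symmetry of $\Hess f$ and $\Hess f'$ from \autoref{Th:Hess properties}\ref{La:H3}. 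This settles the regular case.

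Next I would pass to the limit. Given $X\in W^{1,2}(T\mms)$ and $Y\in H^{1,2}(T\mms)$, choose $Y_n\in\Reg(T\mms)$ with $Y_n\to Y$ in $W^{1,2}(T\mms)$. The hard part — and the main obstacle — is that the ``zeroth order'' product rule in $\calG$ does not straightforwardly pass to the limit: $\rmd\langle X,Y\rangle$ is only an $\Ell^1(T^*\mms)$-object and $\calG$ is a Banach space whose norm controls $\Vert\langle X,Y_n\rangle\Vert_{\Ell^1}$ and $\Vert\rmd\langle X,Y_n\rangle\Vert_{\Ell^1}$; one has to check that $\langle X,Y_n\rangle\to\langle X,Y\rangle$ in $\Ell^1(\mms)$ (immediate from Cauchy--Schwarz since $\vert X\vert,\vert Y_n\vert,\vert Y\vert\in\Ell^2(\mms)$) \emph{and} that the candidate limit $1$-form $\eta$ defined $\meas$-a.e.\ by $\eta(Z) := \langle\nabla_Z X, Y\rangle + \langle X,\nabla_Z Y\rangle$ (which lies in $\Ell^1(T^*\mms)$ by \eqref{Eq:Bound nabla Z X} and Cauchy--Schwarz) is the $\Ell^1$-limit of $\rmd\langle X,Y_n\rangle$. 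For the latter, writing $\rmd\langle X,Y_n\rangle$ via the regular-case formula with $Y_n$ in place of $Y$, the difference is controlled pointwise by $\vert\nabla X\vert\,\vert Y_n - Y\vert + \vert X\vert\,\vert\nabla Y_n - \nabla Y\vert$, whose $\Ell^1$-norm tends to $0$ by Cauchy--Schwarz and $Y_n\to Y$ in $W^{1,2}(T\mms)$. Hence by closedness of the operator $f\mapsto(f,\rmd f)$ underlying the Banach space $\calG$ (continuity of the defining integral identity in \autoref{Def:W11} in both slots), we conclude $\langle X,Y\rangle\in\calG$ with $\rmd\langle X,Y\rangle = \eta$. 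One small point to handle carefully: the test class in \autoref{Def:W11} uses $g\in\Test_{\Ell^\infty}(\mms)$, so the defining identity must be verified for those $g$; this is stable under the above $\Ell^1$-convergences since $\div(h\,\nabla g)\in\Ell^\infty(\mms)$ by \autoref{Le:Div g nabla f}.

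Finally, for the last assertion, suppose $X\in H^{1,2}(T\mms)$ as well. Take $X_n, Y_n\in\Reg(T\mms)$ with $X_n\to X$, $Y_n\to Y$ in $W^{1,2}(T\mms)$. For each $n$, $\langle X_n,Y_n\rangle$ is a finite sum of terms $g\,g'\,\langle\nabla f,\nabla f'\rangle$ with $f,f'\in\Test(\mms)$ and $g,g'\in\Test(\mms)\cup\R\,\One_\mms$; each such term lies in $\F\cap\Ell^1(\mms)$ with $\Ell^1$-differential (by \autoref{Pr:Product rule for gradients}\ref{La:Ras} and the Leibniz rule), hence lies in $\calG_\reg$ by \autoref{Le:L1 to H11}. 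Since $\langle X_n,Y_n\rangle\to\langle X,Y\rangle$ in $\Ell^1(\mms)$ and $\rmd\langle X_n,Y_n\rangle\to\rmd\langle X,Y\rangle$ in $\Ell^1(T^*\mms)$ (same pointwise estimate as above, now using that both sequences converge in $W^{1,2}(T\mms)$), and $\calG_\reg$ is closed in $\calG$ by \autoref{Def:H11 functions}, we obtain $\langle X,Y\rangle\in\calG_\reg$, completing the proof.
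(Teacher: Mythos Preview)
Your approximation scheme has a genuine gap: in your ``regular case'' you only establish the identity for $X\in\Reg(T\mms)$ (in fact, your computation is written for the even more restrictive $X=\nabla f$, and the claimed extension to all of $\Reg(T\mms)^2$ by bilinearity does not follow --- pure gradients do not span $\Reg(T\mms)$). Then in the limiting step you fix a \emph{general} $X\in W^{1,2}(T\mms)$ and approximate only $Y$, invoking ``the regular-case formula with $Y_n$ in place of $Y$''. But that formula was never established for such $X$: elements of $W^{1,2}(T\mms)\setminus H^{1,2}(T\mms)$ cannot be approximated by $\Reg(T\mms)$ in the $W^{1,2}$-norm, so there is no way to pass from your base case to the pair $(X,Y_n)$.

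The paper circumvents this by \emph{not} restricting $X$ at all in the first step. Instead, for an arbitrary $X\in W^{1,2}(T\mms)$ and $Y=\nabla f$ with $f\in\Test(\mms)$, it rewrites the very defining integration-by-parts identity of $\nabla X$ (\autoref{Def:Cov der}) as the statement that $\langle X,\nabla f\rangle\in\calG$ with $\rmd\langle X,\nabla f\rangle = \nabla X:(\cdot\otimes\nabla f) + (\Hess f)^\sharp:(\cdot\otimes\nabla f)$; this is precisely what \autoref{Def:W11} requires, and the formula follows. The extension to $Y=g\,\nabla f$ then comes from $\langle X,g\,\nabla f\rangle = \langle g\,X,\nabla f\rangle$ together with \autoref{Le:Leibniz rule W(21)}, still for arbitrary $X\in W^{1,2}(T\mms)$. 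Only at this point, with the asymmetric base case $(X,Y)\in W^{1,2}(T\mms)\times\Reg(T\mms)$ in hand, does one approximate $Y\in H^{1,2}(T\mms)$ as you describe. Your argument for the final $\calG_\reg$-assertion is essentially correct and matches the paper.
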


\begin{proof} We first prove the claim for $Y = \nabla f$ for some $f\in\Test(\mms)$. In this case, given any $g,h\in\Test(\mms)$, by definition of $\nabla X$,
\begin{align*}
&\int_\mms h\,\nabla X : (\nabla g\otimes \nabla f)\d\meas\\
&\qquad\qquad = -\int_\mms \langle X,\nabla f\rangle\div(h\,\nabla g)\d\meas - \int_\mms h\Hess f(X,\nabla g)\d\meas.
\end{align*}
Rearranging terms and recalling \autoref{Def:W11} as well as  \autoref{Th:Properties W12 TM},  we see that $\langle X,\nabla f\rangle \in \Ell^1(\mms)$ belongs to $\calG$ and
\begin{align*}
\rmd\langle X,\nabla f\rangle = \nabla X:(\cdot\otimes \nabla f) + (\Hess f)^\sharp : (\cdot\otimes \nabla f).
\end{align*}
It follows from \eqref{Eq:Dir der def} that for every $Z\in \Test(T\mms)$,
\begin{align*}
\rmd\langle X,\nabla f\rangle(Z) = \big\langle\nabla_ZX,\nabla f\big\rangle + \big\langle \nabla_Z\nabla f,X\big\rangle\quad\meas\text{-a.e.}
\end{align*}
By $\Ell^\infty$-linear extension and the density of $\Test(T\mms)$ in $\Ell^2(T\mms)$, this identity holds in fact for every $Z\in\Ell^2(T\mms)$, and therefore extends to arbitrary $Z\in\Ell^0(T\mms)$ by construction of the latter space in \autoref{Sub:L0 modules}.

The case $Y := g\,\nabla f$, $f,g\in\Test(\mms)$, follows by the trivial $\meas$-a.e.~identity $\langle X, Y\rangle = \langle g\,X, \nabla f\rangle$, the previously derived identity and \autoref{Le:Leibniz rule W(21)}. 

By linearity of the covariant derivative and \autoref{Th:Properties W12 TM}, the foregoing discussion thus readily covers the case of general $Y\in\Reg(T\mms)$.

Now, given any $Y\in H^{1,2}(T\mms)$, a sequence $(Y_n)_{n\in\N}$ in $\Reg(T\mms)$ such that $Y_n \to Y$ in $W^{1,2}(T\mms)$ as $n\to\infty$ and $Z\in\Ell^2(T\mms)\cap \Ell^\infty(T\mms)$, we have $\langle X,Y_n\rangle \to \langle X,Y\rangle$ and $\langle\nabla_Z X,Y_n\rangle + \langle \nabla_ZY_n,X\rangle\to \langle\nabla_Z X,Y\rangle + \langle \nabla_ZY,X\rangle$ in $\Ell^1(\mms)$ as $n\to\infty$. Passing to the limit in the definition of $\calG$, it straightforwardly follows that $\langle X,Y\rangle\in \calG$. The claimed identity for $\rmd\langle X,Y\rangle(Z)$ follows after passing to suitable subsequences, and it extends to arbitrary $Z\in\Ell^0(T\mms)$ as in the first step of the current proof.

To prove the last claim, suppose first that $X,Y\in\Reg(\mms)$, in which case $\langle X,Y\rangle \in \calG\cap \F$ by \autoref{Th:Properties W12 TM}, what we already proved, \autoref{Pr:Bakry Emery measures} and \autoref{Cor:Calculus rules d}. By the duality between $\Ell^1(T\mms)$ and $\Ell^\infty(T^*\mms)$ as $\Ell^\infty$-modules, see \autoref{Sub:Lebesgue sp}, and \autoref{Re:CSU} below, we deduce that $\rmd\langle X,Y\rangle \in \Ell^1(T^*\mms)$, whence $\langle X,Y\rangle\in \calG_\reg$ thanks to \autoref{Le:d = d1} and \autoref{Le:L1 to H11}.

Lastly, given arbitrary $X,Y\in H^{1,2}(T\mms)$, let $(X_n)_{n\in\N}$ and $(Y_n)_{n\in\N}$ be sequences in $\Reg(T\mms)$ that converge to $X$ and $Y$ in $W^{1,2}(T\mms)$, respectively. Then clearly $\langle X_n,Y_n\rangle\to\langle X,Y\rangle$ in $\Ell^1(\mms)$ as $n\to\infty$, while \autoref{Re:CSU} below ensures that $\rmd \langle X_n,Y_n\rangle \to \rmd\langle X,Y\rangle$ in $\Ell^1(T^*\mms)$ as $n\to\infty$, which is the claim.
\end{proof}

\begin{remark}\label{Re:CSU} By duality and \eqref{Eq:Bound nabla Z X}, it follows in particular from \autoref{Pr:Compatibility} that for every $X\in W^{1,2}(T\mms)$ and every $Y\in H^{1,2}(T\mms)$,
\begin{align*}
\vert\rmd \langle X,Y\rangle\vert \leq \,\vert\nabla X\vert_\HS\,\vert Y \vert + \vert\nabla Y\vert_\HS\,\vert X\vert\quad\meas\text{-a.e.}
\end{align*}
\end{remark}

The following lemma is a version of what is known as \emph{Kato's inequality} (for the Bochner Laplacian) in the smooth case \cite[Ch.~2]{hess1980}. See also \cite[Lem.~3.5]{debin2021}.

\begin{lemma}[Kato's inequality]\label{Le:Kato inequality} For every $X\in H^{1,2}(T\mms)$,  $\vert X\vert\in \F$ and
	\begin{align*}
		\big\vert\nabla\vert X\vert\big\vert \leq \big\vert\nabla X\big\vert_\HS\quad\meas\text{-a.e.}	
	\end{align*}
	In particular, if $X\in H^{1,2}(T\mms)\cap\Ell^\infty(T\mms)$ then $\vert X\vert^2\in \F$.
\end{lemma}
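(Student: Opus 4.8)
\textbf{Proof plan for Kato's inequality (\autoref{Le:Kato inequality}).}

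The plan is to first establish the pointwise inequality $\smash{\big\vert\nabla\vert X\vert\big\vert \leq \big\vert\nabla X\big\vert_\HS}$ together with $\vert X\vert \in \F$ for \emph{regular} vector fields $X\in\Reg(T\mms)$, and then pass to the closure $H^{1,2}(T\mms)$ by approximation. For the regular case, the strategy is the classical one: regularize the norm. I would introduce, for $\varepsilon > 0$, the function $u_\varepsilon := \smash{(\vert X\vert^2 + \varepsilon^2)^{1/2} - \varepsilon}$. The key is that $\vert X\vert^2 = \langle X, X\rangle \in \calG$ for $X\in\Reg(T\mms)$ by metric compatibility (\autoref{Pr:Compatibility}), and in fact $\vert X\vert^2\in\F$ since $X\in\Reg(T\mms)\subset\Ell^\infty(T\mms)$ forces $\langle X,X\rangle\in\Ell^\infty(\mms)$, while \autoref{Pr:Bakry Emery measures} and \autoref{Cor:Calculus rules d} give $\F$-regularity for generators of the form $\langle g_1\nabla f_1, g_2\nabla f_2\rangle$; polarization and linearity then cover all of $\Reg(T\mms)$. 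Applying the chain rule for the differential on $\calG$, or rather its $\F$-version \autoref{Cor:Calculus rules d} combined with \autoref{Le:d = d1}, to the Lipschitz function $t\mapsto (t+\varepsilon^2)^{1/2}-\varepsilon$ composed with $\vert X\vert^2$, I would get $u_\varepsilon\in\F$ with
\begin{align*}
\rmd u_\varepsilon = \frac{\rmd\vert X\vert^2}{2\,(\vert X\vert^2 + \varepsilon^2)^{1/2}}\quad\meas\text{-a.e.}
\end{align*}

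Next I would bound $\vert\nabla u_\varepsilon\vert$ pointwise. Using \autoref{Re:CSU} (the Cauchy--Schwarz consequence of metric compatibility, with $X = Y$), we have $\smash{\vert\rmd\vert X\vert^2\vert = \vert\rmd\langle X,X\rangle\vert \leq 2\,\vert\nabla X\vert_\HS\,\vert X\vert}$ $\meas$-a.e. Hence
\begin{align*}
\vert\nabla u_\varepsilon\vert \leq \frac{2\,\vert\nabla X\vert_\HS\,\vert X\vert}{2\,(\vert X\vert^2+\varepsilon^2)^{1/2}} \leq \vert\nabla X\vert_\HS\quad\meas\text{-a.e.},
\end{align*}
uniformly in $\varepsilon$. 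Since $u_\varepsilon \to \vert X\vert$ in $\Ell^2(\mms)$ as $\varepsilon\to 0$ (using $\vert X\vert\in\Ell^2(\mms)$, which holds as $X\in\Ell^2(T\mms)$) and the energies $\Ch(u_\varepsilon) = \int_\mms\vert\nabla u_\varepsilon\vert^2\d\meas \leq \int_\mms\vert\nabla X\vert_\HS^2\d\meas$ are uniformly bounded, the lower semicontinuity of $\Ch_1$ from \autoref{Pr:Extended domain props}\ref{La:E bounded} (together with \autoref{Pr:Extended domain props}\ref{La:W12 = S2 cap L2} to land back in $\F$) yields $\vert X\vert\in\F$. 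For the pointwise bound, I would extract a subsequence along which $\nabla u_\varepsilon \rightharpoonup \nabla\vert X\vert$ weakly in $\Ell^2(T\mms)$ (using \autoref{Le:d closed} via the musical isomorphism, or Mazur's lemma), and then the convex, strongly $\Ell^2$-lower-semicontinuous functional $Y\mapsto \int_A\vert Y\vert^2\d\meas$ on any Borel set $A$ gives $\int_A\vert\nabla\vert X\vert\vert^2\d\meas \leq \liminf_\varepsilon\int_A\vert\nabla u_\varepsilon\vert^2\d\meas\leq\int_A\vert\nabla X\vert_\HS^2\d\meas$; arbitrariness of $A$ forces the $\meas$-a.e.\ pointwise inequality.

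Finally, for general $X\in H^{1,2}(T\mms)$, take $(X_n)_{n\in\N}$ in $\Reg(T\mms)$ with $X_n\to X$ in $W^{1,2}(T\mms)$. Then $\vert X_n\vert \to \vert X\vert$ in $\Ell^2(\mms)$ (pointwise norms are continuous $\Ell^2(T\mms)\to\Ell^2(\mms)$) and $\Ch(\vert X_n\vert) \leq \int_\mms\vert\nabla X_n\vert_\HS^2\d\meas = \Ch_\cov(X_n) \to \Ch_\cov(X) < \infty$ is bounded; lower semicontinuity of $\Ch_1$ again gives $\vert X\vert\in\F$, and weak $\Ell^2$-compactness of $(\nabla\vert X_n\vert)_{n\in\N}$ plus \autoref{Le:d closed} identifies the weak limit as $\nabla\vert X\vert$, so the pointwise inequality survives passage to the limit by the same convexity argument on Borel sets. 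The addendum $\vert X\vert^2\in\F$ when $X\in H^{1,2}(T\mms)\cap\Ell^\infty(T\mms)$ follows from the chain rule \autoref{Cor:Calculus rules d} applied to $t\mapsto t^2$ (or directly $t\mapsto \min\{t,\Vert\vert X\vert\Vert_{\Ell^\infty}\}^2$), since $\vert X\vert\in\F\cap\Ell^\infty(\mms)$. The main obstacle I anticipate is the bookkeeping around whether the chain/Leibniz rules of \autoref{Cor:Calculus rules d} genuinely apply to $\langle X,X\rangle$ for $X\in\Reg(T\mms)$ at the level of the \emph{differential} $\rmd$ (rather than merely $\rmd_1$ on $\calG$) — this is exactly why I route through \autoref{Le:d = d1}, verifying $\langle X,X\rangle\in\calG\cap\F_\rme$ first — and ensuring the weak-convergence/lower-semicontinuity steps are set up so that the pointwise (not just integrated) inequality is extracted cleanly.
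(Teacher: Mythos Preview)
Your proposal is correct and follows essentially the same approach as the paper: regularize $\vert X\vert$ via $(\vert X\vert^2+\varepsilon^2)^{1/2}-\varepsilon$ for $X\in\Reg(T\mms)$, use $\vert X\vert^2\in\F$ (from \autoref{Pr:Bakry Emery measures} and the Leibniz rule) together with the chain rule and \autoref{Re:CSU} to get the uniform pointwise bound, pass to the limit via lower semicontinuity, and then extend to $H^{1,2}(T\mms)$ by approximation. The only cosmetic difference is that the paper extracts the pointwise $\meas$-a.e.\ inequality via Mazur's lemma and convexity of the carr\'e du champ, whereas you use weak lower semicontinuity of $Y\mapsto\int_A\vert Y\vert^2\d\meas$ over arbitrary Borel $A$; both are standard and equivalent here.
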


\begin{proof} We initially prove the first claim for $X\in\Reg(T\mms)$. Define $\varphi_n\in\Lip([0,\infty))$ by $\smash{\varphi_n(t) := (t+1/n^2)^{1/2}-1/n}$, $n\in\N$. By  polarization, \autoref{Pr:Bakry Emery measures} and the Leibniz rule stated in \autoref{Cor:Calculus rules d}, we have $\vert X\vert^2\in \F$, and thus $\varphi_n\circ\vert X\vert^2\in \F$ for every $n\in\N$. Moreover, $\varphi_n\circ\vert X\vert^2\to \vert X\vert$ pointwise $\meas$-a.e.~and in $\Ell^2(\mms)$ as $n\to\infty$, as well as $\varphi_n'(t)^2\,t \leq 1/4$ for every $t\geq 0$ and every $n\in\N$. Hence by \autoref{Re:CSU} and --- since also $\vert X\vert^2\in \calG_\reg$ by \autoref{Pr:Compatibility} --- \autoref{Le:d = d1},
\begin{align}\label{Eq:99 Luftballons}
\begin{split}
\big\vert\nabla(\varphi_n\circ\vert X\vert^2)\big\vert^2 &= \big\vert\varphi_n'\circ\vert X\vert^2\big\vert^2\,\big\vert\nabla \vert X\vert^2\big\vert^2\\
&\leq 4\,\big\vert\varphi_n'\circ\vert X\vert^2\big\vert^2\,\vert X\vert^2\,\big\vert\nabla X\big\vert_\HS^2\\
&\leq \big\vert \nabla X\big\vert_\HS^2\quad\meas\text{-a.e.}
\end{split}
\end{align}
Therefore $\vert X\vert\in \F$ by \autoref{Pr:Extended domain props}, and $(\varphi_n\circ\vert X\vert^2)_{n\in\N}$ converges $\F$-weakly to $\vert X\vert$. By Mazur's lemma, suitable convex combinations of elements of $(\varphi_n\circ\vert X\vert^2)_{n\in\N}$ converge $\F$-strongly to $\vert X\vert$. The convexity of the carré du champ, see e.g.~\cite[p.~249]{ambrosio2015}, and \eqref{Eq:99 Luftballons} imply the claimed $\meas$-a.e.~upper bound on $\smash{\big\vert\nabla\vert X\vert\big\vert}$. 

For general $X\in H^{1,2}(T\mms)$, let $(X_n)_{n\in\N}$ be a sequence in $\Reg(T\mms)$ such that $X_n \to X$ in $H^{1,2}(T\mms)$ and $\vert X_n \vert \to \vert X\vert$ both pointwise $\meas$-a.e.~and in $\Ell^2(\mms)$ as $n\to\infty$. By what we proved above, $(\vert X_n\vert)_{n\in\N}$ is bounded in $\F$, whence $\vert X\vert\in \F$ again by  \autoref{Pr:Extended domain props}. Still by what we already proved, every $\Ell^2$-weak limit of subsequences of $(\big\vert\nabla \vert X_n\vert\big\vert)_{n\in\N}$ is clearly no larger than $\vert\nabla X\vert_\HS$ $\meas$-a.e., whence we obtain $\smash{\big\vert\nabla \vert X\vert\big\vert \leq \vert\nabla X\vert_\HS}$ $\meas$-a.e.~by Mazur's lemma. (See also (2.10) in \cite{ambrosio2015}.)

If $X\in H^{1,2}(T\mms)\cap\Ell^\infty(T\mms)$, the $\F$-regularity of $\vert X\vert^2$ follows from the one of $\vert X\vert$ and the chain rule in \autoref{Cor:Calculus rules d}.
\end{proof}

\autoref{Le:Kato inequality} has numerous important consequences. First, the $\F$-regularity asserted therein makes it possible in \autoref{Sec:Curvature} to pair the function $\smash{\vert X\vert\in \F}$, $X\in H^{1,2}(T\mms)$, with the given distribution $\smash{\kappa\in \F_\qloc^{-1}(\mms)}$. Second, it yields the improved semigroup comparison for the covariant heat flow in \autoref{Th:HSU Bochner}. Third, it cancels out the covariant term appearing in the definition of the Ricci curvature, \autoref{Th:Ricci measure}, leading to a vector $q$-Bochner inequality for $q\in [1,2]$,  \autoref{Th:Vector Bochner}. Under additional assumptions, the latter again implies improved semigroup comparison results in \autoref{Th:HSU forms}, this time for the contravariant heat flow. Fourth,  it is regarded as the key technical tool in showing that every $X\in H^{1,2}(T\mms)$ has a ``quasi-continuous representative'' similar to \cite[Thm.~3.14]{debin2021}. This latter topic is not addressed here, but the arguments of \cite{debin2021} do not seem hard to adapt to our setting.

\begin{corollary}\label{Cor:Kappa cont} The real-valued function $\smash{X \mapsto \langle \kappa\,\big\vert\,\vert X\vert^2\big\rangle}$ defined on $H^{1,2}(T\mms)$ is $H^{1,2}$-continuous. More precisely, let $\rho'\in (0,1)$ and $\alpha'\in\R$ satisfy \autoref{Le:Form boundedness} for every $\mu\in \{\kappa^+,\kappa^-,\vert\kappa\vert\}$. Then for every $X,Y\in H^{1,2}(T\mms)$,
\begin{align*}
\big\vert\big\langle \mu\,\big\vert\,\vert X\vert^2\big\rangle^{1/2} - \big\langle \mu\,\big\vert\,\vert Y\vert^2\big\rangle^{1/2}\big\vert^2   \leq \rho'\,\Ch_\cov(X-Y) + \alpha'\,\big\Vert X-Y\big\Vert_{\Ell^2(T\mms)}^2.
\end{align*}
\end{corollary}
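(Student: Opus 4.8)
\textbf{Proof plan for \autoref{Cor:Kappa cont}.}

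The plan is to combine the \emph{Kato inequality} \autoref{Le:Kato inequality} --- which gives $\vert X\vert\in\F$ with $\bigl\vert\nabla\vert X\vert\bigr\vert\le\vert\nabla X\vert_\HS$ $\meas$-a.e.~for every $X\in H^{1,2}(T\mms)$ --- with the form boundedness \autoref{Le:Form boundedness}, and to exploit that $f\mapsto\langle\mu\,\vert\,f^2\rangle^{1/2}$ is a seminorm on $\F$ for nonnegative $\mu$, so that the (reverse) triangle inequality turns the problem into a genuine \emph{difference} estimate rather than a product-type bound. First I would observe that for $\mu\in\{\kappa^+,\kappa^-,\vert\kappa\vert\}$ the quantity $q_\mu(f):=\langle\mu\,\vert\,f^2\rangle^{1/2}=\bigl(\int_\mms\widetilde f^2\,\d\mu\bigr)^{1/2}$ is well-defined and finite on $\F$ by \autoref{Le:Form boundedness}, and is a seminorm on $\F$: homogeneity is clear, and the triangle inequality is just Minkowski's inequality in $\Ell^2(\mms,\mu)$ applied to the $\Ch$-quasi-continuous representatives (here one uses that $(f+g)^\sim=\widetilde f+\widetilde g$ $\Ch$-q.e., and that $\mu$ charges no $\Ch$-polar set). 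In particular $\bigl\vert q_\mu(f)-q_\mu(g)\bigr\vert\le q_\mu(f-g)$ for all $f,g\in\F$.

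Next, given $X,Y\in H^{1,2}(T\mms)$, I would apply this with $f:=\vert X\vert$ and $g:=\vert Y\vert$, both of which lie in $\F$ by \autoref{Le:Kato inequality}. The reverse triangle inequality for the pointwise norm on the Hilbert module $\Ell^2(T\mms)$, namely $\bigl\vert\,\vert X\vert-\vert Y\vert\,\bigr\vert\le\vert X-Y\vert$ $\meas$-a.e., combined with the fact that $\vert X\vert-\vert Y\vert\in\F$ and $\Ch$ is a Dirichlet form (so $q_\mu$ is monotone under $\meas$-a.e.~domination of the absolute values --- more precisely $q_\mu\bigl(\vert X\vert-\vert Y\vert\bigr)\le q_\mu(\vert X-Y\vert)$, which again follows from Minkowski after passing to quasi-continuous representatives and using $\bigl\vert(\vert X\vert-\vert Y\vert)^\sim\bigr\vert\le\vert X-Y\vert^\sim$ $\Ch$-q.e.), yields
\begin{align*}
\bigl\vert q_\mu(\vert X\vert)-q_\mu(\vert Y\vert)\bigr\vert^2
\le q_\mu\bigl(\vert X\vert-\vert Y\vert\bigr)^2
\le q_\mu\bigl(\vert X-Y\vert\bigr)^2
= \bigl\langle\mu\,\big\vert\,\vert X-Y\vert^2\bigr\rangle.
\end{align*}
Now apply \autoref{Le:Form boundedness} to the function $\vert X-Y\vert\in\F$ (legitimate by \autoref{Le:Kato inequality}) to get $\bigl\langle\mu\,\big\vert\,\vert X-Y\vert^2\bigr\rangle\le\rho'\,\Ch(\vert X-Y\vert)+\alpha'\,\Vert\,\vert X-Y\vert\,\Vert_{\Ell^2(\mms)}^2$, and then invoke Kato's inequality once more in the form $\Ch(\vert X-Y\vert)=\int_\mms\bigl\vert\nabla\vert X-Y\vert\bigr\vert^2\d\meas\le\int_\mms\vert\nabla(X-Y)\vert_\HS^2\d\meas=\Ch_\cov(X-Y)$, together with $\Vert\,\vert X-Y\vert\,\Vert_{\Ell^2(\mms)}=\Vert X-Y\Vert_{\Ell^2(T\mms)}$ by definition of the pointwise norm. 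Chaining these inequalities gives exactly the asserted estimate for $\mu\in\{\kappa^+,\kappa^-,\vert\kappa\vert\}$. The statement about $\kappa$ itself follows since $\langle\kappa\,\vert\,\vert X\vert^2\rangle=\langle\kappa^+\,\vert\,\vert X\vert^2\rangle-\langle\kappa^-\,\vert\,\vert X\vert^2\rangle$, each term being $H^{1,2}$-continuous by what was just shown (continuity of $X\mapsto q_\mu(\vert X\vert)$ follows from the displayed Lipschitz-type bound, since $\Ch_\cov$ and $\Vert\cdot\Vert_{\Ell^2(T\mms)}^2$ are continuous along $H^{1,2}$-convergent sequences).

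The main obstacle I anticipate is purely a matter of care rather than depth: making rigorous the manipulations with $\Ch$-quasi-continuous representatives when passing the reverse triangle inequality $\bigl\vert\,\vert X\vert-\vert Y\vert\,\bigr\vert\le\vert X-Y\vert$ (an $\meas$-a.e.~statement in $\Ell^0(\mms)$) through the pairing $\langle\mu\,\vert\,\cdot\rangle$, which is defined via $\Ch$-q.c.~versions and sees $\Ch$-polar sets. The point is that $\vert X\vert,\vert Y\vert,\vert X-Y\vert$ all belong to $\F$ by \autoref{Le:Kato inequality}, so they have $\Ch$-q.e.~unique quasi-continuous representatives by \autoref{Le:Props q.r. s.l. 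Dirichlet form}, and an $\meas$-a.e.~inequality between $\F$-functions on which both sides are nonnegative $\meas$-a.e.~upgrades to an $\Ch$-q.e.~inequality between their quasi-continuous representatives by item (iii) of \autoref{Le:Props q.r. s.l. Dirichlet form}; since $\mu$ charges no $\Ch$-polar set, the inequality then survives integration against $\mu$. One should also note the minor subtlety that $(\vert X\vert-\vert Y\vert)^\sim$ need not be nonnegative, but only its absolute value is controlled by $\vert X-Y\vert^\sim$ $\Ch$-q.e., which is all Minkowski's inequality needs. Once these bookkeeping points are settled the estimate is immediate.
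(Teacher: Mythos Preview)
Your proposal is correct and follows essentially the same route as the paper's proof: the paper also chains the reverse triangle inequality for the $\Ell^2(\mms,\mu)$-seminorm, the pointwise reverse triangle inequality $\bigl\vert\,\vert X\vert-\vert Y\vert\,\bigr\vert\le\vert X-Y\vert$, \autoref{Le:Form boundedness} applied to $\vert X-Y\vert$, and finally \autoref{Le:Kato inequality} to pass from $\Ch(\vert X-Y\vert)$ to $\Ch_\cov(X-Y)$. You are somewhat more explicit than the paper about the upgrade from $\meas$-a.e.\ to $\Ch$-q.e.\ via \autoref{Le:Props q.r. s.l. Dirichlet form}(iii), which the paper leaves implicit, but the argument is the same.
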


\begin{proof} Since $\kappa=\kappa^+-\kappa^-$, it suffices to prove the last statement. Given $X,Y\in H^{1,2}(T\mms)$, by \autoref{Le:Kato inequality} we have $\vert X\vert, \vert Y\vert, \vert X - Y\vert\in \F$. In particular, the expressions $\smash{\big\langle\mu\,\big\vert\,\vert X\vert^2\big\rangle}$ and $\smash{\big\langle\mu\,\big\vert\,\vert Y\vert^2\big\rangle}$ make sense. Since
\begin{align*}
&\big\vert \big\langle\mu \,\big\vert\,\vert X\vert^2\big\rangle^{1/2} - \big\langle \mu\,\big\vert\,\vert Y\vert^2\big\rangle^{1/2}\big\vert^2\textcolor{white}{\big\Vert_{\Ell^2}^2}\\
&\qquad\qquad \leq \big\langle\mu\,\big\vert\,\big\vert\vert X\vert - \vert Y\vert\big\vert^2\big\rangle\textcolor{white}{\big\Vert_{\Ell^2}^2}\\
&\qquad\qquad \leq \big\langle \mu\,\big\vert\,\vert X-Y\vert^2\big\rangle\textcolor{white}{\big\Vert_{\Ell^2}^2}\\
&\qquad\qquad \leq \rho'\,\Ch\big(\vert X-Y\vert\big) + \alpha'\,\big\Vert X-Y\big\Vert_{\Ell^2(T\mms)}^2\\
&\qquad\qquad \leq \rho'\,\Ch_\cov(X-Y) +\alpha'\,\big\Vert X-Y\big\Vert_{\Ell^2(T\mms)}^2
\end{align*}
by \autoref{Le:Form boundedness} and again thanks to \autoref{Le:Kato inequality}, the claim readily follows.
\end{proof}

\begin{remark} We should not expect the function in \autoref{Cor:Kappa cont} to admit a continuous extension to $\Ell^2(T\mms)$ in general. The reason is once again the case of compact Riemannian manifolds $\mms$ with boundary and, say, with nonnegative Ricci curvature. In this case, $\kappa := \mathcall{l}\,\surf\in\Kato_0(\mms)$ by \cite[Lem.~2.33, Thm.~4.4]{erbar2020}, where $\mathcall{l}\colon\partial\mms \to \R$ designates the lowest eigenvalue function of the second fundamental form $\mathbb{I}$ at $\partial\mms$. The pairing $\smash{\big\langle \kappa\,\big\vert\,\vert X\vert^2\big\rangle}$ then simply does not make sense for general vector fields in $\Ell^2(T\mms)$, which are only defined up to $\vol$-negligible sets (unlike the $H^{1,2}$-case, where the well-definedness of $\smash{\big\langle \kappa\,\big\vert\,\vert X\vert^2\big\rangle}$ comes from the trace theorem).
\end{remark}

\begin{lemma}[Triviality of the torsion tensor]\label{Le:Torsion free} Suppose that $f\in \Dom_\reg(\Hess)$ and $X,Y\in W^{1,2}(T\mms)$. Then $\langle X,\nabla f\rangle,\langle Y,\nabla f\rangle\in \calG$ and
\begin{align*}
\big\langle X,\nabla \langle Y,\nabla f\rangle\big\rangle - \big\langle Y,\nabla \langle X,\nabla f\rangle\big\rangle = \rmd f(\nabla_X Y - \nabla_Y X)\quad\meas\text{-a.e.}
\end{align*}
\end{lemma}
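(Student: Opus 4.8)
\textbf{Proof plan for \autoref{Le:Torsion free}.}

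The plan is to reduce to the case where $X$ and $Y$ are themselves gradients of test functions, where the identity becomes an instance of \autoref{Pr:Product rule for gradients}\ref{La:Dwa}, and then to propagate the general case by bilinearity, the Leibniz rule, and density/approximation. First I would check that the terms are well-defined: since $f\in\Dom_\reg(\Hess)$ and $X,Y\in W^{1,2}(T\mms)$, \autoref{Pr:Compatibility} gives $\langle X,\nabla f\rangle,\langle Y,\nabla f\rangle\in\calG$ (using $\nabla f\in H^{1,2}(T\mms)$ by \autoref{Th:Properties W12 TM}), so the differentials $\rmd\langle X,\nabla f\rangle$ and $\rmd\langle Y,\nabla f\rangle$ make sense and, paired against $Y$ resp.\ $X$ via the $\Ell^1$--$\Ell^\infty$ duality of \autoref{Sub:Lebesgue sp}, produce the $\Ell^1$-functions on the left-hand side; the right-hand side $\rmd f(\nabla_X Y-\nabla_Y X)$ is well-defined by \eqref{Eq:Dir der def} and \eqref{Eq:Bound nabla Z X}.

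Next I would establish the base case $X=\nabla g_1$, $Y=\nabla g_2$ with $g_1,g_2\in\Test(\mms)$. Here $\langle X,\nabla f\rangle=\langle\nabla f,\nabla g_1\rangle$ and $\langle Y,\nabla f\rangle=\langle\nabla f,\nabla g_2\rangle$, both in $\calG_\reg\subset\F$ by \autoref{Pr:Bakry Emery measures}, so by \autoref{Pr:Compatibility} (or \autoref{Pr:Product rule for gradients}\ref{La:Ras}) one has $\rmd\langle\nabla f,\nabla g_2\rangle(\nabla g_1)=\Hess f(\nabla g_2,\nabla g_1)+\Hess g_2(\nabla f,\nabla g_1)$ and symmetrically with $g_1,g_2$ swapped. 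Subtracting and using the symmetry of $\Hess f$ from \autoref{Th:Hess properties}\ref{La:H3}, the Hessian-of-$f$ terms cancel, leaving $\Hess g_2(\nabla f,\nabla g_1)-\Hess g_1(\nabla f,\nabla g_2)$, which by \autoref{Pr:Product rule for gradients}\ref{La:Dwa} (applied twice, or once in the form of \eqref{Eq:Formulas intro}'s second line) equals $\langle\nabla_{\nabla g_1}\nabla g_2-\nabla_{\nabla g_2}\nabla g_1,\nabla f\rangle=\rmd f(\nabla_X Y-\nabla_Y X)$, using \autoref{Th:Properties W12 TM}\ref{La:Cov 5} to identify $\nabla(\nabla g_i)=(\Hess g_i)^\sharp$ and \eqref{Eq:Dir der def}. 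From there I would pass to $X=u_1\nabla g_1$, $Y=u_2\nabla g_2$ with $u_1,u_2\in\Test(\mms)$ (hence to all of $\Reg(T\mms)$ by bilinearity) by invoking the Leibniz rule \autoref{Le:Leibniz rule W(21)} for $\nabla(u_iX)$ together with the Leibniz rules for the gradient (\autoref{Cor:Calculus rules d}) and for $\calG_\reg$-functions (\autoref{Pr:H11 calculus rules}), checking that the extra first-order terms generated on both sides match; this is a bookkeeping computation of the kind in the proof of \autoref{Le:Leibniz rule W(21)}.

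Finally I would remove the regularity restriction: given $X,Y\in W^{1,2}(T\mms)$, one cannot approximate by $\Reg(T\mms)$ directly (that would give only $H^{1,2}(T\mms)$), so instead I would fix $Y$ and note the identity is linear and $W^{1,2}$-continuous in $X$ for $X$ ranging over $\Reg(T\mms)$ with $Y$ regular — wait, that still needs $X\in H^{1,2}$. The cleaner route, which I expect to be the main obstacle and which I would handle as in \autoref{Pr:Compatibility}, is to keep $f\in\Dom_\reg(\Hess)$ fixed and observe that both sides depend on $X,Y$ only through $\nabla X,\nabla Y$ and the pointwise values, with all maps $\Ell^2(T^{\otimes 2}\mms)$- resp.\ $\Ell^2(T\mms)$-continuous (using \autoref{Re:CSU} to control $\rmd\langle X,\nabla f\rangle$ in $\Ell^1(T^*\mms)$ by $\vert\nabla X\vert_\HS$ and $\vert X\vert$, since $\nabla f\in\Ell^\infty(T\mms)$ and, when $f\in\Test(\mms)$, $\nabla(\nabla f)\in\Ell^2(T^{\otimes2}\mms)\cap$ appropriate bounded space); then approximate a general $f\in\Dom_\reg(\Hess)$ by $\Test(\mms)$ in $\Dom(\Hess)$-norm and $X,Y$ by bounded $W^{1,2}$-vector fields, passing to the limit in the $\Ell^1(\mms)$-identity. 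The delicate point throughout is ensuring the directional-derivative terms $\nabla_XY$, $\nabla_YX$ — which a priori live only in $\Ell^0(T\mms)$ — are paired with $\nabla f\in\Ell^\infty(T\mms)$ so that $\rmd f(\nabla_XY-\nabla_YX)\in\Ell^1(\mms)$, which is exactly why the hypothesis $f\in\Dom_\reg(\Hess)$ (not merely $\Dom(\Hess)$) and the boundedness $\vert\nabla f\vert\in\Ell^\infty(\mms)$ for test functions are used.
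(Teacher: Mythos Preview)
Your approximation strategy in the last paragraph has a real gap, and you already sense it: once you have the identity for $X,Y\in\Reg(T\mms)$, continuity in the $W^{1,2}(T\mms)$-norm only extends it to $H^{1,2}(T\mms)$, because $\Reg(T\mms)$ is by definition dense only in $H^{1,2}(T\mms)$, not in $W^{1,2}(T\mms)$. Your attempted workaround (``approximate $X,Y$ by bounded $W^{1,2}$-vector fields'') does not help, since you still only know the identity on the regular class. There is no approximation route from $\Reg(T\mms)$ to general $W^{1,2}(T\mms)$ available in this paper.

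The paper's proof sidesteps this entirely by never reducing to a base case. It applies \autoref{Pr:Compatibility} \emph{directly} for general $X,Y\in W^{1,2}(T\mms)$: since $f\in\Dom_\reg(\Hess)$ implies $\nabla f\in H^{1,2}(T\mms)$ with $\nabla(\nabla f)=(\Hess f)^\sharp$ (\autoref{Th:Properties W12 TM}), the metric-compatibility formula reads
\[
\big\langle X,\nabla\langle Y,\nabla f\rangle\big\rangle = \rmd\langle Y,\nabla f\rangle(X) = \nabla Y:(X\otimes\nabla f) + \Hess f(X,Y) = \rmd f(\nabla_X Y) + \Hess f(X,Y)
\]
$\meas$-a.e., and symmetrically with $X,Y$ swapped. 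Subtracting and invoking the symmetry of $\Hess f$ from \autoref{Th:Hess properties} finishes the proof in two lines. The point you missed is that \autoref{Pr:Compatibility} already holds for one vector field in $W^{1,2}(T\mms)$ and the other in $H^{1,2}(T\mms)$ --- and $\nabla f$ supplies the $H^{1,2}$ slot --- so no approximation of $X$ or $Y$ is needed at all.
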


\begin{proof} \autoref{Pr:Compatibility} shows that $\langle Y,\nabla f\rangle,\langle X,\nabla f\rangle \in \calG$ with
\begin{align*}
\big\langle X,\nabla \langle Y,\nabla f\rangle\big\rangle &= \nabla Y : (X\otimes \nabla f)+\Hess f(X,Y)\\
&= \rmd f(\nabla_XY) + \Hess f(X,Y)\quad\meas\text{-a.e.},\\
\big\langle Y,\nabla \langle X,\nabla f\rangle\big\rangle &= \nabla X : (Y\otimes \nabla f)+\Hess f(Y,X)\\
&= \rmd f(\nabla_YX) + \Hess f(X,Y)\quad\meas\text{-a.e.}
\end{align*}
In the last step, we used the symmetry of $\Hess f$ from \autoref{Th:Hess properties}. Subtracting the two previous identities gives the assertion.
\end{proof}

\begin{remark} In the setting of \autoref{Le:Torsion free}, a more familiar way --- compared to classical Riemannian geometry --- of writing the stated identity is
\begin{align*}
X(Y\,f) - Y(X\,f) = \rmd f(\nabla_X Y - \nabla_Y X)\quad\meas\text{-a.e.}
\end{align*}
when defining $Xf := \langle X,\nabla f\rangle$, $Y f := \langle Y,\nabla f\rangle$, and accordingly $X(Y\,f)$ and $Y(X\,f)$.
\end{remark}

Since $\rmd\,\F$ generates $\Ell^2(T^*\mms)$ by \autoref{Pr:Generators cotangent module}, it follows by \autoref{Le:Mollified heat flow} that $\smash{\rmd\,\Dom_\reg(\Hess)}$ generates $\smash{\Ell^2(T^*\mms)}$ as well, in the sense of $\Ell^\infty$-modules. Hence, by \autoref{Le:Torsion free}, given $X,Y\in W^{1,2}(T\mms)$, $\nabla_X Y - \nabla_YX$ is the unique vector field $Z\in\Ell^1(T\mms)$ such that
\begin{align*}
X(Y\,f) - Y(X\,f) = \rmd f(Z)\quad\meas\text{-a.e.}
\end{align*}
for every $f\in \Dom_\reg(\Hess)$. This motivates the following definition.

\begin{definition}\label{Def:Lie bracket} The \emph{Lie bracket} $[X,Y]\in \Ell^1(T\mms)$ of two given vector fields $X,Y\in W^{1,2}(T\mms)$ is defined by
\begin{align*}
[X,Y] := \nabla_X Y-\nabla_Y X.
\end{align*}
\end{definition}

We terminate with the following locality property. It directly follows from the second part of \autoref{Pr:Compatibility} as well as \autoref{Pr:H11 calculus rules}.

\begin{lemma}[Locality of the covariant derivative] If $X,Y\in H^{1,2}(T\mms)$, then
\begin{align*}
\One_{\{X=Y\}}\,\nabla X = \One_{\{X=Y\}}\,\nabla Y.
\end{align*}
\end{lemma}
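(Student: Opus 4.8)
The statement asserts that if $X,Y\in H^{1,2}(T\mms)$, then $\One_{\{X=Y\}}\,\nabla X = \One_{\{X=Y\}}\,\nabla Y$, where $\{X=Y\} = \{X-Y = 0\} \in \Borel^\meas(\mms)$ in the sense of \autoref{Sub:Linfty modles}. By linearity of the covariant derivative, it suffices to set $Z := X - Y\in H^{1,2}(T\mms)$ and prove that $\One_{\{Z=0\}}\,\nabla Z = 0$, i.e.~that $\vert\nabla Z\vert_\HS = 0$ $\meas$-a.e.~on $\{Z=0\} = \{\vert Z\vert = 0\}$. The first step is therefore to reduce the claim to this single-vector-field version and to recall the locality conventions: $\One_E\,\nabla Z = 0$ precisely when $\vert\nabla Z\vert_\HS = 0$ $\meas$-a.e.~on $E$.

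The key tool is Kato's inequality, \autoref{Le:Kato inequality}: for $Z\in H^{1,2}(T\mms)$ we have $\vert Z\vert\in\F$ with $\big\vert\nabla\vert Z\vert\big\vert \leq \vert\nabla Z\vert_\HS$ $\meas$-a.e. Unfortunately this inequality alone does not immediately give the conclusion, since $\vert\nabla Z\vert_\HS$ could a priori be large on $\{\vert Z\vert = 0\}$ even though $\vert\nabla\vert Z\vert\big\vert = 0$ there (the latter being a consequence of the locality of $\nabla$ on functions, \autoref{Cor:Calculus rules d}\ref{La:Due}, applied to $\vert Z\vert\in\F$, or of \autoref{Th:Properties energy measure}\ref{La:Minimum mu}). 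The sharp statement needs the full strength of metric compatibility. So the second step is to return to \autoref{Pr:Compatibility}: for $Z\in H^{1,2}(T\mms)$ and arbitrary $W\in\Ell^0(T\mms)$ we have, using $\langle Z,Z\rangle = \vert Z\vert^2$,
\begin{align*}
\rmd\vert Z\vert^2(W) = 2\,\big\langle\nabla_W Z, Z\big\rangle\quad\meas\text{-a.e.},
\end{align*}
and $\vert Z\vert^2\in\calG$. The third step exploits this: on $\{\vert Z\vert = 0\}$ the $\meas$-a.e.~Cauchy--Schwarz inequality for $:_\meas$ gives $\vert\langle\nabla_W Z, Z\rangle\vert \leq \vert\nabla Z\vert_\HS\,\vert W\vert\,\vert Z\vert = 0$, so $\rmd\vert Z\vert^2 = 0$ $\meas$-a.e.~on $\{\vert Z\vert = 0\}$ (which is anyway automatic). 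This is still not the claim, so the real argument must square-root once more: I would apply the approximation argument of the proof of \autoref{Le:Kato inequality} itself, but now more carefully. Concretely, with $\varphi_n(t) := (t+1/n^2)^{1/2} - 1/n$ one has, on $\{\vert Z\vert = 0\}$, that $\varphi_n'\circ\vert Z\vert^2$ stays bounded and $\vert\nabla Z\vert_\HS$ enters the bound \eqref{Eq:99 Luftballons} multiplied by $\varphi_n'(\vert Z\vert^2)^2\,\vert Z\vert^2$, which vanishes on $\{\vert Z\vert=0\}$; passing to the Mazur limit shows $\big\vert\nabla\vert Z\vert\big\vert = 0$ there but, crucially, tracking the chain of inequalities in reverse on this set forces $\vert\nabla Z\vert_\HS = 0$ $\meas$-a.e.~on $\{\vert Z\vert = 0\}$ as well, since the only place the estimate was not an equality was the Cauchy--Schwarz step, which is saturated when $Z=0$.

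The cleaner route, which I would actually write up, avoids this delicate bookkeeping: approximate $Z\in H^{1,2}(T\mms)$ by $Z_n\in\Reg(T\mms)$ with $Z_n\to Z$ in $W^{1,2}(T\mms)$, and use that $\Reg(T\mms)$ consists of finite sums $\sum_i g_i\,\nabla f_i$ with $f_i\in\Test(\mms)$, $g_i\in\Test(\mms)\cup\R\One_\mms$, for which \autoref{Th:Properties W12 TM}\ref{La:Cov 5} gives an explicit formula $\nabla(g_i\,\nabla f_i) = \nabla g_i\otimes\nabla f_i + g_i\,(\Hess f_i)^\sharp$; combined with the locality of the gradient (\autoref{Cor:Calculus rules d}) and of the Hessian (the last Lemma of \autoref{Sub:Calc rules hess}), one gets locality of $\nabla$ on $\Reg(T\mms)$ directly, and then extends it by the $W^{1,2}(T\mms)$-continuity of $\Id\times\nabla$ (\autoref{Th:Properties W12 TM}\ref{La:Cov 2}) together with the general measure-theoretic fact that if $V_n\to V$ and $\nabla V_n\to\nabla V$ in the relevant $\Ell^2$-topologies and $\One_{E_n}\,\nabla V_n = 0$ with $\One_{E_n}\to\One_E$, then $\One_E\,\nabla V = 0$ — here one takes $E_n = E = \{Z=0\}$ but must first replace $Z_n$ by $Z_n$ truncated/modified so that it vanishes on $\{Z=0\}$, which is where a short lemma (mimicking \autoref{Le:Kato inequality}'s Mazur argument) is needed. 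The main obstacle is exactly this last point: arranging approximants in $\Reg(T\mms)$ that respect the vanishing set $\{Z=0\}$, since a generic $\Reg(T\mms)$-approximant need not vanish there. This is overcome by composing with cutoff functions $\psi_\varepsilon(\vert Z\vert^2)$ that kill a neighborhood of $\{\vert Z\vert=0\}$, using \autoref{Le:Leibniz rule W(21)}/\autoref{Re:fX in H12}\ref{La:lalalala} to keep everything in $H^{1,2}(T\mms)$, and then letting $\varepsilon\to 0$ while invoking Kato's inequality one last time to control the error terms supported near $\{\vert Z\vert=0\}$.
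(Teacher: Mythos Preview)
Your proposal has a genuine gap: you never pair $Z$ with a \emph{second}, varying vector field in the metric-compatibility identity. You only use $\rmd\vert Z\vert^2(W) = 2\langle\nabla_W Z, Z\rangle$, but on $\{Z=0\}$ the right-hand side vanishes because of the factor $Z$, not because of $\nabla_W Z$; so you learn nothing about $\nabla Z$ there. Your attempt to salvage this by ``tracking the chain of inequalities in reverse'' in the proof of Kato's inequality is incorrect: on $\{\vert Z\vert=0\}$ the chain \eqref{Eq:99 Luftballons} reads $0 = 0 \leq 0 \leq \vert\nabla Z\vert_\HS^2$, since $4\,\varphi_n'(0)^2\cdot 0 = 0$, not $1$. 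The second inequality there is \emph{not} saturated at $t=0$, so no information on $\vert\nabla Z\vert_\HS$ can be extracted. Your alternative approximation route via cutoffs $\psi_\varepsilon(\vert Z\vert^2)\,Z_n$ may be salvageable in principle, but as stated it leaves the control of the Leibniz error term $\nabla\psi_\varepsilon(\vert Z\vert^2)\otimes Z_n$ as $\varepsilon\to 0$ unjustified.

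The paper's proof is a one-liner based on the idea you are missing. By linearity, it suffices to show $\One_{\{Z=0\}}\,\nabla Z = 0$ for $Z\in H^{1,2}(T\mms)$. For any $V\in\Reg(T\mms)$, the second part of \autoref{Pr:Compatibility} gives $\langle Z,V\rangle\in\calG_\reg$ with $\rmd\langle Z,V\rangle(W) = \langle\nabla_W Z,V\rangle + \langle Z,\nabla_W V\rangle$ $\meas$-a.e. On $\{Z=0\}$ we have $\langle Z,V\rangle = 0$, so the locality of $\rmd$ on $\calG_\reg$ from \autoref{Pr:H11 calculus rules} yields $\One_{\{Z=0\}}\,\rmd\langle Z,V\rangle = 0$. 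Since also $\One_{\{Z=0\}}\,\langle Z,\nabla_W V\rangle = 0$, we obtain $\One_{\{Z=0\}}\,\nabla Z : (W\otimes V) = 0$ for every $W$ and every $V\in\Reg(T\mms)$; density of $\Reg(T\mms)$ finishes the argument.
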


\subsection{Heat flow on vector fields}\label{Sub:Heat flow vector fields} Now we introduce and study the canonical heat flow $(\CHeat_t)_{t\geq 0}$ on $\Ell^2$-vector fields. First, after defining its generator  in \autoref{Def:Bochner Laplacian}, following well-known lines \cite{brezis1973} we collect elementary properties of the flow $(\CHeat_t)_{t\geq 0}$ in \autoref{Th:CHeat properties}. Then we prove the important semigroup comparison  \autoref{Th:HSU Bochner} between $(\CHeat_t)_{t\geq 0}$ and $(\ChHeat_t)_{t\geq 0}$, using  \autoref{Le:Kato inequality}.

\subsubsection{Bochner Laplacian} In fact, we have a preliminary choice to make, i.e.~either to define the Bochner Laplacian $\Bochner$ on $W^{1,2}(T\mms)$ or on the \emph{strictly smaller} space $H^{1,2}(T\mms)$. We choose the latter one since the calculus rules from \autoref{Sub:Calculus rules} are more powerful, in particular in view of \autoref{Pr:BE vector fields} below. Also, no ambiguity occurs for the background boundary conditions, see \autoref{Ex:Bochner smooth}.

\begin{definition}\label{Def:Bochner Laplacian} We define $\Dom(\Bochner)$ to consist of all $X\in H^{1,2}(T\mms)$ for which there exists $Z\in \Ell^2(T\mms)$ such that for every $Y\in H^{1,2}(T\mms)$,
\begin{align*}
\int_\mms \langle Y,Z\rangle\d\meas = -\int_\mms \nabla Y:\nabla X\d\meas.
\end{align*}
In case of existence, $Z$ is uniquely determined, denoted by $\Bochner X$ and termed the \emph{Bochner Laplacian} \textnormal{(}or \emph{connection Laplacian} or \emph{horizontal Laplacian}\textnormal{)} of $X$.
\end{definition}

Observe that $\Dom(\Bochner)$ is a vector space, and that $\Bochner\colon \Dom(\Bochner)\to\Ell^2(T\mms)$ is a linear operator. Both are easy to see from the linearity of the covariant derivative.

We modify the functional from \eqref{Eq:E cov W} with domain $W^{1,2}(T\mms)$ by introducing the ``augmented'' covariant energy functional $\smash{\widetilde{\Ch}_\cov\colon \Ell^2(T\mms)\to [0,\infty]}$ with
\begin{align*}
\widetilde{\Ch}_\cov(X) := \begin{cases} \displaystyle \int_\mms \big\vert\nabla X\big\vert_\HS^2\d\meas & \text{if }X\in H^{1,2}(T\mms),\\
\infty & \text{otherwise}.
\end{cases}
\end{align*}
Clearly, its (non-relabeled) polarization $\smash{\widetilde{\Ch}_\cov\colon H^{1,2}(T\mms)^2\to \R}$ is a closed, symmetric form, and $\Bochner$ is the nonpositive, self-adjoint generator uniquely associated to it according to  \cite[Thm.~1.3.1]{fukushima2011}. 

A first elementary consequence of this discussion,  \autoref{Le:Kato inequality} and Rayleigh's theorem is the following inequality between the spectral bottoms of $\Delta$ and $\Bochner$. (Recall that, since $-\Delta$ is nonnegative and symmetric, the spectrum $\sigma(-\Delta)$ of $-\Delta$ is the set of all $\lambda\geq 0$ such that the operator $-\Delta-\lambda$ fails to be bijective; the spectrum $\sigma(-\Bochner)$ of $-\Bochner$ is defined analogously.)

\begin{corollary}\label{Cor:Spectra cov} We have
	\begin{align*}
		\inf \sigma(-\Delta) \leq \inf \sigma(-\Bochner).
	\end{align*}
\end{corollary}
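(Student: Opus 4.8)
The goal is to compare the bottoms of the spectra of two nonpositive self-adjoint operators via their associated quadratic forms, so the natural tool is Rayleigh's variational characterization. Recall that for a nonpositive self-adjoint operator $A$ associated with a closed symmetric form $(\mathcal{Q},\Dom(\mathcal{Q}))$ one has
\begin{align*}
\inf\sigma(-A) = \inf\!\Big\lbrace \frac{\mathcal{Q}(u)}{\Vert u\Vert^2} : u\in\Dom(\mathcal{Q})\setminus\{0\}\Big\rbrace.
\end{align*}
Applying this both to $-\Delta$, whose form is $\Ch$ with domain $\F$, and to $-\Bochner$, whose form is $\smash{\widetilde{\Ch}_\cov}$ with domain $H^{1,2}(T\mms)$, it suffices to show that for every $X\in H^{1,2}(T\mms)$ with $X\neq 0$ there exists some nonzero $f\in\F$ with
\begin{align*}
\frac{\Ch(f)}{\Vert f\Vert_{\Ell^2(\mms)}^2} \leq \frac{\widetilde{\Ch}_\cov(X)}{\Vert X\Vert_{\Ell^2(T\mms)}^2}.
\end{align*}

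First I would take an arbitrary $X\in H^{1,2}(T\mms)$ with $X\neq 0$ and set $f:=\vert X\vert$. By Kato's inequality, \autoref{Le:Kato inequality}, we have $f\in\F$ with $\smash{\big\vert\nabla\vert X\vert\big\vert \leq \big\vert\nabla X\big\vert_\HS}$ $\meas$-a.e., hence, integrating and using the representation $\Ch(f) = \int_\mms\vert\nabla f\vert^2\d\meas$ together with the definition of $\smash{\widetilde{\Ch}_\cov}$,
\begin{align*}
\Ch\big(\vert X\vert\big) = \int_\mms \big\vert\nabla\vert X\vert\big\vert^2\d\meas \leq \int_\mms \big\vert\nabla X\big\vert_\HS^2\d\meas = \widetilde{\Ch}_\cov(X).
\end{align*}
Moreover $\smash{\Vert \vert X\vert\Vert_{\Ell^2(\mms)}^2 = \int_\mms\vert X\vert^2\d\meas = \Vert X\Vert_{\Ell^2(T\mms)}^2}$, and since $X\neq 0$ in $\Ell^2(T\mms)$ the function $\vert X\vert$ is nonzero in $\Ell^2(\mms)$, so it is an admissible test function in the Rayleigh quotient for $-\Delta$. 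Combining the two displays gives the pointwise comparison of Rayleigh quotients above, and taking the infimum over all such $X$ yields $\inf\sigma(-\Delta)\leq\inf\sigma(-\Bochner)$.

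The only genuinely substantive input is Kato's inequality \autoref{Le:Kato inequality}, which is already established; the rest is the standard form-comparison argument for spectral bottoms, so I do not anticipate a real obstacle. One minor point to handle with a sentence of care is the legitimacy of the Rayleigh characterization itself for $\Bochner$: this follows since $\smash{\widetilde{\Ch}_\cov}$ is a closed symmetric form on $H^{1,2}(T\mms)$ and $\Bochner$ is its nonpositive self-adjoint generator, as recorded just before the statement, so the min-max principle applies verbatim. (If one wanted $\Dom(\Bochner)$ rather than $H^{1,2}(T\mms)$ as the test class, one would invoke the standard density of the operator domain in the form domain, but this detour is unnecessary here.)
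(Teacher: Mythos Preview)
The proposal is correct and follows exactly the approach indicated in the paper, which states the corollary as ``a first elementary consequence of this discussion, \autoref{Le:Kato inequality} and Rayleigh's theorem'' without further detail. You have faithfully expanded that one-line justification: take $f:=\vert X\vert$ for $X\in H^{1,2}(T\mms)\setminus\{0\}$, apply Kato's inequality to compare the form values, note the equality of $\Ell^2$-norms, and conclude via the variational characterization of spectral bottoms.
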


\begin{example}\label{Ex:Bochner smooth} Let $\mms$ be a Riemannian manifold with boundary. Recall that every element of $H^{1,2}(T\mms)$ has $\surf$-a.e.~vanishing normal component at $\partial\mms$ by the local  version of \autoref{Pr:Trace thm}. In particular, $\Bochner$ coincides with the self-adjoint realization in $\Ell^2(T\mms)$ of the restriction of the usual (non-relabeled) Bochner Laplacian $\Bochner$ to the class of compactly supported elements $X\in\Gamma(T\mms)$ satisfying the following mixed boundary conditions on $\partial\mms$, see \eqref{Eq:Normal parts vfs smooth world}:
\begin{align}\label{Eq:Bdry cond Bochner}
\begin{split}
X^\perp &= 0,\\
(\nabla_\sfn X)^\Vert &=0.
\end{split}
\end{align}
Indeed, for \emph{any} compactly supported $X,Y\in \Gamma(T\mms)$, 
\begin{align*}
&\int_\mms \langle\Bochner X,Y\rangle\d\vol - \int_\mms \langle X,\Bochner Y\rangle\d\vol\\
&\qquad\qquad = \int_{\partial\mms}\langle\nabla_\sfn X,Y\rangle\d\surf - \int_{\partial\mms} \langle X,\nabla_\sfn Y\rangle\d\surf
\end{align*}
according to the computations carried out in \cite[Ch.~2]{charalambous2010}. The last two integrals vanish under \eqref{Eq:Bdry cond Bochner}. Moreover, as remarked in \cite{charalambous2010} this suffices to recover the defining integration by parts formula from \autoref{Def:Bochner Laplacian}.

Let us remark for completeness that in \cite{charalambous2010}, the boundary conditions that are \emph{dual} to \eqref{Eq:Bdry cond Bochner} have been considered. See also \cite[Prop.~1.2.6]{schwarz1995}.
\end{example}

\subsubsection{Heat flow and its elementary properties}\label{Sub:HF vector fields} Analogously to \autoref{Subsub:Neumann heat flow}, we may and will define the \emph{heat flow} on vector fields as the semigroup $(\CHeat_t)_{t\geq 0}$ of bounded, linear and self-adjoint operators on $\Ell^2(T\mms)$ by
\begin{align*}
\CHeat_t := \rme^{\Bochner t}.
\end{align*}

Following e.g.~\cite{brezis1973} or \cite[Subsec.~3.4.4]{gigli2018}, the subsequent elementary properties of $(\CHeat_t)_{t\geq 0}$ are readily established.

\begin{theorem}\label{Th:CHeat properties} The following properties of $(\CHeat_t)_{t\geq 0}$ hold for every $X\in \Ell^2(T\mms)$ and every $t>0$.
\begin{enumerate}[label=\textnormal{\textcolor{black}{(}\roman*\textcolor{black}{)}}]
\item The curve $t\mapsto\CHeat_t X$ belongs to $\Cont^1((0,\infty);\Ell^2(T\mms))$ with
\begin{align*}
\frac{\rmd}{\rmd t}\CHeat_tX = \Bochner\CHeat_tX.
\end{align*}
\item If $X\in \Dom(\Bochner)$, we have
\begin{align*}
\frac{\rmd}{\rmd t}\CHeat_tX = \CHeat_t\Bochner X.
\end{align*}
In particular, we have the identity 
\begin{align*}
\Bochner\,\CHeat_t = \CHeat_t\,\Bochner\quad\text{on }\Dom(\Bochner).
\end{align*}
\item For every $s\in [0,t]$,
\begin{align*}
\Vert \CHeat_t X\Vert_{\Ell^2(T\mms)}\leq \Vert\CHeat_s X\Vert_{\Ell^2(T\mms)}.
\end{align*}
\item The function $t\mapsto \widetilde{\Ch}_\cov(\CHeat_tX)$ belongs to $\Cont^1((0,\infty))$, is nonincreasing, and its derivative satisfies
\begin{align*}
\frac{\rmd}{\rmd t}\widetilde{\Ch}_\cov(\CHeat_tX) = -2\int_\mms\big\vert\Bochner \CHeat_t X\big\vert^2\d\meas.
\end{align*}
\item If $X\in H^{1,2}(T\mms)$, the map $t\mapsto \CHeat_tX$ is continuous on $[0,\infty)$ w.r.t.~strong convergence in $H^{1,2}(T\mms)$.
\item We have
\begin{align*}
\widetilde{\Ch}_\cov(\CHeat_tX) &\leq \frac{1}{2t}\,\big\Vert X\big\Vert_{\Ell^2(T\mms)}^2,\\
\big\Vert\Bochner\CHeat_tX\big\Vert_{\Ell^2(T\mms)}^2 &\leq \frac{1}{2t^2}\,\big\Vert X\big\Vert_{\Ell^2(T\mms)}^2.
\end{align*}
\end{enumerate}
\end{theorem}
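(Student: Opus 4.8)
The statement in \autoref{Th:CHeat properties} collects the standard properties of the analytic semigroup $(\CHeat_t)_{t\geq 0}$ generated by the nonpositive self-adjoint operator $\Bochner$ on the Hilbert space $\Ell^2(T\mms)$, associated to the closed symmetric form $\widetilde{\Ch}_\cov$. The entire proof is a transcription of classical semigroup theory (cf.~\cite{brezis1973} or \autoref{Th:Heat flow properties} and the arguments of \cite[Subsec.~3.4.4]{gigli2018}), and no taming-specific input is needed: once one records that $\Ell^2(T\mms)$ is a Hilbert space (\autoref{Th:Riesz theorem modules}, \autoref{Pr:Generators cotangent module}), that $H^{1,2}(T\mms)$ with $\widetilde{\Ch}_\cov$ is a closed symmetric form with generator $\Bochner$ (stated right after \autoref{Def:Bochner Laplacian}), and that $-\Bochner\geq 0$, the spectral theorem does the rest. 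I would therefore open the proof by recalling these three facts and writing $\CHeat_t=\rme^{\Bochner t}=\int_0^\infty \rme^{-\lambda t}\d E_\lambda$ for the spectral resolution $(E_\lambda)_{\lambda\geq 0}$ of $-\Bochner$.

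\emph{Items (i)--(iii).} For $(i)$ and $(ii)$: for $t>0$ the functions $\lambda\mapsto \rme^{-\lambda t}$ and $\lambda\mapsto \lambda\,\rme^{-\lambda t}$ are bounded on $[0,\infty)$, so $\CHeat_t X\in\Dom(\Bochner)$ for every $X\in\Ell^2(T\mms)$ and $t\mapsto\CHeat_tX$ is $\Cont^1$ on $(0,\infty)$ with $\frac{\rmd}{\rmd t}\CHeat_tX=\Bochner\CHeat_tX$, by dominated convergence applied to the spectral integral (the difference quotient $\frac{\rme^{-\lambda(t+h)}-\rme^{-\lambda t}}{h}$ is dominated uniformly in $h$ near $t>0$). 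If moreover $X\in\Dom(\Bochner)$ then $\Bochner\CHeat_tX=\CHeat_t\Bochner X$ since both equal $\int_0^\infty(-\lambda)\rme^{-\lambda t}\d E_\lambda X$, which gives the commutation relation on $\Dom(\Bochner)$. For $(iii)$: $\Vert\CHeat_tX\Vert_{\Ell^2(T\mms)}^2=\int_0^\infty \rme^{-2\lambda t}\d\Vert E_\lambda X\Vert^2$ is nonincreasing in $t$ because $\rme^{-2\lambda t}\leq\rme^{-2\lambda s}$ for $s\leq t$ and $\lambda\geq 0$; in particular taking $s=0$ recovers the contraction property $\Vert\CHeat_tX\Vert\leq\Vert X\Vert$.

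\emph{Items (iv)--(vi).} For $(iv)$: for $X\in\Ell^2(T\mms)$ and $t>0$, $\widetilde{\Ch}_\cov(\CHeat_tX)=\int_0^\infty\lambda\,\rme^{-2\lambda t}\d\Vert E_\lambda X\Vert^2$ (by definition of the form associated to $-\Bochner$, valid since $\CHeat_tX\in\Dom(\Bochner)\subset H^{1,2}(T\mms)$), which is $\Cont^1$ and nonincreasing in $t$, with derivative $-2\int_0^\infty\lambda^2\rme^{-2\lambda t}\d\Vert E_\lambda X\Vert^2=-2\Vert\Bochner\CHeat_tX\Vert_{\Ell^2(T\mms)}^2$; equivalently one integrates by parts $\frac{\rmd}{\rmd t}\widetilde{\Ch}_\cov(\CHeat_tX)=2\widetilde{\Ch}_\cov(\CHeat_tX,\Bochner\CHeat_tX)=-2\Vert\Bochner\CHeat_tX\Vert^2$ using self-adjointness of $\Bochner$. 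For $(v)$: strong continuity of $t\mapsto\CHeat_tX$ in $H^{1,2}(T\mms)$ at $t\geq 0$ for $X\in H^{1,2}(T\mms)$ follows from $\Vert\CHeat_tX-\CHeat_sX\Vert_{\Ell^2(T\mms)}^2+\widetilde{\Ch}_\cov(\CHeat_tX-\CHeat_sX)=\int_0^\infty(1+\lambda)(\rme^{-\lambda t}-\rme^{-\lambda s})^2\d\Vert E_\lambda X\Vert^2$, which tends to $0$ as $s\to t$ by dominated convergence, the integrand being bounded by $(1+\lambda)\rme^{-\lambda t}\in\Ell^1(\d\Vert E_\lambda X\Vert^2)$ for $X\in H^{1,2}(T\mms)=\Dom((-\Bochner)^{1/2})$ when $t>0$, and by $4(1+\lambda)\d\Vert E_\lambda X\Vert^2$ uniformly when approaching $t=0$. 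For $(vi)$: the elementary pointwise bounds $\lambda\,\rme^{-2\lambda t}\leq\frac{1}{2\rme\, t}\leq\frac{1}{2t}$ and $\lambda^2\rme^{-2\lambda t}\leq\frac{1}{\rme^2 t^2}\leq\frac{1}{2t^2}$ for all $\lambda\geq 0$, $t>0$, integrated against $\d\Vert E_\lambda X\Vert^2$ with total mass $\Vert X\Vert_{\Ell^2(T\mms)}^2$, give the two a priori estimates exactly as in \autoref{Th:Heat flow properties}.

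\textbf{Main obstacle.} There is no genuine obstacle: the result is soft functional analysis and the only thing to be careful about is bookkeeping — keeping track of which statements require $X\in\Dom(\Bochner)$ versus $X\in H^{1,2}(T\mms)$ versus merely $X\in\Ell^2(T\mms)$, and invoking the regularizing effect $\CHeat_t(\Ell^2(T\mms))\subset\Dom(\Bochner)$ for $t>0$ so that $(i)$, $(iv)$, $(vi)$ hold for arbitrary $X$. Accordingly I would simply write "the proof follows well-known lines, see e.g.~\cite{brezis1973} or the arguments in \cite[Subsec.~3.4.4]{gigli2018}" and then either relegate the details to the reader or include the short spectral-calculus computations above; given the surrounding style of the paper, a one-line citation plus a remark that the same reasoning underlies \autoref{Th:Heat flow properties} is the most economical choice.
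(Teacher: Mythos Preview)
Your proposal is correct and matches the paper's treatment exactly: the paper does not give a proof but simply states before the theorem that ``Following e.g.~\cite{brezis1973} or \cite[Subsec.~3.4.4]{gigli2018}, the subsequent elementary properties of $(\CHeat_t)_{t\geq 0}$ are readily established.'' Your spectral-calculus sketch is a faithful unpacking of what those references contain, and your concluding recommendation to cite and move on is precisely what the paper does.
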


\subsubsection{Functional inequalities and $\Ell^p$-properties} The calculus rules from \autoref{Sub:Calculus rules} allow us to derive useful functional inequalities of $(\CHeat_t)_{t\geq 0}$ w.r.t.~$(\ChHeat_t)_{t\geq 0}$. The main result, essentially coming from \autoref{Pr:Compatibility} and \autoref{Le:Kato inequality}, is the $\Ell^1$-estimate from \autoref{Th:HSU Bochner}.  $\Ell^p$-consequences of it, $p\in [1,\infty]$, are stated in \autoref{Cor:Lp props Bochner}.

In fact, the latter requires the following $\Ell^2$-version of it \emph{in advance} for technical reasons, see \autoref{Re:Technical remark} --- in particular, \autoref{Pr:BE vector fields} does \emph{not} follow from \autoref{Th:HSU Bochner} just by Jensen's inequality for $(\ChHeat_t)_{t\geq 0}$.

\begin{proposition}\label{Pr:BE vector fields} For every $X\in \Ell^2(T\mms)$ and every $t\geq 0$,
\begin{align*}
\vert\CHeat_tX\vert^2\leq \ChHeat_t\big(\vert X\vert^2\big)\quad\meas\text{-a.e.}
\end{align*}
\end{proposition}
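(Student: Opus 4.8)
The strategy is the standard interpolation (or ``Bochner-flow'') argument: fix $t>0$, and for $X\in\Ell^2(T\mms)$ consider the curve
\begin{align*}
s\mapsto F(s) := \int_\mms \ChHeat_{t-s}\big(\vert\CHeat_s X\vert^2\big)\,\phi\d\meas
\end{align*}
for a fixed nonnegative $\phi\in\Ell^\infty(\mms)\cap\Ell^1(\mms)$ (say $\phi\in\Test(\mms)$ with $\phi\geq 0$, or more simply a nonnegative bounded Borel function of $\meas$-integrable support). The goal is to show $F$ is nondecreasing on $[0,t]$, whence $F(0)\leq F(t)$, i.e. $\int_\mms \ChHeat_t(\vert X\vert^2)\,\phi\d\meas \geq \int_\mms \vert\CHeat_t X\vert^2\,\phi\d\meas$; since $\phi\geq 0$ is arbitrary in a class testing $\meas$-a.e.\ inequalities, this is exactly $\vert\CHeat_t X\vert^2\leq\ChHeat_t(\vert X\vert^2)$ $\meas$-a.e. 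The case $t=0$ is trivial.

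\textbf{Key steps.} First I would reduce to $X$ in a dense, regular class where differentiation under the integral is legitimate: by \autoref{Th:CHeat properties} the curve $s\mapsto\CHeat_s X$ is $\Cont^1((0,\infty);\Ell^2(T\mms))$ with $\CHeat_s X\in\Dom(\Bochner)$ for $s>0$, and $\vert\CHeat_s X\vert\in\F$ for $s>0$ by Kato's inequality \autoref{Le:Kato inequality} (since $\CHeat_s X\in H^{1,2}(T\mms)$, because $\Dom(\Bochner)\subset H^{1,2}(T\mms)$ by \autoref{Def:Bochner Laplacian}). Hence $\vert\CHeat_s X\vert^2\in\F$ (one needs an $\Ell^\infty$-bound; for this one may first assume $X\in\Ell^2(T\mms)\cap\Ell^\infty(T\mms)$, so that $\CHeat_s X$ stays bounded — this is where the second sentence of \autoref{Le:Kato inequality} is used — and then remove the boundedness assumption at the end by an $\Ell^2$-approximation and the contraction properties of both semigroups). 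Second, differentiate: for $0<s<t$,
\begin{align*}
F'(s) = -\int_\mms \Delta\ChHeat_{t-s}\big(\vert\CHeat_sX\vert^2\big)\,\phi\d\meas + \int_\mms \ChHeat_{t-s}\Big(\frac{\rmd}{\rmd s}\vert\CHeat_sX\vert^2\Big)\,\phi\d\meas.
\end{align*}
Using self-adjointness of $\ChHeat_{t-s}$, $\frac{\rmd}{\rmd s}\vert\CHeat_sX\vert^2 = 2\langle\Bochner\CHeat_s X,\CHeat_s X\rangle$, and the definition \eqref{Eq:IBP Laplacian} of $\Delta$, this rearranges into
\begin{align*}
F'(s) = \int_\mms \ChHeat_{t-s}\phi\,\Big[2\,\langle\Bochner Y,Y\rangle - \Delta\vert Y\vert^2\Big]\d\meas\Big|_{Y=\CHeat_s X},
\end{align*}
interpreting $\Delta\vert Y\vert^2$ in the weak form $\int \psi\,\Delta\vert Y\vert^2 = -\int\langle\nabla\psi,\nabla\vert Y\vert^2\rangle$. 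The third, crucial step is the pointwise/weak \emph{Bochner-type inequality}
\begin{align*}
\frac{1}{2}\Delta\vert Y\vert^2 \geq \langle\Bochner Y,Y\rangle \quad\text{(weakly, tested against nonnegative }\psi\in\F\cap\Ell^\infty(\mms)),
\end{align*}
equivalently $-\int\langle\nabla\psi,\nabla\vert Y\vert^2\rangle\,/2 \geq \int\psi\,\langle\Bochner Y,Y\rangle$ for all $0\leq\psi\in\F\cap\Ell^\infty(\mms)$, valid for $Y\in\Dom(\Bochner)\cap\Ell^\infty(T\mms)$. Granting this, $F'(s)\geq 0$, which finishes the argument.

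\textbf{The main obstacle.} The heart of the matter is establishing that last Bochner inequality for $(\CHeat_t)$, i.e. $\tfrac12\Delta|Y|^2\geq\langle\Bochner Y,Y\rangle$ in the weak sense. This is the defining compatibility between the Bochner Laplacian and the pointwise norm, and it should be extracted from metric compatibility of $\nabla$ (\autoref{Pr:Compatibility}) together with Kato's inequality (\autoref{Le:Kato inequality}): for $\psi\in\F\cap\Ell^\infty(\mms)$ nonnegative and $Y\in\Dom(\Bochner)\cap H^{1,2}(T\mms)$ one has $\langle X,Y\rangle$-type identities giving, with $X=Y$, $\rmd|Y|^2(Z) = 2\langle\nabla_Z Y,Y\rangle$, hence $\nabla|Y|^2 = 2(\nabla Y)^{\mathsf{sym}}\cdot Y$ in the appropriate sense, so that $\langle\nabla\psi,\nabla|Y|^2\rangle = 2\,\nabla Y:(\nabla\psi\otimes Y)$; then $\psi Y\in H^{1,2}(T\mms)$ by \autoref{Re:fX in H12}, $\nabla(\psi Y) = \nabla\psi\otimes Y + \psi\nabla Y$, and the definition of $\Bochner$ yields
\begin{align*}
-\int_\mms\psi\,\langle\Bochner Y,Y\rangle\d\meas = \int_\mms \nabla(\psi Y):\nabla Y\d\meas = \int_\mms \psi\,|\nabla Y|_\HS^2\d\meas + \int_\mms \nabla Y:(\nabla\psi\otimes Y)\d\meas.
\end{align*}
Comparing, $-\int\langle\nabla\psi,\nabla|Y|^2\rangle/2 - \int\psi\langle\Bochner Y,Y\rangle = \int\psi\,|\nabla Y|_\HS^2\geq 0$, which is the desired inequality (indeed with the extra nonnegative term $\int\psi|\nabla Y|_\HS^2$, exactly what drives $F'\geq 0$). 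I expect the care needed to justify integrability and the weak interpretation of $\Delta|Y|^2$ along the flow — ensuring $|\CHeat_s X|^2\in\F$ for $0<s<t$ with adequate bounds, handling $s\to 0$ and $s\to t$, and the final removal of the $\Ell^\infty$-restriction on $X$ — to be the genuinely technical part, while the Bochner inequality itself is essentially a bookkeeping consequence of the already-established calculus rules.
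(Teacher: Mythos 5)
Your strategy is the paper's: the same interpolation functional $F(s)=\int_\mms \ChHeat_{t-s}\phi\,\vert\CHeat_sX\vert^2\d\meas$, the same differentiation, and the same key cancellation coming from metric compatibility (\autoref{Pr:Compatibility}) together with the Leibniz rule $\nabla(\psi\,Y)=\nabla\psi\otimes Y+\psi\,\nabla Y$, leaving $F'(s)=-2\int_\mms\ChHeat_{t-s}\phi\,\vert\nabla\CHeat_sX\vert_\HS^2\d\meas$. Before addressing the real issue, fix the signs: with your $F$ the desired conclusion is $F(t)\leq F(0)$, so you need $F$ \emph{nonincreasing}, and your own final identity $-\tfrac12\int_\mms\langle\nabla\psi,\nabla\vert Y\vert^2\rangle\d\meas-\int_\mms\psi\,\langle\Bochner Y,Y\rangle\d\meas=\int_\mms\psi\,\vert\nabla Y\vert_\HS^2\d\meas\geq 0$ yields exactly $F'(s)\leq 0$; the assertions ``$F$ nondecreasing'', ``$F(0)\leq F(t)$'' and ``$F'(s)\geq 0$'' are all reversed and mutually inconsistent with the inequality you then claim to deduce.

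The substantive gap is the regularity reduction. You propose to secure $\vert\CHeat_sX\vert^2\in\F$ by first assuming $X\in\Ell^\infty(T\mms)$ ``so that $\CHeat_sX$ stays bounded'' and then invoking the second sentence of \autoref{Le:Kato inequality}. But $\Ell^\infty$-preservation of $(\CHeat_t)_{t\geq 0}$ is precisely a \emph{consequence} of the proposition you are proving (it is how \autoref{Th:HSU Bochner} and \autoref{Cor:Lp props Bochner} are obtained later); at this stage nothing guarantees $\CHeat_sX\in\Ell^\infty(T\mms)$ for bounded $X$, so the reduction is circular --- this is exactly the point of \autoref{Re:Technical remark}. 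The way out is to avoid $\F$-regularity of $\vert\CHeat_sX\vert^2$ altogether: by \autoref{Pr:Compatibility} one has $\vert\CHeat_sX\vert^2\in\calG_\reg$ for \emph{every} $X\in\Ell^2(T\mms)$ and $s\in(0,t)$, since $\CHeat_sX\in\Dom(\Bochner)\subset H^{1,2}(T\mms)$, and the weak integration by parts of $\Delta\ChHeat_{t-s}\phi$ against $\vert\CHeat_sX\vert^2$ is then supplied by \autoref{Le:IBP for W11}, provided you test against $\phi\in\Test_{\Ell^\infty}(\mms)$ (so that $\ChHeat_{t-s}\phi\in\Test(\mms)$ with $\Delta\ChHeat_{t-s}\phi=\ChHeat_{t-s}\Delta\phi\in\Ell^\infty(\mms)$) rather than against an arbitrary bounded Borel $\phi$. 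With that substitution your argument closes for all $X\in\Ell^2(T\mms)$ at once, with no truncation or final approximation step needed.
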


\begin{proof} We only prove the nontrivial part in which $t>0$. Let $\phi\in\Test_{\Ell^\infty}(\mms)$ be nonnegative. Define the function $F\colon [0,t]\to \R$ by
\begin{align*}
F(s) := \int_\mms\phi\,\ChHeat_{t-s}\big(\vert\CHeat_s X\vert^2\big)\d\meas = \int_\mms \ChHeat_{t-s}\phi\,\vert\CHeat_sX\vert^2\d\meas.
\end{align*}
Of course, $F$ is well-defined. Note that for every $s,s'\in[0,t]$ with $s'<s$,
\begin{align}\label{Eq:NEUER BOUND}
\vert\ChHeat_{t-s}\phi - \ChHeat_{t-s'}\phi\vert = \Big\vert\!\int_{s'}^s \Delta\ChHeat_{t-r}\phi\d r\Big\vert \leq \Vert\Delta \phi\Vert_{\Ell^\infty(\mms)}\,(s-s')\quad\meas\text{-a.e.}
\end{align}
Furthermore, since $s\mapsto \CHeat_s X$ is continuous as a map from $[0,t]$ into $\Ell^2(T\mms)$ and locally absolutely continuous as a map from $(0,t]$ into $\Ell^2(T\mms)$, the $\Ell^1$-valued map $s\mapsto \vert\CHeat_s X\vert^2$ is continuous on $[0,t]$ and locally absolutely continuous on $(0,t]$. Combining this with \eqref{Eq:NEUER BOUND}, we obtain that $F$ is continuous on $[0,t]$ and locally absolutely continuous on $(0,t)$. By exchanging differentiation and integration, for $\Leb^1$-a.e.~$s\in (0,t)$ we thus get
\begin{align*}
F'(s) = -\int_\mms \Delta\ChHeat_{t-s}\phi\,\vert\CHeat_sX\vert^2 \d\meas+ 2\int_\mms\ChHeat_{t-s}f\,\langle \CHeat_sX,\Bochner\CHeat_sX\rangle\d\meas.
\end{align*}

Observe that $\ChHeat_{t-s}\phi\in\Test(\mms)$ with $\Delta\ChHeat_{t-s}\phi = \ChHeat_{t-s}\Delta\phi\in\Ell^\infty(\mms)$ as well as $\vert\CHeat_s X\vert^2\in \calG_\reg$ for every $s\in (0,t)$ by \autoref{Pr:Compatibility}. By \autoref{Le:IBP for W11} and \autoref{Re:fX in H12}, for $\Leb^1$-a.e.~$s\in (0,t)$ we have
\begin{align}\label{Eq:Ulk}
F'(s) &= \int_\mms \big\langle\nabla\ChHeat_{t-s}\phi,\nabla\vert\CHeat_sX\vert^2\big\rangle\d\meas - 2\int_\mms\big\langle\nabla (\ChHeat_{t-s}\phi\,\CHeat_sX),\nabla\CHeat_sX\big\rangle\d\meas\\
&= \int_\mms \big\langle\nabla\ChHeat_{t-s}\phi,\nabla\vert\CHeat_sX\vert^2\big\rangle\d\meas - 2\int_\mms[\nabla\ChHeat_{t-s}\phi\otimes \CHeat_sX] : \nabla\CHeat_sX \d\meas\nonumber\\
&\qquad\qquad - 2\int_\mms \ChHeat_{t-s}\phi\,\big\vert\nabla\CHeat_sX\big\vert_\HS^2\d\meas\nonumber\\
&\leq \int_\mms  \big\langle\nabla\ChHeat_{t-s}\phi,\nabla\vert\CHeat_sX\vert^2\big\rangle\d\meas - 2\int_\mms[\nabla\ChHeat_{t-s}\phi\otimes \CHeat_sX] : \nabla\CHeat_sX\d\meas = 0.\nonumber
\end{align}
In the last equality, we used \autoref{Pr:Compatibility}. Therefore,
\begin{align*}
\int_\mms \phi\,\vert\CHeat_tX\vert^2\d\meas = F(t) \leq F(0) = \int_\mms \phi\,\ChHeat_t\big(\vert X\vert^2\big)\d\meas.
\end{align*}
This proves the claim thanks to the arbitrariness of $\phi$ by \autoref{Le:Mollified heat flow}.
\end{proof}

In applications, the following corollary of \autoref{Pr:BE vector fields} could be useful. 

\begin{corollary}\label{Cor:Bounded Lapl density}
For every $X\in\Dom(\Bochner)$, there exists a sequence $(X_n)_{n\in\N}$ in $\Dom(\Bochner)\cap\Ell^\infty(T\mms)$ which converges to $X$ in $H^{1,2}(T\mms)$ such that in addition, $\Bochner X_n \to \Bochner X$ in $\Ell^2(T\mms)$ as $n\to\infty$. If $X\in\Ell^\infty(T\mms)$ in addition, this sequence can be constructed to be bounded in $\Ell^\infty(T\mms)$.
\end{corollary}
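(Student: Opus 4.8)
The plan is to obtain the approximating sequence by truncating the pointwise norm of $X$ via a suitable mollification, following the standard scheme used for scalar functions in \autoref{Le:Mollified heat flow}, but now carried out at the level of vector fields using the semigroup $(\CHeat_t)_{t\geq 0}$. First I would fix $X\in\Dom(\Bochner)$ and, for $\varepsilon>0$, consider the mollified vector field
\begin{align*}
X_\varepsilon := \frac{1}{\varepsilon}\int_0^\infty \rme^{-s/\varepsilon}\,\CHeat_s X\d s,
\end{align*}
which lies in $\Dom(\Bochner)$ with $\Bochner X_\varepsilon = \varepsilon^{-1}(\CHeat_0 X - \varepsilon^{-1}\int_0^\infty \rme^{-s/\varepsilon}\CHeat_s X\d s)$ by the resolvent identity, and which converges to $X$ in $H^{1,2}(T\mms)$ with $\Bochner X_\varepsilon\to\Bochner X$ in $\Ell^2(T\mms)$ as $\varepsilon\to 0$ by \autoref{Th:CHeat properties} and general semigroup theory. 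The point of this regularization is \autoref{Pr:BE vector fields}: it gives $\vert\CHeat_s X\vert^2\leq \ChHeat_s(\vert X\vert^2)$ $\meas$-a.e., and since $(\ChHeat_s)_{s\geq 0}$ is sub-Markovian and maps $\Ell^1\cap\Ell^\infty$ boundedly into $\Ell^\infty$, one deduces by Minkowski's integral inequality that $\vert X_\varepsilon\vert\in\Ell^\infty(\mms)$ whenever $\vert X\vert\in\Ell^2(\mms)$ — indeed $\vert X_\varepsilon\vert^2\leq \varepsilon^{-1}\int_0^\infty\rme^{-s/\varepsilon}\ChHeat_s(\vert X\vert^2)\d s$, and the right-hand side is $\meas$-essentially bounded since $\ChHeat_s$ has norm $\lesssim s^{-1/2}$ from $\Ell^1$ to $\Ell^\infty$ (or directly since the resolvent of the heat flow maps $\Ell^1(\mms)$ into $\Ell^\infty(\mms)$ under the ultracontractivity-type bounds implicit in \autoref{Th:Heat flow properties} combined with sub-Markovianity). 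Thus each $X_\varepsilon$ is the desired bounded element, and choosing $\varepsilon = 1/n$ gives the sequence $(X_n)_{n\in\N}$.

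\textbf{The $\Ell^\infty$-bounded case.} If in addition $X\in\Ell^\infty(T\mms)$, then $\vert X_\varepsilon\vert^2\leq \varepsilon^{-1}\int_0^\infty\rme^{-s/\varepsilon}\ChHeat_s(\vert X\vert^2)\d s\leq \Vert X\Vert_{\Ell^\infty(T\mms)}^2$ $\meas$-a.e., using that $\ChHeat_s$ is sub-Markovian (hence a contraction on $\Ell^\infty(\mms)$) and that the exponential weight integrates to $1$. So the same sequence is automatically bounded in $\Ell^\infty(T\mms)$ by $\Vert X\Vert_{\Ell^\infty(T\mms)}$, with no further modification needed.

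\textbf{Main obstacle.} The delicate point is the regularity claim $X_\varepsilon\in\Dom(\Bochner)$ together with the $H^{1,2}$- and graph-convergence $X_\varepsilon\to X$, $\Bochner X_\varepsilon\to\Bochner X$. This is not automatic from \autoref{Def:Bochner Laplacian} alone but follows from the spectral-calculus description of $\Bochner$ as the nonpositive self-adjoint generator of the closed form $\smash{\widetilde\Ch_\cov}$ on $H^{1,2}(T\mms)$ recorded right after \autoref{Def:Bochner Laplacian}: the resolvent $(\Id - \varepsilon\Bochner)^{-1}$ maps $\Ell^2(T\mms)$ into $\Dom(\Bochner)$, commutes with $\Bochner$, and converges strongly to the identity on $\Dom(\Bochner)$ in the graph norm, while mapping $\Dom(\Bochner)$ into itself; and since $\smash{\widetilde\Ch_\cov(X_\varepsilon)\to\widetilde\Ch_\cov(X)}$ by standard form-convergence of resolvent approximations, one upgrades $\Ell^2$-convergence to $H^{1,2}$-convergence. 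I would simply invoke this abstract machinery rather than reprove it. A secondary subtlety is justifying the ultracontractive bound $\Vert\ChHeat_s\Vert_{\Ell^1(\mms);\Ell^\infty(\mms)}<\infty$ that makes $X_\varepsilon$ bounded in the merely-$\Ell^2$ case; if such a global bound is not available in the present generality, the cleaner route is to note that $\vert X\vert^2\in\Ell^1(\mms)$, that the resolvent $(\Id - \varepsilon\Delta)^{-1}$ applied to an $\Ell^1$-function need not be bounded, and instead to truncate first: set $X^{(k)} := \sigma_k(\vert X\vert)\,\vert X\vert^{-1}\,X$ with $\sigma_k(t):=\min\{t,k\}$, which lies in $\Ell^\infty(T\mms)\cap H^{1,2}(T\mms)$ by \autoref{Le:Kato inequality}-type truncation arguments and approximates $X$ in $H^{1,2}(T\mms)$ when $X\in\Dom(\Bochner)$ (using that $X\in H^{1,2}(T\mms)$ and an $\Ell^2$-dominated convergence for $\nabla X$), and only then apply the resolvent mollification to each $X^{(k)}$; a diagonal argument then produces the sequence. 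I expect the resolvent/form-theoretic bookkeeping to be the only genuinely nontrivial ingredient, everything else being routine.
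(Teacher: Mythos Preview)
Your primary route has a genuine gap: you claim that the resolvent of the functional heat flow maps $\Ell^1(\mms)$ into $\Ell^\infty(\mms)$, invoking ``ultracontractivity-type bounds implicit in \autoref{Th:Heat flow properties}''. No such bound is available in this paper. \autoref{Th:Heat flow properties} only records the $\Ell^2$-regularizing estimates $\Ch(\ChHeat_t f)\leq (2t)^{-1}\Vert f\Vert_{\Ell^2}^2$ and $\Vert\Delta\ChHeat_t f\Vert_{\Ell^2}^2\leq (2t^2)^{-1}\Vert f\Vert_{\Ell^2}^2$; there is no heat-kernel or $\Ell^1\to\Ell^\infty$ smoothing assumed anywhere in the tamed-space framework. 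So for general $X\in\Dom(\Bochner)$ with $\vert X\vert^2$ merely in $\Ell^1(\mms)$, your $X_\varepsilon$ need not be bounded.

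Your fallback --- truncate first, then mollify --- is the right idea, and is essentially what the paper does, but you over-engineer it. You set $X^{(k)}:=\sigma_k(\vert X\vert)\,\vert X\vert^{-1}\,X$ and assert $X^{(k)}\in H^{1,2}(T\mms)$ ``by \autoref{Le:Kato inequality}-type truncation arguments''. This is neither obvious nor needed: the Leibniz rules in \autoref{Re:fX in H12} require either $f\in\Test(\mms)$ or $X\in\Ell^\infty(T\mms)$, and $\sigma_k(\vert X\vert)\,\vert X\vert^{-1}$ is only in $\F_\bounded$ while $X$ is not assumed bounded. The paper instead takes the crude truncation $X_k:=\One_{\{\vert X\vert\leq k\}}\,X\in\Ell^2(T\mms)\cap\Ell^\infty(T\mms)$ --- with no $H^{1,2}$-regularity claimed or required --- and then applies the semigroup $\CHeat_t$ directly rather than the resolvent. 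By \autoref{Pr:BE vector fields} one gets $\CHeat_t X_k\in\Dom(\Bochner)\cap\Ell^\infty(T\mms)$, and the a priori estimates in \autoref{Th:CHeat properties} (item (vi)) immediately give $\CHeat_t X_k\to\CHeat_t X$ in $H^{1,2}(T\mms)$ and $\Bochner\CHeat_t X_k\to\Bochner\CHeat_t X$ in $\Ell^2(T\mms)$ as $k\to\infty$, since these bounds are linear in the $\Ell^2$-norm of the input. Then $t\to 0$ and a diagonal argument finish. Your resolvent would work just as well once you drop the spurious $H^{1,2}$-claim on $X^{(k)}$ and note that $X^{(k)}\to X$ in $\Ell^2(T\mms)$ suffices, but the paper's version is shorter.
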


\begin{proof} Define $\smash{X_k := \One_{\{\vert X\vert \leq k\}}\,X}\in\Ell^2(T\mms)\cap\Ell^\infty(T\mms)$, $k\in\N$, and, given any $t > 0$, consider the element $X_{t,k} := \CHeat_tX_k$ which, thanks to \autoref{Pr:BE vector fields}, belongs to $\Dom(\Bochner)\cap\Ell^\infty(T\mms)$. By \autoref{Th:CHeat properties}, we have $X_{t,k}\to \CHeat_t X$ in $H^{1,2}(T\mms)$ and $\Bochner X_{t,k} \to \Bochner \CHeat_t X$ in $\Ell^2(T\mms)$ as $k\to\infty$ for every $t>0$. Furthermore, $\CHeat_t X\to X$ in $H^{1,2}(T\mms)$ and $\Bochner\CHeat_t X = \CHeat_t\Bochner X \to \Bochner X$ in $\Ell^2(T\mms)$ as $t\to 0$ again by \autoref{Th:CHeat properties}. The claim follows by a diagonal argument.
\end{proof}

The following improvement of \autoref{Pr:BE vector fields} is an instance of the correspondence between \emph{form domination} and \emph{semigroup domination}  \cite{hess1977,ouhabaz1999,shigekawa1997,simon1977}. It extends analogous results for Riemannian manifolds without boundary \cite{hess1977,hess1980}. 

\begin{theorem}\label{Th:HSU Bochner} For every $X\in \Ell^2(T\mms)$ and every $t\geq 0$,
\begin{align*}
\vert\CHeat_t X\vert \leq \ChHeat_t\vert X\vert\quad\meas\text{-a.e.}
\end{align*}
\end{theorem}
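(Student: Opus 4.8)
The plan is to upgrade the $\Ell^2$-comparison $\vert\CHeat_tX\vert^2\leq\ChHeat_t(\vert X\vert^2)$ from \autoref{Pr:BE vector fields} to the stronger pointwise bound $\vert\CHeat_tX\vert\leq\ChHeat_t\vert X\vert$, following the classical form-domination/semigroup-domination scheme (as in \cite{hess1977,ouhabaz1999,shigekawa1997,simon1977}). The abstract principle is that if $\vert\nabla\vert X\vert\,\vert\leq\vert\nabla X\vert_\HS$ holds $\meas$-a.e.~for $X$ in a suitable core (Kato's inequality, \autoref{Le:Kato inequality}), then the scalar heat flow dominates the vector heat flow. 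Concretely I would mimic the proof of \autoref{Pr:BE vector fields}: fix $t>0$ and a nonnegative $\phi\in\Test_{\Ell^\infty}(\mms)$, and compare along the flow, but now with $\vert\CHeat_sX\vert$ in place of $\vert\CHeat_sX\vert^2$.

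\textbf{Key steps.} First I would reduce to $X\in\Dom(\Bochner)\cap\Ell^\infty(T\mms)$, which is legitimate by \autoref{Cor:Bounded Lapl density} together with the $\Ell^2$-continuity of both $\CHeat_t$ and $\ChHeat_t$ and the continuity of $\vert\cdot\vert$; for such $X$, Kato's inequality \autoref{Le:Kato inequality} gives $\vert\CHeat_sX\vert\in\F$ for every $s$, and $\vert X\vert^2\in\F$. Next, to avoid the degeneracy of $t\mapsto\sqrt t$ at $0$, I would regularize: set $u_{s,\varepsilon}:=(\vert\CHeat_sX\vert^2+\varepsilon)^{1/2}$ and consider $F_\varepsilon(s):=\int_\mms\ChHeat_{t-s}\phi\,u_{s,\varepsilon}\d\meas$ for $s\in[0,t]$. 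As in \autoref{Pr:BE vector fields}, $s\mapsto\ChHeat_{t-s}\phi$ is Lipschitz into $\Ell^\infty(\mms)$ with derivative $-\ChHeat_{t-s}\Delta\phi$, and $s\mapsto\vert\CHeat_sX\vert^2$ is continuous on $[0,t]$, locally absolutely continuous on $(0,t]$, so $F_\varepsilon$ is absolutely continuous with
\begin{align*}
F_\varepsilon'(s) = -\int_\mms\Delta\ChHeat_{t-s}\phi\,u_{s,\varepsilon}\d\meas + \int_\mms\ChHeat_{t-s}\phi\,\frac{\langle\CHeat_sX,\Bochner\CHeat_sX\rangle}{u_{s,\varepsilon}}\d\meas
\end{align*}
for $\Leb^1$-a.e.~$s\in(0,t)$. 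Then I integrate the first term by parts using \autoref{Le:IBP for W11} (valid since $u_{s,\varepsilon}\in\calG_\reg$ by \autoref{Pr:Compatibility} and the chain rule \autoref{Pr:H11 calculus rules}, and $\ChHeat_{t-s}\phi\in\Test(\mms)$ with bounded Laplacian), and expand $\nabla u_{s,\varepsilon}=\frac{1}{2u_{s,\varepsilon}}\nabla\vert\CHeat_sX\vert^2$. After using metric compatibility \autoref{Pr:Compatibility} to write $\tfrac12\langle\nabla\ChHeat_{t-s}\phi,\nabla\vert\CHeat_sX\vert^2\rangle=[\nabla\ChHeat_{t-s}\phi\otimes\CHeat_sX]:\nabla\CHeat_sX$, the second-order terms recombine and what remains is a pointwise-negative bulk term of the shape $-\ChHeat_{t-s}\phi\,\big(\vert\nabla\CHeat_sX\vert_\HS^2 - \vert\nabla u_{s,\varepsilon}\vert^2\big)/u_{s,\varepsilon}$ up to terms that vanish; here I invoke the refined Kato inequality $\vert\nabla u_{s,\varepsilon}\vert\le\vert\nabla\vert\CHeat_sX\vert\,\vert\le\vert\nabla\CHeat_sX\vert_\HS$ $\meas$-a.e.~to conclude $F_\varepsilon'(s)\le0$. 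Hence $\int_\mms\phi\,u_{t,\varepsilon}\d\meas=F_\varepsilon(t)\le F_\varepsilon(0)=\int_\mms\ChHeat_t\phi\,(\vert X\vert^2+\varepsilon)^{1/2}\d\meas$; letting $\varepsilon\to0$ by dominated convergence and then using arbitrariness of $\phi\ge0$ via \autoref{Le:Mollified heat flow} gives $\vert\CHeat_tX\vert\le\ChHeat_t\vert X\vert$ $\meas$-a.e.

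\textbf{Main obstacle.} The delicate point is the same integrability/regularity juggling that forced the $\Ell^2$-version to be proved "in advance": I must make sure every term in $F_\varepsilon'(s)$ is genuinely integrable and that the integration by parts \autoref{Le:IBP for W11} applies, which requires $u_{s,\varepsilon}\in\calG_\reg$ rather than merely $\calG$ (recall \autoref{Re:No W11 in general}) — this is why one works with $X\in\Ell^\infty(T\mms)$ so that $\vert\CHeat_sX\vert^2\in\calG_\reg$ via \autoref{Pr:Compatibility} and the chain rule preserves $\calG_\reg$. A secondary subtlety is justifying differentiation under the integral sign and the local absolute continuity of $s\mapsto\vert\CHeat_sX\vert^2$ near $s=0$, handled exactly as in \autoref{Pr:BE vector fields} by staying on $(0,t)$ and using continuity on the closed interval; the $\varepsilon$-regularization then removes the only remaining issue, namely that $1/u_{s,0}$ could blow up where $\CHeat_sX$ vanishes. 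Once these bookkeeping points are dispatched, the algebraic cancellation is identical in spirit to \eqref{Eq:Ulk}, with the extra negative $\varepsilon$-dependent term only helping.
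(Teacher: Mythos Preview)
Your proposal is correct and follows essentially the same route as the paper: an interpolation argument $s\mapsto\int\ChHeat_{t-s}\phi\cdot\varphi_\varepsilon(\vert\CHeat_sX\vert^2)\,\rmd\meas$, differentiation in $s$, integration by parts of both the $\Delta$-term and the $\Bochner$-term, cancellation of the cross terms via metric compatibility (\autoref{Pr:Compatibility}), and then Kato's inequality (\autoref{Le:Kato inequality}) to show the remaining bulk term has the right sign. The paper uses the same regularization and the same key inequality $-2r\varphi_\varepsilon''(r)\le\varphi_\varepsilon'(r)$ combined with $\big\vert\nabla\vert\CHeat_sX\vert\big\vert\le\vert\nabla\CHeat_sX\vert_\HS$; your computation, after expansion, yields the same inequality.

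One small point to tighten: if $\meas[\mms]=\infty$, your choice $u_{s,\varepsilon}=(\vert\CHeat_sX\vert^2+\varepsilon)^{1/2}$ is bounded below by $\sqrt\varepsilon$, so $u_{s,\varepsilon}\notin\Ell^2(\mms)$ and $F_\varepsilon$ may fail to be finite; likewise the chain rule for $\calG_\reg$ in \autoref{Pr:H11 calculus rules} requires the outer function to vanish at $0$. The paper avoids this by working with $\varphi_\varepsilon(r)=(r+\varepsilon)^{1/2}-\varepsilon^{1/2}$, which leaves the gradient computation unchanged. With that shift your argument goes through verbatim. Also, since you already have $\CHeat_sX\in\Ell^\infty(T\mms)$ via \autoref{Pr:BE vector fields}, you can use the stronger fact $\vert\CHeat_sX\vert^2\in\F_\bounded$ (from \autoref{Le:Kato inequality}) and integrate the $\Delta$-term by parts directly, rather than routing through $\calG_\reg$ and \autoref{Le:IBP for W11} --- this is what the paper does and is marginally cleaner, though your route works too.
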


\begin{proof} Again, we restrict ourselves to $t>0$. By the $\Ell^2$-continuity of both sides of the claimed inequality in $X$, it is sufficient to prove the latter for $X\in \Test(\mms)$. Given any $\varepsilon > 0$, define the function $\varphi_\varepsilon\in \Cont^\infty([0,\infty))\cap\Lip([0,\infty))$ by $\varphi_\varepsilon(r) := (r+\varepsilon)^{1/2}-\varepsilon^{1/2}$. Moreover, let $\phi\in\Test(\mms)$ be nonnegative with $\Delta\phi\in\Ell^\infty(\mms)$. As in the proof of \autoref{Pr:BE vector fields}, one argues that the function $F_\varepsilon\colon [0,t]\to \R$ with
\begin{align*}
F_\varepsilon(s) := \int_\mms \phi\,\ChHeat_{t-s}\big(\varphi_\varepsilon\circ \vert \CHeat_s X\vert^2\big)\d\meas = \int_\mms \ChHeat_{t-s}\phi\,\big[\varphi_\varepsilon\circ \vert\CHeat_s X\vert^2\big]\d\meas
\end{align*}
is continuous on $[0,t]$, locally absolutely continuous on $(0,t)$, and in differentia\-ting it, integration and differentiation can be switched at $\Leb^1$-a.e.~$s\in (0,t)$, yielding
\begin{align*}
F_\varepsilon'(s) &= -\int_\mms \Delta\ChHeat_{t-s}\phi\,\big[\varphi_{\varepsilon}\circ \vert \CHeat_s X\vert^2\big]\d\meas\\
&\qquad\qquad + 2 \int_\mms\ChHeat_{t-s}\phi\,\big[\varphi_\varepsilon'\circ \vert\CHeat_sX\vert^2\big]\,\langle \CHeat_sX,\Bochner\CHeat_sX\rangle\d\meas.
\end{align*}

By \autoref{Pr:BE vector fields}, we have $\CHeat_sX\in \Ell^\infty(T\mms)$  and hence $\vert\CHeat_sX\vert^2\in \F$ for every $s\in (0,t)$ by \autoref{Le:Kato inequality}. In particular $\smash{\varphi_\varepsilon\circ \vert\CHeat_s X\vert^2,\varphi_\varepsilon'\circ\vert\CHeat_s X\vert^2\in \F} \cap\Ell^\infty(\mms)$, and hence by \autoref{Cor:Calculus rules d} and \autoref{Re:fX in H12},
\begin{align*}
F_\varepsilon'(s) &= \int_\mms \big\langle\Delta\ChHeat_{t-s}\phi, \nabla\big[\varphi_{\varepsilon}\circ\vert \CHeat_s X\vert^2\big]\big\rangle\d\meas\\
&\qquad\qquad - 2 \int_\mms \nabla\big[\ChHeat_{t-s}\phi\,\big[\varphi_\varepsilon'\circ\vert\CHeat_sX\vert^2\big]\,\CHeat_sX\big]:\nabla \CHeat_sX\d\meas\\
&= \int_\mms \big[\varphi_\varepsilon'\circ \vert\CHeat_sX\vert^2\big]\,\big\langle \nabla\ChHeat_{t-s}\phi,\nabla\vert\CHeat_sX\vert^2\big\rangle\d\meas\\
&\qquad\qquad -2\int_\mms \ChHeat_{t-s}\phi\,\big[\varphi_\varepsilon''\circ\vert\CHeat_s X\vert^2\big]\,\nabla \vert\CHeat_s X\vert^2 \otimes \CHeat_sX : \nabla\CHeat_sX\d\meas\\
&\qquad\qquad - 2\int_\mms \big[\varphi_\varepsilon'\circ\vert\CHeat_sX\vert^2\big]\,\nabla\ChHeat_{t-s}\phi\otimes\CHeat_sX : \nabla\CHeat_sX\d\meas\\
&\qquad\qquad - 2\int_\mms\ChHeat_{t-s}\phi\,\big[\varphi_\varepsilon'\circ\vert\CHeat_sX\vert^2\big]\,\big\vert\nabla\CHeat_sX\big\vert_\HS^2\d\meas\\
 &= -2\int_\mms \ChHeat_{t-s}\phi\,\big[\varphi_\varepsilon''\circ\vert\CHeat_s X\vert^2\big]\,\nabla \vert\CHeat_s X\vert^2 \otimes \CHeat_sX : \nabla\CHeat_sX\d\meas\\
&\qquad\qquad - 2\int_\mms\ChHeat_{t-s}\phi\,\big[\varphi_\varepsilon'\circ\vert\CHeat_sX\vert^2\big]\,\big\vert\nabla\CHeat_sX\big\vert_\HS^2\d\meas.
\end{align*}
In the last step, we used \autoref{Pr:Compatibility} to cancel out two integrals. Lastly, one easily verifies that $-2r\,\varphi_\varepsilon''(r) \leq \varphi_\varepsilon'(r)$ for every $r\geq 0$, and that $-\varphi_\varepsilon''$ is nonnegative. Taking \autoref{Le:Kato inequality} into account, we thus get
\begin{align*}
&-2\,\big[\varphi_\varepsilon''\circ\vert\CHeat_sX\vert^2\big]\,\nabla\vert\CHeat_sX\vert^2\otimes\CHeat_sX : \nabla\CHeat_sX\\
&\qquad\qquad = - 4\,\big[\varphi_\varepsilon''\circ\vert\CHeat_sX\vert^2\big]\,\vert\CHeat_s X\vert\,\nabla\vert \CHeat_sX\vert\otimes\CHeat_sX : \nabla \CHeat_sX\!\textcolor{white}{\big\vert^2}\\
&\qquad\qquad \leq - 4\,\big[\varphi_\varepsilon''\circ\vert\CHeat_sX\vert^2\big]\,\vert\CHeat_sX\vert^2\,\big\vert\nabla \vert\CHeat_sX\vert\big\vert\,\vert\nabla \CHeat_sX\vert_\HS\textcolor{white}{\big\vert^2}\\
&\qquad\qquad \leq 2\,\big[\varphi_\varepsilon'\circ\vert\CHeat_sX\vert^2\big]\,\big\vert\nabla\CHeat_sX\big\vert_\HS^2\quad\meas\text{-a.e.}
\end{align*}
This shows that $F'(s) \leq 0$ for $\Leb^1$-a.e.~$s\in (0,t)$, whence
\begin{align*}
\int_\mms\phi\,\big[\varphi_\varepsilon\circ \vert\CHeat_tX\vert^2\big]\d\meas = F_\varepsilon(t)\leq F_\varepsilon(0) = \int_\mms\phi\,\ChHeat_t(\varphi_\varepsilon\circ \vert X\vert^2)\d\meas
\end{align*}
for every $\varepsilon > 0$. Sending $\varepsilon \to 0$ with the aid of Lebesgue's theorem and using the arbitrariness of $\phi$ via \autoref{Le:Mollified heat flow} gives the desired assertion.
\end{proof}

Note that the only essential tool to prove \autoref{Pr:BE vector fields} and \autoref{Th:HSU Bochner} is the metric compatibility of $\nabla$ from \autoref{Pr:Compatibility}. In particular, no curvature shows up in both statements.

\begin{remark}\label{Re:Technical remark} In the notation of the proof of \autoref{Th:HSU Bochner},  \autoref{Le:Kato inequality} only guarantees that $\vert \CHeat_s X\vert \in \F$, $s\in (0,t)$. However, the required regularity $\vert \CHeat_s X\vert^2 \in \F$  is unclear without any a priori  information about $\Ell^\infty$-$\Ell^\infty$-regularizing properties of $(\CHeat_t)_{t\geq 0}$, which is precisely provided by \autoref{Pr:BE vector fields}. In turn, the proof of the latter only needs $\calG_\reg$-regularity of $\vert \CHeat_sX\vert^2$, which is true for any $X\in \Ell^2(T\mms)$ by \autoref{Pr:Compatibility}. To integrate by parts in \eqref{Eq:Ulk}, this missing $\F$-regularity is compensated by \autoref{Le:IBP for W11}, which is one key feature of the space $\calG_\reg$ (recall \autoref{Re:No W11 in general} as well).
\end{remark}

\begin{remark}[Heat kernel] Following the arguments for \cite[Thm.~6.5]{braun2020}, we deduce from \autoref{Th:HSU Bochner} that on any $\RCD(K,\infty)$ space, $K\in\R$ --- in fact, on any (tamed) Dirichlet space where $(\ChHeat_t)_{t\geq 0}$ admits a heat kernel with Gaussian upper bounds as in (4.1) in  \cite{tamanini2019}, see also \cite[Sec.~6.1]{ambrosio2014b} --- $(\CHeat_t)_{t\geq 0}$ has a heat kernel in the sense of \cite[Ch.~6]{braun2020}. The pointwise operator norm of the  latter is $\smash{\meas^{\otimes 2}}$-a.e.~no larger than the heat kernel of $(\ChHeat_t)_{t\geq 0}$, compare with  \cite[Thm.~6.7]{braun2020}.
\end{remark}

\begin{corollary}\label{Cor:Lp props Bochner}
	The heat flow $(\CHeat_t)_{t\geq 0}$ uniquely extends to a semigroup of bounded linear operators on $L^p(T\mms)$ for every $p \in [1,\infty]$ such that, for every $X\in\Ell^p(T\mms)$ and every $t\geq 0$,
	\begin{align*}
	\vert\CHeat_t X\vert^p \leq \ChHeat_t\big(\vert X\vert^p\big)\quad\meas\text{-a.e.},
	\end{align*}
	and in particular	
	\begin{align*}
		\Vert\CHeat_t\Vert_{\Ell^p(T\mms),\Ell^p(T\mms)} \leq 1.
	\end{align*}
	It is strongly continuous on $L^p(T\mms)$ if $p<\infty$ and weakly$^*$ continuous on $\Ell^\infty(T\mms)$.
\end{corollary}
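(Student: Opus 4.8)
\textbf{Proof plan for \autoref{Cor:Lp props Bochner}.} The strategy is the standard interpolation–extrapolation argument for semigroups that are contractive on $\Ell^2$ and satisfy a pointwise domination by a sub-Markovian semigroup, which here is $(\ChHeat_t)_{t\geq 0}$. First I would fix $p\in [1,\infty)$ and a vector field $X\in\Ell^2(T\mms)\cap\Ell^p(T\mms)$; such $X$ form a dense subset of $\Ell^p(T\mms)$ (recall from \autoref{Sub:Lebesgue sp} and \autoref{Sub:Test objects} that even the test classes are dense). The key pointwise estimate is obtained by combining \autoref{Th:HSU Bochner}, namely $\vert\CHeat_tX\vert\leq \ChHeat_t\vert X\vert$ $\meas$-a.e., with Jensen's inequality for the sub-Markovian operator $\ChHeat_t$ applied to the convex function $r\mapsto r^p$ on $[0,\infty)$: since $(\ChHeat_t)_{t\geq 0}$ is positivity preserving and sub-Markovian (see \autoref{Subsub:Neumann heat flow}), one has $(\ChHeat_t\vert X\vert)^p\leq \ChHeat_t(\vert X\vert^p)$ $\meas$-a.e. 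Chaining these gives
\begin{align*}
\vert\CHeat_tX\vert^p\leq \ChHeat_t\big(\vert X\vert^p\big)\quad\meas\text{-a.e.}
\end{align*}
Integrating over $\mms$ and using that $\ChHeat_t$ is a contraction on $\Ell^1(\mms)$ yields $\Vert\CHeat_tX\Vert_{\Ell^p(T\mms)}^p\leq \Vert\vert X\vert^p\Vert_{\Ell^1(\mms)} = \Vert X\Vert_{\Ell^p(T\mms)}^p$, i.e. the $\Ell^p$-contraction bound on the dense subspace.

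Next I would extend $\CHeat_t$ to all of $\Ell^p(T\mms)$ by density and uniform continuity, for $p<\infty$: the contraction estimate just derived shows $\CHeat_t$ is Lipschitz on the dense set $\Ell^2(T\mms)\cap\Ell^p(T\mms)$, hence extends uniquely to a bounded linear operator on $\Ell^p(T\mms)$ with operator norm at most $1$; the semigroup law $\CHeat_{t+s}=\CHeat_t\CHeat_s$ passes to the extension since it holds on the dense subspace and all operators involved are continuous. The pointwise inequality $\vert\CHeat_tX\vert^p\leq \ChHeat_t(\vert X\vert^p)$ then extends to general $X\in\Ell^p(T\mms)$ by taking $\Ell^p$-convergent approximating sequences from $\Ell^2(T\mms)\cap\Ell^p(T\mms)$, passing to a $\meas$-a.e.\ convergent subsequence on the left and using $\Ell^1$-continuity of $\ChHeat_t$ on the right (again after a subsequence). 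For $p=\infty$ I would argue by duality: $\Ell^\infty(T\mms)=\Ell^1(T\mms)^*$ as $\Ell^\infty$-modules by \autoref{Sub:Lebesgue sp}, and since $\CHeat_t$ is self-adjoint on $\Ell^2(T\mms)$ and contractive on $\Ell^1(T\mms)$, its adjoint is a weakly$^*$ continuous contraction on $\Ell^\infty(T\mms)$ extending $\CHeat_t$; the pointwise bound $\vert\CHeat_tX\vert\leq\ChHeat_t\vert X\vert\leq\Vert X\Vert_{\Ell^\infty(T\mms)}$ $\meas$-a.e.\ (using that $\ChHeat_t$ is sub-Markovian) gives the $\Ell^\infty$-contractivity directly and the pointwise inequality with exponent $p=\infty$ interpreted as $\vert\CHeat_tX\vert\leq\ChHeat_t\vert X\vert$.

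Finally I would address strong continuity for $p<\infty$: on $\Ell^2(T\mms)$ the map $t\mapsto\CHeat_tX$ is continuous by \autoref{Th:CHeat properties}, hence on $\Ell^2(T\mms)\cap\Ell^p(T\mms)$ one has $\Ell^p$-continuity at $t=0$ for $X$ bounded with support of finite measure by dominated convergence (dominating via the $\Ell^p$-bound and $\Ell^2$-convergent subsequences); an $\varepsilon/3$-argument using the uniform contraction bound $\Vert\CHeat_t\Vert_{\Ell^p(T\mms),\Ell^p(T\mms)}\leq 1$ then upgrades this to strong continuity on all of $\Ell^p(T\mms)$, and the semigroup law gives continuity at every $t\geq 0$. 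Weak$^*$ continuity on $\Ell^\infty(T\mms)$ follows from strong continuity of the pre-adjoint semigroup on $\Ell^1(T\mms)$. The only mild subtlety — the ``main obstacle'', though it is routine — is making the density and measurable-subsequence bookkeeping precise so that the $\meas$-a.e.\ pointwise inequality genuinely survives the $\Ell^p$-completion; everything else is a direct consequence of \autoref{Th:HSU Bochner} together with the sub-Markovian and contraction properties of $(\ChHeat_t)_{t\geq 0}$ recorded in \autoref{Subsub:Neumann heat flow}, so I would keep the written proof short and point to those facts.
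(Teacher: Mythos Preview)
Your proposal is correct and is precisely the standard argument the paper has in mind; indeed, the paper states this corollary without proof, implicitly regarding it as an immediate consequence of \autoref{Th:HSU Bochner} together with Jensen's inequality for the sub-Markovian semigroup $(\ChHeat_t)_{t\geq 0}$ and routine density/duality considerations, exactly as you outline.
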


\subsection{Bits of tensor calculus} In this section, we shortly outline basic elements of general nonsmooth tensor calculus. Fix $r,s\in\N_0$ throughout.

\subsubsection{Tensor fields}\label{Sub:Tensor fields} Define  the space of \emph{\textnormal{(}$\Ell^2$-\textnormal{)}tensor fields} of type $(r,s)$ over $\mms$ by\label{Not:L2 tfs rs}
\begin{align*}
\Ell^2(T_s^r\mms) := \Ell^2((T^*)^{\otimes r}\mms) \otimes \Ell^2(T^{\otimes s}\mms),
\end{align*}
where all tensor products are intended in the sense of \autoref{Sub:Tensor products}.  For $s=0$ or $r=0$, we employ the consistent interpretations
\begin{align*}
\Ell^2(T_0^r\mms) &:= \Ell^2((T^*)^{\otimes r}\mms),\\
\Ell^2(T_s^0\mms) &:= \Ell^2(T^{\otimes s}\mms),\\
\Ell^2(T_0^0\mms) &:= \Ell^2(\mms).
\end{align*}
The $\Ell^0$-normed module induced by $\Ell^2(T_s^r\mms)$  as in \autoref{Sub:L0 modules} is termed $\Ell^0(T_s^r\mms)$.  Given any $T\in \Ell^0(T_s^r\mms)$, $\omega_1,\dots,\omega_r\in\Ell^0(T^*\mms)$ as well as $X_1,\dots,X_s\in\Ell^0(T\mms)$, the pointwise scalar pro\-duct of $T$ and the element $\omega_1\otimes\dots\otimes\omega_r\otimes X_1\otimes\dots\otimes X_s\in \Ell^0(T_s^r\mms)$  is shortly written $T(\omega_1,\dots,\omega_r,X_1,\dots,X_s)\in\Ell^0(\mms)$. (This section is the only place in our work where this bracket notation, strictly speaking, does not mean pointwise duality pairings, but rather pointwise pairings of elements of the same vector  space.)

\subsubsection{Tensorial covariant derivative}
We first introduce the concept of covariant derivative of suitable $\smash{T\in\Ell^2(T_s^r\mms)}$. We start again with a motivating example. 

\begin{example}\label{Ex:Tfs} Suppose that $\mms$ is a Riemannian manifold with boundary, and let $\smash{T\in \Gamma((T^*)^{\otimes r}\mms\otimes T^{\otimes s}\mms)}$ be an $(r,s)$-tensor field over $\mms$. Then, see e.g.~\cite[Ch.~8]{lee2018}, $\smash{\nabla T\in \Gamma((T^*)^{\otimes r}\mms \otimes T^{\otimes(s+1)}\mms)}$ is the unique $(r,s+1)$-tensor field such that for every $\eta_1,\dots,\eta_r\in\Gamma_\rmc(T^*\mms)$ and every $Z,Y_1,\dots,Y_s\in\Gamma_\rmc(T\mms)$, 
\begin{align*}
\nabla T(\eta_1,\dots,\eta_r,Z,Y_1,\dots,Y_s) &= \rmd\big[T(\eta_1,\dots,\eta_r,Y_1,\dots,Y_s)\big](Z)\\
&\qquad\qquad - \sum_{i=1}^r T(\nabla_Z \eta_i) - \sum_{j=1}^s T(\nabla_ZY_j)
\end{align*}
on $\mms$. Here, we used the shorthand notations
\begin{align}\label{Eq:Short}
\begin{split}
T(\nabla_Z\eta_i) &:= T(\eta_1,\dots,\underbrace{(\nabla_Z\eta_i^\sharp)^\flat}_{i\text{-th slot}},\dots,\eta_r,Y_1,\dots,Y_s),\\
T(\nabla_ZY_j) &:= T(\eta_1,\dots,\eta_r,Y_1,\dots,\!\!\!\!\!\!\underbrace{\nabla_ZY_j}_{(r+j)\text{-th slot}}\!\!\!\!\!\!,\dots,Y_s)
\end{split}
\end{align}
for $i\in\{1,\dots,r\}$ and $j\in\{1,\dots,s\}$. As for the ordinary covariant derivative, see \autoref{Ex:Smooth cov der motiv} and \eqref{Eq:Dir der def} and also \autoref{Re:Dir der tf} below, one thinks of $\nabla T(\cdot, Z,\cdot)$ as directional derivative $\nabla_Z T(\cdot,\cdot)$.  

As in \autoref{Ex:Hess} and \autoref{Ex:Smooth cov der motiv},  $\nabla T$ is still uniquely determined by the above identity when requiring the latter only for $\eta_1,\dots,\eta_r$ and $Z, Y_1,\dots,Y_s$ with vanishing normal parts at $\partial\mms$. Hence integration and integration by parts give
\begin{align*}
&\int_\mms \nabla T(\eta_1,\dots,\eta_r, Z, Y_1,\dots,Y_s)\d\vol\\
&\qquad\qquad = - \int_\mms T(\eta_1,\dots,\eta_r,Y_1,\dots,Y_s)\div Z\d\vol\\
&\qquad\qquad \qquad\qquad -\int_\mms \sum_{i=1}^r T(\nabla_Z\eta_i)\d\vol - \int_\mms\sum_{j=1}^s T(\nabla_ZY_j)\d\vol
\end{align*}
by \autoref{Ex:Mflds with boundary}. Then $\nabla T$ is still uniquely defined by this identity.
\end{example}

Note that all relevant objects in \autoref{Ex:Tfs}, in particular the covariant derivative of \emph{vector fields} and directional derivatives, see  \autoref{Sub:Cov der 1} and \autoref{Sub:Calculus rules}, have already been made sense of in our nonsmooth framework. Hence they can be used to define the covariant derivative $\nabla T$ for appropriate $\smash{T\in \Ell^2(T_s^r\mms)}$. Indeed, the r.h.s.~of the above integral identity --- with $\vol$ replaced by $\meas$ --- makes sense for arbitrary such $T$, for $\eta_1,\dots,\eta_r\in \Test(T^*\mms)$ and $Z, Y_1,\dots,Y_s\in\Test(T\mms)$. Clearly $T(\eta_1,\dots,\eta_r,Y_1,\dots,Y_s)\in\Ell^2(\mms)$ and $Z\in\Dom_\TV(\DIV)\cap\Dom(\div)$ with $\div Z\in\Ell^2(\mms)$ and $\norm Z=0$, which shows the well-definedness of the first integral. For the second, note that $\smash{(\nabla_Z\eta_i^\sharp)^\flat\in\Ell^2(T^*\mms)}$, $i\in \{1,\dots,r\}$, by \eqref{Eq:Bound nabla Z X}, which directly yields $\smash{T(\nabla_Z\eta_i)\in\Ell^2(\mms)}$. The third integral is discussed analogously. 

This leads to the subsequent definition. In the sequel, we retain the shorthand notations from \eqref{Eq:Short}.

\begin{definition}\label{Def:Tensor cov der} We define the space $W^{1,2}(T_s^r\mms)$ to consist of all $T\in \Ell^2(T_s^r\mms)$ for which there exists $A\in \Ell^2(T_{s+1}^r\mms)$ such that for every $\eta_1,\dots,\eta_r\in\Test(T^*\mms)$ and every $Z,Y_1,\dots,Y_s\in\Test(T\mms)$,
\begin{align*}
&\int_\mms A(\eta_1,\dots,\eta_r,Z,Y_1,\dots,Y_s)\d\meas\\
&\qquad\qquad = -\int_\mms T(\eta_1,\dots,\eta_r,Y_1,\dots,Y_s)\div Z\d\meas\\
&\qquad\qquad\qquad\qquad - \int_\mms \sum_{i=1}^r T(\nabla_Z\eta_i)\d\meas - \int_\mms \sum_{j=1}^s T(\nabla_ZY_j)\d\meas.
\end{align*}
In case of existence, the element $A$ is unique, denoted by $\nabla T$ and termed the \emph{covariant derivative} of $T$.
\end{definition}

The uniqueness follows by density of an appropriate class in $\Ell^2(T_s^r\mms)$, see \eqref{Eq:Regular tf} below and \autoref{Sub:Tensor products} for details. Clearly, $\smash{W^{1,2}(T_s^r\mms)}$ is thus a vector space, and $\nabla$ is a linear operator on it. Further properties  of this covariant derivative are summarized in \autoref{Th:Tensor field props} below. Thanks to \autoref{Th:Properties W12 TM} and \eqref{Eq:Dir der def}, we have $\smash{W^{1,2}(T_1^0\mms)= W^{1,2}(T\mms)}$, and on these spaces, the notions of covariant  derivative from \autoref{Def:Cov der} and \autoref{Def:Tensor cov der} coincide. Moreover, $\smash{W^{1,2}(T_0^1\mms)}$ coincides with the image of $W^{1,2}(T\mms)$ under $\flat$. Lastly, we have $\smash{\F\subset W^{1,2}(T_0^0\mms)}$, but in general it seems hard to verify equality. See also \autoref{Re:Wd120 in W12} below.

We endow $W^{1,2}(T_s^r\mms)$ with the norm $\smash{\Vert\cdot\Vert_{W^{1,2}(T_s^r\mms)}}$ given by
\begin{align*}
\big\Vert T\big\Vert_{W^{1,2}(T_s^r\mms)}^2 := \big\Vert T\big\Vert_{\Ell^2(T_s^r\mms)}^2 + \big\Vert\nabla T\big\Vert_{\Ell^2(T_{s+1}^r\mms)}^2.
\end{align*}
We introduce the functional $\smash{\Ch_s^r\colon \Ell^2(T_s^r\mms)\to [0,\infty]}$ given by
\begin{align*}
\Ch_s^r(T) := \begin{cases} \displaystyle\int_\mms \vert \nabla T\vert^2\d\meas & \text{if }T\in W^{1,2}(T_s^r\mms),\\
\infty & \text{otherwise}.
\end{cases}
\end{align*}

The proof of the subsequent theorem follows completely similar lines as in \autoref{Th:Hess properties} and \autoref{Th:Properties W12 TM}. We leave the details to the reader.

\begin{theorem}\label{Th:Tensor field props} The space $\smash{W^{1,2}(T_s^r\mms)}$, the covariant derivative $\nabla$ and the functional $\smash{\Ch_s^r}$ have the following properties.
\begin{enumerate}[label=\textnormal{\textcolor{black}{(}\roman*\textcolor{black}{)}}]
\item\label{La:Item1} $\smash{W^{1,2}(T_s^r\mms)}$ is a separable Hilbert space w.r.t.~$\smash{\Vert\cdot\Vert_{W^{1,2}(T_s^r\mms)}}$.
\item\label{La:Item2} The covariant derivative $\nabla$ is a closed operator. That is, the image of the map $\smash{\Id\times \nabla\colon W^{1,2}(T_s^r\mms)\to\Ell^2(T_s^r\mms)\times \Ell^2(T_{s+1}^r\mms)}$ is a closed subspace of $\smash{\Ell^2(T_s^r\mms)\times \Ell^2(T_{s+1}^r\mms)}$.
\item\label{La:Item3} The functional $\smash{\Ch_s^r}$ is $\Ell^2$-lower semicontinuous, and every $T\in \Ell^2(T_s^r\mms)$ obeys the duality formula
\begin{align*}
\Ch_s^r(T) &= \sup\!\Big\lbrace\! -\! 2\sum_{k=1}^n\int_\mms T(\eta_1^k,\dots,\eta_r^k,Y_1^k,\dots,Y_s^k)\div Y_0^k\d\meas \\
&\qquad\qquad - 2\sum_{k=1}^n\int_\mms \sum_{i=1}^r T(\nabla_{Y_0^k}\eta_i^k)\d\meas - 2\sum_{k=1}^n\int_\mms \sum_{j=1}^s T(\nabla_{Y_0^k}Y_j^k)\d\meas\\
&\qquad\qquad - \int_\mms \Big\vert\!\sum_{k=1}^n \eta_1^k\otimes \dots\otimes\eta_r^k \otimes Y_0^k\otimes Y_1^k\otimes\dots\otimes Y_s^k\Big\vert^2\d\meas :\\
&\qquad\qquad\qquad\qquad n\in\N,\ \eta_i^k\in \Test(T^*\mms),\ Y_j^k\in \Test(T\mms)\Big\rbrace.\textcolor{white}{\sum_j^n}
\end{align*}
\end{enumerate}
\end{theorem}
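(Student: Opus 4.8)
\textit{Proof sketch for \autoref{Th:Tensor field props}.} The argument proceeds in three independent blocks, exactly paralleling the treatment of \autoref{Th:Hess properties} and \autoref{Th:Properties W12 TM}, and the main work is purely formal once one has the right generating class of test tensor fields. First I would fix the \emph{regular tensor fields} of type $(r,s)$ as the linear span of elementary tensors
\begin{align*}
\eta_1\otimes\dots\otimes\eta_r\otimes Y_1\otimes\dots\otimes Y_s,\qquad \eta_i\in\Test(T^*\mms),\ Y_j\in\Test(T\mms),
\end{align*}
noting by \autoref{Sub:Tensor products} and \autoref{Sub:Test reg} that this class, call it $\Reg(T_s^r\mms)$, is dense in $\Ell^2(T_s^r\mms)$ and consists of bounded elements. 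The uniqueness of $\nabla T$ in \autoref{Def:Tensor cov der} is then immediate: if $A_1,A_2$ both satisfy the defining identity, their difference is pointwise $\meas$-a.e.\ orthogonal to every $\eta_1\otimes\dots\otimes\eta_r\otimes Z\otimes Y_1\otimes\dots\otimes Y_s$ with all factors regular, hence vanishes. This makes $W^{1,2}(T_s^r\mms)$ a vector space and $\nabla$ linear on it.

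For \ref{La:Item2}, the key observation is that for \emph{fixed} $\eta_1,\dots,\eta_r\in\Test(T^*\mms)$ and $Z,Y_1,\dots,Y_s\in\Test(T\mms)$, every term on the right-hand side of the defining identity in \autoref{Def:Tensor cov der} is continuous in $T$ with respect to weak convergence in $\Ell^2(T_s^r\mms)$. Indeed $T\mapsto T(\eta_1,\dots,\eta_r,Y_1,\dots,Y_s)$ and the maps $T\mapsto T(\nabla_Z\eta_i)$, $T\mapsto T(\nabla_ZY_j)$ are pointwise module morphisms into $\Ell^2(\mms)$ (here one uses that $(\nabla_Z\eta_i^\sharp)^\flat\in\Ell^2(T^*\mms)$ and $\nabla_ZY_j\in\Ell^2(T\mms)$ by \eqref{Eq:Bound nabla Z X}, so that these are again $\Ell^2$-objects, and that $\div Z\in\Ell^2(\mms)$ by \autoref{Le:Div g nabla f}), and the left-hand side $A\mapsto A(\eta_1,\dots,\eta_r,Z,Y_1,\dots,Y_s)$ is weakly continuous in $A$. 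Hence if $(T_n,\nabla T_n)\to (T,A)$ in $\Ell^2(T_s^r\mms)\times\Ell^2(T_{s+1}^r\mms)$ — even weakly — the defining identity passes to the limit, so $T\in W^{1,2}(T_s^r\mms)$ and $\nabla T=A$; the image of $\Id\times\nabla$ is therefore closed. Then \ref{La:Item1} follows exactly as for $\Dom(\Hess)$: $\Ell^2(T_s^r\mms)\times\Ell^2(T_{s+1}^r\mms)$ is a separable Hilbert space (separability from \autoref{Pr:Generators cotangent module}, \autoref{Th:Riesz theorem modules} and the inductive construction of tensor products in \autoref{Sub:Tensor products}), the norm $\Vert\cdot\Vert_{W^{1,2}(T_s^r\mms)}$ trivially obeys the parallelogram law so $W^{1,2}(T_s^r\mms)$ is a Hilbert space, and $\Id\times\nabla$ is a linear isometry onto its image, which is closed by \ref{La:Item2}; being a closed subspace of a separable Hilbert space it is separable, hence so is $W^{1,2}(T_s^r\mms)$.

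For \ref{La:Item3}, $\Ell^2$-lower semicontinuity of $\Ch_s^r$ is a standard consequence of \ref{La:Item2}: on a sublevel set $\{\Ch_s^r\leq c\}$ the covariant derivatives are bounded in $\Ell^2(T_{s+1}^r\mms)$, hence weakly relatively compact, and passing to a weakly convergent subsequence and invoking closedness of $\nabla$ together with Mazur's lemma gives the claim. The duality formula is proved by the same two-inequality scheme as for $\Ch_2$ in \autoref{Th:Hess properties}. For ``$\geq$'' one may assume $T\in W^{1,2}(T_s^r\mms)$, uses Hilbert-space duality of $W^{1,2}(T_s^r\mms)$ with its dual together with density of $\Reg(T_{s+1}^r\mms)$ in $\Ell^2(T_{s+1}^r\mms)$ to write $\Ch_s^r(T)$ as a supremum of $2\int_\mms \nabla T(\cdot)\d\meas - \int_\mms\vert\cdot\vert^2\d\meas$ over regular $(r,s+1)$-tensor fields, and then rewrites each $\int_\mms\nabla T$-term via the defining identity of \autoref{Def:Tensor cov der} (legitimate since the generic regular test element is a finite sum of $\eta_1^k\otimes\dots\otimes\eta_r^k\otimes Y_0^k\otimes\dots\otimes Y_s^k$). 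For ``$\leq$'' one assumes the supremum $C$ is finite, defines a linear functional $\Phi$ on $\Reg(T_{s+1}^r\mms)$ by the right-hand side bracket expression, checks well-definedness by the usual homogeneity-in-$\lambda$ argument (if some $T$ vanishes but $\Phi(T)\neq0$, scaling $T\mapsto\lambda T$ contradicts $C<\infty$), gets the bound $\vert\Phi(T)\vert\leq\sqrt{C}\,\Vert T\Vert_{\Ell^2(T_{s+1}^r\mms)}$ by optimizing over $\lambda$, extends $\Phi$ to $\Ell^2(T_{s+1}^r\mms)'$ and represents it by some $A'\in\Ell^2(T_{s+1}^r\mms)$ via \autoref{Th:Riesz theorem modules}; comparison with \autoref{Def:Tensor cov der} yields $T\in W^{1,2}(T_s^r\mms)$ with $\nabla T=A'$ and $\Ch_s^r(T)\leq C$.

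The main — and really only — obstacle is bookkeeping: one must be careful that the ``other'' slot-insertion maps $T(\nabla_Z\eta_i)$, $T(\nabla_ZY_j)$ genuinely land in $\Ell^2(\mms)$, which rests on the $\Ell^0$-module directional-derivative construction of \autoref{Sub:Calculus rules} and the bound \eqref{Eq:Bound nabla Z X}, and that in the ``$\geq$'' half of the duality formula one correctly matches the triple-indexed regular tensor expansion with the bracket on the right-hand side of \ref{La:Item3}. Once the generating-class density and these pointwise-module facts are in place, everything else is a verbatim repetition of the scalar and vector cases, which is why I would simply remark that ``the details are left to the reader'' as the paper does, after recording the three blocks above. $\qed$
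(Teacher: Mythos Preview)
Your proposal is correct and follows precisely the approach the paper intends: the paper itself gives no detailed argument, merely remarking that ``the proof \dots\ follows completely similar lines as for \autoref{Th:Hess properties} and \autoref{Th:Properties W12 TM}'' and leaving details to the reader. Your three-block sketch --- closedness from weak continuity of both sides of the defining identity, separable Hilbert structure via the isometry $\Id\times\nabla$ into the separable product, and the duality formula via the two-inequality scheme with the $\Phi$-functional and the $\lambda$-homogeneity trick --- is exactly that analogous argument spelled out, with the bookkeeping concerns (integrability of slot-insertion maps via \eqref{Eq:Bound nabla Z X}, density of the regular tensor class) correctly identified.
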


\subsubsection{Tensor algebra and Leibniz rule} Yet, unless $r\in\{0,1\}$ and $s=0$ or $r = 0$ and $s\in\{0,1\}$ we do not know whether $W^{1,2}(T_s^r\mms)$ is nontrivial. As we show in \autoref{Le:Leibniz rule reg tf}, $\smash{W^{1,2}(T_s^r\mms)}$ is in fact dense in $\smash{\Ell^2(T_s^r\mms)}$ for arbitrary $r,s\in \N_0$, for it contains the space of \emph{regular $(r,s)$-tensor fields}  given by\label{Not:Reg tfields}
\begin{align}\label{Eq:Regular tf}
\begin{split}
\Reg(T_s^r\mms) &:= \Big\lbrace\! \sum_{k=1}^n \omega_1^k\otimes\dots\otimes \omega_r^k\otimes X_1^k\otimes\dots\otimes X_s^k :\\
&\qquad\qquad n\in\N,\ \omega_i^k\in \Reg(T^*\mms),\ X_j^k\in\Reg(T\mms)\Big\rbrace.
\end{split}
\end{align}
Since $\Test(\mms)$ is both an algebra and closed under multiplication with constant functions, we consistently set $\Reg(T_0^0\mms) := \Test(\mms)$.

As expected from the smooth setting \cite[Ch.~8]{lee2018}, the crucial tool to prove the inclusion outlined above is the \emph{Leibniz rule} --- on every element of $\Reg(T_s^r\mms)$, the covariant derivative should pass through every slot. Technically, this requires to deal with the  \emph{\textnormal{(}$\Ell^2$-\textnormal{)}tensor algebra} $
\rmT_{\Ell^2}(\mms)$, since every summand of \eqref{Eq:LR} below belongs to a different $\Ell^\infty$-tensor product.

To this aim, we consider the following sequence $\smash{(\calM_n)_{n\in\N_0}}$ of all possible finite tensor products of $\Ell^2(T^*\mms)$ and $\Ell^2(T\mms)$ as in  \autoref{Sub:Tensor products}. Set
\begin{align*}
\calM_0 &:= \Ell^2(\mms),\\
\calM_1 &:= \Ell^2(T^*\mms),\\
\calM_2 &:= \Ell^2(T\mms).
\end{align*}
Inductively, if $\smash{\calM_{2^k-1},\dots, \calM_{2(2^k-1)}}$ are  defined for a given $k\in \N$ then, for $i\in \{1,\dots,2^{k+1}\}$, we set
\begin{align*}
\calM_{2(2^k-1)+i} := \begin{cases}  \calM_{2^k-1+\lfloor i/2\rfloor}\otimes \Ell^2(T^*\mms) & \text{if } i \text{ is odd},\\
\calM_{2^k-1 + \lfloor (i-1)/2\rfloor}\otimes \Ell^2(T\mms) & \text{otherwise}.
\end{cases}
\end{align*}

\begin{definition} The \emph{\textnormal{(}$\Ell^2$\textnormal{)}-tensor algebra} over $\mms$ is defined as
\begin{align*}
\rmT_{\Ell^2}(\mms) := \bigoplus_{n\in\N_0}\calM_n.
\end{align*}
\end{definition}

All module operations, e.g.~taking pointwise norms or multiplication with $\Ell^\infty$-functions, can be made sense of componentwise for elements of $\rmT_{\Ell^2}(\mms)$.

\begin{lemma}\label{Le:Leibniz rule reg tf} We have the inclusion $\Reg(T_r^s)\subset W^{1,2}(T_s^r\mms)$. More precisely, for every $\omega_1,\dots,\omega_r\in\Reg(T^*\mms)$ and every $X_1,\dots,X_s\in\Reg(T\mms)$, we have $\omega_1\otimes\dots\otimes\omega_r\otimes X_1\otimes\dots\otimes X_s\in \smash{W^{1,2}(T_s^r\mms)}$ and, as an identity in $\rmT_{\Ell^2}(\mms)$,
\begin{align}\label{Eq:LR}
\begin{split}
&\nabla(\omega_1\otimes\dots\otimes\omega_r\otimes X_1\otimes\dots\otimes X_s)\\
&\qquad\qquad = \sum_{i=1}^r \omega_1\otimes\dots\otimes\underbrace{(\nabla \omega_i^\sharp)^\flat}_{i\text{-th slot}}\otimes\dots\otimes \omega_r\otimes X_1\otimes\dots\otimes X_s\\
&\qquad\qquad\qquad\qquad + \sum_{j=1}^s \omega_1\otimes\dots\otimes \omega_r \otimes X_1\otimes\dots\otimes\!\!\!\!\!\!\! \underbrace{\nabla X_j}_{(r+j)\text{-th slot}}\!\!\!\!\!\!\!\otimes \dots\otimes X_s.
\end{split}
\end{align}
\end{lemma}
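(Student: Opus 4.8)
The strategy is the standard one for Leibniz-type identities: first establish the two-slot (or really, the reduction to single-slot) cases, then induct on the number of tensor factors, carrying along the algebra $\rmT_{\Ell^2}(\mms)$ to keep the bookkeeping honest. The base cases are already available: $s=1$, $r=0$ is precisely \autoref{Def:Cov der} combined with \autoref{Th:Properties W12 TM}\ref{La:Cov 5}, which gives $\Reg(T\mms)\subset W^{1,2}(T\mms)$; the case $r=1$, $s=0$ follows by applying $\flat$ and using that $\flat$ is a pointwise module isometric isomorphism (so $(\nabla\omega^\sharp)^\flat$ is exactly the covariant derivative of the $1$-form $\omega$ in the sense of $W^{1,2}(T_0^1\mms)$); and $r=s=0$ is the fact that $\Test(\mms)\subset\F\subset W^{1,2}(T_0^0\mms)$ with $\nabla f = \rmd f$.

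First I would verify the one-step product rule: if $T\in\Reg(T_s^r\mms)$ has already been shown to lie in $W^{1,2}(T_s^r\mms)$ with the expected $\nabla T$, and $\omega\in\Reg(T^*\mms)$ (resp.\ $X\in\Reg(T\mms)$), then $T\otimes\omega\in W^{1,2}(T_s^{r+1}\mms)$ (resp.\ $T\otimes X\in W^{1,2}(T_{s+1}^r\mms)$) with $\nabla(T\otimes\omega) = \nabla T\otimes\omega + T\otimes(\nabla\omega^\sharp)^\flat$ in $\rmT_{\Ell^2}(\mms)$, and similarly for $X$. This is proved by unwinding \autoref{Def:Tensor cov der}: plug the candidate $A := \nabla T\otimes\omega + T\otimes(\nabla\omega^\sharp)^\flat$ into the defining integral identity, tested against $\eta_1,\dots,\eta_{r+1}\in\Test(T^*\mms)$ and $Z,Y_1,\dots,Y_s\in\Test(T\mms)$. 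The term $T\otimes(\nabla\omega^\sharp)^\flat$ paired against $\eta_1\otimes\cdots\otimes\eta_{r+1}\otimes Z\otimes Y_1\otimes\cdots$ produces, via \eqref{Eq:Dir der def} and \eqref{Eq:Bound nabla Z X}, the term $\langle\nabla_Z\omega^\sharp,\eta_{r+1}^\sharp\rangle\,T(\eta_1,\dots,\eta_r,Y_1,\dots,Y_s)$, which is exactly the new $i=r+1$ summand in the shorthand of \eqref{Eq:Short}; the term $\nabla T\otimes\omega$ paired up gives $\nabla T(\eta_1,\dots,\eta_r,Z,Y_1,\dots,Y_s)\,\omega(\eta_{r+1}^\sharp)$, and applying the inductive hypothesis (i.e.\ \autoref{Def:Tensor cov der} for $T$, tested against $Z$, $Y_1,\dots,Y_s$ and $\eta_1,\dots,\eta_r$, with the scalar factor $\omega(\eta_{r+1}^\sharp)$ absorbed — here one uses that $\omega(\eta_{r+1}^\sharp)\in\F\cap\Ell^\infty(\mms)$ and multiplies it into the test function slot, invoking \autoref{Le:Div g nabla f} and the algebra property of $\Test(\mms)$ to legitimately replace $\div Z$ by $\div(\omega(\eta_{r+1}^\sharp)\,Z)$ modulo the gradient correction) reproduces the right-hand side of the defining identity for $T\otimes\omega$. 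The integrability needed to make every term well-defined is exactly what was checked in the paragraph preceding \autoref{Def:Tensor cov der}: $\Reg(T\mms)\subset\Ell^2(T\mms)\cap\Ell^\infty(T\mms)$, $(\nabla_Z\eta^\sharp)^\flat\in\Ell^2(T^*\mms)$ by \eqref{Eq:Bound nabla Z X}, and $\Test(T\mms)\subset\Dom_\TV(\DIV)\cap\Dom(\div)$ with $\norm=0$ by \autoref{Le:Div g nabla f}.

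With the one-step rule in hand, \eqref{Eq:LR} follows by a straightforward induction on $r+s$: peel off the last factor (either $\omega_r$ or $X_s$), apply the inductive hypothesis to the shorter tensor, then apply the one-step product rule; the resulting identity in $\rmT_{\Ell^2}(\mms)$ collects exactly the $r+s$ summands of \eqref{Eq:LR} because each application of the product rule inserts the covariant derivative into precisely one new slot while leaving the others untouched. Finally, $\Reg(T_s^r\mms)\subset W^{1,2}(T_s^r\mms)$ is immediate since $\Reg(T_s^r\mms)$ is the linear span of the elements just treated and $W^{1,2}(T_s^r\mms)$ is a vector space.

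\textbf{Main obstacle.} The delicate point is not the algebra but making the ``absorb the scalar factor into the test-function slot'' maneuver rigorous: when I replace, say, $Z$ by $f\,Z$ with $f = \omega(\eta^\sharp)$ to match the inductive form of \autoref{Def:Tensor cov der}, I generate a correction term $\rmd f(\ldots)$ from the Leibniz rule for $\div$ (\autoref{Le:Div identities}), and I must check that these corrections cancel precisely against the cross-terms coming from $\nabla_Z$ acting on the covectors/vectors $\eta_i, Y_j$ inside the shorthand \eqref{Eq:Short} — i.e.\ that torsion-freeness / metric compatibility (\autoref{Pr:Compatibility}, \autoref{Le:Torsion free}) is what glues everything together. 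Keeping the slot-indexing consistent through the induction, and verifying that no boundary/normal-component terms sneak in (they don't, because all test vector fields have vanishing normal component by \autoref{Le:Div g nabla f}), is the part that demands care; the underlying computations are routine but voluminous.
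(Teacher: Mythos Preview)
Your proposal is correct and follows essentially the same inductive strategy as the paper: establish the single-factor cases from \autoref{Th:Properties W12 TM}, then bootstrap via a one-slot Leibniz rule. The paper carries out the $r=s=1$ case explicitly and then says ``similarly by induction''; your one-step product rule is the same computation.

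There is one organizational difference worth flagging. In the $r=s=1$ step the paper does \emph{not} absorb the scalar $\langle\omega,\eta\rangle$ into the $Z$-slot; instead it integrates the $\div Z$ term by parts directly to produce $\int\rmd\big[\langle\omega,\eta\rangle\,\langle X,Y\rangle\big](Z)\,\rmd\meas$, then expands this via \autoref{Pr:Compatibility} (applied to each factor, both of which lie in $\F_\bounded$ by \autoref{Pr:Bakry Emery measures}) and cancels. Your ``absorb the scalar into the test slot'' route works too, but note that $\omega(\eta_{r+1}^\sharp)\in\F\cap\Ell^\infty(\mms)$ is generally \emph{not} in $\Test(\mms)$, so $f\,Z\notin\Test(T\mms)$ and you cannot invoke \autoref{Def:Tensor cov der} for $T$ on the nose; you must approximate $f$ by a sequence $(f_n)\subset\Test(\mms)$ in $\F$ and pass to the limit (this is exactly what the paper does in the subsequent \autoref{Pr:Leibniz tfs}). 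Either way the computation closes, and \autoref{Le:Torsion free} is not actually needed --- \autoref{Pr:Compatibility} alone handles the cross terms.
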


\begin{proof} We first comment on the cases $r\in \{0,1\}$ and $s=0$ or $r=0$ and $s\in\{0,1\}$ in which no formula \eqref{Eq:LR} has to be shown. The case $r=s=0$ is straightforward from the identifications and inclusions $\smash{\Reg(T_0^0\mms) = \Test(\mms)\subset \F}\subset \smash{W^{1,2}(T_0^0\mms)}$ by \autoref{Le:Div g nabla f} and the \autoref{Def:L2 div} of the $\Ell^2$-divergence. In this case, the covariant derivative and the gradient from \autoref{Def:Gradient} agree. In the cases $r=0$ and $s=1$ or $r=1$ and $s=0$, the claimed  regularity follows from \autoref{Th:Properties W12 TM}. 

For $r=s=1$, let $\omega\in \Reg(T^*\mms)$ and $X\in \Reg(T\mms)$, and let $\eta\in\Test(T^*\mms)$ and $Z,Y\in \Test(T\mms)$ be fixed. Then by \eqref{Eq:Tensor product pointwise sc prod}, \autoref{Le:Div g nabla f}, \autoref{Pr:Compatibility},  \eqref{Eq:Dir der def} and finally \eqref{Eq:Transpose}, we infer that
\begin{align*}
&-\int_\mms \omega\otimes X(\eta, Y) \div Z\d\meas\\
&\qquad\qquad\qquad\qquad - \int_\mms \omega\otimes X((\nabla_Z \eta^\sharp)^\flat, Y) \d\meas -\int_\mms \omega\otimes X(\eta,\nabla_Z Y)\d\meas\\
&\qquad\qquad = \int_\mms \rmd\big[\langle \omega,\eta\rangle\,\langle X,Y\rangle\big](Z)\d\meas\\
&\qquad\qquad\qquad\qquad - \int_\mms \big\langle\omega,(\nabla_Z\eta^\sharp)^\flat\big\rangle\,\langle X,Y\rangle\d\meas  -\int_\mms \langle\omega,\eta\rangle\,\langle X,\nabla_ZY\rangle\d\meas\\
&\qquad\qquad = \int_\mms \big\langle (\nabla_Z\omega^\sharp)^\flat, \eta\big\rangle\,\langle X,Y\rangle\d\meas + \int_\mms \big\langle\omega,(\nabla_Z\eta^\sharp)^\flat\big\rangle\,\langle X,Y\rangle\d\meas\\
&\qquad\qquad\qquad\qquad + \int_\mms\langle\omega,\eta\rangle\,\langle\nabla_ZX,Y\rangle \d\meas + \int_\mms \langle\omega,\eta\rangle\,\langle X,\nabla_ZY \rangle\d\meas\\
&\qquad\qquad\qquad\qquad -\int_\mms \big\langle \omega,(\nabla_Z\eta^\sharp)^\flat\big\rangle\,\langle X,Y\rangle\d\meas - \int_\mms \langle\omega,\eta\rangle\,\langle X,\nabla_ZY\rangle\d\meas\\
&\qquad\qquad = \int_\mms (\nabla_Z\omega^\sharp)^\flat\otimes X(\eta,Y)\d\meas + \int_\mms \omega\otimes\nabla_Z X(\eta,Y)\d\meas.
\end{align*}
By \eqref{Eq:Dir der def} and identification of the r.h.s.~with the scalar product in $\smash{\Ell^2(T_1^1\mms)}$, it follows that $\smash{\omega\otimes X\in W^{1,2}(T_1^1\mms)}$. 

The case of general $r,s\in\N_0$ are now deduced similarly by induction over $r$ or $s$ while keeping the other variable fixed, respectively.
\end{proof}

In the next final proposition, let us fix $r',s'\in\N_0$. Given any $\smash{T\in\Ell^0(T_s^r\mms)}$ and $\smash{S\in \Ell^0(T_{s'}^{r'}\mms)}$, by $\smash{T\boxtimes S}$ we mean the unique element of $\smash{\Ell^0(T_{s+s'}^{r+r'}\mms)}$ such that for every $\eta_1,\dots,\eta_{r+r'}\in\Ell^0(T^*\mms)$ and every $Y_1,\dots,Y_{s+s'}\in\Ell^0(T\mms)$,
\begin{align*}
&T\boxtimes S(\eta_1,\dots,\eta_{r+r'},Y_1,\dots,Y_{s+s'})\\
&\qquad\qquad = T(\eta_1,\dots,\eta_r,Y_1,\dots,Y_s)\\
&\qquad\qquad\qquad\qquad\times S(\eta_{r+1},\dots,\eta_{r+r'},Y_{s+1},\dots,Y_{s+s'})\quad\meas\text{-a.e.}
\end{align*}

\begin{proposition}[Leibniz rule for tensor fields]\label{Pr:Leibniz tfs} Suppose that $T\in \smash{W^{1,2}(T_r^s\mms)}$ and $\smash{S\in\Reg(T_{r'}^{s'}\mms)}$. Then $\smash{T\boxtimes S\in W^{1,2}(T_{r+r'}^{s+s'}\mms)}$ and, as an identity in $\smash{\rmT_{\Ell^2}(\mms)}$,
\begin{align*}
\nabla(T\boxtimes S) = \nabla T \otimes S + T\otimes\nabla S.
\end{align*}
\end{proposition}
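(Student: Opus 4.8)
The strategy is to reduce the general Leibniz rule for tensor fields to the one already established for regular tensor fields in \autoref{Le:Leibniz rule reg tf}, by approximating $T$. More precisely, since $S\in\Reg(T_{r'}^{s'}\mms)$ is a fixed finite linear combination of elementary regular tensors, and since both sides of the claimed identity are linear in $S$, I would first reduce to the case where $S = \sigma_1\otimes\dots\otimes\sigma_{r'}\otimes W_1\otimes\dots\otimes W_{s'}$ with $\sigma_i\in\Reg(T^*\mms)$ and $W_j\in\Reg(T\mms)$; these are bounded in $\Ell^\infty$ (recall $\Reg(T\mms)\subset\Ell^2(T\mms)\cap\Ell^\infty(T\mms)$, hence $\Reg(T^*\mms)\subset\Ell^\infty(T^*\mms)$ too), which ensures $T\boxtimes S\in\Ell^2(T_{s+s'}^{r+r'}\mms)$ for any $T\in\Ell^2(T_s^r\mms)$ and that $\nabla T\otimes S$, $T\otimes\nabla S$ are genuine $\Ell^2$-tensor fields.

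\textbf{Main argument.} With $S$ elementary regular and bounded, I would verify the defining integration-by-parts identity from \autoref{Def:Tensor cov der} for $A := \nabla T\otimes S + T\otimes\nabla S$, tested against $\eta_1,\dots,\eta_{r+r'}\in\Test(T^*\mms)$ and $Z,Y_1,\dots,Y_{s+s'}\in\Test(T\mms)$. The natural route: first establish it when $T$ is itself an elementary regular tensor field --- there it is an immediate consequence of \autoref{Le:Leibniz rule reg tf} applied to the product $T\boxtimes S$, which is again regular, together with the bookkeeping identity that the covariant derivative through the $(r+r'+s+s'+1)$-slot tensor $\nabla(T\boxtimes S)$ splits as the sum of the derivatives through the $T$-slots and through the $S$-slots; this splitting is exactly \eqref{Eq:LR}. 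Then, for general $T\in W^{1,2}(T_s^r\mms)$, I would pass to the limit: by \autoref{Le:Leibniz rule reg tf} and \autoref{Th:Tensor field props}\ref{La:Item3} (density of $\Reg(T_s^r\mms)$ in $\Ell^2(T_s^r\mms)$ and closedness of $\nabla$ from \ref{La:Item2}), choose $T_n\in\Reg(T_s^r\mms)$ with $T_n\to T$ in $W^{1,2}(T_s^r\mms)$; then $T_n\boxtimes S\to T\boxtimes S$ in $\Ell^2(T_{s+s'}^{r+r'}\mms)$ and $\nabla T_n\otimes S + T_n\otimes\nabla S\to \nabla T\otimes S + T\otimes\nabla S$ in $\Ell^2(T_{s+1+s'}^{r+r'}\mms)$, both convergences being valid because multiplication by the bounded tensor $S$ (and by its covariant derivative $\nabla S\in\Ell^2$, paired against the bounded $T_n$ in the relevant contraction) is continuous. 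Closedness of $\nabla$ on $W^{1,2}(T_{s+s'}^{r+r'}\mms)$ then yields $T\boxtimes S\in W^{1,2}(T_{s+s'}^{r+r'}\mms)$ with the asserted derivative. Finally, remove the assumption that $S$ is elementary by linearity.

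\textbf{Expected obstacle.} The routine-but-delicate point is the combinatorial verification that the single covariant derivative $\nabla(T\boxtimes S)$ produced by \autoref{Le:Leibniz rule reg tf} for the full product tensor really decomposes into ``$\nabla$ acting on the $T$-part'' plus ``$\nabla$ acting on the $S$-part'' in the sense that identifies the result with $\nabla T\boxtimes S + T\boxtimes\nabla S$ as an element of $\rmT_{\Ell^2}(\mms)$ --- one must track the insertion slots in \eqref{Eq:Short} carefully and match them against the slot structure of $\boxtimes$, using that $\nabla_Z$ applied to a simple tensor passes through each factor (again \autoref{Le:Leibniz rule reg tf} with $r,s$ suitably small, applied slot-by-slot). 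A secondary subtlety is that $\nabla S$, for general regular $S$, need not be bounded in $\Ell^\infty$ (it involves Hessians of test functions, which we do not control uniformly), so in the term $T_n\otimes\nabla S$ one must use that $T_n\to T$ in $\Ell^2$ while $\nabla S\in\Ell^2$ and the contraction in the scalar product is only bilinear --- here one should note that in the pairing defining convergence, the relevant factors of $S$ (but not $\nabla S$ itself against the test fields) are the bounded ones, so the limit passage is still legitimate; this is precisely the kind of integrability matching already handled in \autoref{Le:Leibniz rule W(21)} and \autoref{Pr:Compatibility}, and I would invoke those as a template. Once these identifications are in place, the proof is, as the paper suggests, entirely parallel to \autoref{Le:Leibniz rule reg tf}.
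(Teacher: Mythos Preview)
Your approximation step contains a genuine gap. You write: ``choose $T_n\in\Reg(T_s^r\mms)$ with $T_n\to T$ in $W^{1,2}(T_s^r\mms)$'', justifying this by density of $\Reg(T_s^r\mms)$ in $\Ell^2(T_s^r\mms)$ together with closedness of $\nabla$. But $\Ell^2$-density plus closedness of the operator does \emph{not} yield $W^{1,2}$-density; that implication simply fails. In fact the paper emphasizes (see the discussion after \autoref{Def:H12 vfs}) that already for vector fields, $H^{1,2}(T\mms)=\cl_{W^{1,2}}\Reg(T\mms)$ is in general a \emph{strict} subspace of $W^{1,2}(T\mms)$ --- on a compact Riemannian manifold with boundary, elements of the closure must have vanishing normal component, while $W^{1,2}(T\mms)$ contains vector fields that do not. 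So for general $T\in W^{1,2}(T_s^r\mms)$ there is no reason such an approximating sequence exists, and your argument as written only proves the Leibniz rule for $T$ in the $W^{1,2}$-closure of $\Reg(T_s^r\mms)$.

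The paper's proof circumvents this by never approximating $T$. Instead, it works directly with the integration-by-parts identity defining $\nabla(T\boxtimes S)$: writing $g:=S(\eta_{r+1},\dots,\eta_{r+r'},Y_{s+1},\dots,Y_{s+s'})$, one observes $g\in\F\cap\Ell^\infty(\mms)$ by \autoref{Pr:Compatibility} and the Leibniz rule, so that $g\,Z\in\Dom(\div)$ with $\div(g_n\,Z)\to\div(g\,Z)$ in $\Ell^2$ for any sequence $g_n\in\Test(\mms)$ converging to $g$ in $\F$. Since $g_n\,Z\in\Test(T\mms)$, the defining property of $\nabla T$ from \autoref{Def:Tensor cov der} applies with $g_n\,Z$ in the direction slot, and passing to the limit produces exactly the term $\int g\,\nabla T(\eta_1,\dots,\eta_r,Z,Y_1,\dots,Y_s)\,\rmd\meas$. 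The remaining terms, coming from $\rmd g(Z)$ and the derivatives falling on the $S$-slots, assemble into $T\otimes\nabla S$ via \autoref{Le:Leibniz rule reg tf}. The point is that the approximation happens on the \emph{scalar} level (approximating $g$ by test functions), not on the tensor $T$.
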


\begin{proof} We write $S$ in the form
\begin{align*}
S := \omega_{r+1}\otimes\dots\otimes \omega_{r+r'}\otimes Y_{s+1}\otimes\dots\otimes Y_{s+s'}
\end{align*}
for given $\omega_{r+1},\dots,\omega_{r+r'}\in\Reg(T^*\mms)$ and $Y_{s+1},\dots,Y_{s+s'}\in\Reg(T\mms)$. Given any $\eta_1,\dots,\eta_{r+r'}\in\Test(T^*\mms)$ and $Z,X_1,\dots, X_{s+s'}\in\Test(T\mms)$, we abbreviate
\begin{align*}
f &:= T(\eta_1,\dots,\eta_r,Y_1,\dots,Y_s),\\
g &:= S(\eta_{r+1},\dots,\eta_{r+r'},Y_{s+1},\dots,Y_{s+s'})
\end{align*}
and, for $i\in \{r+1,\dots,r+r'\}$ and $j\in \{s+1,\dots,s+s'\}$,
\begin{align*}
U_i &:= \omega_{r+1}\otimes\dots\otimes \underbrace{(\nabla_Z\omega_i^\sharp)^\flat}_{i\text{-th slot}}\otimes\dots\otimes \omega_{r+r'}\otimes Y_{s+1}\otimes\dots\otimes Y_{s+s'},\\
V_j &:= \omega_{r+1}\otimes\dots\otimes\omega_{r+r'}\otimes Y_{s+1}\otimes\dots\otimes \!\!\!\!\!\!\underbrace{\nabla_ZY_j}_{(r'+j)\text{-th slot}}\!\!\!\!\!\!\otimes\dots\otimes Y_{s+s'}.
\end{align*}
By \eqref{Eq:Tensor product pointwise sc prod}, \autoref{Pr:Compatibility} and the Leibniz rule from \autoref{Cor:Calculus rules d}, it follows that $g\in \F\cap\Ell^\infty(\mms)$. By \autoref{Le:Div g nabla f}, we deduce that $\smash{g\,Z\in\Dom_\TV(\DIV)\cap\Dom(\div)}$ with $\norm(g\,Z)=0$ and that for every sequence $(g_n)_{n\in\N}$ in $\Test(\mms)$ converging to $g$ in $\smash{\F}$, we have $\div(g_n\,Z)\to \div(g\,Z)$ in $\Ell^2(\mms)$ as $n\to\infty$. By \autoref{Le:Div g nabla f} again and the fact that $g_n\,Z\in\Test(T\mms)$ for every $n\in\N$,
\begin{align*}
&-\int_\mms T\boxtimes S(\eta_1,\dots,\eta_{r+r'},Y_1,\dots,Y_{s+s'})\div Z\d\meas\\
&\qquad\qquad\qquad\qquad -\int_\mms\sum_{i=1}^{r+r'} T\boxtimes S(\nabla_Z\eta_i)\d\meas - \int_\mms\sum_{j=1}^{s+s'}T\boxtimes S(\nabla_ZY_j)\d\meas\\
&\qquad\qquad = -\lim_{n\to\infty}\int_\mms f \div(g_n\,Z)\d\meas + \int_\mms f\d g(Z)\d\meas\\
&\qquad\qquad\qquad\qquad - \lim_{n\to\infty} \int_\mms g_n\sum_{i=1}^{r} T(\nabla_Z\eta_i)\d\meas - \lim_{n\to\infty}\int_\mms g_n\sum_{j=1}^{s} T(\nabla_ZY_j)\d\meas\\
&\qquad\qquad\qquad\qquad -\int_\mms f\!\sum_{i=r+1}^{r+r'}\!S(\nabla_Z\eta_i)\d\meas - \int_\mms f\!\sum_{j=s+1}^{s+s'}\!S(\nabla_Z Y_j)\d\meas.
\end{align*}
Applying \autoref{Pr:Compatibility} to the fifth last integral, then applying \autoref{Le:Leibniz rule reg tf} to the sum of those derivatives that fall on $S$, and finally using the definition of $\nabla T$, the above sum is equal to
\begin{align*}
&-\lim_{n\to\infty}\int_\mms f\div(g_n\,Z)\d\meas\\
&\qquad\qquad\qquad\qquad + \int_\mms f\!\sum_{i=r+1}^{r+r'} U_i(\eta_{r+1},\dots,\eta_{r+r'},Y_{s+1},\dots,Y_{s+s'})\d\meas\\
&\qquad\qquad\qquad\qquad + \int_\mms f\!\sum_{j=s+1}^{s+s'} V_j(\eta_{r+1},\dots,\eta_{r+r'},Y_{s+1},\dots,Y_{s+s'})\d\meas\\
&\qquad\qquad\qquad\qquad - \lim_{n\to\infty}\int_\mms g_n\sum_{i=1}^rT(\nabla_Z\eta_i)\d\meas -\lim_{n\to\infty}\int_\mms g_n\sum_{j=1}^sT(\nabla_ZY_j)\d\meas\\
&\qquad\qquad = \int_\mms g\,\nabla T(\eta_1,\dots,\eta_r,Z,Y_1,\dots,Y_s)\d\meas\\
&\qquad\qquad\qquad\qquad + \int_\mms f\!\sum_{i=r+1}^{r+r'} U_i(\eta_{r+1},\dots,\eta_{r+r'},Y_{s+1},\dots,Y_{s+s'})\d\meas\\
&\qquad\qquad\qquad\qquad + \int_\mms f\!\sum_{j=s+1}^{s+s'} V_j(\eta_{r+1},\dots,\eta_{r+r'},Y_{s+1},\dots,Y_{s+s'})\d\meas.
\end{align*}
The claimed identity in $\rmT_{\Ell^2}(\mms)$ readily follows.
\end{proof}

\begin{remark}\label{Re:Dir der tf} In a similar way as in \eqref{Eq:Dir der def}, one can define the \emph{directional derivative} $\smash{\nabla_Z T\in \Ell^0(T_s^r\mms)}$ of a given $\smash{T\in W^{1,2}(T_s^r\mms)}$ in the direction of $Z\in\Ell^0(T\mms)$. In the notation of \autoref{Pr:Leibniz tfs}, given such $Z$ the Leibniz rule becomes
\begin{align*}
\nabla_Z(T\boxtimes S) = \nabla_ZT \otimes S + T\otimes\nabla_ZS,
\end{align*}
in $\rmT_{\Ell^2}(\mms)$, 
and accordingly in the framework of \autoref{Le:Leibniz rule reg tf}.
\end{remark}

\begin{remark} A more general Leibniz rule seems hard to obtain by evident integrability issues. Compare with \autoref{Re:Integr issues} below. One framework in which one could instead work is an appropriate version $\smash{W^{1,1}(T_{s+s'}^{r+r'}\mms)}$ of \autoref{Def:Tensor cov der}. However, as in the motivating remarks before \autoref{Def:W221} and \autoref{Def:W21}, it is not clear if such a notion gives rise to nontrivial objects.
\end{remark}

\section{Exterior derivative}\label{Ch:Ext derivative}

Throughout this chapter, let us fix $k\in\N_0$.

\subsection{The Sobolev space $\smash{\Dom(\rmd^k)}$}  We now give a meaning to the exterior derivative acting on suitable $k$-forms, i.e.~elements of $\Ell^2(\Lambda^kT^*\mms)$ (recall \autoref{Sub:Exterior products}).

\subsubsection{Definition and basic properties} Before the motivating smooth \autoref{Ex:Ext der smooth}, a notational comment is in order. Given $\omega\in\Ell^0(\Lambda^kT^*\mms)$ and $X_0,\dots,X_k,Y\in \Ell^0(T\mms)$, we shall use the standard abbreviations
\begin{align*}
\omega(\widehat{X}_i) &:= \omega(X_0,\dots,\widehat{X}_i,\dots,X_k)\\
\textcolor{white}{\widehat{X}}&:= \omega(X_0 \wedge \dots\wedge X_{i-1}\wedge X_{i+1}\wedge\dots\wedge X_k),\\
\omega(Y,\widehat{X}_i,\widehat{X}_j) &:= \omega(Y,X_0,\dots,\widehat{X}_i,\dots,\widehat{X}_j,\dots, X_k)\\
\textcolor{white}{\widehat{X}_i}&:= \omega(Y\wedge X_0\wedge\dots\wedge X_{i-1}\wedge X_{i+1}\wedge\dots \wedge X_{j-1}\wedge X_{j+1}\wedge \dots\wedge X_k).
\end{align*}

\begin{example}\label{Ex:Ext der smooth} On a Riemannian manifold $\mms$ with boundary, the exterior derivative $\smash{\rmd \colon \Gamma(\Lambda^kT^*\mms)\to\Gamma(\Lambda^{k+1}T^*\mms)}$ is defined by three axioms \cite[Thm.~9.12]{lee2018}. It can be shown \cite[Sec.~A.2]{petersen2006} that the unique such $\rmd$ satisfies the following pointwise, chart-free representation for any $\omega\in \Gamma(\Lambda^kT^*\mms)$ and any  $X_0,\dots,X_k \in \Gamma_\rmc(T\mms)$:
\begin{align}\label{Eq:Ext chart free}
\begin{split}
\rmd\omega(X_0,\dots,X_k) &= \sum_{i=0}^k (-1)^i\,\rmd\big[\omega(\widehat{X}_i)\big](X_i)\\
&\qquad\qquad + \sum_{i=0}^k\sum_{j=i+1}^k (-1)^{i+j}\,\omega([X_i,X_j], \widehat{X}_i,\widehat{X}_j).
\end{split}
\end{align}

By the discussion from \autoref{Sub:Riem mflds}, the map $\rmd$ is still uniquely determined on $\Gamma(\Lambda^kT^*\mms)$ by this identity when restricting to those  $X_0,\dots,X_k$ for which
\begin{align*}
\langle X_0,\sfn\rangle = \dots = \langle X_k,\sfn\rangle = 0\quad\text{on }\partial\mms.
\end{align*}
In this case, integrating \eqref{Eq:Ext chart free} leads to
\begin{align*}
\int_\mms \rmd\omega(X_0,\dots,X_k)\d\vol &= \int_\mms\sum_{i=0}^k(-1)^i\,\omega(\widehat{X}_i)\div X_i\d\vol\\
&\qquad\qquad + \int_\mms \sum_{i=0}^k\sum_{j=i+1}^k (-1)^{i+j}\,\omega([X_i,X_j],\widehat{X}_i,\widehat{X}_j)\d\vol
\end{align*}
after integration by parts in conjunction with \autoref{Ex:Mflds with boundary}. Given this integral identity for every compactly supported $X_0,\dots,X_k\in\Gamma(\Lambda^kT^*\mms)$ with vanishing normal parts at $\partial\mms$ as above, the differential $\rmd\omega\in\Gamma(\Lambda^{k+1}T^*\mms)$ of $\omega\in \Gamma(\Lambda^kT^*\mms)$ is of course still uniquely determined.
\end{example}

Now note that the r.h.s.~of the last integral identity --- with $\vol$ replaced by $\meas$ --- is meaningful for arbitrary $\omega\in\Ell^2(\Lambda^kT^*\mms)$ and $X_0,\dots,X_k\in\Test(T\mms)$. Indeed, $\smash{\omega(\widehat{X}_i)\in\Ell^2(\mms)}$ since $X_0,\dots,X_k\in \Ell^\infty(T\mms)$, and $X_0,\dots,X_k\in \Dom_\TV(\DIV)\cap \Dom(\div)$ with $\div X_i\in \Ell^2(\mms)$ and $\norm X_i = 0$ by \autoref{Le:Div g nabla f}, $i\in \{0,\dots,k\}$. Moreover, by \eqref{Eq:Bound nabla Z X} the Lie bracket $[X_i,X_j]$ belongs to $\Ell^2(T\mms)$, whence $\smash{\omega([X_i,X_j],\widehat{X}_i,\widehat{X}_j)}\in \Ell^2(\mms)$, $i\in \{0,\dots,k\}$ and $j\in \{i+1,\dots,k\}$.

These considerations motivate the subsequent definition. (We only make explicit the degree $k$ in the name of the space, but not in the differential object itself.)

\begin{definition}\label{Def:Exterior derivative} We define $\smash{\Dom(\rmd^k)}$ to consist of all $\omega\in \Ell^2(\Lambda^kT^*\mms)$ for which there exists $\eta\in \Ell^2(\Lambda^{k+1}T^*\mms)$ such that for every $X_0,\dots,X_k\in \Test(T\mms)$,
\begin{align*}
\int_\mms \eta(X_0,\dots,X_k)\d\meas &=\int_\mms \sum_{i=0}^k(-1)^{i+1}\,\omega(\widehat{X}_i)\div X_i\d\meas\\
&\qquad\qquad +\int_\mms \sum_{i=0}^k\sum_{j=i+1}^k (-1)^{i+j}\, \omega([X_i,X_j],\widehat{X}_i,\widehat{X}_j)\d\meas.
\end{align*}
In case of existence, the element $\eta$ is unique, denoted by $\rmd \omega$ and termed the \emph{exterior derivative} \textnormal{(}or \emph{exterior differential}\textnormal{)} of $\omega$.
\end{definition}

The uniqueness follows by density of $\smash{\Test(\Lambda^{k+1}T^*\mms)}$ in $\smash{\Ell^2(\Lambda^{k+1}T^*\mms)}$ as discussed in \autoref{Sub:Test objects}. It is then clear that $\smash{\Dom(\rmd^k)}$ is a real vector space and that $\rmd$ is a linear operator on it.

We always endow $\smash{\Dom(\rmd^k)}$ with the norm $\smash{\Vert \cdot\Vert_{\Dom(\rmd^k)}}$ given by
\begin{align*}
\big\Vert \omega \big\Vert_{\Dom(\rmd^k)}^2 := \big\Vert\omega\big\Vert_{\Ell^2(\Lambda^kT^*\mms)}^2 + \big\Vert \rmd\omega\big\Vert_{\Ell^2(\Lambda^{k+1}T^*\mms)}^2.
\end{align*}
We introduce the functional $\smash{\Ch_\rmd\colon\Ell^2(\Lambda^kT^*\mms)\to [0,\infty]}$ with
\begin{align*}
\Ch_\rmd(\omega) :=\begin{cases} \displaystyle\int_\mms \vert\rmd\omega\vert^2\d\meas & \text{if }\omega\in \Dom(\rmd^k),\\
\infty & \text{otherwise}.
\end{cases}
\end{align*}
We do not make explicit the dependency of $\Ch_\rmd$ on the degree $k$. It will always be clear from the context which one is intended.

\begin{remark}\label{Re:Wd120 in W12} By \autoref{Le:Div g nabla f} it is easy to see that $\F$ is contained in $\smash{\Dom(\rmd^0)}$, and that $\rmd\omega$ is simply the exterior differential from \autoref{Def:Differential}, $\omega\in \F$. The reverse inclusion, however, seems more subtle, but at least  holds true if $\mms$ is intrinsically complete as in \autoref{Def:Intr compl}. Compare with \autoref{Re:No W11 in general}, \cite[p.~136]{gigli2018} and (the proof of) \cite[Prop.~3.3.13]{gigli2018}.
\end{remark}

\begin{remark} Similarly to \autoref{Re:Conf trafos} and \autoref{Re:Conf trafos II}, motivated by its axiomatization in Riemannian geometry we expect the differential $\rmd$ to neither depend on conformal transformations of $\langle\cdot,\cdot\rangle$, nor  on drift transformations of $\meas$. 
\end{remark}

The next theorem collects basic properties of the above notions. It is proven in a similar fashion as \autoref{Th:Hess properties} and \autoref{Th:Properties W12 TM}.

\begin{theorem}\label{Th:Wd12 properties} The space $\Dom(\rmd^k)$, the exterior derivative $\rmd$ and the functional $\Ch_\rmd$ satisfy the following properties.
\begin{enumerate}[label=\textnormal{\textcolor{black}{(}\roman*\textcolor{black}{)}}]
\item\label{La:I} $\smash{\Dom(\rmd^k)}$ is a separable Hilbert space w.r.t.~$\smash{\Vert\cdot\Vert_{\Dom(\rmd^k)}}$.
\item\label{La:II} The exterior differential is a closed operator. That is, the image of the map $\smash{\Id\times \rmd\colon \Dom(\rmd^k)\to \Ell^2(\Lambda^kT^*\mms)\times\Ell^2(\Lambda^{k+1}T^*\mms)}$ is a closed subspace of $\smash{\Ell^2(\Lambda^kT^*\mms)\times\Ell^2(\Lambda^{k+1}T^*\mms)}$.
\item\label{La:III} The functional $\Ch_\rmd$ is $\Ell^2$-lower semicontinuous, and for every $\smash{\omega\in\Ell^2(\Lambda^kT^*\mms)}$ we have the duality formula
\begin{align*}
\Ch_\rmd(\omega) &= \sup\!\Big\lbrace 2\sum_{l=1}^n\int_\mms \sum_{i=0}^k(-1)^{i+1}\,\omega(\widehat{X}^l_i)\div X_i^l\d\meas\\
&\qquad\qquad + 2\sum_{l=1}^n\int_\mms \sum_{i=0}^k\sum_{j=i+1}^k (-1)^{i+j}\,\omega([X_i^l,X_j^l], \widehat{X}_i^l,\widehat{X}_j^l)\d\meas\\
&\qquad\qquad - \int_\mms \Big\vert\!\sum_{j=1}^n X_0^l\wedge\dots\wedge X_k^l\Big\vert^2\d\meas:  n\in \N,\ X_i^l\in\Test(T\mms)\Big\rbrace.
\end{align*}
\item\label{La:IV} For every $f_0\in\Test(\mms)\cup \R\,\One_\mms$ and every $f_1,\dots,f_k\in \Test(\mms)$ we have $\smash{f_0\d f_1\wedge\dots\wedge\rmd f_k\in \Dom(\rmd^k)}$ with
\begin{align*}
\rmd(f_0\d f_1\wedge\dots\wedge\rmd f_k) &= \rmd f_0\wedge\rmd f_1\wedge\dots\wedge\rmd f_k,
\end{align*}
with the usual interpretation $\rmd\One_\mms := 0$. In particular $\smash{\Reg(\Lambda^kT^*\mms)}\subset \smash{\Dom(\rmd^k)}$, and $\smash{\Dom(\rmd^k)}$ is dense in $\Ell^2(\Lambda^kT^*\mms)$.
\end{enumerate}
\end{theorem}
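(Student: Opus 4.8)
This theorem closely parallels \autoref{Th:Hess properties} and \autoref{Th:Properties W12 TM}, so the plan is to recycle those arguments verbatim, adapting only the specific algebraic identities.

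\medskip

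\textit{Plan.} The proof of \autoref{Th:Wd12 properties} follows completely similar lines as for \autoref{Th:Hess properties} and \autoref{Th:Properties W12 TM}, which is why only a sketch is given and the routine details are left to the reader.

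First I would treat \ref{La:II}. For fixed $X_0,\dots,X_k\in\Test(T\mms)$, the right-hand side of the defining identity in \autoref{Def:Exterior derivative} depends linearly and continuously on $\omega$ with respect to weak convergence in $\Ell^2(\Lambda^kT^*\mms)$ (indeed the factors $\div X_i$ and $[X_i,X_j]$ are fixed $\Ell^2$-elements, and $\omega\mapsto \omega(\widehat X_i)$, $\omega\mapsto\omega([X_i,X_j],\widehat X_i,\widehat X_j)$ are $\Ell^2$-weakly continuous since the vector fields are bounded), while the left-hand side is $\Ell^2$-weakly continuous in $\eta$. Hence if $(\omega_n,\rmd\omega_n)\to(\omega,\eta)$ weakly in $\Ell^2(\Lambda^kT^*\mms)\times\Ell^2(\Lambda^{k+1}T^*\mms)$, then $\omega\in\Dom(\rmd^k)$ and $\rmd\omega=\eta$; passing to strong limits and using Mazur's lemma gives closedness of the graph. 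For \ref{La:I}, the norm $\Vert\cdot\Vert_{\Dom(\rmd^k)}$ trivially satisfies the parallelogram identity, so completeness (from \ref{La:II} together with completeness of the two $\Ell^2$-spaces) yields the Hilbert property; separability follows because $\Id\times\rmd$ is a linear isometry of $\Dom(\rmd^k)$ onto a subspace of the separable Hilbert space $\Ell^2(\Lambda^kT^*\mms)\times\Ell^2(\Lambda^{k+1}T^*\mms)$ — separability of the factors being guaranteed by \autoref{Pr:Generators cotangent module}, \autoref{Th:Riesz theorem modules} and \autoref{Sub:Exterior products}. For the lower semicontinuity in \ref{La:III}, bounded sets in the Hilbert space $\Ell^2(\Lambda^{k+1}T^*\mms)$ are weakly relatively compact, so this is immediate from \ref{La:II} and Mazur's lemma.

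\medskip

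For the duality formula in \ref{La:III}, the inequality ``$\geq$'' is obtained by assuming $\omega\in\Dom(\rmd^k)$, writing $\Ch_\rmd(\omega)=\sup\{2\int_\mms\rmd\omega(\xi)\d\meas-\int_\mms|\xi|^2\d\meas : \xi\in\Test(\Lambda^{k+1}T\mms)\}$ using duality of $\Dom(\rmd^k)$ with its Hilbert dual and density of $\Test(\Lambda^{k+1}T\mms)$ in $\Ell^2(\Lambda^{k+1}T\mms)$, expanding a generic test element as a finite sum of terms $X_0^l\wedge\dots\wedge X_k^l$, and then invoking \autoref{Def:Exterior derivative} to rewrite each $\int_\mms\rmd\omega(X_0^l\wedge\dots\wedge X_k^l)\d\meas$ via the divergence/Lie-bracket formula. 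For ``$\leq$'', assume the supremum $C$ on the right-hand side is finite; define $\Phi\colon\Test(\Lambda^{k+1}T\mms)\to\R$ by $2\,\Phi(\sum_l X_0^l\wedge\dots\wedge X_k^l):=$ twice the bilinear-in-$\omega$ expression appearing in the formula, check it is well-defined (a scaling argument $\Phi(\lambda T)\sgn\Phi(T)=\lambda\,\Phi(T)\sgn\Phi(T)$ forces $\Phi(T)=0$ whenever $T=0$, else $C=\infty$), linear, and bounded: replacing $T$ by $\lambda T$ and optimizing over $\lambda$ gives $|\Phi(T)|\le\sqrt C\,\Vert T\Vert_{\Ell^2(\Lambda^{k+1}T\mms)}$. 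Thus $\Phi$ extends to $(\Ell^2(\Lambda^{k+1}T\mms))'$, and by \autoref{Th:Riesz theorem modules} there is $\eta\in\Ell^2(\Lambda^{k+1}T^*\mms)$ with $\Phi(T)=\int_\mms\eta(T)\d\meas$; by \autoref{Def:Exterior derivative} this says $\omega\in\Dom(\rmd^k)$ with $\rmd\omega=\eta$, and $\Vert\rmd\omega\Vert\le\sqrt C$.

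\medskip

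Finally, for \ref{La:IV}: take $f_0\in\Test(\mms)\cup\R\,\One_\mms$, $f_1,\dots,f_k\in\Test(\mms)$, set $\omega:=f_0\d f_1\wedge\dots\wedge\rmd f_k$, and verify directly that $\eta:=\rmd f_0\wedge\rmd f_1\wedge\dots\wedge\rmd f_k\in\Ell^2(\Lambda^{k+1}T^*\mms)$ (all the $\rmd f_i$ are bounded $1$-forms and $\rmd f_0$ either bounded or zero) satisfies the defining identity of \autoref{Def:Exterior derivative}. This is the step where the main (but still routine) work sits: plugging in $X_0,\dots,X_k\in\Test(T\mms)$ one expands $\eta(X_0\wedge\dots\wedge X_k)=\det[\rmd f_a(X_b)]$ by cofactors along the first row (up to the factorial normalization of \eqref{Eq:Ptw scalar prod ext}), and must match it against $\sum_i(-1)^{i+1}\omega(\widehat X_i)\div X_i+\sum_{i<j}(-1)^{i+j}\omega([X_i,X_j],\widehat X_i,\widehat X_j)$. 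The identification uses, for each relevant index, the Leibniz rule \eqref{Eq:Div lbnz rle} (or \autoref{Le:Div g nabla f}) to integrate by parts against $\div X_i$, the torsion-freeness identity of \autoref{Le:Torsion free} / \autoref{Def:Lie bracket} to handle the Lie-bracket terms $\rmd f_a([X_i,X_j])=X_i(X_j f_a)-X_j(X_i f_a)$, and the symmetry cancellations in the second-order terms $X_i(X_j f_a)$ — exactly the combinatorial bookkeeping underlying the classical chart-free formula \eqref{Eq:Ext chart free}. Since $\Reg(\Lambda^kT^*\mms)$ is the linear span of such elements, this proves $\Reg(\Lambda^kT^*\mms)\subset\Dom(\rmd^k)$, and density of $\Dom(\rmd^k)$ in $\Ell^2(\Lambda^kT^*\mms)$ follows from the density of $\Reg(\Lambda^kT^*\mms)$ recorded in \autoref{Sub:Test objects}. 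The chief technical nuisance throughout is the degree-$k$ index juggling in \ref{La:IV}; everything else is a verbatim transcription of the two earlier proofs.
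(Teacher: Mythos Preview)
Your proposal is correct and follows essentially the same approach as the paper. For \ref{La:I}--\ref{La:III} the paper simply says the arguments are analogous to \autoref{Th:Hess properties} and \autoref{Th:Properties W12 TM} and omits details, while for \ref{La:IV} the paper organizes the computation by first isolating the pointwise identity $\sum_i(-1)^i\,\rmd[\omega(\widehat{X}_i)](X_i)=-\sum_{i<j}(-1)^{i+j}\,\omega([X_i,X_j],\widehat{X}_i,\widehat{X}_j)$ for $\omega=\rmd f_1\wedge\dots\wedge\rmd f_k$ (via \autoref{Def:Lie bracket} and \autoref{Th:Properties W12 TM}) and then integrating by parts the $f_0$-factor, which is exactly your expand-and-match strategy rearranged.
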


\begin{proof} The items \ref{La:I}, \ref{La:II} and \ref{La:III} follow completely analogous lines as the proofs of corresponding statements in \autoref{Th:Hess properties} and \autoref{Th:Properties W12 TM}. We omit the details.

We turn to \ref{La:IV}. We concentrate on the proof of the claimed formula, from which the last two statements then readily follow by linearity of $\rmd$. First observe that the r.h.s.~of the claimed identity belongs to $\Ell^2(\Lambda^{k+1}T^*\mms)$. By definition \eqref{Eq:Tensor product pointwise sc prod} of the pointwise scalar product in $\Ell^2(\Lambda^kT^*\mms)$ and \autoref{Pr:BE vector fields}, we have $\omega(X_1,\dots,X_k) \in \F\cap\Ell^\infty(\mms)$, where $\omega := \rmd f_1 \wedge\dots\wedge \rmd f_k$. Direct computations using the  \autoref{Def:Lie bracket} of the Lie bracket and \autoref{Th:Properties W12 TM} yield
\begin{align*}
\sum_{i=0}^k (-1)^i\,\rmd\big[\omega(\widehat{X}_i)\big](X_i) = -\sum_{i=0}^k\sum_{j=i+1}^k (-1)^{i+j}\,\omega([X_i,X_j],\widehat{X}_i,\widehat{X}_j)\quad\meas\text{-a.e.}
\end{align*}
For $f_0\in\Test(\mms)$, it thus follows from \autoref{Le:Div g nabla f} that
\begin{align*}
&\int_\mms \sum_{i=0}^k (-1)^{i+1}\,f_0\,\omega(\widehat{X}_i)\div X_i\d\meas\\
&\qquad\qquad\qquad\qquad + \int_\mms \sum_{i=0}^k \sum_{j=i+1}^k (-1)^{i+j}\,f_0\,\omega([X_i,X_j],\widehat{X}_i,\widehat{X}_j)\d\meas\\
&\qquad\qquad = \int_\mms \sum_{i=0}^k (-1)^i\,\rmd f_0(X_i)\,\omega(\widehat{X}_i)\d\meas\\
&\qquad\qquad\qquad\qquad + \int_\mms \sum_{i=0}^k (-1)^i\, f_0\d\big[\omega(\widehat{X}_i)\big](X_i)\d\meas\\
&\qquad\qquad\qquad\qquad + \int_\mms \sum_{i=0}^k\sum_{j=i+1}^k (-1)^{i+j}\,f_0\,\omega([X_i,X_j],\widehat{X}_i,\widehat{X}_j)\d\meas\\
&\qquad\qquad = \int_\mms (\rmd f_0\wedge \omega)(X_0,\dots,X_k)\d\meas,
\end{align*}
which shows the first claimed identity. The same computation can be done for $f_0 \in\R\,\One_\mms$ with the formal interpretation $\rmd f_0 := 0$.
\end{proof}

\begin{remark}\label{Re:Kato role} For arbitrary, not necessarily tamed Dirichlet spaces, certainly the spaces $\Ell^2(\Lambda^kT^*\mms)$ and $\Ell^2(\Lambda^{k+1}T^*\mms)$ from \autoref{Sub:Exterior products} make sense. One is then tempted to define the exterior derivative of $f_0\,\rmd f_1\wedge\dots\wedge\rmd f_k$ for appropriate $f_0,\dots,f_k\in \F_\rme$ simply as $\rmd f_0 \wedge\rmd f_1\wedge\dots\wedge\rmd f_k\in\smash{\Ell^2(\Lambda^{k+1}T^*\mms)}$. However, it is in general not clear if $\rmd$ defined in that way is closable. In our approach, this is clear from \autoref{Def:Exterior derivative} by integration by parts, for which it has been crucial to know the existence of a large class of vector fields whose Lie bracket is well-defined. In our approach, this is precisely $\Test(T\mms)$, whose nontriviality --- in fact, density in $\Ell^2(T\mms)$ --- is a consequence of the (extended Kato condition on the) lower Ricci bound $\kappa$, see \autoref{Sub:Test fcts} and \autoref{Sub:Test objects}.
\end{remark}

\subsubsection{Calculus rules} We proceed with further calculus rules for $\rmd$. In view of \autoref{Pr:Leibniz rule ext der} below, the following preliminary lemma is required.

\begin{lemma}\label{Le:f omega} Suppose that $\omega\in\smash{\Dom(\rmd^k)}$, and that $f\in \F\cap\Ell^\infty(\mms)$. Then for every $X_0,\dots,X_k\in \Test(T\mms)$,
\begin{align*}
&\int_\mms f\d\omega(X_0,\dots,X_k)\d\meas\\
&\qquad\qquad= \int_\mms \sum_{i=0}^k (-1)^{i+1}\,\omega(\widehat{X}_i)\div(f\,X_i)\d\meas\\
&\qquad\qquad\qquad\qquad + \int_\mms \sum_{i=0}^k\sum_{j=i+1}^k(-1)^{i+j}\,f\,\omega([X_i,X_j],\widehat{X}_i,\widehat{X}_j)\d\meas.
\end{align*}
\end{lemma}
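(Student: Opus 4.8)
The plan is to reduce the case of general $f \in \F \cap \Ell^\infty(\mms)$ to the test-function case $f \in \Test(\mms)$, which is essentially the defining property of $\Dom(\rmd^k)$ combined with the Leibniz rule for the $\Ell^2$-divergence. First I would observe that when $f \in \Test(\mms)$, the claimed identity follows directly: since $\Test(\mms)$ is an algebra, $f\,X_i \in \Test(T\mms)$ for each $i$, and each Lie bracket $[f\,X_i, X_j]$, $[X_i, f\,X_j]$ can be expanded via bilinearity and the Leibniz rule for $\nabla$ (see \autoref{Le:Leibniz rule W(21)} and the directional-derivative formula \eqref{Eq:Dir der def}). Actually, the cleanest route in the test case is to apply \autoref{Def:Exterior derivative} directly to the vector fields $f\,X_0, X_1, \dots, X_k$ — no, rather one should be careful: I would instead use \autoref{Def:Exterior derivative} with $X_0, \dots, X_k$ as given but note that the asserted identity is exactly what one gets by replacing the plain $\div X_i$ terms with $\div(f\,X_i)$ and multiplying the bracket terms by $f$; the discrepancy between the two is controlled by $\div(f\,X_i) = f\div X_i + \rmd f(X_i)$ from \eqref{Eq:Div lbnz rle} and by the algebraic identity for how $\rmd f$ interacts with the wedge structure, which is precisely the computation carried out in the proof of \autoref{Th:Wd12 properties}\ref{La:IV}. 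In fact, for $f \in \Test(\mms)$ this lemma is a direct consequence of the product rule $\rmd(f\,\omega) = \rmd f \wedge \omega + f\,\rmd\omega$ applied formally, but since we do not yet have that product rule, I would verify it by the same direct manipulation: expand $\div(f\,X_i)$, use that $\sum_i (-1)^i \rmd[\omega(\widehat X_i)](X_i) = -\sum_{i<j}(-1)^{i+j}\omega([X_i,X_j],\widehat X_i, \widehat X_j)$ — wait, that identity holds for $\omega$ of the special form $\rmd f_1 \wedge \cdots \wedge \rmd f_k$; for general $\omega \in \Dom(\rmd^k)$ one simply invokes \autoref{Def:Exterior derivative} itself.

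So the correct structure is: for $f \in \Test(\mms)$, set $Y_i := f\,X_i \in \Test(T\mms)$ only for $i=0$ and keep the rest, then apply \autoref{Def:Exterior derivative} to $(f\,X_0, X_1, \dots, X_k)$. This gives $\int f\,X_0$-type terms; one then has to massage the bracket terms $[f\,X_0, X_j] = f\,[X_0,X_j] + (\rmd f(X_0))\,X_j - (\rmd f(X_j))\,X_0$ — no, the Leibniz rule for the Lie bracket in terms of directional derivatives needs care. Let me reconsider: the genuinely clean approach is to prove the identity for $f \in \Test(\mms)$ by a \emph{symmetric} argument using the defining property with all of $f\,X_0, \dots, f\,X_k$ replaced — but that introduces too many cross terms. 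Instead, the honest plan: establish the lemma for $f \in \Test(\mms)$ by the explicit computation in the proof of \autoref{Th:Wd12 properties}\ref{La:IV} generalized from $\omega = \rmd f_1 \wedge \cdots \wedge \rmd f_k$ to arbitrary $\omega \in \Dom(\rmd^k)$ — this works because that computation only used the \emph{defining} identity for $\rmd\omega$ (encoded via the div and bracket terms) together with \autoref{Le:Div g nabla f} and the bracket formula; I would rewrite it to start from $\int f\,\rmd\omega(X_0,\dots,X_k)\d\meas$, insert the defining property of $\rmd\omega$ after testing against $f\,\meas$ — but $\rmd\omega$ is defined by testing against $\meas$, not $f\,\meas$, so one must first absorb $f$ into the test vector fields. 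The resolution: test \autoref{Def:Exterior derivative} against $X_0, \dots, X_k$; then $f\,\rmd\omega(X_0,\dots,X_k)$ is \emph{not} directly $\rmd\omega(f\,X_0, X_1, \dots, X_k)$ since $\rmd\omega$ is alternating and $\Ell^\infty$-multilinear, so in fact $f\,\rmd\omega(X_0,\dots,X_k) = \rmd\omega(f\,X_0,X_1,\dots,X_k)$ by $\Ell^\infty$-linearity. Good — so I would apply \autoref{Def:Exterior derivative} to the tuple $(f\,X_0, X_1, \dots, X_k)$, obtaining $\int \rmd\omega(f\,X_0, X_1,\dots,X_k)\d\meas = \int f\,\rmd\omega(X_0,\dots,X_k)\d\meas$ on the left, and on the right a sum of div-terms and bracket-terms involving $f\,X_0$. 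Then I expand $\div(f\,X_0) = f\div X_0 + \rmd f(X_0)$ (the $i=0$ term) and $\div X_i$ for $i \geq 1$ (unchanged), and the brackets $[f\,X_0, X_j]$ for $j \geq 1$. Comparing with the target identity — which has $\div(f\,X_i)$ for \emph{all} $i$ and $f$ times \emph{all} brackets — shows the difference is a sum of terms each containing a factor $\rmd f(X_i)$ or $\rmd f(X_j)$, and these must cancel; verifying this cancellation is the routine algebraic core, using the alternating property of $\omega$ and $\rmd f(X_i)\,\omega(\widehat X_i) = $ contributions to $(\rmd f \wedge \omega)$ that telescope.

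Having the lemma for $f \in \Test(\mms)$, the general case $f \in \F \cap \Ell^\infty(\mms)$ follows by approximation. I would take $f_t := \ChHeat_t f \in \Test(\mms)$ for $t > 0$; by \autoref{Subsub:Neumann heat flow}, $f_t \to f$ in $\F$ as $t \to 0$, and $\sup_{t \in (0,1]} \Vert f_t \Vert_{\Ell^\infty(\mms)} \leq \Vert f \Vert_{\Ell^\infty(\mms)}$ by the sub-Markov property. For fixed $X_0, \dots, X_k \in \Test(T\mms)$: on the left-hand side, $\int f_t\,\rmd\omega(X_0,\dots,X_k)\d\meas \to \int f\,\rmd\omega(X_0,\dots,X_k)\d\meas$ since $\rmd\omega(X_0,\dots,X_k) \in \Ell^1(\mms)$ and $f_t \to f$ boundedly in $\Ell^2$ (hence, after extracting a subsequence, $\meas$-a.e., so dominated convergence applies). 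On the right-hand side, the bracket terms converge likewise since $\omega([X_i,X_j],\widehat X_i,\widehat X_j) \in \Ell^1(\mms)$ by Cauchy–Schwarz and the $\Ell^\infty$-bounds on the $X$'s. For the div-terms I use \autoref{Le:Div g nabla f}: $X_i = g_i \nabla h_i$ is of the form for which $\div(f_t\,X_i) = \rmd f_t(X_i) + f_t\div X_i$ — actually better, $f_t\,X_i \in \Dom(\div)$ with $\div(f_t\,X_i) \to \div(f\,X_i)$ is not immediate since $f\,X_i$ may not be covered by \autoref{Le:Div g nabla f}; instead I keep the div on $X_i$ and the factor $f_t$ separate: rewrite $\int \omega(\widehat X_i)\div(f_t\,X_i)\d\meas = -\int \rmd[\omega(\widehat X_i)]$-type expression, or more simply use that for $X_i \in \Test(T\mms)$, \autoref{Le:Div identities} gives $f_t\,X_i \in \Dom(\DIV)$ with $\DIV(f_t\,X_i) = \widetilde{f_t}\DIV X_i + \rmd f_t(X_i)\,\meas$, and $\rmd f_t(X_i) \to \rmd f(X_i)$ in $\Ell^2(\mms)$ since $\rmd f_t \to \rmd f$ in $\Ell^2(T^*\mms)$ and $\vert X_i \vert \in \Ell^\infty(\mms)$. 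Combining, all six families of terms pass to the limit, yielding the claimed identity for $f \in \F \cap \Ell^\infty(\mms)$. The main obstacle is the bookkeeping in the test-function step — correctly tracking the signs $(-1)^{i+j}$, the alternating slots $\widehat X_i, \widehat X_j$, and confirming that the $\rmd f$-terms produced by the divergence Leibniz rule exactly reconstitute $\rmd f \wedge \omega$ evaluated on $(X_0,\dots,X_k)$ minus what the target identity already contains — but this is precisely the computation already executed in the proof of \autoref{Th:Wd12 properties}\ref{La:IV}, now run with a general $\omega \in \Dom(\rmd^k)$ in place of a decomposable one, so no new idea is needed.
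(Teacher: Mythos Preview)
Your proposal is correct and follows essentially the same two-step strategy as the paper: first absorb $f\in\Test(\mms)$ into the slot $X_0$ via $\Ell^\infty$-linearity, apply \autoref{Def:Exterior derivative} to $(f\,X_0,X_1,\dots,X_k)$, and expand $\div(f\,X_0)$ together with $[f\,X_0,X_j] = f\,[X_0,X_j] - \rmd f(X_j)\,X_0$ (this is the correct Lie-bracket formula --- your tentative version with an extra $\rmd f(X_0)\,X_j$ term was indeed wrong before you retracted it); then pass to general $f\in\F\cap\Ell^\infty(\mms)$ by heat-flow approximation and \autoref{Le:Div identities}. The paper's proof is exactly this, only stated more cleanly without the false starts.
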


\begin{proof} We first prove the claim for $f\in\Test(\mms)$. As $f\,X_0\in\Test(T\mms)$, by definition of the Lie bracket  and \autoref{Le:Leibniz rule W(21)} we have
\begin{align*}
[f\,X_0, X_j] &= \nabla_{f\,X_0}X_j - \nabla_{X_j}(f\,X_0)\\
&= f\,\nabla_{X_0}X_j - \rmd  f(X_j)\,X_0 - f\,\nabla_{X_j}X_0\\
&= f\,[X_0,X_j] - \rmd f(X_j)\,X_0
\end{align*}
for every $j\in \{1,\dots,k\}$. Hence, by \autoref{Le:Div identities},
\begin{align*}
&\int_\mms f\d\omega(X_0,\dots,X_k)\d\meas\\
&\qquad\qquad =\int_\mms \rmd\omega(f\,X_0,X_1,\dots,X_k)\d\meas\\
&\qquad\qquad = -\int_\mms \omega(\widehat{X}_0)\d f(X_0)\d\meas + \int_\mms \sum_{i=0}^k (-1)^{i+1}f\,\omega(\widehat{X}_i)\div X_i\d \meas \\
&\qquad\qquad\qquad\qquad +\int_\mms \sum_{i=0}^k\sum_{j=i+1}^k f\,\omega([X_i,X_j],\widehat{X}_i,\widehat{X}_j)\d\meas\\
&\qquad\qquad\qquad\qquad + \int_\mms\sum_{j=1}^k (-1)^{j+1}\,\omega(\widehat{X}_j)\d f(X_j)\d\meas.
\end{align*}
Since $\div(f\,X_i) = \rmd f(X_i) + f\div X_i$ $\meas$-a.e.~by \autoref{Le:Div identities}, we are done.

The claim for general $f\in \F\cap \Ell^\infty(\mms)$ follows by the approximation result from \autoref{Le:Mollified heat flow} together with \autoref{Le:Div identities}.
\end{proof}

\begin{proposition}[Leibniz rule]\label{Pr:Leibniz rule ext der} Let $\smash{\omega\in \Dom(\rmd^k)}$ and, for some $k'\in\N_0$, suppose that $\smash{\omega'\in \Reg(\Lambda^{k'}T^*\mms)}$. Then $\omega\wedge\omega'\in \smash{\Dom(\rmd^{k+k'})}$ with
\begin{align*}
\rmd(\omega\wedge\omega')= \rmd\omega \wedge\omega' + (-1)^k\,\omega\wedge\rmd\omega'.
\end{align*}
\end{proposition}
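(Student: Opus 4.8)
The plan is to reduce, by bilinearity of $\wedge$ and linearity of $\rmd$, to the case where $\omega'=f_0\,\rmd f_1\wedge\dots\wedge\rmd f_{k'}$ is a generator of $\Reg(\Lambda^{k'}T^*\mms)$ with $f_0\in\Test(\mms)\cup\R\,\One_\mms$ and $f_1,\dots,f_{k'}\in\Test(\mms)$, and then to induct on $k'$, peeling off the factors $\rmd f_j$ one at a time. For the base case $k'=0$, i.e.\ $\omega'=f_0$, one has to show $f_0\,\omega\in\Dom(\rmd^k)$ with $\rmd(f_0\,\omega)=f_0\,\rmd\omega+\rmd f_0\wedge\omega$, and this is exactly what \autoref{Le:f omega} was set up for: writing the defining integral identity of \autoref{Def:Exterior derivative} for the candidate $\eta:=f_0\,\rmd\omega+\rmd f_0\wedge\omega$ tested against $X_0,\dots,X_k\in\Test(T\mms)$, one substitutes $f_0\div X_i=\div(f_0\,X_i)-\rmd f_0(X_i)$ from \autoref{Le:Div identities}, applies \autoref{Le:f omega} with $f:=f_0$ to turn $\int\sum_i(-1)^{i+1}\omega(\widehat X_i)\div(f_0\,X_i)\d\meas+\int\sum_{i<j}(-1)^{i+j}f_0\,\omega([X_i,X_j],\widehat X_i,\widehat X_j)\d\meas$ back into $\int f_0\,\rmd\omega(X_0,\dots,X_k)\d\meas$, and identifies the leftover $\int\sum_i(-1)^i\omega(\widehat X_i)\,\rmd f_0(X_i)\d\meas$ with $\int(\rmd f_0\wedge\omega)(X_0,\dots,X_k)\d\meas$; since $\rmd f_0\in\Ell^\infty(T^*\mms)$ for $f_0\in\Test(\mms)$, the form $\eta$ lies in $\Ell^2(\Lambda^{k+1}T^*\mms)$. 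As $\rmd\omega'=\rmd f_0$ by \autoref{Th:Wd12 properties}(\ref{La:IV}) and $\rmd f_0\wedge\omega=(-1)^k\,\omega\wedge\rmd f_0$, this is the asserted formula; the case $f_0=c\,\One_\mms$ is trivial.

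For the inductive step ($k'\ge1$) I would first establish the auxiliary fact that for any $\omega\in\Dom(\rmd^m)$ and $g\in\Test(\mms)$ one has $\omega\wedge\rmd g\in\Dom(\rmd^{m+1})$ with $\rmd(\omega\wedge\rmd g)=\rmd\omega\wedge\rmd g$. To prove this I would expand $(\rmd\omega\wedge\rmd g)(X_0,\dots,X_{m+1})$ by the antisymmetrized wedge formula as a signed sum of terms $\rmd g(X_l)\,\rmd\omega(X_0,\dots,\widehat X_l,\dots,X_{m+1})$, integrate, and apply \autoref{Le:f omega} with $f:=\rmd g(X_l)=\langle\nabla g,X_l\rangle$, which belongs to $\F\cap\Ell^\infty(\mms)$ by \autoref{Pr:Bakry Emery measures}, \autoref{Cor:Calculus rules d} and boundedness of gradients of test functions. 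Using $\div(\rmd g(X_l)\,X_i)=\rmd[\rmd g(X_l)](X_i)+\rmd g(X_l)\div X_i$, the terms proportional to $\div X_i$ reassemble into $\sum_i(-1)^{i+1}(\omega\wedge\rmd g)(\widehat X_i)\div X_i$, while the remaining terms $\rmd[\rmd g(X_l)](X_i)$ and $\rmd g(X_l)\,\omega([X_i,X_j],\dots)$ reassemble into $\sum_{i<j}(-1)^{i+j}(\omega\wedge\rmd g)([X_i,X_j],\widehat X_i,\widehat X_j)$, the key ingredient being the torsion-free identity $\rmd[\rmd g(X_i)](X_j)-\rmd[\rmd g(X_j)](X_i)=\rmd g([X_i,X_j])$ furnished by \autoref{Le:Torsion free} with $f:=g$. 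Granting this, I would write $\omega\wedge\omega'=(\omega\wedge f_0\,\rmd f_1)\wedge(\rmd f_2\wedge\dots\wedge\rmd f_{k'})$; the identity $\omega\wedge f_0\,\rmd f_1=f_0\,(\omega\wedge\rmd f_1)$, the auxiliary fact (with $m:=k$, $g:=f_1$) and the base case (in degree $k+1$) give $\omega\wedge f_0\,\rmd f_1\in\Dom(\rmd^{k+1})$ with $\rmd(\omega\wedge f_0\,\rmd f_1)=f_0\,(\rmd\omega\wedge\rmd f_1)+\rmd f_0\wedge\omega\wedge\rmd f_1$, and the inductive hypothesis applied in degree $k+1$ to the regular $(k'-1)$-form $\rmd f_2\wedge\dots\wedge\rmd f_{k'}$ then yields $\omega\wedge\omega'\in\Dom(\rmd^{k+k'})$ with $\rmd(\omega\wedge\omega')=\rmd(\omega\wedge f_0\,\rmd f_1)\wedge(\rmd f_2\wedge\dots\wedge\rmd f_{k'})$, since $\rmd(\rmd f_2\wedge\dots\wedge\rmd f_{k'})=\rmd\One_\mms\wedge\rmd f_2\wedge\dots=0$ by \autoref{Th:Wd12 properties}(\ref{La:IV}). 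Expanding and using $\rmd f_0\wedge\omega=(-1)^k\,\omega\wedge\rmd f_0$ together with $\rmd\omega'=\rmd f_0\wedge\rmd f_1\wedge\dots\wedge\rmd f_{k'}$ (again \autoref{Th:Wd12 properties}(\ref{La:IV})) gives $\rmd(\omega\wedge\omega')=\rmd\omega\wedge\omega'+(-1)^k\,\omega\wedge\rmd\omega'$.

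I expect the main obstacle to be organizational rather than conceptual: tracking the antisymmetrization signs when expanding wedge products, checking that the divergence and Lie-bracket contributions regroup exactly into the contractions $(\omega\wedge\rmd g)(\widehat X_i)$ and $(\omega\wedge\rmd g)([X_i,X_j],\widehat X_i,\widehat X_j)$ of \autoref{Def:Exterior derivative}, and ensuring that every intermediate object produced (for instance $\omega\wedge\rmd g$, $\rmd\omega\wedge\rmd g$, the functions $\langle\nabla g,X_l\rangle$, or $\langle\nabla f,X\rangle$ for test $f$ and test $X$) has the integrability and regularity needed to invoke \autoref{Le:f omega}, \autoref{Le:Div identities} and \autoref{Le:Torsion free}; here the $\Ell^\infty$-bounds inherent in working with test functions and test vector fields are precisely what makes the argument close.
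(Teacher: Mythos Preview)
Your proposal is correct and follows essentially the same three-step structure as the paper: the base case $k'=0$ via \autoref{Le:f omega}, the auxiliary case $\omega'=\rmd g$ via \autoref{Le:f omega} applied with $f=\rmd g(X_l)\in\F\cap\Ell^\infty(\mms)$ together with the torsion identity from \autoref{Le:Torsion free}, and then the induction peeling off $\rmd f_1$ and invoking the hypothesis on $\rmd f_2\wedge\dots\wedge\rmd f_{k'}$. The only slip is a sign in your torsion formula (the correct version reads $\rmd[\rmd g(X_j)](X_i)-\rmd[\rmd g(X_i)](X_j)=\rmd g([X_i,X_j])$), but you already flagged sign-tracking as the bookkeeping to be done, and the overall architecture matches the paper's proof exactly.
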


\begin{proof} We proceed by induction on $k'$ and start with $k'=0$. In this case, $\omega'$ is simply an element $f\in\Test(\mms)$. Given any $X_0,\dots,X_k\in \Test(T\mms)$, by \autoref{Le:Div identities} and \autoref{Le:f omega} we obtain
\begin{align*}
&\int_\mms\sum_{i=0}^k (-1)^{i+1}\,f\,\omega(\widehat{X}_i)\div X_i\d\meas\\
&\qquad\qquad\qquad\qquad + \int_\mms \sum_{i=0}^k \sum_{j=i+1}^k (-1)^{i+j}\,f\,\omega([X_i,X_j],\widehat{X}_i,\widehat{X}_j)\d\meas\\
&\qquad\qquad = \int_\mms \sum_{i=0}^k (-1)^{i+1}\,\omega(\widehat{X}_i)\div(f\,X_i)\d\meas + \int_\mms \sum_{i=0}^k (-1)^i\,\omega(\widehat{X}_i)\d f(X_i)\d\meas\\
&\qquad\qquad\qquad\qquad + \int_\mms \sum_{i=0}^k \sum_{j=i+1}^k (-1)^{i+j}\,f\,\omega([X_i,X_j],\widehat{X}_i,\widehat{X}_j)\d\meas\\
&\qquad\qquad = \int_\mms f\d\omega(X_0,\dots,X_k)\d\meas + \int_\mms (\rmd f\wedge\omega)(X_0,\dots,X_k)\d\meas.
\end{align*}
In the last equality, we used the definition \eqref{Eq:Ptw scalar prod ext} of the pointwise scalar product in $\Ell^2(\Lambda^kT^*\mms)$. Therefore, we obtain that $f\,\omega\in\smash{\Dom(\rmd^k)}$ with
\begin{align*}
\rmd(f\,\omega) = f\d \omega + \rmd f \wedge\omega = f\d\omega +(-1)^k\,\omega\wedge\rmd f,
\end{align*}
which is precisely the claim for $k'=0$.

Before we proceed with the induction step, we show the claim under the assumption that $\omega' := \rmd f$ for some $f\in\Test(\mms)$, in which case we more precisely claim that $\omega\wedge\rmd f\in \smash{\Dom(\rmd^{k+1})}$ with
\begin{align}\label{Eq:k'=1 case}
\rmd(\omega\wedge \rmd f) = \rmd\omega\wedge\rmd f,
\end{align}
keeping in mind that $\rmd(\rmd f)=0$ by \autoref{Th:Wd12 properties}. To this aim, let $X_0,\dots,X_{k+1}\in\Test(T\mms)$. By definition \eqref{Eq:Ptw scalar prod ext} of the pointwise scalar product and \autoref{Le:f omega} --- which can be applied since $\rmd f(X_i)\in \F\cap\Ell^\infty(\mms)$ by \autoref{Pr:Compatibility} --- we get
\begin{align*}
&\int_\mms (\rmd\omega\wedge \rmd f)(X_0,\dots,X_{k+1})\d \meas\\
&\qquad\qquad = \int_\mms\sum_{i=0}^{k+1} (-1)^{i+k+1}\d\omega(\widehat{X}_i)\d f(X_i)\d\meas\\
&\qquad\qquad = \int_\mms \sum_{i=0}^{k+1}\sum_{\substack{j=0,\\j\neq i}}^{k+1} a_{ij}\,\omega(\widehat{X}_i,\widehat{X}_j)\div\!\big[\rmd f(X_i)\,X_j\big]\d\meas\\
&\qquad\qquad\qquad\qquad + \int_\mms \sum_{i=0}^{k+1}\sum_{\substack{j=0,\\ j\neq i}}^{k+1}\sum_{\substack{j'=j+1,\\j'\neq i}}^{k+1} b_{ijj'}\,\omega([X_j,X_{j'}], \widehat{X}_i,\widehat{X}_j,\widehat{X}_{j'})\d f(X_i)\d\meas,
\end{align*}
where, for $i,j,j'\in \{0,\dots,k+1\}$,
\begin{align*}
a_{ij} &:= \begin{cases} (-1)^{i+j+k+1} & \text{if } j \leq i,\\
(-1)^{i+j+k} & \text{otherwise},
\end{cases}\\
b_{ijj'} &:= \begin{cases} (-1)^{i+j+j'+k} & \text{if } j < i < j',\\
(-1)^{i+j+j'+k+1} & \text{otherwise}.
\end{cases}
\end{align*}
Then \eqref{Eq:k'=1 case} directly follows since, by \autoref{Le:Torsion free},
\begin{align*}
&\div\!\big[\rmd f(X_i)\,X_j\big] -\div\!\big[\rmd f(X_j)\,X_i\big]\\
&\qquad\qquad = \rmd f(X_i)\div X_j - \rmd f(X_j)\div X_i - \rmd f([X_i,X_j])\quad\meas\text{-a.e.}
\end{align*}

Now we are ready to perform the induction step. Given the assertion for $k'-1$ with $k'\in \N$, by linearity it suffices to consider the case $\omega' := f_0\d f_1\wedge\dots\wedge\rmd f_{k'}$, where $f_0\in\Test(\mms)\cup\R\,\One_\mms$ and $f_1,\dots,f_{k'}\in\Test(\mms)$. By \autoref{Th:Wd12 properties} we have $f_0\,\omega\in \smash{\Dom(\rmd^k)}$. Writing $\omega'' := \rmd f_2\wedge\dots\wedge \rmd f_{k'}$ thus yields
\begin{align*}
\rmd(\omega\wedge\omega') &= \rmd\big[(\omega\wedge f_0\d f_1)\wedge\omega''\big]\\
&= \rmd\big[(f_0\,\omega)\wedge\rmd f_1\big]\wedge\omega''\\
&= \big[f_0\,\rmd \omega + \rmd f_0\wedge\omega\big]\wedge \rmd f_1\wedge\omega''\\
&= \rmd\omega\wedge\omega' + (-1)^k\,\omega\wedge\rmd\omega'
\end{align*}
where we used the induction hypothesis in the second identity and \autoref{Th:Wd12 properties} in the last two equalities.
\end{proof}

\begin{remark}\label{Re:Integr issues} In \autoref{Pr:Leibniz rule ext der}, by evident integrability issues we cannot go really beyond the assumption $\omega\in\smash{\Reg(\Lambda^{k'}T^*\mms)}$, not even to the space $\smash{\Dom_\reg(\rmd^k)}$ introduced in the subsequent \autoref{Def:Hd12 forms}.
\end{remark}

\begin{definition}\label{Def:Hd12 forms} We define the space $\smash{\Dom_\reg(\rmd^k)\subset \Dom(\rmd^k)}$ by
\begin{align*}
\Dom_\reg(\rmd^k) := \cl_{\Vert \cdot\Vert_{\Dom(\rmd)}} \Reg(\Lambda^kT^*\mms).
\end{align*}
\end{definition}

This definition is non-void thanks to \autoref{Th:Wd12 properties} --- in fact, $\smash{\Dom_\reg(\rmd^k)}$ is a dense subspace of $\Ell^2(\Lambda^kT^*\mms)$. As in \autoref{Re:Test closure} we see that $\smash{\Dom_\reg(\rmd^k)}$ coincides with the closure of $\Test(\Lambda^kT^*\mms)$ in $\smash{\Dom(\rmd^k)}$ if $\mms$ is intrinsically complete, but might be larger in general. In line with this observation, we also do not know if $\smash{\Dom(\rmd^0) = \F}$ unless constant functions belong to $\F$. 

\begin{proposition}\label{Pr:d^2 =0} For every $\omega\in \Dom_\reg(\rmd^k)$, we have $\rmd\omega\in \Dom_\reg(\rmd^{k+1})$ with
\begin{align*}
\rmd(\rmd\omega)=0.
\end{align*}
\end{proposition}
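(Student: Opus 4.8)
The statement $\rmd(\rmd\omega)=0$ for $\omega\in\Dom_\reg(\rmd^k)$ should follow by a two-step density argument: first verify the identity on the generating class $\Reg(\Lambda^kT^*\mms)$, then pass to the closure using closedness of the exterior differential. So the first thing I would do is reduce, by linearity of $\rmd$ and of the claimed identity, to a single generic regular $k$-form $\omega = f_0\,\rmd f_1\wedge\dots\wedge\rmd f_k$ with $f_0\in\Test(\mms)\cup\R\,\One_\mms$ and $f_1,\dots,f_k\in\Test(\mms)$.

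\textbf{Key steps.} For such $\omega$, by \autoref{Th:Wd12 properties}\ref{La:IV} we already know $\omega\in\Dom(\rmd^k)$ with
\begin{align*}
\rmd\omega = \rmd f_0\wedge\rmd f_1\wedge\dots\wedge\rmd f_k.
\end{align*}
This is again an element of $\Reg(\Lambda^{k+1}T^*\mms)$ (with leading coefficient $\One_\mms$), hence by \autoref{Th:Wd12 properties}\ref{La:IV} applied once more it lies in $\Dom(\rmd^{k+1})$ and its exterior derivative is computed by putting $f_0 := \One_\mms$ there: $\rmd(\rmd\omega) = \rmd\One_\mms\wedge\rmd f_0\wedge\rmd f_1\wedge\dots\wedge\rmd f_k = 0$, using the convention $\rmd\One_\mms := 0$. (Equivalently, one can invoke \autoref{Pr:Leibniz rule ext der} with $\omega' := \rmd f_1\wedge\dots\wedge\rmd f_k\in\Reg$, which gives $\rmd(\rmd f_0\wedge\omega') = \rmd(\rmd f_0)\wedge\omega' - \rmd f_0\wedge\rmd\omega' = 0$ since each factor $\rmd f_i$ satisfies $\rmd(\rmd f_i)=0$ and $\rmd\omega' = 0$ by iterating \eqref{Eq:k'=1 case} in the proof of \autoref{Pr:Leibniz rule ext der}.) In either case, $\rmd\omega\in\Reg(\Lambda^{k+1}T^*\mms)\subset\Dom_\reg(\rmd^{k+1})$ and $\rmd(\rmd\omega)=0$ for all $\omega\in\Reg(\Lambda^kT^*\mms)$.

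\textbf{Passing to the closure.} Now let $\omega\in\Dom_\reg(\rmd^k)$ be arbitrary and pick a sequence $(\omega_n)_{n\in\N}$ in $\Reg(\Lambda^kT^*\mms)$ with $\omega_n\to\omega$ in $\Dom(\rmd^k)$, i.e.~$\omega_n\to\omega$ in $\Ell^2(\Lambda^kT^*\mms)$ and $\rmd\omega_n\to\rmd\omega$ in $\Ell^2(\Lambda^{k+1}T^*\mms)$. In particular $(\rmd\omega_n)_{n\in\N}$ is a Cauchy sequence in $\Dom(\rmd^{k+1})$, since $\rmd(\rmd\omega_n)=0$ for every $n$ by the previous step, so $\Vert\rmd\omega_n - \rmd\omega_m\Vert_{\Dom(\rmd^{k+1})} = \Vert\rmd\omega_n-\rmd\omega_m\Vert_{\Ell^2(\Lambda^{k+1}T^*\mms)}\to 0$. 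By \autoref{Th:Wd12 properties}\ref{La:I} the space $\Dom(\rmd^{k+1})$ is complete, and by the closedness of the exterior differential, \autoref{Th:Wd12 properties}\ref{La:II}, the limit of $(\rmd\omega_n)_{n\in\N}$ in $\Ell^2(\Lambda^{k+1}T^*\mms)$ — which is $\rmd\omega$ — lies in $\Dom(\rmd^{k+1})$ with $\rmd(\rmd\omega) = \lim_{n\to\infty}\rmd(\rmd\omega_n) = 0$. Moreover $\rmd\omega_n\to\rmd\omega$ in $\Dom(\rmd^{k+1})$ and each $\rmd\omega_n\in\Reg(\Lambda^{k+1}T^*\mms)$, so $\rmd\omega\in\Dom_\reg(\rmd^{k+1})$. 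This proves the proposition.

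\textbf{Expected main obstacle.} There is essentially no obstacle here beyond bookkeeping: the content is entirely in \autoref{Th:Wd12 properties} and \autoref{Pr:Leibniz rule ext der}, which have already been established. The only point requiring a little care is making sure that the approximating sequence for $\omega\in\Dom_\reg(\rmd^k)$ can be chosen in $\Reg(\Lambda^kT^*\mms)$ (rather than merely in $\Dom(\rmd^k)$) — but this is exactly the definition of $\Dom_\reg(\rmd^k)$ as the $\Vert\cdot\Vert_{\Dom(\rmd)}$-closure of $\Reg(\Lambda^kT^*\mms)$ in \autoref{Def:Hd12 forms} — and that $\rmd\omega$ itself ends up in $\Dom_\reg(\rmd^{k+1})$, which follows since the $\rmd\omega_n$ are regular $(k+1)$-forms converging to $\rmd\omega$ in the graph norm.
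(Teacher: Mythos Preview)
Your proposal is correct and follows essentially the same approach as the paper: verify $\rmd(\rmd\omega)=0$ on $\Reg(\Lambda^kT^*\mms)$ via \autoref{Th:Wd12 properties}\ref{La:IV}, then extend to $\Dom_\reg(\rmd^k)$ by the closedness of $\rmd$ from \autoref{Th:Wd12 properties}\ref{La:II}. The paper's proof is simply a terse two-sentence version of your argument.
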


\begin{proof} The statement for $\omega\in\Reg(\Lambda^kT^*\mms)$ follows from \autoref{Th:Wd12 properties} above. By definition of $\smash{\Dom(\rmd^k)}$ and the closedness of $\rmd$ again by \autoref{Th:Wd12 properties}, the claim extends to arbitrary $\smash{\omega\in \Dom(\rmd^k)}$.
\end{proof}

\begin{remark} A locality property for $\rmd$ such as
\begin{align*}
\One_{\{\omega = 0\}}\d \omega = 0
\end{align*}
for general $\smash{\omega\in \Dom(\rmd^k)}$ seems hard to obtain from our axiomatization. Compare with a similar remark at \cite[p.~140]{gigli2018}.
\end{remark}

\subsection{Nonsmooth de Rham cohomology and Hodge theorem}\label{Sub:Hodge thm}

Motivated by \autoref{Pr:d^2 =0}, the goal of this section is to make sense of a nonsmooth de Rham complex. The link with the smooth setting is outlined in \autoref{Re:Compat Hodge}. 

Given $k\in \N_0$ we exceptionally designate by $\smash{\rmd^k}$ the exterior differential defined on $\smash{\Dom(\rmd^k)}$, which is well-defined by  \autoref{Pr:d^2 =0}. Define the spaces $\rmC_k(\mms)$ and $\rmE_k(\mms)$ of \emph{closed} and \emph{exact} $k$-forms by
\begin{align*}
\rmC_k(\mms) &:= \mathrm{ker}\,\rmd^k \\
&\textcolor{white}{:}= \big\lbrace \omega \in \Dom(\rmd^k) : \rmd\omega = 0 \big\rbrace,\\
\rmE_k(\mms) &:= \mathrm{im}\,\rmd^{k-1}\\
&\textcolor{white}{:}= \big\lbrace \omega \in \Dom(\rmd^k) : \omega = \rmd\omega' \text{ for some }\omega'\in \Dom(\rmd^{k-1})\big\rbrace.
\end{align*}
By \autoref{Th:Wd12 properties}, we know that $\rmC_k(\mms)$ is a closed subspace of $\Ell^2(\Lambda^kT^*\mms)$, but not if the same is true for $\rmE_k(\mms)$. Since $\rmE_k(\mms) \subset \rmC_k(\mms)$ and $\rmC_k(\mms)$ is $\Ell^2$-closed, the $\Ell^2$-closure of $\rmE_k(\mms)$ is contained in $\rmC_k(\mms)$ as well, hence the following definition is meaningful and non-void.

\begin{definition} The \emph{$k$-th de Rham cohomology group} of $\mms$ is defined by
\begin{align*}
H_{\dR}^k(\mms) := \rmC_k(\mms)\,\big/\,\cl_{\Vert\cdot\Vert_{\Ell^2(\Lambda^kT^*\mms)}} \rmE_k(\mms).
\end{align*}
\end{definition}

In view of  the Hodge \autoref{Th:Hodge thm}, we first need to make sense of the Hodge Laplacian and of harmonic $k$-forms. We start with the following. (Again, we only make explicit the degree $k$ in the denotation of the space, but not of the differential object itself.)

\begin{definition}\label{Def:Codifferential} Given any $k\geq 1$, the space $\smash{\Dom(\delta^k)}$ is defined to consist of all $\smash{\omega\in \Ell^2(\Lambda^kT^*\mms)}$ for which there exists $\smash{\rho\in \Ell^2(\Lambda^{k-1}T^*\mms)}$ such that for every $\eta\in \Test(\Lambda^{k-1}T^*\mms)$, we have
\begin{align*}
\int_\mms \langle\rho,\eta\rangle\d\meas = \int_\mms \langle\omega,\rmd\eta\rangle\d\meas.
\end{align*}
If it exists, $\rho$ is unique, denoted by $\delta\omega$ and called the \emph{codifferential} of $\omega$. We simply define $\smash{\Dom(\delta^0) := \Ell^2(\mms)}$ and $\delta := 0$ on this space.
\end{definition}

By the  density of $\Test(\Lambda^{k-1}T^*\mms)$ in $\Ell^2(\Lambda^{k-1}T^*\mms)$, the uniqueness statement is indeed true. Furthermore, $\delta$ is a closed operator, i.e.~the image of the assignment $\smash{\Id\times \delta\colon \Dom(\delta^k)\to \Ell^2(\Lambda^kT^*\mms)\times\Ell^2(\Lambda^{k-1}T^*\mms)}$ is closed in $\Ell^2(\Lambda^kT^*\mms)\times\Ell^2(\Lambda^{k-1}T^*\mms)$. Lastly, by comparison of \autoref{Def:Codifferential} with  \autoref{Def:L2 div} we have $\smash{\Dom(\delta^1) = \Dom(\div)^\flat}$, and for every $\smash{\omega\in \Dom(\delta^1)}$,
\begin{align}\label{Eq:delta = -div}
\delta\omega = -\div\omega^\sharp\quad\meas\text{-a.e.}
\end{align}

The next result shows that $\smash{\Dom(\delta^k)}$ is nonempty --- in fact, it is dense in $\Ell^2(\Lambda^kT^*\mms)$. There and in the sequel, for appropriate $f_1,\dots,f_k\in\F_\rme$ and $i,j\in \{1,\dots,k\}$ with $i<j$, we use the abbreviations
\begin{align*}
\{\widehat{\rmd f}_i\} &:= \rmd f_1\wedge\dots\wedge \widehat{\rmd f}_i\wedge\dots\wedge\rmd f_k\\
\{\widehat{\rmd f}_i,\widehat{\rmd f}_j\} &:= \rmd f_1\wedge\dots\wedge\widehat{\rmd f}_i\wedge\dots\wedge\widehat{\rmd f}_j\wedge\dots \wedge\rmd f_k.
\end{align*}

\begin{lemma}\label{Le:delta of d} For every $f_0\in \Test(\mms)\cup\R\,\One_\mms$ and every $f_1,\dots,f_k\in \Test(\mms)$, we have $f_0\d f_1\wedge\dots\wedge\rmd f_k\in \smash{\Dom(\delta^k)}$ with
\begin{align*}
\delta(f_0\d f_1\wedge\dots\wedge\rmd f_k)  &= \sum_{i=1}^k (-1)^i\,\big[f_0\,\Delta f_i + \langle \rmd f_0,\rmd f_i\rangle\big]\,\{\widehat{\rmd f}_i\}\\
&\qquad\quad  + \sum_{i=1}^k \sum_{j=i+1}^k (-1)^{i+j}\,f_0\,[\nabla f_i,\nabla f_j]^\flat\wedge \{\widehat{\rmd f}_i,\widehat{\rmd f}_j\},
\end{align*}
with the usual interpretation $\rmd\One_\mms := 0$.
\end{lemma}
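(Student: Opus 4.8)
The strategy is to verify the defining property of $\delta$ from \autoref{Def:Codifferential} directly, by testing against an arbitrary $\eta \in \Test(\Lambda^{k-1}T^*\mms)$ and integrating by parts. Write $\omega := f_0\,\rmd f_1 \wedge \dots \wedge \rmd f_k$ and let $\rho$ denote the claimed right-hand side; note first that $\rho \in \Ell^2(\Lambda^{k-1}T^*\mms)$ since all the coefficients $f_0\,\Delta f_i + \langle \rmd f_0,\rmd f_i\rangle$ and $f_0$ are in $\Ell^\infty(\mms)$ (using $\Delta f_i \in \F \subset \Ell^2$ but also, via \autoref{Pr:Fin tot var} and \autoref{Pr:Compatibility}, that the relevant products stay square-integrable), and the Lie brackets $[\nabla f_i, \nabla f_j]$ belong to $\Ell^2(T\mms)$ by \eqref{Eq:Bound nabla Z X}. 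The core computation pairs $\omega$ with $\rmd\eta$; since by \autoref{Th:Wd12 properties}\ref{La:IV} we may reduce $\eta$ to an element of $\Reg(\Lambda^{k-1}T^*\mms)$ by density and closedness of $\delta$, and then by multilinearity to $\eta := g\,\rmd g_1 \wedge \dots \wedge \rmd g_{k-1}$ with $g \in \Test(\mms) \cup \R\,\One_\mms$ and $g_i \in \Test(\mms)$, the wedge $\rmd\eta = \rmd g \wedge \rmd g_1 \wedge \dots \wedge \rmd g_{k-1}$ is again an explicit regular $k$-form.

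\textbf{Key steps.} First I would rewrite $\langle \omega, \rmd\eta\rangle$ using the determinant formula \eqref{Eq:Ptw scalar prod ext} for the pointwise scalar product on $\Ell^2(\Lambda^kT^*\mms)$, expressing it as $f_0$ times a sum over permutations of products $\langle \rmd f_{\sigma(0)}, \rmd g'\rangle \langle \rmd f_{\sigma(1)}, \rmd g_1\rangle \cdots$, where $g' := g$ slots into the role of the zeroth form; all these pairings $\langle \rmd f_i, \rmd g_j\rangle = \langle \nabla f_i, \nabla g_j\rangle$ lie in $\F \cap \Ell^\infty(\mms)$ by \autoref{Pr:Bakry Emery measures} and \autoref{Sub:Test fcts}. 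Second, I would integrate by parts: each term contains a factor of the form $\int_\mms h\,\langle \nabla f_i, \nabla g_j\rangle \,\d\meas$ which, when one of the $g$'s is peeled off, becomes via \eqref{Eq:IBP Laplacian} (the defining property of $\Delta$) and the Leibniz rule \autoref{Le:Div g nabla f} an expression involving $g_j\,\Delta f_i$, $\langle \nabla f_i, \nabla(\text{rest})\rangle$, and correction terms. The bookkeeping of signs is governed by the alternating structure of the determinant; the terms where the derivative lands on another $\rmd f_{i'}$ rather than producing a Laplacian reorganize — using the torsion identity \autoref{Le:Torsion free} in the form $\langle \nabla f_i, \nabla\langle \nabla f_j, \nabla g\rangle\rangle - \langle \nabla f_j, \nabla\langle\nabla f_i,\nabla g\rangle\rangle = \rmd g([\nabla f_i,\nabla f_j])$ — into precisely the Lie-bracket terms $f_0\,[\nabla f_i,\nabla f_j]^\flat \wedge \{\widehat{\rmd f}_i,\widehat{\rmd f}_j\}$ appearing in $\rho$. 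Third, I would collect all surviving terms and match them, slot by slot in the exterior algebra $\Lambda^{k-1}$, against the explicit $\rho$ paired with $\eta$, using \eqref{Eq:Ptw scalar prod ext} once more; the case $f_0 \in \R\,\One_\mms$ is handled in parallel with the convention $\rmd\One_\mms := 0$ simply dropping the $\langle \rmd f_0, \rmd f_i\rangle$ terms.

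\textbf{Main obstacle.} The principal difficulty is purely combinatorial: correctly tracking the signs $(-1)^i$ and $(-1)^{i+j}$ through the antisymmetrization, and checking that the "mixed" terms (where an integration by parts shifts a gradient from one $f_i$ onto a factor $\langle \nabla f_j, \nabla g\rangle$) recombine exactly into the second fundamental sum via the torsion-free identity rather than leaving a residue. A secondary, more technical point is ensuring at each stage that every product of test-type functions encountered retains enough integrability (typically $\F \cap \Ell^\infty$ or at least $\Ell^2$) so that \eqref{Eq:IBP Laplacian}, \autoref{Le:Div g nabla f} and \autoref{Le:Div identities} genuinely apply; this is where the fact that $\Test(\mms)$ is an algebra, together with \autoref{Pr:Bakry Emery measures} giving $\langle \nabla f, \nabla g\rangle \in \F$, does the essential work. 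Once these two points are settled the identification with $\rho$ is forced by the uniqueness clause in \autoref{Def:Codifferential}.
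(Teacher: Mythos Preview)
Your proposal is correct and follows essentially the same route as the paper: test against $\eta = g_1\,\rmd g_2\wedge\dots\wedge\rmd g_k$ with $g_i\in\Test(\mms)$, expand $\langle\omega,\rmd\eta\rangle$ via the Leibniz/determinant formula \eqref{Eq:Ptw scalar prod ext}, integrate by parts against the scalar factor (the paper does this via $\nabla g_1\in\Dom(\div)$, \autoref{Le:Div g nabla f} and \autoref{Pr:Product rule for gradients}), and then regroup the second-order terms using the Hessian identity $\Hess f_K(\nabla f_I,\cdot)-\Hess f_I(\nabla f_K,\cdot)=[\nabla f_I,\nabla f_K]^\flat$, which is equivalent to your torsion-free formulation. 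One small point: no density or closedness argument is needed to reduce $\eta$ --- by \autoref{Def:Codifferential} the defining property only asks for $\eta\in\Test(\Lambda^{k-1}T^*\mms)$, so linearity alone gets you to a single monomial.
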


\begin{proof} We abbreviate $\omega := f_0\d f_1\wedge\dots\wedge \rmd f_k$. By linearity, it clearly suffices to consider the defining property from \autoref{Def:Codifferential} for $\eta := g_1\d g_2\wedge\dots\wedge \rmd g_k$, $g_1,\dots,g_k\in\Test(\mms)$. Write $\mathfrak{S}_k$ for the set of permutations of $\{1,\dots,k\}$. Then by \eqref{Eq:Ptw scalar prod ext} and the Leibniz formula,
\begin{align*}
\int_\mms \langle\omega,\rmd\eta\rangle\d\meas &= \int_\mms \sum_{\sigma\in\mathfrak{S}_k} \sgn \sigma\,f_0\,\big\langle\nabla g_1,\nabla f_{\sigma(1)}\big\rangle\,\prod_{i=2}^k \big\langle\nabla g_i,\nabla f_{\sigma(i)}\big\rangle\d\meas.
\end{align*}
Since $\nabla g_1\in\Dom(\div)$, regardless of whether $f_0\in \Test(\mms)$ or $f_0\in\R\,\One_\mms$ --- and with appropriate interpretation $\nabla f_0 := 0$ in the latter case --- integration by parts and then using \autoref{Le:Div g nabla f} and \autoref{Pr:Compatibility} yields
\begin{align*}
\int_\mms \langle\omega,\rmd\eta\rangle\d\meas &= -\int_\mms \sum_{\sigma\in\mathfrak{S}_k}\sgn \sigma\,g_1\div\!\Big[f_0\,\nabla f_{\sigma(1)}\,\prod_{i=2}^k \big\langle\nabla g_i,\nabla f_{\sigma(i)}\big\rangle\Big]\d\meas\\
&= -\int_\mms \sum_{\sigma\in\mathfrak{S}_k}\sgn \sigma\,g_1\,\big\langle \nabla f_0,\nabla f_{\sigma(1)}\big\rangle\,\prod_{i=2}^k \big\langle\nabla g_i,\nabla f_{\sigma(i)}\big\rangle\d\meas\\
&\qquad\qquad - \int_\mms \sum_{\sigma\in\mathfrak{S}_k}\sgn\sigma\,g_1\,f_0\,\Delta f_{\sigma(1)}\,\prod_{i=2}^k \big\langle\nabla g_i,\nabla f_{\sigma(i)}\big\rangle\d\meas\\
&\qquad\qquad - \int_\mms \sum_{\sigma\in\mathfrak{S}_k} \sgn\sigma\, g_1\,f_0\,\sum_{i=2}^k \Big[\!\Hess g_i(\nabla f_{\sigma(1)},\nabla f_{\sigma(i)})\\
&\qquad\qquad\qquad\qquad\times \prod_{\substack{j=2,\\j\neq i}}^k \big\langle\nabla g_j,\nabla f_{\sigma(j)}\big\rangle\Big]\d\meas\\
&\qquad\qquad -\int_\mms \sum_{\sigma\in\mathfrak{S}_k}\sgn\sigma\, g_1\,f_0\sum_{i=2}^k\Big[\!\Hess f_{\sigma(i)}(\nabla f_{\sigma(1)},\nabla g_i)\\
&\qquad\qquad\qquad\qquad \times\prod_{\substack{j=2,\\j\neq i}}^k \big\langle\nabla g_j,\nabla f_{\sigma(j)}\big\rangle\Big]\d\meas.
\end{align*}
The second last integral vanishes identically, which follows by symmetry of the Hessian and by comparing a given $\sigma\in\mathfrak{S}_k$ with the permutation that swaps $\sigma(1)$ and $\sigma(i)$ in $\sigma$, $i\in \{2,\dots,k\}$. The claim  follows from the combinatorial formulas
\begin{align*}
\sum_{\substack{\sigma\in\mathfrak{S}_k,\\\sigma(1) = I}}\sgn\sigma \prod_{j=1}^k \big\langle\nabla g_j,\nabla f_{\sigma(j)}\big\rangle &= (-1)^{I+1}\,\big\langle\rmd g_2\wedge\dots\wedge\rmd g_k, \{\widehat{\rmd f}_I\}\big\rangle\\
\sum_{\substack{\sigma\in\mathfrak{S}_k,\\
\sigma(1)=I,\\\sigma(J) = K\\}} \sgn\sigma\prod_{\substack{j=1,\\j\neq J}}^k \big\langle\nabla g_i,\nabla f_{\sigma(i)}\big\rangle &= a_{IJK}\,\big\langle \{\widehat{\rmd g}_J\}, \{\widehat{\rmd f}_I,\widehat{\rmd f}_K\}\big\rangle
\end{align*}
for every $I,K\in \{1,\dots,k\}$ with $I\neq K$ and every $J\in \{2,\dots,k\}$, where
\begin{align*}
a_{IJK} := \begin{cases} (-1)^{1+I+J+K} & \text{if }I < K,\\
(-1)^{I+J+K} & \text{otherwise},
\end{cases}
\end{align*}
from the identity
\begin{align*}
\Hess f_K(\nabla f_I,\cdot) - \Hess f_I(\nabla f_K,\cdot) = [\nabla f_I,\nabla f_K]^\flat
\end{align*}
stemming from the definition of the Lie bracket and \autoref{Th:Hess properties}, and
\begin{align*}
&\sum_{J=2}^k (-1)^J\,\big\langle[\nabla f_I,\nabla f_K],\nabla g_J\big\rangle\,\big\langle \{\widehat{\rmd g}_J\}, \{\widehat{\rmd f}_I,\widehat{\rmd f}_K\}\big\rangle\\
&\qquad\qquad = \big\langle \rmd g_2\wedge\dots\wedge\rmd g_k, [\nabla f_I,\nabla f_K]^\flat \wedge \{\widehat{\rmd f}_I,\widehat{\rmd f}_K\}\big\rangle.\qedhere
\end{align*}
\end{proof}

\begin{remark} By approximation, using \autoref{Th:Wd12 properties} and \autoref{Le:delta of d}, one readily proves  the following. Let $f\in \F_\eb$ and $\omega\in H^{1,2}(T^*\mms)$. Assume furthermore that $\rmd f\in\Ell^\infty(T^*\mms)$ or that $\omega\in\Ell^\infty(T^*\mms)$. Then $f\,\omega \in H^{1,2}(T^*\mms)$ with
		\begin{align*}
		\rmd(f\,\omega) &= f\,\rmd\omega + \rmd f\wedge\omega,\\
		\delta(f\,\omega) &= f\,\delta\omega - \langle\rmd f,\omega\rangle.
		\end{align*}
\end{remark}

\begin{definition}\label{Def:W12 forms space} We define the space $W^{1,2}(\Lambda^kT^*\mms)$ by
\begin{align*}
W^{1,2}(\Lambda^kT^*\mms) &:= \Dom(\rmd^k)\cap \Dom(\delta^k).
\end{align*}
\end{definition}

By \autoref{Th:Wd12 properties} and \autoref{Le:delta of d}, we already know that $W^{1,2}(\Lambda^kT^*\mms)$ is a dense subspace of $\Ell^2(\Lambda^kT^*\mms)$. 

We endow $W^{1,2}(\Lambda^kT^*\mms)$ with the norm $\Vert\cdot\Vert_{W^{1,2}(\Lambda^kT^*\mms)}$ given by
\begin{align*}
\Vert\omega\Vert_{W^{1,2}(\Lambda^kT^*\mms)}^2 := \Vert\omega\Vert_{\Ell^2(\Lambda^kT^*\mms)}^2 + \Vert \rmd\omega\Vert_{\Ell^2(\Lambda^{k+1}T^*\mms)}^2 + \Vert\delta\omega\Vert_{\Ell^2(\Lambda^{k-1}T^*\mms)}^2
\end{align*}
and we define the \emph{contravariant} functional $\Ch_\con\colon\Ell^2(\Lambda^kT^*\mms)\to [0,\infty]$ by
\begin{align*}
\Ch_\con(\omega) := \begin{cases}\displaystyle \int_\mms \big[\vert\rmd\omega\vert^2 + \vert\delta\omega\vert^2\big]\d\meas & \text{if }\omega\in W^{1,2}(\Lambda^kT^*\mms),\\
\infty & \text{otherwise}.
\end{cases}
\end{align*}
Arguing as for \autoref{Th:Hess properties}, \autoref{Th:Properties W12 TM} and \autoref{Th:Wd12 properties}, $\smash{W^{1,2}(\Lambda^kT^*\mms)}$ becomes a separable Hilbert space w.r.t.~$\smash{\Vert\cdot\Vert_{W^{1,2}(\Lambda^kT^*\mms)}}$. Moreover, the functional $\Ch_\con$ is clearly $\Ell^2$-lower semicontinuous.

Again by \autoref{Th:Wd12 properties} and \autoref{Le:delta of d}, we have $\smash{\Reg(\Lambda^kT^*\mms)\subset W^{1,2}(\Lambda^kT^*\mms)}$, so that the following definition  makes sense.

\begin{definition}\label{Def:H Hodge space} The space $H^{1,2}(\Lambda^kT^*\mms)\subset W^{1,2}(\Lambda^kT^*\mms)$ is defined by
\begin{align*}
H^{1,2}(\Lambda^kT^*\mms) := \cl_{\Vert \cdot\Vert_{W^{1,2}(\Lambda^kT^*\mms)}}\Reg(\Lambda^kT^*\mms).
\end{align*}
\end{definition}

\begin{remark}[Absolute boundary conditions]\label{Re:Abs bdry cond} We adopt the interpretation that $\omega$ and $\rmd\omega$ have ``vanishing normal components'' for any given $\smash{\omega\in H^{1,2}(\Lambda^kT^*\mms)}$. For instance, on a compact Riemannian manifold $\mms$ with boundary, by \eqref{Eq:Normal parts forms smooth world} every $\omega\in \Reg(\Lambda^kT^*\mms)$ satisfies the \emph{absolute boundary conditions} \cite[Sec.~2.6]{schwarz1995}
\begin{align}\label{Eq:Abs bdry cond}
\begin{split}
\rmn\,\omega &= 0,\\
\rmn\d\omega &= 0
\end{split}
\end{align}
$\surf$-a.e.~at $\partial\mms$. By Gaffney's inequality --- see \autoref{Re:Gaffney} below --- \autoref{Pr:d^2 =0} and \autoref{Le:delta of d}, both $\omega$ and $\rmd \omega$  belong to the corresponding $W^{1,2}$-Sobolev spaces over $\boldsymbol{\mathrm{F}} := \Lambda^kT^*\mms$ and $\boldsymbol{\mathrm{F}} := \Lambda^{k+1}T^*\mms$ induced by the Levi-Civita connection $\nabla$ as introduced in \autoref{Sub:Riem mflds}, respectively, and these inclusions are continuous. Hence, the trace theorem can be applied, and \eqref{Eq:Abs bdry cond} passes to the limit in the definition of $H^{1,2}(\Lambda^kT^*\mms)$. In particular, \eqref{Eq:Abs bdry cond} holds $\surf$-a.e.~for every $\omega \in H^{1,2}(\Lambda^kT^*\mms)$.
\end{remark}

\begin{remark}[Gaffney's inequality]\label{Re:Gaffney} In general, $H^{1,2}(\Lambda^kT^*\mms)$ does not coincide with $W^{1,2}(\Lambda^kT^*\mms)$. On a compact Riemannian manifold $\mms$ with boundary, our usual argument using \autoref{Pr:Trace thm} gives the claim, up to an important technical detail to be fixed before. (For notational simplicity, we restrict ourselves to the case $k=1$. In the general case, $H^{1,2}(T\mms)$ has to be replaced by $W^{1,2}(\boldsymbol{\mathrm{F}})$ defined according to \autoref{Sub:Riem mflds} with $\boldsymbol{\mathrm{F}} := \Lambda^kT^*\mms$.) 

In general, $\rmd$ and $\delta$ are continuous w.r.t.~convergence in $\smash{H^{1,2}(T\mms)^\flat}$ [sic] by \cite[p.~62]{schwarz1995}, whence $\smash{H^{1,2}(T^*\mms)\subset H^{1,2}(T\mms)^\flat}$ with continuous inclusion. However, to apply the trace theorem to infer that all elements of $H^{1,2}(T^*\mms)$ have vanishing normal component at $\partial\mms$, the reverse inclusion is required (compare with the foregoing \autoref{Re:Abs bdry cond} and with \autoref{Le:Inclusion} below). The latter is a classical result by Gaffney, see \cite[Cor.~2.1.6]{schwarz1995} for a proof: there exists a finite constant $C>0$ such that for every $\smash{\omega\in H^{1,2}(T\mms)^\flat}$ with $\rmn\,\omega = 0$ $\surf$-a.e.~on $\partial\mms$, we have
\begin{align*}
\big\Vert \omega^\sharp\big\Vert_{W^{1,2}(T\mms)}^2 \leq C\,\big\Vert \omega\big\Vert_{W^{1,2}(T^*\mms)}^2.
\end{align*}
\end{remark}

\begin{definition}\label{Def:Hodge Lapl} The space $\Dom(\Hodge_k)$ is defined to consist of all $\smash{\omega\in H^{1,2}(\Lambda^kT^*\mms)}$ for which there exists $\alpha\in \Ell^2(\Lambda^kT^*\mms)$ such that for every $\eta\in H^{1,2}(\Lambda^kT^*\mms)$,
\begin{align*}
\int_\mms \langle\alpha,\eta\rangle\d\meas = \int_\mms \big[\langle\rmd \omega,\rmd\eta\rangle + \langle\delta\omega,\delta\eta\rangle\big]\d\meas.
\end{align*}
In case of existence, the element $\alpha$ is unique, denoted by $\Hodge_k \omega$ and termed the \emph{Hodge Laplacian} of $\omega$. Moreover, the space $\Harm(\Lambda^kT^*\mms)$ of \emph{harmonic $k$-forms} is defined as the space of all $\omega\in H^{1,2}(\Lambda^kT^*\mms)$ with $\rmd\omega = 0$ and $\delta\omega = 0$.
\end{definition}

For the most important case $k=1$, we shall write $\Hodge$ instead of $\Hodge_k$. By \eqref{Eq:IBP Laplacian},
\begin{align*}
\Hodge_0 = -\Delta.
\end{align*}
Moreover, the Hodge Laplacian $\Hodge_k$ is a closed operator, which can be e.g.~seen by identifying $\Hodge_k\omega$, $\omega\in \Dom(\Hodge_k)$, with the only element in the subdifferential of $\smash{\widetilde{\Ch}_\con}(\omega)$, where the functional $\smash{\widetilde{\Ch}}_\con$ is defined in \eqref{Eq:C con} below \cite[p.~145]{gigli2018}. In particular, $\smash{\Harm(\Lambda^kT^*\mms)}$ is a closed subspace of $\Ell^2(\Lambda^kT^*\mms)$.

\begin{remark} Our definition of harmonic $k$-forms follows \cite[Def.~2.2.1]{schwarz1995}. Of course, in the framework of \autoref{Def:Hodge Lapl}, $\omega\in\Harm(\Lambda^kT^*\mms)$ if and only if $\omega\in\Dom(\Hodge_k)$ with $\Hodge_k\omega=0$, see e.g.~\cite[p.~145]{gigli2018}. This, however, is a somewhat implicit consequence of the interpretation of any $\omega\in \Harm(\Lambda^kT^*\mms)$ as obeying absolute boundary conditions (recall \autoref{Re:Abs bdry cond}). Compare with \cite[Prop.~1.2.6, Prop.~2.1.2, Cor.~2.1.4]{schwarz1995}. In general, the vanishing of $\Hodge_k\omega$ is a weaker condition than asking for $\rmd\omega$ and $\delta\omega$ to vanish identically for appropriate  $\omega\in\Ell^2(\Lambda^kT^*\mms)$ \cite[p.~68]{schwarz1995}.
\end{remark}

We can now state and prove the following variant of Hodge's theorem.

\begin{theorem}[Hodge theorem]\label{Th:Hodge thm} The map $\omega\mapsto [\omega]$ from $\Harm(\Lambda^kT^*\mms)$ into $H_\dR^k(\mms)$ is an isomorphism of Hilbert spaces.
\end{theorem}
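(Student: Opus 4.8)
The plan is to mimic the classical Hodge decomposition argument, adapted to the $\Ell^\infty$-module setting following the same strategy as in \cite[Thm.~3.5.9]{gigli2018}. First I would establish the $\Ell^2$-orthogonal decomposition
\begin{align*}
\Ell^2(\Lambda^kT^*\mms) = \cl_{\Vert\cdot\Vert_{\Ell^2(\Lambda^kT^*\mms)}}\rmE_k(\mms) \oplus \Harm(\Lambda^kT^*\mms) \oplus \cl_{\Vert\cdot\Vert_{\Ell^2(\Lambda^kT^*\mms)}}\big(\delta\Dom(\delta^{k+1})\big),
\end{align*}
which rests on two facts. The first is that $\rmd$ and $\delta$ are ``formally adjoint'' in the sense that $\int_\mms\langle\rmd\eta,\omega\rangle\d\meas = \int_\mms\langle\eta,\delta\omega\rangle\d\meas$ whenever the terms are defined; for $\eta\in\Test(\Lambda^kT^*\mms)$ this is essentially the defining property in \autoref{Def:Codifferential}, and by the density statements in \autoref{Th:Wd12 properties} and \autoref{Le:delta of d} together with closedness of $\rmd$ and $\delta$ it extends to $\eta\in H^{1,2}(\Lambda^kT^*\mms)$ and $\omega\in\Dom(\delta^k)$ (resp. $\omega\in\Dom(\rmd^k)$). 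The second is the identification $\Harm(\Lambda^kT^*\mms) = \big(\cl\,\rmE_k(\mms)\big)^\perp \cap \big(\cl\,\delta\Dom(\delta^{k+1})\big)^\perp$, which follows from the formal adjointness: $\omega\perp\rmd\eta$ for all $\eta\in\Test(\Lambda^{k-1}T^*\mms)$ says exactly that $\omega\in\Dom(\delta^k)$ with $\delta\omega=0$, and dually $\omega\perp\delta\eta$ for all $\eta\in\Test(\Lambda^{k+1}T^*\mms)$ says $\omega\in\Dom(\rmd^k)$ with $\rmd\omega=0$; the regularity needed to conclude $\omega\in H^{1,2}(\Lambda^kT^*\mms)$ comes from $\rmd\omega=0$ and $\delta\omega=0$ being attained in the $W^{1,2}$-norm through the approximating regular forms, so $\omega$ lies in the closure $H^{1,2}(\Lambda^kT^*\mms)$.

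Granting the decomposition, I would argue as follows. Consider the map $\Psi\colon\Harm(\Lambda^kT^*\mms)\to H_\dR^k(\mms)$, $\omega\mapsto[\omega]$, noting $\Harm(\Lambda^kT^*\mms)\subset\rmC_k(\mms)$ since harmonic forms are closed. Injectivity: if $[\omega]=0$, i.e.\ $\omega\in\cl\,\rmE_k(\mms)$, then since $\omega\in\Harm(\Lambda^kT^*\mms)$ is also orthogonal to $\cl\,\rmE_k(\mms)$ by the decomposition, $\omega=0$. Surjectivity: given $[\alpha]\in H_\dR^k(\mms)$ with $\alpha\in\rmC_k(\mms)\subset\Ell^2(\Lambda^kT^*\mms)$, decompose $\alpha=\alpha_1+\alpha_0+\alpha_2$ with $\alpha_1\in\cl\,\rmE_k(\mms)$, $\alpha_0\in\Harm(\Lambda^kT^*\mms)$, $\alpha_2\in\cl\,\delta\Dom(\delta^{k+1})$. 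Since $\alpha$ is closed and $\alpha_1$ is a limit of exact (hence closed) forms and $\alpha_0$ is closed, closedness of $\rmd$ forces $\alpha_2\in\rmC_k(\mms)$; but $\alpha_2$ is orthogonal to $\Harm(\Lambda^kT^*\mms)$ and to $\cl\,\rmE_k(\mms)$, while $\rmC_k(\mms)=\Harm(\Lambda^kT^*\mms)\oplus\cl\,\rmE_k(\mms)$ inside $\Ell^2(\Lambda^kT^*\mms)$ (this last being a corollary of the triple decomposition), so $\alpha_2=0$. Hence $\alpha=\alpha_1+\alpha_0$ with $\alpha_1\in\cl\,\rmE_k(\mms)$, giving $[\alpha]=[\alpha_0]=\Psi(\alpha_0)$. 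Linearity of $\Psi$ is clear, and it is an isometry onto its image because on $\Harm(\Lambda^kT^*\mms)$ the $\Ell^2$-norm agrees with the quotient norm on $H_\dR^k(\mms)$: for $\omega$ harmonic, $\Vert[\omega]\Vert_{H_\dR^k(\mms)} = \inf_{\eta\in\cl\,\rmE_k(\mms)}\Vert\omega-\eta\Vert_{\Ell^2(\Lambda^kT^*\mms)} = \Vert\omega\Vert_{\Ell^2(\Lambda^kT^*\mms)}$ by orthogonality. Thus $\Psi$ is a bijective linear isometry, i.e.\ an isomorphism of Hilbert spaces.

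The main obstacle I expect is not the Hilbert-space bookkeeping but justifying the formal adjointness between $\rmd$ and $\delta$ at the level of generality we work in, and the attendant density/closedness inputs. Specifically, one must be careful that $\int_\mms\langle\rmd\eta,\omega\rangle\d\meas = \int_\mms\langle\eta,\delta\omega\rangle\d\meas$ holds for $\eta$ ranging over a class dense enough in $H^{1,2}(\Lambda^{k-1}T^*\mms)$ that it can be combined with the $W^{1,2}$-closedness of $\delta$ — here the natural candidate is $\Reg(\Lambda^{k-1}T^*\mms)$, and \autoref{Le:delta of d} shows its image under $\delta$ is well-understood, but one still needs that regular forms are $W^{1,2}$-dense in $H^{1,2}$, which is true by \autoref{Def:H Hodge space}. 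A secondary subtlety, parallel to \autoref{Re:Wd120 in W12} and \autoref{Re:Gaffney}, is that without intrinsic completeness one cannot freely pass between ``$H^{1,2}$'' and ``closure of test forms'' spaces; but since the whole statement is phrased via $H^{1,2}(\Lambda^kT^*\mms)$ and $\Harm(\Lambda^kT^*\mms)$ consistently, and the orthogonal-complement characterization of harmonic forms is taken relative to the $\Ell^2$-closures of $\rmE_k(\mms)$ and $\delta\Dom(\delta^{k+1})$, this does not actually bite — one only needs the formal adjoint relation, not any $H^{1,2}=W^{1,2}$ identification. Once that relation is in hand, the proof is the standard Hodge argument and poses no further difficulty.
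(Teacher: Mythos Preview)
Your strategy is sound but more elaborate than the paper's. The paper bypasses the three-fold Hodge decomposition of $\Ell^2(\Lambda^kT^*\mms)$ entirely and works only inside the closed Hilbert subspace $H := \rmC_k(\mms)$: with $V := \rmE_k(\mms) \subset H$, it observes that the orthogonal complement $V^\perp$ (taken within $H$) equals $\Harm(\Lambda^kT^*\mms)$, and then invokes the general Hilbert space fact that $\omega \mapsto [\omega]$ is an isometric isomorphism from $V^\perp$ onto $H/\cl_H V = H_\dR^k(\mms)$. Your co-exact summand $\cl\,\delta\Dom(\delta^{k+1})$ and the ensuing argument that $\alpha_2 = 0$ are unnecessary, since the de Rham cohomology is defined as a quotient of $\rmC_k(\mms)$, not of all of $\Ell^2(\Lambda^kT^*\mms)$. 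Your route does recover the full Hodge--Morrey-type splitting, which carries independent interest, but for the stated isomorphism the paper's argument is essentially a two-liner. As for your anticipated ``main obstacle'' --- the adjointness $\int_\mms\langle\rmd\eta,\omega\rangle\d\meas = \int_\mms\langle\eta,\delta\omega\rangle\d\meas$ --- in the direction needed this is literally the \emph{definition} of $\delta$ in \autoref{Def:Codifferential}, so no further density or closedness work is required.
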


\begin{proof} Set $H := \rmC_k(\mms)$ and $V := \rmE_k(\mms)$. Thanks to \autoref{Th:Wd12 properties}, $H$ is a Hilbert space w.r.t.~$\smash{\Vert \cdot\Vert_H := \Vert \cdot\Vert_{\Ell^2(\Lambda^kT^*\mms)}}$.  Moreover, $V$ is a subspace of $H$. Lastly, by \autoref{Def:Codifferential} it is elementary to see that
\begin{align*}
V^\perp = \Harm(\Lambda^kT^*\mms),
\end{align*}
where we intend the orthogonal complement w.r.t.~the usual scalar product in $\Ell^2(\Lambda^kT^*\mms)$. Therefore, \autoref{Th:Hodge thm} follows since by basic Hilbert space theory \cite[Thm.~2.2.3]{kadison1983}, the map sending any $\smash{\omega\in V^\perp}$ to $\smash{\omega + \cl_{\Vert\cdot\Vert_H}V \in H/\cl_{\Vert\cdot\Vert_H}V}$ is a Hilbert space isomorphism.
\end{proof}

\begin{remark} It is unclear if $\smash{W^{1,2}(\Lambda^kT^*\mms)\setminus H^{1,2}(\Lambda^kT^*\mms)}$ contains elements with vanishing differential and codifferential, hence our choice of the domain of definition of $\Hodge_k$ and of harmonic $k$-forms. Compare with \cite[Rem.~3.5.16]{gigli2018}.
\end{remark}

\begin{remark}\label{Re:Compat Hodge} \autoref{Th:Hodge thm} is a variant of the Hodge theorem on compact manifolds $\mms$ with boundary \cite[Thm.~2.6.1]{schwarz1995}. Quite interestingly, in contrast to our setting --- where boundary conditions somewhat come as a byproduct of our class of ``smooth $k$-forms'' --- no boundary conditions are needed to build the corresponding de Rham cohomology group $H^k_\dR(\mms)$ \cite[p.~103]{schwarz1995}. \emph{Still}, the latter is isomorphic to the space of harmonic \emph{Neumann fields} \cite[Def.~2.2.1]{schwarz1995} which by definition satisfy absolute boundary conditions. This follows from the \emph{Hodge--Morrey decomposition} \cite[Thm.~2.4.2]{schwarz1995} combined with the \emph{Friedrichs decomposition} \cite[Thm.~2.4.8]{schwarz1995}.

It is worth mentioning that in this setting, $\dim\Harm(\Lambda^kT^*\mms)$ coincides with the $k$-th Betti number of $\mms$ \cite[p.~68]{schwarz1995}.
\end{remark}

\section{Curvature measures} \label{Sec:Curvature}

We are now in a position to introduce various concepts of \emph{curvature}. In \autoref{Sub:Ricci measure}, we prove our main result,  \autoref{Th:Ricci measure}, where the \emph{$\kappa$-Ricci measure} $\RIC^\kappa$ is made sense of, among others in terms of  $\DELTA^{2\kappa}$. In \autoref{Sub:Curv tensors from RIC}, we  separate $\kappa$ from $\DELTA^{2\kappa}$, which induces the \emph{measure-valued Laplacian} $\DELTA$ fully compatible with the definition of the measure-valued divergence $\DIV$, see \autoref{Def:Meas val Lapl}. In turn, this allows us to define a Ricci curvature and an intrinsic second fundamental form on $\mms$. 

\subsection{$\kappa$-Ricci measure}\label{Sub:Ricci measure}  Our main  \autoref{Th:Ricci measure} requires some technical preliminary ingredients that are subsequently discussed.

\subsubsection{Preliminary preparations} 

\begin{lemma}\label{Le:Hodge test} For every $g \in\Test(\mms)\cup\R\,\One_\mms$  and every $f\in\Test(\mms)$, we have $g\d f\in \Dom(\Hodge)$ with
\begin{align*}
\Hodge(g\d f)= - g\d \Delta f -\Delta g\d f- 2\Hess f(\nabla g,\cdot),
\end{align*}
with the usual interpretations $\nabla\One_\mms := 0$ and $\Delta \One_\mms := 0$. More generally, for every $X\in\Reg(T\mms)$ and every $h\in\Test(\mms)\cup \R\,\One_\mms$, we have $\smash{h\,X^\flat\in\Dom(\Hodge)}$ with
\begin{align*}
\Hodge(h\,X) = h\,\Hodge X - \Delta h\,X-2\,\nabla_{\nabla h}X.
\end{align*}
\end{lemma}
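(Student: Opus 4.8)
The statement asserts that $g\,\rmd f$ lies in $\Dom(\Hodge)$ with a prescribed value of the Hodge Laplacian. By \autoref{Def:Hodge Lapl}, this amounts to verifying first that $g\,\rmd f \in H^{1,2}(\Lambda^1 T^*\mms)$ --- which is immediate since $g\,\rmd f \in \Reg(T^*\mms) \subset H^{1,2}(\Lambda^1 T^*\mms)$ by \autoref{Th:Wd12 properties} and \autoref{Le:delta of d} --- and then producing the candidate $\alpha := -g\,\rmd\Delta f - \Delta g\,\rmd f - 2\Hess f(\nabla g,\cdot)$ and checking the defining integration by parts identity
\begin{align*}
\int_\mms \langle \alpha,\eta\rangle\d\meas = \int_\mms \big[\langle\rmd(g\,\rmd f),\rmd\eta\rangle + \langle\delta(g\,\rmd f),\delta\eta\rangle\big]\d\meas
\end{align*}
for every $\eta\in H^{1,2}(\Lambda^1T^*\mms)$. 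First I would note that $\alpha$ genuinely lies in $\Ell^2(\Lambda^1T^*\mms)$: the term $\Hess f(\nabla g,\cdot)$ is controlled using $\nabla g\in\Ell^\infty(T\mms)$ and $\Hess f\in\Ell^2((T^*)^{\otimes 2}\mms)$ from \autoref{Th:Hess}, while $g\,\rmd\Delta f$ and $\Delta g\,\rmd f$ are products of bounded functions with $\Ell^2$ one-forms since $\Delta f\in\F$ and $g,\Delta g$ are bounded (for $g\in\Test(\mms)$; the case $g\in\R\,\One_\mms$ is simpler and will be absorbed by the interpretation $\Delta\One_\mms := 0$, $\nabla\One_\mms := 0$).

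The main computation is to expand $\rmd(g\,\rmd f)$ and $\delta(g\,\rmd f)$ explicitly. By the Leibniz rule in \autoref{Pr:Leibniz rule ext der} (with $k=1$, $\omega := \rmd f$, $\omega' := g$, using $\rmd(\rmd f)=0$ from \autoref{Th:Wd12 properties}), one gets $\rmd(g\,\rmd f) = \rmd g\wedge\rmd f$. By \autoref{Le:delta of d} applied with $k=1$, $f_0 := g$, $f_1 := f$ --- or by \eqref{Eq:delta = -div} together with \autoref{Le:Div g nabla f} --- one gets $\delta(g\,\rmd f) = -g\,\Delta f - \langle\rmd g,\rmd f\rangle$. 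Then it suffices to test the desired identity against the dense class $\eta := h_0\,\rmd h_1\in\Reg(T^*\mms)$, $h_0\in\Test(\mms)\cup\R\,\One_\mms$, $h_1\in\Test(\mms)$, using \autoref{Th:Wd12 properties} for $\rmd\eta = \rmd h_0\wedge\rmd h_1$ and \autoref{Le:delta of d} for $\delta\eta = -h_0\,\Delta h_1 - \langle\rmd h_0,\rmd h_1\rangle$, and then to integrate by parts term by term. The integration by parts will repeatedly invoke \autoref{Le:Div g nabla f}, the defining property of the Hessian from \autoref{Def:Hess} (which is exactly the place where the three terms $-\int\langle\nabla f,\nabla h_1\rangle\div(\cdot) - \int\langle\nabla f,\nabla h_0\rangle\div(\cdot) - \int h\langle\nabla f,\nabla\langle\nabla h_0,\nabla h_1\rangle\rangle$ appear), and the metric-compatibility/torsion identities \autoref{Pr:Compatibility} and \autoref{Le:Torsion free}. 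The bookkeeping of signs and of which derivative lands where is the tedious heart of the argument; the identity $\big\langle\rmd g\wedge\rmd f,\rmd h_0\wedge\rmd h_1\big\rangle = \langle\rmd g,\rmd h_0\rangle\langle\rmd f,\rmd h_1\rangle - \langle\rmd g,\rmd h_1\rangle\langle\rmd f,\rmd h_0\rangle$ from \eqref{Eq:Ptw scalar prod ext} will be used to unpack the $\rmd$-part.

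For the second, more general assertion, I would write $X = \sum_i g_i\,\nabla f_i\in\Reg(T\mms)$ and use bilinearity to reduce to $X := g\,\nabla f$ with $f\in\Test(\mms)$ and $g\in\Test(\mms)\cup\R\,\One_\mms$, so that $X^\flat = g\,\rmd f$ and $\Hodge X^\flat$ is already known from the first part. Then $h\,X^\flat = (h\,g)\,\rmd f$, and since $h\,g\in\Test(\mms)\cup\R\,\One_\mms$ (using that $\Test(\mms)$ is an algebra), the first part applies again with $g$ replaced by $h\,g$. Expanding $\Delta(h\,g)$ and $\nabla(h\,g)$ by the Leibniz and chain rules from \autoref{Le:Delta Leibniz rule} and \autoref{Cor:Calculus rules d}, and comparing with $h\,\Hodge X^\flat$, one sees that the cross-terms reorganize into $-\Delta h\,X - 2\,\nabla_{\nabla h}X$ after identifying $\Hess f(\nabla(hg),\cdot)$ via \autoref{Th:Properties W12 TM}\ref{La:Cov 5} and \eqref{Eq:Dir der def}; the directional derivative $\nabla_{\nabla h}X$ enters precisely through $\nabla h\otimes\nabla f + \text{(Hessian terms)}$. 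The main obstacle throughout is purely combinatorial --- keeping track of signs from the wedge products and of the symmetry of the Hessian so that spurious terms cancel in pairs --- rather than any genuine analytic difficulty, since every integrability and density fact needed is already available from \autoref{Le:Div g nabla f}, \autoref{Th:Hess}, \autoref{Pr:Compatibility}, and \autoref{Th:Wd12 properties}.
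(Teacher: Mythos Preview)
Your plan is correct and would succeed; the second part in particular matches the paper almost verbatim. The first part, however, takes a more laborious route than the paper does. You propose to test against $\eta = h_0\,\rmd h_1$, expand both $\langle\rmd g\wedge\rmd f,\rmd h_0\wedge\rmd h_1\rangle$ via \eqref{Eq:Ptw scalar prod ext} and $\delta(g\,\rmd f)\,\delta(h_0\,\rmd h_1)$ as a product of two scalar functions, and then integrate by parts term by term at the level of functions, invoking \autoref{Def:Hess} to recognize the Hessian. This works, but the bookkeeping is substantial.

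The paper instead moves the derivatives structurally before any expansion: since $\rmd(g\,\rmd f)=\rmd g\wedge\rmd f\in\Dom(\delta^2)$ by \autoref{Le:delta of d}, one has $\int\langle\rmd(g\,\rmd f),\rmd\eta\rangle\,\rmd\meas=\int\langle\delta(\rmd g\wedge\rmd f),\eta\rangle\,\rmd\meas$ with $\delta(\rmd g\wedge\rmd f)=\Delta f\,\rmd g-\Delta g\,\rmd f-[\nabla g,\nabla f]^\flat$ already computed. Likewise, since $\delta(g\,\rmd f)=-\langle\nabla g,\nabla f\rangle-g\,\Delta f\in\F$ by \autoref{Pr:Compatibility}, one writes $\int\delta(g\,\rmd f)\,\delta\eta\,\rmd\meas=-\int\langle\rmd[\delta(g\,\rmd f)],\eta\rangle\,\rmd\meas$ and computes $\rmd[\delta(g\,\rmd f)]$ via \autoref{Pr:Product rule for gradients}. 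Adding these two and using $[\nabla g,\nabla f]^\flat=\Hess f(\nabla g,\cdot)-\Hess g(\nabla f,\cdot)$ gives the claimed $\alpha$ directly --- the Lie-bracket terms absorb the Hessian cancellations that you would otherwise have to track by hand. Your approach buys nothing extra here; the paper's use of $\Dom(\delta^2)$ is the natural shortcut.
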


\begin{proof} The respective r.h.s.'s of the claimed identities belong to $\Ell^2(T^*\mms)$, which grants their  meaningfulness.

To prove the first, we claim that for every $f',g'\in\Test(\mms)$,
\begin{align*}
&\int_\mms \big\langle\rmd(g\d f), \rmd(g'\d f')\big\rangle\d\meas + \int_\mms\delta(g\d f)\,\delta(g'\d f')\d\meas\\
&\qquad\qquad = -\int_\mms \big\langle g\d \Delta f+ \Delta g\d f + 2\Hess f(\nabla g,\cdot), g' \d f'\big\rangle\d\meas. 
\end{align*}
By linearity of both sides in $g'\d f'$ and density, the first identity for $\Hodge(g\d f)$ then readily follows. Indeed, since $\delta(g\d f) = -\langle\nabla g,\nabla f\rangle - g\,\Delta f$ $\meas$-a.e.~by \eqref{Eq:delta = -div} --- with appropriate interpretation if $g\in\R\,\One_\mms$ according to \eqref{Eq:div nabla = Delta} --- $\delta (g\d f)$ belongs to $\F$ by \autoref{Pr:Compatibility}, and we have
\begin{align*}
&\int_\mms \delta(g\d f)\,\delta(g'\d f')\d\meas\\
&\qquad\qquad = -\int_\mms\big\langle\rmd\big[\langle\nabla g,\nabla f\rangle + g\,\Delta f\big],g'\d f'\big\rangle\d\meas\\
&\qquad\qquad = -\int_\mms \big\langle \!\Hess g(\nabla f,\cdot) + \Hess f(\nabla g,\cdot) + \rmd g\,\Delta f + g\,\rmd \Delta f,g'\d f'\big\rangle\d\meas,
\end{align*}
with $\Hess g := 0$ whenever $g\in \R\,\One_\mms$. On the other hand, by \autoref{Th:Wd12 properties} we have $\rmd(g\d f)=\rmd g\wedge\rmd f$, which  belongs to $\smash{\Dom(\delta^2)}$ by \autoref{Le:delta of d}  with
\begin{align*}
&\int_\mms \big\langle \rmd(g\d f),\rmd(g'\d f')\big\rangle\d\meas\\
&\qquad\qquad = \int_\mms \big\langle\delta(\rmd g\wedge\rmd f),g'\d f'\big\rangle\d\meas\\
&\qquad\qquad = \int_\mms \big\langle \Delta f\d g - \Delta g\d f - [\nabla g,\nabla f]^\flat,g'\d f'\big\rangle\d\meas.
\end{align*}
Adding up these two identities yields the claim since
\begin{align*}
[\nabla g,\nabla f]^\flat = \Hess f(\nabla g,\cdot) - \Hess g(\nabla f,\cdot).
\end{align*}

Concerning the second claim, from what we already proved, the linearity of $\Hodge$ and since $\Test(\mms)\cup \R\,\One_\mms$ is an algebra, it follows that $h\,X^\flat\in\Dom(\Hodge)$. Again by linearity, it thus suffices to consider the case $X := g\,\nabla f$, $g\in\Test(\mms)\cup\R\,\One_\mms$ and $f\in\Test(\mms)$. Indeed,
\begin{align*}
\Hodge(h\,g\d f) &= -h\,g\d f - \Delta(h\,g)\d f - 2\Hess f(\nabla(h\,g),\cdot)\\
&= -h\,g\d \Delta f - g\,\Delta h \d f - 2 \langle\nabla h,\nabla g\rangle\d f - h\,\Delta g\d f\\
&\qquad\qquad - 2\,g\Hess f(\nabla h,\cdot) - 2\,h\Hess f(\nabla g,\cdot)\\
&= h\,\Hodge(g\d f) - \Delta h\,(g\d f) -\smash{2\,\big[\nabla_{\nabla h}(g\,\nabla f)\big]^\sharp.}
\end{align*}
In the last identity, we used the identity
\begin{align*}
\nabla(g\,\nabla f) = \nabla g\otimes \nabla f + g\,(\Hess f)^\sharp
\end{align*}
inherited from \autoref{Th:Hess properties}.
\end{proof}

Recall from \autoref{Pr:Generators cotangent module} that $\nabla\,\Test(\mms)$, a set consisting of $\meas$-essentially bounded elements, generates $\smash{\Ell^2(T\mms)}$ in the sense of $\Ell^\infty$-modules. Hence, for every $A\in\Ell^2(T^{\otimes 2}\mms)$, by \eqref{Eq:Duality formula symm part I} its symmetric part obeys the duality formula
\begin{align}\label{Eq:Duality formula symm part}
\begin{split}
\big\vert A_\sym\big\vert_\HS^2 &= \esssup\!\Big\lbrace 2 A : \sum_{j=1}^m\nabla h_j\otimes \nabla h_j - \Big\vert\!\sum_{j=1}^m \nabla h_j\otimes\nabla h_j\Big\vert_\HS^2 : \\
&\qquad\qquad m\in\N,\ h_1,\dots,h_m\in\Test(\mms)\Big\rbrace.
\end{split}
\end{align}
This is crucial in the next \autoref{Le:Pre.Bochner}. Therein,  the divergence of $X\in\Reg(T\mms)$ is understood in the sense of \autoref{Def:L2 div}. Recall that if $X\in\Test(T\mms)$, this is the same as interpreting it according to \autoref{Def:Measure-valued divergence} by \autoref{Pr:Div comp}.

\begin{lemma}\label{Le:Pre.Bochner} For every $X\in\Reg(T\mms)$, we have $\vert X\vert^2\in\Dom(\DELTA^{2\kappa})$ with
\begin{align*}
\DELTA^{2\kappa}\frac{\vert X\vert^2}{2} \geq \Big[\big\vert \nabla X\big\vert_\HS^2 -\big\langle X,(\Hodge X^\flat)^\sharp\big\rangle\Big]\,\meas.
\end{align*}
\end{lemma}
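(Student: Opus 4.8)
\textbf{Proof plan for Lemma \ref{Le:Pre.Bochner}.}

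The plan is to reduce the statement to \autoref{Le:Extremely key lemma} applied with $N' := \infty$, exactly as \autoref{Th:Hess} was deduced from it, but now keeping the ``zeroth order'' factor $g$ nontrivial and tracking both the symmetric and antisymmetric parts of $\nabla X$. First I would write a generic $X\in\Reg(T\mms)$ as a finite sum $X = \sum_{i=1}^n g_i\,\nabla f_i$ with $f_i\in\Test(\mms)$ and $g_i\in\Test(\mms)\cup\R\,\One_\mms$, and record the three facts that make all terms meaningful: $\vert X\vert^2\in\F$ by polarization of \autoref{Pr:Bakry Emery measures} together with the Leibniz rule, $\nabla X\in\Ell^2(T^{\otimes 2}\mms)$ with the explicit formula $\nabla(g_i\,\nabla f_i) = \nabla g_i\otimes\nabla f_i + g_i\,(\Hess f_i)^\sharp$ from \autoref{Th:Properties W12 TM}\ref{La:Cov 5}, and $X^\flat\in\Dom(\Hodge)$ with the formula from \autoref{Le:Hodge test}. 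The identity I want to establish first is that $\vert X\vert^2\in\Dom(\DELTA^{2\kappa})$ with
\begin{align*}
\DELTA^{2\kappa}\frac{\vert X\vert^2}{2} = \Big[\sum_{i,i'=1}^n g_i\,g_{i'}\,\gamma_2^{2\kappa}(f_i,f_{i'}) + \text{(first order terms)}\Big]\,\meas + \mu_\perp,
\end{align*}
where $\mu_\perp\geq 0$ is the $\meas$-singular part coming from $\bdGamma_2^{2\kappa}$; this is essentially the content of $\mu_1[f,g]$ in \autoref{Le:Extremely key lemma} after expanding $\DELTA^{2\kappa}\vert X\vert^2 = \sum_{i,i'}\DELTA^{2\kappa}(g_i\,g_{i'}\,\langle\nabla f_i,\nabla f_{i'}\rangle)$ using the measure-valued Schrödinger operator's Leibniz-type behaviour (obtained by testing against $\F_\bounded$ as in the proof of \autoref{Pr:Fin tot var}) and \autoref{Le:Calculus rules}.

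The heart of the argument is then the pointwise inequality on the $\meas$-absolutely continuous part. By \autoref{Le:Extremely key lemma} with $N':=\infty$, $f := (f_1,\dots,f_n)$, $g := (g_1,\dots,g_n)$ and $h := (h_1,\dots,h_m)$ arbitrary test functions, we get
\begin{align*}
\Big[\sum_{i=1}^n\sum_{j=1}^m \big[\langle\nabla f_i,\nabla h_j\rangle\,\langle\nabla g_i,\nabla h_j\rangle + g_i\,\rmH[f_i](h_j,h_j)\big]\Big]^2 \leq \rho_1[f,g]\,\sum_{j,j'=1}^m\langle\nabla h_j,\nabla h_{j'}\rangle^2
\end{align*}
$\meas$-a.e., where $\rho_1[f,g]$ is exactly the density of the $\meas$-absolutely continuous part of $\DELTA^{2\kappa}\vert X\vert^2/2 - \langle X,(\Hodge X^\flat)^\sharp\rangle\,\meas$ once one identifies the combination of $\bdGamma_2^{2\kappa}$, $\rmH$, and the bilinear terms with that expression — this identification is a computation using \eqref{Eq:H = H} (so that $\rmH[f_i](\cdot,\cdot)$ becomes genuine Hessian values) and the Hodge formula of \autoref{Le:Hodge test}. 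The left-hand side is, by \eqref{Eq:Dir der def} and \ref{La:Cov 5}, precisely $\big(\nabla X : \sum_j \nabla h_j\otimes\nabla h_j\big)^2$; here the crucial point is that because $\sum_j\nabla h_j\otimes\nabla h_j$ is \emph{symmetric}, only the symmetric part $(\nabla X)_\sym$ survives the pairing, so the inequality reads $\big(2(\nabla X)_\sym : S\big)^2 \leq \rho_1[f,g]\,\vert S\vert_\HS^2$ for $S := \sum_j\nabla h_j\otimes\nabla h_j$. Following the argument in the proof of \autoref{Th:Hess} — replacing $h_j$ by $a_j h_j$, $a_j\in\Q$, removing a null set, passing to real coefficients and then to elements of $\Test(\mms)$, and finally invoking the duality formula \eqref{Eq:Duality formula symm part} — one upgrades this to $\big\vert(\nabla X)_\sym\big\vert_\HS^2 \leq \rho_1[f,g]$ $\meas$-a.e. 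Adding $\big\vert(\nabla X)_\asym\big\vert_\HS^2$ to both sides and using \eqref{Eq:sym plus asym} together with the nonnegativity of $\mu_\perp$ gives the claimed measure inequality $\DELTA^{2\kappa}\vert X\vert^2/2 \geq \big[\vert\nabla X\vert_\HS^2 - \langle X,(\Hodge X^\flat)^\sharp\rangle\big]\,\meas$.

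The main obstacle I anticipate is bookkeeping: one must verify carefully that the density $\rho_1[f,g]$ produced by \autoref{Le:Extremely key lemma} (built from $\bdGamma_2^{2\kappa}$, the pre-Hessian terms $g_i\,\rmH[f_i](f_{i'},g_{i'})$, the quadratic gradient products, with $1/N'=0$) equals, $\meas$-a.e., the density of the measure $\DELTA^{2\kappa}\vert X\vert^2/2 - \langle X,(\Hodge X^\flat)^\sharp\rangle\,\meas$ — i.e., that the ``extra'' first-order terms in $\mu_1[f,g]$ reassemble exactly into $-\langle X,(\Hodge X^\flat)^\sharp\rangle$ after substituting the Hodge formula of \autoref{Le:Hodge test} and the covariant-derivative formula of \ref{La:Cov 5}. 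This is where \eqref{Eq:H = H} (identifying $\rmH[f]$ with $\Hess f$) and the metric compatibility \autoref{Pr:Compatibility} are used to rewrite $\langle\nabla g_i,\nabla h_j\rangle\langle\nabla f_i,\nabla h_j\rangle$ and the cross Hessian terms; care with signs and with the $\nabla\One_\mms := 0$, $\Delta\One_\mms:=0$ conventions for the constant-function summands is essential. A secondary technical point is justifying the ``Leibniz rule'' for $\DELTA^{2\kappa}$ applied to the products $g_i g_{i'}\langle\nabla f_i,\nabla f_{i'}\rangle$; this is not stated as such in the excerpt but follows by the same $\F_\bounded$-testing computation used inside the proof of \autoref{Pr:Fin tot var}, combined with \autoref{Le:Calculus rules}, and it is what produces both the absolutely continuous first-order terms and the nonnegative singular remainder $\mu_\perp$.
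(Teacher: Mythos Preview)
Your plan is essentially the paper's proof: write $X=\sum_i g_i\,\nabla f_i$, identify the measure $\mu_1[f,g]$ from \autoref{Le:Extremely key lemma} (with $N'=\infty$) with a combination of $\DELTA^{2\kappa}\vert X\vert^2/2$, the Hodge term, and $(\nabla X)_\asym$, then recognize the left-hand side of the key inequality as $\nabla X$ paired with a symmetric tensor, optimize via \eqref{Eq:Duality formula symm part}, and add back the antisymmetric part.

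One correction to your bookkeeping, precisely at the spot you flagged as the main obstacle: the identity the computation actually yields is
\[
\mu_1[f,g] \;=\; \DELTA^{2\kappa}\frac{\vert X\vert^2}{2} \;+\; \Big[\big\langle X,(\Hodge X^\flat)^\sharp\big\rangle \;-\; \big\vert(\nabla X)_\asym\big\vert_\HS^2\Big]\,\meas,
\]
not the expression you wrote. The sign on the Hodge term is $+$ (recall $\Hodge(\rmd f)=-\rmd\Delta f$, so $\langle\nabla f,(\Hodge\,\rmd f)^\sharp\rangle=-\langle\nabla f,\nabla\Delta f\rangle$, matching the $\bdGamma_2^{2\kappa}$ definition), and the antisymmetric contribution $-\vert(\nabla X)_\asym\vert_\HS^2$ already appears in $\mu_1[f,g]$ --- it comes from the last line of its definition via $\frac{1}{2}[\langle\nabla f_i,\nabla f_{i'}\rangle\langle\nabla g_i,\nabla g_{i'}\rangle-\langle\nabla f_i,\nabla g_{i'}\rangle\langle\nabla g_i,\nabla f_{i'}\rangle]$. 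With this correction, your final step works: from $\vert(\nabla X)_\sym\vert_\HS^2\leq\rho_1[f,g]$ you add $\vert(\nabla X)_\asym\vert_\HS^2$ to both sides, the antisymmetric terms cancel on the right, and you obtain exactly \eqref{Eq:12345}. The paper cites \cite[Cor.~6.3]{erbar2020} directly for the Leibniz rule for $\DELTA^{2\kappa}$ you need.
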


\begin{proof} The Leibniz rule for $\smash{\DELTA^{2\kappa}}$ \cite[Cor.~6.3]{erbar2020} together with polarization and the linearity of $\smash{\DELTA^{2\kappa}}$ on $\F$, recall in particular \autoref{Pr:Bakry Emery measures}, ensure that $\vert X\vert^2\in\F$, and in fact $\smash{\vert X\vert^2\in\Dom(\DELTA^{2\kappa})}$.

Write $X := g_1\,\nabla f_1 + \dots + g_n\,\nabla f_n$ for certain $g_i \in \Test(\mms)\cup\R\,\One_\mms$ and $f_i\in \Test(\mms)$, $i\in \{1,\dots,n\}$. Retaining the notation from \autoref{Le:Extremely key lemma} for $N':= \infty$, we first claim the identity
\begin{align}\label{Eq:rho1 claim}
\rho_1[f,g] = \DELTA^{2\kappa}\frac{\vert X\vert^2}{2} + \Big[\big\langle X,(\Hodge X^\flat)^\sharp\big\rangle - \big\vert(\nabla X)_\asym\big\vert_\HS^2 \Big]\,\meas.
\end{align}
Again by the Leibniz rule for $\DELTA^{2\kappa}$ from \cite[Cor.~6.3]{erbar2020} and \autoref{Pr:Product rule for gradients},
\begin{align*}
\DELTA^{2\kappa}\frac{\vert X\vert^2}{2} &= \frac{1}{2}\sum_{i,i'=1}^n \widetilde{g_i}\,\widetilde{g}_{i'}\,\DELTA^{2\kappa}\langle\nabla f_i,\nabla f_{i'}\rangle + \sum_{i,i'=1}^n \big\langle\nabla [g_i\,g_{i'}],\nabla\langle\nabla f_i,\nabla f_{i'}\rangle\big\rangle\,\meas\\
&\qquad\qquad + \frac{1}{2}\sum_{i,i'=1}^n \langle\nabla f_i,\nabla f_{i'}\rangle\,\Delta[g_i\,g_{i'}]\,\meas\\
&= \frac{1}{2}\sum_{i,i'=1}^n \widetilde{g_i}\,\widetilde{g}_{i'}\,\DELTA^{2\kappa}\langle\nabla f_i,\nabla f_{i'}\rangle\\
&\qquad\qquad +\sum_{i,i'=1}^n \big[\!\Hess f_i(\nabla f_{i'},\nabla [g_i\,g_{i'}]) + \Hess f_{i'}(\nabla f_i,\nabla [g_i\,g_{i'}])\big]\,\meas\\
&\qquad\qquad + \sum_{i,i'=1}^n \big[g_{i'}\,\Delta g_i\,\langle\nabla f_i,\nabla f_{i'}\rangle + \langle\nabla g_i,\nabla g_{i'}\rangle\,\langle\nabla f_i,\nabla f_{i'}\rangle\big]\,\meas\\
&= \frac{1}{2}\sum_{i,i'=1}^n \widetilde{g_i}\,\widetilde{g}_{i'}\,\DELTA^{2\kappa}\langle\nabla f_i,\nabla f_{i'}\rangle\\
&\qquad\qquad +2\sum_{i,i'=1}^n \big[g_i\Hess f_i(\nabla f_{i'},\nabla g_{i'}) + g_i\Hess f_{i'}(\nabla f_i,\nabla g_{i'})\big]\,\meas\\
&\qquad\qquad + \sum_{i,i'=1}^n \big[g_{i'}\,\Delta g_i\,\langle\nabla f_i,\nabla f_{i'}\rangle + \langle\nabla g_i,\nabla g_{i'}\rangle\,\langle\nabla f_i,\nabla f_{i'}\rangle\big]\,\meas.
\end{align*}
Furthermore, by \autoref{Le:Hodge test} we obtain
\begin{align*}
\Hodge X^\flat = -\sum_{i=1}^n \big[g_i\d\Delta f_i + \Delta g_i\d f_i + 2\Hess f_i(\nabla g_i,\cdot)\big],
\end{align*}
which entails
\begin{align*}
\big\langle X,(\Hodge X^\flat)^\sharp\big\rangle &= -\sum_{i,{i'}=1}^n \big[g_{i'}\,\Delta g_i\,\langle\nabla f_i,\nabla f_{i'}\rangle + 2\,g_{i'}\Hess f_i(\nabla g_i,\nabla f_{i'})\big]\\
&\qquad\qquad -\sum_{i,i'=1}^n \big[g_i\,g_{i'}\,\langle\nabla \Delta f_i,\nabla f_{i'}\rangle\big]\quad\meas\text{-a.e.}
\end{align*}
Next, by item \ref{La:Cov 5} in \autoref{Th:Properties W12 TM}, the symmetry of the Hessian by \autoref{Th:Hess properties}, and the definition of the anti-symmetric part of an element in $\Ell^2(T^{\otimes 2}\mms)$,
\begin{align*}
(\nabla X)_\asym &= \frac{1}{2}\sum_{i=1}^n \big[\nabla g_i\otimes\nabla f_i - \nabla f_i\otimes \nabla g_i\big]
\end{align*}
from which we get
\begin{align*}
\big\vert (\nabla X)_\asym\big\vert_\HS^2 &= \frac{1}{2}\sum_{i,i'=1}^n \big[\langle \nabla f_i,\nabla f_{i'}\rangle\,\langle \nabla g_i,\nabla g_{i'}\rangle - \langle\nabla f_i,\nabla g_{i'}\rangle\,\langle\nabla g_i,\nabla f_{i'}\rangle\big]\quad\meas\text{-a.e.}
\end{align*}
Lastly, the definition \eqref{Eq:Gamma_2 opertor} of the measure-valued  $\Gamma_2$-operator yields
\begin{align*}
\bdGamma_2^{2\kappa}(f_i, f_{i'}) =  \frac{1}{2}\,\DELTA^{2\kappa}\langle\nabla f_i,\nabla f_{i'}\rangle - \frac{1}{2}\big[\langle\nabla \Delta f_i,\nabla f_{i'}\rangle + \langle\nabla \Delta f_{i'},\nabla f_i\rangle\big]\,\meas
\end{align*} 
for every $i,i'\in\{1,\dots,n\}$. Patching terms together straightforwardly leads to \eqref{Eq:rho1 claim}.

Therefore, w.r.t.~$\meas$ we have
\begin{align*}
\Big[\DELTA^{2\kappa}\frac{\vert X\vert^2}{2}\Big]_\perp = \rho_1[f,g]_\perp \geq 0
\end{align*}
thanks to \autoref{Le:Extremely key lemma}. To finally prove the claimed inequality, it thus suffices to show that, setting $\delta^{2\kappa}\vert X\vert^2/2 := \rmd(\DELTA^{2\kappa}\vert X\vert^2/2)_\ll/\rmd \meas$,
\begin{align}\label{Eq:12345}
\delta^{2\kappa}\frac{\vert X\vert^2}{2} \geq \big\vert\nabla X\big\vert_\HS^2 - \big\langle X, (\Hodge X^\flat)^\sharp\big\rangle\quad\meas\text{-a.e.}
\end{align}
Indeed, having \eqref{Eq:rho1 claim} at our disposal, \autoref{Le:Extremely key lemma} implies  that for every $m\in\N$ and for every $h_1,\dots,h_m\in\Test(\mms)$,
\begin{align*}
\Big\vert \nabla X : \sum_{j=1}^m\nabla h_j\otimes\nabla h_j \Big\vert &\leq \Big[\delta^{2\kappa}\frac{\vert X\vert^2}{2} + \big\langle X,(\Hodge X^\flat)^\sharp\big\rangle - \big\vert(\nabla X)_\asym\big\vert_\HS^2\Big]^{1/2}\\
&\qquad\qquad \times \Big\vert\! \sum_{j=1}^m \nabla h_j\otimes\nabla h_j\Big\vert_\HS\quad\meas\text{-a.e.}
\end{align*}
Applying Young's inequality at the r.h.s.~and optimizing over $h_1,\dots,h_m\in\Test(\mms)$ according to \eqref{Eq:Duality formula symm part} yields
\begin{align*}
\big\vert(\nabla X)_\sym\big\vert_\HS^2 \leq \delta^{2\kappa}\frac{\vert X\vert^2}{2} + \big\langle X,(\Hodge X^\flat)^\sharp\big\rangle - \big\vert(\nabla X)_\asym\big\vert_\HS^2\quad\meas\text{-a.e.}
\end{align*}
which is the remaining claim \eqref{Eq:12345} by the decomposition \eqref{Eq:sym plus asym}.
\end{proof}

The following consequence of \autoref{Le:Pre.Bochner} is not strictly needed in \autoref{Th:Ricci measure}, but gives an idea about the reasoning for \autoref{Le:Inclusion} below.

\begin{corollary}\label{Cor:Integr Bochner II} For every $f\in \Dom(\Delta)$, we have $\vert\nabla f\vert\in\F$ and
\begin{align*}
\Ch_2(f) \leq \int_\mms (\Delta f)^2\d\meas - \big\langle \kappa \,\big\vert\,\vert\nabla f\vert^2\big\rangle.
\end{align*}
\end{corollary}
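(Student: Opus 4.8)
The strategy is to combine the pointwise Bochner inequality for test functions, \autoref{Le:Pre.Bochner} applied to gradient vector fields $X := \nabla f$ with $f\in\Test(\mms)$, with an approximation argument in $\Dom(\Delta)$ analogous to the one in the proof of \autoref{Cor:Dom(Delta) subset W22}, but now exploiting \autoref{Le:Kato inequality} to gain the missing $\F$-regularity of $\vert\nabla f\vert$ and the continuity of the curvature term. Recall that for $f\in\Test(\mms)$ one has $\nabla f\in\Reg(T\mms)$, and $(\Hodge(\rmd f)^\sharp)^\flat$ equals $-\rmd\Delta f$ by \autoref{Le:Hodge test} with $g:=\One_\mms$, so that $\langle\nabla f,(\Hodge(\rmd f)^\flat)^\sharp\rangle = -\langle\nabla f,\nabla\Delta f\rangle$ $\meas$-a.e. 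Moreover $\nabla(\nabla f) = (\Hess f)^\sharp$ by item \ref{La:Cov 5} in \autoref{Th:Properties W12 TM} (with $g:=\One_\mms$), hence $\vert\nabla(\nabla f)\vert_\HS = \vert\Hess f\vert_\HS$ $\meas$-a.e., which is in $\Ell^2(\mms)$ by \autoref{Th:Hess}.

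\textbf{First I would establish the bound for $f\in\Test(\mms)$.} Integrating the inequality of \autoref{Le:Pre.Bochner} for $X:=\nabla f$ against $\One_\mms$ is not directly admissible since $\One_\mms$ need not belong to $\F_\rme$; instead I would test $\DELTA^{2\kappa}\vert\nabla f\vert^2/2$ against the cutoff functions $g_n\in\F_\bounded$ from \autoref{Le:Approx to id} with $g_n=1$ $\meas$-a.e.~on $G_n$, use that $\DELTA^{2\kappa}\vert\nabla f\vert^2$ has finite total variation by \autoref{Pr:Fin tot var}, and pass to the limit using $\DELTA^{2\kappa}\vert\nabla f\vert^2[\mms] = -2\langle\kappa\,\vert\,\vert\nabla f\vert^2\rangle$ from \eqref{Eq:Identities Gamma2 DELTA}. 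This gives
\begin{align*}
\int_\mms \big\vert\Hess f\big\vert_\HS^2\d\meas + \int_\mms \big\langle\nabla f,\nabla\Delta f\big\rangle\d\meas \leq \frac12\,\DELTA^{2\kappa}\vert\nabla f\vert^2[\mms] = -\big\langle\kappa\,\big\vert\,\vert\nabla f\vert^2\big\rangle,
\end{align*}
and since $\int_\mms \langle\nabla f,\nabla\Delta f\rangle\d\meas = -\int_\mms(\Delta f)^2\d\meas$ by \eqref{Eq:IBP Laplacian} applied with $g:=\Delta f\in\F$, one obtains $\Ch_2(f)\leq \int_\mms(\Delta f)^2\d\meas - \langle\kappa\,\vert\,\vert\nabla f\vert^2\rangle$. (Alternatively this is already contained in \eqref{Eq:Write down}.)

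\textbf{Then I would run the approximation.} Given $f\in\Dom(\Delta)$, set $f_n := \max\{\min\{f,n\},-n\}$ and consider $f_{t,n} := \ChHeat_t f_n\in\Test(\mms)$; by \autoref{Th:Heat flow properties}, $f_{t,n}\to\ChHeat_t f$ in $\F$ and $\Delta f_{t,n}\to\Delta\ChHeat_t f$ in $\Ell^2(\mms)$ as $n\to\infty$, and $\ChHeat_t f\to f$ in $\F$, $\Delta\ChHeat_t f=\ChHeat_t\Delta f\to\Delta f$ in $\Ell^2(\mms)$ as $t\to0$. By the bound just established together with \eqref{Eq::E^k identity} and \autoref{Le:Form boundedness} for $\mu:=\kappa^-$ — exactly as in \eqref{Eq:Write down} — the family $\{\Hess f_{t,n}\}$ is bounded in $\Ell^2((T^*)^{\otimes2}\mms)$ along this approximation, so by \autoref{Th:Hess properties}\ref{La:H2} we get $f\in\Dom(\Hess)$ with $\Hess f_{t,n}\rightharpoonup\Hess f$. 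Crucially, $\nabla f_{t,n}\in\Reg(T\mms)$, hence by \autoref{Le:Kato inequality} each $\vert\nabla f_{t,n}\vert\in\F$, and by \autoref{Cor:Kappa cont} the curvature functional $X\mapsto\langle\kappa\,\vert\,\vert X\vert^2\rangle$ is $H^{1,2}$-continuous; combined with \autoref{Th:HSU Bochner} giving $\vert\CHeat_s\nabla g\vert\leq\ChHeat_s\vert\nabla g\vert$ and the gradient contraction \eqref{Eq:1BE grad est}, one controls $\vert\nabla f_{t,n}\vert$ in $\F$ uniformly and identifies its limit as $\vert\nabla f\vert\in\F$, so $\langle\kappa\,\vert\,\vert\nabla f_{t,n}\vert^2\rangle\to\langle\kappa\,\vert\,\vert\nabla f\vert^2\rangle$. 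Passing to the limit in the inequality of the previous step and using $\Ell^2$-lower semicontinuity of $\Ch_2$ (\autoref{Th:Hess properties}\ref{La:H4}) and of the energy then yields $\Ch_2(f)\leq \int_\mms(\Delta f)^2\d\meas - \langle\kappa\,\vert\,\vert\nabla f\vert^2\rangle$, which is the claim.

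\textbf{Main obstacle.} The delicate point is precisely the convergence of the curvature term $\langle\kappa\,\vert\,\vert\nabla f_{t,n}\vert^2\rangle\to\langle\kappa\,\vert\,\vert\nabla f\vert^2\rangle$ and the a priori $\F$-regularity of $\vert\nabla f\vert$ — the two issues flagged in \autoref{Re:rererer}. Mere form-boundedness (\autoref{Le:Form boundedness}) is insufficient; what rescues the argument is Kato's inequality \autoref{Le:Kato inequality}, which bounds $\big\vert\nabla\vert\nabla f_{t,n}\vert\big\vert$ by $\vert\Hess f_{t,n}\vert_\HS$ $\meas$-a.e., together with the semigroup domination \autoref{Th:HSU Bochner} (applied to $\CHeat$ on vector fields) and the gradient estimate \eqref{Eq:1BE grad est}, ensuring the approximating sequence $(\vert\nabla f_{t,n}\vert)$ is $\F$-bounded with the right weak limit; then \autoref{Pr:Extended domain props}, Mazur's lemma and \autoref{Cor:Kappa cont} close the gap. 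I would take care to organize the double limit ($n\to\infty$ then $t\to0$, via a diagonal sequence) so that all three convergences — in $\F$, in $\Ell^2(\mms)$ for the Laplacian, and of the $\kappa$-pairing — hold simultaneously.
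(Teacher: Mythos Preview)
Your overall strategy matches the paper's --- prove the inequality for $f\in\Test(\mms)$ via \autoref{Le:Pre.Bochner} and \eqref{Eq:Identities Gamma2 DELTA}, then approximate --- but the approximation step has a genuine gap.

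You obtain only \emph{weak} convergence $\Hess f_{t,n}\rightharpoonup\Hess f$ (via boundedness and closedness), and then invoke \autoref{Cor:Kappa cont}. But \autoref{Cor:Kappa cont} requires \emph{strong} convergence $\nabla f_{t,n}\to\nabla f$ in $H^{1,2}(T\mms)$; weak convergence of Hessians does not give this. Your appeal to \autoref{Th:HSU Bochner} and \eqref{Eq:1BE grad est} does not fill the gap: \autoref{Th:HSU Bochner} concerns the covariant heat flow $(\CHeat_t)_{t\ge0}$, which is unrelated to the approximation $\nabla(\ChHeat_t f_n)$, and the gradient estimate \eqref{Eq:1BE grad est} gives pointwise bounds on $\vert\nabla f_{t,n}\vert$, not $\F$-norm convergence of $\vert\nabla f_{t,n}\vert$. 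At best these yield boundedness of $(\vert\nabla f_{t,n}\vert)$ in $\F$, hence weak $\F$-convergence after Mazur --- still insufficient for \autoref{Cor:Kappa cont}.

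The fix is simple and is what the paper does. Apply the quantitative bound of \autoref{Cor:Dom(Delta) subset W22} to \emph{differences} $f_{t,n}-f_{s,m}$: since $\Delta f_{t,n}$ and $\nabla f_{t,n}$ are Cauchy in $\Ell^2$, the sequence is Cauchy in $\Dom(\Hess)$, hence converges \emph{strongly}. In particular $\Ch_2(f_{t,n})\to\Ch_2(f)$, and by item \ref{La:Cov 5} of \autoref{Th:Properties W12 TM} one gets $\nabla f_{t,n}\to\nabla f$ strongly in $H^{1,2}(T\mms)$; then \autoref{Cor:Kappa cont} applies directly. The paper also streamlines the regularity claim by noting up front that $f\in\Dom(\Delta)\subset\Dom_\reg(\Hess)$ implies $\nabla f\in H^{1,2}(T\mms)$, whence $\vert\nabla f\vert\in\F$ by \autoref{Le:Kato inequality}, separating this from the limiting argument. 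Your test-function step is correct but overcomplicated; the detour through the cutoffs $g_n$ is unnecessary since $\DELTA^{2\kappa}\vert\nabla f\vert^2$ has finite total variation and \eqref{Eq:Identities Gamma2 DELTA} gives its total mass directly.
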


\begin{proof} Since $\Dom(\Delta)\subset \Dom_\reg(\Hess)$ by \autoref{Cor:Dom(Delta) subset W22} and $\nabla\,\Dom_\reg(\Hess)\subset H^{1,2}(T\mms)$ by \autoref{Th:Properties W12 TM}, \autoref{Le:Kato inequality} implies that $\vert\nabla f\vert\in\F$ whenever $f\in\Dom(\Delta)$. In particular, $\Ch_2(f)$ is finite and the pairing $\smash{\big\langle \kappa\,\big\vert\,\vert\nabla f\vert^2\big\rangle}$ is well-defined for every such $f$.

The claimed estimate for $f\in\Test(\mms)$ follows by integrating  \autoref{Le:Pre.Bochner} for $X := \nabla f\in\Reg(T\mms)$ over all of $\mms$, and then using \eqref{Eq:div nabla = Delta} and \eqref{Eq:Identities Gamma2 DELTA}. In the more general case $f\in\Dom(\Delta)$, let $(f_k)_{k\in\N}$ be a sequence in $\Test(\mms)$ as constructed in the proof of  \autoref{Cor:Dom(Delta) subset W22}, i.e.~which satisfies $\Ch_2(f_k)\to \Ch_2(f)$, $\Delta f_k \to \Delta f$ in $\Ell^2(\mms)$ and $\nabla f_k \to \nabla f$ in $\Ell^2(T\mms)$ as $k\to\infty$. The first and the third convergence,  with \autoref{Th:Properties W12 TM}, imply that $\nabla f_k\to \nabla f$ in $H^{1,2}(T\mms)$ as $k\to\infty$. Hence
\begin{align*}
\lim_{k\to\infty}\big\langle\kappa\,\big\vert\,\vert\nabla f_k\vert\big\rangle = \big\langle\kappa\,\big\vert\,\vert\nabla f\vert\big\rangle
\end{align*}
by \autoref{Cor:Kappa cont}. The conclusion follows easily.
\end{proof}

For \autoref{Le:Inclusion}, but also in \autoref{Sub:Dim-free Ric} and \autoref{Th:Vector Bochner} below, we shall need the subsequent \autoref{Le:Epsilon lemma}. (Recall \autoref{Sub:Tamed spaces} for the well-definedness of $\Ch^{q\kappa}$ for $\kappa\in \Kato_{1-}(\mms)$ and $q\in [1,2]$.)  \autoref{Cor:Kappa X^2 identity} is then deduced along the same lines as  \autoref{Pr:Fin tot var} after setting $q= 1$  and letting $\varepsilon \to 0$ in \autoref{Le:Epsilon lemma}. 

\begin{lemma}\label{Le:Epsilon lemma} Let $X\in\Reg(T\mms)$,  and let $\phi\in\Dom(\Delta^{q\kappa})\cap\Ell^\infty(\mms)$ be nonnegative with $\Delta^{q\kappa}\phi\in\Ell^\infty(\mms)$. Given any $\varepsilon > 0$, we define $\varphi_\varepsilon\in\Cont^\infty([0,\infty))$ by $\varphi_\varepsilon(r) := (r+\varepsilon)^{q/2}-\varepsilon^{q/2}$. Then for every $q\in[1,2]$,
\begin{align*}
&\int_\mms \big[\varphi_\varepsilon\circ \vert X\vert^2\big]\,\Delta^{q\kappa}\phi\d\meas\\
&\qquad\qquad \geq -2\int_\mms \phi\,\big[\varphi_\varepsilon'\circ\vert X\vert^2\big]\,\big\langle X,(\Hodge X^\flat)^\sharp\big\rangle\d\meas\\
&\qquad\qquad\qquad\qquad  + 2\,\big\langle \kappa \,\big\vert\,\phi\,\vert X\vert^2\,\varphi_\varepsilon'\circ\vert X\vert^2\big\rangle - q\,\big\langle\kappa\,\big\vert\,\phi\,\varphi_\varepsilon\circ\vert X\vert^2\big\rangle.\!\!\!\textcolor{white}{\int_\mms}
\end{align*}
\end{lemma}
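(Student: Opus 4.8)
\textbf{Proof proposal for \autoref{Le:Epsilon lemma}.}

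The plan is to run the ``mollified heat flow'' trick from \autoref{Sub:HF vector fields}, now for the \emph{contravariant} heat flow on $1$-forms (or equivalently vector fields) but perturbed by the potential $q\kappa$, and differentiate a suitable integral functional along this flow. Concretely, for a nonnegative $\phi\in\Dom(\Delta^{q\kappa})\cap\Ell^\infty(\mms)$ with $\Delta^{q\kappa}\phi\in\Ell^\infty(\mms)$ I would fix $X\in\Reg(T\mms)$, let $P^{q\kappa}_t$ act on $X^\flat$ (Feynman--Kac / Schrödinger semigroup with potential $q\kappa$, whose generator involves the Hodge Laplacian $\Hodge$ plus the potential), and consider
\begin{align*}
G_\varepsilon(s) := \int_\mms \Schr{q\kappa}_{t-s}\phi\,\big[\varphi_\varepsilon\circ \vert (\text{contravariant flow})_s X\vert^2\big]\d\meas.
\end{align*}
However — and this is the cleaner route given what is available in the excerpt — since \autoref{Le:Epsilon lemma} is only a \emph{first order} statement (it does not need the contravariant flow's regularizing properties, only an inequality), I would instead avoid introducing the perturbed contravariant flow altogether and argue by the \emph{definition} of $\Delta^{q\kappa}$ together with \autoref{Le:Pre.Bochner}. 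Namely, $\varphi_\varepsilon$ is smooth on $[0,\infty)$ with $\varphi_\varepsilon(0)=0$, bounded first derivative, and $r\varphi_\varepsilon'(r)\le \tfrac q2\varphi_\varepsilon(r)+C_\varepsilon$-type bounds; and $\vert X\vert^2\in\Dom(\DELTA^{2\kappa})\subset\F\cap\Ell^\infty(\mms)$ for $X\in\Reg(T\mms)$ by \autoref{Le:Pre.Bochner}. Hence $\varphi_\varepsilon\circ\vert X\vert^2\in\F\cap\Ell^\infty(\mms)$ by the chain rule \autoref{Cor:Calculus rules d}, and one can compute
\begin{align*}
\int_\mms \big[\varphi_\varepsilon\circ\vert X\vert^2\big]\,\Delta^{q\kappa}\phi\d\meas = -\Ch^{q\kappa}\big(\phi,\varphi_\varepsilon\circ\vert X\vert^2\big) = -\Ch\big(\phi,\varphi_\varepsilon\circ\vert X\vert^2\big) - q\,\big\langle\kappa\,\big\vert\,\phi\,\varphi_\varepsilon\circ\vert X\vert^2\big\rangle
\end{align*}
using \eqref{Eq::E^k identity} and the fact that $\phi\in\Dom(\Delta^{q\kappa})$.

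The key step is then to rewrite $-\Ch(\phi,\varphi_\varepsilon\circ\vert X\vert^2)$. By the chain rule for the gradient, $\nabla(\varphi_\varepsilon\circ\vert X\vert^2) = \big[\varphi_\varepsilon'\circ\vert X\vert^2\big]\,\nabla\vert X\vert^2$, and $\vert X\vert^2\in\calG_\reg$ by \autoref{Pr:Compatibility}, so by \autoref{Le:IBP for W11} (applied with $g$ replaced by $\phi$, which requires $\Delta^{q\kappa}\phi\in\Ell^\infty$ only to control $\Delta\phi = \Delta^{q\kappa}\phi - 2q\kappa\cdot$ — I should check the integrability carefully here, this is where a small argument is needed) we get
\begin{align*}
-\Ch\big(\phi,\varphi_\varepsilon\circ\vert X\vert^2\big) = \int_\mms \phi\,\big[\varphi_\varepsilon'\circ\vert X\vert^2\big]\,\DELTA^{2\kappa}_{\ll}\!\big(\vert X\vert^2\big)\d\cdots
\end{align*}
— more precisely, I would test the defining property of $\DELTA^{2\kappa}\vert X\vert^2$ (\autoref{Def:Measure valued Schr}) against $h := \phi\,[\varphi_\varepsilon'\circ\vert X\vert^2]\in\F\cap\Ell^\infty(\mms)$, exactly as in the proof of \autoref{Pr:Fin tot var} where $h := \phi\,\vert\nabla f\vert$ was inserted. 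This yields
\begin{align*}
\int_\mms \widetilde{h}\d\DELTA^{2\kappa}\vert X\vert^2 = -\Ch^{2\kappa}\big(h,\vert X\vert^2\big) = -\Ch\big(h,\vert X\vert^2\big) - 2\,\big\langle\kappa\,\big\vert\,h\,\vert X\vert^2\big\rangle,
\end{align*}
and expanding $\nabla h$ by the Leibniz rule introduces the term $\phi\,[\varphi_\varepsilon''\circ\vert X\vert^2]\,\vert\nabla\vert X\vert^2\vert^2$, which is $\le 0$ since $\varphi_\varepsilon''\le 0$ for $q\le 2$. Combining: the left side of the claimed inequality equals $2\int_\mms\phi[\varphi_\varepsilon'\circ\vert X\vert^2]\,\bdGamma_2^{2\kappa}$-type term (coming from $\DELTA^{2\kappa}\vert X\vert^2$) plus the two $\kappa$-pairings, and then \autoref{Le:Pre.Bochner} gives the pointwise lower bound $\DELTA^{2\kappa}\vert X\vert^2/2 \ge (\vert\nabla X\vert_\HS^2 - \langle X,(\Hodge X^\flat)^\sharp\rangle)\,\meas$, of which I discard the nonnegative $\vert\nabla X\vert_\HS^2$ part (after noting $\varphi_\varepsilon'\ge 0$) to land on $-2\int\phi[\varphi_\varepsilon'\circ\vert X\vert^2]\langle X,(\Hodge X^\flat)^\sharp\rangle$. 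Reassembling all pieces — being careful that the $\kappa$-pairings recombine into $2\langle\kappa\,|\,\phi\,\vert X\vert^2\,\varphi_\varepsilon'\circ\vert X\vert^2\rangle - q\langle\kappa\,|\,\phi\,\varphi_\varepsilon\circ\vert X\vert^2\rangle$ — gives the stated inequality.

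The main obstacle I anticipate is the \emph{integrability bookkeeping} in applying \autoref{Le:IBP for W11} / the defining property of $\DELTA^{2\kappa}$ with the test function $h = \phi\,[\varphi_\varepsilon'\circ\vert X\vert^2]$: one must know $h\in\F\cap\Ell^\infty(\mms)$ (boundedness is clear since $\varphi_\varepsilon'$ is bounded on $[0,\infty)$ and $\phi\in\Ell^\infty$; $\F$-membership uses $\phi\in\F$, $\varphi_\varepsilon'\circ\vert X\vert^2\in\F\cap\Ell^\infty$ via \autoref{Le:Kato inequality} and \autoref{Cor:Calculus rules d}, and the Leibniz rule), and one must justify that all the intermediate pairings against $\kappa$ are finite, which follows from \autoref{Le:Form boundedness} applied to the $\F$-functions involved, exactly as in \autoref{Pr:Fin tot var}. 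A secondary subtlety is that $\varphi_\varepsilon'$ and $\varphi_\varepsilon''$ are only continuous (not $\Cont^1$) at $r=0$ when $q<2$ after differentiating once more, but $\varphi_\varepsilon\in\Cont^\infty([0,\infty))$ by the shift by $\varepsilon>0$, so this is a non-issue — precisely the reason for the $\varepsilon$-regularization — and the elementary pointwise inequalities $\varphi_\varepsilon'\ge 0$, $\varphi_\varepsilon''\le 0$, $2r\varphi_\varepsilon''(r)+\varphi_\varepsilon'(r)\ge 0$ (or whatever exact sign conditions the recombination demands) should be checked directly from $\varphi_\varepsilon(r)=(r+\varepsilon)^{q/2}-\varepsilon^{q/2}$ with $1\le q\le 2$.
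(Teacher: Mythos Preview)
Your approach is essentially the paper's: integrate by parts via $\Ch^{q\kappa}$, test the defining property of $\DELTA^{2\kappa}\vert X\vert^2$ against $h=\phi\,[\varphi_\varepsilon'\circ\vert X\vert^2]$, and invoke \autoref{Le:Pre.Bochner}. The bookkeeping you flag (membership of $h$ in $\F_\bounded$, finiteness of the $\kappa$-pairings) is exactly what the paper verifies, and your justification via \autoref{Le:Kato inequality}, the chain rule, and \autoref{Le:Form boundedness} is the right one.

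There is, however, one genuine slip in the assembly. You write that you ``discard the nonnegative $\vert\nabla X\vert_\HS^2$ part'' coming from \autoref{Le:Pre.Bochner}. But after the Leibniz expansion of $\nabla h$ you also carry the term $\tfrac12\int_\mms\phi\,[\varphi_\varepsilon''\circ\vert X\vert^2]\,\big\vert\nabla\vert X\vert^2\big\vert^2\d\meas$, which is \emph{nonpositive} since $\varphi_\varepsilon''\le 0$. Since you are proving a \emph{lower} bound, a nonpositive leftover cannot simply be dropped; it must be absorbed. The paper does this by keeping the $\vert\nabla X\vert_\HS^2$ term, writing $\big\vert\nabla\vert X\vert^2\big\vert^2=4\,\vert X\vert^2\,\big\vert\nabla\vert X\vert\big\vert^2$, applying Kato's inequality $\big\vert\nabla\vert X\vert\big\vert\le\vert\nabla X\vert_\HS$ (which goes the right way because $\varphi_\varepsilon''\le 0$), and then using precisely the elementary inequality you list at the end, $2r\,\varphi_\varepsilon''(r)+\varphi_\varepsilon'(r)\ge 0$ for $q\in[1,2]$, to conclude
\[
\int_\mms \phi\,\big[\varphi_\varepsilon'\circ\vert X\vert^2 + 2\,\vert X\vert^2\,\varphi_\varepsilon''\circ\vert X\vert^2\big]\,\vert\nabla X\vert_\HS^2\d\meas \ge 0.
\]
So the two terms must be dropped \emph{together}, not separately, and Kato's inequality is not merely a regularity tool for $h\in\F$ but the mechanism that makes this absorption work. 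Once you make this correction, your proof coincides with the paper's.
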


\begin{proof} According to our choice of $q$, we have $2r\,\varphi_\varepsilon''(r) \geq -\varphi_\varepsilon'(r)$ for every $r\geq 0$. Recall from \autoref{Le:Kato inequality} and \autoref{Th:Properties energy measure} that $\phi\,\varphi_\varepsilon'\circ\vert X \vert^2\in \F_\bounded$. Therefore, by \eqref{Eq::E^k identity}, \autoref{Th:Properties energy measure},  \autoref{Le:Pre.Bochner} and \autoref{Le:Kato inequality} we get
\begin{align*}
&\frac{1}{2}\int_\mms \big[\varphi_\varepsilon\circ \vert X\vert^2\big]\,\Delta^{q\kappa}\phi\d\meas - \big\langle \kappa\,\big\vert\, \phi\,\vert X\vert^2\,\varphi_\varepsilon'\circ\vert X\vert^2\big\rangle\\
&\qquad\qquad= - \frac{1}{2}\int_\mms \big[\varphi_\varepsilon'\circ\vert X\vert^2\big]\,\big\langle\nabla \phi,\nabla \vert X\vert^2\big\rangle\d\meas\\
&\qquad\qquad\qquad\qquad - \big\langle \kappa\,\big\vert\,\phi\,\big[q(\varphi_\varepsilon\circ \vert X\vert^2)/2 + \vert X\vert^2\, \varphi_\varepsilon'\circ\vert X\vert^2\big]\big\rangle\textcolor{white}{\int_\mms}\\
&\qquad\qquad = -\frac{1}{2}\,\Ch^{2\kappa}\big(\phi\,\varphi_\varepsilon'\circ \vert X\vert^2, \vert X\vert^2\big) + \frac{1}{2}\int_\mms \phi\,\big[\varphi_\varepsilon''\circ\vert X\vert^2\big]\,\big\vert\nabla \vert X\vert^2\big\vert^2\d\meas\\
&\qquad\qquad\qquad\qquad - \frac{q}{2}\,\big\langle \kappa\,\big\vert\,\phi\,\varphi_\varepsilon\circ \vert X\vert^2\big\rangle\textcolor{white}{\int_\mms}\\
&\qquad\qquad\geq \int_\mms \phi\,\big[\varphi_\varepsilon'\circ\vert X\vert^2\big]\,\big\vert \nabla X\big\vert_\HS^2\d\meas - \int_\mms \phi\,\big[\varphi_\varepsilon'\circ\vert X\vert^2\big]\,\big\langle X, (\Hodge X^\flat)^\sharp\big\rangle\d\meas\\
&\qquad\qquad\qquad\qquad + 2\int_\mms \phi\,\big[\varphi''\circ\vert X\vert^2\big]\,\vert X\vert^2\,\big\vert\nabla \vert X\vert\big\vert^2\d\meas - \frac{q}{2}\,\big\langle\kappa\,\big\vert\,\phi\,\varphi_\varepsilon\circ \vert X\vert^2\big\rangle\\
&\qquad\qquad \geq - \int_\mms \phi\,\big[\varphi_\varepsilon'\circ\vert X\vert^2\big]\,\big\langle X, (\Hodge X^\flat)^\sharp\big\rangle\d\meas  - \frac{q}{2}\,\big\langle\kappa\,\big\vert\,\phi\,\varphi_\varepsilon\circ \vert X\vert^2\big\rangle.
\end{align*}
Multiplying this inequality by $2$ terminates the proof.
\end{proof}

\begin{remark}\label{Re:Epsilon Remark} Let \autoref{As:Restr} below hold. Then, after partial integration of the Hodge Laplacian term and approximation, the conclusion of \autoref{Le:Epsilon lemma}, with $\kappa$ replaced by $\kappa_n$, $n\in\N$, holds under the  more general hypothesis $q = 2$, $\varepsilon = 0$, $\smash{X\in \Dom(\Hodge)^\sharp}$ and $\phi\in\Test_{\Ell^\infty}(\mms)$. See also \autoref{Cor:GrnsSchnsII} below.
\end{remark}

\begin{corollary}\label{Cor:Kappa X^2 identity} For every $X\in\Reg(T\mms)$,
\begin{align*}
\DELTA^{2\kappa}\vert X\vert^2[\mms] = -2\,\big\langle\kappa\,\big\vert\,\vert X\vert^2\big\rangle.
\end{align*}
\end{corollary}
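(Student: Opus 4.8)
\textbf{Proof of \autoref{Cor:Kappa X^2 identity}.} The plan is to derive this as the ``bulk'' version of the estimate in \autoref{Le:Epsilon lemma}, mimicking the argument that yields the second identity in \eqref{Eq:Identities Gamma2 DELTA} of \autoref{Pr:Fin tot var}. First I would fix $X\in\Reg(T\mms)$ and recall from \autoref{Le:Pre.Bochner} that $\vert X\vert^2\in\Dom(\DELTA^{2\kappa})$, so that the quantity $\DELTA^{2\kappa}\vert X\vert^2[\mms]$ makes sense; moreover, by \autoref{Le:Kato inequality} and $\Reg(T\mms)\subset H^{1,2}(T\mms)\cap\Ell^\infty(T\mms)$ we have $\vert X\vert^2\in\F_\bounded$, so the pairing $\langle\kappa\mid\vert X\vert^2\rangle$ is well-defined via \autoref{Le:Form boundedness}. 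The idea is to test the defining property of $\DELTA^{2\kappa}$ (\autoref{Def:Measure valued Schr}) against a function $\phi$ that approximates $\One_\mms$.

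The key step is to choose $\phi := g_n$, where $(g_n)_{n\in\N}$ is an approximating sequence for $\One_\mms$; however \autoref{Le:Approx to id} alone does not give $g_n\in\Dom(\Delta^{2\kappa})$ with $\Delta^{2\kappa}g_n\in\Ell^\infty(\mms)$, so instead I would use the Feynman--Kac mollification $\phi_n := \Schr{2\kappa}_{t_n} g_n$ for suitable $t_n\to 0$, which lies in $\Dom(\Delta^{2\kappa})$ and is bounded (recall the density statement after \autoref{Def:BE cond} and the semigroup bounds from \autoref{Sub:Tamed spaces}). By \autoref{Def:Measure valued Schr},
\begin{align*}
\int_\mms \widetilde{\phi_n}\d\DELTA^{2\kappa}\vert X\vert^2 = -\Ch^{2\kappa}(\phi_n,\vert X\vert^2).
\end{align*}
One then has to show both sides converge to the claimed quantities. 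For the left side, since $\DELTA^{2\kappa}\vert X\vert^2$ has finite total variation --- this is the content of \autoref{Pr:Fin tot var}, applied with the Leibniz rule for $\DELTA^{2\kappa}$ \cite[Cor.~6.3]{erbar2020} to reduce from $\vert X\vert^2$ to carré du champs of test functions, or more directly by noting $\nabla X\in\Ell^2(T^{\otimes 2}\mms)$ and $\Hodge X^\flat\in\Ell^2(T^*\mms)$ together with \eqref{Eq:rho1 claim} --- dominated convergence gives $\int_\mms\widetilde{\phi_n}\d\DELTA^{2\kappa}\vert X\vert^2\to\DELTA^{2\kappa}\vert X\vert^2[\mms]$, using $\widetilde{\phi_n}\to 1$ $\Ch$-q.e.\ and $\vert\widetilde{\phi_n}\vert\leq 1$. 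For the right side, expanding $\Ch^{2\kappa}(\phi_n,\vert X\vert^2) = \Ch(\phi_n,\vert X\vert^2) + 2\langle\kappa\mid\phi_n\,\vert X\vert^2\rangle$ via \eqref{Eq::E^k identity}, the energy term $\Ch(\phi_n,\vert X\vert^2) = \int_\mms\langle\nabla\phi_n,\nabla\vert X\vert^2\rangle\d\meas$ tends to $0$ because $\vert\nabla\phi_n\vert\to 0$ (the mollified analogue of \autoref{Le:Approx to id}, exploiting $\vert\nabla\Schr{2\kappa}_{t}g_n\vert\leq\Schr{\cdot}_t\vert\nabla g_n\vert$ in the spirit of \eqref{Eq:1BE grad est}), while $\langle\kappa\mid\phi_n\,\vert X\vert^2\rangle\to\langle\kappa\mid\vert X\vert^2\rangle$ by form-boundedness \autoref{Le:Form boundedness} applied to $\vert\mu\vert$ for $\mu:=\kappa^\pm$ together with $\phi_n\,\vert X\vert\to\vert X\vert$ in $\F$.

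The main obstacle I anticipate is the control of $\nabla\phi_n\to 0$: unlike the $\RCD$ case (recall \autoref{Re:No global control} and \autoref{Re:RCD cons}) there is no global bound on $\Ch(g_n)$, so one genuinely needs the mollification and the gradient estimate \eqref{Eq:1BE grad est} together with exponential $\Ell^\infty$-boundedness of $(\Schr{\kappa}_t)_{t\geq 0}$ to obtain that $\Schr{\kappa}_{t_n}\vert\nabla g_n\vert\to 0$ in an appropriate sense; alternatively, one can run the argument of \autoref{Le:Epsilon lemma} with $q=2$ and $\varepsilon=0$ against $\phi_n$ and pass to the limit, which sidesteps isolating the energy term. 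Once the limit is taken, rearranging the two identities yields $\DELTA^{2\kappa}\vert X\vert^2[\mms] = -2\langle\kappa\mid\vert X\vert^2\rangle$, as announced; this is precisely the bulk-mass analogue of the relation \eqref{Eq:Meas id}--\eqref{Eq:Identities Gamma2 DELTA} specialized from gradient vector fields to arbitrary $X\in\Reg(T\mms)$. \qed
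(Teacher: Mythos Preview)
Your approach has a genuine gap at precisely the point you flag as the ``main obstacle'': the control $\int_\mms\langle\nabla\phi_n,\nabla\vert X\vert^2\rangle\,\rmd\meas\to 0$. Neither of your proposed fixes works. The gradient estimate \eqref{Eq:1BE grad est} applies to $(\ChHeat_t)_{t\geq 0}$, not to $(\Schr{2\kappa}_t)_{t\geq 0}$, and even for the heat semigroup it only yields $\vert\nabla\ChHeat_{t_n}g_n\vert\leq\Schr{\kappa}_{t_n}\vert\nabla g_n\vert$ --- which is useless because \autoref{Le:Approx to id} provides \emph{no} control on $\vert\nabla g_n\vert$ whatsoever (this is exactly the content of \autoref{Re:No global control}). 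Your alternative of running \autoref{Le:Epsilon lemma} with $q=2$, $\varepsilon=0$ produces only an \emph{inequality} (the two $\kappa$-terms on the right cancel), not the identity you need. In short, your scheme implicitly requires intrinsic completeness, which is not assumed.

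The paper's route is different and avoids approximating $\One_\mms$ altogether. One sets $q=1$ and lets $\varepsilon\to 0$ in \autoref{Le:Epsilon lemma}; the two $\kappa$-terms then cancel, leaving exactly the hypothesis of \autoref{Le:BE1 lemma} with $u:=\vert X\vert$ and $g:=\One_{\{\vert X\vert>0\}}\,\vert X\vert^{-1}\,\langle X,(\Hodge X^\flat)^\sharp\rangle$. That lemma produces a nonnegative measure $\bdsigma$ satisfying the analogue of \eqref{Eq:Tuuut}. Repeating verbatim the algebraic manipulations leading to \eqref{Eq:Meas id} in the proof of \autoref{Pr:Fin tot var} (with $\vert\nabla f\vert$ replaced by $\vert X\vert$ and $\langle\nabla f,\nabla\Delta f\rangle$ by $-\langle X,(\Hodge X^\flat)^\sharp\rangle$) gives a measure identity for $\DELTA^{2\kappa}\vert X\vert^2$; inserting the admissible test function $h:=\vert X\vert\in\F$ into the defining relation for $\bdsigma$ then yields $\DELTA^{2\kappa}\vert X\vert^2[\mms]=-2\,\langle\kappa\mid\vert X\vert^2\rangle$ directly, with no recourse to cutoffs approximating $\One_\mms$.
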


We will have to identify $1$-forms in $\smash{H^{1,2}(T^*\mms)}$ with their vector field counterparts while additionally retaining their respective first order regularities in \autoref{Th:Ricci measure}. This is discussed now. In some sense, \autoref{Le:Inclusion} can be seen as an analogue of Gaffney's inequality in \autoref{Re:Gaffney} under curvature lower bounds. 

\begin{definition}\label{Def:Hsharp} We define the space $\smash{H_\sharp^{1,2}(T\mms)}$ as the image of $H^{1,2}(T^*\mms)$ under the map $\sharp$, endowed with the norm
\begin{align*}
\Vert X \Vert_{H_\sharp^{1,2}(T\mms)} := \Vert X^\flat\Vert_{H^{1,2}(T^*\mms)}.
\end{align*}
\end{definition}

\begin{lemma}\label{Le:Inclusion} $\smash{H_\sharp^{1,2}(T\mms)}$ is a subspace of $H^{1,2}(T\mms)$. The aforementioned natural inclusion is continuous. Additionally, for every $\smash{X\in H_\sharp^{1,2}(T\mms)}$,
\begin{align*}
\Ch_\cov(X) \leq \Ch_\con(X^\flat) - \big\langle \kappa\,\big\vert\,\vert X\vert^2\big\rangle.
\end{align*}
\end{lemma}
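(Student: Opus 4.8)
\textbf{Plan of proof for \autoref{Le:Inclusion}.}
The strategy is to first establish the asserted inequality on the generating class $\Reg(T\mms)$, where all computations are legitimate, and then pass to the closure. The crucial analytic input that makes the closure argument work is the form-bound $\rho'<1$ coming from $\kappa\in\Kato_{1-}(\mms)$ via \autoref{Le:Form boundedness}, combined with Kato's inequality \autoref{Le:Kato inequality}.

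\emph{Step 1: the inequality on $\Reg(T\mms)$.} Fix $X\in\Reg(T\mms)$. By \autoref{Le:Pre.Bochner}, $\vert X\vert^2\in\Dom(\DELTA^{2\kappa})$ and pointwise $\meas$-a.e.,
$\big\vert\nabla X\big\vert_\HS^2 \leq \delta^{2\kappa}\tfrac{\vert X\vert^2}{2} + \big\langle X,(\Hodge X^\flat)^\sharp\big\rangle$,
where $\delta^{2\kappa}\vert X\vert^2/2$ is the density of the $\meas$-absolutely continuous part of $\DELTA^{2\kappa}\vert X\vert^2/2$ (its $\meas$-singular part being nonnegative). Integrating this over $\mms$ and using \autoref{Cor:Kappa X^2 identity}, which gives $\DELTA^{2\kappa}\vert X\vert^2[\mms] = -2\langle\kappa\,\vert\,\vert X\vert^2\rangle$, together with the nonnegativity of the singular part, yields
$\int_\mms\big\vert\nabla X\big\vert_\HS^2\d\meas \leq -\big\langle\kappa\,\big\vert\,\vert X\vert^2\big\rangle + \int_\mms\big\langle X,(\Hodge X^\flat)^\sharp\big\rangle\d\meas.$
Since $X\in\Reg(T\mms)\subset\Dom(\Hodge)^{\sharp}$ by \autoref{Le:Hodge test} and $X^\flat\in H^{1,2}(T^*\mms)$, the defining property of $\Hodge$ in \autoref{Def:Hodge Lapl} (tested against $X^\flat$ itself) gives $\int_\mms\langle X,(\Hodge X^\flat)^\sharp\rangle\d\meas = \int_\mms\langle \Hodge X^\flat, X^\flat\rangle\d\meas = \int_\mms[\vert\rmd X^\flat\vert^2 + \vert\delta X^\flat\vert^2]\d\meas = 2\,\Ch_\con(X^\flat)$. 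Hence $\Ch_\cov(X)\leq 2\,\Ch_\con(X^\flat) - \langle\kappa\,\vert\,\vert X\vert^2\rangle$. (A factor-of-$2$ bookkeeping check against the normalization of $\Ch_\con$ in \autoref{Def:W12 forms space} will be needed here; I expect the statement as written, so I will track constants carefully and, if necessary, use that $\Ch_\con$ carries the $\tfrac12$ while $\widetilde\Ch_\cov$ does not.)

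\emph{Step 2: closure.} Given $X\in H_\sharp^{1,2}(T\mms)$, pick $X_n\in\Reg(T\mms)$ with $X_n^\flat\to X^\flat$ in $H^{1,2}(T^*\mms)$, i.e. $X_n\to X$ in $\Ell^2(T\mms)$, $\rmd X_n^\flat\to\rmd X^\flat$, $\delta X_n^\flat\to\delta X^\flat$ in $\Ell^2$. The point is to show $(X_n)$ is Cauchy in $W^{1,2}(T\mms)$, so that its limit is $X$ and $\Ch_\cov$ is controlled. For $m,n$, apply Step 1 to $X_n - X_m$: $\Ch_\cov(X_n-X_m)\leq 2\,\Ch_\con(X_n^\flat - X_m^\flat) - \langle\kappa\,\vert\,\vert X_n - X_m\vert^2\rangle$. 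The right-hand side is the source of the difficulty: the term $-\langle\kappa\,\vert\,\vert X_n-X_m\vert^2\rangle$ is \emph{not} a priori bounded by the contravariant quantities. Here I would invoke \autoref{Le:Kato inequality}: $\vert X_n-X_m\vert\in\F$ with $\vert\nabla\vert X_n-X_m\vert\vert\leq\vert\nabla(X_n-X_m)\vert_\HS$, so by \autoref{Le:Form boundedness} applied to $\kappa^-$ (the only dangerous part),
$-\langle\kappa\,\vert\,\vert X_n-X_m\vert^2\rangle\leq\langle\kappa^-\,\vert\,\vert X_n-X_m\vert^2\rangle\leq\rho'\,\Ch_\cov(X_n-X_m)+\alpha'\Vert X_n-X_m\Vert_{\Ell^2(T\mms)}^2.$
Absorbing $\rho'\,\Ch_\cov(X_n-X_m)$ into the left side (possible since $\rho'<1$) gives
$(1-\rho')\,\Ch_\cov(X_n-X_m)\leq 2\,\Ch_\con(X_n^\flat-X_m^\flat)+\alpha'\Vert X_n-X_m\Vert_{\Ell^2(T\mms)}^2\to 0.$
Thus $(X_n)$ is $W^{1,2}(T\mms)$-Cauchy; by completeness and closedness of $\nabla$ (\autoref{Th:Properties W12 TM}\ref{La:Cov 2}) its limit is an element of $H^{1,2}(T\mms)$ equal to $X$, and $\Vert X\Vert_{W^{1,2}(T\mms)}\lesssim\Vert X\Vert_{H_\sharp^{1,2}(T\mms)}$, giving the claimed continuous inclusion. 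Finally, to get the sharp inequality $\Ch_\cov(X)\leq\Ch_\con(X^\flat)-\langle\kappa\,\vert\,\vert X\vert^2\rangle$ (without the $1/(1-\rho')$ loss) for the limit, I pass to the limit directly in the Step-1 inequality for $X_n$: $\Ch_\cov$ is lower semicontinuous (\autoref{Th:Properties W12 TM}\ref{La:Cov 3}), $\Ch_\con(X_n^\flat)\to\Ch_\con(X^\flat)$, and $\langle\kappa\,\vert\,\vert X_n\vert^2\rangle\to\langle\kappa\,\vert\,\vert X\vert^2\rangle$ by \autoref{Cor:Kappa cont} (legitimately applicable because we now know $X_n\to X$ in $H^{1,2}(T\mms)$).

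\emph{Main obstacle.} The delicate point is precisely the Cauchy estimate in Step 2: a priori the curvature pairing does not dominate or get dominated by the contravariant norm, so one cannot close the loop without Kato's inequality plus the strict form bound $\rho'<1$. This is exactly the ``two correlated problems'' flagged in the introduction, and the argument above (absorb, then use lower semicontinuity to recover the clean constant) is the resolution. A secondary nuisance is making sure $X\in\Reg(T\mms)$ indeed lies in $\Dom(\Hodge)^\sharp$ and that the integration-by-parts $\int\langle X,(\Hodge X^\flat)^\sharp\rangle = \int\langle\Hodge X^\flat,X^\flat\rangle$ is valid — this follows from \autoref{Le:Hodge test} together with the self-adjoint form characterization of $\Hodge$ in \autoref{Def:Hodge Lapl}, since $X^\flat\in H^{1,2}(T^*\mms)$ is an admissible test form.
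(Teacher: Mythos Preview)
Your proposal is correct and follows essentially the same route as the paper: establish the inequality on $\Reg(T\mms)$ by integrating \autoref{Le:Pre.Bochner}, evaluating via \autoref{Cor:Kappa X^2 identity} and \autoref{Le:Hodge test}, then absorb the curvature term using \autoref{Le:Form boundedness} plus \autoref{Le:Kato inequality} with $\rho'<1$, and finally pass to the closure using $\Ell^2$-lower semicontinuity of $\Ch_\cov$ together with \autoref{Cor:Kappa cont}. Your Cauchy-in-$W^{1,2}$ argument on differences is in fact a slightly more explicit justification of the inclusion $H_\sharp^{1,2}(T\mms)\subset H^{1,2}(T\mms)$ than the paper's (which invokes only lower semicontinuity on the sequence itself), and your flagged factor-of-$2$ issue reflects a genuine normalization inconsistency in the paper between the definition of $\Ch_\con$ and its use here --- the intended quantity is $\int_\mms[\vert\rmd X^\flat\vert^2+\vert\delta X^\flat\vert^2]\d\meas$.
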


\begin{proof}  Let $\rho'\in [0,1)$ and $\alpha'\in\R$ be as in \autoref{Le:Form boundedness} for $\mu:=  \kappa^-$. 

Clearly $\Reg(T\mms) = \Reg(T^*\mms)^\sharp \subset \smash{H_\sharp^{1,2}(T\mms)}$ as well as $\Reg(T\mms)\subset H^{1,2}(T\mms)$ by definition of the respective spaces. Moreover, the claimed inequality for $X\in \Reg(T\mms)$ follows after evaluating the inequality in \autoref{Le:Pre.Bochner} at $\mms$ and using \autoref{Le:Hodge test} and \autoref{Cor:Kappa X^2 identity}.

To prove that $\smash{H_\sharp^{1,2}(T^*\mms)\subset H^{1,2}(T\mms)}$ with continuous inclusion, we again first study what happens for $X\in \Reg(T\mms)$. By the previous step,  \autoref{Le:Form boundedness} for $\mu := \kappa^-$ together with  \autoref{Le:Kato inequality}, and \eqref{Eq:delta = -div},
\begin{align}\label{Eq:Concl}
\Ch_\cov(X) &\leq -\big\langle\kappa\,\big\vert\,\vert X\vert^2\big\rangle  + \Ch_\con(X^\flat) \\
&\leq \big\langle\kappa^-\,\big\vert\,\vert X\vert^2\big\rangle +\Ch_\con(X^\flat)\nonumber\\
&\leq \rho'\,\Ch_\cov(X) + \alpha'\,\big\Vert X\big\Vert_{\Ell^2(T\mms)}^2 + \Ch_\con(X^\flat).\nonumber
\end{align}
Rearranging yields
\begin{align}\label{Eq:Equats}
\Ch_\cov(X) &\leq \frac{\alpha'}{1-\rho'}\,\big\Vert X\big\Vert_{\Ell^2(T\mms)}^2   + \frac{1}{1-\rho'}\,\Ch_\con(X^\flat).
\end{align}
This inequality extends to arbitrary $X \in \smash{H_\sharp^{1,2}(T^*\mms)}$ by applying it to all members of a sequence $(X_n)_{n\in\N}$ in $\Reg(T\mms)$ such that $X_n \to X$ in $\smash{H_\sharp^{1,2}(T\mms)}$ as $n\to\infty$ and using the $\Ell^2$-lower semicontinuity of $\Ch_\cov$ from \autoref{Th:Properties W12 TM}. In particular, as the r.h.s.~of \eqref{Eq:Equats} is continuous in $\smash{H_\sharp^{1,2}(T\mms)}$, both $\smash{H_\sharp^{1,2}(T\mms)\subset H^{1,2}(T\mms)}$ and the continuity of this inclusion follow. In particular, by \autoref{Cor:Kappa cont} the inequality \eqref{Eq:Concl} is stable under this limit procedure.
\end{proof}

\subsubsection{Main result} We now come to the main result of this article.

\begin{theorem}\label{Th:Ricci measure} There exists a unique continuous mapping $\smash{\RIC^\kappa\colon H_\sharp^{1,2}(T\mms)^2} \to \smash{\Meas_\fin^\pm(\mms)_\Ch}$ satisfying the identity
\begin{align}\label{Eq:Ric on test vfs}
\begin{split}
\RIC^\kappa(X,Y) &= \DELTA^{2\kappa}\frac{\langle X,Y\rangle}{2} + \Big[\frac{1}{2} \big\langle X,(\Hodge Y^\flat)^\sharp\big\rangle\\
&\qquad\qquad + \frac{1}{2} \big\langle Y,(\Hodge X^\flat)^\sharp\big\rangle - \nabla X:\nabla Y\Big]\,\meas
\end{split}
\end{align}
for every $X,Y\in\Reg(T\mms)$. The map $\smash{\RIC^\kappa}$ is symmetric and $\R$-bilinear. Furthermore, for every $\smash{X,Y\in H_\sharp^{1,2}(T\mms)}$, it obeys
\begin{align}\label{Eq:Ric kappa ids}
\begin{split}
\RIC^\kappa(X,X) &\geq 0,\\
\RIC^\kappa(X,Y)[\mms] &=\int_\mms \big[\langle\rmd X^\flat,\rmd Y^\flat\rangle +\delta X^\flat\,\delta Y^\flat\big]\d\meas\\
&\qquad\qquad  -\int_\mms \nabla X:\nabla Y \d\meas  -  \big\langle\kappa\,\big\vert\, \langle X,Y\rangle\big\rangle,\\
\big\Vert\RIC^\kappa(X,Y)\big\Vert_\TV^2 &\leq \,\big[\Ch_\con(X^\flat) + \big\langle \kappa\,\big\vert\,\vert X\vert^2\big\rangle\big]\, \big[\Ch_\con(Y^\flat) +\big\langle \kappa\,\big\vert\,\vert Y\vert^2\big\rangle\big].
\end{split}
\end{align}
\end{theorem}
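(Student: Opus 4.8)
The proof of Theorem~\ref{Th:Ricci measure} splits naturally into: (1) showing that the right-hand side of \eqref{Eq:Ric on test vfs} is a well-defined finite signed Borel measure not charging $\Ch$-polar sets for $X,Y\in\Reg(T\mms)$, (2) establishing an a priori bound that makes the map $X\mapsto\RIC^\kappa(X,X)$ (hence, by polarization, $\RIC^\kappa$) continuous from $H_\sharp^{1,2}(T\mms)$ into $\Meas_\fin^\pm(\mms)_\Ch$, so that it extends uniquely, and (3) passing the asserted properties to the limit. First I would check (1): for $X,Y\in\Reg(T\mms)$ we have $\langle X,Y\rangle\in\Dom(\DELTA^{2\kappa})$ by polarizing \autoref{Le:Pre.Bochner} (or by the Leibniz rule for $\DELTA^{2\kappa}$ from \cite[Cor.~6.3]{erbar2020}), so $\DELTA^{2\kappa}\langle X,Y\rangle/2\in\Meas_\sigmafinR^\pm(\mms)_\Ch$; its finiteness follows from \autoref{Cor:Kappa X^2 identity} applied after polarization, giving $\DELTA^{2\kappa}\langle X,Y\rangle[\mms]=-2\langle\kappa\,|\,\langle X,Y\rangle\rangle$, which is finite since $\langle X,Y\rangle\in\F$ and \autoref{Le:Form boundedness} applies. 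The bracket terms $\langle X,(\Hodge Y^\flat)^\sharp\rangle\,\meas$ and $\nabla X:\nabla Y\,\meas$ are in $\Ell^1(\mms)$ using $X,Y\in\Reg(T\mms)\subset\Ell^\infty(T\mms)$, \autoref{Le:Hodge test} and $\Reg(T\mms)\subset W^{1,2}(T\mms)$ (\autoref{Th:Properties W12 TM}). Symmetry and $\R$-bilinearity on $\Reg(T\mms)$ are immediate from the explicit formula.

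\textbf{The nonnegativity and the total-variation bound.} For $X\in\Reg(T\mms)$, \autoref{Le:Pre.Bochner} combined with \autoref{Le:Hodge test} (to identify $(\Hodge X^\flat)^\sharp$) gives exactly $\RIC^\kappa(X,X)\geq 0$; this is the place where the taming condition enters. Next, evaluating the defining formula at $\mms$ and using \autoref{Cor:Kappa X^2 identity}, the representation $\delta X^\flat=-\div X^\sharp$ from \eqref{Eq:delta = -div}, and $\rmd(g\,\rmd f)=\rmd g\wedge\rmd f$ (\autoref{Th:Wd12 properties}) together with \autoref{Le:Hodge test}, yields the second identity in \eqref{Eq:Ric kappa ids} for regular vector fields --- i.e.\ $\RIC^\kappa(X,Y)[\mms]=\Ch_\con$-type pairing minus covariant pairing minus the $\kappa$-pairing. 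Then, because $\RIC^\kappa(X,X)\geq 0$, one has $\|\RIC^\kappa(X,X)\|_\TV=\RIC^\kappa(X,X)[\mms]$, and by \autoref{Le:Inclusion} this equals $\Ch_\con(X^\flat)-\Ch_\cov(X)-\langle\kappa\,|\,|X|^2\rangle\le\Ch_\con(X^\flat)+\langle\kappa\,|\,|X|^2\rangle$ minus a nonnegative covariant term. Combining with Cauchy--Schwarz for the bilinear form $\RIC^\kappa$ (valid since $\RIC^\kappa(X,X)\geq0$) gives the stated quadratic total-variation estimate for $X,Y\in\Reg(T\mms)$; crucially the right-hand side is controlled by $\|X\|_{H_\sharp^{1,2}(T\mms)}^2\,\|Y\|_{H_\sharp^{1,2}(T\mms)}^2$ via \autoref{Cor:Kappa cont} and \autoref{Le:Inclusion}.

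\textbf{The extension.} The total-variation bound just obtained shows $\Reg(T\mms)\ni X\mapsto\RIC^\kappa(X,X)$ is Lipschitz on bounded sets of $H_\sharp^{1,2}(T\mms)$ for the total-variation norm on $\Meas_\fin^\pm(\mms)$; by polarization the same holds for the bilinear map. Since $\Reg(T\mms)$ is dense in $H_\sharp^{1,2}(T\mms)$ by definition (\autoref{Def:Hsharp}, \autoref{Def:H Hodge space}) and $(\Meas_\fin^\pm(\mms),\|\cdot\|_\TV)$ is a Banach space, $\RIC^\kappa$ extends uniquely to a continuous symmetric $\R$-bilinear map on $H_\sharp^{1,2}(T\mms)^2$; that the extension still avoids $\Ch$-polar sets follows because $\Meas_\fin^\pm(\mms)_\Ch$ is $\|\cdot\|_\TV$-closed in $\Meas_\fin^\pm(\mms)$ (absolute continuity w.r.t.\ a fixed $\Ch$-smooth reference measure is preserved under TV-limits). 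Finally, all three properties in \eqref{Eq:Ric kappa ids} pass to the limit: nonnegativity and the TV-bound are preserved under TV-convergence, and the integral identity uses that each term on its right-hand side is continuous on $H_\sharp^{1,2}(T\mms)$ --- the contravariant part by definition of the norm, the covariant part by \autoref{Le:Inclusion}, and the $\kappa$-pairing by \autoref{Cor:Kappa cont}.

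\textbf{Main obstacle.} The delicate point is ensuring finiteness and the correct target space throughout the extension: one must know a priori, before taking limits, that $\DELTA^{2\kappa}\langle X,X\rangle$ has finite total variation for $X\in\Reg(T\mms)$ --- this is precisely the role of \autoref{Cor:Kappa X^2 identity} (itself resting on the $\BE_1(\kappa,\infty)$ consequence of taming, cf.\ \autoref{Pr:Fin tot var}) --- and that the bound controlling $\RIC^\kappa(X,X)[\mms]$ is genuinely in terms of the $H_\sharp^{1,2}$-norm and not merely the $\Ell^2$-norm; here the form-boundedness with constant $\rho'<1$ from \autoref{Le:Form boundedness}, feeding into \autoref{Le:Inclusion} and \autoref{Cor:Kappa cont}, is what closes the argument. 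Getting the bookkeeping of these estimates consistent, and verifying the second identity of \eqref{Eq:Ric kappa ids} on $\Reg(T\mms)$ by carefully matching the Hodge-Laplacian and $\Gamma_2$-terms, is the part requiring genuine care rather than routine manipulation.
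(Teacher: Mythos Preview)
Your proposal is correct and follows essentially the same route as the paper's proof: define $\RIC^\kappa$ on $\Reg(T\mms)$ via \eqref{Eq:Ric on test vfs}, get nonnegativity from \autoref{Le:Pre.Bochner}, compute the total mass via \autoref{Cor:Kappa X^2 identity} and \autoref{Le:Hodge test}, derive the off-diagonal total-variation bound, and extend by density using the $H_\sharp^{1,2}$-continuity furnished by \autoref{Le:Inclusion} and \autoref{Cor:Kappa cont}. The only cosmetic difference is that where you invoke ``Cauchy--Schwarz for the bilinear form $\RIC^\kappa$'', the paper phrases this via \autoref{Le:Measure lemma} applied to $\lambda^2\,\RIC^\kappa(X,X)+2\lambda\,\RIC^\kappa(X,Y)+\RIC^\kappa(Y,Y)=\RIC^\kappa(\lambda X+Y,\lambda X+Y)\geq 0$, which directly yields $\Vert\RIC^\kappa(X,Y)\Vert_\TV\leq\Vert\RIC^\kappa(X,X)\Vert_\TV^{1/2}\,\Vert\RIC^\kappa(Y,Y)\Vert_\TV^{1/2}$; the content is the same.
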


\begin{proof} Given any $X,Y\in\Reg(T\mms)$, we define $\smash{\RIC^\kappa(X,Y)\in \Meas_\fin^\pm(\mms)_\Ch}$ by \eqref{Eq:Ric on test vfs}. This assignment is well-defined since $\langle X,Y\rangle \in \Dom(\DELTA^{2\kappa})$ by \autoref{Le:Pre.Bochner} and gives a nonnegative element. The map $\RIC^\kappa\colon \Reg(T\mms)^2\to \Meas_\fin^\pm(\mms)_\Ch$ defined in that way is clearly symmetric and $\R$-bilinear. 

Let $X$ and $Y$ as above. Then by \autoref{Le:Hodge test} and \autoref{Cor:Kappa X^2 identity},
\begin{align*}
\Vert\RIC^{\kappa}(X,X)\Vert_\TV &= \RIC^{\kappa}(X,X)[\mms]\\
&=\Ch_\con(X^\flat) - \Ch_\cov(X) - \big\langle \kappa\,\,\big\vert\, \vert X\vert^2\big\rangle\\
&\leq \Ch_\con(X^\flat) - \big\langle \kappa\,\big\vert\,\vert X\vert^2\big\rangle.
\end{align*}
By symmetry and $\R$-bilinearity of $\RIC^\kappa$, for every $\lambda\in\R$ we obtain
\begin{align*}
&\lambda^2\,\RIC^\kappa(X,X) + 2\lambda\,\RIC^\kappa(X,Y) + \RIC^\kappa(Y,Y)\\
&\qquad\qquad = \RIC^\kappa(\lambda\, X+Y,\lambda\,X+Y)\geq 0.
\end{align*}
\autoref{Le:Measure lemma} thus implies that
\begin{align*}
\Vert \RIC^\kappa(X,Y)\Vert_\TV &\leq \big[\Ch_\con(X^\flat) - \big\langle \kappa\,\big\vert\,\vert X\vert^2\big\rangle\big]^{1/2}\,\big[\Ch_\con(Y^\flat) - \big\langle\kappa\,\big\vert\,\vert Y\vert^2\big\rangle\big]^{1/2}.
\end{align*}
Since the r.h.s.~is jointly continuous in $X$ and $Y$ w.r.t.~convergence in $\smash{H_\sharp^{1,2}(T\mms)}$ by \autoref{Le:Inclusion} and \autoref{Cor:Kappa cont}, the above map $\RIC^\kappa$ extends continuously and uniquely to a (non-relabeled) map $\smash{\RIC^\kappa\colon H_\sharp^{1,2}(T\mms)^2\to \Meas_{\fin}^\pm(\mms)}$. 

In particular, the last inequality of \eqref{Eq:Ric kappa ids} directly comes as a byproduct of the previous argument, while the first inequality of \eqref{Eq:Ric kappa ids} follows by continuously extending \autoref{Le:Pre.Bochner} to any $\smash{X\in H_\sharp^{1,2}(T\mms)}$. The second identity for $X\in \Reg(T\mms)$ follows from \autoref{Le:Hodge test} and \autoref{Cor:Kappa X^2 identity} after integration by parts. Since both sides are jointly $\smash{H_\sharp^{1,2}}$-continuous in $X$ and $Y$ by  \autoref{Le:Inclusion} and \autoref{Cor:Kappa cont}, this identity easily extends to all $X,Y\in \smash{H_\sharp^{1,2}(T\mms)}$.
\end{proof}

\begin{remark} In the abstract setting of diffusion operators,  Sturm \cite{sturm2018} introduced a ``pointwise'', possibly dimension-dependent Ricci tensor. Although the smoothness assumptions are not really justified in our setting \cite[Rem.~3.6.6]{gigli2018}, it would be interesting to study the behavior of $\RIC^\kappa$ under conformal and drift transformations of $\langle\cdot,\cdot\rangle$ and $\meas$, respectively, as done in the abstract framework of  \cite{sturm2018}.
\end{remark}

\subsection{Curvature tensors from the $\kappa$-Ricci measure}\label{Sub:Curv tensors from RIC} Next, we separate  $\kappa$ from $\RIC^\kappa$ in \autoref{Th:Ricci measure} to define the \emph{Ricci curvature} and the \emph{second fundamental form} intrinsically defined by the data $(\mms,\Ch,\meas)$.

\subsubsection{Measure-valued Laplacian}\label{Sub:Measure valued Lapl} 

\begin{definition}\label{Def:Meas val Lapl} We define $\Dom(\DELTA)$ to consist of all $f\in \F$ for which $\nabla f\in \Dom_{L^2}(\DIV)$, in which case we define the\label{Not:Meas val Lapl} \emph{measure-valued Laplacian} of $f$ by
\begin{align*}
\DELTA f := \DIV\nabla f.
\end{align*}
\end{definition}

\begin{lemma}\label{Le:Fin tot var} The signed Borel measure $\smash{\widetilde{u}^2\,\kappa}$ has finite total variation for every $u\in\F$. In particular, if $u^2\in\Dom(\DELTA^{2\kappa})$, we have $u^2\in\Dom(\DELTA)$ with
\begin{align*}
\DELTA(u^2) = \DELTA^{2\kappa}(u^2) + 2\,\widetilde{u}^2\,\kappa.
\end{align*}
In particular $\DELTA\vert\nabla f\vert^2$ has finite total variation for every $f\in \Test(\mms)$.
\end{lemma}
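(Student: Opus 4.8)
The key point to establish first is the opening claim: for every $u\in\F$, the signed Borel measure $\widetilde{u}^2\,\kappa$ has finite total variation. I would argue as follows. Write $\kappa = \kappa^+ - \kappa^-$ with $\kappa^+,\kappa^-\in\Kato_{1-}(\mms)$ (by \autoref{Re:iffs}), so it suffices to show $\int_\mms \widetilde{u}^2\d\kappa^\pm < \infty$. This is precisely the form-boundedness estimate of \autoref{Le:Form boundedness} applied to $\mu := \kappa^+$ and then $\mu := \kappa^-$: there are $\rho',\alpha'$ with $\int_\mms\widetilde{u}^2\d\kappa^\pm \leq \rho'\,\Ch(u) + \alpha'\int_\mms u^2\d\meas < \infty$ since $u\in\F$. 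Hence $\widetilde{u}^2\,\vert\kappa\vert = \widetilde{u}^2\,\kappa^+ + \widetilde{u}^2\,\kappa^-$ is a finite measure, so $\widetilde{u}^2\,\kappa\in\Meas_\fin^\pm(\mms)_\Ch$ (it charges no $\Ch$-polar set since $\vert\kappa\vert$ does not and since $\Ch$-q.c.\ representatives are $\Ch$-q.e.\ unique).

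Next I would establish the identification $\DELTA(u^2) = \DELTA^{2\kappa}(u^2) + 2\,\widetilde{u}^2\,\kappa$ under the hypothesis $u^2\in\Dom(\DELTA^{2\kappa})$. The plan is to unwind both sides against test functions $h\in\F_\bc$. By \autoref{Def:Measure valued Schr}, for every $h\in\F$ we have $\int_\mms \widetilde h\d\DELTA^{2\kappa}(u^2) = -\Ch^{2\kappa}(h,u^2)$, and by \eqref{Eq::E^k identity} this equals $-\Ch(h,u^2) - 2\,\langle\kappa\,\vert\,\widetilde h\,\widetilde u^2\rangle$. On the other hand, I must show $\nabla(u^2)\in\Dom_{L^2}(\DIV)$ with $\DIV\nabla(u^2)$ having the asserted absolutely continuous and singular parts. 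Since $u^2\in\Dom(\DELTA^{2\kappa})\subset\F$, the chain/Leibniz rules in \autoref{Cor:Calculus rules d} give $\nabla(u^2) = 2\,\widetilde u\,\nabla u$, and the pairing $-\int_\mms \rmd h(\nabla(u^2))\d\meas = \Ch(h,u^2)$ for $h\in\F_\bc$. Combining, $-\int_\mms\rmd h(\nabla(u^2))\d\meas = \int_\mms\widetilde h\d\DELTA^{2\kappa}(u^2) + 2\,\langle\kappa\,\vert\,\widetilde h\,\widetilde u^2\rangle = \int_\mms\widetilde h\,\rmd[\DELTA^{2\kappa}(u^2) + 2\,\widetilde u^2\,\kappa]$; here the right-hand measure is $\sigma$-finite, signed, charges no $\Ch$-polar set, and by the first part has finite total variation, so it qualifies as $\DIV\nabla(u^2)$ in the sense of \autoref{Def:Measure-valued divergence}. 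The $\Ell^2$-integrability of the absolutely continuous part, required for $\Dom_{L^2}(\DIV)$, follows because $\DELTA^{2\kappa}(u^2)_\ll$ and $(\widetilde u^2\kappa)_\ll$ are both in $\Ell^1$ and — in the application to $u = \vert\nabla f\vert$ below — the explicit formula \eqref{Eq:Meas id} from \autoref{Pr:Fin tot var} exhibits the density as an $\Ell^1$-function; strictly, for the abstract statement one only needs $\div_1 X\in\Ell^1_\loc$, which is automatic, so I would either weaken the abstract claim to $u^2\in\Dom(\DELTA)$ without the $L^2$-subscript or invoke the extra regularity available in the concrete case.

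Finally, the concluding assertion $\DELTA\vert\nabla f\vert^2$ has finite total variation for $f\in\Test(\mms)$ is now immediate: by \autoref{Pr:Bakry Emery measures}, $\vert\nabla f\vert^2\in\Dom(\DELTA^{2\kappa})$, and by \autoref{Pr:Fin tot var} (using $\vert\nabla f\vert\in\F_\bounded$) the measure $\DELTA^{2\kappa}\vert\nabla f\vert^2$ has finite total variation; applying the identification with $u := \vert\nabla f\vert\in\F$ gives $\DELTA\vert\nabla f\vert^2 = \DELTA^{2\kappa}\vert\nabla f\vert^2 + 2\,\vert\nabla f\vert_\sim^2\,\kappa$, and both summands are finite (the second by the first part of the lemma, since $\vert\nabla f\vert\in\F$), whence $\Vert\DELTA\vert\nabla f\vert^2\Vert_\TV \leq \Vert\DELTA^{2\kappa}\vert\nabla f\vert^2\Vert_\TV + 2\,\Vert\vert\nabla f\vert_\sim^2\,\kappa\Vert_\TV < \infty$. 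The main obstacle I anticipate is bookkeeping around the $\Dom_{L^2}$ versus $\Dom$ distinction for the measure-valued Laplacian and making sure the singular-part term $2\,\widetilde u^2\,\kappa$ is correctly matched with $\DELTA^{2\kappa}(u^2)_\perp$ — in particular that no cancellation or double-counting occurs in the Lebesgue decomposition — but this is handled cleanly by testing against $\F_\bc$ and invoking uniqueness in \autoref{Def:Measure-valued divergence}.
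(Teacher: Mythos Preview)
Your proposal is correct and follows essentially the same route as the paper: form boundedness of $\vert\kappa\vert$ (equivalently of $\kappa^\pm$) from \autoref{Le:Form boundedness} for the first claim, then a direct computation unwinding \autoref{Def:Measure valued Schr} and \eqref{Eq::E^k identity} against test functions for the identification, with \autoref{Pr:Fin tot var} supplying the finite total variation of $\DELTA^{2\kappa}\vert\nabla f\vert^2$ in the final step. One small slip: you wrote $-\int_\mms \rmd h(\nabla(u^2))\d\meas = \Ch(h,u^2)$, which should carry a minus sign on the right, but your subsequent combination is correct regardless; your worry about the $\Dom_{L^2}$ versus $\Dom$ bookkeeping is legitimate and is indeed glossed over in the paper's terse proof, though as you note it causes no trouble in the intended application.
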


\begin{proof} The first claim follows by observing that $\kappa\in\Kato_{1-}(\mms)$ if and only if $\vert\kappa\vert\in\Kato_{1-}(\mms)$, whence $\smash{\widetilde{u}^2\,\vert\kappa\vert}$ is a finite measure by \cite[Cor.~2.25]{erbar2020}. 

The remaining claims follow by direct computations using \autoref{Def:Measure valued Schr}, \eqref{Eq::E^k identity} and \autoref{Pr:Fin tot var}.
\end{proof}

\subsubsection{Dimension-free Ricci curvature}\label{Sub:Dim-free Ric} Keeping in mind \autoref{Th:Ricci measure}, we define the map $\smash{\RIC\colon H_\sharp^{1,2}(T\mms)^2 \to \Meas_{\fin}^\pm(\mms)_\Ch}$ by
\begin{align*}
\RIC(X,Y) := \RIC^\kappa(X,Y) + \langle X,Y\rangle_\sim\,\kappa.
\end{align*}
This map  is well-defined, symmetric and $\R$-bilinear --- indeed, by polarization, \autoref{Le:Inclusion} and \autoref{Le:Kato inequality},  $\langle X,Y\rangle_\sim\,\kappa\in \smash{\Meas_\fin^\pm(\mms)_\Ch}$ for every $X,Y\in\smash{H_\sharp^{1,2}(T\mms)}$. $\RIC$ is also jointly continuous, which follows from polarization, \autoref{Cor:Kappa cont} and \autoref{Le:Inclusion} again. Lastly, by \autoref{Th:Ricci measure},  \autoref{Le:Pre.Bochner} and \autoref{Le:Fin tot var}, for every $X,Y\in\Reg(T\mms)$,
\begin{align}\label{Eq:Ric IID}
\begin{split}
\RIC(X,Y)& = \DELTA \frac{\langle X,Y\rangle}{2} + \Big[\frac{1}{2}\big\langle X, (\Hodge Y^\flat)^\sharp\big\rangle\\
&\qquad\qquad + \frac{1}{2}\big\langle Y, (\Hodge X^\flat)^\sharp\big\rangle - \nabla X:\nabla Y\Big]\,\meas.
\end{split}
\end{align}
Of course, \eqref{Eq:Ric IID} is defined to recover the familiar \emph{vector Bochner formula}
\begin{align*}
\DELTA\frac{\vert X\vert^2}{2} + \big\langle X,(\Hodge X^\flat)^\sharp\big\rangle\,\meas = \RIC(X,X) + \big\vert\nabla X\big\vert_\HS^2\,\meas
\end{align*}
for $X\in\Reg(T\mms)$, which, setting $X := \nabla f$, $f\in\Test(\mms)$ and using \autoref{Le:Hodge test} as well as \autoref{Th:Properties W12 TM}, in turn reduces to the \emph{Bochner identity}
\begin{align*}
\DELTA\frac{\vert \nabla f\vert^2}{2} - \big\langle\nabla f,\nabla \Delta f\big\rangle\,\meas = \RIC(\nabla f,\nabla f) + \big\vert\!\Hess f\big\vert_\HS^2\,\meas.
\end{align*}

In general, $\RIC(X,X)$ is no longer nonnegative, but it is bounded from below by $\kappa$ in the following sense, which is a direct consequence of \eqref{Eq:Ric IID}, \eqref{Eq:Ric kappa ids} as well as the respective $\smash{H_\sharp^{1,2}}$-continuity of both sides of the resulting inequality.

\begin{proposition}\label{Pr:kappa lower bd} For every $\smash{X\in H_\sharp^{1,2}(T\mms)}$, the map $\RIC$ defined above obeys
\begin{align*}
\RIC(X,X) \geq \vert X\vert^2_\sim\,\kappa.
\end{align*}
\end{proposition}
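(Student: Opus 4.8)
The statement $\RIC(X,X) \geq \vert X\vert^2_\sim\,\kappa$ for $X \in H_\sharp^{1,2}(T\mms)$ will follow by combining the definition of $\RIC$ with the nonnegativity of the $\kappa$-Ricci measure from \autoref{Th:Ricci measure}. First I would recall the defining identity
\begin{align*}
\RIC(X,Y) := \RIC^\kappa(X,Y) + \langle X,Y\rangle_\sim\,\kappa,
\end{align*}
valid for all $X,Y \in H_\sharp^{1,2}(T\mms)$, which is meaningful because $\langle X,Y\rangle_\sim\,\kappa \in \Meas_\fin^\pm(\mms)_\Ch$ by polarization together with \autoref{Le:Inclusion} (giving $X,Y \in H^{1,2}(T\mms)$) and \autoref{Le:Kato inequality} (giving $\vert X\vert, \vert Y\vert \in \F$, hence the pairing of $\kappa$ with $\langle X,Y\rangle_\sim$ is a finite signed measure). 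Specializing to $Y := X$ yields
\begin{align*}
\RIC(X,X) = \RIC^\kappa(X,X) + \vert X\vert^2_\sim\,\kappa.
\end{align*}

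The key step is then simply to invoke the first inequality in \eqref{Eq:Ric kappa ids} of \autoref{Th:Ricci measure}, namely $\RIC^\kappa(X,X) \geq 0$ as signed Borel measures, which holds for every $X \in H_\sharp^{1,2}(T\mms)$. Adding $\vert X\vert^2_\sim\,\kappa$ to both sides of this measure inequality gives exactly
\begin{align*}
\RIC(X,X) = \RIC^\kappa(X,X) + \vert X\vert^2_\sim\,\kappa \geq \vert X\vert^2_\sim\,\kappa,
\end{align*}
which is the claim. Strictly speaking, one should note that the nonnegativity $\RIC^\kappa(X,X) \geq 0$ was first established for $X \in \Reg(T\mms)$ via \autoref{Le:Pre.Bochner} (where the $\meas$-singular part is nonnegative by \autoref{Le:Extremely key lemma}, and the absolutely continuous part is controlled by the Bochner inequality), and then propagated to all of $H_\sharp^{1,2}(T\mms)$ by the continuity of $\RIC^\kappa$ asserted in \autoref{Th:Ricci measure}; this is already packaged into \eqref{Eq:Ric kappa ids}, so no further work is needed here.

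\textbf{On the main obstacle.} There is essentially no obstacle: this proposition is a bookkeeping consequence of results already in hand. The only point requiring a word of care is the well-definedness of both $\RIC(X,X)$ and the right-hand side $\vert X\vert^2_\sim\,\kappa$ as finite signed measures charging no $\Ch$-polar sets — but this is handled precisely as indicated above, using Kato's inequality \autoref{Le:Kato inequality} to land $\vert X\vert$ in $\F$ and the form-boundedness \autoref{Le:Form boundedness} (equivalently \autoref{Cor:Kappa cont}) to see that pairing $\kappa$ against $\vert X\vert^2$ is finite and continuous in the $H_\sharp^{1,2}$-topology. Once this is recorded, the inequality is immediate, and its stability under the $H_\sharp^{1,2}$-limit (needed if one prefers to argue first on $\Reg(T\mms)$ and then pass to the closure) follows from the joint continuity of $\RIC$ and of $X \mapsto \vert X\vert^2_\sim\,\kappa$, the latter by polarization and \autoref{Cor:Kappa cont}.
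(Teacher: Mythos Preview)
Your proposal is correct and follows essentially the same route as the paper: the proposition is stated there as ``a direct consequence of \eqref{Eq:Ric IID}, \eqref{Eq:Ric kappa ids} as well as the respective $H_\sharp^{1,2}$-continuity of both sides,'' which amounts precisely to combining the defining identity $\RIC = \RIC^\kappa + \langle\cdot,\cdot\rangle_\sim\,\kappa$ with the nonnegativity $\RIC^\kappa(X,X)\geq 0$ from \autoref{Th:Ricci measure}. Your write-up is in fact slightly more direct, since you invoke the definition of $\RIC$ on all of $H_\sharp^{1,2}(T\mms)$ rather than passing through \eqref{Eq:Ric IID} on $\Reg(T\mms)$ and then extending by continuity, but the content is identical.
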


It is unclear whether \eqref{Eq:Ric IID} or \eqref{Eq:Ric on test vfs} hold for general $\smash{X,Y\in H_\sharp^{1,2}(T\mms)}$, since we do not know whether $\langle X,Y\rangle\in \Dom(\DELTA)$ or $\langle X,Y\rangle\in \Dom(\DELTA^{2\kappa})$ in the respective situation. Still,  \eqref{Eq:Ric IID} makes sense in the subsequent weak form. See also  \autoref{Re:Weitzenböck} below.

\begin{lemma}\label{Le:Weak form} For every $\smash{X,Y\in H_\sharp^{1,2}(T\mms)}$ and every $f\in \Test(\mms)$,
\begin{align*}
\int_\mms \widetilde{f}\d\RIC(X,Y) &= \int_\mms \big[\big\langle\rmd X^\flat, \rmd(f\,Y^\flat)\big\rangle + \delta X^\flat\, \delta(f\,Y^\flat) - \nabla X : \nabla(f\,Y)\big]\d\meas\\
&= \int_\mms \big[\!\Hess f(X,Y) + \rmd f(X\div Y + Y\div X)\big]\d\meas\\
&\qquad\qquad + \int_\mms f\,\big[\big\langle\rmd X^\flat,\rmd Y^\flat\big\rangle + \delta X^\flat\,\delta Y^\flat - \nabla X:\nabla Y\big]\d\meas.
\end{align*}
\end{lemma}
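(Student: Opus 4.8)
The strategy is to prove both displayed identities \emph{first for regular vector fields} $X,Y\in\Reg(T\mms)$, where everything is licensed by the earlier results, and then to pass to the limit on $\smash{H_\sharp^{1,2}(T\mms)^2}$ using the joint $\smash{H_\sharp^{1,2}}$-continuity of every term involved. Fix $f\in\Test(\mms)$ throughout.

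\textbf{Step 1: the regular case, second equality.} For $X,Y\in\Reg(T\mms)$ I would start from \eqref{Eq:Ric IID}, multiply by $\widetilde f$ and integrate. The term $\int_\mms \widetilde f\d\DELTA\langle X,Y\rangle/2$ is handled by the defining property of $\DELTA=\DIV\nabla$ from \autoref{Def:Meas val Lapl} and \autoref{Def:Measure-valued divergence}: since $\langle X,Y\rangle\in\calG_\reg\cap\F$ (by \autoref{Pr:Compatibility} combined with \autoref{Pr:Bakry Emery measures} and \autoref{Cor:Calculus rules d}) and $f\in\Test(\mms)\subset\F_\bc$, we get $\int_\mms\widetilde f\d\DELTA\langle X,Y\rangle/2=-\tfrac12\int_\mms\rmd f(\nabla\langle X,Y\rangle)\d\meas$, and then \autoref{Pr:Compatibility} rewrites $\rmd f(\nabla\langle X,Y\rangle)=\langle\nabla_{\nabla f}X,Y\rangle+\langle X,\nabla_{\nabla f}Y\rangle$, i.e.\ $(\nabla X):(\nabla f\otimes Y)+(\nabla Y):(\nabla f\otimes X)$. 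Next I would use \autoref{Le:Leibniz rule W(21)} (or its $H^{1,2}$-variant \autoref{Re:fX in H12}) to write $\nabla(f\,Y)=\nabla f\otimes Y+f\,\nabla Y$, so that $\int_\mms\nabla X:\nabla(f\,Y)\d\meas=\int_\mms(\nabla X):(\nabla f\otimes Y)\d\meas+\int_\mms f\,\nabla X:\nabla Y\d\meas$; symmetrising gives the $\nabla X:\nabla Y$ pieces of both lines of the claim. For the Hodge terms I would invoke \autoref{Le:Hodge test}: $\Hodge(f\,Y^\flat)=f\,\Hodge Y^\flat-\Delta f\,Y^\flat-2\,\nabla_{\nabla f}Y$, pair with $X^\flat$ and integrate against the $\Ell^2$-duality, and use the self-adjointness/symmetry of $\Hodge$ on $\Reg\subset\Dom(\Hodge)$ (from \autoref{Def:Hodge Lapl}) to replace $\int_\mms\langle\Hodge(f\,Y^\flat),X^\flat\rangle\d\meas$ by $\int_\mms[\langle\rmd X^\flat,\rmd(f\,Y^\flat)\rangle+\delta X^\flat\,\delta(f\,Y^\flat)]\d\meas$; combining with the symmetric expression for $\langle Y,(\Hodge X^\flat)^\sharp\rangle$ and the identity $\div$-expansions $\div(f\,Y)=f\div Y+\rmd f(Y)$, $\Delta f=\div\nabla f$ reorganises everything into the two displayed lines. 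The passage between the "$\nabla_{\nabla f}$" normalisation and the divergence terms $\rmd f(X\div Y+Y\div X)$ uses metric compatibility once more together with \autoref{Le:Div g nabla f}.

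\textbf{Step 2: first equality in the regular case.} This is essentially the same computation read in reverse: the middle line of the claim \emph{is} $\int_\mms[\langle\rmd X^\flat,\rmd(f\,Y^\flat)\rangle+\delta X^\flat\,\delta(f\,Y^\flat)-\nabla X:\nabla(f\,Y)]\d\meas$ after using the Leibniz rules for $\rmd$ and $\delta$ on $\Reg$ (\autoref{Th:Wd12 properties}, \autoref{Le:delta of d}, and the product rules recorded in the remarks) to expand $\rmd(f\,Y^\flat)=f\,\rmd Y^\flat+\rmd f\wedge Y^\flat$, $\delta(f\,Y^\flat)=f\,\delta Y^\flat-\langle\rmd f,Y^\flat\rangle$, and $\nabla(f\,Y)$ as above; contracting against $\rmd X^\flat$, $\delta X^\flat$, $\nabla X$ and collecting the $\Hess f$ and $\rmd f$ contributions recovers exactly \eqref{Eq:Ric IID} paired with $\widetilde f$. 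So Steps 1 and 2 establish the full chain of equalities for $X,Y\in\Reg(T\mms)$.

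\textbf{Step 3: passage to $\smash{H_\sharp^{1,2}(T\mms)}$.} Both the left-hand side $\int_\mms\widetilde f\d\RIC(X,Y)$ and all the right-hand side integrals are jointly continuous in $(X,Y)$ w.r.t.\ the $\smash{H_\sharp^{1,2}}$-norm: $\RIC$ is $\smash{H_\sharp^{1,2}}$-continuous with values in $\Meas_\fin^\pm(\mms)_\Ch$ by the construction preceding \autoref{Pr:kappa lower bd} (and $\widetilde f\in\Ell^\infty$ w.r.t.\ every such measure since $f\in\F_\bounded$, recall \autoref{Le:Props q.r. s.l. Dirichlet form}); the Hodge terms are continuous because $\rmd,\delta$ are bounded $H^{1,2}(T^*\mms)\to\Ell^2$ and, via \autoref{Le:Inclusion}, $\smash{H_\sharp^{1,2}(T\mms)\hookrightarrow H^{1,2}(T\mms)}$ continuously, so $\nabla X,\nabla Y$ depend continuously on $X,Y$; and $f\,Y^\flat$, $f\,Y$ depend continuously on $Y$ by the $f$-Leibniz rules together with the fact that $f,\rmd f,\Hess f\in\Ell^\infty$ since $f\in\Test(\mms)$. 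Approximating arbitrary $X,Y\in\smash{H_\sharp^{1,2}(T\mms)}$ by sequences in $\Reg(T\mms)$ and invoking these continuities termwise yields the identities in general.

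\textbf{Main obstacle.} The only genuinely delicate point is bookkeeping: there is no deep new estimate needed, but the two displayed lines bundle together the $\DELTA$-term, the two Hodge terms and the covariant term, each of which must be expanded by a \emph{different} product/Leibniz rule ($\DELTA$ via \autoref{Pr:Compatibility} and \autoref{Le:Div g nabla f}, the Hodge terms via \autoref{Le:Hodge test}, the covariant term via \autoref{Le:Leibniz rule W(21)}), and the cross-terms ($\rmd f\wedge Y^\flat$ contracted with $\rmd X^\flat$, $\langle\rmd f,Y^\flat\rangle\,\delta X^\flat$, $\nabla f\otimes Y$ contracted with $\nabla X$) must be shown to assemble exactly into $\Hess f(X,Y)+\rmd f(X\div Y+Y\div X)$. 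Verifying that the combinatorial signs and the conversions between $\nabla_{\nabla f}$-notation and $\div$-notation match requires care but is entirely routine given \autoref{Le:Torsion free} and the symmetry of $\Hess$ from \autoref{Th:Hess properties}. I would also double-check that \autoref{Le:Hodge test} is applicable, i.e.\ that $f\,Y^\flat\in\Dom(\Hodge)$ for $Y\in\Reg(T\mms)$, which it explicitly asserts ($h\,X^\flat\in\Dom(\Hodge)$ for $h\in\Test(\mms)$, $X\in\Reg(T\mms)$).
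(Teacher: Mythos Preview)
Your overall strategy matches the paper's, and the tools you cite are the right ones. But there are two false claims and one genuine underestimation.

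First, $\Test(\mms)\not\subset\F_\bc$: test functions need not have compact support, so you cannot invoke \autoref{Def:Measure-valued divergence} directly for $\int\widetilde f\d\DELTA\langle X,Y\rangle$. The paper routes this through \autoref{Le:Fin tot var} and \autoref{Def:Measure valued Schr}, whose test class is all of $\F$: writing $\DELTA\langle X,Y\rangle=\DELTA^{2\kappa}\langle X,Y\rangle+2\langle X,Y\rangle_\sim\kappa$ and integrating against $\widetilde f$, the $\kappa$-contributions cancel and one recovers $-\int\langle\nabla f,\nabla\langle X,Y\rangle\rangle\d\meas$. Second, $\Hess f\notin\Ell^\infty$ in general for $f\in\Test(\mms)$; only $\Hess f\in\Ell^2$ is guaranteed (\autoref{Th:Hess}). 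Hence the continuity of $\int\Hess f(X,Y)\d\meas$ in $(X,Y)$ cannot be argued via a pointwise bound with bounded first factor; it follows indirectly once the second equality is proven for regular $X,Y$, since the middle line is manifestly $H_\sharp^{1,2}$-continuous.

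The more substantive gap is in what you call ``routine bookkeeping''. To compute $\int\langle\rmd X^\flat,\rmd f\wedge Y^\flat\rangle\d\meas=\int\rmd X^\flat(\nabla f,Y)\d\meas$ one wants to use the defining property of $\rmd X^\flat$ from \autoref{Def:Exterior derivative}---but that definition only allows testing against $X_0,X_1\in\Test(T\mms)$, and neither $\nabla f$ (for a general $f\in\Test(\mms)$) nor $Y\in\Reg(T\mms)\setminus\Test(T\mms)$ qualify. The paper isolates this as an intermediate claim \eqref{Eq:Claim weak form}, first established for $Y\in\Test(T\mms)$ and $f$ a product of two test functions (so that $\nabla f\in\Test(T\mms)$), then extended via two separate approximation steps---each time rewriting both sides in a form continuous in the relevant variable, using \autoref{Pr:Compatibility}, \autoref{Th:Properties W12 TM} and \autoref{Le:Product convergence}. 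This is the actual computational work in the proof, and \autoref{Le:Torsion free} alone does not settle it.
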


\begin{proof} For a given $f\in\Test(\mms)$, all terms are continuous in $X$ and $Y$ w.r.t.~convergence in $\smash{H_\sharp^{1,2}(T\mms)}$. Hence, without restriction, we may and will assume in the sequel that $X,Y\in\Reg(T\mms)$. 

Under these assumptions, we first claim that
\begin{align}\label{Eq:Claim weak form}
\begin{split}
&\int_\mms \rmd X^\flat(\nabla f,Y)\d\meas\\
&\qquad\qquad = \int_\mms \big[\!-\!\langle X,Y\rangle \,\Delta f + \langle X,\nabla f\rangle\div Y- \big\langle X,[\nabla f,Y]\big\rangle\big]\d\meas.
\end{split}
\end{align}
If $Y\in \Test(T\mms)$ and $f$ is the product of two elements of $\Test(\mms)$, by \autoref{Th:Properties energy measure}, we have $\nabla f\in \Test(T\mms)$, and hence \eqref{Eq:Claim weak form} simply follows from the \autoref{Def:Exterior derivative} of the exterior derivative $\rmd$. To relax the assumption on $Y$, note that since $\langle X,\nabla f\rangle\in \F$ by \autoref{Le:Kato inequality}, by definition of the Lie bracket, \autoref{Th:Properties W12 TM}, \autoref{Pr:Compatibility} and \eqref{Eq:div nabla = Delta},
\begin{align*}
&\int_\mms \rmd X^\flat(\nabla f,Y)\d\meas\\
&\qquad\qquad = \int_\mms \big[\!-\!\langle X,Y\rangle\,\Delta f - \rmd\langle X,\nabla f\rangle(Y) - \nabla Y : (\nabla f\otimes X)\big] \d\meas\\
&\qquad\qquad\qquad\qquad + \int_\mms \Hess f(Y,X)\d\meas\\
&\qquad\qquad = \int_\mms \big[\!-\!\langle X,Y\rangle\,\Delta f - \rmd\langle X,\nabla f\rangle(Y) - \rmd\langle Y,X\rangle(\nabla f)\big]\d\meas\\
&\qquad\qquad\qquad\qquad + \int_\mms \big[\nabla X : (\nabla f\otimes Y) + \Hess f(Y,X)\big]\d\meas\\
&\qquad\qquad = \int_\mms \big[\!-\!\rmd\langle X,\nabla f\rangle(Y) + \nabla X : (\nabla f\otimes Y) + \Hess f(Y,X)\big]\d\meas.
\end{align*}
Both extremal sides of this identity are $\Ell^2$-continuous in $Y$, whence it extends to any $Y\in \Reg(T\mms)$. For such $Y$, the second and the third identity still hold --- compare with the above mentioned results --- and \eqref{Eq:Claim weak form} follows since $Y\in \Dom(\div)$ by \eqref{Eq:div nabla = Delta}, \eqref{Eq:Div lbnz rle} and \autoref{Le:Div g nabla f}, whence by \autoref{Def:L2 div},
\begin{align*}
-\int_\mms \rmd\langle X,\nabla f\rangle(Y)\d\meas = \int_\mms \langle X,\nabla f\rangle\div Y\d\meas.
\end{align*}
Similarly, the assumption on $f$ is relaxed now. Indeed, by \eqref{Eq:Claim weak form},
\begin{align*}
&\int_\mms \rmd X^\flat(\nabla f,Y)\d\meas\\
&\qquad\qquad = \int_\mms \big[\rmd \langle X,Y\rangle(\nabla f) - \langle X,\nabla f\rangle\div Y - \nabla Y : (\nabla f\otimes X)\big]\d\meas\\
&\qquad\qquad\qquad\qquad + \int_\mms \Hess f(Y,X)\d\meas\\
&\qquad\qquad = \int_\mms \big[\rmd \langle X,Y\rangle(\nabla f) - \langle X,\nabla f\rangle\div Y - \nabla Y : (\nabla f\otimes X)\big]\d\meas\\
&\qquad\qquad\qquad\qquad + \int_\mms \big[\rmd \langle \nabla f,X\rangle(Y) - \nabla X : (Y\otimes \nabla f)\big]\d\meas\\
&\qquad\qquad = \int_\mms \big[\rmd \langle X,Y\rangle(\nabla f) - \langle X,\nabla f\rangle\div Y - \nabla Y : (\nabla f\otimes X)\big]\d\meas\\
&\qquad\qquad\qquad\qquad - \int_\mms \big[\langle \nabla f, X\rangle\div Y + \nabla X : (Y\otimes \nabla f)\big]\d\meas.
\end{align*}
Here, we successively used that $\langle X,Y\rangle, \langle\nabla f, X\rangle\in \F$ by \autoref{Le:Kato inequality}, the definition of the Lie bracket, \autoref{Th:Properties W12 TM}, \autoref{Pr:Compatibility}, and that $Y\in \Dom(\div)$ by \eqref{Eq:div nabla = Delta}, \eqref{Eq:Div lbnz rle} and \autoref{Le:Div g nabla f}. Using \autoref{Le:Product convergence}, both extremal sides of this identity thus extend to general $f\in\Test(\mms)$. As above, the second and the third equality still hold under this assumption. Thus,  \eqref{Eq:Claim weak form} is proven in the desired generality.

We turn to the claim of the lemma and initially prove the second equality. First, using \autoref{Th:Wd12 properties},  \eqref{Eq:Claim weak form}, the definition of the Lie bracket and \autoref{Th:Properties W12 TM},
\begin{align*}
&\int_\mms \big\langle\rmd X^\flat, \rmd(f\,Y^\flat)\big\rangle\d\meas\\
&\qquad\qquad= \int_\mms \big[\rmd X^\flat(\nabla f, Y) + f\,\big\langle\rmd X^\flat,\rmd Y^\flat\big\rangle\big]\d\meas\\
&\qquad\qquad= \int_\mms \big[\!-\!\langle X,Y\rangle \,\Delta f + \langle X,\nabla f\rangle\div Y- \big\langle X,[\nabla f,Y]\big\rangle\big]\d\meas\\
&\qquad\qquad\qquad\qquad + \int_\mms f\,\big\langle\rmd X^\flat,\rmd Y^\flat\big\rangle\d\meas\\
&\qquad\qquad= \int_\mms \big[\!-\!\langle X,Y\rangle\,\Delta f + \langle X,\nabla f\rangle\div Y - \nabla Y : (\nabla f\otimes X)\big]\d\meas\\
&\qquad\qquad\qquad\qquad + \int_\mms \big[\!\Hess f(X,Y) + f\,\big\langle\rmd X^\flat, \rmd Y^\flat\big\rangle\big]\d\meas.
\end{align*}
Second, by \eqref{Eq:delta = -div}, \autoref{Le:Div identities} and \autoref{Le:Div g nabla f},
\begin{align*}
\int_\mms \delta X^\flat\,\delta(f\,Y^\flat)\d\meas &= \int_\mms \div X\, \div(f\,Y)\d\meas\\
&= \int_\mms \big[\!\div X\,\langle \nabla f,Y\rangle + f\div X\,\div Y\big]\d\meas.
\end{align*}
Third, by \autoref{Le:Leibniz rule W(21)},
\begin{align*}
-\int_\mms \nabla X: \nabla(f\,Y)\d\meas = -\int_\mms \big[\nabla X : (\nabla f\otimes Y) - f\,\nabla X:\nabla Y\big]\d\meas.
\end{align*}
Adding up these three identities and employing that
\begin{align*}
\int_\mms \big[\!- \!\langle X,Y\rangle\,\Delta f-\nabla Y : (\nabla f\otimes X) - \nabla X : (\nabla f\otimes Y)\big]\d\meas = 0
\end{align*}
thanks to \eqref{Eq:div nabla = Delta}, \autoref{Le:Kato inequality} and \autoref{Pr:Compatibility}, the second claimed equality in the lemma is shown.

To prove the first identity, denote the r.h.s.~of it by $\rmA(X,Y)$. By what we have proved above, it follows that $\rmA(X,Y) = \rmA(Y,X)$. Since $\smash{f\,X, f\,Y\in H_\sharp^{1,2}(T\mms)}$ by \autoref{Th:Wd12 properties}, by \autoref{Le:Fin tot var} and \eqref{Eq:Ric IID} we get
\begin{align*}
\int_\mms \widetilde{f}\d\RIC(X,Y) &= -\frac{1}{2}\int_\mms \big[\big\langle\nabla f,\nabla \langle X,Y\rangle\big\rangle + 2f\,\nabla X:\nabla Y\big]\d\meas\\
&\qquad\qquad + \frac{1}{2}\int_\mms \big[\big\langle \rmd(f\,X^\flat),\rmd Y^\flat\big\rangle + \delta(f\,X^\flat)\,\delta Y^\flat\big]\d\meas\\
&\qquad\qquad + \frac{1}{2}\int_\mms \big[\big\langle\rmd X^\flat,\rmd(f\,Y^\flat)\big\rangle + \delta X^\flat\,\delta(f\,Y^\flat)\big]\d\meas\\
&= - \frac{1}{2}\int_\mms \big[\nabla (f\,X) : \nabla Y + \nabla X : \nabla (f\,Y)\big]\d\meas\\
&\qquad\qquad + \frac{1}{2}\int_\mms \big[\big\langle \rmd(f\,X^\flat),\rmd Y^\flat\big\rangle + \delta(f\,X^\flat)\,\delta Y^\flat\big]\d\meas\\
&\qquad\qquad + \frac{1}{2}\int_\mms \big[\big\langle\rmd X^\flat,\rmd(f\,Y^\flat)\big\rangle + \delta X^\flat\,\delta(f\,Y^\flat)\big]\d\meas\\
&= \frac{1}{2}\,\big[\rmA(X,Y) + \rmA(Y,X)\big] = \rmA(X,Y).
\end{align*}
In the second step, we used \autoref{Pr:Compatibility} and then \autoref{Le:Leibniz rule W(21)}.
\end{proof}

\begin{lemma}\label{Le:Rausziehen} For every $X,Y\in H_\sharp^{1,2}(T\mms)$ and every $f\in\Test(\mms)$,
\begin{align*}
\RIC(f\,X,Y) = \widetilde{f}\,\RIC(X,Y).
\end{align*}
\end{lemma}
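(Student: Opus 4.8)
The statement $\RIC(f\,X,Y) = \widetilde{f}\,\RIC(X,Y)$ for $f\in\Test(\mms)$ and $X,Y\in H_\sharp^{1,2}(T\mms)$ is a tensoriality (module bilinearity) property of the Ricci measure in its first slot. By symmetry and $\R$-bilinearity of $\RIC$, once this is proven it gives tensoriality in both slots. The natural strategy is to test both sides against an arbitrary $\Ch$-q.c.\ function, reducing everything to the weak identity of \autoref{Le:Weak form}, and then to exploit the density of $\Test(\mms)$ in $\F$ together with the fact that $\Test(\mms)$ is an algebra so that $f\,g\in\Test(\mms)$ whenever $f,g\in\Test(\mms)$.

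\textbf{Key steps.} First, I would reduce to showing that for every $g\in\Test(\mms)$,
\begin{align*}
\int_\mms \widetilde{g}\d\RIC(f\,X,Y) = \int_\mms \widetilde{g}\,\widetilde{f}\d\RIC(X,Y);
\end{align*}
since $\widetilde{g}\,\widetilde{f} = \widetilde{f\,g}$ (both are $\Ch$-q.c.\ $\meas$-versions of $f\,g\in\Test(\mms)$), the right-hand side equals $\int_\mms \widetilde{f\,g}\d\RIC(X,Y)$. The measures $\RIC(f\,X,Y)$ and $\RIC(X,Y)$ charge no $\Ch$-polar set and are finite, so the family of integrals $\{\int_\mms \widetilde{g}\d\mu : g\in\Test(\mms)\}$ determines $\mu$ (test functions are dense in $\F$, whose q.c.\ representatives suffice to separate such measures by quasi-regularity); hence proving the displayed identity for all $g\in\Test(\mms)$ finishes the proof. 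Second, I would apply \autoref{Le:Weak form} to the left-hand side with the pair $(f\,X, Y)$ — note $f\,X\in H_\sharp^{1,2}(T\mms)$ needs to be checked, but this follows from \autoref{Le:Leibniz rule W(21)}/\autoref{Re:fX in H12} applied at the level of $1$-forms via \autoref{Th:Wd12 properties}, or directly since $\Reg(T\mms)$ is stable under multiplication by $\Test(\mms)$ and one passes to the $H_\sharp^{1,2}$-closure. This expresses $\int_\mms \widetilde{g}\d\RIC(f\,X,Y)$ in terms of $\rmd(f\,X)^\flat$, $\delta(f\,X)^\flat$ and $\nabla(f\,X)$ paired against $g\,Y$. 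Third, I would expand each of these using the Leibniz rules: $\rmd(f\,X^\flat) = f\,\rmd X^\flat + \rmd f\wedge X^\flat$ (remark after \autoref{Le:delta of d}), $\delta(f\,X^\flat) = f\,\delta X^\flat - \langle\rmd f, X^\flat\rangle$ (same remark), and $\nabla(f\,X) = \nabla f\otimes X + f\,\nabla X$ (\autoref{Le:Leibniz rule W(21)}). Fourth, I would apply \autoref{Le:Weak form} to the right-hand side, i.e.\ to the pair $(X,Y)$ tested against $f\,g\in\Test(\mms)$, again expanding $\rmd(f\,g\,Y^\flat)$, $\delta(f\,g\,Y^\flat)$, $\nabla(f\,g\,Y)$ by the same Leibniz rules. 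Fifth, I would match the two expansions term by term: after collecting, the difference between the two sides is a sum of integrals each carrying a stray factor of $\rmd f$ or $\rmd g$ against $X^\flat$, $Y^\flat$, together with Hessian terms $\Hess(fg)$ versus $\Hess f$, $\Hess g$; the Hessian product rule and the metric-compatibility / torsion-free identities (\autoref{Pr:Compatibility}, \autoref{Le:Torsion free}, and \autoref{Pr:Product rule for gradients}), combined with integration by parts against $\div Y$, $\div X$ using \autoref{Le:Div g nabla f} and \autoref{Le:Div identities}, should force this difference to vanish. Effectively this is the ``second order'' analogue of the computation already carried out inside the proof of \autoref{Le:Weak form} (where the identity $\int_\mms[-\langle X,Y\rangle\Delta f - \nabla Y:(\nabla f\otimes X) - \nabla X:(\nabla f\otimes Y)]\d\meas = 0$ was used), now applied with $f$ replaced by $f\,g$ and the extra cross terms absorbed.

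\textbf{Main obstacle.} The bookkeeping in step five is the delicate part: one must be careful that all the intermediate objects have enough integrability and regularity for the integration-by-parts manipulations — in particular $\langle X,\nabla f\rangle\in\F$ (\autoref{Le:Kato inequality}), $Y\in\Dom(\div)$ for $Y\in\Reg(T\mms)$ (via \eqref{Eq:div nabla = Delta}, \eqref{Eq:Div lbnz rle}, \autoref{Le:Div g nabla f}), and the density/continuity arguments to pass from $\Reg(T\mms)$ to $H_\sharp^{1,2}(T\mms)$ — so I would first prove the identity for $X,Y\in\Reg(T\mms)$, where \autoref{Le:Weak form}'s second equality and all Leibniz rules apply cleanly, and then extend to general $X,Y\in H_\sharp^{1,2}(T\mms)$ by continuity: for fixed $f,g\in\Test(\mms)$ all terms $\int_\mms\widetilde{g}\d\RIC(f\,X,Y)$ and $\int_\mms\widetilde{f\,g}\d\RIC(X,Y)$ are jointly continuous in $(X,Y)\in H_\sharp^{1,2}(T\mms)^2$ (by joint $H_\sharp^{1,2}$-continuity of $\RIC$ from \autoref{Th:Ricci measure}, together with $H_\sharp^{1,2}$-continuity of $X\mapsto f\,X$, which itself follows from \autoref{Le:Inclusion} and the $1$-form Leibniz rules). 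I expect no conceptual difficulty beyond this careful term-matching, since tensoriality of $\RIC$ is exactly what the defining formula \eqref{Eq:Ric IID} is engineered to produce.
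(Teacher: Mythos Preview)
Your approach is correct and tracks the paper's proof closely: both reduce to $X,Y\in\Reg(T\mms)$ by $H_\sharp^{1,2}$-continuity, test against $g\in\Test(\mms)$ (using that $\Test(\mms)$ is an algebra so $fg\in\Test(\mms)$), expand via Leibniz rules, and match terms. The only real difference is the starting formula: the paper works directly from the defining identity \eqref{Eq:Ric IID} --- integrating $\DELTA\langle X,Y\rangle/2$ by parts and invoking the ready-made Hodge Leibniz rule $\Hodge(f\,X^\flat) = f\,\Hodge X^\flat - \Delta f\,X^\flat - 2\,(\nabla_{\nabla f}X)^\flat$ from \autoref{Le:Hodge test} --- whereas you start from \autoref{Le:Weak form} and handle the $\rmd$-, $\delta$-, and $\nabla$-pieces separately. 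The paper's route is marginally cleaner because the Hodge Leibniz rule packages $\rmd$ and $\delta$ together, cutting down the cross terms.

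One shortcut you missed: the \emph{first} identity of \autoref{Le:Weak form} already gives $\int_\mms\widetilde{h}\d\RIC(X,Y) = \rmA_h(X,Y)$ with $\rmA_h(X,Y) := \int_\mms[\langle\rmd X^\flat,\rmd(hY^\flat)\rangle + \delta X^\flat\,\delta(hY^\flat) - \nabla X:\nabla(hY)]\d\meas$, and its proof establishes $\rmA_h(X,Y)=\rmA_h(Y,X)$. Then
\[
\int_\mms\widetilde{g}\d\RIC(fX,Y)=\rmA_g(fX,Y)=\rmA_g(Y,fX)=\rmA_{fg}(Y,X)=\rmA_{fg}(X,Y)=\int_\mms\widetilde{fg}\d\RIC(X,Y),
\]
where the middle equality is just $g\,(fX)=(fg)\,X$. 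This bypasses all the term-matching in your step five.
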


\begin{proof} By the $\smash{H_\sharp^{1,2}}$-continuity of both sides in $X$ and $Y$ (recall \autoref{Th:Wd12 properties} and \autoref{Le:delta of d}), we may and will assume without restriction that $X,Y\in\Reg(T\mms)$. Owing to \autoref{Le:Mollified heat flow}, it moreover suffices to prove that for every $g\in\Test(\mms)$,
\begin{align}\label{Eq:Rausziehen}
\int_\mms \widetilde{g}\d\RIC(f\,X,Y) = \int_\mms \widetilde{g}\,\widetilde{f}\d\RIC(X,Y).
\end{align}

As in the proof of \autoref{Le:Weak form} above, by \eqref{Eq:Ric IID} and \autoref{Le:Hodge test} we have
\begin{align*}
\int_\mms \widetilde{g}\d\RIC(f\,X,Y) &= \frac{1}{2}\int_\mms \big[\Delta g\, f\,\langle X,Y\rangle + g\,f\,\big\langle X, (\Hodge Y^\flat)^\sharp\big\rangle\big]\d\meas\\
&\qquad\qquad + \frac{1}{2}\int_\mms \big[g\,\big\langle Y, (\Hodge (f\,Y^\flat))^\sharp\big\rangle - 2g\,\nabla (f\,X):\nabla Y\big]\d\meas,\\
\int_\mms \widetilde{g}\,\widetilde{f}\d\RIC(X,Y) &= \frac{1}{2}\int_\mms \big[\Delta(g\,f)\,\langle X,Y\rangle + g\,f\,\big\langle X, (\Hodge Y^\flat)^\sharp\big\rangle\big]\d\meas\\
&\qquad\qquad +\frac{1}{2}\int_\mms \big[g\,f\,\big\langle Y, (\Hodge X^\flat)^\sharp\big\rangle - 2g\,f\,\nabla X:\nabla Y\big]\d\meas.
\end{align*}
Using \autoref{Le:Delta Leibniz rule} and \autoref{Le:Leibniz rule W(21)} yields
\begin{alignat*}{3}
\Delta(g\,f) &= g\,\Delta f + 2\,\langle\nabla g,\nabla f\rangle + f\,\Delta g & & \meas\text{-a.e.},\\
\nabla(f\,X) : \nabla Y &=f\,\nabla X : \nabla Y + (\nabla f\otimes X):\nabla Y &\quad & \meas\text{-a.e.},
\end{alignat*}
while \autoref{Le:Hodge test} ensures that
\begin{align*}
(\Hodge(f\,X^\flat))^\sharp =f\,(\Hodge X^\flat)^\sharp - \Delta f\,X - 2 \,\nabla_{\nabla f}X.
\end{align*}
Lastly, since $\smash{\langle X,Y\rangle\in \F_\bounded}$ by \autoref{Le:Kato inequality}, $\langle X,Y\rangle\,\nabla f \in\Dom_\TV(\DIV)\cap \Dom(\div)$ with $\norm(\langle X,Y\rangle\,\nabla f)=0$ by \autoref{Le:Div g nabla f}. Together with \autoref{Pr:Compatibility}, this yields
\begin{align*}
\int_\mms \langle\nabla g,\nabla f\rangle\,\langle X,Y\rangle\d\meas &= -\int_\mms g\div\!\big(\langle X,Y\rangle\,\nabla f\big)\d\meas\\
&= -\int_\mms \big[g\,\Delta f\,\langle X,Y\rangle + \nabla X : (\nabla f\otimes Y)\big]\d\meas\\
&\qquad\qquad +\int_\mms \nabla Y :(\nabla f\otimes X)\d\meas.
\end{align*}
Using the last four identities, a term-by-term comparison in the above identities for both sides of \eqref{Eq:Rausziehen} precisely yields \eqref{Eq:Rausziehen}.
\end{proof}

Following  \autoref{Subsub:Measure spaces}, denote by $\smash{\RIC_\ll\colon H_\sharp^{1,2}(T\mms)^2\to \Meas_\fin^\pm(\mms)_\Ch}$ the continuous map --- recall \eqref{Eq:TV under singularity} --- which assigns to $\RIC(X,Y)$ its $\meas$-absolutely continuous part $\RIC_\ll(X,Y) := \RIC(X,Y)_\ll$, $\smash{X,Y\in H_\sharp^{1,2}(T\mms)}$.

\begin{definition}\label{Def:Ric} Either of the maps $\RIC_\ll$ or $\ric\colon \smash{H_\sharp^{1,2}(T\mms)^2\to \Ell^1(\mms)}$ given by 
\begin{align*}
\ric(X,Y) := \frac{\rmd \RIC_\ll(X,Y)}{\rmd\meas}
\end{align*}
is called \emph{Ricci curvature} of $(\mms,\Ch,\meas)$.
\end{definition}

The advantage of this definition is threefold. First, the map $\ric$ extends to all of $\Ell^2(T\mms)^2$ in a sense made precise in \autoref{Pr:Ric extension} below in the finite-dimensional framework. Second, this separation of $\RIC$ into $\meas$-absolutely continuous and $\meas$-singular parts allows us to define the second fundamental form from the latter in \autoref{Def:Second fund form} below. Third, it makes it possible to say that the negative part of the ``lowest eigenvalue'' of $\ric$ satisfies the extended Kato condition according to  \autoref{Pr:Kato prop}. To this aim, define $\ric_*\colon \mms\to [-\infty,\infty]$ by
\begin{align*}
\ric_* := \essinf\!\big\lbrace \vert X\vert^{-2}\,\ric(X,X) : X\in H_\sharp^{1,2}(T\mms),\ \vert X\vert \leq 1\ \meas\text{-a.e.}\big\rbrace.
\end{align*}

\begin{proposition}\label{Pr:Kato prop} The measure $\ric_*^-\,\meas$ belongs to $\Kato_{1-}(\mms)$.
\end{proposition}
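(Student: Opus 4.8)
The goal is to show that $\ric_*^-\,\meas \in \Kato_{1-}(\mms)$. The natural strategy is to bound $\ric_*^-$ from above by $\kappa^-$ in a suitable pointwise $\meas$-a.e.\ sense, so that the statement follows from the monotonicity properties of the Kato classes (recall from \autoref{Re:iffs} that $\Kato_\rho(\mms)$ is closed under passing to the positive/negative part, and that $\mu \in \Kato_\rho(\mms)$ as soon as $0 \leq \mu \leq \nu$ with $\nu \in \Kato_\rho(\mms)$, which is immediate from \autoref{Def:Kato class}). Thus the first and main step is to prove the $\meas$-a.e.\ inequality
\begin{align*}
\ric_*^-\,\meas \leq \kappa^-
\end{align*}
as measures, i.e.\ $\ric_*^-\,\meas \leq (\kappa^-)_\ll$ with the $\meas$-singular part of $\kappa^-$ only helping. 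Concretely, I would start from \autoref{Pr:kappa lower bd}, which gives $\RIC(X,X) \geq \vert X\vert_\sim^2\,\kappa$ for every $X \in H_\sharp^{1,2}(T\mms)$. Taking $\meas$-absolutely continuous parts on both sides --- and using that the $\meas$-singular part of $\kappa$ decomposes as $(\kappa_\perp)^+ - (\kappa_\perp)^-$ with the negative part subtracting --- one obtains
\begin{align*}
\ric(X,X)\,\meas = \RIC_\ll(X,X) \geq \vert X\vert_\sim^2\,\kappa_\ll \geq -\vert X\vert^2\,\kappa^-_\ll \geq -\vert X\vert^2\,\kappa^-
\end{align*}
$\meas$-a.e., where in the last step I use $\kappa^-_\ll \leq \kappa^-$ as measures and pass to densities w.r.t.\ $\meas$; here $\kappa^-_\ll$ denotes the $\meas$-absolutely continuous part of $\kappa^-$, and $\vert X\vert^2 = \vert X\vert_\sim^2$ $\meas$-a.e.

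\textbf{Passing to the essential infimum.} Dividing by $\vert X\vert^2$ on $\{X \neq 0\}$ (using the convention from the beginning of \autoref{Pt:II} about degenerate quotients) gives $\vert X\vert^{-2}\,\ric(X,X) \geq -k$ $\meas$-a.e.\ for every $X \in H_\sharp^{1,2}(T\mms)$ with $\vert X\vert \leq 1$ $\meas$-a.e., where $k \in \Ell^1_{\mathrm{loc}}(\mms)$ is the $\meas$-density of $\kappa^-_\ll$. Taking the essential infimum over all such $X$ --- which exists and is unique by the discussion in \autoref{Subsub:Fcts} --- yields $\ric_* \geq -k$ $\meas$-a.e., hence $\ric_*^- \leq k$ $\meas$-a.e., i.e.\ $\ric_*^-\,\meas \leq \kappa^-_\ll \leq \kappa^-$ as measures. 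Since $\kappa \in \Kato_{1-}(\mms)$, we have $\kappa^- \in \Kato_\rho(\mms)$ for some $\rho \in [0,1)$ by \autoref{Re:iffs}, and therefore $\ric_*^-\,\meas \in \Kato_\rho(\mms) \subset \Kato_{1-}(\mms)$ by the monotonicity of the Kato condition in \autoref{Def:Kato class} (a Borel measure dominated by a $\rho$-Kato measure is itself $\rho$-Kato, as the defining quantity $\rmq\text{-}\!\sup\Exp^{\,\cdot}[\sfa_t^{2\vert\cdot\vert}]$ is monotone in the measure, and $\ric_*^-\,\meas$ charges no $\Ch$-polar set because it is $\meas$-absolutely continuous).

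\textbf{Expected obstacle.} The delicate point is the interchange of the essential infimum defining $\ric_*$ with the $\meas$-a.e.\ inequality: each individual inequality $\vert X\vert^{-2}\,\ric(X,X) \geq -k$ holds $\meas$-a.e.\ with an $X$-dependent null set, but $\ric_*$ is defined as an \emph{uncountable} essential infimum, which is nonetheless well-defined and equals the essential infimum over a countable subfamily (by separability of $H^{1,2}_\sharp(T\mms)$, which follows from \autoref{Le:Inclusion} together with separability of $H^{1,2}(T\mms)$ and of the modules involved). So I would fix a countable $H^{1,2}_\sharp$-dense family $(X_n)_{n\in\N}$ in the unit ball $\{\vert X\vert \leq 1\ \meas\text{-a.e.}\}$, note that $\ric_* = \essinf_n \vert X_n\vert^{-2}\,\ric(X_n,X_n)$ by continuity of $\ric$ in $H^{1,2}_\sharp(T\mms)$ (from \autoref{Th:Ricci measure} and \autoref{Def:Ric}) and a diagonal/approximation argument --- being careful that scaling by $\vert X_n\vert^{-1}$ stays inside the class and that the map $X \mapsto \vert X\vert^{-2}\ric(X,X)$ is lower semicontinuous along $H^{1,2}_\sharp$-convergent sequences with uniformly bounded pointwise norm --- and then the desired inequality holds $\meas$-a.e.\ on the complement of the \emph{countable} union of the exceptional null sets. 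This reduction is the only step requiring genuine care; the rest is bookkeeping.
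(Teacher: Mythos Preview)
Your proof is correct and follows essentially the same route as the paper: bound $\ric_*^-$ pointwise $\meas$-a.e.\ by the density of $(\kappa^-)_\ll$ via \autoref{Pr:kappa lower bd} (equivalently, the nonnegativity of $\RIC^\kappa$ in \autoref{Th:Ricci measure}), and then invoke monotonicity of the Kato condition together with $\kappa^- \in \Kato_{1-}(\mms)$.

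The only remark is that your ``expected obstacle'' is not an obstacle at all, and the separability/countable-density argument you sketch is unnecessary. By the definition of $\meas$-essential infimum recalled in \autoref{Subsub:Fcts}, $\ric_*$ is the $\meas$-a.e.\ \emph{maximal} function dominated by every $\vert X\vert^{-2}\ric(X,X)$; since $-k$ is one such lower bound (each inequality holding up to its own null set is precisely what the definition requires), maximality immediately gives $\ric_* \geq -k$ $\meas$-a.e. The paper uses exactly this without comment, writing the equivalent chain $\ric_*^- \leq \esssup\{\vert X\vert^{-2}\ric(X,X)^- : \dots\} \leq \rmd\kappa^-/\rmd\meas$.
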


\begin{proof} By \cite[Rem.~2.8]{erbar2020}, $\ric_*^-\,\meas\in \Kato_{1-}(\mms)$ if and only if the function $\ric_*^-$ belongs to the extended functional Kato class of $\mms$ \cite[Def.~2.20]{erbar2020}. The proof of the latter will in particular yield $\ric_*^- \in \Ell_\loc^1(\mms)$ and $\ric_*^-\,\meas\in\Meas_\sigmafin^+(\mms)_\Ch$ as a byproduct.

Let $\kappa = \kappa^+-\kappa^-$ be the Jordan decomposition of $\kappa$ with $\kappa^+,\kappa^-\in \smash{\Meas_\sigmafin^+(\mms)}$. Since $\smash{\sfa^{\vert\kappa\vert}_t \geq \sfa^{\kappa^-}_t}$ for every $t\geq 0$, we have $\kappa^-\in\Kato_{1-}(\mms)$ and $\rmd\kappa^-/\rmd\meas\,\meas\in\Kato_{1-}(\mms)$ by \eqref{Eq:TV under singularity}. Hence, to prove the claim it is sufficient to prove that $\ric_*^-\leq \rmd\kappa^-/\rmd\meas$ $\meas$-a.e. Indeed, given any $X\in\smash{H_\sharp^{1,2}(T\mms)}$, by definition of $\ric(X,X)$ and \autoref{Th:Ricci measure},
\begin{align*}
\ric(X,X) \geq \vert X\vert^2\,\frac{\rmd \kappa}{\rmd\meas} \geq -\vert X\vert^2\,\frac{\rmd\kappa^-}{\rmd \meas}\quad\meas\text{-a.e.}
\end{align*}
In particular $\ric_*^- < \infty$ $\meas$-a.e., and thus
\begin{align*}
\ric_*^- &\leq \esssup\!\big\lbrace \vert X\vert^{-2}\,\ric(X,X)^- : X\in H_\sharp^{1,2}(T\mms),\ \vert X\vert \leq 1 \ \meas\text{-a.e.}\big\rbrace\\
&\leq \frac{\rmd\kappa^-}{\rmd\meas}\quad\meas\text{-a.e.}\qedhere
\end{align*}
\end{proof}

\subsubsection{Dimension-dependent Ricci tensor}\label{Sub:Dimensional Ricci tensor} Following the $\RCD^*(K,N)$-treatise \cite{han2018}, $K\in\R$, and motivated by \cite{bakry1985b}, we shortly outline the definition of an $N$-Ricci tensor on $\BE_2(\kappa,N)$ spaces for $N \in [1,\infty)$. The latter condition is assumed to hold for $(\mms,\Ch,\meas)$ throughout this subsection. Details are left to the reader.

Keeping in mind \autoref{Pr:Upper bound local dimension}, let $\smash{(E_n)_{n\in\N\cup\{\infty\}}}$ be the dimensional decomposition of $\Ell^2(T\mms)$ and define the function $\dim_\loc \colon\mms\to \{1,\dots,\lfloor N\rfloor\}$ by
\begin{align*}
\dim_\loc := \One_{E_1} + 2\,\One_{E_2} + \dots + \lfloor N\rfloor\,\One_{\lfloor N\rfloor}.
\end{align*}
Arguing as for \cite[Prop.~4.1]{han2018}, we see that for every $\smash{X\in H_\sharp^{1,2}(T\mms)}$, if $N\in\N$ then
\begin{align*}
\tr \nabla X = \div X\quad\meas\text{-a.e.}\quad\text{on }E_N
\end{align*}
according to the definition \eqref{Eq:Pointwise trace} of the trace of a generic $A\in\Ell^2(T^{\otimes 2}\mms)$. The function $\smash{\rmR_N\colon H_\sharp^{1,2}(T\mms)^2\to \Ell^1(\mms)}$ defined by
\begin{align*}
\rmR_N(X,Y) := \begin{cases} \displaystyle\frac{\big[\!\tr\nabla X - \div X\big]\,\big[\!\tr\nabla Y - \div Y\big]}{N-\dim_\loc} & \text{if } \dim_\loc < N,\\
0 & \text{otherwise}
\end{cases}
\end{align*}
is thus well-defined. In fact, the assignment $(X,Y) \mapsto \rmR_N(X,Y)\,\meas$ is continuous as a map from $\smash{H_\sharp^{1,2}(T\mms)^2}$ into $\smash{\Meas_\fin^\pm(\mms)_\Ch}$ thanks to \autoref{Le:Inclusion}.

\begin{definition} The map $\smash{\RIC_N\colon H_\sharp^{1,2}(T\mms)^2 \to \Meas_\fin^\pm(\mms)_\Ch}$ given by
\begin{align*}
\RIC_N(X,Y) := \RIC(X,Y) - \rmR_N(X,Y)\,\meas 
\end{align*}
is henceforth called \emph{$N$-Ricci tensor} of $(\mms,\Ch,\meas)$.
\end{definition}

\begin{theorem}\label{Th:BEkappa n ric bounds} For every $X\in H_\sharp^{1,2}(T\mms)$,
\begin{align*}
\RIC_N(X,X) &\geq \vert X\vert_\sim^2\,\kappa,\\
\DELTA\frac{\vert X\vert^2}{2} + \big\langle X,(\Hodge X^\flat)^\sharp\big\rangle &\geq \RIC_N(X,X) + \frac{1}{N}\,\vert\!\div X\vert^2\,\meas.
\end{align*}
\end{theorem}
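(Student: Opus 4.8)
The statement has two parts, and I would reduce both to \autoref{Pr:kappa lower bd} and \eqref{Eq:Ric IID} together with the trace inequality derived from \autoref{Le:Extremely key lemma} (see \autoref{Re:Trace}) and \autoref{Pr:Upper bound local dimension}. First note that since $\RIC_N(X,X) = \RIC(X,X) - \rmR_N(X,X)\,\meas$ and $\rmR_N(X,X)\geq 0$ $\meas$-a.e.\ by its very definition (it is a nonnegative quotient, interpreted as $0$ where $\dim_\loc \geq N$), we immediately get $\RIC_N(X,X) \leq \RIC(X,X)$. That inequality goes the \emph{wrong} way for the first claim, so instead I would argue that $\RIC(X,X)$ and $\RIC_N(X,X)$ differ only by a $\meas$-absolutely continuous term, whence they have the \emph{same} $\meas$-singular part; combined with $\RIC(X,X)\geq \vert X\vert_\sim^2\,\kappa$ from \autoref{Pr:kappa lower bd}, and the fact that $\vert X\vert_\sim^2\,\kappa$ restricted to $\meas$-singular sets must be dominated, we need to check the absolutely continuous parts separately. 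More precisely: by \autoref{Pr:kappa lower bd}, $\RIC(X,X) - \vert X\vert_\sim^2\,\kappa \geq 0$; decomposing w.r.t.\ $\meas$ using \eqref{Eq:TV under singularity}, both $\RIC_\perp(X,X) - \vert X\vert_\sim^2\,\kappa_\perp \geq 0$ and $\ric(X,X) - \vert X\vert^2\,\rmd\kappa_\ll/\rmd\meas \geq 0$ $\meas$-a.e.\ by \autoref{Le:Measure lemma}-type reasoning. So it suffices to show $\ric_N(X,X) := \ric(X,X) - \rmR_N(X,X) \geq \vert X\vert^2\,\rmd\kappa_\ll/\rmd\meas$ $\meas$-a.e., which will follow from the stronger pointwise Bochner-type inequality in the second claim — hence the two parts are really one.

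For the second inequality, I would first establish it for $X\in\Reg(T\mms)$ and then extend by continuity. For $X\in\Reg(T\mms)$, \eqref{Eq:Ric IID} gives
\begin{align*}
\DELTA\frac{\vert X\vert^2}{2} + \big\langle X,(\Hodge X^\flat)^\sharp\big\rangle\,\meas = \RIC(X,X) + \big\vert\nabla X\big\vert_\HS^2\,\meas.
\end{align*}
So the claim reduces to the pointwise inequality
\begin{align*}
\big\vert\nabla X\big\vert_\HS^2 \geq \rmR_N(X,X) + \frac{1}{N}\,\vert\!\div X\vert^2 \quad\meas\text{-a.e.},
\end{align*}
i.e.\ $\big\vert\nabla X\big\vert_\HS^2 - \frac{1}{N}\,\vert\!\div X\vert^2 \geq \rmR_N(X,X)$. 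This is a purely linear-algebraic fact about the $\meas$-a.e.\ defined endomorphism $\nabla X$ on a fiber of dimension $\dim_\loc \leq \lfloor N\rfloor$: writing $n := \dim_\loc$, $t := \tr\nabla X = \div X$ on $E_N$ (and more generally decomposing $\div X$ into the trace part and the part $\div X - \tr\nabla X$ which, I should note, for $X\in\Reg(T\mms)$ equals $\div X$ minus the contraction of $\nabla X$), one uses Cauchy–Schwarz in the form $\vert\tr A\vert^2 \leq n\,\vert A\vert_\HS^2$ for an $n\times n$ matrix, together with the splitting of the identity contribution. Concretely, decompose $\nabla X = \frac{\tr\nabla X}{n}\,\mathrm{Id} + A_0$ with $A_0$ trace-free on the fiber; then $\vert\nabla X\vert_\HS^2 = \frac{(\tr\nabla X)^2}{n} + \vert A_0\vert_\HS^2 \geq \frac{(\tr\nabla X)^2}{n}$, and the standard manipulation
\begin{align*}
\frac{(\tr\nabla X)^2}{n} - \frac{(\div X)^2}{N} = \frac{N - n}{nN}\,(\tr\nabla X)^2 + \frac{2}{N}\,\tr\nabla X\,(\tr\nabla X - \div X) - \cdots
\end{align*}
has to be reorganized so that the cross terms combine into $\frac{(\tr\nabla X - \div X)^2}{N-n} = \rmR_N(X,X)$ plus a manifestly nonnegative remainder; this is exactly the elementary identity used in \cite[Prop.~4.1, Thm.~4.3]{han2018}, and I would cite it rather than grind through it. On the region $\{\dim_\loc \geq N\}$ (which forces $\dim_\loc = N = \lfloor N\rfloor$ by \autoref{Pr:Upper bound local dimension}) one has $\tr\nabla X = \div X$ $\meas$-a.e.\ by the second part of \autoref{Pr:Upper bound local dimension}, so $\rmR_N(X,X) = 0$ there and the inequality is just $\vert\nabla X\vert_\HS^2 \geq \frac{1}{N}(\div X)^2$, which is the trace Cauchy–Schwarz with equality constraint — available directly from \autoref{Re:Trace} after the usual replacement of $\nabla h_j$ by general generators as in the proof of \autoref{Pr:Upper bound local dimension}.

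Having the inequality for $X\in\Reg(T\mms)$, I would extend to $X\in H_\sharp^{1,2}(T\mms)$: the left-hand side, read as the measure $\RIC(X,X) + \vert\nabla X\vert_\HS^2\,\meas$ via \eqref{Eq:Ric IID}, depends $H_\sharp^{1,2}$-continuously on $X$ by \autoref{Th:Ricci measure} and \autoref{Le:Inclusion}; $\RIC_N(X,X)$ is $H_\sharp^{1,2}$-continuous by the remark after its definition (continuity of $\rmR_N$ via \autoref{Le:Inclusion}); and $\vert\!\div X\vert^2\,\meas$ is handled because $\div$ is continuous on $H_\sharp^{1,2}$ (through $\delta$, by \eqref{Eq:delta = -div} and the definition of $H^{1,2}(T^*\mms)$). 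Since $\rmR_N(X,X) + \frac1N\vert\!\div X\vert^2 = \vert\nabla X\vert_\HS^2 - (\text{nonneg.})$ pointwise on $\Reg(T\mms)$, passing to the limit preserves the inequality. \textbf{The main obstacle} I anticipate is the bookkeeping in the pointwise linear-algebra step — in particular making sure the fiber dimension $\dim_\loc$ enters correctly (using the dimensional decomposition and the two cases $\dim_\loc < N$ vs.\ $\dim_\loc = N$), and that the term $\div X - \tr\nabla X$ is exactly the object appearing in $\rmR_N$; the identification $\tr\nabla X = \div X$ on $E_N$ from \autoref{Pr:Upper bound local dimension} is what makes the construction consistent, and I would need to invoke it carefully together with a density/approximation argument to reduce to $\Reg(T\mms)$ where that identity was proved. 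Everything else is a direct assembly of \eqref{Eq:Ric IID}, \autoref{Pr:kappa lower bd}, and \autoref{Re:Trace}.
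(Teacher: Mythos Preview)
There are two genuine gaps.

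\textbf{Sign error in the reduction of the second inequality.} From \eqref{Eq:Ric IID} one has, for $X\in\Reg(T\mms)$,
\[
\DELTA\tfrac{\vert X\vert^2}{2} + \big\langle X,(\Hodge X^\flat)^\sharp\big\rangle\,\meas = \RIC(X,X) + \big\vert\nabla X\big\vert_\HS^2\,\meas = \RIC_N(X,X) + \rmR_N(X,X)\,\meas + \big\vert\nabla X\big\vert_\HS^2\,\meas,
\]
so the second claim is equivalent to
\[
\big\vert\nabla X\big\vert_\HS^2 + \rmR_N(X,X) \geq \tfrac{1}{N}\,\vert\!\div X\vert^2 \quad\meas\text{-a.e.},
\]
i.e.\ $\big\vert\nabla X\big\vert_\HS^2 - \tfrac{1}{N}\,\vert\!\div X\vert^2 \geq -\rmR_N(X,X)$, \emph{not} $\geq +\rmR_N(X,X)$ as you wrote. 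Your version is false already for the constant field $X=\partial_x$ on the Gaussian-weighted line: there $\nabla X=0$, $\tr\nabla X=0$, but $\div X\neq 0$ and $\rmR_N(X,X)>0$. The correct inequality is exactly the elementary $\tfrac{t^2}{n}+\tfrac{(d-t)^2}{N-n}\geq\tfrac{d^2}{N}$ with $t=\tr\nabla X$, $d=\div X$, $n=\dim_\loc$, combined with $\vert\nabla X\vert_\HS^2\geq t^2/n$ from \autoref{Re:Trace}; this is precisely the computation in \cite{han2018} that you cite, so the fix is local.

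\textbf{The first inequality does not follow from the second.} Once corrected, the second inequality is a pure linear-algebra fact carrying no curvature information; via \eqref{Eq:Ric IID} it yields an \emph{upper} bound for $\RIC_N$, not a lower one. Your appeal to \autoref{Pr:kappa lower bd} gives only $\RIC^\kappa(X,X)\geq 0$, equivalently $\ric(X,X)\geq\vert X\vert^2\,\rmd\kappa_\ll/\rmd\meas$ on the absolutely continuous part; to conclude $\ric(X,X)-\rmR_N(X,X)\geq\vert X\vert^2\,\rmd\kappa_\ll/\rmd\meas$ you need the strictly stronger estimate $\RIC^\kappa(X,X)_\ll\geq\rmR_N(X,X)\,\meas$, and nothing you have invoked provides this. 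The paper obtains it by returning to \autoref{Le:Extremely key lemma} with $N'=N$ (not $N'=\infty$): for $X=\nabla f$, $f\in\Test(\mms)$, the dimensional self-improvement gives $\gamma_2^{2\kappa}(f)\geq\vert\Hess f\vert_\HS^2+\rmR_N(\nabla f,\nabla f)$ $\meas$-a.e.\ (the analogue of \cite[Thm.~3.3]{han2018}); one then passes from gradients to $\Reg(T\mms)$ using the $\Test$-bilinearity of both $\RIC^\kappa$ (via \autoref{Le:Rausziehen}) and $\rmR_N$ (via \autoref{Le:Leibniz rule W(21)} and the trace definition), as in \cite[Thm.~4.3]{han2018}, before the continuity extension. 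Without this $N$-dimensional input from \autoref{Le:Extremely key lemma}, the first inequality is out of reach.
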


\begin{proof} We only outline the main differences to the proof of \cite[Thm.~4.3]{han2018}. Set
\begin{align*}
\RIC_N^\kappa(X,Y) := \RIC^\kappa(X,Y) -\rmR_N(X,Y)\,\meas
\end{align*}
for $\smash{X,Y\in H_\sharp^{1,2}(T\mms)}$. By \autoref{Le:Pre.Bochner}, we see that $\smash{\RIC_N^\kappa(X,X)_\perp \geq 0}$, understood w.r.t.~$\meas$, for every such $X$. The nonnegativity of $\smash{\RIC_N^\kappa(X,X)_\ll}$ for $X := \nabla f$, $f\in\Test(\mms)$, is argued similarly to \cite[Thm.~3.3]{han2018} up to replacing $\bdGamma_2(f)$ therein by $\bdGamma_2^{2\kappa}(f)$, compare with \autoref{Le:Extremely key lemma}. By \autoref{Le:Rausziehen} and the definition of $\RIC$, it follows that $\RIC^\kappa$ is both $\R$- and $\Test$-bilinear. The same is true for $\rmR_N$ by \autoref{Le:Leibniz rule W(21)} and the definition \eqref{Eq:Pointwise trace} of the trace. Proceeding now as in the proof of \cite[Thm.~4.3]{han2018} implies the nonnegativity of $\smash{\RIC_N^\kappa(X,X)}$ for every $X\in\Reg(T\mms)$, and hence for every $\smash{X\in H_\sharp^{1,2}(T\mms)}$ by continuity. We conclude the first inequality from the definition of $\RIC$ again. The argument for the second is the same as in  \cite{han2018}.
\end{proof}

We finally turn to the existence proof of an appropriate extension of $\ric$ to $\Ell^2(T\mms)^2$ announced after \autoref{Def:Ric}. By $\Test$-bilinearity of $\RIC_N$ encountered in the above proof of \autoref{Th:BEkappa n ric bounds}, a similar statement holds for the map induced by $\rmd(\RIC_N)_\ll/\rmd\meas$ but is not written down for notational convenience.

The quite technical proof of the following preparatory result is the same as for \cite[Lem.~A.1, Thm.~A.2]{giglipasqu2020} --- up to replacing the norm $\smash{\Vert\cdot\Vert_{W^{1,2}(T\mms)}}$ by $\smash{\Vert\cdot\Vert_{H_\sharp^{1,2}(T\mms)}}$ --- and thus omitted.

\begin{lemma}\label{Le:Regular vfs generation} Denote by $(E_n)_{n\in\N\cup\{\infty\}}$ the dimensional decomposition of $\Ell^2(T\mms)$ according to \autoref{Th:Dimensional decomposition}. Then there exist $V_1,\dots,V_{\lfloor N\rfloor}\in \smash{H_\sharp^{1,2}(T\mms)}$ such that $\{V_1,\dots,V_n\}$ is a local basis of $\Ell^2(T\mms)$ on $E_n$ for every $n\in\{1,\dots,\lfloor N\rfloor\}$.
\end{lemma}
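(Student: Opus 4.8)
\textbf{Proof plan for \autoref{Le:Regular vfs generation}.}
The plan is to mimic the argument of \cite[Lem.~A.1, Thm.~A.2]{giglipasqu2020} for the space $W^{1,2}(T\mms)$, but carried out with the norm $\Vert\cdot\Vert_{H_\sharp^{1,2}(T\mms)}$ in place of $\Vert\cdot\Vert_{W^{1,2}(T\mms)}$. The only structural facts I need are: (i) that $\Reg(T\mms)\subset H_\sharp^{1,2}(T\mms)$ and is dense in $\Ell^2(T\mms)$ — which holds by \autoref{Th:Ricci measure}'s preparatory lemmas, more precisely by \autoref{Le:Inclusion} together with the density statements in \autoref{Th:Properties W12 TM} and \autoref{Le:delta of d}; (ii) that $\Ell^2(T\mms)$ has a dimensional decomposition $(E_n)_{n\in\N\cup\{\infty\}}$ by \autoref{Th:Dimensional decomposition}; and (iii) that by \autoref{Pr:Upper bound local dimension} we have $E_n=\emptyset$ (up to $\meas$-negligible sets) for $n>\lfloor N\rfloor$ and for $n=\infty$, so the decomposition is genuinely finite: $\mms = E_1 \cup \dots \cup E_{\lfloor N\rfloor}$ up to an $\meas$-null set.

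First I would reduce to producing, for each fixed $n\in\{1,\dots,\lfloor N\rfloor\}$ and each Borel $B\subset E_n$ of finite positive measure, a finite family in $\Reg(T\mms)$ that is a local basis of $\Ell^2(T\mms)$ on $B$; this is the ``local'' step of \cite{giglipasqu2020}. Since $\Reg(T\mms)$ generates $\Ell^2(T\mms)$ in the $\Ell^\infty$-module sense (by \autoref{Pr:Generators cotangent module} and the discussion in \autoref{Sub:Test objects}), on $B$ one can pick $W_1,\dots,W_n\in\Reg(T\mms)$ that are $\meas$-a.e.\ independent on $B$ and whose $\Span_B$ is all of $\Ell^2(T\mms)\big\vert_B$; the determinant of the Gram matrix $[\langle W_i,W_j\rangle]$ is then positive $\meas$-a.e.\ on $B$. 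Then I would patch these local bases over a countable Borel partition of $E_n$ into such finite-measure pieces, and finally over $n\in\{1,\dots,\lfloor N\rfloor\}$, using the gluing/normalization machinery of \cite{giglipasqu2020}: one rescales and recombines the pieces with bounded Borel coefficients (which act on $\Reg(T\mms)$ keeping one inside $H_\sharp^{1,2}(T\mms)$, since $\Reg(T\mms)$ is stable under multiplication by $\Test(\mms)\cup\R\,\One_\mms$ and the relevant norm is controlled), and then replaces the possibly merely $\Ell^\infty$-coefficiented combinations by honest elements of $\Reg(T\mms)$ using that $\Test(\mms)$ is dense and that $\Reg(T\mms)$ is $\Vert\cdot\Vert_{H_\sharp^{1,2}(T\mms)}$-dense in its closure. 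The output is vectors $V_1,\dots,V_{\lfloor N\rfloor}\in H_\sharp^{1,2}(T\mms)$ such that for each $n$, $\{V_1,\dots,V_n\}$ restricted to $E_n$ is independent and spans $\Ell^2(T\mms)\big\vert_{E_n}$, i.e.\ a local basis on $E_n$.

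The step I expect to be the main obstacle is verifying that the gluing procedure genuinely stays inside $H_\sharp^{1,2}(T\mms)$ with controlled norm — i.e.\ the analogue of the $W^{1,2}$-bound in \cite[Lem.~A.1]{giglipasqu2020}. In \cite{giglipasqu2020} the covariant derivative norm is what must be controlled under multiplication by cutoffs; here the relevant quantity is $\Vert\cdot\Vert_{H_\sharp^{1,2}(T\mms)} = \Vert(\cdot)^\flat\Vert_{H^{1,2}(T^*\mms)}$, which involves $\rmd$ and $\delta$ rather than $\nabla$ directly. I would handle this by noting that the Leibniz rules for $\rmd$ and $\delta$ on $1$-forms of the type $f\,\omega$ ($f\in\F_\eb$, $\omega\in H^{1,2}(T^*\mms)$) recorded after \autoref{Le:delta of d} give exactly the product-rule estimates needed, and that \autoref{Le:Inclusion} ensures any $\Vert\cdot\Vert_{H_\sharp^{1,2}(T\mms)}$-convergent sequence of regular vector fields has covariant derivatives converging in $\Ell^2$, so the limit vector fields retain the local-basis property on each $E_n$. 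With these two inputs the bookkeeping of \cite[Thm.~A.2]{giglipasqu2020} transfers verbatim, and the lemma follows.
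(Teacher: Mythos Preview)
Your proposal is correct and coincides with the paper's own argument: the paper explicitly states that the proof is the same as \cite[Lem.~A.1, Thm.~A.2]{giglipasqu2020} up to replacing $\Vert\cdot\Vert_{W^{1,2}(T\mms)}$ by $\Vert\cdot\Vert_{H_\sharp^{1,2}(T\mms)}$, and omits the details. You have correctly identified the ingredients that make this replacement work --- the finiteness of the dimensional decomposition from \autoref{Pr:Upper bound local dimension}, the density of $\Reg(T\mms)$, and the Leibniz rules for $\rmd$ and $\delta$ recorded after \autoref{Le:delta of d} that control the $H^{1,2}(T^*\mms)$-norm under multiplication by cutoffs.
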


\begin{theorem}\label{Pr:Ric extension} There exists a unique symmetric and $\Ell^\infty$-bilinear assignment $\mathfrak{ric}\colon \Ell^2(T\mms)^2\to\Ell_\loc^1(\mms)$ whose restriction to $\smash{H_\sharp^{1,2}(T\mms)^2}$ coincides with $\ric$.
\end{theorem}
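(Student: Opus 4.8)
<br>

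The plan is to build $\mathfrak{ric}(X,Y)$ pointwise by expanding $X$ and $Y$ in the local bases $V_1,\dots,V_{\lfloor N\rfloor}$ supplied by \autoref{Le:Regular vfs generation}. First I would recall the dimensional decomposition $(E_n)_{n\in\N\cup\{\infty\}}$ of $\Ell^2(T\mms)$ from \autoref{Th:Dimensional decomposition}; by \autoref{Pr:Upper bound local dimension} we have $\meas[E_n]=0$ for $n>\lfloor N\rfloor$, so $\mms = \bigcup_{n=1}^{\lfloor N\rfloor} E_n$ up to an $\meas$-negligible set. On each $E_n$, the family $\{V_1,\dots,V_n\}$ is a local basis, so every $X\in\Ell^2(T\mms)$ can be written as $\One_{E_n}X = \sum_{i=1}^n f_i^X\,V_i$ for uniquely determined $f_i^X\in\Ell^0(\mms)$ (the coefficients are Borel measurable and finite $\meas$-a.e.\ on $E_n$ by the structural characterization of local bases, \cite[Prop.~1.4.4]{gigli2018}). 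I would then define, on $E_n$,
\begin{align*}
\mathfrak{ric}(X,Y) := \sum_{i,j=1}^n f_i^X\,f_j^Y\,\ric(V_i,V_j),
\end{align*}
and patch over $n$. This is forced: on $H_\sharp^{1,2}(T\mms)$, $\Test$-bilinearity of $\ric$ (which follows from \autoref{Le:Rausziehen} and the definition of $\RIC_\ll$) and a localization/density argument using \autoref{Le:Mollified heat flow} show that for $X,Y\in H_\sharp^{1,2}(T\mms)$ with $\One_{E_n}X=\sum f_i^X V_i$, $\One_{E_n}Y = \sum f_j^Y V_j$ one necessarily has $\One_{E_n}\ric(X,Y) = \sum_{i,j} f_i^X f_j^Y \ric(V_i,V_j)$ $\meas$-a.e.; hence any $\Ell^\infty$-bilinear extension must agree with the formula above, giving uniqueness.

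The key steps in order would be: (1) record that the coefficient functions $f_i^X$ depend $\Ell^\infty$-linearly on $X$ and are finite $\meas$-a.e.; (2) check that $\mathfrak{ric}(X,Y)\in\Ell^1_\loc(\mms)$ — since each $\ric(V_i,V_j)\in\Ell^1(\mms)$ and the $f_i^X$ are finite $\meas$-a.e., the product is in $\Ell^1_\loc(\mms)$ (more carefully, on a set where $|X|$ and $|Y|$ are bounded the $f_i^X, f_j^Y$ can be bounded too, using that $\{V_1,\dots,V_n\}$ can be taken pointwise orthonormal as in \eqref{Eq:Obstr}, so local integrability is genuine); (3) verify symmetry ($\ric(V_i,V_j)=\ric(V_j,V_i)$ by symmetry of $\RIC^\kappa$ and of $\langle X,Y\rangle_\sim\,\kappa$, established in \autoref{Th:Ricci measure}) and $\Ell^\infty$-bilinearity (immediate from the coefficient expansion, since $\One_{E_n}(gX) = \sum (g f_i^X) V_i$ for $g\in\Ell^\infty(\mms)$); (4) verify that the restriction to $\smash{H_\sharp^{1,2}(T\mms)^2}$ coincides with $\ric$, which is exactly the $\Test$-bilinearity/density computation sketched above, noting $V_i\in H_\sharp^{1,2}(T\mms)$; (5) conclude uniqueness from the same identity.

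The main obstacle I expect is step (2) together with the well-definedness of the patching: one must be careful that the coefficients $f_i^X$ are genuinely $\meas$-measurable and that the pointwise-orthonormal choice of $\{V_1,\dots,V_n\}$ on Borel subsets of $E_n$ (via \eqref{Eq:Obstr}) can be used to control $|f_i^X| \leq |X|$ $\meas$-a.e., which then upgrades "finite $\meas$-a.e." to honest $\Ell^1_\loc$-membership of $\mathfrak{ric}(X,Y)$ — without such a bound one only gets a measurable function, not a locally integrable one. A secondary subtlety is that the decomposition $X = \sum f_i^X V_i$ holds only on each $E_n$ separately (with $n$ varying), so one must check the patched function does not depend on an auxiliary choice and that the restriction identity in step (4) is verified $E_n$ by $E_n$; since $\ric(\cdot,\cdot)$ and the formula are both local in the sense of \autoref{Th:Dimensional decomposition}, this is routine once the $\Test$-bilinearity of $\ric$ is in hand. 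All the analytic inputs — $\ric(V_i,V_j)\in\Ell^1(\mms)$, symmetry, the curvature identities — are already available from \autoref{Th:Ricci measure}, \autoref{Le:Rausziehen} and \autoref{Le:Regular vfs generation}, so no new estimates are needed beyond bookkeeping.
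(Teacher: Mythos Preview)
Your outline is the same as the paper's in spirit --- local bases from \autoref{Le:Regular vfs generation} plus $\Test$-bilinearity from \autoref{Le:Rausziehen} plus approximation --- but your step~(4) has a real gap. The identity $\One_{E_n}\ric(X,Y)=\sum_{i,j}f_i^X f_j^Y\ric(V_i,V_j)$ for $X,Y\in H_\sharp^{1,2}(T\mms)$ cannot be obtained from \autoref{Le:Mollified heat flow}: that lemma approximates functions in $\F$, whereas the coefficients $f_i^X$ lie only in $\Ell^0(\mms)$, and neither $\One_{E_n}X$ nor $\sum_i f_i^X V_i$ need belong to $H_\sharp^{1,2}(T\mms)$, so one cannot simply plug them into $\ric$. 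The paper resolves this by an intermediate step you have skipped: for fixed $X,Y\in H_\sharp^{1,2}(T\mms)$ and $f\in\Ell^\infty(\mms)$, it picks a sequence $(f_n)$ in $\Test(\mms)$ bounded in $\Ell^\infty(\mms)$ with $f_n\to f$ pointwise $\meas$-a.e., notes that $f_n X\in H_\sharp^{1,2}(T\mms)$ and that $\ric(f_n X,Y)=f_n\,\ric(X,Y)$ by \autoref{Le:Rausziehen}, so by dominated convergence the limit $\mathfrak{ric}(fX,Y):=f\,\ric(X,Y)$ exists in $\Ell^1(\mms)$ and is independent of the sequence. This first extends $\ric$ to an $\Ell^\infty$-bilinear map on the span $\calW$ of all $fX$ with $f\in\Ell^\infty(\mms)$ and $X\in H_\sharp^{1,2}(T\mms)$; only then does the local-basis formula enter, and your consistency check becomes a direct consequence of $\Ell^\infty$-bilinearity on $\calW$ (since $\One_C X\in\calW$ for any Borel $C$).

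A smaller point: \autoref{Le:Regular vfs generation} does not claim the $V_i$ can be taken pointwise orthonormal, and Gram--Schmidt would destroy $H_\sharp^{1,2}$-regularity. For the $\Ell^1_\loc$ claim in step~(2), instead restrict to Borel subsets $C\subset E_n$ on which the inverse Gram matrix $[\langle V_i,V_j\rangle]^{-1}$ as well as $\vert X\vert$ and $\vert Y\vert$ are bounded; on each such $C$ the coefficients $f_i^X,f_j^Y$ lie in $\Ell^\infty(\mms)$, which is precisely how the paper frames the local definition.
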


\begin{proof} We first define $\mathfrak{ric}(f\,X,Y)$ for given $\smash{X,Y\in H_\sharp^{1,2}(T\mms)}$ and $f\in\Ell^\infty(\mms)$. Let $(f_n)_{n\in\N}$ be a sequence in $\Test(\mms)$ such that $f_n \to f$ pointwise $\meas$-a.e.~as $n\to\infty$ as well as $\sup_{n\in\N}\Vert f_n\Vert_{\Ell^\infty}<\infty$. Since
\begin{align*}
&\int_\mms \big\vert \ric(f_n\,X,Y) - \ric(f_m\,X,Y)\big\vert\d\meas  = \int_\mms \vert f_n -f_m\vert\,\big\vert\ric(X,Y)\big\vert\d\meas
\end{align*}
by \autoref{Le:Rausziehen} for every $n,m\in\N$, $(\ric(f_n\,X,Y))_{n\in\N}$ is a Cauchy sequence in $\Ell^1(\mms)$ and hence converges to a limit denoted by $\mathfrak{ric}(f\,X,Y)\in\Ell^1(\mms)$. By an analogous argument, we see that this procedure does not depend on the chosen sequence in $\Test(\mms)$ with the above properties, and moreover $\mathfrak{ric}(f\,X,Y) = \ric(f\,X,Y)$ if $f\in\Test(\mms)$. The symmetry and bilinearity of $\ric$ thus straightforwardly induces a symmetric and $\Ell^\infty$-bilinear map $\smash{\mathfrak{ric}\colon \calW^2\to\Ell^1(\mms)}$. Here, $\calW\subset \Ell^2(T\mms)$ is the linear span of all elements of the form $f\,X$, $f\in \Ell^\infty(\mms)$ and $\smash{X\in H_\sharp^{1,2}(T\mms)}$.

Now we define $\mathfrak{ric}(X,Y)$ for general $X,Y\in\Ell^2(T\mms)$. Retaining the notation of \autoref{Le:Regular vfs generation}, let $n\in\N$, and let $C\in \Borel(\mms)$ be some subset of $E_n$ such that for some $f_1,\dots,f_n, g_1,\dots,g_n\in\Ell^\infty(\mms)$ vanishing $\meas$-a.e.~outside $C$,
\begin{align}\label{Eq:Repr X Y}
\begin{split}
\One_C\,X &= f_1\,V_1 + \dots + f_n\,V_n,\\
\One_C\,Y &= g_1\,V_1 + \dots + g_n\,V_n.
\end{split}
\end{align}
Locally, we then define
\begin{align*}
\One_{C}\,\mathfrak{ric}(X,Y) := \sum_{i,j=1}^n f_i\,g_j\,\mathfrak{ric}(V_i,V_j).
\end{align*}
This is a good definition that does not depend on the representations \eqref{Eq:Repr X Y} of $X$ and $Y$ on $C$ and $D$, respectively, and is easily seen to give rise to a (non-relabeled) map $\smash{\mathfrak{ric}\colon \Ell^2(T\mms)^2\to\Ell_\loc^1(\mms)}$ which has all desired properties.
\end{proof}

\begin{remark} By the local definition of $\mathfrak{ric}$ in \autoref{Pr:Ric extension}, we neither know if the integrability of $\mathfrak{ric}(X,Y)$ for given $X,Y\in \Ell^2(T\mms)$ can be improved, nor if $\mathfrak{ric}$ is actually continuous in an appropriate target topology.
\end{remark}

\subsubsection{Second fundamental form}\label{Sub:Second fund form} Similarly as for \autoref{Def:Ric}, let us denote by $\RIC_\perp\colon \smash{H_\sharp^{1,2}(T\mms)^2\to \Meas_\fin^\pm(\mms)_\Ch}$ the continuous map --- recall  \eqref{Eq:TV under singularity} --- assigning to $\RIC(X,Y)$ its $\meas$-singular part $\RIC_\perp(X,Y) :=\RIC(X,Y)_\perp$, $\smash{X,Y\in H_\sharp^{1,2}(T\mms)}$.

\begin{definition}\label{Def:Second fund form} The map $\smash{\II\colon H_\sharp^{1,2}(T\mms)^2\to \Meas_\fin^{\pm}(\mms)_\Ch}$ given by
\begin{align*}
\II(X,Y) := \RIC_\perp(X,Y)
\end{align*}
is called \emph{second fundamental form} of $(\mms,\Ch,\meas)$.
\end{definition}

In particular, if $X\in \Reg(T\mms)$, then by \eqref{Eq:Ric IID}, \autoref{Def:Meas val Lapl} and \autoref{Def:Normal component},
\begin{align}\label{Eq:Sec fund form normal cpt}
\II(X,X) = \DELTA_\perp\frac{\vert X\vert^2}{2} = \DIV_\perp \nabla \frac{\vert X\vert^2}{2} = -\norm \nabla\frac{\vert X\vert^2}{2}.
\end{align}
This observation complements the nonsmooth ``boundary'' discussion pursued so far. In particular, \eqref{Eq:Sec fund form normal cpt} is fully justified in the smooth context, as noted in the next example (and it only depends on the ``tangential parts'' of $X$ and $Y$ by definition of $\smash{H_\sharp^{1,2}(T\mms)}$, recall \autoref{Le:Div g nabla f}). See \cite[Ch.~2]{han2020} for similar computations.

\begin{example}\label{Ex:Sec fund form smooth} Let $\mms$ be a Riemannian manifold with boundary $\partial\mms$. The second fundamental form of $\partial\mms$ is the map $\bbI$ defined by
\begin{align*}
\bbI(X^\Vert,Y^\Vert) := \langle\nabla_{X^\Vert} \sfn,Y^\Vert\rangle_\jmath
\end{align*}
for $\smash{X^\Vert,Y^\Vert\in \Gamma(T\partial\mms)}$. According to \autoref{Sub:Riem mflds}, such an $\smash{X^\Vert}$  can be uniquely identified with $\smash{X\in \Gamma(T\mms)\big\vert_{\partial\mms}}$ such that $\langle X,\sfn\rangle=0$ on $\partial\mms$. We extend $X$ and $\sfn$ to  (non-relabeled) smooth vector fields defined on an open neighborhood of $\partial\mms$ \cite[Lem.~8.6]{lee2018}. Then by metric compatibility of the Levi-Civita connection on $\mms$, 
\begin{align*}
\bbI(X^\Vert,X^\Vert) &= -\langle\nabla_X X,\sfn\rangle + \big\langle \nabla X,\nabla \langle X,\sfn\rangle\big\rangle  = -\frac{1}{2}\big\langle\nabla\vert X\vert^2,\sfn\big\rangle\quad\text{on }\partial\mms.
\end{align*}
With \autoref{Ex:Mflds with boundary}, this shows that for every smooth, compactly supported and purely tangential $X\in \Gamma(T\mms)$ to which $\smash{X^\Vert\in \Gamma(T\partial\mms)}$ is uniquely associated,
\begin{align*}
\II(X,X) = \bbI(X^\Vert, X^\Vert)\,\surf.
\end{align*}
\end{example}

In line with \autoref{Ex:Sec fund form smooth}, we make the following bibliographical remark which, in fact, partly motivated \autoref{Def:Second fund form}.

\begin{remark} \autoref{Def:Ric} and \autoref{Def:Second fund form} together yield the identity
\begin{align*}
\RIC(X,X) = \ric(X,X)\,\meas + \II(X,X)
\end{align*}
for every $\smash{X\in H_\sharp^{1,2}(T\mms)}$. This identity --- and hence our definitions --- have a smooth evidence by \cite[Thm.~2.4]{han2020}. There it has been shown that on any smooth, connected Riemannian manifold $\mms$ with boundary  (with measure $\smash{\meas := \rme^{-2w}\,\vol}$, $w\in\Cont^2(\mms)$, so that $\smash{\surf = \rme^{-2w}\,\Haus^{d-1}\big\vert_{\partial\mms}}$), with $\RIC$ the  Ricci measure in the sense of \cite[Thm.~3.6.7]{gigli2018} --- and hence of \autoref{Sub:Dim-free Ric} --- we have
\begin{align*}
\RIC(\nabla f,\nabla f) = \Ric(\nabla f,\nabla f)\,\meas + \bbI(\nabla f^\Vert,\nabla f^\Vert)\,\surf
\end{align*}
for every $\smash{f\in\Cont_\comp^\infty(\mms)}$ with $\rmd f(\sfn) = 0$ on $\partial\mms$.
\end{remark}

\begin{remark}[Convexity of $\RCD$ spaces]\label{Re:Conv RCD c} In the novel  interpretation proposed by \autoref{Def:Second fund form}, on an $\RCD(K,\infty)$ space, $K\in\R$, \cite[Thm.~3.6.7]{gigli2018} implies that every such space is \emph{intrinsically convex} in the sense that
\begin{align}\label{Eq:Convexity}
\II(X,X) \geq 0
\end{align}
for every $\smash{X\in H_\sharp^{1,2}(T\mms)}$. In other words, every such space is necessarily \emph{convex} in the sense that \eqref{Eq:Convexity} holds for, say, every $X\in\Reg(T\mms)$. (This notion of convexity is frequently used in the smooth setting, see e.g.~\cite[Def.~1.2.2]{wang2014}.)

This should not be surprising from various perspectives. 

First, we know from \cite[Thm.~6.18]{ambrosio2014b} that \emph{geodesic convexity} of a subset $Y$ of $\mms$ is a \emph{sufficient} condition for it to naturally become again $\RCD(K,\infty)$ as soon as $\meas[\partial Y] = 0$ and $\meas[Y] > 0$. Through \eqref{Eq:Convexity} and \cite{han2020} we have thus provided a nonsmooth analogue of the fact that every, say, compact,  geodesically convex Riemannian manifold with boundary has nonnegative second fundamental form \cite[Lem.~61]{petersen2006}. (The converse, of course, does not hold in general, e.g.~for disks on the cylinder $\smash{\S^1 \times \R}$ with diameter larger than $\pi$.)

Second, recent results \cite{sturm2020} and examples \cite{wang2014} show that on nonconvex domains --- even if the boundary has arbitrarily small concavity --- in general, one cannot expect uniform lower Ricci bounds solely  described by relative entropies.
\end{remark}

\begin{remark}\label{Re:Also conc on interior sing} Our terminology of ``second fundamental form'' is of course leaned on the smooth case, where the singular part of $\RIC$ w.r.t.~the given volume measure is concentrated on the \emph{boundary} of $\mms$. However, it is worth pointing out that in general, $\II$ may be supported on \emph{interior} singularities --- and may not admit any boundary contribution at all --- as well. 

For example, let us consider the doubling $\smash{\mms := \hat{\mms}^+  \sqcup \hat{\mms}^- \sqcup \partial\hat{\mms}}$ of a (say compact)  Riemannian surface $\smash{\hat{\mms}}$ with boundary such that the curvature  of $\smash{\partial\hat{\mms}}$ is bounded from below by $1$, glued along $\smash{\partial\hat{\mms}}$. Here $\smash{\hat{\mms}^+}$ and $\smash{\hat{\mms}^-}$ are two copies of the interior of $\smash{\hat{\mms}}$. This space canonically becomes tamed  \cite[Thm.~7.17]{erbar2020}. The induced second fundamental form $\II$ is concentrated on $\smash{\partial\hat{\mms}}$ and satisfies $\smash{\II(X,X) \geq \vert X\vert^2_\sim\,\surf}$ for every $\smash{X\in H_\sharp^{1,2}(T\mms)}$, yet $\smash{\partial \mms =\emptyset}$.
\end{remark}

The second fundamental form really matters when one is tempted to derive a Weitzenböck formula from \autoref{Le:Weak form}. In other words, the latter should really be treated as an \emph{interior} identity, away from singularities  \cite[Cor.~21]{petersen2006}.

\begin{remark}[Weitzenböck identity]\label{Re:Weitzenböck}  If $X \in \Dom(\Hodge)^\sharp\cap \Dom(\Bochner)$ in \autoref{Le:Weak form}, from the latter we could deduce that
\begin{align*}
\int_\mms \widetilde{f}\d\RIC(X,Y) = \int_\mms f\,\big\langle Y, (\Hodge X^\flat)^\sharp + \Bochner X\big\rangle\d\meas
\end{align*}
for every $Y\in \smash{H_\sharp^{1,2}(T\mms)}$, which is strongly reminiscent of the Weitzenböck formula \cite[Cor.~21]{petersen2006}. In other words, $\RIC(X,Y) \ll \meas$ and
\begin{align}\label{Eq:ric abs cts}
\ric(X,Y) = \big\langle Y, (\Hodge X^\flat)^\sharp + \Bochner X\big\rangle\quad\meas\text{-a.e.},
\end{align}
which, if $Y\in\smash{\Dom(\Hodge)^\sharp\cap\Dom(\Bochner)}$ as well, especially implies the \emph{pointwise} symmetry
\begin{align*}
\big\langle Y, (\Hodge X^\flat)^\sharp + \Bochner X\big\rangle = \big\langle (\Hodge Y^\flat)^\sharp + \Bochner Y, X\big\rangle\quad\meas\text{-a.e.}
\end{align*}
The identity \eqref{Eq:ric abs cts} plays a crucial role in deriving the Feynman--Kac formula for the semigroup on differential $1$-forms on Riemannian manifolds, with or without boundary, as well as Bismut--Elworthy--Li formulas for $\rmd\ChHeat_tf$, $f\in\Ell^2(\mms)\cap\Ell^\infty(\mms)$ and $t>0$ \cite{bismut1984, elworthy1994, hsu2002, wang2014}. Still, we do not know whether $\Dom(\Hodge)^\sharp \cap \Dom(\Bochner) \neq \{0\}$.

Let us remark that for $\smash{X\in H_\sharp^{1,2}(T\mms)}$ to belong to both $\Dom(\Hodge)^\sharp$ and $\Dom(\Bochner)$, from \eqref{Eq:ric abs cts} one would necessarily have $\II(X,\cdot) = 0$. On a compact Riemannian manifold $\mms$ with boundary, this is underlined by comparison of the boundary conditions for $\Hodge$ and $\Bochner$. Indeed, let $X\in \Gamma(T\mms)$. By \eqref{Eq:Bdry cond Bochner}, recall that $X\in \Dom(\Bochner)$ means that
\begin{align*}
X^\perp &= 0,\\
(\nabla_\sfn X)^\Vert &=0
\end{align*}
on $\partial\mms$ according to \eqref{Eq:Normal parts vfs smooth world}. On the other hand, $X^\flat\in\Dom(\Hodge)$ entails absolute boundary conditions as in \autoref{Re:Abs bdry cond} for $X^\flat$ which, by \cite[Lem.~4.1]{hsu2002}, are equivalent to
\begin{align*}
X^\perp &= 0,\\
(\nabla_\sfn X)^\Vert - \mathbb{I}(X^\Vert,\cdot) &= 0
\end{align*}
at $\partial\mms$. Hence, we must have $\mathbb{I}(X^\Vert,\cdot) = 0$ on $\partial\mms$.
\end{remark}

\subsection{Vector Bochner inequality} The subsequent vector $q$-Bochner inequality is a direct consequence from  \autoref{Le:Epsilon lemma}, $q\in [1,2]$. For $\RCD(K,\infty)$ or $\RCD^*(K,N)$ spaces, $K\in\R$ and $N\in [1,\infty)$, it is due to \cite[Thm.~3.13]{braun2020} for $q=1$. Note that the assumption of \autoref{Th:Vector Bochner} is satisfied if $X\in\Test(T\mms)$ by \autoref{Le:Hodge test}.

Let $\Dom(\DELTA^{q\kappa})$ be defined w.r.t.~the closed form $\Ch^{q\kappa}$ as in \autoref{Def:Measure valued Schr}, $q\in [1,2]$.\label{Not:Meas valued Schr Ii}

\begin{theorem}\label{Th:Vector Bochner} Suppose that $X\in\Reg(T\mms)$ satisfies $\Hodge X^\flat \in \Ell^1(T^*\mms)$. Then for every $q\in [1,2]$, we have $\vert X\vert^q\in \Dom(\DELTA^{q\kappa})$ and
\begin{align*}
&\DELTA^{q\kappa}\frac{\vert X\vert^q}{q} + \vert X\vert^{q-2}\,\big\langle X, (\Hodge X^\flat)^\sharp\big\rangle\,\meas  \geq 0.
\end{align*}
\end{theorem}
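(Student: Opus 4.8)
The plan is to establish the inequality first for $q = 2$ and then pass to general $q \in [1,2]$ by the $\varphi_\varepsilon$-regularization already prepared in \autoref{Le:Epsilon lemma}, letting $\varepsilon \to 0$. The case $q = 2$ is essentially contained in \autoref{Le:Pre.Bochner}: there we showed $\vert X\vert^2 \in \Dom(\DELTA^{2\kappa})$ with $\DELTA^{2\kappa}\vert X\vert^2/2 \geq [\vert\nabla X\vert_\HS^2 - \langle X,(\Hodge X^\flat)^\sharp\rangle]\,\meas \geq -\langle X,(\Hodge X^\flat)^\sharp\rangle\,\meas$, using $\vert\nabla X\vert_\HS^2 \geq 0$. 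So the only genuine work is the range $q \in [1,2)$, and here $\Hodge X^\flat \in \Ell^1(T^*\mms)$ (rather than merely $\Ell^2$) is what makes the relevant integrals converge.

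\textbf{Key steps.} First I would fix $q \in [1,2)$ and $\varepsilon > 0$, set $\varphi_\varepsilon(r) := (r+\varepsilon)^{q/2} - \varepsilon^{q/2}$ as in \autoref{Le:Epsilon lemma}, and invoke that lemma: for every nonnegative $\phi \in \Dom(\Delta^{q\kappa}) \cap \Ell^\infty(\mms)$ with $\Delta^{q\kappa}\phi \in \Ell^\infty(\mms)$,
\begin{align*}
\int_\mms \big[\varphi_\varepsilon\circ\vert X\vert^2\big]\,\Delta^{q\kappa}\phi\d\meas &\geq -2\int_\mms \phi\,\big[\varphi_\varepsilon'\circ\vert X\vert^2\big]\,\big\langle X,(\Hodge X^\flat)^\sharp\big\rangle\d\meas\\
&\qquad + 2\,\big\langle \kappa\,\big\vert\,\phi\,\vert X\vert^2\,\varphi_\varepsilon'\circ\vert X\vert^2\big\rangle - q\,\big\langle\kappa\,\big\vert\,\phi\,\varphi_\varepsilon\circ\vert X\vert^2\big\rangle.
\end{align*}
Then I would send $\varepsilon \to 0$. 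Since $X \in \Reg(T\mms) \subset \Ell^\infty(T\mms)$, the quantities $\vert X\vert^2$ and $\vert X\vert^q$ are in $\Ell^\infty(\mms)$, and $\varphi_\varepsilon \circ \vert X\vert^2 \to \vert X\vert^q$ uniformly as $\varepsilon \to 0$; the derivative $\varphi_\varepsilon'(r) = \frac{q}{2}(r+\varepsilon)^{q/2-1}$ converges pointwise to $\frac{q}{2} r^{q/2-1}$ on $\{r > 0\}$ and is controlled so that $\varphi_\varepsilon' \circ \vert X\vert^2 \to \frac{q}{2}\vert X\vert^{q-2}$ (understood on $\{\vert X\vert \neq 0\}$ per the convention in \autoref{Pt:II}). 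Here the hypothesis $\Hodge X^\flat \in \Ell^1(T^*\mms)$ ensures the term $\int_\mms \phi\,[\varphi_\varepsilon'\circ\vert X\vert^2]\,\langle X,(\Hodge X^\flat)^\sharp\rangle\d\meas$ is dominated by an $\Ell^1$-function, so Lebesgue's dominated convergence applies; similarly $\phi \in \Ell^\infty(\mms)$ and the fact (from \autoref{Le:Fin tot var}, or directly from $\kappa \in \Kato_{1-}(\mms)$ and \autoref{Le:Form boundedness}) that $\widetilde{u}^2\vert\kappa\vert$ is finite for $u \in \F$ handle the $\kappa$-pairings. The limiting inequality reads
\begin{align*}
\int_\mms \vert X\vert^q\,\Delta^{q\kappa}\phi\d\meas \geq -q\int_\mms \phi\,\vert X\vert^{q-2}\,\big\langle X,(\Hodge X^\flat)^\sharp\big\rangle\d\meas + q\,\big\langle\kappa\,\big\vert\,\phi\,\vert X\vert^q\big\rangle - q\,\big\langle\kappa\,\big\vert\,\phi\,\vert X\vert^q\big\rangle,
\end{align*}
where the last two terms cancel, i.e. $\int_\mms \vert X\vert^q\,\Delta^{q\kappa}\phi\d\meas \geq -q\int_\mms \phi\,\vert X\vert^{q-2}\,\langle X,(\Hodge X^\flat)^\sharp\rangle\d\meas$. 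Dividing by $q$ and applying \autoref{Le:BE1 lemma} (with $u := \vert X\vert^q \in \Ell^2(\mms)\cap\Ell^\infty(\mms)$ and $g := -\vert X\vert^{q-2}\langle X,(\Hodge X^\flat)^\sharp\rangle \in \Ell^1 \cap \Ell^2$, noting $\vert X\vert^{q-1} \leq \vert X\vert \cdot \vert X\vert^{q-2}$ makes $g$ square-integrable since $\Hodge X^\flat \in \Ell^2$) — or rather the variant of \autoref{Le:BE1 lemma} adapted to potential $q\kappa$ instead of $\kappa$, exactly as \autoref{Le:Fin tot var} and \autoref{Pr:Fin tot var} were deduced — yields $\vert X\vert^q \in \F$ and the existence of a measure $\bdsigma \in \Meas^+_{\sigmafinR}(\mms)_\Ch$ representing the Schrödinger operator, i.e. $\vert X\vert^q \in \Dom(\DELTA^{q\kappa})$ with $\DELTA^{q\kappa}\vert X\vert^q/q + \vert X\vert^{q-2}\langle X,(\Hodge X^\flat)^\sharp\rangle\,\meas = \bdsigma \geq 0$, which is the claim. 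Finiteness of total variation is not asserted here, so I do not need the full strength of \autoref{Pr:Fin tot var}'s argument; only membership in $\Dom(\DELTA^{q\kappa})$ and nonnegativity.

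\textbf{Main obstacle.} The delicate point is the passage $\varepsilon \to 0$ in the integral involving $\varphi_\varepsilon' \circ \vert X\vert^2$ near the zero set of $X$: $\varphi_\varepsilon'(r) \sim \frac{q}{2} r^{q/2-1}$ blows up as $r \to 0$ when $q < 2$, so one must check that $\vert X\vert^{q-2}\,\langle X,(\Hodge X^\flat)^\sharp\rangle$ — with the convention that it is restricted to $\{\vert X\vert \neq 0\}$ — is genuinely in $\Ell^1$, which follows because $\vert\langle X,(\Hodge X^\flat)^\sharp\rangle\vert \leq \vert X\vert\,\vert\Hodge X^\flat\vert$ pointwise, so the product is bounded by $\vert X\vert^{q-1}\vert\Hodge X^\flat\vert \leq \Vert X\Vert_{\Ell^\infty}^{q-1}\vert\Hodge X^\flat\vert \in \Ell^1$ precisely by the hypothesis $\Hodge X^\flat \in \Ell^1(T^*\mms)$. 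That is exactly why this integrability assumption appears in the statement, and it is also where one sees that the argument would fail for general $X \in H^{1,2}_\sharp(T\mms)$ without it. A secondary technical care is ensuring one has a $\BE_1(\kappa,\infty)$-type input with the correctly scaled potential $q\kappa$; but since $q\kappa/2 \in \Kato_{1-}(\mms)$ whenever $\kappa \in \Kato_{1-}(\mms)$, and \autoref{Le:BE1 lemma} is stated as a minor variant valid for such potentials, this is routine bookkeeping rather than a real difficulty.
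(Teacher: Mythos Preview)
Your proposal is correct and follows essentially the same approach as the paper: pass to the limit $\varepsilon \to 0$ in \autoref{Le:Epsilon lemma} via Lebesgue's theorem (using $\Hodge X^\flat \in \Ell^1(T^*\mms)$ for domination), observe that the two $\kappa$-pairings cancel in the limit since $2r\varphi_\varepsilon'(r) \to q r^{q/2}$ and $q\varphi_\varepsilon(r) \to q r^{q/2}$, and then invoke the $q\kappa$-variant of \autoref{Le:BE1 lemma} (i.e.\ \cite[Lem.~6.2]{erbar2020}) to conclude $\vert X\vert^q \in \Dom(\DELTA^{q\kappa})$ together with the measure inequality. Your additional discussion of the $q=2$ case and of the pointwise blow-up of $\varphi_\varepsilon'$ near $\{X=0\}$ is more detailed than the paper's write-up but not a different argument.
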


\begin{proof} Letting $\varepsilon \to 0$ in \autoref{Le:Epsilon lemma} with Lebesgue's theorem yields
\begin{align*}
&\int_\mms \frac{\vert X\vert^q}{q}\,\Delta^{q\kappa}\phi\d\meas  \geq - \int_\mms \phi\,\vert X\vert^{q-2}\,\big\langle X,(\Hodge X^\flat)^\sharp\big\rangle \d\meas
\end{align*}
for every $\phi\in\Dom(\Delta^{q\kappa})\cap\Ell^\infty(\mms)$ with $\Delta^{q\kappa}\phi\in\Ell^\infty(\mms)$. Since the function on the r.h.s.~which involves $X$ belongs to $\Ell^1(\mms)\cap\Ell^2(\mms)$ --- and here is where we use that $\smash{\Hodge X^\flat\in\Ell^1(T^*\mms)}$ --- a variant of \cite[Lem.~6.2]{erbar2020} for $\Ch^{q\kappa}$ in place of $\Ch^{2\kappa}$ implies that $\vert X\vert^q\in\Dom(\DELTA^{q\kappa})$ with the desired inequality.
\end{proof}

\begin{remark} If $\kappa\in\Kato_0(\mms)$, the quadratic form $\Ch^{q\kappa}$ is well-defined and closed even for $q\in [2,\infty)$, and \autoref{Th:Vector Bochner} can be deduced along the same lines for this range of $q$ even without the assumption that $\smash{\Hodge X^\flat\in\Ell^1(T^*\mms)}$. Compare e.g.~with the functional treatise \cite[Ch.~3]{braun2021}.
\end{remark}

\subsection{Heat flow on 1-forms} A slightly more restrictive variant of \autoref{Th:Vector Bochner} yields functional inequalities for the heat flow $(\HHeat_t)_{t\geq 0}$ on $1$-forms, see \autoref{Th:HSU forms}. The latter is shortly introduced before, along with its basic properties. A thorough study of $(\HHeat_t)_{t\geq 0}$ on $\RCD(K,\infty)$ spaces, $K\in\R$, has been pursued in \cite{braun2020}.

\subsubsection{Heat flow and its elementary properties} Analogously to \autoref{Subsub:Neumann heat flow} and  \autoref{Sub:HF vector fields}, we define the \emph{heat flow} on $1$-forms as the semigroup $(\HHeat_t)_{t\geq 0}$ of bounded, linear and self-adjoint operators on $\Ell^2(T^*\mms)$ by
\begin{align}\label{Eq:Def HHt}
\HHeat_t := \rme^{-\Hodge t}.
\end{align}
It is associated \cite[Thm.~1.3.1]{fukushima2011} to the  functional $\smash{\widetilde{\Ch}_\con\colon \Ell^2(T^*\mms)\to [0,\infty]}$ with
\begin{align}\label{Eq:C con}
\widetilde{\Ch}_\con(\omega) := \begin{cases}
\displaystyle \int_\mms \big[\vert\rmd \omega\vert^2 + \vert\delta\omega\vert^2\big]\d\meas & \text{if }\omega\in H^{1,2}(T^*\mms),\\
\infty & \text{otherwise}.
\end{cases}
\end{align}

\begin{theorem} The subsequent properties of $(\HHeat_t)_{t\geq 0}$ hold for every $\omega\in\Ell^2(T^*\mms)$ and every $t>0$.
\begin{enumerate}[label=\textnormal{\textcolor{black}{(}\roman*\textcolor{black}{)}}]
\item The curve $t\mapsto \HHeat_t\omega$ belongs to $\Cont^1((0,\infty);\Ell^2(T^*\mms))$ with
\begin{align*}
\frac{\rmd}{\rmd t}\HHeat_t\omega = -\Hodge\HHeat_t\omega.
\end{align*}
\item If $\omega\in\Dom(\Hodge)$, we have
\begin{align*}
\frac{\rmd}{\rmd t}\HHeat_t\omega = -\HHeat_t\Hodge\omega.
\end{align*}
In particular, we have the identity
\begin{align*}
\Hodge\,\HHeat_t = \HHeat_t\,\Hodge\quad\text{on }\Dom(\Hodge).
\end{align*}
\item For every $s\in [0,t]$,
\begin{align*}
\Vert \HHeat_t\omega\Vert_{\Ell^2(T^*\mms)} \leq \Vert\HHeat_s\omega\Vert_{\Ell^2(T^*\mms)}.
\end{align*}
\item The function $t\mapsto\smash{\widetilde{\Ch}_\con(\HHeat_t\omega)}$ belongs to $\Cont^1((0,\infty))$, is nonincreasing, and its derivative satisfies
\begin{align*}
\frac{\rmd}{\rmd t}\widetilde{\Ch}_\con(\HHeat_t\omega) = - 2\int_\mms \big\vert\Hodge\HHeat_t\omega\big\vert^2\d\meas.
\end{align*}
\item If $\omega\in H^{1,2}(T^*\mms)$, the map $t\mapsto\HHeat_t\omega$ is continuous on $[0,\infty)$ w.r.t.~strong convergence in $H^{1,2}(T^*\mms)$.
\item We have
\begin{align*}
\widetilde{\Ch}_\con(\HHeat_t\omega) &\leq\frac{1}{2t}\,\big\Vert\omega\big\Vert_{\Ell^2(T^*\mms)}^2,\\
\big\Vert \Hodge\HHeat_t\omega\big\Vert_{\Ell^2(T^*\mms)}^2 &\leq \frac{1}{2t^2}\,\big\Vert \omega\big\Vert_{\Ell^2(T^*\mms)}^2.
\end{align*}
\end{enumerate}
\end{theorem}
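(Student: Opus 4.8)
This theorem collects the standard properties of an analytic semigroup generated by a nonpositive self-adjoint operator on a Hilbert space, applied to the concrete pair $(\HHeat_t)_{t\geq 0}$ and $-\Hodge$ on $\Ell^2(T^*\mms)$. The plan is to run the exact same scheme already used in the excerpt for $(\ChHeat_t)_{t\geq 0}$ in \autoref{Subsub:Neumann heat flow} and \autoref{Th:Heat flow properties}, and for $(\CHeat_t)_{t\geq 0}$ in \autoref{Th:CHeat properties}, quoting spectral theory \cite[Lem.~1.3.2]{fukushima2011} and the standard semigroup a priori estimates from \cite{brezis1973}. First I would record that $\Hodge = \Hodge_1$ is the nonpositive self-adjoint operator associated to the closed symmetric form $\smash{\widetilde{\Ch}_\con}$ on $\Ell^2(T^*\mms)$ defined in \eqref{Eq:C con}, whose finiteness domain $H^{1,2}(T^*\mms)$ is dense in $\Ell^2(T^*\mms)$ by \autoref{Th:Wd12 properties} and \autoref{Le:delta of d}; this is exactly the content needed to invoke \cite[Thm.~1.3.1, Lem.~1.3.2]{fukushima2011}, so that $(\HHeat_t)_{t\geq 0} = (\rme^{-\Hodge t})_{t\geq 0}$ as in \eqref{Eq:Def HHt} is a strongly continuous contraction semigroup of bounded self-adjoint operators on $\Ell^2(T^*\mms)$.

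The individual items then follow verbatim from the functional-analytic facts about such semigroups. For (i) and (ii) I would appeal to the analyticity of $(\HHeat_t)_{t\geq 0}$: for any $\omega\in\Ell^2(T^*\mms)$ one has $\HHeat_t\omega\in\Dom(\Hodge)$ for $t>0$, the curve $t\mapsto\HHeat_t\omega$ lies in $\Cont^1((0,\infty);\Ell^2(T^*\mms))$ with $\frac{\rmd}{\rmd t}\HHeat_t\omega = -\Hodge\HHeat_t\omega$, and when $\omega\in\Dom(\Hodge)$ the semigroup commutes with its generator, giving $\frac{\rmd}{\rmd t}\HHeat_t\omega = -\HHeat_t\Hodge\omega$ and the operator identity $\Hodge\,\HHeat_t = \HHeat_t\,\Hodge$ on $\Dom(\Hodge)$. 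For (iii) the contraction property $\Vert\HHeat_t\omega\Vert\leq\Vert\HHeat_s\omega\Vert$ for $s\leq t$ is the semigroup property combined with $\Vert\HHeat_{t-s}\Vert_{\op}\leq 1$. For (iv) I would compute $\frac{\rmd}{\rmd t}\smash{\widetilde{\Ch}_\con}(\HHeat_t\omega) = 2\,\langle\Hodge\HHeat_t\omega,\tfrac{\rmd}{\rmd t}\HHeat_t\omega\rangle_{\Ell^2} = -2\Vert\Hodge\HHeat_t\omega\Vert_{\Ell^2}^2 \leq 0$, using the spectral-calculus identity $\smash{\widetilde{\Ch}_\con}(\eta) = \langle-\Hodge\eta,\eta\rangle_{\Ell^2}$ for $\eta\in\Dom(\Hodge)$, together with monotonicity of the spectral integrals to see the map is $\Cont^1$ and nonincreasing. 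For (v), strong continuity in $H^{1,2}(T^*\mms)$ at $t=0$ for $\omega\in H^{1,2}(T^*\mms)$ follows since $\smash{\widetilde{\Ch}_\con}(\HHeat_t\omega)\to\smash{\widetilde{\Ch}_\con}(\omega)$ and $\HHeat_t\omega\to\omega$ in $\Ell^2(T^*\mms)$, which upgrades weak to strong convergence in the Hilbert space $H^{1,2}(T^*\mms)$ via the parallelogram identity, exactly as argued for \autoref{Th:CHeat properties}. For (vi), the two a priori bounds $\smash{\widetilde{\Ch}_\con}(\HHeat_t\omega)\leq\frac{1}{2t}\Vert\omega\Vert_{\Ell^2}^2$ and $\Vert\Hodge\HHeat_t\omega\Vert_{\Ell^2}^2\leq\frac{1}{2t^2}\Vert\omega\Vert_{\Ell^2}^2$ follow from the elementary scalar inequalities $\lambda\,\rme^{-2\lambda t}\leq\frac{1}{2t}$ and $\lambda^2\,\rme^{-2\lambda t}\leq\frac{1}{2t^2}$ for $\lambda\geq 0$, $t>0$, applied under the spectral integral, just as in \autoref{Th:Heat flow properties} and \autoref{Th:CHeat properties}.

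There is essentially no obstacle here: the only ``input'' beyond abstract semigroup theory is that $\smash{\widetilde{\Ch}_\con}$ is a densely defined closed symmetric nonnegative form, which is already established in \autoref{Ch:Ext derivative}. I would therefore keep the proof to one or two sentences, writing ``The proof is entirely analogous to that of \autoref{Th:Heat flow properties} and \autoref{Th:CHeat properties}, using the spectral theorem \cite[Lem.~1.3.2]{fukushima2011} applied to the nonpositive self-adjoint operator $\Hodge$ and the standard a priori estimates in \cite{brezis1973}; we leave the details to the reader,'' since the paper has consistently treated the three parallel heat flows this way. If a referee wanted more, the one computation worth spelling out is the derivative formula in (iv), but even that is the textbook differentiation of $t\mapsto\langle-\Hodge\HHeat_t\omega,\HHeat_t\omega\rangle_{\Ell^2}$ under the semigroup.
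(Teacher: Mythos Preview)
Your proposal is correct and matches the paper's treatment exactly: the paper gives no proof for this theorem, instead prefacing it (as for \autoref{Th:CHeat properties}) with the remark that these are standard properties following \cite{brezis1973} and \cite[Subsec.~3.4.4]{gigli2018}, which is precisely the one-sentence approach you advocate. One cosmetic slip: $\Hodge$ itself is \emph{nonnegative} self-adjoint (so $\widetilde{\Ch}_\con(\eta) = \langle\Hodge\eta,\eta\rangle_{\Ell^2}$, not $\langle-\Hodge\eta,\eta\rangle_{\Ell^2}$), and it is $-\Hodge$ that plays the role of the nonpositive generator; this does not affect your argument.
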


Via the closedness of $\rmd$ from \autoref{Le:d closed} together with \autoref{Le:Hodge test}, the following \autoref{Le:Commutation relation Ht Pt} is verified. The spectral bottom inequality from \autoref{Cor:Spec bds} follows from the second identity of \eqref{Eq:Ric kappa ids}, \autoref{Le:Kato inequality}, \eqref{Eq::E^k identity} and Rayleigh's theorem.

\begin{lemma}\label{Le:Commutation relation Ht Pt} For every $f\in\F$ and every $t> 0$, $\rmd\ChHeat_tf \in\Dom(\Hodge)$ and
\begin{align*}
\HHeat_t\rmd f = \rmd\ChHeat_tf.
\end{align*}
\end{lemma}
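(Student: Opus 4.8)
The statement \autoref{Le:Commutation relation Ht Pt} is a commutation relation between the functional heat flow $(\ChHeat_t)_{t\geq 0}$ and the flow $(\HHeat_t)_{t\geq 0}$ on $1$-forms, mediated by the differential $\rmd$. The natural strategy is to verify first that the generators intertwine on a sufficiently rich core, namely that $\rmd$ maps $\Dom(\Delta)$ (or a dense subalgebra thereof) into $\Dom(\Hodge)$ with $\Hodge\,\rmd f = -\rmd\Delta f$; then lift this to the semigroup level via the standard uniqueness theory for strongly continuous contraction semigroups associated with closed forms, using the closedness of $\rmd$ as the glue.

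\textbf{Key steps.} First I would check the generator-level identity $\Hodge\,\rmd f = -\rmd\Delta f$ for $f$ in a dense enough class. For $f\in\Test(\mms)$ this is immediate from \autoref{Le:Hodge test} applied with $g := \One_\mms$: that lemma gives $\rmd f\in\Dom(\Hodge)$ with $\Hodge(\rmd f) = -\rmd\Delta f - \Delta\One_\mms\,\rmd f - 2\Hess f(\nabla\One_\mms,\cdot) = -\rmd\Delta f$ under the stated conventions $\nabla\One_\mms := 0$ and $\Delta\One_\mms := 0$. Second, I would fix $f\in\F$ and consider the curve $t\mapsto \HHeat_t\,\rmd f$ versus $t\mapsto\rmd\ChHeat_t f$ on $(0,\infty)$. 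For $t>0$ the smoothing property of $(\ChHeat_t)_{t\geq 0}$ puts $\ChHeat_t f\in\Dom(\Delta)$, indeed $\ChHeat_t f\in\Test(\mms)$ on a tamed space by the discussion in \autoref{Sub:Test fcts} (via $\BE_2(\kappa,N)$ and \cite[Cor.~6.8]{erbar2020}); hence the generator identity applies to $\ChHeat_t f$. Third, I would differentiate: for $t>0$, using $\frac{\rmd}{\rmd t}\ChHeat_t f = \Delta\ChHeat_t f$ and the continuity/closedness of $\rmd$ from \autoref{Le:d closed}, one gets $\frac{\rmd}{\rmd t}\rmd\ChHeat_t f = \rmd\Delta\ChHeat_t f = -\Hodge\,\rmd\ChHeat_t f$ in $\Ell^2(T^*\mms)$, so $t\mapsto\rmd\ChHeat_t f$ solves the same abstract Cauchy problem $\dot u = -\Hodge u$ as $t\mapsto\HHeat_t\,\rmd f$. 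Matching initial data as $t\downarrow 0$ — $\rmd\ChHeat_t f\to\rmd f$ in $\Ell^2(T^*\mms)$ since $\ChHeat_t f\to f$ in $\F$ and $\rmd$ is $\F$-continuous, while $\HHeat_t\,\rmd f\to\rmd f$ by strong continuity of $(\HHeat_t)_{t\geq 0}$ — and invoking uniqueness of solutions to the Cauchy problem for the self-adjoint contraction semigroup $(\HHeat_t)_{t\geq 0}$ yields $\HHeat_t\,\rmd f = \rmd\ChHeat_t f$ for all $t>0$; the case $t=0$ is the trivial identity.

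\textbf{Alternative and main obstacle.} A cleaner variant avoids the Cauchy-problem uniqueness argument: define $v(s) := \HHeat_{t-s}\,\rmd\ChHeat_s f$ for $s\in(0,t)$, show $v$ is $\Ell^2(T^*\mms)$-differentiable with $v'(s) = \Hodge\HHeat_{t-s}\,\rmd\ChHeat_s f + \HHeat_{t-s}\,\rmd\Delta\ChHeat_s f = \HHeat_{t-s}\big(\Hodge\,\rmd\ChHeat_s f + \rmd\Delta\ChHeat_s f\big) = 0$ by the generator identity, hence $v$ is constant, and compare the one-sided limits $s\downarrow 0$ and $s\uparrow t$. The technical care needed is in justifying the differentiation of $v$: one needs $\rmd\ChHeat_s f\in\Dom(\Hodge)$ for $s\in(0,t)$ (supplied by \autoref{Le:Hodge test} once $\ChHeat_s f\in\Test(\mms)$) together with joint continuity of $s\mapsto\ChHeat_s f$ into $\Dom(\Delta)$ and the commutation $\HHeat\Hodge = \Hodge\HHeat$ on $\Dom(\Hodge)$. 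The main obstacle I anticipate is not conceptual but bookkeeping: ensuring the regularity $\ChHeat_s f\in\Test(\mms)$ and $\rmd\ChHeat_s f\in\Dom(\Hodge)$ is available uniformly enough on compact $s$-subintervals of $(0,t)$ to legitimately exchange $\rmd$ with $s$-differentiation and to apply \autoref{Le:d closed}; this is exactly the kind of place where the tamed-space machinery (smoothing of $(\ChHeat_t)_{t\geq 0}$ into $\Test(\mms)$) is essential and must be cited carefully.
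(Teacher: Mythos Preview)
Your overall strategy matches the paper's: use \autoref{Le:Hodge test} with $g:=\One_\mms$ to get the generator intertwining $\Hodge\,\rmd f = -\rmd\Delta f$, then lift to the semigroup level via the interpolation $v(s):=\HHeat_{t-s}\,\rmd\ChHeat_s f$ (or the equivalent Cauchy-problem argument), invoking the closedness of $\rmd$ from \autoref{Le:d closed} at the end.

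There is, however, one gap that is not mere bookkeeping. Your claim that $\ChHeat_t f\in\Test(\mms)$ for general $f\in\F$ is false: the regularizing effect recorded in \autoref{Sub:Test fcts} (via the reverse Poincar\'e inequality and \cite[Cor.~6.8]{erbar2020}) only gives $\ChHeat_t f\in\Test(\mms)$ when $f\in\Ell^2(\mms)\cap\Ell^\infty(\mms)$, since membership in $\Test(\mms)$ requires $\ChHeat_t f\in\Ell^\infty(\mms)$. For unbounded $f\in\F$ you therefore cannot apply \autoref{Le:Hodge test} directly to $\ChHeat_s f$, and the differentiation of $v(s)$ is not justified as stated. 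The fix --- spelled out in the remark following \autoref{Le:Commutation relation Ht Pt} in the paper --- is to first establish the identity for $f\in\F_\bounded$, where your argument goes through verbatim, and then pass to general $f\in\F$ by truncation $f_n := \max\{\min\{f,n\},-n\}$. One has $\rmd f_n\to\rmd f$ in $\Ell^2(T^*\mms)$, hence $\HHeat_t\,\rmd f_n\to\HHeat_t\,\rmd f$; on the other side $\rmd\ChHeat_t f_n\to\rmd\ChHeat_t f$ in $\Ell^2(T^*\mms)$, while $\rmd(\rmd\ChHeat_t f_n)=0$ and $\delta(\rmd\ChHeat_t f_n)=-\Delta\ChHeat_t f_n\to-\Delta\ChHeat_t f$ in $\Ell^2(\mms)$ by \autoref{Th:Heat flow properties}. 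This shows $\rmd\ChHeat_t f\in H^{1,2}(T^*\mms)$ and the commutation relation survives the limit.
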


\begin{remark} It is part of the statement of \autoref{Le:Commutation relation Ht Pt} that $\rmd\ChHeat_tf\in H^{1,2}(T^*\mms)$.  Indeed, if $f\in\F_\bounded$, we even have $\smash{\rmd\ChHeat_t f\in\Reg(T^*\mms)}$. Using that by \autoref{Th:Wd12 properties} and \autoref{Le:delta of d}, $\rmd(\rmd\ChHeat_t f)=0$  and $\delta(\rmd\ChHeat_tf) = -\Delta\ChHeat_t f$ for such $f$, the claim for general elements of $\F$ easily follows by truncation and \autoref{Th:Heat flow properties}.
\end{remark}

\begin{remark} Analogously to \eqref{Eq:Def HHt}, it is possible to define the heat flow $\smash{(\HHeat_t^k)_{t\geq 0}}$ in $\Ell^2(\Lambda^kT^*\mms)$ with generator $\smash{-\Hodge_k}$ for any $k\in\N$. However, it is not clear if the commutation relation from \autoref{Le:Commutation relation Ht Pt} holds  between $\smash{(\HHeat_t^k)_{t\geq 0}}$ and $\smash{(\HHeat_t^{k-1})_{t\geq 0}}$ for $k\geq 2$. Compare with \cite[Rem.~3.4]{braun2020}.
\end{remark}

	\begin{corollary} If $\omega\in\Dom(\delta)$ and $t> 0$, then $\HHeat_t\omega\in \Dom(\delta)$ with
		\begin{align*}
		\delta\HHeat_t\omega = \ChHeat_t\delta\omega.
		\end{align*}
	\end{corollary}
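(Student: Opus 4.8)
The statement to prove is a commutation relation $\delta\HHeat_t\omega = \ChHeat_t\delta\omega$ for $\omega\in\Dom(\delta)$ and $t>0$. The natural route is to dualize \autoref{Le:Commutation relation Ht Pt}, which asserts $\HHeat_t\rmd f = \rmd\ChHeat_t f$ for $f\in\F$ and $t>0$, using the $\meas$-symmetry of both semigroups and the fact that $\delta$ is (up to the sign convention in \autoref{Def:Codifferential}) the adjoint of $\rmd$ in the sense of the defining integration by parts formula $\int_\mms \langle\delta\omega,\eta\rangle\d\meas = \int_\mms\langle\omega,\rmd\eta\rangle\d\meas$ for $\eta\in\Test(\mms)$ (and more generally $\eta\in\F$, by approximation).

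First I would fix $\omega\in\Dom(\delta)$ and $t>0$, and test the quantity $\int_\mms \langle\HHeat_t\omega,\rmd\eta\rangle\d\meas$ against an arbitrary $\eta\in\Test(\mms)$. Using the $\meas$-self-adjointness of $\HHeat_t$ on $\Ell^2(T^*\mms)$ recorded at the start of \autoref{Sub:Heat flow vector fields} (more precisely in the theorem listing elementary properties of $(\HHeat_t)_{t\geq 0}$), we may move $\HHeat_t$ onto $\rmd\eta$: $\int_\mms\langle\HHeat_t\omega,\rmd\eta\rangle\d\meas = \int_\mms\langle\omega,\HHeat_t\rmd\eta\rangle\d\meas$. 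Here one needs $\rmd\eta\in\Ell^2(T^*\mms)$, which holds since $\eta\in\Test(\mms)\subset\F$, and then \autoref{Le:Commutation relation Ht Pt} gives $\HHeat_t\rmd\eta = \rmd\ChHeat_t\eta$. Since $\ChHeat_t\eta\in\F$ (indeed $\ChHeat_t\eta\in\Test(\mms)$ by the reverse Poincaré estimate in \autoref{Sub:Test fcts}, though $\F$-membership suffices), the defining property of $\delta\omega$ — extended from $\Test(\mms)$ to all test arguments in $\F$ by density of $\Test(\mms)$ in $\F$ and $\Ell^2$-continuity of $\rmd$ on $\F$ — yields $\int_\mms\langle\omega,\rmd\ChHeat_t\eta\rangle\d\meas = \int_\mms\langle\delta\omega,\ChHeat_t\eta\rangle\d\meas$. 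Finally, applying the $\meas$-self-adjointness of $\ChHeat_t$ on $\Ell^2(\mms)$ moves it back: $\int_\mms\langle\delta\omega,\ChHeat_t\eta\rangle\d\meas = \int_\mms\langle\ChHeat_t\delta\omega,\eta\rangle\d\meas$. Combining the chain of equalities, $\int_\mms\langle\HHeat_t\omega,\rmd\eta\rangle\d\meas = \int_\mms\langle\ChHeat_t\delta\omega,\eta\rangle\d\meas$ for every $\eta\in\Test(\mms)$, which is precisely the statement that $\HHeat_t\omega\in\Dom(\delta)$ with $\delta\HHeat_t\omega = \ChHeat_t\delta\omega$, by \autoref{Def:Codifferential} and the uniqueness of the codifferential.

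One should be slightly careful about the degenerate case $k=0$ in \autoref{Def:Codifferential}, where $\delta:=0$ on $\Ell^2(\mms)$: if $\omega$ is a $0$-form the statement reduces to $0 = \ChHeat_t\cdot 0$, which is trivially true, so without loss of generality $\omega$ is a genuine $1$-form, i.e. we work with $\delta = \delta^1$ and the argument above applies verbatim. I would also make explicit that $\ChHeat_t\delta\omega\in\Ell^2(\mms)$, which is clear since $\delta\omega\in\Ell^2(\mms)$ by definition of $\Dom(\delta)$ and $\ChHeat_t$ is a contraction on $\Ell^2(\mms)$; this is the candidate for the codifferential of $\HHeat_t\omega$ and its uniqueness follows from density of $\Test(\mms)$ in $\Ell^2(\mms)$.

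\textbf{Main obstacle.} The only non-routine point is the legitimacy of the intermediate step $\int_\mms\langle\omega,\rmd\ChHeat_t\eta\rangle\d\meas = \int_\mms\langle\delta\omega,\ChHeat_t\eta\rangle\d\meas$: the defining formula for $\delta\omega$ in \autoref{Def:Codifferential} is stated for test forms $\eta\in\Test(\Lambda^{k-1}T^*\mms)$, i.e. here for $\eta\in\Test(\mms)$, whereas $\ChHeat_t\eta$ need not literally lie in $\Test(\mms)$ — though in fact it does, by the regularization property recorded in \autoref{Sub:Test fcts}. To be safe I would argue by density: for $\eta\in\Test(\mms)$, $\ChHeat_t\eta\in\F$, and the map $\zeta\mapsto\int_\mms\langle\omega,\rmd\zeta\rangle\d\meas - \int_\mms\langle\delta\omega,\zeta\rangle\d\meas$ is $\Vert\cdot\Vert_{\F}$-continuous on $\F$ and vanishes on the dense subset $\Test(\mms)$, hence vanishes on all of $\F$; this uses $\Vert\rmd\zeta\Vert_{\Ell^2(T^*\mms)}\leq\Vert\zeta\Vert_\F$ for $\zeta\in\F$, which is immediate from the construction in \autoref{Def:Differential} and \eqref{Eq:TRE}. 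Everything else is a mechanical bookkeeping of adjoints, so this should be a short proof.
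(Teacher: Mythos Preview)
Your proposal is correct and is precisely the duality argument the paper has in mind: the Corollary is stated without proof immediately after \autoref{Le:Commutation relation Ht Pt}, and the intended derivation is to pair $\HHeat_t\omega$ against $\rmd\eta$ for $\eta\in\Test(\mms)$, shift $\HHeat_t$ and $\ChHeat_t$ by self-adjointness, and invoke $\HHeat_t\rmd\eta=\rmd\ChHeat_t\eta$. Your care about whether $\ChHeat_t\eta\in\Test(\mms)$ is well placed but, as you note, unnecessary here since $\eta\in\Ell^2(\mms)\cap\Ell^\infty(\mms)$ already forces $\ChHeat_t\eta\in\Test(\mms)$ by the regularization in \autoref{Sub:Test fcts}.
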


\begin{corollary}\label{Cor:Spec bds} We have
\begin{align*}
\inf\sigma(-\Delta^\kappa) \leq \inf\sigma(\Hodge).
\end{align*}
\end{corollary}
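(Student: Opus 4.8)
The statement to be proved is the spectral bottom inequality $\inf\sigma(-\Delta^\kappa)\leq\inf\sigma(\Hodge)$, i.e.\ the Hodge Laplacian on $1$-forms lies, in the bottom of its spectrum, above the Schrödinger operator with potential $\kappa$. The natural tool is Rayleigh's variational characterization of the spectral bottom of a nonnegative self-adjoint operator: $\inf\sigma(\Hodge)=\inf\{\widetilde{\Ch}_\con(\omega):\omega\in H^{1,2}(T^*\mms),\ \Vert\omega\Vert_{\Ell^2(T^*\mms)}=1\}$ and, similarly, $\inf\sigma(-\Delta^\kappa)=\inf\{\Ch^\kappa(f):f\in\F,\ \Vert f\Vert_{\Ell^2(\mms)}=1\}$, where $\Ch^\kappa(f)=\Ch(f)+\big\langle\kappa\,\big\vert\,f^2\big\rangle$ by \eqref{Eq::E^k identity}. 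So it suffices to show that for every normalized $\omega\in H^{1,2}(T^*\mms)$ there is a competitor $f$ with $\Vert f\Vert_{\Ell^2(\mms)}\leq 1$ (or exactly $1$ after rescaling) and $\Ch^\kappa(f)\leq\widetilde{\Ch}_\con(\omega)$.

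The right competitor is the pointwise norm: set $f:=\vert\omega\vert=\vert\omega^\sharp\vert$. First I would reduce to $\omega\in\Reg(T^*\mms)$: both $\widetilde{\Ch}_\con$ and $\big\langle\kappa\,\big\vert\,\vert\cdot\vert^2\big\rangle$ behave well under the $H^{1,2}$-approximation (using \autoref{Cor:Kappa cont} for the curvature term via \autoref{Le:Inclusion}, and $\Ell^2$-lower semicontinuity of $\widetilde{\Ch}_\con$), so it is enough to establish the inequality on regular forms and pass to the infimum. For $X:=\omega^\sharp\in\Reg(T\mms)$, \autoref{Le:Kato inequality} gives $\vert X\vert\in\F$ with $\big\vert\nabla\vert X\vert\big\vert\leq\big\vert\nabla X\big\vert_\HS$ $\meas$-a.e., hence
\begin{align*}
\Ch\big(\vert X\vert\big)=\int_\mms\big\vert\nabla\vert X\vert\big\vert^2\d\meas\leq\int_\mms\big\vert\nabla X\big\vert_\HS^2\d\meas=\Ch_\cov(X).
\end{align*}
Combining this with the second identity of \eqref{Eq:Ric kappa ids} in \autoref{Th:Ricci measure} evaluated at $X=Y$, namely $\RIC^\kappa(X,X)[\mms]=\Ch_\con(X^\flat)\cdot 2-\ldots$; more precisely, from \autoref{Le:Inclusion} we have $\Ch_\cov(X)\leq\Ch_\con(X^\flat)-\big\langle\kappa\,\big\vert\,\vert X\vert^2\big\rangle$, and recalling $\Ch_\con(X^\flat)=\frac12\,\widetilde{\Ch}_\con(X^\flat)=\frac12\int_\mms[\vert\rmd X^\flat\vert^2+\vert\delta X^\flat\vert^2]\d\meas$ up to the factor conventions, we obtain
\begin{align*}
\Ch^\kappa\big(\vert X\vert\big)=\Ch\big(\vert X\vert\big)+\big\langle\kappa\,\big\vert\,\vert X\vert^2\big\rangle\leq\Ch_\cov(X)+\big\langle\kappa\,\big\vert\,\vert X\vert^2\big\rangle\leq\widetilde{\Ch}_\con(X^\flat).
\end{align*}
Since $\Vert\,\vert X\vert\,\Vert_{\Ell^2(\mms)}=\Vert X\Vert_{\Ell^2(T\mms)}=\Vert\omega\Vert_{\Ell^2(T^*\mms)}$, taking the infimum over normalized $\omega\in\Reg(T^*\mms)$ and then extending to $H^{1,2}(T^*\mms)$ by the density/continuity reduction above yields $\inf\sigma(-\Delta^\kappa)\leq\inf\sigma(\Hodge)$.

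\textbf{Main obstacle.} The one genuinely delicate point is the bookkeeping of the $1/2$-factor conventions between $\widetilde{\Ch}_\con$, $\Ch_\con$, and the Rayleigh quotient for $\Hodge$ (cf.\ \eqref{Eq:C con} versus the definition of $\Ch_\con$), and making sure the competitor $\vert X\vert$ actually lies in the form domain $\F$ of $\Delta^\kappa$ --- which is exactly $\F$ by the discussion around \autoref{Re:Why Kato}, so \autoref{Le:Kato inequality} already settles this. A second, milder issue is justifying the passage from $\Reg(T^*\mms)$ to all of $H^{1,2}(T^*\mms)$ in the variational inequality: here one uses that $\Reg(T^*\mms)$ is dense in $H^{1,2}(T^*\mms)$ by \autoref{Def:H Hodge space}, that $X_n\to X$ in $H^{1,2}$ forces $\vert X_n\vert\to\vert X\vert$ in $\Ell^2(\mms)$ and, via \autoref{Cor:Kappa cont}, $\big\langle\kappa\,\big\vert\,\vert X_n\vert^2\big\rangle\to\big\langle\kappa\,\big\vert\,\vert X\vert^2\big\rangle$, together with $\Ell^2$-lower semicontinuity of the Dirichlet form; but since we only need an \emph{upper} bound on the infimum, it in fact suffices to test against regular forms directly, so even this step can be kept short.
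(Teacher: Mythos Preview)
Your approach is correct and essentially identical to the paper's: the paper's one-line justification invokes exactly the same four ingredients you use --- the second identity of \eqref{Eq:Ric kappa ids} (equivalently \autoref{Le:Inclusion}) together with the nonnegativity $\RIC^\kappa(X,X)\geq 0$ to get $\Ch_\cov(X)+\big\langle\kappa\,\big\vert\,\vert X\vert^2\big\rangle\leq\widetilde{\Ch}_\con(X^\flat)$, then Kato's inequality \autoref{Le:Kato inequality} to pass to $\Ch\big(\vert X\vert\big)$, then \eqref{Eq::E^k identity} to assemble $\Ch^\kappa\big(\vert X\vert\big)$, and finally Rayleigh's theorem. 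Your bookkeeping observation about the $1/2$-factor in $\Ch_\con$ versus $\widetilde{\Ch}_\con$ is well-spotted; the inequality you actually need is the one coming directly from \eqref{Eq:Ric kappa ids}, and since that identity already holds for all $X\in H_\sharp^{1,2}(T\mms)$, the reduction to $\Reg(T^*\mms)$ you sketch is in fact unnecessary.
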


\subsubsection{Functional inequalities and $\Ell^p$-properties}\label{Sub:Funct inequ hf 1-forms} Unlike the results for $(\HHeat_t)_{t\geq 0}$ from  \cite{braun2020} for $\RCD(K,\infty)$  spaces, $K\in\R$, a collateral effect of the singular potential $\kappa$ is that we do not know how the domains $\Dom(\Delta)$ and $\Dom(\Delta^{2\kappa})$ are related. Compare with \autoref{Re:HSU forms?} below. We thus restrict ourselves to the following assumption throughout this subsection.

\begin{assumption}\label{As:Restr} In the framework of \autoref{As:Bakry Emery}, there exists $\mathcall{k}\in \Ell_\loc^1(\mms)$ in the functional extended Kato class of $\mms$ \cite[Def.~2.20]{erbar2020} which is uniformly bounded from below by some $K\in\R$, such that
\begin{align*}
\kappa = \mathcall{k}\,\meas.
\end{align*}
\end{assumption}

We define the sequence $(\kappa_n)_{n\in\N}$ in $\Kato_{1-}(\mms)$ by
\begin{align*}
\kappa_n := \mathcall{k}_n\,\meas
\end{align*}
with $\mathcall{k}_n := \min\{n,\mathcall{k}\}\in\Ell^\infty(\mms)$, $n\in\N$. Observe that $(\mms,\Ch,\meas)$ obeys $\BE_2(\kappa_n,N)$ for every $n\in\N$. A priori, the Schrödinger operator $\Delta^{2\kappa_n}$ is the \emph{form sum} \cite[p.~19]{faris1975} of $\Delta$ and $-2\mathcall{k}_n$, the latter being viewed as self-adjoint \cite[Thm.~1.7]{faris1975}  multiplication operator on $\Ell^2(\mms)$ with domain $\Dom(-2\kappa_n) := \{f\in \Ell^2(\mms) : \mathcall{k}_n\,f\in \Ell^2(\mms)\} = \Ell^2(\mms)$, $n\in\N$. In fact \cite[Prop.~3.1]{faris1975}, $\smash{\Delta^{2\kappa_n}}$ is an \emph{operator sum}, i.e.~$f\in \Dom(\Delta)$ if and only if $f\in\Dom(\Delta^{2\kappa_n})$ for every $n\in\N$, and for such $f$,
\begin{align}\label{Eq:Delta 2k Delta corresp}
\Delta^{2\kappa_n}f = \Delta f -2\mathcall{k}_n\,f\quad\meas\text{-a.e.}
\end{align}

\begin{proposition}\label{Pr:BE 1-forms} For every $\omega\in\Ell^2(T^*\mms)$ and every $t\geq 0$,
\begin{align*}
\vert\HHeat_t\omega\vert^2 \leq \Schr{2\kappa}_t\big(\vert\omega\vert^2\big)\quad\meas\text{-a.e.}
\end{align*}
\end{proposition}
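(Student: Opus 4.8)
The statement to prove is the pointwise $\Ell^\infty$-comparison $\vert\HHeat_t\omega\vert^2 \leq \Schr{2\kappa}_t(\vert\omega\vert^2)$ for the heat flow on $1$-forms against the Feynman--Kac semigroup with potential $2\kappa$. The plan is to imitate the proof of \autoref{Pr:BE vector fields}, replacing the functional heat flow $(\ChHeat_t)_{t\geq 0}$ by the Schr\"odinger semigroup $(\Schr{2\kappa}_t)_{t\geq 0}$ and the covariant heat flow $(\CHeat_t)_{t\geq 0}$ by $(\HHeat_t)_{t\geq 0}$. By the $\Ell^2$-continuity of both sides in $\omega$, it suffices to prove the inequality for $\omega\in\Reg(T^*\mms)$, so that $\omega^\sharp\in\Reg(T\mms)$ and $\omega\in\Dom(\Hodge)$ by \autoref{Le:Hodge test}. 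Under \autoref{As:Restr} we work with the truncations $\kappa_n = \mathcall{k}_n\,\meas$ and the relation \eqref{Eq:Delta 2k Delta corresp}, then let $n\to\infty$ at the end; this keeps all potentials bounded during the computation and allows mollification by $(\Schr{2\kappa_n}_t)_{t\geq 0}$ as in \autoref{Re:Epsilon Remark}.

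First I would fix $t>0$, a nonnegative $\phi\in\Test_{\Ell^\infty}(\mms)$, and consider the function
\begin{align*}
F(s) := \int_\mms \phi\,\Schr{2\kappa_n}_{t-s}\big(\vert\HHeat_s\omega\vert^2\big)\d\meas = \int_\mms \Schr{2\kappa_n}_{t-s}\phi\,\vert\HHeat_s\omega\vert^2\d\meas
\end{align*}
on $[0,t]$, using $\meas$-symmetry of $\Schr{2\kappa_n}_{t-s}$. As in \autoref{Pr:BE vector fields}, the map $s\mapsto\Schr{2\kappa_n}_{t-s}\phi$ is Lipschitz from $[0,t)$ into $\Ell^\infty(\mms)$ with derivative $-\Delta^{2\kappa_n}\Schr{2\kappa_n}_{t-s}\phi$, and $s\mapsto\vert\HHeat_s\omega\vert^2$ is continuous on $[0,t]$ and locally absolutely continuous on $(0,t]$ with values in $\Ell^1(\mms)$; hence $F$ is continuous on $[0,t]$, locally absolutely continuous on $(0,t)$, and for a.e.\ $s\in(0,t)$,
\begin{align*}
F'(s) = -\int_\mms \Delta^{2\kappa_n}\Schr{2\kappa_n}_{t-s}\phi\,\vert\HHeat_s\omega\vert^2\d\meas - 2\int_\mms \Schr{2\kappa_n}_{t-s}\phi\,\big\langle\HHeat_s\omega,\Hodge\HHeat_s\omega\big\rangle\d\meas.
\end{align*}
The key regularity inputs are: $\Schr{2\kappa_n}_{t-s}\phi\in\Test(\mms)$ with bounded $\Delta^{2\kappa_n}$-image (via \eqref{Eq:Delta 2k Delta corresp} and the smoothing of $(\ChHeat_t)_{t\geq 0}$), and $\vert\HHeat_s\omega\vert^2\in\calG_\reg$, which I would get from \autoref{Pr:Compatibility} applied to $\HHeat_s\omega^\sharp\in H^{1,2}(T\mms)$ (recall $\Dom(\Hodge)^\sharp\subset H^{1,2}(T\mms)$ is not quite what we need here — rather use that $\HHeat_s$ preserves enough regularity; more safely, prove first an $\Ell^2$-version $\vert\HHeat_t\omega\vert^2\leq\Schr{2\kappa_n}_t(\vert\omega\vert^2)$ by this argument and bootstrap as in \autoref{Re:Technical remark}).

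The crucial step is the differential inequality $F'(s)\leq 0$. Writing $\psi := \Schr{2\kappa_n}_{t-s}\phi$, using \eqref{Eq:Delta 2k Delta corresp}, \autoref{Le:IBP for W11} and \autoref{Le:Inclusion} to integrate by parts, and then the Bochner-type inequality: the cross term $\big\langle\HHeat_s\omega,\Hodge\HHeat_s\omega\big\rangle$ is exactly the term controlled by \autoref{Le:Epsilon lemma}/\autoref{Th:Vector Bochner} for $q=2$, $\varepsilon=0$ in the form of \autoref{Re:Epsilon Remark}. Concretely, after the integration by parts one arrives at an expression of the shape
\begin{align*}
F'(s) = -2\int_\mms \psi\,\Big[\big\vert\nabla\HHeat_s\omega^\sharp\big\vert_\HS^2 + \big\langle\HHeat_s\omega^\sharp,(\Hodge\HHeat_s\omega)^\sharp\big\rangle - \tfrac12\DELTA^{2\kappa_n}\vert\HHeat_s\omega^\sharp\vert^2\big/\meas\Big]\d\meas - (\text{boundary/}\kappa\text{ terms}),
\end{align*}
and the bracket is $\leq 0$ $\meas$-a.e.\ by \autoref{Le:Pre.Bochner} (or its consequence in \autoref{Le:Epsilon lemma} with $\varphi(r)=r$), while the remaining $\kappa_n$-contribution combines with the $\Delta^{2\kappa_n}$-to-$\Delta$ correction to cancel. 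Integrating $F'\leq 0$ gives $F(t)\leq F(0)$, i.e.\ $\int_\mms\phi\,\vert\HHeat_t\omega\vert^2\d\meas\leq\int_\mms\phi\,\Schr{2\kappa_n}_t(\vert\omega\vert^2)\d\meas$; arbitrariness of $\phi$ via \autoref{Le:Mollified heat flow} yields $\vert\HHeat_t\omega\vert^2\leq\Schr{2\kappa_n}_t(\vert\omega\vert^2)$ $\meas$-a.e. Finally, since $\mathcall{k}_n\uparrow\mathcall{k}$ one has $\Schr{2\kappa_n}_t(\vert\omega\vert^2)\downarrow\Schr{2\kappa}_t(\vert\omega\vert^2)$ $\meas$-a.e.\ (monotone convergence of Feynman--Kac functionals, cf.\ \cite[Lem.~2.1]{braun2021} and the remark after \autoref{Re:No restriction}), and passing to the limit $n\to\infty$ completes the proof.

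\textbf{Main obstacle.} The delicate point is identical to the one flagged in \autoref{Re:Technical remark}: to differentiate $F$ and integrate by parts legitimately one needs $\vert\HHeat_s\omega\vert^2\in\F$ (not merely $\calG_\reg$), which requires an a priori $\Ell^\infty$-$\Ell^\infty$ smoothing property of $(\HHeat_t)_{t\geq 0}$ that is not yet available. The remedy is the same two-step bootstrap: first establish the weaker $\Ell^2$-statement $\vert\HHeat_t\omega\vert^2\leq\Schr{2\kappa_n}_t(\vert\omega\vert^2)$ using only $\calG_\reg$-regularity of $\vert\HHeat_s\omega\vert^2$ (available from \autoref{Pr:Compatibility}) together with \autoref{Le:IBP for W11}, which then provides $\HHeat_s\omega^\sharp\in H^{1,2}(T\mms)\cap\Ell^\infty(T\mms)$ and hence $\vert\HHeat_s\omega^\sharp\vert^2\in\F$ by \autoref{Le:Kato inequality}, after which the full argument goes through. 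A secondary nuisance is that, unlike the covariant case, one must carry the truncations $\kappa_n$ through the whole computation because $\Dom(\Delta)$ and $\Dom(\Delta^{2\kappa})$ are not known to coincide without \autoref{As:Restr}; this is precisely why \autoref{As:Restr} is imposed in this subsection.
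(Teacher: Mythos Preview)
Your overall architecture matches the paper's: define the interpolation $F(s)=\int_\mms\Schr{2\kappa_n}_{t-s}\phi\,\vert\HHeat_s\omega\vert^2\d\meas$, differentiate, show $F'\le 0$ via the $q=2$ Bochner-type inequality, then let $n\to\infty$ by Levi's theorem. Two points, however, diverge from the paper's argument.

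First, you assert $\psi:=\Schr{2\kappa_n}_{t-s}\phi\in\Test(\mms)$. This is not established: the Schr\"odinger semigroup is not known to yield $\vert\nabla\psi\vert\in\Ell^\infty(\mms)$, and invoking ``the smoothing of $(\ChHeat_t)_{t\geq 0}$'' does not help since $\Schr{2\kappa_n}_t\neq\ChHeat_t$. What \emph{is} available under \autoref{As:Restr} is that $\psi\in\Dom(\Delta)$ with $\psi,\Delta\psi\in\Ell^\infty(\mms)$ (via \eqref{Eq:Delta 2k Delta corresp}). The paper's fix is to mollify $\psi$ by the \emph{unperturbed} heat flow: construct $(f_i)\subset\Test_{\Ell^\infty}(\mms)$ nonnegative with $(f_i)$, $(\Delta f_i)$ bounded in $\Ell^\infty(\mms)$ and $f_i\to\psi$, $\Delta f_i\to\Delta\psi$ pointwise $\meas$-a.e.; apply the key inequality with $f_i$; pass to the limit via Lebesgue's theorem.

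Second, your ``main obstacle'' and proposed bootstrap are misplaced. You worry about needing $\vert\HHeat_s\omega\vert^2\in\F$, but the paper never needs this. The point of \autoref{Re:Epsilon Remark} (which you cite) is precisely that the conclusion of \autoref{Le:Epsilon lemma} for $q=2$, $\varepsilon=0$, $\kappa$ replaced by $\kappa_n$, extends --- after integrating the Hodge term by parts and approximating --- to all $X\in\Dom(\Hodge)^\sharp$ and $\phi\in\Test_{\Ell^\infty}(\mms)$. In this regime it reads
\begin{align*}
\int_\mms \vert X\vert^2\,\Delta^{2\kappa_n}\phi\d\meas + 2\int_\mms \phi\,\big\langle X,(\Hodge X^\flat)^\sharp\big\rangle\d\meas \geq 0,
\end{align*}
which needs only $\vert X\vert^2\in\Ell^1(\mms)$ and $X\in\Dom(\Hodge)^\sharp$ --- both automatic for $X=(\HHeat_s\omega)^\sharp$ with arbitrary $\omega\in\Ell^2(T^*\mms)$. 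Substituting $\phi=f_i$ gives $-F'(s)\geq 0$ in the limit, with no appeal to $\F$-regularity of $\vert X\vert^2$ and no prior restriction to $\omega\in\Reg(T^*\mms)$. Consequently the two-step bootstrap you outline (first $\Ell^2$, then $\Ell^\infty$) is unnecessary here; the analogy with \autoref{Re:Technical remark} does not carry over, because the relevant Bochner inequality has already been extended to the full $\Dom(\Hodge)^\sharp$ in \autoref{Re:Epsilon Remark}.
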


\begin{proof} We only concentrate on the nontrivial part $t>0$. Let $\phi\in\Test_{\Ell^\infty}(\mms)$ be nonnegative, and define $F\colon [0,t]\to \R$ by
\begin{align*}
F(s) := \int_\mms\phi\,\Schr{2\kappa_n}_{t-s}\big(\vert\HHeat_s\omega\vert^2\big)\d\meas = \int_\mms \Schr{2\kappa_n}_{t-s}\phi\,\vert\HHeat_s\omega\vert^2\d\meas,
\end{align*}
where $n\in\N$. As in the proof of \autoref{Pr:BE vector fields}, we argue that  $F$ is locally absolutely continuous on $(0,t)$, and that for $\Leb^1$-a.e.~$s\in (0,t)$,
\begin{align*}
F'(s) = -\int_\mms \Delta^{2\kappa_n}\Schr{2\kappa_n}_{t-s}\phi\,\vert\HHeat_s\omega\vert^2\d\meas - 2\int_\mms\Schr{2\kappa_n}_{t-s}\phi\,\big\langle\HHeat_s\omega,\Hodge\HHeat_s\omega\big\rangle\d\meas.
\end{align*}

Given such an $s\in(0,t)$, using a mollified version of $(\ChHeat_t)_{t\geq 0}$ \cite[p.~1648]{savare2014} we construct a sequence $(f_i)_{i\in\N}$ of nonnegative functions in $\Test_{\Ell^\infty}(\mms)$ such that $(f_i)_{i\in\N}$ and $(\Delta f_i)_{i\in\N}$ are bounded in $\Ell^\infty(\mms)$, and $\smash{f_i\to \Schr{2\kappa_n}_{t-s}\phi}$ as well as $\smash{\Delta f_i \to \Delta \Schr{2\kappa_n}_{t-s}\phi}$ pointwise $\meas$-a.e.~as $i\to\infty$. By Lebesgue's theorem, \eqref{Eq:Delta 2k Delta corresp}, \autoref{Re:Epsilon Remark}  and \eqref{Eq:delta = -div},
\begin{align*}
F'(s) &= -\lim_{i\to\infty} \int_\mms \Delta^{2\kappa_n}f_i\,\vert\HHeat_s\omega\vert^2\d\meas \\
&\qquad\qquad -\lim_{i\to\infty} 2\int_\mms f_i\,\big\langle\HHeat_s\omega,\Hodge\HHeat_s\omega\big\rangle\d\meas\leq 0.
\end{align*}
Integrating this inequality from $0$ to $t$, employing the arbitrariness of $\phi$ and letting $n\to\infty$ via Levi's theorem readily provides the claimed inequality.
\end{proof}

As for \autoref{Cor:Bounded Lapl density}, we have the following consequence of \autoref{Pr:BE 1-forms}. A similar argument as for \autoref{Cor:GrnsSchnsII}, providing an extension of \autoref{Le:Epsilon lemma} beyond regular vector fields which is needed for the proof of \autoref{Th:HSU forms}, is due to \cite{braun2020}.

\begin{corollary}\label{Cor:GrnsSchns} For every $\omega\in \Dom(\Hodge)$, there exists a sequence $(\omega_n)_{n\in\N}$ in $\Dom(\Hodge)\cap\Ell^\infty(T^*\mms)$ which converges to $\omega$ in $H^{1,2}(T^*\mms)$ such that in addition, $\Hodge \omega_n \to\Hodge\omega$ in $\Ell^2(T^*\mms)$ as $n\to\infty$. If $\omega\in\Ell^\infty(T^*\mms)$, this sequence can be constructed to be uniformly bounded in $\Ell^\infty(T^*\mms)$.
\end{corollary}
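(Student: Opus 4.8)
The statement to prove is \autoref{Cor:GrnsSchns}, which is a ``bounded-Laplacian density'' result for the Hodge Laplacian, directly analogous to \autoref{Cor:Bounded Lapl density} for the Bochner Laplacian. The approach mirrors exactly that earlier proof, with two ingredients replacing their Bochner counterparts: the heat flow $(\HHeat_t)_{t\geq 0}$ on $1$-forms in place of $(\CHeat_t)_{t\geq 0}$, and the pointwise estimate $\vert\HHeat_t\omega\vert^2 \leq \Schr{2\kappa}_t(\vert\omega\vert^2)$ from \autoref{Pr:BE 1-forms} in place of \autoref{Pr:BE vector fields}. The key structural point is that $\Schr{2\kappa}_t$ is a bounded operator on $\Ell^\infty(\mms)$ (recall the exponential $\Ell^p$-boundedness of the Feynman--Kac semigroup from \autoref{Sub:Tamed spaces}), so \autoref{Pr:BE 1-forms} furnishes the $\Ell^\infty$-$\Ell^\infty$ regularization needed to produce bounded approximants.

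\textbf{Key steps.} First I would truncate: given $\omega\in\Dom(\Hodge)$, set $\omega_k := \One_{\{\vert\omega\vert\leq k\}}\,\omega\in\Ell^2(T^*\mms)\cap\Ell^\infty(T^*\mms)$ for $k\in\N$, so that $\omega_k\to\omega$ in $\Ell^2(T^*\mms)$ as $k\to\infty$. Next, for fixed $t>0$ consider $\HHeat_t\omega_k$. By \autoref{Pr:BE 1-forms} we have $\vert\HHeat_t\omega_k\vert^2 \leq \Schr{2\kappa}_t(\vert\omega_k\vert^2)\leq \Schr{2\kappa}_t(\vert\omega\vert^2)$ $\meas$-a.e., and since $\vert\omega_k\vert^2\in\Ell^\infty(\mms)$ with a $k$-independent bound (namely $\Vert\omega\Vert_{\Ell^\infty(T^*\mms)}^2$ if $\omega$ is bounded, or simply finite for each $k$ otherwise), the $\Ell^\infty$-boundedness of $\Schr{2\kappa}_t$ gives $\HHeat_t\omega_k\in\Ell^\infty(T^*\mms)$, uniformly in $k$ in the case $\omega\in\Ell^\infty(T^*\mms)$. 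Because $\HHeat_t$ maps $\Ell^2(T^*\mms)$ into $\Dom(\Hodge)$ (it is the semigroup generated by $-\Hodge$, so $\rme^{-\Hodge t}\,\Ell^2(T^*\mms)\subset\Dom(\Hodge)$ for $t>0$ by the spectral theorem), we have $\HHeat_t\omega_k\in\Dom(\Hodge)\cap\Ell^\infty(T^*\mms)$. Now I invoke the elementary heat-flow properties: $\HHeat_t\omega_k\to\HHeat_t\omega$ in $H^{1,2}(T^*\mms)$ and $\Hodge\HHeat_t\omega_k\to\Hodge\HHeat_t\omega$ in $\Ell^2(T^*\mms)$ as $k\to\infty$ for every $t>0$ (continuity and commutation of $\HHeat_t$ with $\Hodge$ on $\Dom(\Hodge)$, as recorded right after \eqref{Eq:Def HHt}), and $\HHeat_t\omega\to\omega$ in $H^{1,2}(T^*\mms)$ together with $\Hodge\HHeat_t\omega = \HHeat_t\Hodge\omega\to\Hodge\omega$ in $\Ell^2(T^*\mms)$ as $t\to 0$. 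A diagonal argument over $t\to 0$ and $k\to\infty$ then extracts a sequence $(\omega_n)_{n\in\N}$ with the required properties; in the case $\omega\in\Ell^\infty(T^*\mms)$, the uniform bound from \autoref{Pr:BE 1-forms} is preserved along the diagonal.

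\textbf{Main obstacle.} There is essentially no deep obstacle here: the result is a formal consequence of \autoref{Pr:BE 1-forms} plus the standard semigroup calculus, exactly as \autoref{Cor:Bounded Lapl density} follows from \autoref{Pr:BE vector fields}. The only point requiring a little care is the difference between the two smoothing semigroups --- $(\CHeat_t)_{t\geq 0}$ is sub-Markovian and compares against $(\ChHeat_t)_{t\geq 0}$, whereas $(\HHeat_t)_{t\geq 0}$ compares against the Feynman--Kac semigroup $(\Schr{2\kappa}_t)_{t\geq 0}$, which is only \emph{exponentially} bounded on $\Ell^\infty(\mms)$. Thus one must note that for each fixed $t>0$ the constant $C\,\rme^{Ct}$ governing $\Vert\Schr{2\kappa}_t\Vert_{\Ell^\infty(\mms);\Ell^\infty(\mms)}$ is finite, which suffices for boundedness of the approximants at each scale; the uniform-in-$t$ bound is not needed because the diagonal sequence fixes one scale per index. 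Hence the proof is a near-verbatim transcription of the proof of \autoref{Cor:Bounded Lapl density}, and I would present it in two or three sentences referencing that earlier argument.
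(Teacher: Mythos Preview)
Your proposal is correct and follows exactly the approach indicated by the paper, which simply states ``As for \autoref{Cor:Bounded Lapl density}, we have the following consequence of \autoref{Pr:BE 1-forms}'' without spelling out a separate proof. Your observation about the exponential (rather than contractive) $\Ell^\infty$-bound for $(\Schr{2\kappa}_t)_{t\geq 0}$ is a nice additional remark, but as you correctly note it is immaterial here since only boundedness at each fixed $t>0$ is used.
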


\begin{lemma}\label{Cor:GrnsSchnsII} For every $n\in\N$, the conclusion from \autoref{Le:Epsilon lemma} holds for $\kappa$ replaced by $\kappa_n$ as well as $q=1$, $\varepsilon > 0$, $X\in \smash{\Dom(\Hodge)^\sharp\cap\Ell^\infty(T\mms)}$ and $\phi\in\Dom(\Delta)\cap \Ell^\infty(\mms)$.
\end{lemma}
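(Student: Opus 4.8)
\textbf{Plan of proof for \autoref{Cor:GrnsSchnsII}.}
The idea is to prove the inequality for $X\in\Reg(T\mms)$ first, where \autoref{Le:Epsilon lemma} already applies (after replacing $\kappa$ by $\kappa_n$; note that $(\mms,\Ch,\meas)$ satisfies $\BE_2(\kappa_n,N)$ for every $n\in\N$, so all ingredients leading to \autoref{Le:Epsilon lemma}, in particular \autoref{Le:Pre.Bochner} and \autoref{Le:Kato inequality}, remain valid with $\kappa_n$ in place of $\kappa$), and then to pass to the limit along a suitable approximation of a general $X\in\Dom(\Hodge)^\sharp\cap\Ell^\infty(T\mms)$. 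The key structural point is that for $X\in\Reg(T\mms)$ the term $\langle X,(\Hodge X^\flat)^\sharp\rangle$ appearing on the right-hand side of \autoref{Le:Epsilon lemma} can be integrated by parts against $\phi\,\varphi_\varepsilon'\circ\vert X\vert^2$; since $\kappa_n=\mathcall{k}_n\,\meas$ with $\mathcall{k}_n\in\Ell^\infty(\mms)$, all the pairings $\langle\kappa_n\,\vert\,\cdot\rangle$ are ordinary $\Ell^1$-integrals against $\mathcall{k}_n\,\meas$, which renders the whole inequality stable under the relevant modes of convergence.

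\textbf{Step 1: a partial-integration reformulation on $\Reg(T\mms)$.}
Fix $n\in\N$ and $\varepsilon>0$. For $X\in\Reg(T\mms)$, by \autoref{Le:Kato inequality} and \autoref{Th:Properties energy measure} one has $\varphi_\varepsilon'\circ\vert X\vert^2\in\F_\bounded$, hence $\phi\,\varphi_\varepsilon'\circ\vert X\vert^2\in\F_\bounded$ for $\phi\in\Dom(\Delta)\cap\Ell^\infty(\mms)$. The plan is to rewrite
\begin{align*}
-2\int_\mms \phi\,\big[\varphi_\varepsilon'\circ\vert X\vert^2\big]\,\big\langle X,(\Hodge X^\flat)^\sharp\big\rangle\d\meas
\end{align*}
by the defining property of $\Hodge$ on $H^{1,2}(T^*\mms)$, using $f\,\omega\in H^{1,2}(T^*\mms)$ for $f\in\F_\bounded$, $\omega\in\Reg(T^*\mms)$ (the Leibniz-type remark after \autoref{Le:delta of d}). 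This gives an expression in terms of $\rmd$, $\delta$ and the function $\phi\,\varphi_\varepsilon'\circ\vert X\vert^2$ only, with no explicit occurrence of $\Hodge X^\flat$. Everything else in the conclusion of \autoref{Le:Epsilon lemma} ($\Delta^{q\kappa_n}\phi=\Delta\phi-2\mathcall{k}_n\,\phi$ for $q=1$ by \eqref{Eq:Delta 2k Delta corresp}, and the $\langle\kappa_n\,\vert\,\cdot\rangle$ terms) is already in integral form. The outcome is the desired inequality, now written entirely in terms of $\nabla X$, $\rmd X^\flat$, $\delta X^\flat$, $\vert X\vert$ and $\phi$, valid for every $X\in\Reg(T\mms)$.

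\textbf{Step 2: approximation and passage to the limit.}
Given $X\in\Dom(\Hodge)^\sharp\cap\Ell^\infty(T\mms)$, apply \autoref{Cor:GrnsSchns} to $\omega:=X^\flat$ to obtain $\omega_k\in\Dom(\Hodge)\cap\Ell^\infty(T^*\mms)$, uniformly bounded in $\Ell^\infty(T^*\mms)$, with $\omega_k\to X^\flat$ in $H^{1,2}(T^*\mms)$ and $\Hodge\omega_k\to\Hodge X^\flat$ in $\Ell^2(T^*\mms)$; then approximate each $\omega_k$ (or rather $X_k:=\omega_k^\sharp$) by elements of $\Reg(T\mms)$ in the $H^{1,2}(T^*\mms)$-topology via \autoref{Def:H Hodge space}, using \autoref{Le:Inclusion} to control the covariant norm, and pass to a diagonal sequence $X_j\in\Reg(T\mms)$ with $X_j\to X$ in $H_\sharp^{1,2}(T\mms)$, $\Hodge X_j^\flat\to\Hodge X^\flat$ in $\Ell^2(T^*\mms)$, $X_j\to X$ in $H^{1,2}(T\mms)$, and, after extracting a further subsequence, $X_j\to X$ pointwise $\meas$-a.e.\ with $\sup_j\Vert X_j\Vert_{\Ell^\infty(T\mms)}<\infty$. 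Since $\varphi_\varepsilon$ and $\varphi_\varepsilon'$ are bounded Lipschitz on bounded sets, $\varphi_\varepsilon\circ\vert X_j\vert^2\to\varphi_\varepsilon\circ\vert X\vert^2$ and $\varphi_\varepsilon'\circ\vert X_j\vert^2\to\varphi_\varepsilon'\circ\vert X\vert^2$ boundedly $\meas$-a.e.; as $\mathcall{k}_n\in\Ell^\infty(\mms)$ and $\phi,\Delta^{\kappa_n}\phi\in\Ell^\infty(\mms)$, Lebesgue's dominated convergence theorem lets the terms involving $\Delta^{\kappa_n}\phi$ and the $\langle\kappa_n\,\vert\,\cdot\rangle$-pairings pass to the limit. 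For the term that was reformulated in Step~1, the $H^{1,2}(T^*\mms)$- and $\Ell^\infty$-convergences together with $\Hodge X_j^\flat\to\Hodge X^\flat$ in $\Ell^2(T^*\mms)$ and the boundedness of the multipliers give convergence back to $-2\int_\mms\phi\,[\varphi_\varepsilon'\circ\vert X\vert^2]\,\langle X,(\Hodge X^\flat)^\sharp\rangle\d\meas$; for the $\vert\nabla X\vert_\HS$-type contributions hidden in Step~1's reformulation one only needs $\Ell^2$-lower semicontinuity of $\Ch_\cov$ (\autoref{Th:Properties W12 TM}), which preserves the inequality in the right direction. This yields the claim for all $X\in\Dom(\Hodge)^\sharp\cap\Ell^\infty(T\mms)$.

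\textbf{Main obstacle.}
The delicate point is Step~2: \autoref{Le:Epsilon lemma} is stated only for $X\in\Reg(T\mms)$, and unlike in the $\RCD$ setting one does not a priori have $\Reg(T\mms)$ dense in $\Dom(\Hodge)^\sharp$ in a topology controlling \emph{both} $\Hodge X^\flat$ \emph{and} $\nabla X$ simultaneously. Here is exactly where \autoref{As:Restr} and the truncation $\kappa_n=\mathcall{k}_n\,\meas$ with $\mathcall{k}_n\in\Ell^\infty(\mms)$ enter: boundedness of $\mathcall{k}_n$ makes the perturbation term a genuine bounded multiplication operator (so $\Dom(\Delta^{2\kappa_n})=\Dom(\Delta)$ by \eqref{Eq:Delta 2k Delta corresp}, and the $\kappa_n$-pairings are weak-$\Ell^1$-continuous), which is what allows the limit to be taken; the $\Ell^\infty$-uniform bound on the approximants from \autoref{Cor:GrnsSchns} and \autoref{Pr:BE 1-forms} is what prevents the nonlinear terms $\varphi_\varepsilon\circ\vert X_j\vert^2$ from escaping to infinity. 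I expect that carefully bookkeeping the reformulation in Step~1 so that the limit in Step~2 only ever sees \emph{lower} semicontinuity on the $\vert\nabla X\vert_\HS^2$ side (and genuine convergence elsewhere) is the one place where care is required; everything else is routine in the spirit of the proofs of \autoref{Pr:BE vector fields}, \autoref{Th:HSU Bochner} and \autoref{Pr:BE 1-forms}.
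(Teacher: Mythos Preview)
Your overall strategy --- integrate by parts the Hodge Laplacian term, approximate $X$ by elements of $\Reg(T\mms)$ using the $H^{1,2}(T^*\mms)$-density and \autoref{Le:Inclusion}, pass to the limit, and integrate back --- is the same one the paper uses. However, your execution of Step~2 contains two overclaims that are not actually available, and the paper avoids both by reorganizing the argument.

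First, you assert the existence of a diagonal sequence $X_j\in\Reg(T\mms)$ with both $\Hodge X_j^\flat\to\Hodge X^\flat$ in $\Ell^2(T^*\mms)$ and $\sup_j\Vert X_j\Vert_{\Ell^\infty(T\mms)}<\infty$. Neither is justified: the $\Reg$-approximation of $\omega_k$ in $H^{1,2}(T^*\mms)$ gives no control on $\Hodge$ (this is exactly the obstruction discussed in \autoref{Re:HSU forms?}), and nothing guarantees uniform $\Ell^\infty$-bounds for $\Reg$-approximants of a bounded $\omega_k$. You seem aware of the first point in your ``Main obstacle'' paragraph, yet still list the $\Hodge$-convergence as part of the diagonal sequence and invoke it to ``integrate back''; this is circular.

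The paper circumvents both issues by \emph{changing the order of approximation and specialization}. Instead of starting from \autoref{Le:Epsilon lemma} (where the test function $\psi=\phi\,\varphi_\varepsilon'\circ\vert X\vert^2$ already depends on $X$), it goes back to \autoref{Le:Pre.Bochner} integrated against a \emph{fixed} nonnegative $\psi_j\in\Test(\mms)$. With $\psi_j$ fixed, one integrates by parts the Hodge term, then lets $X_i\to X$ in $H_\sharp^{1,2}(T\mms)$: the gradient term converges via \autoref{Pr:Compatibility}, the $\vert\nabla X_i\vert_\HS^2$ term via the strong $H^{1,2}$-convergence from \autoref{Le:Inclusion}, the reformulated Hodge term via $H^{1,2}(T^*\mms)$-convergence, and one integrates back using $X\in\Dom(\Hodge)^\sharp$. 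No uniform $\Ell^\infty$-bound on $X_i$ and no $\Hodge$-convergence are needed. A second limit $\psi_j\to\psi\in\F_\bounded$ is then easy. Only \emph{after} both limits does one set $\psi:=\phi\,[\varphi_\varepsilon'\circ\vert X\vert^2]$ for the \emph{limit} $X$ (which is in $\Ell^\infty(T\mms)$ by hypothesis), and the $\varphi_\varepsilon''$-term arising from the Leibniz rule is absorbed into $\int\psi\,\vert\nabla X\vert_\HS^2$ via \autoref{Le:Kato inequality}, yielding the conclusion of \autoref{Le:Epsilon lemma}.

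In short: your idea is right, but you try to carry the $X$-dependent test function through the limit, which forces you to claim convergences you do not have. The fix is to keep $\psi$ generic until after the approximation in $X$ is complete.
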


\begin{proof} We shortly outline the argument. Let $\psi\in \F_\bounded$ be nonnegative, and let $(X_i)_{i\in\N}$ and $(\psi_j)_{j\in\N}$ be sequences in $\Reg(T\mms)$ and $\Test(\mms)$ converging to $X$ and $\psi$ in $\smash{H_\sharp^{1,2}(T\mms)}$ and $\F$, respectively. By \autoref{Le:Mollified heat flow}, we may and will assume that $\psi_j$ is nonnegative for every $j\in\N$. Integrating  \autoref{Le:Pre.Bochner}, for $\kappa$ replaced by $\kappa_n$,  $n\in\N$, against $\psi_j$ and using that $\vert X_i\vert^2\in\F_\bounded$ by \autoref{Le:Kato inequality}, for every $i,j\in\N$,
\begin{align*} 
&-\frac{1}{2}\int_\mms \big\langle\nabla \psi_j,\nabla \vert X_i\vert^2\big\rangle\d\meas - \int_\mms \mathcall{k}_n\,\psi_j\,\frac{\vert X_i\vert^2}{2}\d\meas\\
&\qquad\qquad \geq \int_\mms \psi_j\,\big\vert\nabla X_i\big\vert_\HS^2\d\meas - \int_\mms\psi_j\,\big\langle X_i, (\Hodge X_i^\flat)^\sharp\big\rangle\d\meas.
\end{align*}
Integrating by parts the last term, using \autoref{Pr:Compatibility} for the first, \autoref{Le:Inclusion} for the third and \autoref{Th:Wd12 properties} and \autoref{Le:delta of d} for the last term, and finally integrating by parts back the last term we send $i\to\infty$. This yields the previous inequality for $X_i$ replaced by $X$. Employing \autoref{Le:Inclusion} and  \autoref{Pr:Compatibility} again for the first term together with $X\in\Ell^\infty(T\mms)$ and $\nabla \psi_j\to\nabla\psi$ in $\Ell^2(T\mms)$ as $j\to\infty$, the above estimate still holds for $\psi_j$ replaced by $\psi$. Lastly, we insert $\psi := \phi\,[\varphi_\varepsilon'\circ\vert X\vert^2]$, $\varepsilon > 0$, where $\varphi_\varepsilon$ is defined as in \autoref{Le:Epsilon lemma} for $q=1$. The term containing $\varphi_\varepsilon''\circ\vert X\vert^2$ coming from the Leibniz rule in the first integral cancels out with the third integral thanks to \autoref{Le:Kato inequality}, and elementary further computations entail the claim.
\end{proof}

\autoref{Th:HSU forms} is known as \emph{Hess--Schrader--Uhlenbrock inequality} \cite{hess1977, hess1980,simon1977} in the case when $\mms$ is a compact Riemannian manifold without boundary. A similar, analytic access to the latter on such $\mms$ with \emph{convex} boundary is due to \cite{ouhabaz1999, shigekawa1997, shigekawa2000}. On general compact $\mms$ with boundary, it has been derived in \cite{hsu2002}  using probabilistic methods. In the noncompact case without boundary, one can  appeal to both analytic \cite{gueneysu2017} or stochastic \cite{driver2001,li1992} methods. Moreover, recently, $\Ell^p$-properties of $(\HHeat_t)_{t\geq 0}$ and related heat kernel estimates on Riemannian manifolds have been studied in \cite{magniez2020} under Kato curvature conditions.

\begin{theorem}[Hess--Schrader--Uhlenbrock inequality]\label{Th:HSU forms} For every $\omega\in \Ell^2(T^*\mms)$ and every $t\geq 0$,
\begin{align*}
\vert\HHeat_t\omega\vert\leq \Schr{\kappa}_t\vert\omega\vert\quad\meas\text{-a.e.}
\end{align*}
\end{theorem}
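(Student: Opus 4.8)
The plan is to mimic the proof of the $\Ell^2$-bound \autoref{Pr:BE 1-forms}, but at the level of the ``square roots'', exactly as in the passage from \autoref{Pr:BE vector fields} to \autoref{Th:HSU Bochner}. Since both sides of the claimed inequality are $\Ell^2$-continuous in $\omega$, it suffices to prove it for $\omega = X^\flat$ with $X\in\Reg(T\mms)$; moreover by \autoref{Pr:BE 1-forms} applied at positive times we already know $\HHeat_sX^\flat\in\Ell^\infty(T^*\mms)$ for $s>0$, hence by \autoref{Le:Kato inequality} we have $\vert\HHeat_sX^\flat\vert^2\in\F_\bounded$, which is the regularity we will need below. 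Fix $t>0$, a nonnegative $\phi\in\Test_{\Ell^\infty}(\mms)$, an integer $n\in\N$, and $\varepsilon>0$, and consider
\begin{align*}
F_\varepsilon(s) := \int_\mms \Schr{\kappa_n}_{t-s}\phi\,\big[\varphi_\varepsilon\circ\vert\HHeat_s X^\flat\vert^2\big]\d\meas,
\end{align*}
where $\varphi_\varepsilon(r):=(r+\varepsilon)^{1/2}-\varepsilon^{1/2}$ and $\kappa_n$ is the truncation from \autoref{As:Restr}. (Here I invoke \autoref{As:Restr}; as in \autoref{Sub:Funct inequ hf 1-forms} this is the natural hypothesis under which $\Dom(\Delta)=\Dom(\Delta^{2\kappa_n})$ and the Feynman--Kac semigroup $(\Schr{\kappa_n}_t)_{t\geq 0}$ is well enough understood.) The goal is to show $F_\varepsilon'(s)\leq 0$ for $\Leb^1$-a.e.~$s\in(0,t)$, whence $F_\varepsilon(t)\leq F_\varepsilon(0)$; then send $\varepsilon\to0$ by Lebesgue's theorem, use arbitrariness of $\phi$ via \autoref{Le:Mollified heat flow}, and finally let $n\to\infty$ by Levi's monotone convergence to upgrade $\kappa_n$ to $\kappa$.

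\textbf{Key steps.} First, following the absolute-continuity argument in the proofs of \autoref{Pr:BE vector fields} and \autoref{Th:HSU Bochner}, one checks that $F_\varepsilon$ is continuous on $[0,t]$ and locally absolutely continuous on $(0,t)$ — using that $s\mapsto\Schr{\kappa_n}_{t-s}\phi$ is Lipschitz into $\Ell^\infty(\mms)$ with derivative $-\Delta^{\kappa_n}\Schr{\kappa_n}_{t-s}\phi$, and that $s\mapsto\vert\HHeat_sX^\flat\vert^2$ is continuous on $[0,t]$ and locally absolutely continuous on $(0,t]$ into $\Ell^1(\mms)$. Differentiating under the integral sign gives, for a.e.~$s\in(0,t)$,
\begin{align*}
F_\varepsilon'(s) = -\int_\mms \Delta^{\kappa_n}\Schr{\kappa_n}_{t-s}\phi\,\big[\varphi_\varepsilon\circ\vert\HHeat_sX^\flat\vert^2\big]\d\meas + 2\int_\mms \Schr{\kappa_n}_{t-s}\phi\,\big[\varphi_\varepsilon'\circ\vert\HHeat_sX^\flat\vert^2\big]\,\big\langle\HHeat_sX^\flat,-\Hodge\HHeat_sX^\flat\big\rangle\d\meas.
\end{align*}
Second, one must pass to a form where the pointwise Bochner-type inequality applies: use a mollified heat flow as in \cite{savare2014} to approximate $\Schr{\kappa_n}_{t-s}\phi$ by nonnegative $f_i\in\Test_{\Ell^\infty}(\mms)$ with $(f_i)_{i\in\N}$ and $(\Delta f_i)_{i\in\N}$ bounded in $\Ell^\infty(\mms)$ and $f_i,\Delta f_i$ converging pointwise $\meas$-a.e. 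Using $\eqref{Eq:Delta 2k Delta corresp}$ (in the form $\Delta^{\kappa_n}f = \Delta f - \mathcall{k}_n\,f$) this brings $F_\varepsilon'(s)$ into exactly the shape controlled by \autoref{Cor:GrnsSchnsII}, which is the extension of \autoref{Le:Epsilon lemma} (with $q=1$, potential $\kappa_n$, $X\in\Dom(\Hodge)^\sharp\cap\Ell^\infty(T\mms)$, test function in $\Dom(\Delta)\cap\Ell^\infty(\mms)$) that we are entitled to quote. Plugging that inequality in shows the bracketed integrand is $\leq 0$, i.e.~$F_\varepsilon'(s)\leq 0$. The two cancellations of $\varphi_\varepsilon''$-terms (one from $-2r\varphi_\varepsilon''(r)\leq\varphi_\varepsilon'(r)$, one from metric compatibility of $\nabla$ via \autoref{Pr:Compatibility}) are the same bookkeeping as in the proof of \autoref{Th:HSU Bochner}, already encapsulated inside \autoref{Cor:GrnsSchnsII}.

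\textbf{Main obstacle.} The delicate point is not the differential inequality itself but the regularity justification: unlike in \autoref{Th:HSU Bochner}, where \autoref{Pr:BE vector fields} supplied $\Ell^\infty$-$\Ell^\infty$ smoothing for free, here I need $\HHeat_sX^\flat\in\Ell^\infty(T^*\mms)$ (so that $\vert\HHeat_sX^\flat\vert^2\in\F$ and $\varphi_\varepsilon'\circ\vert\HHeat_sX^\flat\vert^2\in\F_\bounded$) and I need $\HHeat_sX^\flat$ to lie in $\Dom(\Hodge)^\sharp$ in order to invoke \autoref{Cor:GrnsSchnsII}; both are guaranteed by \autoref{Pr:BE 1-forms} together with $(\HHeat_t)_{t\geq0}$ mapping into $\Dom(\Hodge)$ at positive times and \autoref{Le:Kato inequality}, but stitching these together cleanly — plus handling that $\Dom(\Hodge)^\sharp$ is not a priori preserved by truncation, which is why \autoref{Cor:GrnsSchns} is needed — is where the care goes. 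The $n\to\infty$ limit is routine once the inequality $\vert\HHeat_tX^\flat\vert\leq\Schr{\kappa_n}_t\vert X^\flat\vert$ is established for each $n$, since $\Schr{\kappa_n}_t\vert X^\flat\vert\downarrow\Schr{\kappa}_t\vert X^\flat\vert$ $\meas$-a.e.~by monotone convergence in the Feynman--Kac representation (recall $\mathcall{k}_n\uparrow\mathcall{k}$, so $\sfa_t^{\kappa_n}\uparrow\sfa_t^{\kappa}$).
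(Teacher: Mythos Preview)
Your proposal is correct and follows essentially the same route as the paper's proof: the same $F_\varepsilon$ interpolation with the truncated potential $\kappa_n$, the same mollified-heat-flow approximation of $\Schr{\kappa_n}_{t-s}\phi$, the same appeal to \autoref{Cor:GrnsSchnsII}, and the same limit procedure in $\varepsilon$, $\phi$, and $n$. The only cosmetic difference is the initial reduction: you pass to $\omega=X^\flat$ with $X\in\Reg(T\mms)$ by $\Ell^2$-density, whereas the paper instead truncates and cuts off $\omega$ to a sequence $(\omega_l)_{l\in\N}$ in $\Ell^1(T^*\mms)\cap\Ell^\infty(T^*\mms)$ and carries an extra $l\to\infty$ limit at the end --- both serve exactly the same purpose of ensuring $\HHeat_s(\cdot)\in\Ell^\infty(T^*\mms)$ via \autoref{Pr:BE 1-forms}. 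Your remark that \autoref{Cor:GrnsSchns} is needed to handle truncation is a slight non sequitur, since your own reduction avoids truncation entirely; but this does not affect the argument.
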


\begin{proof} Let $(\omega_l)_{l\in\N}$ be a sequence in $\Ell^1(T^*\mms) \cap\Ell^\infty(T^*\mms)$ which is obtained by appropriately cutting off and truncating the given $\omega$. (In this case, truncation means multiplication with an indicator function of $\{\vert \omega\vert \leq R\}$, $R>0$.) Moreover, given any $\varepsilon > 0$, define $\varphi_\varepsilon\in\Cont^\infty([0,\infty))$ by $\smash{\varphi_\varepsilon(r) := (r+\varepsilon)^{1/2}-\varepsilon^{1/2}}$. For a nonnegative  $\phi\in\Test_{\Ell^\infty}(\mms)$, given any $l\in\N$, consider the function $F_\varepsilon\colon [0,t]\to \R$ with
\begin{align*}
F_\varepsilon(s) := \int_\mms\phi\,\Schr{\kappa_n}_{t-s}\big(\varphi_\varepsilon\circ\vert\HHeat_s\omega_l\vert^2\big)\d\meas = \int_\mms\Schr{\kappa_n}_{t-s}\phi\,\big[\varphi_\varepsilon\circ\vert\HHeat_s\omega_l\vert^2\big]\d\meas.
\end{align*}
As for \autoref{Pr:BE 1-forms}, the function $F_\varepsilon$ is readily verified to be continuous on $[0,t]$, locally absolutely continuous on $(0,t)$, and integration and differentiation can be swapped in computing its derivative $F'_\varepsilon(s)$ at $\Leb^1$-a.e.~$s\in (0,t)$. 

Given such an $s\in (0,t)$, consider a sequence $(f_i)_{i\in\N}$  of nonnegative functions in $\Test_{\Ell^\infty}(\mms)$ associated to $\smash{\Schr{\kappa_n}_{t-s}\phi}$ as in the proof of \autoref{Pr:BE 1-forms}. Then, according to  \autoref{Cor:GrnsSchnsII} --- since $\HHeat_s\omega_l\in\Ell^\infty(T^*\mms)$ thanks to \autoref{Pr:BE 1-forms} ---   \eqref{Eq:div nabla = Delta} and Lebesgue's theorem,
\begin{align*}
F_\varepsilon'(s) &= -\int_\mms \Delta^{\kappa_n}\Schr{\kappa_n}_{t-s}\phi\,\big[\varphi_\varepsilon\circ \vert\HHeat_s\omega_l\vert^2\big]\d\meas\\
&\qquad\qquad -2\int_\mms \Schr{\kappa_n}_{t-s}\phi\,\big[\varphi_\varepsilon'\circ\vert\HHeat_s\omega_l\vert^2\big]\,\big\langle \HHeat_s\omega_l,\Hodge\HHeat_s\omega_l\big\rangle\d\meas\\
&= -\lim_{i\to\infty} \int_\mms \Delta^{\kappa_n}f_i\,\big[\varphi_\varepsilon\circ \vert\HHeat_s\omega_l\vert^2\big]\d\meas\\
&\qquad\qquad -\lim_{i\to\infty}2\int_\mms f_i\,\big[\varphi_\varepsilon'\circ\vert\HHeat_s\omega_l\vert^2\big]\,\big\langle \HHeat_s\omega_l,\Hodge\HHeat_s\omega_l\big\rangle\d\meas\\
&\leq  - \lim_{i\to\infty} 2\,\big\langle \kappa_n\,\big\vert\,f_i\,\vert\HHeat_s\omega_l\vert^2\,\varphi_\varepsilon\circ\vert\HHeat_s\omega_l\vert^2\big\rangle + \lim_{i\to\infty} \big\langle\kappa_n\,\big\vert\,f_i\,\varphi_\varepsilon\circ\vert\HHeat_s\omega_l\vert^2\big\rangle\!\!\!\textcolor{white}{\int}\\
&=  - 2\,\big\langle\kappa_n\,\big\vert\, \Schr{\kappa_n}_{t-s}\phi\,\vert\HHeat_s\omega_l\vert^2\,\varphi_\varepsilon'\circ\vert\HHeat_s\omega_l\vert^2\big\rangle + \big\langle\kappa_n\,\big\vert\,\Schr{\kappa_n}_{t-s}\phi\,\varphi_\varepsilon\circ\vert\HHeat_s\omega_l\vert^2\big\rangle.\!\!\!\textcolor{white}{\int}
\end{align*}
Integrating this inequality from $0$ to $t$, sending $\varepsilon\to 0$ with the aid of Lebesgue's theorem and employing the arbitrariness of $\phi$ imply that, for every $l,n\in\N$,
\begin{align*}
\vert\HHeat_t\omega_l\vert\leq\Schr{\kappa_n}_t\vert\omega_l\vert  \quad\meas\text{-a.e.}
\end{align*}
Sending $l\to\infty$ and $n\to\infty$ using Levi's theorem  terminates the proof.
\end{proof}

\begin{remark}\label{Re:HSU forms?} Technical issues in general  prevent us from proving \autoref{Pr:BE 1-forms} or \autoref{Th:HSU forms} beyond \autoref{As:Restr}. The key reason is that integrated versions or inequalities derived from \eqref{Eq:Ric on test vfs} and \eqref{Eq:Ric kappa ids} are hard to obtain beyond $X\in\Reg(T\mms)$ or integrands both belonging to $\Test(\mms)$ and $\smash{\Dom(\Delta^{2\kappa})}$. 

It is outlined in \autoref{Re:Epsilon Remark} above how to obtain  more general versions under \autoref{As:Restr}. The key obstacle, however, lies in dealing with the behavior of the term in \eqref{Eq:Ric on test vfs} containing the Hodge Laplacian or, in other words, to obtain an analogue to  \autoref{Cor:GrnsSchnsII}. In general, $\Ell^\infty$-bounds for derivatives of $\smash{\Schr{2\kappa}_{t-s}\phi}$ or $\smash{\Schr{\kappa}_{t-s}\phi}$, $s\in (0,t)$, lack for sufficiently many nonnegative $\phi\in\Ell^2(\mms)$, but being able to integrate by parts this term essentially requires  e.g.~$\smash{\Schr{2\kappa}_{t-s}\phi \in \Ell^\infty(\mms)}$ and $\smash{\rmd \Schr{2\kappa}_{t-s}\phi\in\Ell^\infty(T^*\mms)}$. (A related question is whether and when not only $\smash{\Schr{2\kappa}_{t-s}\phi, \Delta^{2\kappa}\Schr{2\kappa}_{t-s}\phi\in\Ell^\infty(\mms)}$ --- which can always be achieved by  \cite[Sec.~6.1]{erbar2020} --- but also $\smash{\rmd\Schr{2\kappa}_{t-s}\phi\in \Ell^\infty(T^*\mms)}$ holds.) Compare with \autoref{Pr:Leibniz rule ext der}, \autoref{Re:Integr issues}, \eqref{Eq:delta = -div} and \autoref{Le:Div identities}. We also cannot leave the Hodge Laplacian term as it is because we do not know if $\Reg(T^*\mms)$ is dense in $\Dom(\Hodge)$ w.r.t.~the induced graph norm. Under \autoref{As:Restr}, these deductions could still be done thanks to the explicit relation \eqref{Eq:Delta 2k Delta corresp} between $\smash{\Dom(\Delta^{2\kappa})}$ and $\Dom(\Delta)$.
\end{remark}

\begin{remark} If we know that, given $\omega\in\Ell^2(T^*\mms)$, there exists a sequence $(\omega_n)_{n\in\N}$ in $\Ell^2(T^*\mms)$ that $\Ell^2$-converges to $\omega$ such that $\HHeat_t\omega_n\in\Ell^\infty(T^*\mms)$ for every $t>0$ and every $n\in\N$, then \autoref{Th:HSU forms} can be deduced by the same arguments as above for more general $\smash{\kappa\in\Kato_{1-}(\mms)}$. In particular, since $\HHeat_t\rmd f_n \in\Ell^\infty(T^*\mms)$ for every $t>0$ and every $n\in\N$,  $f_n := \max\{\min\{f,n\},-n\}$, by \autoref{Le:Commutation relation Ht Pt}, \autoref{Th:HSU forms} recovers the gradient estimate \cite[Thm.~6.9]{erbar2020} for any $f\in\F$ by \eqref{Eq:delta = -div}. 

On  Riemannian manifolds with not necessarily convex boundary, such a sequence can be constructed under further geometric assumptions \cite[Thm.~5.2]{arnaudon2017}. (The latter result, in fact, implies  \autoref{Th:HSU forms} by Gronwall's inequality.)
\end{remark}

\appendix

\section{Extrinsic approaches}\label{Ch:Appendix}

Lastly, we compare some recent \emph{extrinsic} approaches \cite{brue2019,buffa2019, sturm2020} to boundary objects on $\RCD$ spaces with our intrinsic notions. More precisely, we outline some links to our notions of divergences and normal components.

\subsection{Sets of finite perimeter} Let $(\mms,\met,\meas)$ be a locally compact $\RCD(K,\infty)$ space, $K\in\R$, with induced Dirichlet space $(\mms,\Ch,\meas)$, cf.~\autoref{Ex:mms}. 

\subsubsection{Identification of the measure-valued divergence} Following \cite[Def.~4.1]{buffa2019}, let $\mathcall{D}\mathcall{M}^p(\mms)$, $p\in[1,\infty]$, be the space of all $X\in \Ell^p(T\mms)$ such that there exists $\smash{\textit{div}\,X\in \Meas_\fin^\pm(\mms)_\Ch}$ such that for every $h\in\Lip_\bs(\mms)$,
\begin{align*}
-\int_\mms h\d\textit{div}\,X = \int_\mms \rmd h(X)\d\meas.
\end{align*}
The density of $\Lip_\bs(\mms)$ in $W^{1,2}(\mms)$ \cite{ambrosio2014a} and \cite[Prop.~4.6]{buffa2019} yield the following.

\begin{lemma}\label{Le:Div identification BCM} Every $X\in \mathcall{D}\mathcall{M}^2(\mms)$ belongs to $\Dom(\DIV)$, and
\begin{align*}
\textit{div}\,X = \DIV X.
\end{align*}
\end{lemma}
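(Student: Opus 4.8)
The statement to prove is Lemma~\ref{Le:Div identification BCM}: every $X\in\mathcall{D}\mathcall{M}^2(\mms)$ belongs to $\Dom(\DIV)$, with $\textit{div}\,X=\DIV X$. The natural strategy is to show that the measure $\textit{div}\,X$ witnesses the defining property of $\DIV X$ from \autoref{Def:Measure-valued divergence}, i.e.\ that for every $h\in\F_\bc$ one has $-\int_\mms\widetilde h\d(\textit{div}\,X)=\int_\mms\rmd h(X)\d\meas$. Since $\textit{div}\,X\in\Meas_\fin^\pm(\mms)_\Ch\subset\Meas_\sigmafinR^\pm(\mms)_\Ch$ and is in particular a $\sigma$-finite signed Borel measure charging no $\Ch$-polar set, its uniqueness as $\DIV X$ (guaranteed by density of $\F_\bc$ in $\F$) then gives the claimed identity. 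So the whole content is an approximation/density argument passing from the test class $\Lip_\bs(\mms)$ appearing in the definition of $\mathcall{D}\mathcall{M}^2(\mms)$ to the class $\F_\bc$ appearing in \autoref{Def:Measure-valued divergence}.

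\textbf{Key steps.} First I would recall that on a locally compact $\RCD(K,\infty)$ space the Cheeger energy $\Ch$ coincides (up to the setting of \autoref{Ex:mms}) with the Dirichlet form under consideration, and that $\Lip_\bs(\mms)$ is dense in $W^{1,2}(\mms)=\F$ in the $\F$-norm — this is the cited density result from \cite{ambrosio2014a}. Second, I would fix $h\in\F_\bc$ and choose a sequence $(h_n)_{n\in\N}$ in $\Lip_\bs(\mms)$ with $h_n\to h$ in $\F$; by a standard truncation one may additionally arrange $\sup_n\Vert h_n\Vert_{\Ell^\infty(\mms)}<\infty$ and $\supp h_n$ contained in a fixed compact set (using that $\supp h$ is compact and local cut-off functions are available from \autoref{Le:Approx to id} or directly from local compactness). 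Third, on the right-hand side $\int_\mms\rmd h_n(X)\d\meas\to\int_\mms\rmd h(X)\d\meas$, since $\rmd h_n\to\rmd h$ in $\Ell^2(T^*\mms)$ by $\F$-convergence and the isometry \eqref{Eq:TRE}, while $X\in\Ell^2(T\mms)$, so the pairing is $\Ell^2$-continuous. Fourth, on the left-hand side one needs $\int_\mms\widetilde h_n\d(\textit{div}\,X)\to\int_\mms\widetilde h\d(\textit{div}\,X)$: since $\textit{div}\,X\in\Meas_\fin^\pm(\mms)_\Ch$ does not charge $\Ch$-polar sets, one may invoke \cite[Prop.~4.6]{buffa2019} (or the general fact that $\Ch$-q.c.\ representatives are unique up to $\Ch$-polar sets, \autoref{Le:Props q.r. s.l. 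Dirichlet form}, together with the finiteness and boundedness bounds) to pass from the $\Lip$ representatives $h_n$ to the $\Ch$-q.c.\ version $\widetilde h$; concretely, after passing to a subsequence $h_n\to\widetilde h$ $\Ch$-q.e., hence $|\textit{div}\,X|$-a.e., and dominated convergence applies because of the uniform $\Ell^\infty$ bound and finiteness of $|\textit{div}\,X|$. Combining the two limits gives $-\int_\mms\widetilde h\d(\textit{div}\,X)=\int_\mms\rmd h(X)\d\meas$ for all $h\in\F_\bc$, which is exactly the defining property of $\DIV X$; uniqueness then yields $\textit{div}\,X=\DIV X$, and in particular $X\in\Dom(\DIV)$.

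\textbf{Main obstacle.} The only delicate point is the left-hand side passage to the limit, i.e.\ controlling $\int\widetilde h_n\d(\textit{div}\,X)$ when only $\F$-convergence of $h_n$ is available: $\F$-convergence does not by itself give convergence of the $\Ch$-q.c.\ representatives except along a subsequence and $\Ch$-q.e., so one genuinely needs that $\textit{div}\,X$ charges no $\Ch$-polar set (which is built into the definition of $\mathcall{D}\mathcall{M}^2(\mms)$ via $\Meas_\fin^\pm(\mms)_\Ch$) together with the truncation producing a uniform sup-norm bound. This is precisely where \cite[Prop.~4.6]{buffa2019} does the work, and I expect the proof to be short once that ingredient is cited; the rest is the routine $\Ell^2$-pairing continuity already noted. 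No curvature assumption beyond what makes $\Lip_\bs(\mms)$ dense in $W^{1,2}(\mms)$ is actually used, but since the lemma is stated for $\RCD(K,\infty)$ spaces I would simply work in that setting.
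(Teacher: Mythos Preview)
Your proposal is correct and follows exactly the approach the paper indicates: the paper does not write out a proof but simply records that the density of $\Lip_\bs(\mms)$ in $W^{1,2}(\mms)$ from \cite{ambrosio2014a} together with \cite[Prop.~4.6]{buffa2019} yield the lemma, and your argument spells out precisely this passage from the test class $\Lip_\bs(\mms)$ to $\F_\bc$ via approximation, using $\Ell^2$-continuity on the right-hand side and the fact that $\textit{div}\,X$ charges no $\Ch$-polar sets on the left.
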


\subsubsection{Gauß--Green formula}\label{Sub:GG formula} For boundary objects to really appear, we use the Gauß--Green formulas for appropriate subsets $E\subset\mms$ obtained in \cite{buffa2019}.

We say that a Borel set $E\subset\mms$ has \emph{finite perimeter} \cite[Def.~3.3]{buffa2019} if $\One_E\in \BV(\mms)$. It is associated with a Radon measure $\smash{\vert\rmD\One_E\vert\in \Meas_\finR^+(\mms)}$ \cite[Thm.~3.4]{buffa2019} which is supported on $\partial E$ and, if $\meas[\partial E] = 0$, in particular singular to $\meas$ \cite[Rem.~3.5]{buffa2019}. Here, the class of functions of \emph{bounded variation} $\BV(\mms)\subset\Ell^1(\mms)$ can be defined in various ways \cite{ambrosio2014, buffa2019, miranda2003} which all lead to the same spaces and objects in a large generality \cite[Thm.~1.1]{ambrosio2014}. (In particular, we require sets of finite perimeter to have finite $\meas$-measure, although this is not strictly needed \cite[Def.~1.1, Def.~1.2]{brue2019}.)

We now make the following assumptions on $E$. 
\begin{enumerate}[label=\alph*.]
\item\label{La:A} $E$ satisfies the obstructions from \autoref{Re:Subsets}.
\item\label{La:B} The inclusion $\smash{W^{1,2}(\mms)\big\vert_E \subset W^{1,2}(E)}$ from \eqref{Eq:W12 inclusions subsets} is dense.
\item\label{La:C} $E$ or $E^\rmc$ is a set of finite perimeter.
\end{enumerate}
Item \ref{La:A}~guarantees that $(E,\met_E,\meas_E)$ induces a quasi-regular, strongly local Dirichlet space $(E,\Ch_E,\meas_E)$, hence a tangent module $\Ell^2(TE)$ w.r.t.~$\meas_E$. By \eqref{Eq:W12 inclusions subsets}, we can identify $\smash{\Ell^2(T\mms)\big\vert_E}$ with $\Ell^2(TE)$. Given any $X\in\Ell^2(T\mms)$, denote by $X_E\in\Ell^2(TE)$ the image of $\One_E\,X$ under this identification. Of course, \ref{La:B} is satisfied if $E$ has the extension property $\smash{W^{1,2}(\mms)\big\vert_E = W^{1,2}(E)}$, and if $\met_E\leq C\,\met$ on $E^2$ for some finite $C>1$. For a  different variant of this condition \ref{La:B}, see \autoref{Sub:Nonsmooth example} below.

\begin{proposition}\label{Pr:GG sets finite per} For every $X\in\Test(T\mms)$, there exists a unique $\langle X,v_E\rangle_{\partial E}\in \Ell^\infty(\partial E,\vert\rmD\One_E\vert)$ such that for every $h\in W^{1,2}(E)$,
\begin{align}\label{Eq:Zvbd}
-\int_E\rmd h(X) \d\meas = \int_E h\,\frac{\rmd\textit{div}\,X}{\rmd \meas}\d\meas + \int_{\partial E}\widetilde{h}\,\big\langle X,v_E\big\rangle_{\partial E}\d\vert\rmD\One_E\vert.
\end{align}
In particular, we have $X_E\in\Dom_\TV(\DIV_E)$ with
\begin{align*}
\div_E X_E &= \frac{\rmd\textit{div}\,X}{\rmd\meas}\quad\meas_E\text{-a.e.},\\
\norm_E X_E &= - \big\langle X,v_E\big\rangle_{\partial E}\,\vert\rmD\One_E\vert.
\end{align*}
\end{proposition}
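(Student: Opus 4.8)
The statement is the Gauß--Green formula for sets of finite perimeter combined with the identification of the measure-valued divergence relative to the subset $E$. The plan is to invoke the Gauß--Green calculus from \cite{buffa2019} to obtain the boundary pairing $\langle X, v_E\rangle_{\partial E}$ and the identity \eqref{Eq:Zvbd}, and then to translate this integration by parts statement into the language of the intrinsic divergence $\DIV_E$ on the Dirichlet space $(E,\Ch_E,\meas_E)$ via \autoref{Le:Div identification BCM} and the density hypothesis \ref{La:B}.

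\textbf{Key steps.} First I would fix $X\in\Test(T\mms)$ and recall that by \autoref{Le:Div g nabla f} we have $X\in\Dom_\TV(\DIV)\cap\Dom(\div)$ with $\norm X=0$, so $\DIV X = \div X\,\meas\in\Meas_\fin^\pm(\mms)_\Ch$; in particular $X\in\mathcall{D}\mathcall{M}^2(\mms)$ in the sense of \autoref{Le:Div identification BCM}, and $\textit{div}\,X = \DIV X$. Second, since $E$ (or $E^\rmc$) is a set of finite perimeter by \ref{La:C}, the Gauß--Green theorem of \cite{buffa2019} (their Theorem on Gauß--Green formulas for $\mathcall{D}\mathcall{M}^\infty$ and $\mathcall{D}\mathcall{M}^2$ vector fields, applied to the bounded vector field $X$) produces the unique boundary datum $\langle X,v_E\rangle_{\partial E}\in\Ell^\infty(\partial E,\vert\rmD\One_E\vert)$ and the identity \eqref{Eq:Zvbd} for every $h\in\Lip_\bs(\mms)$, with $\rmd\textit{div}\,X/\rmd\meas$ being the $\meas$-absolutely continuous part of $\DIV X$, i.e.~$\div X$. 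Third, I would upgrade the class of test functions $h$ from $\Lip_\bs(\mms)$ to all of $W^{1,2}(E)$: both sides of \eqref{Eq:Zvbd} are continuous in $h$ w.r.t.~the $W^{1,2}(E)$-norm --- the first integral because $X\in\Ell^\infty(T\mms)$ and so $X_E\in\Ell^\infty(TE)$, the second because $\div X\in\Ell^\infty(\mms)\subset\Ell^2(E)$ by \autoref{Le:Div g nabla f}, and the boundary integral by the trace theorem for $W^{1,2}(E)$ functions against the finite measure $\vert\rmD\One_E\vert$ --- and the restrictions to $E$ of functions in $\Lip_\bs(\mms)$ are dense in $W^{1,2}(E)$ by the density assumption \ref{La:B} together with \eqref{Eq:W12 inclusions subsets} and the density of $\Lip_\bs(\mms)$ in $W^{1,2}(\mms)$. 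Fourth, I would read off the conclusion: \eqref{Eq:Zvbd} for $h\in W^{1,2}(E) = \F_E\cap(\ldots)$ tested against $h\in (\F_E)_\bc$ is exactly the defining property of the measure-valued divergence on $(E,\Ch_E,\meas_E)$ from \autoref{Def:Measure-valued divergence}, with the $\sigma$-finite signed Borel measure
\begin{align*}
\DIV_E X_E = \frac{\rmd\textit{div}\,X}{\rmd\meas}\,\meas_E - \big\langle X,v_E\big\rangle_{\partial E}\,\vert\rmD\One_E\vert,
\end{align*}
which charges no $\Ch_E$-polar set since $\vert\rmD\One_E\vert$ is a Radon measure supported on $\partial E$ and the first summand is $\meas_E$-absolutely continuous. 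Since $\div X\in\Ell^\infty(\mms)$ has finite $\meas_E$-mass on the finite-measure set $E$ and $\vert\rmD\One_E\vert$ is finite, $X_E\in\Dom_\TV(\DIV_E)$. Taking the Lebesgue decomposition w.r.t.~$\meas_E$: the first summand is the $\meas_E$-absolutely continuous part (so $\div_E X_E = \rmd\textit{div}\,X/\rmd\meas$ $\meas_E$-a.e.) and, because $\meas_E[\partial E]=0$ implies $\vert\rmD\One_E\vert\perp\meas_E$ by \cite[Rem.~3.5]{buffa2019}, the second summand is the $\meas_E$-singular part, whence $\norm_E X_E = -\DIV_{E,\perp} X_E = \langle X,v_E\rangle_{\partial E}\,\vert\rmD\One_E\vert$. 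Uniqueness of $\langle X,v_E\rangle_{\partial E}$ follows from uniqueness of the Lebesgue decomposition together with the fact that $\vert\rmD\One_E\vert$-classes of bounded Borel functions on $\partial E$ are determined by their pairings against $\Ch_E$-quasi-continuous representatives of $W^{1,2}(E)$ functions (trace density).

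\textbf{Main obstacle.} The delicate point is the passage from the Gauß--Green identity of \cite{buffa2019}, which is naturally stated with $\Lip_\bs(\mms)$ (or Lipschitz) test functions and the ambient perimeter measure, to the \emph{intrinsic} identity on the Dirichlet space $(E,\Ch_E,\meas_E)$ with test functions in $(\F_E)_\bc$ and the intrinsic notion of $\Ch_E$-quasi-continuous representative. One must check that the trace operator used in \cite{buffa2019} is compatible with $\Ch_E$-quasi-continuous representatives --- i.e.~that $\widetilde h$ appearing in \eqref{Eq:Zvbd} may be taken to be the $\Ch_E$-quasi-continuous $\meas_E$-version --- and that the extension of the identity to all $h\in W^{1,2}(E)$ via hypothesis \ref{La:B} genuinely lands in a class dense enough to pin down a $\sigma$-finite signed measure charging no $\Ch_E$-polar set (this is where \ref{La:B}, rather than just the inclusion \eqref{Eq:W12 inclusions subsets}, is essential). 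Verifying that $\vert\rmD\One_E\vert$ does not charge $\Ch_E$-polar sets --- which is needed for $\DIV_E X_E\in\Meas_\sigmafinR^\pm(E)_{\Ch_E}$ --- should follow from it being Radon, but the interplay of $\Ch$-polarity on $\mms$ versus $\Ch_E$-polarity on $E$ near $\partial E$ deserves a careful word.
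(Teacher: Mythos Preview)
Your approach is essentially the same as the paper's: invoke \autoref{Le:Div g nabla f} and \autoref{Le:Div identification BCM} to place $X$ in $\mathcall{D}\mathcall{M}^2(\mms)$ with $\textit{div}\,X\ll\meas$, apply the Gau\ss--Green theorem of \cite{buffa2019} under hypothesis \ref{La:C} to obtain \eqref{Eq:Zvbd} for $h\in\Lip_\bs(\mms)$, then extend to $h\in W^{1,2}(E)$ via hypothesis \ref{La:B}. The paper carries out the extension step by $\Ch$-quasi-uniform approximation \cite[Thm.~1.3.3]{chen2012} together with \cite[Prop.~4.6]{buffa2019}, rather than appealing to a trace theorem; this is precisely the ``careful word'' you flag in your main obstacle, and the paper's route avoids having to establish a genuine trace map into $\Ell^2(\partial E,\vert\rmD\One_E\vert)$.

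Two minor slips to fix. First, $\div X$ is not in $\Ell^\infty(\mms)$ for $X\in\Test(T\mms)$ --- the term $g\,\Delta f$ is only in $\Ell^1\cap\Ell^2$ --- but $\div X\in\Ell^2(\mms)$ suffices for your continuity argument, and $\div X\in\Ell^1(\mms)$ (not finiteness of $\meas[E]$, which need not hold under \ref{La:C}) is what gives finite total variation. Second, you have a sign error: from \eqref{Eq:Zvbd} one reads off $\DIV_E X_E = (\rmd\textit{div}\,X/\rmd\meas)\,\meas_E + \langle X,v_E\rangle_{\partial E}\,\vert\rmD\One_E\vert$ (with a plus), so that $\norm_E X_E = -\DIV_{E,\perp}X_E = -\langle X,v_E\rangle_{\partial E}\,\vert\rmD\One_E\vert$, matching the statement.
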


\begin{proof} The last statement follows from \eqref{Eq:Zvbd}, whence we concentrate on \eqref{Eq:Zvbd}. By \autoref{Le:Div g nabla f} and \autoref{Le:Div identification BCM}, we have $\vert\textit{div}\,X\vert\ll\meas$. (And furthermore, $\textit{div}\,X$ has finite total variation.) Hence, under \ref{La:C}, \cite[Prop.~6.11, Thm.~6.13]{buffa2019} implies that there exists a unique function $\langle X,v_E\rangle_{\partial E}\in\Ell^\infty(\partial E,\vert\rmD\One_E\vert)$ such that \eqref{Eq:Zvbd} holds for every $h\in\Lip_\bs(\mms)$. By \ref{La:B}, $\Ch$-quasi-uniform approximation \cite[Thm.~1.3.3]{chen2012} and \cite[Prop.~4.6]{buffa2019}, the latter extends to arbitrary $h\in W^{1,2}(E)$.
\end{proof}

\begin{remark}\label{Re:SFF} It would naturally follow from \autoref{Pr:GG sets finite per} that the second fundamental form of $E$ at $X_E$, $X\in\Test(T\mms)$, according to \autoref{Def:Second fund form} --- once $(E,\Ch_E,\meas_E)$ is  tamed by an appropriate $\kappa\in\Kato_{1-}(\mms)$  --- is
\begin{align*}
\II_E(X_E,X_E)  = \frac{1}{2}\big\langle\nabla \vert X\vert^2, v_E\big\rangle_{\partial E}\,\vert\rmD\One_E\vert
\end{align*}
\emph{provided} $\nabla \vert X\vert^2\in \Ell^\infty(T\mms)$. This latter assumption, however, seems to be quite restrictive. (Compare with \cite[Rem.~4.10, Rem.~4.11, Rem.~4.13]{giglipasqu2020}.) Unfortunately, boundedness of the vector field in \eqref{Eq:Zvbd} seems to be essential in \cite{buffa2019}. 
\end{remark}

\begin{remark} The notation $\langle X,v_E\rangle_{\partial E}$, $X\in\Test(T\mms)$, in \autoref{Pr:GG sets finite per} is purely formal, in the sense that the authors of \cite{buffa2019} neither consider any ``tangent module'' with scalar product $\langle \cdot,\cdot\rangle_{\partial E}$ over $\partial E$, nor define a unit normal vector field $v_E$.
\end{remark}

\begin{example}\label{Ex:SFP II} Another version of the Gauß--Green formula on $\RCD(K,N)$ spaces $(\mms,\met,\meas)$, $K\in\R$ and $N\in [1,\infty)$, has been obtained in \cite[Thm.~2.2]{brue2019}. Retain the assumptions \ref{La:A}, \ref{La:B} and \ref{La:C} on $E\subset\mms$. Then there exists a unique $v_E\in \smash{\Ell^2_E(T\mms)}$, the tangent module over $\partial E$ \cite[Thm.~2.1]{brue2019}, with $\vert v_E\vert=1$ $\vert \rmD \One_E\vert$-a.e.~on $\partial E$ such that for every $X\in H^{1,2}(T\mms)\cap \Dom(\div)\cap\Ell^\infty(T\mms)$,
\begin{align*}
\int_E \div X\d\meas = -\int_{\partial E} \big\langle\!\tr_E(X), v_E\big\rangle\d\vert\rmD\One_E\vert.
\end{align*}
Here $\smash{\tr_E\colon H^{1,2}(T\mms)\cap \Ell^\infty(T\mms)\to \Ell_E^2(T\mms)}$ is the \emph{trace operator} over $\partial E$.

Replacing $X$ by $h\,X$, where $h\in\W^{1,2}(\mms)\cap\Ell^\infty(\mms)$ has bounded support --- recall \autoref{Le:Div identities} and \autoref{Re:fX in H12} --- and using \eqref{Eq:Div lbnz rle} as well as the arbitrariness of $h$ we obtain that $X_E\in \Dom(\DIV_E)$ with
\begin{align*}
\div_EX_E &= \div X\quad\meas_E\text{-a.e.},\\
\norm_EX_E &= -\big\langle X, v_E\big\rangle\,\vert\rmd\One_E\vert.
\end{align*}
\end{example}

\subsection{Regular semiconvex subsets}\label{Sub:Nonsmooth example} Consider the canonical Dirichlet space induced by an $\RCD(\mathcall{k},N)$ metric measure space $(\mms,\met,\meas)$, see \autoref{Ex:mms}, where $\mathcall{k}\colon \mms \to\R$ is continuous and lower bounded as well as $N\in [2,\infty)$ \cite[Def.~3.1, Def.~3.3, Thm.~3.4]{sturm2020}. Let $E\subset\mms$ be as in \autoref{Re:Subsets} with $\meas[E] < \infty$.

For a function $f$ on $\mms$ or $E$, denote by $f_n$ its truncation $\max\{\min\{f,n\},-n\}$  at the levels $n$ and $-n$, $n\in\N$. Following \cite[Def.~2.1]{sturm2020} we set
\begin{align*}
W^{1,1+}(\mms) &:= \Big\lbrace f\in \Ell^1(\mms) : f_n\in \F\text{ for every }n\in\N,\\ 
&\qquad\qquad \sup_{n\in\N} \big\Vert \vert f_n\vert + \vert\rmd f_n\vert\big\Vert_{\Ell^1(\mms)} < \infty \Big\rbrace.
\end{align*}
Let $W^{1,1+}(E)$ be defined analogously w.r.t.~$W^{1,2}(E)$ and $\vert\rmd \cdot\vert_E$. We assume that $E$ has \emph{regular boundary} \cite[p.~1702]{sturm2020}, i.e.~$v\in \Dom(\Delta)$ with $v, \Delta  v\in \Cont(\mms)\cap \Ell^\infty(\mms)$, where $v:= \met(\cdot,E) - \met(\cdot, E^\rmc)$ is the signed distance function from $\partial E$, and
\begin{align*}
 W^{1,1+}(\mms)\big\vert_E = W^{1,1+}(E).
\end{align*}
Then thanks to \cite[Lem.~6.10]{sturm2020}, there exists a nonnegative $\smash{\sigma\in \Meas_\fin^+(\mms)_\Ch}$ supported on $\partial E$ such that for every $\smash{h\in W^{1,2}_\bounded(\mms)}$,
\begin{align}\label{Eq:84949}
\int_{\partial E} \widetilde{h}\d\sigma &= \int_E \rmd  v(\nabla h)\d\meas + \int_\mms\Delta v\,h\d\meas.
\end{align}

\begin{lemma}\label{Le:Nl cpnt} In the notation of \autoref{Sub:GG formula}, the vector field $(\nabla v)_E\in\Ell^2(TE)$ belongs to $\Dom_{\Ell^2}(\DIV_E)$ with
\begin{align*}
\div_E(\nabla v)_E &= \Delta v\quad\meas_E\text{-a.e.},\\
\norm_E(\nabla v)_E &= \sigma.
\end{align*}
\end{lemma}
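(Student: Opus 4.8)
The statement of \autoref{Le:Nl cpnt} identifies the measure-valued divergence of $(\nabla v)_E$ on the subset $E$ in terms of data from the ambient space, so the natural strategy is to feed the ambient Gau\ss--Green-type identity \eqref{Eq:84949} into the defining property of $\DIV_E$ from \autoref{Def:Measure-valued divergence}, applied now to the Dirichlet space $(E,\Ch_E,\meas_E)$. The first step is to fix $h\in\F_{E,\bc}$ (boundedly supported, bounded elements of $W^{1,2}(E)$) and observe, using the regularity assumptions on $E$ --- namely $v\in\Dom(\Delta)$ with $v,\Delta v\in\Cont(\mms)\cap\Ell^\infty(\mms)$, and the inclusion $W^{1,1+}(\mms)\big\vert_E = W^{1,1+}(E)$ --- together with the identification $\Ell^2(T\mms)\big\vert_E\cong\Ell^2(TE)$ from \eqref{Eq:W12 inclusions subsets}, that the three integrals in \eqref{Eq:84949} all make sense with $h$ in place of the ambient test function. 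Concretely, $\rmd v(\nabla h)$ restricted to $E$ equals $\rmd_E v_E(\nabla_E h)$ by locality of the carr\'e du champ (\autoref{Th:Properties energy measure}\ref{La:Strong locality}), and $\Delta v\big\vert_E\in\Ell^\infty(\meas_E)$ so $\Delta v\,h\in\Ell^1(\meas_E)$.

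The second step is to rewrite \eqref{Eq:84949} so that it matches the template
\[
-\int_E \rmd_E h\big((\nabla v)_E\big)\d\meas_E = \int_E \widetilde{h}\d\nu
\]
for a suitable $\sigma$-finite signed Borel measure $\nu$ on $E$ charging no $\Ch_E$-polar set. Rearranging \eqref{Eq:84949} gives
\[
-\int_E \rmd v(\nabla h)\d\meas = \int_E \Delta v\,h\d\meas - \int_{\partial E}\widetilde{h}\d\sigma,
\]
so that $\nu = \Delta v\,\meas_E - \sigma$. Here I would need to check that this $\nu$ is admissible: $\Delta v\,\meas_E$ is a finite signed measure absolutely continuous w.r.t.\ $\meas_E$ (since $\Delta v\in\Ell^\infty(\meas)$ and $\meas[E]<\infty$), while $\sigma\in\Meas_\fin^+(\mms)_\Ch$ is supported on $\partial E$, hence singular to $\meas_E$ provided $\meas[\partial E]=0$ (which holds under the standing assumptions from \autoref{Re:Subsets}); in particular $\sigma$ charges no $\Ch_E$-polar set --- this follows because an $\Ch_E$-polar subset of $\partial E$ is $\Ch$-polar and $\sigma$ does not charge $\Ch$-polar sets. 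By uniqueness of the Lebesgue decomposition (\autoref{Subsub:Measure spaces}), $(\Delta v\,\meas_E)$ is the $\meas_E$-absolutely continuous part and $-\sigma$ the singular part of $\DIV_E(\nabla v)_E$. Then $\div_E(\nabla v)_E = \rmd\!\DIV_{E,\ll}(\nabla v)_E/\rmd\meas_E = \Delta v$ $\meas_E$-a.e., and $\norm_E(\nabla v)_E = -\DIV_{E,\perp}(\nabla v)_E = \sigma$ by \autoref{Def:Normal component}; the membership in $\Dom_{\Ell^2}(\DIV_E)$ follows since $\div_E(\nabla v)_E=\Delta v\in\Ell^2(\meas_E)$, again using $\meas[E]<\infty$.

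The third, mostly bookkeeping, step is the density/approximation argument needed to pass from \eqref{Eq:84949} as stated --- valid for $h\in W^{1,2}_{\bounded}(\mms)$ restricted to $E$ --- to the full test class $\F_{E,\bc}$ required by \autoref{Def:Measure-valued divergence} and \autoref{Def:Normal component}. This is where the hypothesis $W^{1,1+}(\mms)\big\vert_E = W^{1,1+}(E)$ does the work: every $h\in\F_{E,\bounded}$ with bounded support extends (after truncation) to an element of $W^{1,1+}(\mms)$, and one approximates in the appropriate $W^{1,2}$-sense combined with $\Ch_E$-quasi-uniform approximation of quasi-continuous representatives (as in the proof of \autoref{Pr:GG sets finite per}, via \cite[Thm.~1.3.3]{chen2012}), so that all three terms in \eqref{Eq:84949} converge. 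I expect this approximation step --- verifying that the boundary term $\int_{\partial E}\widetilde h\d\sigma$ is stable under the approximation and that the limiting identity holds for \emph{all} of $\F_{E,\bc}$, not just a dense subclass --- to be the main technical obstacle, since it requires care about which representative of $h$ enters the $\sigma$-integral; everything else reduces to the Lebesgue decomposition and the locality rules already available.
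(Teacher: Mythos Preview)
Your proposal is correct and follows essentially the same route as the paper. The one simplification you are missing in the third step: since any $h\in\F_{E,\bc}$ is already bounded, once you pick an extension $\overline{h}\in W^{1,1+}(\mms)$ via the regular-boundary hypothesis, the truncation $\overline{h}_n$ belongs to $W^{1,2}_\bounded(\mms)$ and satisfies $\overline{h}_n=h$ $\meas$-a.e.\ on $E$ for every $n\geq\Vert h\Vert_{\Ell^\infty(E)}$; hence \eqref{Eq:84949} applies directly and no limiting or quasi-uniform approximation argument is needed at all.
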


\begin{proof} Since $\meas[E]< \infty$, any given $\smash{h\in W_\bc^{1,2}(E)}$ belongs to $W^{1,1+}(E)$, and hence to $\smash{W^{1,1+}(\mms)\big\vert_E}$ by regularity of $\partial E$. Thus, there exists $\smash{\overline{h} \in W^{1,1+}(\mms)}$ such that $\overline{h} = h$ $\meas$-a.e.~on $E$. In particular, $\overline{h}_n\in W_\bounded^{1,2}(\mms)$ for every $n\in\N$. Since $\smash{\overline{h}_n = h}$ $\meas$-a.e.~on $E$ for large enough $n\in\N$, the claim follows from  \eqref{Eq:84949}.
\end{proof}

\begin{remark} If the integration by parts formula as in  \cite[Lem.~6.11]{sturm2020} holds --- which, in fact,  uniquely characterizes $\sigma$ --- a similar argument as for \autoref{Le:Nl cpnt} yields that for every $f\in\Dom(\Delta)$ with $\nabla f\in\Ell^\infty(T\mms)$, we have $(\nabla f)_E\in\Dom(\DIV_E)$ with
\begin{align*}
\div_E(\nabla f)_E &= \Delta f\quad\meas_E\text{-a.e.},\\
\norm_E(\nabla f)_E &= \langle\nabla f,\nabla v\rangle_\sim\,\sigma.
\end{align*}
The latter is well-defined thanks to \autoref{Le:Kato inequality} and \autoref{Cor:Dom(Delta) subset W22}.
\end{remark}

\begin{proposition}\label{Pr:...} Given any $X\in\Reg(T\mms)$, define $X^\perp\in \Ell^2(T\mms)$ by
\begin{align*}
X^\perp := \langle X,\nabla v\rangle\,\nabla v.
\end{align*}
Then $\smash{X_E^\perp \in \Dom(\DIV_E)}$ with
\begin{align*}
\div_E X^\perp_E &= \langle X,\nabla v\rangle\,\Delta v + \nabla X : (\nabla v\otimes\nabla v)\quad\meas_E\text{-a.e.},\\
\norm_E X^\perp_E &= \langle X,\nabla v\rangle_\sim\,\sigma.
\end{align*}
\end{proposition}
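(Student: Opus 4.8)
The plan is to use the Leibniz rule for the measure-valued divergence over $E$, reducing $X_E^\perp$ to a product of $\langle X,\nabla v\rangle$ with $(\nabla v)_E$, and then invoke \autoref{Le:Nl cpnt}. First I would write $X^\perp = g\,\nabla v$, where $g := \langle X,\nabla v\rangle$. For $X\in\Reg(T\mms)$, metric compatibility of $\nabla$ together with \autoref{Le:Kato inequality} (applied to $v\in\Dom(\Delta)$, so $\nabla v\in H^{1,2}(T\mms)$ once $\partial E$ is regular, and to $X$) and \autoref{Pr:Compatibility} give $g\in\calG_\reg$; in fact since $X\in\Ell^\infty(T\mms)$ and $\nabla v\in\Ell^\infty(T\mms)$ by regularity of $\partial E$ (recall $v,\Delta v\in\Cont(\mms)\cap\Ell^\infty(\mms)$ and $\nabla v$ bounded), one checks $g\in W^{1,2}(\mms)\cap\Ell^\infty(\mms)$ with $\rmd g = \nabla X:(\cdot\otimes\nabla v) + (\Hess v)^\sharp:(\cdot\otimes\nabla v)$, i.e.\ $\rmd g(\nabla v) = \nabla X:(\nabla v\otimes\nabla v) + \Hess v(\nabla v,\nabla v)$. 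Since $v$ is the signed distance function, $\vert\nabla v\vert = 1$ a.e.\ on a neighborhood of $\partial E$ (and on $E$ up to the cut locus), so $\Hess v(\nabla v,\nabla v) = \tfrac12\langle\nabla\vert\nabla v\vert^2,\nabla v\rangle = 0$ $\meas$-a.e., and thus $\rmd g(\nabla v) = \nabla X:(\nabla v\otimes\nabla v)$ $\meas$-a.e.

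Next I would restrict to $E$. From \autoref{Le:Nl cpnt}, $(\nabla v)_E\in\Dom_{\Ell^2}(\DIV_E)$ with $\div_E(\nabla v)_E = \Delta v$ $\meas_E$-a.e.\ and $\norm_E(\nabla v)_E = \sigma$. Now apply \autoref{Le:Div identities} over the Dirichlet space $(E,\Ch_E,\meas_E)$ with the vector field $(\nabla v)_E$ and the multiplier $g_E$, noting $g_E\in W^{1,2}(E)\cap\Ell^\infty(E)$ (this uses the identification $W^{1,2}(\mms)|_E\subset W^{1,2}(E)$ from \eqref{Eq:W12 inclusions subsets} and the fact that the carré du champs agree $\meas_E$-a.e.\ on $E^\circ$, so $\rmd_E g_E = (\rmd g)_E$). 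Since $\vert(\nabla v)_E\vert\in\Ell^\infty(E)$, \autoref{Le:Div identities} gives $g_E\,(\nabla v)_E\in\Dom(\DIV_E)$ with
\begin{align*}
\div_E\big(g_E\,(\nabla v)_E\big) &= g_E\,\div_E(\nabla v)_E + \rmd_E g_E\big((\nabla v)_E\big)\\
&= \langle X,\nabla v\rangle\,\Delta v + \nabla X:(\nabla v\otimes\nabla v)\quad\meas_E\text{-a.e.},\\
\norm_E\big(g_E\,(\nabla v)_E\big) &= \widetilde{g_E}\,\norm_E(\nabla v)_E = \langle X,\nabla v\rangle_\sim\,\sigma,
\end{align*}
where in the last line I use that $\sigma$ is supported on $\partial E$ and the $\Ch_E$-quasi-continuous $\meas_E$-version of $g_E$ restricted to $\partial E$ coincides $\sigma$-a.e.\ with $\langle X,\nabla v\rangle_\sim$ (the latter well-defined by \autoref{Le:Kato inequality}, since $X,\nabla v\in H^{1,2}(T\mms)$). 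Finally I identify $g_E\,(\nabla v)_E$ with $X_E^\perp$ under the module isomorphism $\Ell^2(T\mms)|_E\cong\Ell^2(TE)$, which is immediate from $X^\perp = g\,\nabla v$ and locality.

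The main obstacle I anticipate is the careful bookkeeping around quasi-continuous representatives on $\partial E$: one must check that the $\meas_E$-version $g_E = \langle X,\nabla v\rangle|_E$ has a $\Ch_E$-quasi-continuous version whose trace to $\partial E$ agrees $\sigma$-a.e.\ with $\langle X,\nabla v\rangle_\sim$, and that the regularity hypothesis on $\partial E$ (via \cite[Lem.~6.10]{sturm2020}) is exactly what licenses \autoref{Le:Nl cpnt} and the extension property $W^{1,1+}(\mms)|_E = W^{1,1+}(E)$ needed to relax test functions from $W^{1,2}_{\mathrm{bounded}}(\mms)$ to $W^{1,2}_{\mathrm{bc}}(E)$. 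A secondary technical point is justifying $\Hess v(\nabla v,\nabla v) = 0$: this follows because $\vert\nabla v\vert^2 = 1$ $\meas$-a.e.\ on $\{0 < \vert v\vert\}$ away from the cut locus combined with the chain rule \autoref{Cor:Calculus rules d}, but one should be mindful of the set where $v$ fails to be eikonal; since that set is $\meas$-negligible where it matters for the identity on $E$, locality of $\nabla$ and $\Hess$ (\autoref{Th:Hess properties}, \autoref{Cor:Calculus rules d}) dispatches it. Everything else is a routine application of the Leibniz rules already established.
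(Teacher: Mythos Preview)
Your approach is essentially identical to the paper's: write $X^\perp = g\,\nabla v$ with $g := \langle X,\nabla v\rangle$, invoke \autoref{Le:Nl cpnt} for $(\nabla v)_E$, and apply the Leibniz rule \autoref{Le:Div identities} after computing $\rmd g(\nabla v)$ via metric compatibility. One slip: your formula for $\rmd g$ should read
\[
\rmd g(Z) = \nabla X:(Z\otimes\nabla v) + \Hess v(Z,X),
\]
so the Hessian term at $Z=\nabla v$ is $\Hess v(\nabla v,X) = \tfrac12\,\rmd\vert\nabla v\vert^2(X)$, not $\Hess v(\nabla v,\nabla v)$; this is exactly how the paper computes it. The conclusion is unaffected since $\vert\nabla v\vert = 1$ $\meas$-a.e.\ (globally, so your cut-locus caveat is unnecessary) forces either version to vanish.
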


\begin{proof} Observe that $\smash{\langle X,\nabla v\rangle\in W_\bounded^{1,2}(\mms)}$, so that both the statements make sense. The claimed formulas follow from \autoref{Le:Nl cpnt} as well as \autoref{Le:Div identities}  while noting that by \autoref{Pr:Compatibility} and since $\vert\nabla v\vert = 1$ $\meas$-a.e.,
\begin{align*}
\big\langle \nabla \langle X,\nabla v\rangle,\nabla v\big\rangle &= \nabla X : (\nabla v\otimes\nabla v) + \Hess v(X,\nabla v)\\
&= \nabla X : (\nabla v\otimes \nabla v) + \rmd\vert\nabla v\vert^2(X)/2\\
&= \nabla X: (\nabla v \otimes \nabla v)\quad\meas\text{-a.e.}\qedhere
\end{align*}
\end{proof}

\begin{remark} We do not know if the pointwise defined second fundamental form from \cite[Rem.~5.13]{sturm2020} is related to $\II_E$. Similarly to \autoref{Re:SFF}, by \autoref{Pr:...} these notions coincide if $X\in\Reg(T\mms)$ obeys $\vert X\vert^2\in\Dom(\Hess)$, but we do not know if many of such $X$ can be found in general. 

At least, this time the taming condition for $(E,\Ch_E,\meas_E)$ is already provided once one can verify that the taming distribution
\begin{align*}
\kappa := -\mathcall{k}^-\,\meas_E - \mathcall{l}^-\,\sigma,
\end{align*}
for some appropriate $\mathcall{l}\in\Cont(\mms)$, according to \cite[Thm.~6.14]{sturm2020} and \cite[Prop.~2.16]{erbar2020} belongs to $\Kato_{1-}(E)$. See \cite{braunrigoni2021,erbar2020} for examples in this direction.
\end{remark}

\end{document}